\newcommand{\triplearrows}{\begin{smallmatrix} \to \\ \to \\ 
\to \end{smallmatrix} }
\newcommand{\qcoh}{\mathrm{QCoh}}
\newcommand{\sk}{\mathrm{sk}}
\newcommand{\sF}{\mathcal{F}}
\newcommand{\md}{\mathrm{Mod}}
\newcommand{\sh}{\mathrm{Sh}}
\renewcommand{\sp}{\mathrm{Sp}}
\newcommand{\et}{\mathrm{et}}
\newcommand{\algspc}{\mathrm{AlgSpc}}
\newtheorem{theorem}{Theorem}[section]
\newcommand{\psh}{\mathrm{PSh}}
\newcommand{\perf}{\mathrm{Perf}}
\newtheorem{lemma}[theorem]{Lemma}
\newtheorem{proposition}[theorem]{Proposition}
\newtheorem{corollary}[theorem]{Corollary}
\renewcommand{\hom}{\mathrm{Hom}}
\theoremstyle{definition}
\newcommand{\fun}{\mathrm{Fun}}
\newtheorem{construction}[theorem]{Construction}
\newcommand{\spec}{\mathrm{Spec}}
\newtheorem{definition}[theorem]{Definition}
\newtheorem{example}[theorem]{Example}
\newtheorem{remark}[theorem]{Remark}
\newcommand{\map}{\mathrm{Hom}}
\newcommand{\TC}{\mathrm{TC}}
\newcommand\bcdot{\ensuremath{%
  \mathchoice%
   {\mskip\thinmuskip\lower0.2ex\hbox{\scalebox{1.5}{$\cdot$}}\mskip\thinmuskip}}%
   {\mskip\thinmuskip\lower0.2ex\hbox{\scalebox{1.5}{$\cdot$}}\mskip\thinmuskip}%   
   {\lower0.3ex\hbox{\scalebox{1.2}{$\cdot$}}}%
   {\lower0.3ex\hbox{\scalebox{1.2}{$\cdot$}}}%
   }
\begin{document}

\title{Hyperdescent and \'etale $K$-theory}

\author{Dustin Clausen and Akhil Mathew}

\maketitle

\begin{abstract}
We study the \'etale sheafification of algebraic $K$-theory, called \'etale
$K$-theory. 
Our main results show that \'etale $K$-theory is very close to a noncommutative invariant
called Selmer
$K$-theory,
which is defined at the level of categories. 
Consequently, we show that \'etale $K$-theory has surprisingly well-behaved
properties, integrally and without finiteness assumptions.  
A key theoretical ingredient is the distinction, which we investigate in detail, between  sheaves and hypersheaves of spectra on \'etale sites.\end{abstract}

\tableofcontents
\section{Introduction}

Let $K$ denote the algebraic $K$-theory functor defined on quasi-compact
quasi-separated (qcqs) spectral algebraic
spaces.\footnote{Here we mean the non-connective $K$-theory of perfect complexes, as in
\cite{TT90, BGT}.  The reader is free to imagine $X$ a usual algebraic space over
$\mathbb{Z}$, but the results hold more generally for $X$ a spectral algebraic
space as  
in \cite{SAG}. Furthermore, the structure sheaf need only have an $E_2$-ring structure, not
necessarily $E_\infty$. }
After \cite{TT90}, $K$ satisfies
Nisnevich descent, but it does not satisfy the even more useful \'etale descent.
Thus, we let $K^{et}$ (\emph{\'etale $K$-theory}) denote the \'etale sheafification of $K$, viewing the latter
simply as a presheaf of spectra.

The idea of ``\'etale $K$-theory'' has a long history, going back at least to
the work of Soul\'e \cite{Sou79, Sou81}, and there have been many
different approaches. 
Slightly different (but a posteriori equivalent) versions of this construction have been considered by Friedlander \cite{F80,
F82}
and Dwyer-Friedlander \cite{DF85} in the $l$-adic case (where $l$ is invertible
on the base) and in the $p$-adic case by Geisser-Hesselholt \cite{GH99}.  
In this paper, we simply sheafify $K$-theory in the sense of \cite[Ch. 6]{HTT}; this is
different from approaches such as \cite{Ja87, DHI04}, which involve a
stronger process called hypersheafification. Our main results show in
particular that this
yields a functor $K^{et}$ for connective ring spectra which
behaves well in non-noetherian settings and  works integrally.

\subsection{Main results}
For general presheaves of spectra,
sheafification is a difficult operation to access: its only explicit description
is as a transfinite composition of \v{C}ech constructions \cite[Prop. 6.2.2.7
and proof]{HTT}, and thus one might expect $K^{et}$ to be difficult to
describe.  Nonetheless, we prove the following four theorems which give a good handle on $K^{et}$.

 Let $L_1$ denote Bousfield localization at the complex $K$-theory spectrum, and let $\operatorname{TC}$ denote the functor of topological cyclic homology.

\begin{theorem}[\Cref{mainetaleKdesc}(1) below]
\label{KetvsKSel}
Let $X$ be a qcqs spectral algebraic space. Then the natural map
$$K^{et}(X)\rightarrow L_1K(X)\times_{L_1\operatorname{TC}(X)}\operatorname{TC}(X)$$
is an isomorphism on homotopy in degrees $\geq -1$.
\end{theorem}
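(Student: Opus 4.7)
The plan is to denote the right-hand side by $K^{\mathrm{Sel}}$, show it is already an \'etale sheaf of spectra, and then show the induced map $\alpha \colon K^{et} \to K^{\mathrm{Sel}}$ is a $\pi_{\geq -1}$-equivalence. For the first point, since limits of sheaves are sheaves, it suffices that $\operatorname{TC}$, $L_1 K$, and $L_1\operatorname{TC}$ each satisfy \'etale descent. \'Etale descent for $\operatorname{TC}$ is known by recent work (Bhatt--Morrow--Scholze, Clausen--Mathew--Morrow); \'etale descent for $L_1 K$ is a modern incarnation of Thomason's theorem (see also Clausen--Mathew--Naumann--Noel); the third follows by smashing the second with $L_1 \mathbb{S}$. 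Consequently $K \to K^{\mathrm{Sel}}$ factors as $K \to K^{et} \xrightarrow{\alpha} K^{\mathrm{Sel}}$.

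Next I would test whether $\alpha$ is a $\pi_{\geq -1}$-equivalence on homotopy sheaves: $\pi_n(\alpha)$ is a map of sheaves of abelian groups for each $n$, detected by \'etale stalks (so no hypercompleteness issue arises for this particular question). It therefore suffices to show: for $R$ a strictly henselian local $E_2$-ring with separably closed residue field $k$ of characteristic $p \geq 0$, the map $K(R) \to L_1 K(R) \times_{L_1 \operatorname{TC}(R)} \operatorname{TC}(R)$ is a $\pi_{\geq -1}$-isomorphism (both $K$ and $\operatorname{TC}$ commute with the filtered colimits defining such stalks). I would then pass further to the residue field: Gabber--Suslin rigidity handles the comparison modulo $l^n$ for $l \neq p$, while Dundas--Goodwillie--McCarthy controls the $p$-adic direction. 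For $k$ separably closed, the explicit computations of $K(k)$ (Suslin) and $\operatorname{TC}(k)$ (Hesselholt--Madsen), together with the resulting $L_1$-localizations, let one check the comparison directly in the required range.

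The hard part will be the precise behavior at $\pi_{-1}$, where the rational contribution coming from cyclic homology meets the $l$-adic Bott-periodic information; one must see that the patching in the Selmer pullback recovers $K(R)$ with any remaining discrepancy supported in degrees $\leq -2$. This degree cutoff looks intrinsic to the sheafification used (non-hypercomplete), and is what distinguishes the present approach from earlier constructions of \'etale $K$-theory via hypersheafification, such as those of Dwyer--Friedlander or Jardine: hypersheafifying would, in principle, allow one to extend the comparison to arbitrarily low degrees at the cost of good behavior in non-noetherian or infinite-cohomological-dimension situations, whereas stopping at the sheaf level preserves these features but forces the restriction to $n \geq -1$.
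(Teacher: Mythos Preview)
Your outline has the right ingredients at the stalks, but there is a genuine gap in how you pass from stalks to global sections. You correctly argue that the map $\alpha\colon K^{et}\to K^{Sel}$ of \'etale sheaves induces isomorphisms on \'etale homotopy \emph{sheaves} $\pi_n^{et}$ for $n\geq -1$, since this can be checked on stalks and the stalk computation (your reduction to strictly henselian $R$, essentially Theorem~\ref{stalksofKSel}) is correct. But the theorem asks for an isomorphism on $\pi_n$ of the \emph{sections} $K^{et}(X)\to K^{Sel}(X)$. Knowing that the fiber $F$ of $\alpha$ has vanishing homotopy sheaves in degrees $\geq -1$ does \emph{not} imply $F(X)\in\operatorname{Sp}_{\leq -2}$: that inference requires $F$ to be hypercomplete (equivalently Postnikov complete under finite cohomological dimension), and for an arbitrary qcqs $X$ the \'etale $\infty$-topos need not be hypercomplete. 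Your parenthetical ``no hypercompleteness issue arises'' is exactly backwards: detecting isomorphisms of abelian sheaves on stalks is fine, but translating this into a statement about homotopy groups of sections is precisely where hypercompleteness enters.

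The paper closes this gap by a noetherian approximation argument (Proposition~\ref{generalaxiomaticarg}). One shows that the fiber $\mathcal{F}=\mathrm{fib}(K\to K^{Sel})$ is truncated on the \'etale site of any ring of finite type over $\mathbb{Z}$ (this uses the Lichtenbaum--Quillen bound, Theorem~\ref{QLSelgeneral}), hence its sheafification $\mathcal{F}^{et}$ there is automatically Postnikov complete and lands in $\operatorname{Sp}_{\leq -2}$ by the stalk computation. Then one proves that $\mathcal{F}^{et}$, and consequently $K^{et}$, commutes with filtered colimits of connective $E_2$-rings, which lets one pass from finite-type rings to arbitrary ones. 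This commutation with filtered colimits is itself nontrivial (it is Theorem~\ref{Ketcocont}), and your argument implicitly assumes it when identifying the stalk of $K^{et}$ with $K$ of a strictly henselian ring; in fact it must be proved simultaneously with the comparison, not taken as input. Finally, your diagnosis of the ``hard part'' as the matching at $\pi_{-1}$ is misplaced: that follows cleanly from the stalk computation, and the real work is the global-to-local passage just described.
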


The comparison map in \Cref{KetvsKSel} comes from the cyclotomic trace
$K\rightarrow\operatorname{TC}$ and the fact that all of the theories $L_1K$,
$L_1\operatorname{TC}$, and $\operatorname{TC}$ satisfy \'etale descent.  For
$L_1K$ and $L_1\operatorname{TC}$ \'etale descent follows from the generalization of (some
of) Thomason's work  \cite{Th85} and \cite[Sec. 11]{TT90} provided by \cite{CMNN}.\footnote{Actually,
\cite{CMNN} assumed $E_\infty$-structure sheaves, and this was crucial to the
method of proof; we will show here how to establish the same result assuming
only $E_2$-structure sheaves.  And for this descent result the structure sheaves are
even allowed to be non-connective, cf.\ section~\ref{secEtwo}.}  For $\operatorname{TC}$ it was proved by
Geisser-Hesselholt \cite{GH99}, cf.\  \cite{WG} for the more classical case of
ordinary Hochschild homology and \cite[Sec. 3]{BMS2} and \Cref{TCissheaf}
below for the current level of generality.

The interest of \Cref{KetvsKSel} is the following. The sheafification procedure
defining $K^{et}$ from $K$ destroys the fundamental property of $K$-theory, which
is that it only depends on the appropriate category of modules.  Thus $K^{et}$
is missing important structure such as proper pushforward functoriality, and
important flexibility such as the ability to define $K^{et}$ of an arbitrary
category (with some kind of exact structure).  \Cref{KetvsKSel} shows that,
miraculously enough, these losses are only apparent.  The theory
$K^{Sel}
\stackrel{\mathrm{def}}{=}L_1K\times_{L_1\operatorname{TC}}\operatorname{TC}$ on
the right, introduced 
in \cite{Artinmaps} and dubbed \emph{Selmer $K$-theory}, by definition
only depends on the category of perfect complexes. Then \Cref{KetvsKSel} says that Selmer $K$-theory is essentially the same as \'etale $K$-theory.

 Selmer $K$-theory $K^{Sel}$ thus combines the best of both worlds: it has \'etale descent and hence can be related to standard cohomology theories, but it only depends on the category of modules and hence has the same flexibility as algebraic $K$-theory.
This theorem can be viewed as a kind of combination and generalization of
Thomason's work \cite{Th85} on $L_1K$ at primes different from the residue
characteristic, and the work of Geisser-Hesselholt \cite{GH99} on $\TC$ at
primes equal to the residue characteristic.  Note that Thomason made crucial use
of this ``best of both worlds" property he established for $L_1K$ in his proof
of Grothendieck's purity conjecture with $\mathbb{Q}_\ell$-coefficients
\cite{Th84}.
The same phenomenon is also crucial to the first author's new proof of the Artin
reciprocity law \cite{Artinmaps}.

\begin{theorem}[\Cref{LQstatementmain}]
\label{KapproxKet}
Let $X$ be a qcqs spectral algebraic space of finite Krull dimension, and $p$ a
prime.  For a field $k$, and let $d_k$ denote the (mod $p$) virtual Galois cohomological
dimension of $k$ if $k$ has characteristic $\neq p$, and $
1 + \operatorname{dim}_k\Omega^1_{k/k^p}$ if $k$ has characteristic $p$.  Then the map
$$K(X)\rightarrow K^{et}(X)$$
is an isomorphism on $p$-local homotopy groups in degrees $\geq
\operatorname{max}(\operatorname{sup}_{x\in X} d_{k(x)} - 2, 0)$.
\end{theorem}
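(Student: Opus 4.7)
The plan is to combine \Cref{KetvsKSel} with a local-to-global argument that reduces everything to classical Lichtenbaum-Quillen-type statements at residue fields. By \Cref{KetvsKSel}, it suffices to bound the fiber of $K(X) \to K^{Sel}(X) = L_1 K(X) \times_{L_1 \TC(X)} \TC(X)$ in high degrees. A total-fiber computation on the defining pullback square for $K^{Sel}$, using exactness of Bousfield localization, identifies this fiber with $\mathrm{fib}(G(X) \to L_1 G(X))$, where $G := \mathrm{fib}(K \to \TC)$ is the fiber of the cyclotomic trace. The problem therefore becomes: show that $G(X) \to L_1 G(X)$ is an isomorphism on $p$-local homotopy in degrees $\geq \max(\sup_{x \in X} d_{k(x)} - 2, 0)$.

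Next I would reduce pointwise. Both $K$ and $\TC$ satisfy Nisnevich descent (hence so does $G$), and the Nisnevich cohomological dimension of a qcqs scheme is bounded by its Krull dimension, so the Nisnevich descent spectral sequence converges and converts pointwise bounds at the local rings of $X$ into a global bound with a shift by $\dim X$. A rigidity reduction---Gabber-Thomason rigidity for $L_1 K$, and Dundas-Goodwillie-McCarthy nilinvariance for $G$ after $p$-completion---then further reduces from henselian local rings to their residue fields. It thus suffices to prove the statement pointwise: for every residue field $k = k(x)$, the map $G(k) \to L_1 G(k)$ is a $p$-local isomorphism in degrees $\geq d_k - 2$.

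The pointwise claim splits by the characteristic of $k$ relative to $p$. If $\mathrm{char}(k) \neq p$, then $\TC(k)_{(p)}$ is controlled by $\THH$ in bounded degrees, so the content reduces to the Thomason-Rost-Voevodsky theorem that $K(k) \to L_1 K(k)$ is a $p$-adic isomorphism in degrees $\geq \mathrm{vcd}_p(k) - 1 = d_k - 1$, in the form packaged in \cite{CMNN}. If $\mathrm{char}(k) = p$, then both $K(k)_{(p)}$ and $\TC(k)_{(p)}$ are $L_1$-acyclic (their mod-$p$ homotopy groups are bounded by Geisser-Levine and Hesselholt-Madsen, so they carry no $v_1$-periodic content), hence $L_1 G(k)_{(p)} = 0$, and the question becomes vanishing of $G(k)_{(p)}$ in degrees $\geq d_k - 2$. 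This follows from Geisser-Levine's computation of $K/p$ of $k$ in terms of logarithmic de Rham-Witt forms, nonzero only in degrees $\leq \dim_k \Omega^1_{k/k^p}$, together with Hesselholt-Madsen's analogous computation of $\TC$, giving the bound $d_k = 1 + \dim_k \Omega^1_{k/k^p}$.

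The main obstacle is carrying out the descent and rigidity steps in the full generality of qcqs spectral algebraic spaces of finite Krull dimension with possibly non-noetherian, $E_2$ structure sheaves, since the classical proofs are formulated for ordinary noetherian schemes. Pushing the bound all the way to $d_k - 2$ rather than a coarser estimate will also require a sharp form of the Nisnevich descent spectral sequence and careful handling of the edge terms, as well as an $E_2$-structure-sheaf upgrade of the descent theorem for $L_1 K$ alluded to in Footnote~1.
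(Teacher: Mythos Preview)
Your overall architecture matches the paper's: reduce $K \to K^{et}$ to $K \to K^{Sel}$ via \Cref{KetvsKSel}, localize Nisnevich to henselian local rings, use rigidity to pass to residue fields, and then invoke Rost--Voevodsky in characteristic $\neq p$ and Geisser--Levine / Geisser--Hesselholt in characteristic $p$. The total-fiber identification $\mathrm{fib}(K \to K^{Sel}) \simeq \mathrm{fib}(G \to L_1 G)$ with $G = \mathrm{fib}(K \to \TC)$ is correct and is a perfectly good way to organize the bookkeeping.

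There is, however, a genuine gap in the rigidity step. You invoke ``Dundas--Goodwillie--McCarthy nilinvariance for $G$'' to pass from a henselian local ring $R$ to its residue field $k$, but DGM only yields $G(R) \simeq G(R/I)$ when $I$ is \emph{nilpotent}, and the maximal ideal of a henselian local ring is not nilpotent in general (think of a DVR). What is actually needed is the main theorem of \cite{CMM}, which gives $G(R)/p \simeq G(k)/p$ for an arbitrary henselian pair; this is exactly the ``generalization of Gabber--Suslin rigidity'' flagged in the introduction as a key input, and the reduction to fields does not go through without it. DGM is still used in the paper, but only to pass from $R$ to $\pi_0 R$.

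A secondary remark: your concern about a $\dim X$ shift from Nisnevich descent, and the anticipated need for a ``sharp form'' of the spectral sequence, is misplaced. The fiber $F = \mathrm{fib}(K \to K^{Sel})$ is itself a Nisnevich sheaf; once its stalks lie in $\sp_{\leq c}$, its Nisnevich homotopy sheaves vanish above degree $c$, and the descent spectral sequence $H^p(X_{Nis}; \pi_q^{Nis} F) \Rightarrow \pi_{q-p} F(X)$ then forces $F(X) \in \sp_{\leq c}$ as well. The cohomological-dimension shift goes the other direction: it affects \emph{lower} bounds on homotopy, not upper bounds. This is why the final estimate involves only $\sup_x d_{k(x)}$ and not $\dim X$.
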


\Cref{KapproxKet} is a Lichtenbaum-Quillen-type statement; it says that in high enough
degrees relative to the dimension, algebraic $K$-theory does satisfy \'etale
descent.  With coefficients prime to the residue characteristics it is
well-known  (cf.\ \cite{RO06}) that such statements follow from the Gabber-Suslin rigidity
theorem 
\cite{Gabber92}
and the norm residue isomorphism theorem of Voevodsky-Rost \cite{Voe03, Voe11} in the form
of the Beilinson-Lichtenbaum conjectures (see \cite{HW19} for a textbook
account of the norm residue isomorphism, and \cite{FS02, Lev08} for the relation to
$K$-theory).  We handle the general case by
inputting the generalization of Gabber-Suslin rigidity proved in \cite{CMM}.  This transfers the problem from $K$-theory to $\operatorname{TC}$, which satisfies \'etale descent by the result of Geisser-Hesselholt mentioned above.
\begin{theorem}[\Cref{mainetaleKdesc}(2) and \Cref{hypsmashingfinitecd}]
\label{KetPostnikov}
Let $X$ be a qcqs spectral algebraic space of finite Krull dimension, with a uniform bound on the virtual Galois cohomological dimension of its residue fields.  Then over $X$, the \'etale sheaf of spectra $K^{et}$ is a \emph{Postnikov sheaf}: it maps by an equivalence to the inverse limit of its (\'etale-sheafified) Postnikov tower.  In particular, there is a conditionally convergent descent spectral sequence
$$H^p(X_{et};\pi_q^{et}K)\Rightarrow \pi_{q-p}K^{et}(X).$$

In fact, even any \'etale sheaf on $X$ which is a \emph{module} over $K^{et}$ is automatically a Postnikov sheaf.
\end{theorem}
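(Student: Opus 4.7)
The plan is to combine Theorem \ref{KetvsKSel}, which identifies $K^{et}$ in degrees $\geq -1$ with the Selmer $K$-theory $L_1K\times_{L_1\operatorname{TC}}\operatorname{TC}$, with a bound on étale cohomological dimension and a smashing property of hypersheafification in that setting. First I would establish that under the hypotheses—finite Krull dimension $d$ of $X$ and a uniform bound $c$ on the virtual Galois cohomological dimension of residue fields—the étale site of $X$ has cohomological dimension bounded by some $N=N(c,d)$, uniformly over all prime torsion coefficients. This is an essentially classical computation combining the standard bound for qcqs schemes of finite Krull dimension with the residue field assumption, with the usual care at the residue characteristics where the hypothesis is phrased via $\Omega^1_{k/k^p}$.

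Second, I would show that $K^{et}$ itself is Postnikov. By Theorem \ref{KetvsKSel}, the cofiber of $K^{et}\to K^{Sel}$ has sheafified homotopy concentrated in degrees $\leq -1$ and is thus trivially a Postnikov sheaf; since fibers of maps between Postnikov sheaves are Postnikov, the problem reduces to showing $K^{Sel}=L_1K\times_{L_1\operatorname{TC}}\operatorname{TC}$ is Postnikov, which in turn reduces (again by closure under finite limits) to each of $L_1K$, $L_1\operatorname{TC}$, and $\operatorname{TC}$ individually. Each of these is already known to be an honest étale sheaf of spectra by the descent results cited in the introduction, and the bound $N$ forces Postnikov convergence: the descent spectral sequence has only finitely many nonzero rows in any given total degree, so the map from each sheaf to the inverse limit of its Postnikov tower is an equivalence. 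The descent spectral sequence in the statement is then the associated spectral sequence of this Postnikov tower.

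Third, for the module statement, I would invoke the anticipated smashing principle encoded in \Cref{hypsmashingfinitecd}: under bounded étale cohomological dimension, hypersheafification of sheaves of spectra is a smashing localization, i.e., is given by tensoring with a fixed idempotent algebra $L$. Since $K^{et}$ has just been shown to be hypercomplete, the unit map $K^{et}\to L\otimes K^{et}$ is an equivalence, so the idempotent $L$ acts as the identity on $K^{et}$ and therefore on every $K^{et}$-module, forcing any such module to coincide with its own hypersheafification. The main obstacle I foresee is establishing this smashing property rigorously: it is a genuinely delicate topos-theoretic statement requiring one to show that hypercompletion on the étale site commutes with the relevant tensor products and colimits, and it is not purely formal from finite cohomological dimension—one must exploit the interaction of the cohomological bound with the compact generation of sheaves of spectra and with the truncation functors, and handle the integral (all-primes-simultaneously) case with some care.
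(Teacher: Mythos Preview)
Your proposal has a genuine gap in the second step. You claim that the cohomological dimension bound $N$ ``forces Postnikov convergence'' for each of $L_1K$, $L_1\operatorname{TC}$, and $\operatorname{TC}$, arguing that the descent spectral sequence has only finitely many nonzero columns. This inference is incorrect: finite cohomological dimension of a site does \emph{not} imply that every sheaf of spectra on it is Postnikov complete (equivalently, hypercomplete). The paper explicitly constructs a counterexample (\Cref{nonhypcompleteZp}): the constant sheaf $\underline{K(n)}$ on $\mathcal{T}_{\mathbb{Z}_p}$, a site of cohomological dimension $1$, is not hypercomplete for $n\geq 1$. Finite cohomological dimension does guarantee that hypercompleteness and Postnikov completeness coincide (\Cref{hypforfinitecd}), but it does not guarantee that an arbitrary sheaf satisfies either. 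The comparison map from a sheaf to the inverse limit of its Postnikov tower can fail to be an equivalence because the fiber may be acyclic yet nonzero.

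This is precisely the central difficulty the paper is organized around. Proving that $K^{Sel}$ is hypercomplete (\Cref{QLSelgeneral}) genuinely requires the norm residue isomorphism theorem: one reduces Nisnevich-locally to henselian local rings, uses rigidity (Gabber--Suslin and its extension from \cite{CMM}) to reduce to residue fields, and then invokes Beilinson--Lichtenbaum to see that, for fields, $K^{Sel}/p$ is hypercomplete---either because high pieces of the motivic filtration become truncated \'etale sheaves (characteristic $\neq p$), or because $\operatorname{TC}/p$ is truncated (characteristic $p$, via Geisser--Levine and Geisser--Hesselholt). None of this is captured by a cohomological-dimension bound alone. For $\operatorname{TC}$ the hypercompleteness argument (\Cref{TCissheaf}) is again specific: it uses \'etale base change for $\operatorname{THH}$ together with the fact that $\operatorname{TC}/p$ is supported on the characteristic-$p$ locus, where mod-$p$ \'etale cohomological dimension is $\leq 1$ and one can show closure of hypercomplete objects under the relevant colimits.

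You have also inverted the relative difficulty of the two ingredients. The smashing property of hypercompletion (\Cref{hypsmashingfinitecd}) is, in the paper's development, a comparatively formal consequence of the nilpotence criterion for hypercompleteness (\Cref{main:hypcriterion}) and the Tate--Thomason argument; it needs no $K$-theoretic input. The hard, non-formal step---the one requiring Voevodsky--Rost---is showing that $K^{et}$ (via $K^{Sel}$) lands in the hypercomplete subcategory in the first place.
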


\begin{remark}
The \'etale homotopy group sheaves $\pi_q^{et}(K/n)$ of $K$-theory with finite
coefficients can be explicitly described in simpler terms, see
\Cref{KSelHomotopy}.  On the other hand, rational algebraic $K$-theory is largely unknown.
\end{remark}

\Cref{KetPostnikov} addresses a subtlety in the theory of sheaves of spectra,
whose exploration is the main theme of this paper.  This is that one can have a
non-zero sheaf of spectra all of whose stalks vanish, or all of whose homotopy
group sheaves vanish.  In fact, answering a question of Jardine \cite[p.
197]{Ja10}, we show by example
(\Cref{nonhypcompleteZp}) that this can happen even on a site of cohomological dimension $1$, namely
the usual site of finite continuous $\mathbb{Z}_p$-sets.  Such sheaves of
spectra cannot be studied in terms of sheaves of abelian groups, and they
exhibit exotic behavior.  What \Cref{KetPostnikov} shows is that $K^{et}$, as
well as any sheaf it ``touches," is non-exotic (the technical term is
\emph{hypercomplete}, see  \cite{DHI04}, \cite{Ja87}, \cite[Sec. 6.5.2]{HTT}), and hence can be studied in terms of its homotopy group sheaves, or in terms of its stalks, at least in finite-dimensional situations.

In the $l$-adic context and for Bott-inverted (rather than \'etale) $K$-theory, \Cref{KetPostnikov}  goes back to Thomason \cite{Th85, TT90}, with slight additional
assumptions; see also  \cite{RO05, RO06}, which treat the removal of these
assumptions using the Beilinson-Lichtenbaum conjectures. In the $p$-adic case
and for $\mathrm{TC}$ instead, \'etale Postnikov descent is essentially due to
\cite{GH99}.  
Recent work \cite{ELSO} also  treats analogs of Thomason's results (i.e.,
Bott-inverted \'etale hyperdescent) for modules over algebraic
cobordism replacing algebraic $K$-theory, in the context of motivic stable
homotopy theory.

\begin{theorem}[\Cref{mainetaleKdesc}(3)]
\label{Ketcocont}
The functor $R\mapsto K^{et}(\operatorname{Spec}(R))$, from commutative rings\footnote{or connective $E_2$-rings} to spectra, commutes with filtered colimits.
\end{theorem}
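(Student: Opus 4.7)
The plan is to combine \Cref{KetvsKSel} with continuity properties of Selmer $K$-theory, then handle the negative-degree part separately.

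First, I would verify that Selmer $K$-theory $K^{Sel} = L_1 K \times_{L_1 \TC} \TC$ commutes with filtered colimits of connective $E_2$-rings. Each constituent ingredient does: $K$ commutes with filtered colimits by Thomason-Trobaugh (perfect complexes are preserved by filtered colimits of connective $E_2$-ring spectra); $\TC$ commutes with filtered colimits via its construction from $\mathrm{THH}$ and the cyclotomic structure; $L_1$ is smashing, hence preserves all colimits; and pullbacks commute with filtered colimits in spectra.

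By \Cref{KetvsKSel}, the map $K^{et}(X) \to K^{Sel}(X)$ is an isomorphism on $\pi_n$ for $n \geq -1$. Consequently $\pi_n K^{et}(\spec(-))$ commutes with filtered colimits in all degrees $\geq -1$.

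The remaining task, which I anticipate as the main obstacle, is the negative-degree part. My approach would exploit that $K$ is already a Nisnevich sheaf (Thomason-Trobaugh), so étale sheafification only needs to incorporate finite étale covers. On the small étale site of $\spec(R)$, one can then compute $K^{et}$ via Galois descent: for a finite Galois cover $R \to R'$ with group $G$, one forms $K(R')^{hG}$, and $K^{et}(\spec(R))$ is assembled from these over the filtered system of Galois covers. Since every finitely presented étale $R$-algebra descends to some $R_i$ when $R = \mathrm{colim}_i R_i$, and since $K$ is continuous, each building block commutes with filtered colimits --- provided homotopy fixed points under a finite group do. The latter holds degree-wise, via finite-stage Postnikov truncation of $BG$ for $G$ finite, since in each fixed homotopy degree the descent spectral sequence for $(-)^{hG}$ receives contributions only from finitely many cohomology groups, and these commute with filtered colimits. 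Combining these observations should give continuity of $K^{et}$ in every homotopy degree.
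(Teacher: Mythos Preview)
Your argument has a genuine gap in the treatment of negative degrees, and a smaller issue in the first step.

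First, the minor point: the claim that $\TC$ commutes with filtered colimits of connective rings is not formal from its construction out of $\mathrm{THH}$ and the cyclotomic structure --- $\TC$ is built as an inverse limit, so a priori there is no reason for it to commute with colimits. What is true (and what the paper uses) is that $\TC/p$ commutes with filtered colimits of connective $E_1$-rings; this is \cite[Theorem~G]{CMM} and is explicitly flagged in the introduction as ``neither formal nor obvious''. Together with the arithmetic square this suffices for $K^{Sel}$, but your justification is inadequate.

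The serious error is your claim that homotopy fixed points $(-)^{hG}$ for a finite group $G$ commute with filtered colimits ``degree-wise, via finite-stage Postnikov truncation of $BG$''. For a nontrivial finite group, $BG$ is \emph{not} finite-dimensional: $H^*(G;-)$ is nonzero in infinitely many degrees (e.g.\ $H^*(C_p;\mathbb{F}_p)$ is polynomial or polynomial-tensor-exterior). In the descent spectral sequence $H^s(G;\pi_t X)\Rightarrow \pi_{t-s}(X^{hG})$, a fixed total degree receives contributions from infinitely many $(s,t)$, so there is no finitary reason for commutation with filtered colimits. Indeed, \Cref{nonhypcompleteZp} in the paper exhibits exactly this failure. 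Your ``assembly from Galois covers'' therefore does not give continuity of $K^{et}$.

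The paper's fix is to exploit the special structure of the situation rather than argue for generic presheaves. One works with the fiber $\mathcal{F}=\mathrm{fib}(K\to K^{Sel})$, which is coconnective on strictly henselian stalks (\Cref{stalksofKSel}). On a \emph{compact} connective $E_2$-ring $R$, the space $\spec(\pi_0 R)$ is of finite type over $\mathbb{Z}$, hence has bounded \'etale cohomological dimension; combined with the coconnectivity of the stalks, this forces $\mathcal{F}|_{\spec(R)_{et}}$ and its sheafification to be \emph{truncated}. For truncated presheaves, totalizations genuinely are finitary degree-wise, so sheafification on compact objects interacts well with filtered colimits. One then left Kan extends $\mathcal{F}^{et}$ from compact objects to all connective $E_2$-rings and checks that the result is still an \'etale sheaf (again using truncatedness to commute totalizations with the filtered colimit). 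This is the content of \Cref{generalaxiomaticarg}. Since $K^{Sel}$ already commutes with filtered colimits, so does $K^{et}=K^{Sel}+\mathcal{F}^{et}$ (in the sense of the fiber sequence). The truncation step is the idea your proposal is missing.
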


\Cref{Ketcocont} shows (in particular) that the stalks of $K^{et}$ are
indeed accessible, even without any finite dimensionality restrictions  on $X$:
they are simply given by the $K$-theory of strictly henselian local rings (for
which see \Cref{stalksofKSel}).  We emphasize that this is not at all a formal statement, in
this context of sheaves of spectra.  Indeed, the sheaf condition involves
infinite limits, namely homotopy fixed points for finite group actions, so even
though $K$-theory commutes with filtered colimits this property is a priori
destroyed by \'etale sheafification.  The analogous commutation with filtered
colimits for $K^{Sel}$ follows from that of $\operatorname{TC}/p$; this is also
neither formal nor obvious, but it was proved in \cite{CMM}.  Another use of
\Cref{Ketcocont} is in reducing the study of $K^{et}$ to the case of finite type
$\mathbb{Z}$-spaces, which are finite dimensional and therefore fall into the
realm of results like \Cref{KetPostnikov}.

\subsection{Technical ingredients}
We have already indicated that the norm residue isomorphism theorem, plus
Gabber-Suslin rigidity and its generalization \cite{CMM}, are key to proving the
above theorems.  The general strategy is the one pioneered by Thomason
\cite[Sec. 11]{TT90}:
use Nisnevich descent to reduce to the henselian local case, then use rigidity
to reduce to the case of fields, then use the norm residue isomorphism theorem
(and its characteristic $p$ analog \cite{GL00}) to handle fields.  (This last step is much more straightforward than in Thomason's days, where only the degree $\leq 2$ part of the norm residue isomorphism theorem was known, the Merkurjev-Suslin theorem.)

But in addition, there is another set of technical theorems lying in the
background of our arguments.  These have to do with the question of
hypercompleteness for sheaves of spectra, which was touched on in discussing
\Cref{KetPostnikov}.  In fact, much of our work in this article centers around
the question of to what extent the hypercompleteness property is automatic in
the setting of \'etale sheaves of spectra.  This is important, because
hypercomplete sheaves (also called \emph{hypersheaves}) are much easier to
access and study, both for computational and theoretical purposes.  In fact, the
proofs of the above four theorems often involve switching back and forth between the hypercomplete and non-hypercomplete settings.  (The non-hypercomplete setting is still useful because it is closer to being finitary and has better permanence properties.)

Our theoretical work on hypercompleteness holds under finite-dimensionality hypotheses.  The upshot is that hyperdescent is automatic for those sheaves of spectra which arise in practice in the study of algebraic $K$-theory, even though there are counterexamples showing that it is not automatic in general, even in dimension 1.  But let us describe the precise results.

The preliminary observation, again due to Thomason \cite[Sec. 11]{TT90}, is that one can reduce to considering separately the Nisnevich setting and the \'etale setting over a field:

\begin{theorem}[\Cref{hypetalehypNis}]
Let $X$ be a qcqs algebraic space, and let $\mathcal{F}$ be a presheaf of
spectra on $et_X$, the category of qcqs algebraic spaces \'etale
over $X$.  Suppose there is a uniform bound on the \'etale cohomological
dimension of $U$ for all $U\rightarrow X$ in $et_X$ (cf.\ 
\Cref{etalecohdim}).

Then $\mathcal{F}$ is an \'etale hypersheaf if and only if the following two conditions hold:
\begin{enumerate}
\item $\mathcal{F}$ is a Nisnevich hypersheaf;
\item For all $x\in X$, the presheaf $x^\ast\mathcal{F}$ on $et_x$, formed by the stalks $\mathcal{F}_y$ at all finite \'etale extensions $y\rightarrow x$, is an \'etale hypersheaf.
\end{enumerate}
\end{theorem}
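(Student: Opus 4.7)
The forward direction is essentially formal. Since the Nisnevich topology is coarser than the \'etale topology, any \'etale hypersheaf is a Nisnevich hypersheaf, giving (1). For (2), the small \'etale site of $\spec k(x)$ is a filtered colimit of small \'etale sites of \'etale neighborhoods of $x$, and this passage to the point is compatible with the hypersheaf condition, so $x^\ast\sF$ inherits the \'etale hypersheaf property from $\sF$.

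For the backward direction, assume (1) and (2), and compare $\sF$ with its \'etale hypersheafification $\sF' := L^{\mathrm{hyp}}_{et}\sF$. Since $\sF'$ is in particular a Nisnevich hypersheaf, the comparison map $\sF \to \sF'$ is a map of Nisnevich hypersheaves, so it is an equivalence if and only if it is so at each Nisnevich stalk $\spec A$, where $A := \mathcal{O}^h_{X,x}$ for $x \in X$. The Nisnevich hypersheaf property gives $\sF(\spec A) \simeq \sF_x$ directly; the task then reduces to computing $\sF'(\spec A)$ and matching it with $\sF_x$ as well.

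For this local step, Hensel's lemma identifies the small finite \'etale site of $\spec A$ with that of $\spec k(x)$, and every \'etale cover of $\spec A$ admits a finite \'etale refinement. Thus the restriction of $\sF'$ to $\spec A$ is controlled by its restriction to the finite \'etale site, which under the Hensel equivalence corresponds to an \'etale hypersheaf on $\spec k(x)$. Unraveling the definitions, this hypersheaf must be the \'etale hypersheafification of $x^\ast\sF$; by hypothesis~(2), $x^\ast\sF$ is already an \'etale hypersheaf, so its value on $\spec k(x)$ is just $\sF_x$, yielding $\sF'(\spec A) \simeq \sF_x$, as desired.

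\textbf{Main obstacle.} The principal technical challenge is upgrading the cover-level refinement (every \'etale cover of a henselian local has a finite \'etale refinement) to a hypercover-level identification of hypersheafifications. This is precisely where the uniform bound on \'etale cohomological dimension must enter: under this hypothesis, an \'etale hypersheaf is controlled by the Postnikov-convergent tower of its truncations, each of which can be analyzed by bounded-complexity hypercovers, allowing the local reduction to finite \'etale descent to be pushed through. Without such a dimension bound, the exotic non-hypercomplete phenomena exhibited in \Cref{nonhypcompleteZp} would obstruct the reduction, since stalk-level or pointwise information no longer determines the hypersheafification.
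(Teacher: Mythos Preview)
Your overall strategy matches the paper's: reduce to Nisnevich stalks and use the henselian identification of finite \'etale sites. But there is a genuine gap in the forward direction, and a closely related imprecision in the backward one.

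\textbf{The forward direction is not formal.} You assert that ``the passage to the point is compatible with the hypersheaf condition,'' but this is precisely the nontrivial step. The functor $x^\ast$ is a filtered colimit over \'etale neighborhoods, while the hypersheaf condition is a totalization; these do not commute in general. The paper's mechanism is: under the uniform cohomological dimension bound $N$, \Cref{nilpandhyp} says that for an \'etale hypersheaf $\sF$ every \v{C}ech diagram $\sF(B_1)\to\sF(B_2^{\otimes\bullet})$ (for $B_1\to B_2$ faithfully flat \'etale between \'etale $A$-algebras) is $N$-rapidly converging. Writing a faithfully flat \'etale map of ind-\'etale $A$-algebras as a filtered colimit of such maps at finite level, \Cref{commutetotandcolimit} then lets one pass the totalization through the colimit. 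Applied to finite \'etale covers of the henselization, this gives the \'etale hypersheaf property for $x^\ast\sF$ via \Cref{Postcompletecrit2}. Your ``Main obstacle'' paragraph gestures at Postnikov towers, but the actual tool is this uniform nilpotence/rapid-convergence bound, and it is needed already for the forward implication, not only the converse.

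\textbf{In the backward direction}, your claim that the restriction of $\sF'=\sF^h$ to $\spec A$ ``must be the \'etale hypersheafification of $x^\ast\sF$'' is unjustified: hypersheafification does not obviously commute with the ind-\'etale pullback. The paper avoids this by instead applying the \emph{already-proved forward direction} to $\sF^h$: this shows $x^\ast\sF^h$ is an \'etale hypersheaf on $\mathcal{T}_x$. Now $x^\ast\sF$ is also a hypersheaf there by hypothesis~(2), and the map $x^\ast\sF\to x^\ast\sF^h$ is an equivalence on the (unique) strictly henselian stalk, hence an equivalence of hypersheaves. Evaluating at the terminal object gives $\sF(A)\simeq\sF^h(A)$.
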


It is well-known that the Nisnevich setting behaves similarly to the Zariski setting, so it is natural to consider them together.  Our main result in the Zariski/Nisnevich setting shows that under finite-dimensionality assumptions, hyperdescent is automatic:

\begin{theorem}[\Cref{spectralspacehtpy} and \Cref{Nisfinitehtpy}]
\label{hyperdescent}
Let $X$ be a qcqs algebraic space of finite Krull dimension.  Then on either the Zariski site $X_{Zar}$ or the Nisnevich site $X_{Nis}$, every sheaf of spectra (or even spaces) is a hypersheaf.
\end{theorem}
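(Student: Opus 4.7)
The plan is to reduce the theorem to a bound on the homotopy dimension of the relevant $\infty$-topoi, and then invoke Lurie's fundamental result \cite[Cor. 7.2.1.12]{HTT}: an $\infty$-topos of finite homotopy dimension is automatically hypercomplete, so every sheaf of spaces on it is a hypersheaf. This propagates to sheaves of spectra, since hypercompleteness of a sheaf of spectra is detected through its Postnikov tower, whose truncations reduce the question to sheaves of spaces. It thus suffices to show that, for $X$ a qcqs algebraic space of Krull dimension $d$, both $\mathrm{Sh}(X_{Zar})$ and $\mathrm{Sh}(X_{Nis})$ have homotopy dimension $\leq d$.

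For the Zariski site, after passing to the underlying topological space (which is spectral), the task becomes to bound the homotopy dimension of sheaves on a spectral space of Krull dimension $\leq d$ by $d$ itself. I would argue by induction on $d$. The base case $d = 0$ corresponds to a profinite set, for which the statement is essentially formal: its $\infty$-topos is a limit of copies of $\mathcal{S}$. For the inductive step, given an effective epimorphism $U \twoheadrightarrow 1$, one must exhibit a section of $\tau_{\leq d}U$. The strategy is to choose a dense quasi-compact open $V \subset X$ whose closed complement $Z$ has Krull dimension $\leq d-1$, obtain a section on $Z$ after $(d-1)$-truncation from the inductive hypothesis, and then use the recollement of sheaves along $(V,Z)$ together with a further reduction on the generic open piece to assemble the required section after $d$-truncation.

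For the Nisnevich site, a parallel inductive argument applies, with Nisnevich distinguished squares replacing the Zariski open/closed recollement. The new local input is that Nisnevich covers of a strictly henselian local ring split, which handles the base case since the Nisnevich points are precisely the strict henselizations. The inductive step again exploits a filtration of $X$ by closed subspaces of strictly decreasing dimension: pick a generic point of top dimension, form the Nisnevich distinguished square pairing its henselization with the open complement of its closure, and combine the inductive hypothesis on the closed piece with Nisnevich descent to glue the global section.

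The main obstacle in both arguments is carrying out the inductions in the generality of qcqs (rather than Noetherian) spaces: one must ensure that the needed quasi-compact opens, constructible closed subspaces, and Nisnevich squares exist and can be manipulated coherently throughout the recursion. Here the spectrality of the underlying topological space (as a cofiltered limit of finite sober spaces, hence of finite posets) and the systematic use of finite refinements of quasi-compact covers are the key technical ingredients, and some extra care is needed in the Nisnevich case to reduce from algebraic spaces to schemes via an étale atlas, which interweaves the two arguments slightly.
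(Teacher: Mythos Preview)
Your high-level reduction is correct and matches the paper: both arguments go by showing that $\mathrm{Sh}(X_{Zar})$ and $\mathrm{Sh}(X_{Nis})$ have homotopy dimension $\leq d$ and then invoking \cite[Cor.~7.2.1.12]{HTT}. The difference is in how one establishes that bound, and this is exactly where your sketch has a genuine gap.

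Your inductive scheme (``choose a dense quasi-compact open $V$ with closed complement $Z$ of dimension $\leq d-1$'', then recollement) is essentially the classical noetherian argument. You correctly flag that the non-noetherian case is the obstacle, but you do not say how to surmount it, and the steps you outline do not go through as stated. In a general spectral space there need not be a single quasi-compact open whose complement has strictly smaller Krull dimension; likewise ``pick a generic point of top dimension'' is problematic when there are infinitely many. The paper's main device is precisely to avoid this direct induction: one writes the spectral space (resp.\ the Nisnevich site) as a filtered colimit of \emph{finite} approximations --- finite $T_0$-spaces of dimension $\leq d$ in the Zariski case, and auxiliary ``$p$-Nisnevich'' sites attached to spectral stratifications in the Nisnevich case --- proves the homotopy-dimension bound for those by an elementary induction (your recollement argument, which \emph{does} work in the finite case), and then passes the bound through the filtered colimit. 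The nontrivial point enabling this passage is that these sites are \emph{excisive} (the sheaf condition is finitary, so sheaves are closed under filtered colimits of presheaves), which lets one show that a $d$-connective sheaf on the colimit site comes from a $d$-connective compact sheaf on some finite stage. The paper also records a second, pro-object/Zorn's-lemma argument that is closer in spirit to an induction, but it too is not the open/closed recollement you sketch.

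Two smaller corrections: the points of the Nisnevich topos are henselizations, not strict henselizations (the latter are the \'etale points), and correspondingly Nisnevich covers split over henselian local rings, not just strictly henselian ones. Also, algebraic spaces are Nisnevich-locally affine, so the reduction to schemes is straightforward and does not require interleaving with the Zariski argument.
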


In fact, we show that $X_{Zar}$ and $X_{Nis}$ have \emph{homotopy dimension
$\leq d$} in the sense of \cite[Sec. 7.2.1]{HTT}, where $d$ is the Krull dimension of $X$.  This
statement was known in the noetherian case (cf.\ \cite[Theorem 3.7.7.1]{SAG},
\cite{Nis89}), and for the weaker
\emph{cohomological dimension } was known in general (at least in the Zariski
setting, \cite{Sch92}).  \Cref{hyperdescent} is a version of the \emph{Brown-Gersten property}
\cite{BG73}; in those terms, what we have done is removed the noetherian hypotheses from the familiar Brown-Gersten-style statements.

The next setting to consider is the \'etale site of a field, or more generally
the site of finite continuous $G$-sets for a profinite group $G$ (with the usual
topology, where a cover is a jointly surjective collection).  An important idea,
philosophically present in Thomason's work, is that a sheaf of spectra is a
hypersheaf if and only if it satisfies descent in a ``uniform way".  We make
this into a theorem using the notion of exponents of nilpotence from \cite[Def.
6.36]{MNN17}:

\begin{theorem}[\Cref{Gcompletecrit}]
Let $G$ be a profinite group of finite cohomological
dimension $d$, and let $\mathcal{F}$ be a sheaf of  spectra on the site of finite continuous $G$-sets.

Then $\mathcal{F}$ is hypercomplete if and only if for every open normal
subgroup $N\subset G$, the spectrum with $G/N$-action $\mathcal{F}(G/N)$ is
nilpotent of exponent $\leq d+1$. (Roughly, this means $\mathcal{F}(G/N)$ can be
built from free $G/N$-spectra in $\leq d$ steps.)
\end{theorem}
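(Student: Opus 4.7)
The plan is to translate both hypercompleteness and the nilpotence condition into equivalent statements about the homotopy fixed point spectral sequence associated to each cover $G/N \to *$, then match them.

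First I would set up the dictionary: a sheaf $\mathcal{F}$ of spectra on the site of finite continuous $G$-sets is equivalent to a compatible tower of $G/N$-equivariant spectra $\mathcal{F}(G/N)$ for open normal $N \subset G$, where the $G/N$-action arises from automorphisms of $G/N$ in the site and the sheaf condition gives $\mathcal{F}(G/N) \simeq \mathcal{F}(G/N')^{h(N/N')}$ for $N' \subset N$ open normal. The \v Cech nerve of $G/N \to *$ realizes $\mathcal{F}(*) \simeq \mathcal{F}(G/N)^{h(G/N)}$ as the totalization of the cobar for the $G/N$-action. By the characterization of nilpotence in \cite[Def.~6.36 and surrounding discussion]{MNN17}, the $G/N$-spectrum $\mathcal{F}(G/N)$ is nilpotent of exponent $\leq d+1$ with respect to the ideal of induced $G/N$-spectra if and only if its HFPSS $H^s(G/N;\pi_t\mathcal{F}(G/N))\Rightarrow \pi_{t-s}\mathcal{F}(G/N)^{h(G/N)}$ admits a horizontal vanishing line at filtration $s=d+1$ on some finite page (and hence on $E_\infty$), in a form stable under passage to modules. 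The theorem therefore reduces to: $\mathcal{F}$ is hypercomplete if and only if, for every open normal $N$, this HFPSS has an $E_\infty$-vanishing line at $s=d+1$.

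For the backward direction (uniform nilpotence $\Rightarrow$ hypercompleteness), I would exploit the standard fact that bounded-below sheaves of spectra on a site of finite cohomological dimension are automatically hypercomplete, so that hypercompleteness of $\mathcal{F}$ reduces to recovering $\mathcal{F}$ at each $G/N$ from the inverse limit of its truncations $\tau_{\geq -n}\mathcal{F}$. The uniform vanishing line supplied by nilpotence bounds the $\lim^1$ obstructions in the Milnor sequence, so this limit agrees with $\mathcal{F}(G/N)$. Conversely, if $\mathcal{F}$ is hypercomplete, then $\mathcal{F}(G/N) \simeq \lim_n (\tau_{\geq -n}\mathcal{F})(G/N)$; each bounded-below truncation is hypercomplete on a site of cohomological dimension $\leq d$, so its descent spectral sequence has an $E_\infty$-vanishing line at $s=d+1$, and this vanishing line is inherited by the limit, yielding nilpotence of $\mathcal{F}(G/N)$ of exponent $\leq d+1$.

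The main obstacle lies in the forward direction. The $E_2$-page $H^*(G/N;\pi_*\mathcal{F}(G/N))$ for the finite quotient $G/N$ will in general have infinitely many nonzero rows, since finite groups have infinite cohomological dimension for generic coefficients, so the required $E_\infty$ vanishing line at $s=d+1$ must emerge from nontrivial differentials. Controlling these differentials relies on the structural fact that $\mathcal{F}(G/N)$ is built as an iterated homotopy fixed point $\mathcal{F}(G/N')^{h(N/N')}$ for smaller $N' \subset N$, which feeds in the profinite cohomological dimension bound for $G$ via the tower of restrictions; hypercompleteness is precisely what makes this propagation work as a uniform $E_\infty$ vanishing line at finite level, rather than merely a statement at the level of the pro-system.
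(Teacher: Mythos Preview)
Your proposal correctly isolates the forward direction as the crux, but neither direction is actually in place.

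\textbf{Backward direction.} The claim that ``bounded-below sheaves of spectra on a site of finite cohomological dimension are automatically hypercomplete'' is false; in fact the paper's own \Cref{nonhypcompleteZp} (the constant sheaf $\underline{K(n)}$ on $\mathcal{T}_{\mathbb{Z}_p}$) is a connective, non-hypercomplete counterexample on a site of cohomological dimension $1$. It is \emph{bounded-above} sheaves that are automatically hypercomplete. The paper's argument for this direction (\Cref{Postcompletecrit}) is more direct: the Postnikov completion $\widetilde{\mathcal F}(\ast)$ is given by the explicit formula $\mathrm{Tot}\big(\mathcal F(G)\rightrightarrows\mathcal F(G\times G)\cdots\big)$, and this cosimplicial object is the filtered colimit over open normal $N$ of the cosimplicial objects computing $\mathcal F(G/N)^{h(G/N)}\simeq\mathcal F(\ast)$. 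Uniform $d$-nilpotence makes these uniformly weakly rapidly converging, so \Cref{commutetotandcolimit} lets you swap $\mathrm{Tot}$ with the filtered colimit, giving $\widetilde{\mathcal F}(\ast)\simeq\mathcal F(\ast)$.

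\textbf{Forward direction.} You identify the obstruction correctly: for a finite quotient $G/N$, the $E_2$-page $H^s(G/N;\pi_t\mathcal F(G/N))$ typically has no horizontal vanishing line, so the required $E_\infty$ vanishing line must be produced by differentials. But your plan---to propagate the profinite bound through the tower of $\mathcal F(G/N)=\mathcal F(G/N')^{h(N/N')}$---does not contain a mechanism for doing this. Writing $\mathcal F(G/N)$ as a limit of fixed points for smaller quotients does not by itself produce differentials or a nilpotence bound at level $N$; one is still computing fixed points for finite groups at every stage.

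The paper's mechanism is genuinely different and multiplicative. One does not analyze the HFPSS of $\mathcal F(G/N)$ directly. Instead, one proves $d$-nilpotence first for the hypercompleted \emph{unit} sheaf $\mathcal A=1^h$ (an algebra), via the Tate--Thomason argument (\Cref{nilpotenceofetalePostnikov}): the bottom Postnikov layer $\mathcal A_{[0,0]}(G/N)$ is an $H\mathbb Z$-algebra in $\mathrm{Fun}(B(G/N),\mathrm{Sp})$ whose $H$-fixed points lie in $\mathrm{Sp}_{\geq -d}$ for every $H\leq G/N$. The key input (\Cref{Gnilpcriterion}) is an Euler-class argument: over $H\mathbb Z$ the representation sphere $S^{n\widetilde{\rho}_{G/N}}$ becomes a trivial shift, so the iterated Euler class is detected by a class in $\pi_{-2n(|G/N|-1)}(R^{h(G/N)})$, which vanishes for $n\gg 0$ by the coconnectivity bound; this forces nilpotence for the family of proper subgroups, and induction on $|G/N|$ gives full nilpotence. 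Propagating up the Postnikov tower and using Postnikov completeness yields $\mathcal A(G/N)^{t(G/N)}=0$; since $\mathcal A(G/N)\in\mathrm{Sp}_{\geq -d}$, \Cref{nilpexpbounded} then gives the sharp $d$-nilpotence. Finally, every hypercomplete $\mathcal F$ is a module over $\mathcal A$, so $\mathcal F(G/N)$ inherits $d$-nilpotence from $\mathcal A(G/N)$ (\Cref{basicnilplemma}(3)). The algebra structure and the $H\mathbb Z$-linearity of the heart are what make the profinite cohomological-dimension bound bite at each finite level; this is the missing idea in your plan.
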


This strengthens the theorem of Tate-Thomason (cf.\ \cite[Annex 1, Ch.
1]{SerreCG} and \cite[Remark 2.28]{Th85}) on vanishing of Tate constructions,
which says that if $\mathcal{F}$ is hypercomplete then the Tate construction
$\mathcal{F}(G/N)^{t(G/N)}$ vanishes for all $N$.  The converse of the
Tate-Thomason theorem itself fails, as we show by example
(\Cref{nonhypcompleteZp}).

Putting these three theorems together, we get a good handle on what it means for
a sheaf to be a hypersheaf on $X_{et}$, when $X$ is a qcqs algebraic space
satisfying reasonable finite dimensionality hypotheses.  A surprising corollary
is that \emph{hypercompletion is smashing} (in the sense of \cite{Rav84}).  To explain
what this means, recall that hypersheaves of spectra are a Bousfield localization of sheaves of spectra, via a localization functor called hypercompletion.  Then:

\begin{theorem}[\Cref{hypsmashingfinitecd}] \label{hypissmashing}
Let $X$ be a qcqs algebraic space of finite Krull dimension, and suppose that there is a uniform bound on the virtual Galois cohomological dimension of every residue field of $X$.
Then the hypercompletion functor on \'etale sheaves of spectra over $X$ is given by tensoring with some fixed \'etale sheaf $S^h$. (Necessarily, $S^h$ is the hypercompletion of the constant sheaf on the sphere spectrum.)
\end{theorem}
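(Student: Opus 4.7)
The plan is to establish that hypercompletion is smashing by proving that hypercomplete \'etale sheaves of spectra on $X$ are closed under arbitrary colimits in $\sh(X_{et},\sp)$. In a presentable symmetric monoidal stable $\infty$-category, such closure automatically forces the Bousfield localization to take the form $\mathcal{F}\mapsto \mathcal{F}\otimes L(\mathbf{1})$, and $L(\mathbf{1})$ is by definition the hypercompletion $S^h$ of the constant sheaf on the sphere spectrum; so the entire problem reduces to verifying colimit-closure.

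To prove closure, I would first invoke \Cref{hypetalehypNis}, whose hypotheses are exactly our finite Krull dimension and bounded \'etale cohomological dimension assumptions, to decompose hypercompleteness into (a) being a Nisnevich hypersheaf and (b) having each stalk $x^{\ast}\mathcal{F}$ a hypersheaf on the Galois site $et_x$. Condition (a) is automatic for any \'etale sheaf on $X$ by \Cref{hyperdescent}, and so is trivially preserved under colimits; the nontrivial content is the stability of (b) under colimits of \'etale sheaves.

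For each $x\in X$ the functor $x^{\ast}$ is a left adjoint between presentable sheaf categories and hence commutes with colimits, so it suffices to show that on each Galois site $et_x$, the hypercomplete sheaves of spectra are closed under colimits. This is where the uniform bound $D$ on virtual Galois cohomological dimension enters: by \Cref{Gcompletecrit}, hypercompleteness of a sheaf on $et_x$ is equivalent to each value $\mathcal{F}(G_x/N)$ being nilpotent of exponent $\leq d_x+1\leq D+1$ as a $G_x/N$-spectrum. Since this bound is independent of the sheaf and of $x$, and since nilpotence of a fixed exponent in the sense of \cite{MNN17} can be characterized by the vanishing of a fixed tensor power of a unit-fiber and is therefore preserved by arbitrary colimits, the hypercomplete sheaves on $et_x$ form a colimit-closed subcategory. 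Running this across all points of $X$ yields the desired closure of hypersheaves in $\sh(X_{et},\sp)$.

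I expect the main obstacle to be verifying that the pointwise-uniform nilpotence characterization genuinely assembles globally: one must ensure that a colimit of \'etale sheaves whose stalks each satisfy the nilpotence bound $D+1$ has stalks which again satisfy that same bound, with no surprise contributions from the sheafification of the presheaf colimit. This hinges on the $x$-independence of the exponent bound, guaranteed by the hypothesis on the residue fields, combined with the exactness of $x^{\ast}$ as a left adjoint. Once closure is established, general facts about smashing localizations of presentable symmetric monoidal stable $\infty$-categories identify $L(\mathcal{F})$ with $\mathcal{F}\otimes S^h$ and complete the proof.
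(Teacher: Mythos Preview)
Your overall strategy—reduce to the pointwise nilpotence criterion via \Cref{hypetalehypNis} and \Cref{Gcompletecrit}—is the same as the paper's, but there is a genuine logical gap in how you pass from that criterion to ``smashing.''

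You assert that closure of hypercomplete sheaves under colimits alone forces the localization to be smashing. This is not what the paper's \Cref{whenishypsmashing} says: condition (1) there requires closure under colimits \emph{and} under tensoring with arbitrary objects, and the paper treats these as genuinely separate checks (see the proof of \Cref{hypissmashingprofinitegroup2}, which establishes colimit-closure via \Cref{critforhypcolimits} and then says ``all that remains is to check'' tensor-closure). In a presentable symmetric monoidal stable $\infty$-category whose unit does not generate—such as $\sh(X_{et},\sp)$, where the generators are the representables $\Sigma^\infty_+ h_U$ rather than the unit—colimit-closure of the local objects does not obviously force $X\otimes 1^h$ to be local for every $X$. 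Your supporting claim that $d$-nilpotence is ``characterized by the vanishing of a fixed tensor power of a unit-fiber'' is also not correct: for $X$ induced (hence $0$-nilpotent) the object $\mathrm{cofib}(G_+\to S^0)\otimes X$ is nonzero.

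The fix is immediate and is exactly what the paper does: the relevant fact about $d$-nilpotent objects in $\fun(BG,\sp)$ is not colimit-closure but that they form a \emph{tensor ideal} (\Cref{general:dnilpotentobj}, part 2). Combined with part (3) of \Cref{main:hypcriterion}, this gives directly that tensoring a hypercomplete sheaf with anything stays hypercomplete, which is condition (2) of \Cref{whenishypsmashing}. A second, smaller gap: the hypothesis is on \emph{virtual} cohomological dimension, while \Cref{hypetalehypNis} and \Cref{Gcompletecrit} need actual finite cohomological dimension; the paper bridges this by the local-global principle (\Cref{locglobhyp}), passing to an \'etale cover on which the cohomological dimension is honestly finite.
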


This means that the collection of hypersheaves is closed under all colimits and under tensoring with any sheaf.  Adding to this the obvious fact that hypersheaves are closed under all limits, we see that the property of being a hypersheaf has very strong permanence properties.  An interesting example is the following:
\begin{corollary}
Let $X$ be as in \Cref{hypissmashing}.  Then any \'etale sheaf on $X$ which is a module over an \'etale hypersheaf is itself an \'etale hypersheaf.
In particular, any sheaf of $H\mathbb{Z}$-modules is automatically a hypersheaf.
\end{corollary}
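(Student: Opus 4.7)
The plan is to deduce both assertions formally from the smashing property of hypercompletion established in \Cref{hypissmashing}. I would write $L$ for the hypercompletion functor on \'etale sheaves of spectra on $X$, so that $L(\mathcal{F})\simeq\mathcal{F}\otimes S^h$ for every sheaf $\mathcal{F}$, where $S^h = L(\mathbf{1})$ and $\mathbf{1}$ is the constant sheaf of spheres.

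First I would observe that, as is general for smashing localizations, the subcategory of hypersheaves is closed under tensoring with \emph{arbitrary} \'etale sheaves. Indeed, if $\mathcal{A}$ is hypercomplete and $\mathcal{G}$ is any sheaf, then
\[
L(\mathcal{A}\otimes\mathcal{G}) \simeq \mathcal{A}\otimes\mathcal{G}\otimes S^h \simeq (\mathcal{A}\otimes S^h)\otimes\mathcal{G} \simeq L(\mathcal{A})\otimes\mathcal{G} \simeq \mathcal{A}\otimes\mathcal{G},
\]
so $\mathcal{A}\otimes\mathcal{G}$ is already hypercomplete.

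Next, suppose $\mathcal{A}$ is a (unital) hypercomplete algebra object and $\mathcal{F}$ is an $\mathcal{A}$-module. The composite
\[
\mathcal{F} \simeq \mathbf{1}\otimes\mathcal{F} \xrightarrow{\eta\otimes\mathrm{id}} \mathcal{A}\otimes\mathcal{F} \xrightarrow{\mu} \mathcal{F},
\]
of the unit $\eta$ of $\mathcal{A}$ with the action $\mu$, is the identity by the module axioms, so $\mathcal{F}$ is a retract of $\mathcal{A}\otimes\mathcal{F}$. The latter is hypercomplete by the previous step, and hypersheaves are closed under retracts (being the essential image of a reflective localization); hence $\mathcal{F}$ is a hypersheaf.

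For the particular case of $H\mathbb{Z}$-modules, it then suffices to show that the constant \'etale sheaf $H\mathbb{Z}$ is itself a hypersheaf, after which the first part applies. This is automatic because $H\mathbb{Z}$ is $0$-truncated, and any truncated sheaf of spectra in an $\infty$-topos is hypercomplete: it is local with respect to every $\infty$-connective morphism, since such morphisms have $\infty$-connective (in particular, sufficiently connective) fibers. Since the whole argument is formal once \Cref{hypissmashing} is in hand, there is no real obstacle beyond that substantial input.
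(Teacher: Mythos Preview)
Your argument is correct and matches the paper's approach: the module statement is exactly condition~5 of \Cref{whenishypsmashing}, whose proof (via the retract of $\mathcal{A}\otimes\mathcal{F}$ or equivalently the induced $1^h$-module structure) is the same as what you wrote, and the $H\mathbb{Z}$ case follows since the constant sheaf $H\mathbb{Z}$ is truncated, hence hypercomplete by \Cref{hypvsPostcompl}.
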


In fact, there is a refinement of \Cref{hypissmashing} in the case where $X$ admits a uniform bound on the honest (not virtual) cohomological dimensions of the residue fields, namely \'etale hypercompletion is smashing even just in the setting of Nisnevich sheaves with finite \'etale transfers (Corollary \ref{transfers}).  Note that all manner of motivic invariants give Nisnevich sheaves with finite \'etale transfers.
Indeed, in the context of motivic homotopy theory and for modules over algebraic
cobordism, an analogous result appears as \cite[Theorem 6.27]{ELSO}. 

It is generally much easier to prove 
that an object is a sheaf than a hypersheaf (or Postnikov sheaf). 
For instance, in \cite{CMNN} it was shown that telescopically localized localizing
invariants all satisfy \'etale descent on  
qcqs spectral algebraic spaces. The argument was entirely homotopy-theoretic,
relying on the May nilpotence conjecture \cite{MNN15}. By contrast, proving hyperdescent
seems to require significantly more, and as far as we know requires some
version of the norm residue isomorphism theorem. (Note that the work of Thomason
\cite{Th84}  relies on the Merkurjev-Suslin theorem.) 
Using the above ingredients, here we prove  the following result. 
For a qcqs spectral algebraic space $Z$, we write $\perf(Z)$ for the monoidal
stable $\infty$-category of perfect modules on $Z$. 

\begin{theorem}[\Cref{mainhypLn}] 
Let $X$ be a qcqs spectral algebraic space of finite Krull dimension and with a
global bound for the virtual mod $p$ cohomological dimensions of its residue fields.
Let $\mathcal{A}$ be a
localizing invariant of $\perf(X)$-linear $\infty$-categories
which takes values in $L_n^f$-local spectra. Then the construction $Y\to X
\mapsto \mathcal{A}( \perf(Y))$ defines an \'etale hypersheaf on $X$. 
\end{theorem}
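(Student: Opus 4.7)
The plan is to apply the reduction theorem \Cref{hypetalehypNis}, which factors étale hyperdescent into Nisnevich hyperdescent together with hypercompleteness of each residue-field stalk viewed as a sheaf on the site of finite continuous $G_x$-sets, where $G_x$ is the absolute Galois group of $k(x)$. I would verify these two properties in turn for the presheaf $\mathcal{F} \colon Y \mapsto \mathcal{A}(\perf(Y))$ on $et_X$.

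First, $\mathcal{F}$ is already an étale sheaf: Nisnevich descent is built into any localizing invariant via Thomason--Trobaugh-style localization sequences, and étale descent for $L_n^f$-local localizing invariants is the main theorem of \cite{CMNN} (in the extension from $E_\infty$ to $E_2$-structure sheaves carried out earlier in this paper). The Nisnevich hyperdescent is then automatic under the finite Krull dimension hypothesis, by \Cref{hyperdescent}.

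The substantive step is hyperdescent at each residue field. For fixed $x \in X$, the pullback $x^*\mathcal{F}$ assigns to the transitive $G_x$-set $G_x/N$ the spectrum $\mathcal{A}(\perf(L))$ with its $G_x/N$-action, where $L/k(x)$ is the finite Galois extension corresponding to $N$. Let $d$ be the uniform bound on virtual mod $p$ cohomological dimensions of residue fields. Then \Cref{Gcompletecrit} reduces hypercompleteness to showing that each such $\mathcal{A}(\perf(L))$ is nilpotent of exponent $\leq d+1$ as a spectrum with $G_x/N$-action, uniformly in $N$.

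This is where the $L_n^f$-local hypothesis enters essentially: by the May nilpotence conjecture \cite{MNN15} as exploited in \cite{CMNN}, any finite Galois cover becomes uniformly descendable after passage to $L_n^f$-local localizing invariants, with a nilpotence exponent controllable in terms of $n$ and the cohomological complexity of the Galois group; coupled with the cohomological dimension bound $d$, this should deliver the required exponent $\leq d+1$, with the $p=2$ ``virtual'' case needing a standard separate argument treating orderings/real-type fields. I expect the main obstacle to be extracting a \emph{uniform} nilpotence bound that works simultaneously for all open normal $N \subset G_x$ and all $x \in X$ out of the existing descendability results; once this is secured, the theorem assembles formally from \Cref{hypetalehypNis}, \Cref{hyperdescent}, and \Cref{Gcompletecrit}.
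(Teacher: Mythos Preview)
Your overall strategy via \Cref{hypetalehypNis} and \Cref{Gcompletecrit} is reasonable, but the ``substantive step'' has a real gap that you yourself flag and do not close. The descendability results of \cite{CMNN}, even in the form proved in this paper (\Cref{nilpotenceGactionmot}, \Cref{finiteGaldescadditive}), only show that for each individual $G$-Galois cover the object $L_n^f[\perf(B)]$ is nilpotent in $\fun(BG, L_n^f\mot_A)$ with \emph{some} exponent depending on that specific cover, $n$, and $p$. They do not produce an exponent bounded by the cohomological dimension of $k(x)$, and there is no mechanism in the May nilpotence argument to extract such a uniform bound: the proof of \Cref{nilpexpbounded} requires connectivity information that is destroyed by $L_n^f$-localization. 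Without uniformity the criterion of \Cref{Gcompletecrit} simply does not apply, so the argument stalls exactly where you expected.

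The paper's proof sidesteps this entirely and is worth comparing. Rather than trying to prove uniform nilpotence for $\mathcal{A}$ directly, it observes that any additive invariant $\mathcal{A}$ is a module over connective $K$-theory via the universal property of noncommutative motives (\Cref{additivekviamotives}). Since $\mathcal{A}$ is already an \'etale sheaf (\Cref{generaletaledesc}), this module structure extends to one over the \'etale sheafification $(K_{\geq 0})^{et}$. Now the heavy arithmetic input---the norm residue isomorphism theorem, packaged into the hyperdescent of $K^{Sel}$ (\Cref{QLSelgeneral}) and hence of $K^{et}$ (\Cref{mainetaleKdesc})---shows that $(K_{\geq 0})^{et}$ is a hypersheaf. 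Then the smashing property of hypercompletion (\Cref{hypsmashingfinitecd}) forces any module over a hypersheaf to be a hypersheaf. In effect, the uniform nilpotence bound you need is borrowed from $K$-theory itself, where Beilinson--Lichtenbaum supplies it; this is precisely the ingredient your direct approach is missing.
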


In particular, the results of the present paper strengthen those of \cite{CMNN}.  However, \cite{CMNN} also treated the case of non-\'etale extensions of
ring spectra; here we have nothing to say about them. 

\subsection*{Notation}

For a qcqs algebraic space $X$, we let $et_X$ denote the category of
\'etale, qcqs $X$-schemes. We let $X_{Nis}, X_{et}$ denote the Nisnevich
and \'etale sites, respectively; these have underlying categories $et_X$ but
different structures as sites. 

Given a Grothendieck site $\mathcal{T}$, we let $\sh(\mathcal{T})$ denote the
$\infty$-topos of sheaves of spaces on $\mathcal{T}$, and $\sh(\mathcal{T},
\sp)$ the $\infty$-category of sheaves of spectra on $\mathcal{T}$. 
We will let $\psh(\mathcal{T})$ denote presheaves on $\mathcal{T}$ and let $h$
denote 
the Yoneda embedding (either into $\psh(\mathcal{T})$ or the sheafified one into
$\sh(\mathcal{T})$). 

For a spectrum $X$ and a prime number $p$, we will typically write $X_{\hat{p}}$
for the $p$-completion of $X$. We let $L_1$ denote Bousfield localization at
complex $K$-theory $KU$ and $L_{K(1)}$ localization at the first Morava
$K$-theory (at the implicit prime $p$).  

For a field $k$ and a prime number $p$, we write $\mathrm{cd}_p(k)$ for the mod
$p$ Galois cohomological dimension of $k$ and $\mathrm{vcd}_p(k)$ for the mod
$p$ \emph{virtual} Galois cohomological dimension. Recall also that these can only
differ at $p = 2$, and $\mathrm{vcd}_p(k) = \mathrm{cd}_p(k)$ if $k$ contains
$\sqrt{-1}$ or $\sqrt[3]{-1}$.

For a set of primes $\mathcal{P}$, we say that an abelian group (or a spectrum)
$X$ 
is $\mathcal{P}$-local if every prime number outside $\mathcal{P}$ acts
invertibly on $X$.  
We let $X_{\mathcal{P}}$ denote the $\mathcal{P}$-localization of $X$ (i.e., one
inverts every prime number outside $\mathcal{P}$). 
The most important case is when $\mathcal{P}$ is the set of all primes, in which
case $\mathcal{P}$-locality is no condition at all. 

We write $K$ for non-connective $K$-theory. At times we will also use connective
$K$-theory, which we write as $K_{\geq 0}$. 

\subsection*{Acknowledgments}
We would like to thank Bhargav Bhatt, Lars Hesselholt, and Jacob Lurie for
helpful conversations related to this project. 
We also thank Matthew Morrow, Niko Naumann, and Justin Noel for the related
collaborations
\cite{CMNN, CMM} and many consequent discussions. 
We are grateful to Adriano C\'ordova, Elden Elmanto, and the two
anonymous referees for many
helpful comments on this paper. 

This work was done while the second author was a Clay Research Fellow, and is
based upon work supported by the National Science Foundation
under Grant No. DMS-1440140 while the second author was in residence at the
Mathematical Sciences Research Institute in Berkeley, California, during the
Spring 2019 semester.  
The second author also thanks the University of Copenhagen for its hospitality
during multiple visits.

\section{Generalities}

In this section we collect and review some general results on sheaves of spectra, 
hypercompletion, and Postnikov completion. 

\subsection{Prestable $\infty$-categories}
Throughout this section, we fix a Grothendieck prestable $\infty$-category
$\mathcal{C}_{\geq 0}$ \cite[Appendix C]{SAG}.  Denote by
$\mathcal{C}=\operatorname{Sp}(\mathcal{C}_{\geq 0})$ its stabilization and by
$\mathcal{C}^\heartsuit\subset \mathcal{C}_{\geq 0}$ its full subcategory of
discrete objects.  The natural functor $\mathcal{C}_{\geq 0}\rightarrow \mathcal{C}$ is fully faithful, and its essential image gives the connective part of a t-structure on $\mathcal{C}$ for which $\mathcal{C}^\heartsuit$ is the heart.   Furthermore, $\mathcal{C}^\heartsuit$ is a Grothendieck abelian category.  The t-truncation functors $\mathcal{C}\rightarrow \mathcal{C}$ will be denoted $X\mapsto X_{\leq n}$, and the homotopy object functors $\mathcal{C}\rightarrow \mathcal{C}^\heartsuit$ will be denoted $X\mapsto \pi_n X \in \mathcal{C}^\heartsuit$.  These functors are indexed by $n\in\mathbb{Z}$.

\begin{example} The primal example is where $\mathcal{C}_{\geq 0}$ is the $\infty$-category of connective spectra.  Then $\mathcal{C}$ is the $\infty$-category of spectra, $\mathcal{C}^\heartsuit$ is the category of Eilenberg-Maclane spectra in degree $0$ and thus identifies with the category of abelian groups, the truncation functors $X\mapsto X_{\leq n}$ are the usual Postnikov truncations, and the homotopy object functors $X\mapsto \pi_n X$ are the the usual homotopy group functors.
\end{example}

\begin{example}[Sheaves of spectra]
More generally, and all of our examples will essentially be of this form, let $\mathcal{T}$
be a Grothendieck site.  Then the $\infty$-category of sheaves of connective
spectra on $\mathcal{T}$ (cf.\ \cite[Sec. 1.3]{SAG}) 
is a Grothendieck prestable $\infty$-category $\mathcal{C}_{\geq 0}$.  In this case $\mathcal{C}$ identifies with the $\infty$-category of sheaves of spectra on $\mathcal{T}$ and $\mathcal{C}^\heartsuit$ identifies with the category of sheaves of abelian groups on $\mathcal{T}$.  The fully faithful inclusions $\mathcal{C}^\heartsuit\rightarrow \mathcal{C}_{\geq 0}\rightarrow \mathcal{C}$, the truncation functors $(-)_{\leq n}:\mathcal{C}\rightarrow \mathcal{C}$ and the homotopy object functors $\pi_n:\mathcal{C}\rightarrow \mathcal{C}^\heartsuit$ can all be obtained from a two-step process: first apply the corresponding functors from the previous example object-wise to the underlying presheaves, then sheafify the result.  
Recall also that in this context sheafification (from
presheaves on $\mathcal{T}$ to sheaves on $\mathcal{T}$) is t-exact; see
\cite[Rem. 1.3.2.8]{SAG}.
\end{example}

\begin{example}[Sheaves of module spectra] 
We will also need a slight variant of the above example. If $\mathcal{T}$ is a
Grothendieck site and $R$ is a connective $E_1$-ring, then 
$\sh(\mathcal{T}, \md(R)_{\geq 0})$, i.e., sheaves on $\mathcal{T}$ of
connective $R$-module spectra, is a Grothendieck prestable $\infty$-category
with heart the category of sheaves of $\pi_0(R)$-modules on $\mathcal{T}$. We
will especially consider this when $R = \mathbb{S}_{\mathcal{P}}$ for a set of primes
$\mathcal{P}$, i.e., we are considering sheaves of $\mathcal{P}$-local
connective spectra. 
\end{example}

\begin{definition}[{Compare \cite[Def. C.1.2.12]{SAG}}]
Let $X\in \mathcal{C}$.  We say that $X$ is:
\begin{enumerate}
\item \emph{acyclic} if $\pi_nX=0$ for all $n\in\mathbb{Z}$;
\item \emph{hypercomplete} if $\operatorname{Hom}_{\mathcal{C}}(U,X)=\ast$ for all acyclic
$U$.
\item \emph{Postnikov complete} if $X\overset{\sim}{\rightarrow}\varprojlim_n
X_{\leq n}$. 
\end{enumerate}

The Grothendieck prestable $\infty$-category $\mathcal{C}_{\geq 0}$ is called
\emph{separated} if all objects are hypercomplete, and \emph{complete} (or
\emph{left-complete}) if 
the natural map $\mathcal{C} \to \varprojlim_n \mathcal{C}_{\leq n}$ sending $X
\mapsto \left\{X_{\leq n}\right\}$ is an equivalence. 
\end{definition}

\begin{example}
Let $\mathcal{C}$ be the $\infty$-category of sheaves of spectra on a Grothendieck site $\mathcal{T}$. 
For $Y \in \mathcal{T}$, 
let $\Sigma^\infty_+ h_Y \in \mathcal{C}$ denote the sheafification of the
presheaf $\mathcal{T}^{op}\rightarrow\operatorname{Sp}$ given by $Z \mapsto \Sigma^\infty_+
\hom_{\mathcal{T}}(Z, Y)$. 
If $U_\bullet \rightarrow X$ is a hypercover in $\mathcal{T}$ (where each $U_i$
is a coproduct of objects in $\mathcal{T}$), then
$$\varinjlim_{n\in \Delta^{op}} \Sigma^\infty_+ h_{U_n} \rightarrow
\Sigma^\infty_+ h_X$$
is a $\pi_\ast$-isomorphism in $\mathcal{C}$ \cite[Lemma 6.5.3.11]{HTT}.  Thus
if $\mathcal{F}$ is a sheaf of spectra which is hypercomplete, then
$\mathcal{F}$ is a \emph{hypersheaf}, meaning 
$\mathcal{F}(X)\overset{\sim}{\rightarrow}
\varprojlim_{n\in\Delta}\mathcal{F}(U_n)$ for all hypercovers as above.  The
converse also holds, cf.\ \cite{DHI04}, \cite{TV03}, \cite[Cor. 6.5.3.13]{HTT}
for sheaves of spaces.
Note also that a sheaf $\mathcal{F}$ of spectra is hypercomplete if and only if the
underlying sheaf of spaces is hypercomplete (or equivalently a hypersheaf), cf.\ \cite[Prop. 1.3.3.3]{SAG}. 
Note that \cite[Prop. 1.3.3.3]{SAG} shows that the subcategory of
hypercomplete sheaves of spectra on $\mathcal{T}$ is also intrinsically described as
sheaves of spectra on the hypercompletion of the $\infty$-topos of sheaves of
spaces on $\mathcal{T}$. 

We will not use this remark; we simply work directly with the notion of hypercompleteness as in the above definition.\end{example}

\begin{lemma}
\label{hypvsPostcompl}
\begin{enumerate}
\item The collection of hypercomplete objects of $\mathcal{C}$ is closed under all limits.
\item Every t-bounded above object of $\mathcal{C}$ is hypercomplete.
\item Postnikov complete objects are hypercomplete.  More generally, for all $X\in\mathcal{C}$, the object $\varprojlim_n X_{\leq n}$ is hypercomplete.
\end{enumerate}
\end{lemma}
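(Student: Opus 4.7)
The plan is to treat the three parts in order, with part (2) carrying the weight. The first claim is immediate, since $\hom(U,-)$ sends limits to limits: if each $X_\alpha$ is hypercomplete then $\hom(U, \varprojlim X_\alpha) \simeq \varprojlim \hom(U, X_\alpha) \simeq \ast$ for any acyclic $U$.

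For part (2), I would reduce to the vanishing claim that $U_{\leq n} = 0$ for every acyclic $U$ and every integer $n$. Granted this, if $X \in \mathcal{C}_{\leq n}$, then the adjunction between $(-)_{\leq n}$ and the inclusion $\mathcal{C}_{\leq n} \hookrightarrow \mathcal{C}$ gives $\hom_{\mathcal{C}}(U, X) \simeq \hom(U_{\leq n}, X) \simeq \ast$, proving $X$ is hypercomplete. To establish the vanishing claim, note that $W := U_{\leq n}$ lies in $\mathcal{C}_{\leq n}$ and has $\pi_k W = 0$ for all $k$ (using acyclicity of $U$ for $k \leq n$, and $n$-truncatedness for $k > n$); thus it suffices to show that any $W \in \mathcal{C}_{\leq n}$ with all $\pi_k W = 0$ must vanish.

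A first sub-step handles the bounded-both-sides case: if $V \in \mathcal{C}_{[a,b]}$ has vanishing homotopy, then (after shifting to $a=0$) a finite Postnikov induction inside the Grothendieck prestable $\mathcal{C}_{\geq 0}$, using the fiber sequences $\pi_n V[n] \to V_{\leq n} \to V_{\leq n-1}$ which collapse at each stage because $\pi_n V = 0$, shows $V = 0$. Applying this to each connective cover $W_{\geq -m} \in \mathcal{C}_{[-m, n]}$, which is acyclic because $W$ is, one concludes $W_{\geq -m} = 0$, and hence $W \in \mathcal{C}_{\leq -m-1}$ for every $m \geq 0$. Thus $W$ lies in $\bigcap_m \mathcal{C}_{\leq -m}$, which is zero by the right-completeness of the t-structure on $\mathcal{C} = \mathrm{Sp}(\mathcal{C}_{\geq 0})$ (a structural feature of the stabilization of a Grothendieck prestable $\infty$-category; see \cite{SAG}).

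Part (3) follows immediately: each $X_{\leq n}$ is bounded above, hence hypercomplete by (2), so $\varprojlim_n X_{\leq n}$ is a limit of hypercompletes and hence hypercomplete by (1); Postnikov complete objects are by definition such limits. The only non-formal input, and the main obstacle, is the appeal to right-completeness of the t-structure; once that is granted, everything else is direct bookkeeping with the Postnikov tower.
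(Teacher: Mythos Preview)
Your proof is correct and follows essentially the same route as the paper's: both reduce part (2) to showing that $U_{\leq n}=0$ for every acyclic $U$, handle the t-bounded case by a Postnikov d\'evissage, and invoke right-completeness of the t-structure on $\mathcal{C}=\mathrm{Sp}(\mathcal{C}_{\geq 0})$ to finish. The only cosmetic difference is that the paper uses right-completeness to reduce to $U$ bounded below (via $U\simeq\varinjlim_m U_{\geq -m}$ and the fact that $(-)_{\leq n}$ preserves colimits), whereas you use it in the equivalent form $\bigcap_m \mathcal{C}_{\leq -m}=0$ applied to $W=U_{\leq n}$.
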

\begin{proof}
Claim 1 is clear from the definition.  For claim 2, it suffices to see that if $X$ is acyclic, then $X_{\leq n}=0$ for all $n\in\mathbb{Z}$.  Since the $t$-structure on $\mathcal{C}=\operatorname{Sp}(\mathcal{C}_{\geq 0})$ is right complete by construction, we can assume $X$ is t-bounded below.  Then $X_{\leq n}$ is t-bounded with vanishing homotopy objects, and hence is $0$, as desired.  Claim 3 follows immediately from claims 1 and 2.
\end{proof}

\begin{example} 
Let $\mathcal{A}$ be a Grothendieck abelian category, so then
the connective part $\mathcal{D}(\mathcal{A})_{\geq 0}$ of the derived $\infty$-category $\mathcal{D}(\mathcal{A})$ is a Grothendieck
prestable $\infty$-category \cite[Example C.1.4.5]{SAG}. 
In this case, $\mathcal{D}(\mathcal{A})$ has no nonzero acyclic objects, or equivalently
all objects are hypercomplete. However, 
objects of $\mathcal{D}(\mathcal{A})$ need not be Postnikov complete, cf.\
\cite{Neeman} for examples. 
\end{example}

\begin{definition}[Cohomological dimension]
\begin{enumerate}
\item Let $U\in \mathcal{C}_{\geq 0}$ and $A\in \mathcal{C}^\heartsuit$.  For $i\geq 0$, define the \emph{$i^{th}$ cohomology of $U$ with coefficients in $A$} to be the abelian group
$$H^i(U;A) = [U,\Sigma^i A]:=\pi_0 \map_{\mathcal{C}}(U,\Sigma^iA).$$

\item Let $\mathcal{A}$ be a collection of objects of $\mathcal{C}^\heartsuit$.  For $U\in\mathcal{C}_{\geq 0}$ and $d\in\mathbb{N}$, we say that $U$ has \emph{cohomological dimension $\leq d$ with $\mathcal{A}$-coefficients} if $H^i(U;A)=0$ for all $A\in\mathcal{A}$ and $i>d$.

\item Let $\mathcal{A}$ be a collection of objects of $\mathcal{C}^\heartsuit$.  For $d\in\mathbb{N}$, we say that $\mathcal{C}_{\geq 0}$ \emph{has enough objects of cohomological dimension $\leq d$ with $\mathcal{A}$-coefficients} if for any $X\in\mathcal{C}_{\geq 0}$, there exists a map $f:U\rightarrow X$ in $\mathcal{C}_{\geq 0}$ such that $\pi_0f$ is an epimorphism and $U$ has cohomological dimension $\leq d$ with $\mathcal{A}$-coefficients.\end{enumerate}
\label{cohdimensiondef}
\end{definition}

If we leave out ``with $\mathcal{A}$ coefficients", we implicitly mean to take $\mathcal{A}=\mathcal{C}^\heartsuit$.

\begin{remark}
An arbitrary coproduct of objects of $\mathcal{C}_{\geq 0}$ of cohomological
dimension $\leq d$ also has cohomological dimension $\leq d$.  From this one
sees that if $\mathcal{C}_{\geq 0}$ is generated under colimits by objects of
cohomological dimension $\leq d$, then $\mathcal{C}_{\geq 0}$ has enough objects
of cohomological dimension $\leq d$ (compare \cite[Def. C.2.1.1]{SAG}).
\end{remark}

\begin{proposition}
\label{hypforfinitecd}
Let $X\in\mathcal{C}$.  Suppose there exists a $d\geq 0$ such that $\mathcal{C}_{\geq 0}$ has enough objects of cohomological dimension $\leq d$ with $\{\pi_nX\}_{n\in\mathbb{Z}}$-coefficients.  Then:
\begin{enumerate}
\item The map $\alpha:X\rightarrow \varprojlim_n X_{\leq n}$ is a $\pi_\ast$-isomorphism.
\item $X$ is hypercomplete if and only if $X$ is Postnikov complete.
\end{enumerate}
\end{proposition}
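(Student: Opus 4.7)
First, I would deduce part (2) from part (1). Set $\widehat{X} := \varprojlim_n X_{\leq n}$ and $F := \operatorname{fib}(X \to \widehat{X})$. The direction ``Postnikov complete $\Rightarrow$ hypercomplete'' is \Cref{hypvsPostcompl}(3). Conversely, if $X$ is hypercomplete, part (1) gives that $F$ is acyclic, so $\map_{\mathcal{C}}(F, X) = 0$; this forces the natural map $F \to X$ to be null, splitting the fiber sequence as $\widehat{X} \simeq X \oplus \Sigma F$. Since $\Sigma F$ is then a retract of the hypercomplete object $\widehat{X}$ (hypercomplete by \Cref{hypvsPostcompl}(3)) and is simultaneously acyclic, $\Sigma F = 0$ and $X \simeq \widehat{X}$.

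For part (1), one has $F \simeq \varprojlim_n X_{\geq n+1}$, and the goal is $\pi_{\ast} F = 0$. The key step is the vanishing, for any $U \in \mathcal{C}_{\geq 0}$ of cohomological dimension $\leq d$ with $\{\pi_n X\}$-coefficients, of $\map_{\mathcal{C}}(U, F) \simeq \varprojlim_n \map_{\mathcal{C}}(U, X_{\geq n+1})$ (using that $\map_{\mathcal{C}}(U,-)$ preserves limits). For each $n$, the cofiber sequence $X_{\geq n+2} \to X_{\geq n+1} \to \Sigma^{n+1} \pi_{n+1} X$ gives $\pi_j \map_{\mathcal{C}}(U, \Sigma^{n+1} \pi_{n+1} X) = H^{n+1-j}(U; \pi_{n+1} X) = 0$ for $j < n+1-d$; iterating along the Postnikov filtration of $X_{\geq n+1}$ at the level of mapping spectra (which live in the left-complete category $\operatorname{Sp}$, even when $\mathcal{C}$ itself need not be) shows that $\map_{\mathcal{C}}(U, X_{\geq n+1})$ is $(n+1-d)$-connective. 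The Milnor exact sequence in $\operatorname{Sp}$ then forces $\pi_k \varprojlim_n \map_{\mathcal{C}}(U, X_{\geq n+1}) = 0$ for every $k$.

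To promote this vanishing to $\pi_{\ast} F = 0$, use the enough-objects hypothesis twice: first, to detect nontriviality (any nonzero $A \in \mathcal{C}^{\heartsuit}$ receives a $\pi_0$-epimorphism from some good $U$, so good $U$'s suffice to test vanishing of heart objects); second, to resolve an arbitrary $V \in \mathcal{C}_{\geq 0}$ as an iterated extension / simplicial realization in good $U$'s, obtained by taking a good $U_0 \to V$ with $\pi_0$-epi, repeating on the fiber, and so on. Propagating $\map_{\mathcal{C}}(U, F) = 0$ through such a resolution yields $\map_{\mathcal{C}}(V, F) \simeq 0$ for all $V \in \mathcal{C}_{\geq 0}$, and applying this after all shifts gives $\Sigma^i F \in (\mathcal{C}_{\geq 0})^{\perp} = \mathcal{C}_{\leq -1}$ for every $i \in \mathbb{Z}$, which says $\pi_{\ast} F = 0$. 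The main obstacle is the connectivity bound for $\map_{\mathcal{C}}(U, X_{\geq n+1})$: since $X_{\geq n+1}$ is unbounded above and $\mathcal{C}$ itself need not be Postnikov complete, an intrinsic induction in $\mathcal{C}$ would be circular, so we must instead leverage the unconditional left-completeness of the target category $\operatorname{Sp}$ in which the mapping spectrum lives.
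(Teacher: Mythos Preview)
Your deduction of (2) from (1) is correct, though the paper's version is marginally cleaner: once $F$ is acyclic and $X$ is hypercomplete, $F$ is itself hypercomplete (as the fiber of a map between the hypercomplete objects $X$ and $\widehat{X}$, using \Cref{hypvsPostcompl}(1),(3)), and a hypercomplete acyclic object is zero.

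The gap is in part (1). Your connectivity claim $\map_{\mathcal{C}}(U, X_{\geq n+1}) \in \operatorname{Sp}_{\geq n+1-d}$ is false in general, and the appeal to left-completeness of $\operatorname{Sp}$ does not rescue it. Take $X$ to be a nonzero acyclic object of $\mathcal{C}$ (which exists whenever $\mathcal{C}$ is not hypercomplete). Then every $\pi_n X = 0$, so the coefficient condition is vacuous and \emph{every} $U \in \mathcal{C}_{\geq 0}$ is ``good''. But $X_{\geq n+1} = X$ for all $n$, so $\map_{\mathcal{C}}(U, X_{\geq n+1}) = \map_{\mathcal{C}}(U, X)$ is independent of $n$; taking $U = X$ gives a nonzero spectrum that would have to be arbitrarily connective, a contradiction. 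The underlying problem is that ``iterating the Postnikov filtration of $X_{\geq n+1}$'' computes $\map_{\mathcal{C}}(U, \varprojlim_m X_{[n+1,m]})$, not $\map_{\mathcal{C}}(U, X_{\geq n+1})$; these differ precisely by the Postnikov-completion map in $\mathcal{C}$, which is what you are trying to control. Left-completeness of $\operatorname{Sp}$ only lets you recover $\map_{\mathcal{C}}(U, X_{\geq n+1})$ from \emph{its own} truncations, which you have no independent grip on.

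The paper avoids this by never touching $X_{\geq n+1}$. It works on the coconnective side: the transition fiber of $\map_{\mathcal{C}}(U, X_{\leq n+1}) \to \map_{\mathcal{C}}(U, X_{\leq n})$ is $\map_{\mathcal{C}}(U, \Sigma^{n+1}\pi_{n+1}X)$, which \emph{is} $(n{+}1{-}d)$-connective by the cohomological-dimension hypothesis. Hence for $F' := \operatorname{fib}\bigl(\varprojlim_n X_{\leq n} \to X_{\leq d+1}\bigr)$ one gets $[U, F'] = [U, \Sigma F'] = 0$ for every good $U$. Now choose a good $U$ with a $\pi_0$-epimorphism $U \to (F')_{\geq 0}$; since the composite $U \to F'$ is null, $\pi_0 F' = 0$, and similarly $\pi_{-1} F' = 0$. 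This makes $\pi_0\beta$ an isomorphism, where $\beta \colon \varprojlim_n X_{\leq n} \to X_{\leq d+1}$; since $\beta \circ \alpha$ is the truncation map (an isomorphism on $\pi_0$), so is $\pi_0\alpha$. Shifting gives the full $\pi_\ast$-isomorphism. Note that the paper never claims $\map_{\mathcal{C}}(U, F) = 0$ --- only the weaker (and true) statement that suitable $\pi_0$'s vanish.
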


\begin{proof}
First we note 1 $\Rightarrow$ 2.  Indeed, assuming 1, the fiber of $\alpha$ is
acyclic; but if $X$ is hypercomplete the fiber will also be hypercomplete, hence
zero, so $\alpha$ is an equivalence and $X$ is Postnikov complete.  The converse
is true in complete generality by \Cref{hypvsPostcompl}.

Now we prove 1.  Replacing $X$ by its shifts, it suffices to show that $\alpha$ is an isomorphism on $\pi_0$.  Suppose $U\in\mathcal{C}_{\geq 0}$ is of cohomological dimension $\leq d$ with $\pi_\ast X$-coefficients.  Then for all $n\in\mathbb{Z}$, the fiber of
$$\operatorname{Hom}_{\mathcal{C}}(U,X_{\leq n+1})\rightarrow
\operatorname{Hom}_{\mathcal{C}}(U,X_{\leq n})$$
lies in $\operatorname{Sp}_{\geq n-d}$.  Taking the inverse limit along these
maps for all $n\geq d+1$, it follows that if we set
$F$ to be the fiber of $\beta:\varprojlim_n X_{\leq n}\rightarrow X_{\leq d+1}$,
then $[U,F]=[U, \Sigma F ] = 0$.

On the other hand, by hypothesis we can choose $U \in \mathcal{C}_{\geq 0}$ of
cohomological dimension $\leq d$ (with $\pi_* X$-coefficients) so that there is a map
$U\rightarrow F_{\geq 0}$ which is an epimorphism on $\pi_0$.  It follows that
$\pi_0F=0$; similarly $\pi_{-1} F =0 $.  Hence $\pi_0\beta$ is an isomorphism.
But $\pi_0\beta$ is isomorphic to   $\pi_0\alpha$, and therefore the latter is an isomorphism, as desired.
\end{proof}

\begin{example}[{Compare \cite[Th.~1.37]{MV99}}]
\label{cdsite}
Let $\mathcal{T}$ be a Grothendieck site, and take $\mathcal{C}$ to be 
sheaves of spectra on $\mathcal{T}$. 
Suppose $x\in\mathcal{T}$ is of cohomological dimension $\leq d$ in the sense that for every sheaf of abelian groups $A$ on $\mathcal{T}$, the abelian groups $H^i(x;A)=R^i\Gamma(x;A)$ vanish for $i>d$.  
By \cite[Cor. 2.1.2.3]{SAG}, the derived $\infty$-category 
of the category of abelian sheaves on $\mathcal{T}$ is identified with the
$\infty$-category of hypercomplete sheaves of $H\mathbb{Z}$-modules on $\mathcal{T}$. 
In particular, $\infty$-topos cohomology is identified with derived functor
cohomology. 
It follows that the object $h_x\in\mathcal{C}_{\geq 0}$ corepresenting sections over $x$ has cohomological dimension $\leq d$ in the above sense.

Now assume every object of $\mathcal{T}$ admits a covering by objects $x$ of
cohomological dimension $\leq d$.  It follows that $\mathcal{C}_{\geq 0}$ has
enough objects of cohomological dimension $\leq d$.  Thus the previous
proposition applies, and we conclude that Postnikov complete is equivalent to hypercomplete for sheaves of spectra on $\mathcal{T}$.
We have an analogous statement for $\mathcal{P}$-local cohomological dimension, where
$\mathcal{P}$ is some set of primes.  
\end{example}

\begin{example} 
\label{modpdimensionex}
As a special case of the above, 
let $R$ be a commutative $\mathbb{F}_p$-algebra, and consider the \'etale site
$\spec(R)_{et}$. Recall that for any $p$-torsion \'etale sheaf $\sF$ of abelian groups on
$\spec(R)$, one has
$H^*(\spec(R)_{et}, \sF) = 0$ for $\ast > 1$. 
Compare \cite[Exp. X, Theorem 5.1]{SGA4} in the noetherian case and the general case
follows from compatibility with filtered colimits.
It follows that for a sheaf $\sF$ of $p$-complete spectra on $\spec(R)_{et}$,
then $\sF$ is hypercomplete if and only if $\sF$ is Postnikov complete.

\end{example}

The previous proposition will cover all of our cases of interest.  Thus
practically speaking, hypercomplete and Postnikov complete are equivalent
notions.  Nonetheless, they serve different purposes.  Postnikov completeness in
some sense reduces the study of sheaves of spectra to that of sheaf cohomology,
i.e., one obtains the following (standard) descent spectral sequence:

\begin{proposition}
\label{descentss}
Let $U\in\mathcal{C}_{\geq 0}$ and $X\in \mathcal{C}$.  Then there is a conditionally convergent (cohomoligcally indexed) spectral sequence
$$E^2_{p,q} = H^{p}(U;\pi_q X)\Rightarrow \pi_{q-p} \map_{\mathcal{C}}(U,\varprojlim_n X_{\leq n})$$
\end{proposition}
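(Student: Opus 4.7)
The plan is to produce the spectral sequence as the standard tower spectral sequence associated with the Postnikov tower of $X$.

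First, I would consider the Postnikov tower
$$\cdots \to X_{\leq n+1} \to X_{\leq n} \to X_{\leq n-1} \to \cdots$$
in $\mathcal{C}$. Since the $t$-structure on $\mathcal{C}$ is the one coming from $\mathcal{C}_{\geq 0}$, the fiber of $X_{\leq n} \to X_{\leq n-1}$ is naturally identified with $\Sigma^n \pi_n X$. Applying the functor $\map_{\mathcal{C}}(U, -): \mathcal{C} \to \sp$ — which preserves all limits — yields a tower of spectra
$$Y_n := \map_{\mathcal{C}}(U, X_{\leq n}), \qquad \varprojlim_n Y_n \simeq \map_{\mathcal{C}}(U, \varprojlim_n X_{\leq n}),$$
whose successive fibers are $\map_{\mathcal{C}}(U, \Sigma^n \pi_n X)$.

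Next, I would identify the homotopy groups of the fibers with cohomology: by the definition of $H^i(U; A)$ given in \Cref{cohdimensiondef}, one has
$$\pi_s \map_{\mathcal{C}}(U, \Sigma^n \pi_n X) \;=\; [U, \Sigma^{n-s}\pi_n X] \;=\; H^{n-s}(U; \pi_n X).$$
Then I would invoke the standard spectral sequence of a tower of spectra (as in Boardman's framework, or \cite[Sec.~1.2.2]{HA}), which for a tower $\{Y_n\}$ with fibers $F_n$ produces a conditionally convergent spectral sequence with $E_1$-page $\pi_* F_n$ abutting to $\pi_*(\varprojlim_n Y_n)$. Reindexing via $p = n - s$, $q = n$ gives exactly the asserted $E^2$-term $H^p(U; \pi_q X)$ converging to $\pi_{q-p}\map_{\mathcal{C}}(U, \varprojlim_n X_{\leq n})$, and conditional convergence in the sense of Boardman is automatic for any such tower whose target is defined as the inverse limit.

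There is essentially no obstacle: the only minor subtlety is to check that one really has $E_2 = H^p(U; \pi_q X)$ rather than just an $E_1$-page, but this follows because the $d_1$-differential computes precisely the Ext-groups of the heart — the identification of $E_2$ as derived-functor cohomology uses that $\map_{\mathcal{C}}(U, \Sigma^i A) \in \sp$ realizes the derived mapping space into the heart, together with $\pi_*$-exactness of the relevant truncation triangles. Alternatively, one can take the $E_2$ formula as the definition of the reindexed spectral sequence and verify it by comparing the filtration to the one arising from a $\pi$-injective resolution of $X$, but the tower viewpoint is cleaner and suffices here.
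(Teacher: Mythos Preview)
Your approach is essentially identical to the paper's: both construct the spectral sequence from the tower $\{\map_{\mathcal{C}}(U, X_{\leq n})\}$ obtained by applying $\map_{\mathcal{C}}(U,-)$ to the Postnikov tower, identify the fibers as $\map_{\mathcal{C}}(U,\Sigma^n\pi_n X)$, and read off the $E$-page from the definition $H^i(U;A)=[U,\Sigma^i A]$.

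There is, however, one genuine slip. You assert that conditional convergence ``is automatic for any such tower whose target is defined as the inverse limit.'' This is not correct: in Boardman's framework, conditional convergence of a tower spectral sequence to the inverse limit requires that the \emph{colimit} of the tower vanish. The paper verifies this explicitly: since $U \in \mathcal{C}_{\geq 0}$, the spectrum $\map_{\mathcal{C}}(U, X_{\leq q})$ has $\pi_* = 0$ for $* > q$, and as $q \to -\infty$ this forces $\varinjlim_q \map_{\mathcal{C}}(U, X_{\leq q}) = 0$. Without this check the conditional convergence claim is not justified.

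Your final paragraph on the $E_1$/$E_2$ issue is also off the mark. In this setup the $E_1$-term already \emph{is} $H^p(U;\pi_q X)$, directly from the definition $H^i(U;A) = [U,\Sigma^i A]$ applied to the fiber $\Sigma^q\pi_q X$; there is no nontrivial $d_1$ to compute. The paper simply reindexes $E_1$ as $E_2$ to match the standard descent spectral sequence convention. Your discussion of $d_1$ computing Ext-groups of the heart, or of comparing with a $\pi$-injective resolution, is unnecessary and somewhat misleading here.
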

\begin{proof}
Consider the filtered spectrum
$$\ldots \rightarrow \hom_{\mathcal{C}}(U,X_{\leq q})\rightarrow
\hom_{\mathcal{C}}(U,X_{\leq q-1})\rightarrow\ldots$$
indexed by $q\in\mathbb{Z}$.  The colimit is $0$, because the homotopy $\pi_\ast
\hom_{\mathcal{C}}(U,X_{\leq q})$ vanishes in the range $\ast>q$ which covers
everything as $q\rightarrow -\infty$.  Thus the associated spectral sequence
converges conditionally to the homotopy of the inverse limit, which is exactly
$\hom_{\mathcal{C}}(U,\varprojlim_nX_{\leq n})$.  The $E^1$-term is given by the homotopy of the fibers of the maps constituting the above filtered spectrum, and is therefore as claimed because the fiber of $X_{\leq q}\rightarrow X_{\leq q-1}$ is $\Sigma^{q} \pi_qX$.  We reindex $E^1$ to $E^2$ to fit the general convention.
\end{proof}

On the other hand, hypercompleteness can be studied using the formally
convenient machinery of Bousfield localizations, cf.\ \cite[Sec. 5.2.7]{HTT}:

\begin{proposition}[{Cf.\ \cite[Sec. C.3.6]{SAG}}]
\label{hypcompletion}
The fully faithful inclusion $\mathcal{C}^h\rightarrow \mathcal{C}$ of the full subcategory $\mathcal{C}^h\subset\mathcal{C}$ spanned by the hypercomplete objects has an accessible left adjoint.  We denote this left adjoint by $X\mapsto X^h$ and call it hypercompletion.
\end{proposition}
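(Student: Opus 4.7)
The plan is to apply the standard machinery of accessible reflective localizations of presentable $\infty$-categories. By \Cref{hypvsPostcompl}(1), $\mathcal{C}^h\subset \mathcal{C}$ is closed under limits, and $\mathcal{C}$ is presentable (as it is the stabilization of a Grothendieck prestable $\infty$-category). By the adjoint functor theorem it therefore suffices to show that $\mathcal{C}^h$ is an accessible subcategory of $\mathcal{C}$, equivalently, that it is the subcategory of local objects for some small set $S$ of morphisms in $\mathcal{C}$.

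First I would reformulate hypercompleteness as a locality condition. In the stable setting, the fiber sequence $\mathrm{fib}(f)\to A\to B$ together with its suspensions shows that an object $X\in\mathcal{C}$ is hypercomplete if and only if $X$ is local with respect to every $\pi_\ast$-isomorphism $f:A\to B$ (i.e.\ every map $f$ whose fiber is acyclic); equivalently, $X$ is local with respect to every map $U\to 0$ with $U$ acyclic.

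Next, I would exhibit the full subcategory $\mathcal{A}\subset\mathcal{C}$ of acyclic objects as an accessible subcategory. Choose a regular cardinal $\kappa$ such that $\mathcal{C}$ is $\kappa$-accessible and each homotopy object functor $\pi_n:\mathcal{C}\to\mathcal{C}^\heartsuit$ preserves $\kappa$-filtered colimits; such $\kappa$ exists because filtered colimits in $\mathcal{C}_{\geq 0}$ are left exact (this is part of the definition of a Grothendieck prestable $\infty$-category), so $\pi_0$ on $\mathcal{C}_{\geq 0}$ commutes with filtered colimits, and the $t$-structure on $\mathcal{C}$ is right complete by construction. For each $n\in \mathbb{Z}$, the full subcategory $\{X\in\mathcal{C}:\pi_n X=0\}$ is then $\kappa$-accessible, being cut out of $\mathcal{C}$ as the equalizer of two $\kappa$-accessible functors ($\pi_n$ and the zero functor) to $\mathcal{C}^\heartsuit$. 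Intersecting over $n\in\mathbb{Z}$, the subcategory $\mathcal{A}$ is itself $\kappa$-accessible; in particular, it is generated under $\kappa$-filtered colimits by a small set $\mathcal{A}^\kappa$ of $\kappa$-compact acyclic objects. Taking $S=\{U\to 0:U\in\mathcal{A}^\kappa\}$, we see that $\mathcal{C}^h$ equals the subcategory of $S$-local objects. Applying the standard existence theorem for accessible reflective localizations (\cite[Prop.\ 5.5.4.15]{HTT}) then produces the accessible left adjoint $X\mapsto X^h$.

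The main obstacle is the accessibility of $\mathcal{A}$: one needs the homotopy object functors to be compatible with sufficiently filtered colimits, so that the individual kernels $\{\pi_n=0\}$ and therefore their intersection are accessible. This compatibility is exactly what the Grothendieck prestable hypothesis on $\mathcal{C}_{\geq 0}$ supplies; without it the above argument would collapse. Once $\mathcal{A}$ is shown accessible, the reformulation of hypercompleteness in terms of the small set $S$ and the invocation of the standard reflective-localization theorem are routine.
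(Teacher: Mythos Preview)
Your proposal is correct and follows essentially the same route as the paper. Both arguments reduce to showing that the full subcategory $\mathcal{A}\subset\mathcal{C}$ of acyclic objects is accessible, then invoke the machinery of \cite[Sec.~5.5.4]{HTT}. The paper obtains accessibility of $\mathcal{A}$ by recognizing it as the kernel of the collected truncation functor $\mathcal{C}\to\prod_{n\in\mathbb{Z}}\mathcal{C}$, $X\mapsto (X_{\leq n})_n$, and invoking \cite[Prop.~5.4.7.3]{HTT}; you instead use the individual $\pi_n$. The paper also checks that $\mathcal{A}$ is closed under colimits (via the five-lemma and $t$-exactness of direct sums), whereas your packaging---extracting a small generating set $\mathcal{A}^\kappa$ and setting $S=\{U\to 0:U\in\mathcal{A}^\kappa\}$---makes that step unnecessary. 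One small point: your justification that $\pi_n$ commutes with filtered colimits is a bit elliptical; the clean statement is that the $t$-structure on $\mathcal{C}=\operatorname{Sp}(\mathcal{C}_{\geq 0})$ is compatible with filtered colimits (i.e., $\mathcal{C}_{\leq 0}$ is closed under them), which follows from left-exactness of filtered colimits in $\mathcal{C}_{\geq 0}$, and then both $\tau_{\geq n}$ and $\tau_{\leq n}$, hence $\pi_n$, commute with filtered colimits.
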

\begin{proof}
By the general machinery \cite[Sec. 5.5.4]{HTT}, it suffices to see that the
acyclic objects form an accessible full subcategory of $\mathcal{C}$ which is
closed under colimits.  It is accessible because it is the kernel of the
collected truncation functors $\mathcal{C}\rightarrow
\prod_{n\in\mathbb{Z}}\mathcal{C}$ and \cite[Prop. 5.4.7.3]{HTT}.  To see that it is closed under colimits, we
need to check closure under cofibers and direct sums.  The first follows from
the 5-lemma on homotopy groups.  The second follows from the fact that direct sums
are t-exact in a Grothendieck prestable $\infty$-category, since filtered
colimits are t-exact. 
\end{proof}

In particular we see that hypercompletion is an idempotent exact functor; neither idempotency nor exactness is clear for the Postnikov completion (inverse limit over the Postnikov tower) in general.

\begin{remark}
\label{hypisPostnikov}
By the general theory, the hypercompletion functor comes with a
natural transformation $X\rightarrow X^h$.  Moreover this pair of functor and
natural transformation can be characterized objectwise, as the unique
$\pi_\ast$-isomorphism from $X$ to a hypercomplete object.  In particular, if
$\mathcal{C}_{\geq 0}$ has enough objects of cohomological dimension $\leq d$
for some $d\in\mathbb{N}$, then by \Cref{hypforfinitecd} the hypercompletion
of $X \in \mathcal{C}$ is given by the Postnikov completion $X\mapsto \varprojlim_n X_{\leq n}$.\end{remark}

The construction of hypercompletion also has a universal property at the
categorical level; for this, we
note that the hypercomplete objects inherit a $t$-structure. 

\begin{construction}[{The universal property of $\mathcal{C}^h$,
\cite[Sec.~C.3.6]{SAG}}] 
For a Grothendieck prestable $\infty$-category $\mathcal{C}_{\geq 0}$, the
the hypercompletion functor $\mathcal{C}_{\geq 0} \to \mathcal{C}_{\geq 0}^h$
has a universal property: it is the initial exact, left adjoint functor to a
separated Grothendieck prestable $\infty$-category. 
\end{construction} 

In general, one can always form a general Postnikov completion construction on
$\mathcal{C}$, but it need not be given by any type of Bousfield localization.
This makes the hypercompletion slightly easier to work with in practice. 

\begin{construction}[{Cf.\ \cite[Prop. C.3.6.3]{SAG}}]
\label{completionofprestable} 
Let $\mathcal{C}_{\geq 0}$ be a Grothendieck prestable $\infty$-category. 
Then the homotopy limit $\widehat{\mathcal{C}_{\geq 0}} = \varprojlim_n
\mathcal{C}_{[0, n]}$ is a Grothendieck prestable $\infty$-category too  which
is complete. The functor $\mathcal{C}_{\geq 0}\to
\widehat{\mathcal{C}_{\geq 0}}$ is the universal cocontinuous exact functor out of
$\mathcal{C}_{\geq 0}$ into a  complete Grothendieck prestable $\infty$-category, and $\widehat{\mathcal{C}_{\geq 0}}$ is called the
completion (or left completion or Postnikov completion) of $\mathcal{C}_{\geq 0}$. 
\end{construction}

Explicitly, an object of $\widehat{\mathcal{C}_{\geq 0}}$ is a compatible (under
truncation) collection of objects $X_n \in \mathcal{C}_{[0, n]}$. 
The functor $\mathcal{C}_{\geq 0} \to \widehat{\mathcal{C}_{\geq 0}}$ 
has a right adjoint, which carries the compatible collection
$\left\{X_n\right\}$ to $\varprojlim_n X_n$. 
In particular, the composite of the right and left adjoints implements the
functor which carries $ Y \in \mathcal{C}_{\geq 0}$ to the limit of its Postnikov tower. 
The condition that $\mathcal{C}_{\geq 0}$ should be complete 
in particular implies that every object of $\mathcal{C}_{\geq 0}$ is Postnikov
complete. 

\begin{example} 
It is possible for a Grothendieck prestable $\infty$-category 
$\mathcal{C}_{\geq 0}$ 
to have the property that every object is Postnikov complete but 
such that $\mathcal{C}_{\geq 0}$ is not complete in the above sense. 
For example, we can take $\mathcal{C}_{\geq 0} \subset \sp_{\geq 0}$ to be the
subcategory of $K(1)$-acyclic, connective spectra. Since the subcategory of
$K(1)$-acyclic spectra inside all spectra is closed under truncations and
colimits, one sees that $\mathcal{C}_{\geq 0}$ is Grothendieck prestable;
however, its left completion is simply $\sp_{\geq 0}$, as the functor
$\mathcal{C} \to \sp_{\geq 0}$ induces an equivalence on truncated objects. 
\end{example} 

Often, one can compare the Postnikov completion and hypercompletion at the
level of Grothendieck prestable $\infty$-categories; we will
describe such comparisons either in cases of finite cohomological dimension or
in certain infinitary situations. 
For the next result, compare also \cite[Prop.~1.2.1.19]{HA} (which essentially
treats the case $d =0 $; the general case is similar). 
\begin{proposition} 
\label{countablepropcond}
Let $\mathcal{C}_{\geq 0}$ be a Grothendieck prestable $\infty$-category with
stabilization $\mathcal{C}$.
Suppose that there is a $d\geq 0$ such that a countable product of truncated objects of $\mathcal{C}_{\geq 0}$ belongs to
$\mathcal{C}_{\geq -d}$.
Then the hypercompletion and Postnikov completion of $\mathcal{C}_{\geq
0}$ agree. 
\end{proposition}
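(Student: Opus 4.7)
The plan is to reduce the categorical statement to a pointwise assertion. I will show that for every $X\in\mathcal{C}$, the canonical map $X\to\varprojlim_n X_{\leq n}$ is a $\pi_\ast$-isomorphism. Since the target is always hypercomplete by \Cref{hypvsPostcompl}(3), this map then exhibits the Postnikov tower limit as the hypercompletion of $X$, by the characterization in \Cref{hypisPostnikov}. In particular every hypercomplete object is automatically Postnikov complete, while the converse holds by \Cref{hypvsPostcompl}(3). The two subcategories of $\mathcal{C}$ coincide, and the universal properties of hypercompletion and Postnikov completion then identify $\mathcal{C}_{\geq 0}^h$ with $\widehat{\mathcal{C}_{\geq 0}}$.

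For the $\pi_\ast$-isomorphism, fix an integer $m$ and study the projection $g\colon\varprojlim_n X_{\leq n}\to X_{\leq m}$. Since fibers commute with limits, $\mathrm{fib}(g)\simeq\varprojlim_{n\geq m}F_n$ where $F_n=\mathrm{fib}(X_{\leq n}\to X_{\leq m})\in\mathcal{C}_{[m+1,n]}$ is truncated and lies in $\mathcal{C}_{\geq m+1}$. Applying the hypothesis to the shifted family $\Sigma^{-(m+1)}F_n$, which consists of truncated objects of $\mathcal{C}_{\geq 0}$, gives $\prod_{n\geq m}\Sigma^{-(m+1)}F_n\in\mathcal{C}_{\geq -d}$, hence $\prod_{n\geq m}F_n\in\mathcal{C}_{\geq m+1-d}$. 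Writing the limit as the fiber of the Milnor-style shift-minus-identity map
$$\varprojlim_{n\geq m}F_n\;\simeq\;\mathrm{fib}\Bigl(\prod_{n\geq m}F_n\xrightarrow{1-s}\prod_{n\geq m}F_n\Bigr),$$
and using that the fiber of a map between $k$-connective objects is $(k-1)$-connective, one obtains $\mathrm{fib}(g)\in\mathcal{C}_{\geq m-d}$.

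It follows that $g$ induces isomorphisms on $\pi_k$ for $k\leq m-d-1$. The truncation $X\to X_{\leq m}$ is an isomorphism on $\pi_k$ for $k\leq m$ and factors as $X\to\varprojlim_n X_{\leq n}\xrightarrow{g}X_{\leq m}$, so $X\to\varprojlim_n X_{\leq n}$ induces an isomorphism on $\pi_k$ for $k\leq m-d-1$. Since $m$ is arbitrary, the map is an isomorphism on $\pi_k$ for every $k\in\mathbb{Z}$, as desired.

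The main point where the hypothesis intervenes is the connectivity estimate on $\prod F_n$; without such a bound, $\mathrm{fib}(g)$ could have unbounded negative homotopy and no uniform comparison with $X_{\leq m}$ could be extracted. In spirit, this is a variant of the finite cohomological dimension argument of \Cref{hypforfinitecd}, with $d$ playing the analogous controlling role; the shift invariance of products allows the truncated hypothesis to propagate through the entire tower.
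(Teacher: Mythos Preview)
Your connectivity estimate is correct and is exactly the mechanism the paper uses. The gap is in your reduction step. You prove that for every $X\in\mathcal{C}$ the map $X\to\varprojlim_n X_{\leq n}$ is a $\pi_\ast$-isomorphism, hence that the full subcategories of hypercomplete and Postnikov-complete objects of $\mathcal{C}$ coincide. But the proposition is about the categorical completions, and $\widehat{\mathcal{C}_{\geq 0}}=\varprojlim_n\mathcal{C}_{[0,n]}$ is \emph{not} defined as the full subcategory of Postnikov-complete objects: an object of $\widehat{\mathcal{C}_{\geq 0}}$ is an arbitrary compatible tower $\{Y_n\}$, not a priori the Postnikov tower of anything. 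Your pointwise statement yields full faithfulness of the comparison functor $(\mathcal{C}_{\geq 0})^h\to\widehat{\mathcal{C}_{\geq 0}}$, but not essential surjectivity. Indeed, the paper gives immediately after \Cref{completionofprestable} an explicit example of a separated Grothendieck prestable $\infty$-category in which every object is Postnikov complete yet the category is not left-complete; so the inference ``subcategories coincide $\Rightarrow$ categorical completions agree via universal properties'' is invalid in general.

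The fix is minimal: run your same connectivity argument on an \emph{arbitrary} compatible tower $\{Y_n\}$ rather than on the Postnikov tower of a fixed $X$. Setting $X=\varprojlim_n Y_n$ and $F_n=\mathrm{fib}(Y_n\to Y_m)$ for $n\geq m$, your Milnor-plus-hypothesis estimate gives $\mathrm{fib}(X\to Y_m)\in\mathcal{C}_{\geq m-d}$, and then replacing $m$ by $m+d+1$ and composing with $Y_{m+d+1}\to Y_m$ shows $\mathrm{fib}(X\to Y_m)\in\mathcal{C}_{\geq m+1}$, so $X_{\leq m}\simeq Y_m$. This is precisely how the paper argues (after first reducing to the case where $\mathcal{C}_{\geq 0}$ is already separated).
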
 
\begin{proof} 
Without loss of generality, we can assume that $\mathcal{C}_{\geq 0}$ itself is
hypercomplete. 
Let $\left\{X_n\right\}_{n \geq 0}$ be a tower of objects in
$\mathcal{C}_{\geq 0}$ such
that 
for each $n$, 
$X_n$ is $n$-truncated, and the map $(X_n)_{\leq m} \to X_m$ is an equivalence
for $m \leq n$. It suffices to show that the inverse limit 
$X \stackrel{\mathrm{def}}{=}\varprojlim_n X_n$ (computed in $\mathcal{C}$) has
the property that the map 
$X \to X_n$ is given by $n$-truncation for any $n \geq 0$ (in particular, $X \in
\mathcal{C}_{\geq 0}$). 

In fact, consider the homotopy fiber $F_{n+d+1}$ of $X \to X_{n + d +1}$. 
This is the homotopy limit in $\mathcal{C}$ of a countable tower of truncated 
objects of $\mathcal{C}_{\geq n + d + 2}$ and therefore (by our
hypotheses) belongs to $\mathcal{C}_{\geq n + 2}$. 
It follows from the fiber sequence $F_{n+d+1} \to X \to X_{n+d+1}$ that 
the map $X \to X_n$ induces an equivalence on $n$-truncations, as desired. 
\end{proof}

\begin{corollary} 
\label{enoughcdPostnikovcomplete}
The hypotheses of \Cref{countablepropcond} are satisfied if $\mathcal{C}_{\geq
0}$ has enough objects of cohomological dimension $\leq d$ in the sense of
\Cref{cohdimensiondef}.  Thus, the hypercompletion and Postnikov completion of
$\mathcal{C}_{\geq 0}$ agree. 
\end{corollary}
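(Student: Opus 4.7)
The plan is to verify the hypothesis of \Cref{countablepropcond}: for any countable family $\{X_i\}_{i \in \mathbb{N}}$ of truncated objects of $\mathcal{C}_{\geq 0}$, the product $Y := \prod_i X_i$ (taken in $\mathcal{C}$) lies in $\mathcal{C}_{\geq -d}$. Given this, \Cref{countablepropcond} immediately supplies the conclusion.

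The first step is to show: for any $U \in \mathcal{C}_{\geq 0}$ of cohomological dimension $\leq d$ and any truncated $X \in \mathcal{C}_{\geq 0}$, the mapping spectrum $\hom_{\mathcal{C}}(U, X)$ lies in $\sp_{\geq -d}$. When $X = HA$ is discrete, this is just the identification $\pi_i \hom_{\mathcal{C}}(U, HA) = H^{-i}(U; A)$, which vanishes for $i < -d$. The general case follows by induction up the finite Postnikov tower of $X$ using the fiber sequences $\Sigma^k H\pi_k X \to \tau_{\leq k} X \to \tau_{\leq k-1} X$, since each $\Sigma^k \hom_{\mathcal{C}}(U, H\pi_k X) \in \sp_{\geq k-d} \subseteq \sp_{\geq -d}$ for $k \geq 0$ and $\sp_{\geq -d}$ is closed under extensions. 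Passing to products then gives $\hom_{\mathcal{C}}(U, Y) = \prod_i \hom_{\mathcal{C}}(U, X_i) \in \sp_{\geq -d}$, since products in $\sp$ preserve connectivity bounds.

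The heart of the argument is to upgrade this bound to $Y \in \mathcal{C}_{\geq -d}$. This is delicate, because one cannot naively invoke that objects of cohomological dimension $\leq d$ generate $\mathcal{C}_{\geq 0}$ under colimits: $\hom(-, Y)$ turns colimits into limits, and limits in $\sp$ need not preserve connectivity. Instead I would argue by contradiction. If $A := \pi_j Y \neq 0$ for some $j < -d$, apply the enough-objects hypothesis to $A \in \mathcal{C}^\heartsuit \subseteq \mathcal{C}_{\geq 0}$ to pick $U$ of cohomological dimension $\leq d$ with an epimorphism $\pi_0 U \twoheadrightarrow A$, so that $H^0(U; A) \neq 0$. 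Now apply $\hom_{\mathcal{C}}(U, -)$ to the top Postnikov fiber sequence
\[
\tau_{\leq j-1} Y \to \tau_{\leq j} Y \to \Sigma^j HA.
\]
Since $\tau_{\leq j-1} Y \in \mathcal{C}_{\leq j-1}$ and $U \in \mathcal{C}_{\geq 0}$, the leftmost term $\hom_{\mathcal{C}}(U, \tau_{\leq j-1} Y)$ lies in $\sp_{\leq j-1}$, so its $\pi_{-j}$ and $\pi_{-j-1}$ both vanish (using $j \leq -1$). The long exact sequence then yields an isomorphism $\pi_{-j}\hom_{\mathcal{C}}(U, \tau_{\leq j} Y) \cong \pi_{-j}\hom_{\mathcal{C}}(U, \Sigma^j HA) = H^0(U; A) \neq 0$. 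On the other hand, the standard $t$-structure identity $\hom_{\mathcal{C}}(U, \tau_{\leq j} Y) = \tau_{\leq j}\hom_{\mathcal{C}}(U, Y)$ (checked via the fiber sequence $\tau_{\geq j+1}Y \to Y \to \tau_{\leq j}Y$) forces this spectrum to vanish, because $\hom_{\mathcal{C}}(U, Y) \in \sp_{\geq -d}$ and $j < -d$. Contradiction.

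The main obstacle is precisely this final deduction: the connectivity estimate on $\hom_{\mathcal{C}}(U, Y)$ only holds for $U$ in the generating class of objects of cohomological dimension $\leq d$, and it must be converted into a statement about $Y$ itself. The argument threads the needle by isolating a single potentially-nonzero homotopy object $\pi_j Y$ via the Postnikov filtration and detecting it with one well-chosen $U$, using the automatic upper $t$-structure bound on mapping out of $\mathcal{C}_{\geq 0}$ to keep the relevant LES tight.
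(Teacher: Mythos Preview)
Your first two steps match the paper and are correct. The gap is in step 3: the identity $\hom_{\mathcal{C}}(U, \tau_{\leq j} Y) = \tau_{\leq j}\hom_{\mathcal{C}}(U, Y)$ does not follow from the fiber sequence $\tau_{\geq j+1} Y \to Y \to \tau_{\leq j} Y$. For that you would need $\hom_{\mathcal{C}}(U, \tau_{\geq j+1} Y) \in \sp_{\geq j+1}$, which is right $t$-exactness of $\hom(U,-)$ and only holds for $U$ of cohomological dimension $0$. Your connectivity bound was proved only for \emph{truncated} $X \in \mathcal{C}_{\geq 0}$, and $\tau_{\geq j+1} Y$ need not be truncated. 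Concretely: since $\hom(U, Y) \in \sp_{\geq -d} \subset \sp_{\geq j+1}$ while $\hom(U, \tau_{\leq j} Y) \in \sp_{\leq j}$, the induced map between them is zero, so the fiber sequence just gives $\hom(U, \tau_{\geq j+1} Y) \simeq \hom(U, Y) \times \Omega\,\hom(U, \tau_{\leq j} Y)$, from which you learn nothing about $\hom(U, \tau_{\leq j} Y)$. (There are also minor slips: $\pi_{-j}$ should be $\pi_j$ throughout, and the Postnikov fiber sequence runs $\Sigma^j HA \to \tau_{\leq j} Y \to \tau_{\leq j-1} Y$, not the reverse.)

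The fix, which is the paper's argument, is a small but decisive change: apply the enough-objects hypothesis not to $A = \pi_j Y$ but directly to the connective cover $(\Sigma^{-j} Y)_{\geq 0} \in \mathcal{C}_{\geq 0}$. Choose $U$ of cohomological dimension $\leq d$ with a map $U \to (\Sigma^{-j} Y)_{\geq 0}$ that is an epimorphism on $\pi_0$. The composite with $(\Sigma^{-j} Y)_{\geq 0} \to \Sigma^{-j} Y$ is then nonzero on $\pi_0$, hence a nonzero class in $[U, \Sigma^{-j} Y] = \pi_j \hom_{\mathcal{C}}(U, Y)$, contradicting $\hom_{\mathcal{C}}(U, Y) \in \sp_{\geq -d}$. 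This avoids truncating $Y$ entirely.
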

\begin{proof} 
Let $U$ be an object of cohomological dimension $d$. 
Let $\left\{Y_i\right\}_{i \in \mathbb{Z}_{\geq 0}}$ be a countable family of
truncated objects of $\mathcal{C}_{\geq 0}$. 
Then our assumptions show that 
the mapping spectra $\hom_{\mathcal{C}}(U, Y_i)$ belong to $\sp_{\geq -d}$;
taking products, we find that $\hom_{\mathcal{C}}( U, \prod_i Y_i) \in
\sp_{\geq -d}. $
This in particular implies that any map from $U$ to $\Sigma^{d+j}\prod_i Y_i$ is
nullhomotopic for any $j \geq 1$, whence the desired connectivity assertion.  The last claim then follows from \Cref{countablepropcond}. 
\end{proof} 

\begin{example} 
Let $\mathcal{A}$ be a Grothendieck abelian category. 
Suppose that (for some fixed $d \geq 0$) for every object $X \in \mathcal{A}$,
there exists a surjection $Y \twoheadrightarrow X$ such that $Y$ has
cohomological dimension $\leq d$. Then $\mathcal{D}(\mathcal{A})_{\geq 0}$ is Postnikov
complete. 
This follows from \Cref{enoughcdPostnikovcomplete}, noting that for any object
$Z \in \mathcal{D}(\mathcal{A})_{\geq 0}$, there exists an object $X \in
\mathcal{A}$ (embedded as the heart) with a map $X \to Z$ inducing a surjection
on $\pi_0$, i.e., $\mathcal{D}(\mathcal{A})_{\geq 0}$ is $0$-complicial 
\cite[Def.~C.5.3.1]{SAG}. 
\end{example}

\subsection{Smashing hypercompletion}

Fix a Grothendieck prestable $\infty$-category $\mathcal{C}_{\geq 0}$ as in the previous
subsection. 
Now suppose that $\mathcal{C}_{\geq 0}$ has a symmetric monoidal structure
$\otimes$ which commutes with colimits in each variable.  For example, if
$\mathcal{C}_{\geq 0}$ is given as sheaves of connective spectra on a
Grothendieck site, then there is such a symmetric monoidal structure $\otimes$.
It can be produced by sheafifying the usual section-wise smash product, or
(equivalently) by stabilizing the cartesian product symmetric monoidal structure
on the associated $\infty$-topos  \cite[Sec. 1.3.4]{SAG}.
We also denote by $\otimes$ the unique extension of this symmetric monoidal structure to $\mathcal{C}=\mathcal{C}_{\geq 0}\otimes\operatorname{Sp}$ having the same colimit preserving property.

\begin{lemma}
\label{tensorproductCh}
\begin{enumerate}
\item Let $X,Y\in\mathcal{C}$.  If $X$ is acyclic, then so is $X\otimes Y$.
\item There is a unique symmetric monoidal structure $\otimes^h$ on $\mathcal{C}^h$ making hypercompletion $\mathcal{C}\rightarrow\mathcal{C}^h$ into a symmetric monoidal functor, given on objects by $X\otimes^h Y = (X\otimes Y)^h$.
\item Every $X\in\mathcal{C}^h$ has the unique and functorial structure of a $1^h$-module, where $1$ is the unit of $\mathcal{C}$.
\end{enumerate}
\end{lemma}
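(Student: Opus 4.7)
The plan is to treat claim (1) as the main content, since (2) and (3) will follow from (1) by standard symmetric-monoidal localization machinery. My approach to (1) will use the t-structure on $\mathcal{C}$ in two complementary ways.

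First, I will observe that an acyclic object $X \in \mathcal{C}$ is in fact \emph{infinitely connective} in the sense that $X \in \mathcal{C}_{\geq n}$ for every $n \in \mathbb{Z}$.  Indeed, each truncation $X_{\leq n}$ has all homotopy objects vanishing while being t-bounded above, so by \Cref{hypvsPostcompl}(2) it is both acyclic and hypercomplete, which forces $X_{\leq n} = 0$ (the identity endomorphism must be null).  The fiber sequence $X_{\geq n+1} \to X \to X_{\leq n}$ then yields $X \simeq X_{\geq n+1}$.  Second, the assumption that $\otimes$ on $\mathcal{C}_{\geq 0}$ preserves colimits and restricts from $\mathcal{C}$ gives $\mathcal{C}_{\geq 0}\otimes \mathcal{C}_{\geq 0}\subset \mathcal{C}_{\geq 0}$, from which the shifted inclusion $\mathcal{C}_{\geq a}\otimes \mathcal{C}_{\geq b}\subset \mathcal{C}_{\geq a+b}$ is immediate.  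Combining these two inputs, whenever $Y$ is t-bounded below the product $X\otimes Y$ lies in $\mathcal{C}_{\geq n}$ for every $n$, so $X\otimes Y$ is acyclic.  For general $Y$, I will write $Y \simeq \operatorname*{colim}_n Y_{\geq -n}$ using right completeness of the t-structure on $\mathcal{C}$, so that $X \otimes Y \simeq \operatorname*{colim}_n (X\otimes Y_{\geq -n})$ becomes a colimit of acyclic objects.  Closure of acyclics under colimits, already recorded in the proof of \Cref{hypcompletion}, then finishes (1).

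Once (1) is established, claims (2) and (3) are formal.  For (2), the hypercompletion functor $\mathcal{C}\to \mathcal{C}^h$ is a Bousfield localization of a presentable symmetric monoidal $\infty$-category, and (1) is precisely the statement that the class of hypercompletion equivalences is compatible with $\otimes$: tensoring with an acyclic produces an acyclic, so tensoring a local equivalence with an arbitrary object again gives a local equivalence.  The standard monoidal Bousfield-localization machinery (cf.\ \cite[Prop.~2.2.1.9]{HA}) then equips $\mathcal{C}^h$ with a unique symmetric monoidal structure $\otimes^h$ for which hypercompletion is symmetric monoidal, and the description $X\otimes^h Y = (X\otimes Y)^h$ is tautological from this universal property.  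For (3), I will invoke the standard fact that in any symmetric monoidal $\infty$-category every object carries a unique and functorial module structure over the unit; inside $\mathcal{C}^h$, whose unit is the hypercompletion $1^h$ of $1 \in \mathcal{C}$, this yields exactly the claim.

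The hard part is (1), and in particular the initially surprising step that acyclicity forces infinite connectivity.  In an ordinary stable $\infty$-category that would be absurd, but in the Grothendieck prestable setting it is the characteristic phenomenon that distinguishes hypercompleteness from Postnikov completeness, and it is exactly what one needs to marry with right t-exactness of $\otimes$.  After that observation, everything else is a short routine from t-exactness and right completeness of the t-structure.
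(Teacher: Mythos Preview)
Your proposal is correct and matches the paper's own proof essentially line for line: the paper also reduces (2) and (3) to (1) by citing \cite[Prop.~2.2.1.9]{HA}, and proves (1) by noting that an acyclic $X$ lies in $\mathcal{C}_{\geq n}$ for every $n$, reducing via right completeness to $Y$ t-bounded below, and then using $\mathcal{C}_{\geq a}\otimes \mathcal{C}_{\geq b}\subset \mathcal{C}_{\geq a+b}$. Your write-up is simply more explicit about why acyclic implies infinitely connective and about the colimit step, but the argument is the same.
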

\begin{proof}
Claims 2 and 3 follow formally from claim 1, cf.\ \cite[Prop. 2.2.1.9]{HA}.  To prove claim 1, since the t-structure is right complete, we can assume $Y$ is t-bounded below, say $Y\in\mathcal{C}_{\geq m}$.  Since $X$ is acyclic, it lies in $\mathcal{C}_{\geq n}$ for any $n$.  Thus $X\otimes Y\in \mathcal{C}_{\geq n+m}$ for any $n$, hence is acyclic.
\end{proof}

\begin{lemma}
\label{whenishypsmashing}
The following conditions are equivalent:
\begin{enumerate}
\item The full subcategory $\mathcal{C}^h$ is closed under colimits and tensoring with any $X\in\mathcal{C}$.
\item For any $X\in\mathcal{C}$, the object $1^h\otimes X$ is hypercomplete.
\item For any $X\in \mathcal{C}$, the map $X\rightarrow 1^h\otimes X$ of tensoring with $1\rightarrow 1^h$ gives the hypercompletion of $X$.
\item The forgetful functor $\operatorname{Mod}_{1^h}(\mathcal{C})\rightarrow\mathcal{C}$ is fully faithful with essential image $\mathcal{C}^h$.
\item Every $X\in\mathcal{C}$ which is a module over an algebra $A\in\operatorname{Alg}(\mathcal{C}^h)$ also lies in $\mathcal{C}^h$.
\end{enumerate}
\label{lem:smashinghyp}
\end{lemma}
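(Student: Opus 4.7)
The plan is to establish the five equivalences via the cycle $(1)\Rightarrow(2)\Rightarrow(3)\Rightarrow(1)$, supplemented by $(3)\Rightarrow(4)\Rightarrow(5)\Rightarrow(2)$. The implication $(1)\Rightarrow(2)$ is immediate: $1^h \in \mathcal{C}^h$ by construction, and by (1) this subcategory is closed under tensoring with any $X\in\mathcal{C}$.

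The crucial step is $(2)\Rightarrow(3)$. Form the fiber sequence $F \to 1 \to 1^h$ in $\mathcal{C}$; by \Cref{hypisPostnikov} the unit $1 \to 1^h$ of hypercompletion is a $\pi_\ast$-isomorphism, so $F$ is acyclic. Tensoring with any $X\in\mathcal{C}$ yields a fiber sequence $F\otimes X \to X \to 1^h \otimes X$ in which $F\otimes X$ is acyclic by \Cref{tensorproductCh}(1). Hence $X \to 1^h\otimes X$ is a $\pi_\ast$-isomorphism whose target is hypercomplete by hypothesis (2), and the uniqueness characterization of hypercompletion in \Cref{hypisPostnikov} identifies this map with hypercompletion. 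For $(3)\Rightarrow(1)$, observe that (3) realizes hypercompletion as the colimit-preserving functor $X \mapsto 1^h\otimes X$. Closure of $\mathcal{C}^h$ under colimits then follows: given $\{Y_i\}$ in $\mathcal{C}^h$ with colimit $C$ taken in $\mathcal{C}$, we have $C^h \simeq \mathrm{colim}\, Y_i^h \simeq \mathrm{colim}\, Y_i = C$. Closure under tensoring is similar: if $Y\in\mathcal{C}^h$ then $Y\simeq 1^h\otimes Y$, so $Y\otimes X \simeq 1^h\otimes(Y\otimes X) = (Y\otimes X)^h$ is hypercomplete.

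For $(3)\Rightarrow(4)$: by (3) the natural map $1^h = 1\otimes 1^h \to 1^h\otimes 1^h$ is the hypercompletion of the already-hypercomplete object $1^h$, hence an equivalence. Thus $1 \to 1^h$ exhibits $1^h$ as an idempotent commutative algebra in $\mathcal{C}$ in the sense of \cite[Sec.~4.8.2]{HA}, and the standard identification of modules over an idempotent algebra with the corresponding smashing Bousfield localization yields a fully faithful embedding $\mathrm{Mod}_{1^h}(\mathcal{C}) \hookrightarrow \mathcal{C}$ whose image is the subcategory of objects $X$ satisfying $X \simeq 1^h \otimes X$; by (3) this is exactly $\mathcal{C}^h$. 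For $(4)\Rightarrow(5)$: any $A\in\mathrm{Alg}(\mathcal{C}^h)$ lies in $\mathcal{C}^h$ and is thus a $1^h$-module by (4), so the unit of $A$ factors through an algebra map $1^h \to A$; restriction along this map exhibits every $A$-module in $\mathcal{C}$ as a $1^h$-module, hence as hypercomplete. Finally, for $(5)\Rightarrow(2)$, note that $1^h\in\mathrm{Alg}(\mathcal{C}^h)$ by \Cref{tensorproductCh}(2), and for any $X\in\mathcal{C}$ the free module $1^h\otimes X$ is a $1^h$-module; (5) then forces it to be hypercomplete.

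The main obstacle is the bridge $(3)\Rightarrow(4)$: one must invoke Lurie's theory of idempotent algebras to identify $\mathcal{C}^h$ with $\mathrm{Mod}_{1^h}(\mathcal{C})$. Once it is verified that $1\to 1^h$ produces an idempotent object in the precise sense of \cite[Sec.~4.8.2]{HA}, the identification is automatic. Everything else in the argument is formal, drawing only on \Cref{tensorproductCh} (the tensor product of an acyclic with anything is acyclic, together with the monoidal structure on $\mathcal{C}^h$) and the characterization of hypercompletion as the unique $\pi_\ast$-isomorphism to a hypercomplete object from \Cref{hypisPostnikov}.
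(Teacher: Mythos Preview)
Your proof is correct and follows essentially the same route as the paper's, with two minor differences worth noting. First, the paper closes the main cycle as $(1)\Rightarrow(2)\Rightarrow(3)\Rightarrow(4)\Rightarrow(1)$, using for $(4)\Rightarrow(1)$ the observation that the forgetful functor $\mathrm{Mod}_{1^h}(\mathcal{C})\to\mathcal{C}$ preserves colimits and tensoring; your direct argument for $(3)\Rightarrow(1)$ is equally valid. Second, for $(3)\Rightarrow(4)$ the paper gives a more elementary argument than your appeal to the idempotent-algebra machinery of \cite[Sec.~4.8.2]{HA}: full faithfulness reduces to $1^h\otimes 1^h\simeq 1^h$ (the case $X=1^h$ of (3)), and the essential image claim is handled by noting that any $1^h$-module $X$ is a retract of $1^h\otimes X = X^h$ (hence hypercomplete), while conversely every hypercomplete object is a $1^h$-module by \Cref{tensorproductCh}(3). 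This avoids the external reference and keeps the argument self-contained, though of course your approach is conceptually equivalent.
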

\begin{proof}
1 $\Rightarrow$ 2 is trivial.

Suppose 2.  Then $1^h\otimes X$ is hypercomplete.  On the other hand the fiber
of $X\rightarrow 1^h\otimes X$ is acyclic by \Cref{tensorproductCh}, whence 3.

Suppose 3.  Full faithfulness of the forgetful functor is equivalent to
$1^h\otimes 1^h = 1^h$.  This is the special case $X=1^h$ of 3.  For the
essential image claim, suppose $X$ admits a $1^h$-module structure.  Then $X$ is
a retract of $X\otimes 1^h=X^h$, hence $X$ is hypercomplete.  Conversely if $X$
is hypercomplete, then $X$ is a module over $1^h$ by \Cref{tensorproductCh}.

Suppose 4.  Then 1 follows because the forgetful functor in 4 commutes with colimits and tensoring with $X\in \mathcal{C}$.

Thus all of 1 through 4 are equivalent, and it suffices to show they are also equivalent to 5.

Since a module over $A$ is a fortiori a module over $1^h$, we see that even just the essential image claim of 4 implies 5.  Conversely if 5 holds, then since $1^h\otimes X$ is a module over $1^h$ we clearly have 2.
\end{proof}

\begin{definition}[Smashing hypercompletions]
Suppose that $\mathcal{C}$ satisfies the conditions of \Cref{lem:smashinghyp}.
Then we say that \emph{hypercompletion is smashing} for $\mathcal{C}$ (compare
\cite{Rav84}).
\end{definition}

This represents a desirable formal situation, only one step away from the ideal situation where $\mathcal{C}^h=\mathcal{C}$.

Next we provide a local-global criterion, essentially that of
\cite[Prop. 1.3.3.6]{SAG}.  To give the setup, let $\mathcal{X}$ be an
$\infty$-topos.  Recall that a sheaf on $\mathcal{X}$ with values in a
presentable $\infty$-category $\mathcal{Y}$ can be defined simply as a
limit-preserving functor $\mathcal{X}^{op}\rightarrow\mathcal{Y}$ \cite[Sec.
1.3.1]{SAG}.  In particular, we have the $\infty$-category of sheaves of connective spectra on $\mathcal{X}$, which is a Grothendieck prestable $\infty$-category with stabilization the $\infty$-category of sheaves of spectra on $\mathcal{X}$ and heart the category of sheaves of abelian groups on $\mathcal{X}$.

For $x\in\mathcal{X}$, we note by $x^\ast$ the pullback functor on sheaves of spectra from $\mathcal{X}$ to the slice topos $\mathcal{X}_{/x}$, defined by
$$(x^\ast\mathcal{F})(y\rightarrow x) = \mathcal{F}(y).$$

\begin{proposition}
\label{locglobhyp}
Let $\mathcal{X}$ be an $\infty$-topos, and $S$ a collection of objects which covers $\mathcal{X}$, i.e.\ the terminal object $\ast$ lies in the smallest subcategory of $\mathcal{X}$ containing $S$ and closed under colimits.
\begin{enumerate}
\item If $\mathcal{F}$ is a sheaf of spectra on $\mathcal{X}$, then $\mathcal{F}$ is hypercomplete if and only if $x^\ast\mathcal{F}$ is hypercomplete for all $x\in S$.
\item If $\mathcal{F}$ is a sheaf of spectra on $\mathcal{X}$, then $\mathcal{F}$ is Postnikov complete if and only if $x^\ast\mathcal{F}$ is Postnikov complete for all $x\in S$.
\item If hypercompletion is smashing for sheaves of spectra on $\mathcal{X}_{/x}$ for all $x\in S$, then it is also smashing for sheaves of spectra on $\mathcal{X}$.
\end{enumerate}
\end{proposition}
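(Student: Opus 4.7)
The plan is to handle all three parts uniformly by leveraging the pullback functors $x^\ast\colon \sh(\mathcal{X}, \sp) \to \sh(\mathcal{X}_{/x}, \sp)$ associated to the étale geometric morphisms $\mathcal{X}_{/x} \to \mathcal{X}$ for $x \in \mathcal{X}$. I first record the properties of these functors that I will need: $x^\ast$ is symmetric monoidal; it is $t$-exact (being both a left and a right adjoint, since $\mathcal{X}_{/x} \to \mathcal{X}$ is étale) and so preserves $\pi_n$ and the Postnikov truncations $(-)_{\leq n}$; and it preserves hypercomplete objects, a standard consequence of its being left exact as an $\infty$-topos pullback. Crucially, when $S$ covers $\mathcal{X}$ the family $\{x^\ast\}_{x \in S}$ is jointly conservative on $\sh(\mathcal{X}, \sp)$: this follows by writing the terminal object of $\mathcal{X}$ as an iterated colimit of objects of $S$ and observing that global sections turn such colimits into limits.

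For part (1), the forward direction is just the fact that $x^\ast$ preserves hypercomplete objects. For the reverse direction I will apply $x^\ast$ to the hypercompletion map $\mathcal{F} \to \mathcal{F}^h$: this is a $\pi_\ast$-isomorphism, preserved as such by $x^\ast$; by the forward direction $x^\ast \mathcal{F}^h$ is hypercomplete, as is $x^\ast \mathcal{F}$ by hypothesis. A $\pi_\ast$-isomorphism between hypercomplete objects is an equivalence, and joint conservativity then forces $\mathcal{F} \to \mathcal{F}^h$ itself to be an equivalence. Part (2) will be handled by exactly the same conservativity strategy applied to the Postnikov completion map $\mathcal{F} \to \varprojlim_n \mathcal{F}_{\leq n}$, using that $x^\ast$ commutes with limits and with Postnikov truncations.

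For part (3), I invoke \Cref{lem:smashinghyp}, by which it suffices to show that $1^h \otimes X$ is hypercomplete for every $X \in \sh(\mathcal{X}, \sp)$. By part (1) this reduces to showing $x^\ast(1^h \otimes X)$ is hypercomplete on $\mathcal{X}_{/x}$ for every $x \in S$. Symmetric monoidality of $x^\ast$ rewrites this as $x^\ast(1^h) \otimes x^\ast X$, and the natural map $1_{\mathcal{X}_{/x}} = x^\ast(1) \to x^\ast(1^h)$ is a $\pi_\ast$-isomorphism (by $t$-exactness) whose target is hypercomplete (by the forward direction of part (1)), so it exhibits $x^\ast(1^h)$ as the unit hypercompletion on $\mathcal{X}_{/x}$. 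The assumption that hypercompletion is smashing on $\mathcal{X}_{/x}$ then guarantees that $x^\ast(1^h) \otimes x^\ast X$ is hypercomplete, as required.

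The key technical point is the joint conservativity of $\{x^\ast\}_{x \in S}$, which rests on writing the terminal object of $\mathcal{X}$ as an iterated colimit from $S$; once that is available, the whole argument is a formal consequence of adjunctions, $t$-exactness, and symmetric monoidality of the pullback functors, together with the basic observation that hypercompletions are controlled by their behavior on $\pi_\ast$.
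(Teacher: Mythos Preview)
Your argument is correct in outline and close to the paper's, but two points deserve tightening.

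First, the claim that $x^\ast$ preserves hypercomplete objects is not a consequence of left exactness alone; a general geometric pullback need not preserve hypercompletes. The paper argues instead via the \emph{left} adjoint $x_!$ of $x^\ast$ (which exists because $\mathcal{X}_{/x}\to\mathcal{X}$ is \'etale): since $x^\ast$ is $t$-exact, $x_!$ sends $n$-connective objects to $n$-connective objects for all $n$, hence preserves acyclics, and then by adjunction $x^\ast$ preserves hypercompletes.

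Second, your justification for joint conservativity is incomplete as stated: writing $\ast$ as an iterated colimit of objects of $S$ and using that sections send colimits to limits only yields that $\mathcal{F}(\ast)\to\mathcal{G}(\ast)$ is an equivalence, not that $\mathcal{F}(y)\to\mathcal{G}(y)$ is for all $y$. One fixes this by noting that for any $y$ the functor $y\times(-)$ preserves colimits, so $y = y\times\ast$ is an iterated colimit of objects $y\times x$ with $x\in S$; each such $y\times x$ lies over $x$, hence $\mathcal{F}(y\times x)\to\mathcal{G}(y\times x)$ is an equivalence by hypothesis, and one concludes by passing to the limit.

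On approach: for parts (2) and (3) you and the paper argue identically. For part (1), the paper takes a slightly different tack, avoiding an explicit conservativity statement: given $A$ acyclic it checks that the \emph{mapping sheaf} $\underline{\operatorname{Map}}(A,\mathcal{F})$ is terminal, which can be tested after pulling back to each $x\in S$; since $x^\ast$ is $t$-exact it preserves acyclics, and then hypercompleteness of $x^\ast\mathcal{F}$ finishes. Your route through the hypercompletion map $\mathcal{F}\to\mathcal{F}^h$ and conservativity is equally valid and arguably more transparent once conservativity is properly established.
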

\begin{proof}
First we show that $x^\ast$ preserves hypercompleteness for any $x\in\mathcal{X}$.  For this, note that $x^\ast$ is t-exact and preserves limits.  Its left adjoint $x_!$ then necessarily sends $n$-connective objects to $n$-connective objects for all $n$, and in particular preserves acyclics.  It follows that $x^\ast$ does indeed preserve hypercompleteness.

Now suppose that $\mathcal{F}$ is such that $x^\ast\mathcal{F}$ is hypercomplete for all $x\in S$. Let $A$ be acyclic; we will check that the mapping sheaf $\underline{\operatorname{Map}}(A,\mathcal{F})$ is terminal.  It is enough to check this on pullback to any $x\in S$, because $S$ covers.  However, pullback is t-exact and hence preserves acyclic objects, whence the claim.

For 2, since $x^\ast$ preserves limits and is t-exact, it preserves Postnikov towers and their limits.  On the other hand, since $S$ covers, the map $\mathcal{F}\rightarrow\varprojlim_n\mathcal{F}_{\leq n}$ is an equivalence if and only if it is an equivalence on $x^\ast$ for all $x\in S$. Combining gives the claim.

Finally we prove 3.  We need to see that $\mathcal{F}\otimes \mathcal{G}$ is
hypercomplete for all sheaves of spectra $\mathcal{F},\mathcal{G}$ on
$\mathcal{X}$ such that $\mathcal{F}$ is hypercomplete, by part 2 of
\Cref{whenishypsmashing}.  By the first claim this can be checked after applying $x^\ast$.  But $x^\ast$ is symmetric monoidal and preserves hypercompleteness, so we do indeed reduce to hypercompletion being smashing on each $\mathcal{X}_{/x}$, as claimed.
\end{proof}

\begin{remark}
We do not know the whether the converse to 3 holds, that is whether having smashing hypercompletion passes to slice topoi.  If it does not, as seems likely, then the notion of \emph{locally} having smashing hypercompletion, where one requires all slice topoi to have smashing hypercompletion, is probably more amenable in general than the notion of just having smashing localization.  In all the cases in this paper where we prove that hypercompletion is smashing, it follows immediately from the statement that the same is true locally, so this issue does not concern us much here.
\end{remark}

Recall from \Cref{whenishypsmashing} that hypercompletion is smashing for
$\mathcal{C}$ if and only if the full subcategory
$\mathcal{C}^h\subset\mathcal{C}$ of hypercomplete objects is closed under
colimits and $-\otimes X$ for all $X\in\mathcal{C}$.  Using the connection with
Postnikov completion given by \Cref{hypisPostnikov}, one can often check the first condition using abelian cohomology (compare \cite[Prop. 1.39]{Th85}).  Here we no longer need the symmetric monoidal structure on $\mathcal{C}$.
First we need an auxiliary notion.

\begin{definition} 
Let $F: \mathcal{C} \to \mathcal{D}$ be  a left adjoint functor between presentable 
$\infty$-categories with right adjoint $G: \mathcal{D} \to \mathcal{C}$. We say that 
this adjunction (or simply the left adjoint $F$) is \emph{strongly generating} if given a diagram 
$f: K^{\rhd} \to \mathcal{D}$ (the right cone on a simplicial set $K$, cf.\ \cite[Notation 1.2.8.4]{HTT}) such that $G(f)$ is a colimit diagram in
$\mathcal{C}$, then $f$ is a
colimit diagram. 
\end{definition} 

Consider a strongly generating adjunction $(F, G): \mathcal{C} \rightleftarrows
\mathcal{D}$ as above. In this case, 
the right adjoint $G$ is conservative since one can apply the definition to $K =
\ast$. It follows that the image of $F$ generates $\mathcal{D}$ under colimits. 
An important example of strong generation, though not the one relevant for our purposes, arises from the theory of 
Grothendieck prestable $\infty$-categories. 
Let $\mathcal{D}_{\geq 0}$ be a separated Grothendieck prestable
$\infty$-category, and let $X \in \mathcal{D}_{\geq 0}$ be an object such that
for any $Y \in \mathcal{D}_{\geq 0}$, there exists a set $I$ and a map
$\bigoplus_I X \to Y$ which induces a surjection on $\pi_0$. 
Then the functor $\mathrm{Sp}_{\geq 0} \to \mathcal{D}_{\geq 0}$ given by
tensoring with $X$ is strongly generating, by the $\infty$-categorical
Gabriel-Popescu theorem \cite[Theorem C.2.1.6]{SAG}.   In the following, we will use strong generation to deduce 
hypercompleteness. 

\begin{proposition}
\label{critforhypcolimits}
Let $\mathcal{C}_{\geq 0}$ be a Grothendieck prestable $\infty$-category with stabilization $\mathcal{C}$. Suppose there exist a $d\geq 0$ and a functor $h_{(-)}:\mathcal{T}\rightarrow\mathcal{C}_{\geq 0}$ from a small $\infty$-category $\mathcal{T}$ such that:
\begin{enumerate}
\item The induced functor $\operatorname{PSh}(\mathcal{T};\operatorname{Sp})\rightarrow\mathcal{C}$ is strongly generating;
\item For every $U\in\mathcal{T}$, the cohomology functor $H^n(U;-)=[h_U,\Sigma^n(-)]:\mathcal{C}^\heartsuit\rightarrow\operatorname{Ab}$ commutes with filtered colimits for all $n$ and vanishes for $n>d$.
\end{enumerate}

Then the collection of hypercomplete objects $\mathcal{C}^h\subset\mathcal{C}$ is closed under all colimits.
Moreover, the hypercompletions of the objects $h_t, t \in \mathcal{T}$ are
compact in $\mathcal{C}^h$. 
\end{proposition}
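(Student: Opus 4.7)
The plan is to establish the compactness claim first, and then use it together with the strong generation hypothesis to deduce closure under colimits.

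As a preliminary step, I would observe that the hypotheses force $\mathcal{C}_{\geq 0}$ to have enough objects of cohomological dimension $\leq d$: by strong generation the $h_t$ generate $\mathcal{C}$ under colimits, and a standard cellular argument promotes this to generation of $\mathcal{C}_{\geq 0}$ under colimits by the $h_t$'s themselves. Since each $h_t$ has cohomological dimension $\leq d$ by hypothesis (2), the remark following \Cref{cohdimensiondef} then yields the ``enough objects'' property. Consequently, by \Cref{enoughcdPostnikovcomplete} and \Cref{hypforfinitecd}, hypercompletion agrees with Postnikov completion: $X^h \simeq \varprojlim_n X_{\leq n}$ for every $X \in \mathcal{C}$.

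Next I would prove compactness of $h_t^h$ in $\mathcal{C}^h$. Let $\{Y_i\}$ be a filtered diagram of hypercomplete objects with $\mathcal{C}^h$-colimit $Y = (\varinjlim_i Y_i)^h \simeq \varprojlim_n \varinjlim_i (Y_i)_{\leq n}$, using that $(-)_{\leq n}$ is a left adjoint and hence commutes with filtered colimits. By adjunction, $\operatorname{Hom}_{\mathcal{C}^h}(h_t^h, Y) \simeq \operatorname{Hom}_{\mathcal{C}}(h_t, Y) \simeq \varprojlim_n \operatorname{Hom}_{\mathcal{C}}(h_t, \varinjlim_i (Y_i)_{\leq n})$. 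For fixed $n$, the descent spectral sequence (\Cref{descentss}) computing $\operatorname{Hom}_{\mathcal{C}}(h_t, -)$ on the bounded object $(Y_i)_{\leq n}$ has only finitely many nonzero rows (bounded by $d$) and so converges strongly; combined with the hypothesis that $H^p(h_t; -)$ commutes with filtered colimits on $\mathcal{C}^\heartsuit$, this shows that $\operatorname{Hom}_{\mathcal{C}}(h_t, -)$ commutes with filtered colimits of truncated objects. The resulting tower in $n$ has fibers of connectivity $\geq n + 1 - d$, so $\varprojlim_n$ stabilizes in each $\pi_k$ for $n > k + d$, and the stable value is identified (using $Y_i$ Postnikov complete) with $\varinjlim_i \pi_k \operatorname{Hom}_{\mathcal{C}}(h_t, Y_i)$. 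Naturality of the spectral sequence upgrades this $\pi_*$-isomorphism to an equivalence of spectra $\operatorname{Hom}_{\mathcal{C}^h}(h_t^h, Y) \simeq \varinjlim_i \operatorname{Hom}_{\mathcal{C}^h}(h_t^h, Y_i)$.

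For the first claim, closure under arbitrary colimits reduces (via cofibers, and the fact that arbitrary coproducts are filtered colimits of finite coproducts) to closure under filtered colimits. So let $X := \varinjlim_i X_i$ in $\mathcal{C}$ with each $X_i$ hypercomplete, and extend this to a diagram $I^{\rhd} \to \mathcal{C}$ whose cone point is $X^h$ rather than $X$. Applying the right adjoint $G$ pointwise: for each $t \in \mathcal{T}$, the diagram $i \mapsto \operatorname{Hom}_{\mathcal{C}}(h_t, X_i)$ with cone $\operatorname{Hom}_{\mathcal{C}}(h_t, X^h)$ is a colimit diagram in $\sp$ by the compactness just established. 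Hence $G$ of our diagram is a colimit diagram in $\psh(\mathcal{T}; \sp)$, so by strong generation the original diagram is a colimit diagram in $\mathcal{C}$ with cone $X^h$. Since it also has colimit $X$ by definition, uniqueness of colimits forces the canonical map $X \to X^h$ to be an equivalence, so $X$ is hypercomplete. The main obstacle is the careful management of naturality in the descent spectral sequence arguments, ensuring that the stabilization of Postnikov towers and the commutation with filtered colimits are induced by the natural maps, so that one obtains equivalences of spectra rather than abstract isomorphisms on homotopy groups. The organizational twist worth highlighting is that compactness of $h_t^h$ is established \emph{before} closure under colimits and then the latter is deduced from the former via strong generation; this direction looks circular at first glance since closure seems stronger, but it works precisely because strong generation lets one upgrade colimit diagrams detected in $\psh(\mathcal{T}; \sp)$ back to honest colimit diagrams in $\mathcal{C}$.
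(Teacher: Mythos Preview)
Your proposal is correct and follows essentially the same approach as the paper: both reduce to Postnikov completion via the ``enough objects of cohomological dimension $\leq d$'' observation, use the descent spectral sequence of \Cref{descentss} together with hypothesis~(2) to show that $\operatorname{Hom}_{\mathcal{C}}(h_U,-)$ commutes with filtered colimits of Postnikov-complete objects, and then invoke strong generation to conclude. The only organizational difference is that you prove compactness of $h_t^h$ first and deduce closure under colimits as a formal consequence, whereas the paper proves closure first (directly checking that $\varinjlim_i X_i(U)\to ((\varinjlim_i X_i)^\pi)(U)$ is an equivalence via the spectral sequence) and then remarks that the same argument yields compactness; the technical content is identical.
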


For instance, if $\mathcal{T}$ is a Grothendieck site and $\mathcal{C}_{\geq
0}$ is given by sheaves of connective spectra, then 1 is automatic;  if 2
is satisfied,
it follows that  hypercomplete sheaves of spectra on $\mathcal{T}$ form a full 
subcategory of $\psh(\mathcal{T}, \sp)$ which is closed under all colimits.

\begin{proof}
Since $\mathcal{C}^h\subset\mathcal{C}$ is a stable subcategory, it is closed under finite colimits; thus we need only check closure under filtered colimits.
Since each $h_U$ is of cohomological dimension $\leq d$ by hypothesis 2, the
fact that the $h_U$ generate $\mathcal{C}_{\geq 0}$ under colimits (a weak form
of condition 1) implies that $\mathcal{C}_{\geq 0}$ has enough objects of
cohomological dimension $\leq d$.  Thus an object of $\mathcal{C}$ is
hypercomplete if and only if it is Postnikov complete (\Cref{hypforfinitecd}).
We write $(-)^{\pi}$ for the inverse limit over the Postnikov tower, or
equivalently the hypercompletion (by \Cref{hypforfinitecd} again). 
It suffices to show that if $i\mapsto X_i$ is a filtered system of Postnikov complete objects of $\mathcal{C}$, then the map
$$\varinjlim_iX_i\rightarrow(\varinjlim_iX_i)^\pi,$$
from $\varinjlim_iX_i$ to the inverse limit over its Postnikov tower, is an equivalence.

For this, note that for every $U\in\mathcal{T}$ the spectral sequence  
of \Cref{descentss}
(applied to each $X_i$ as well as their colimit) and the hypothesis 2 imply that the natural map
$$\varinjlim_i X_i(U)\rightarrow \left( (\varinjlim_iX_i)^\pi\right)(U)$$
is an equivalence.  This shows that the restriction of $(\varinjlim_iX_i)^\pi$ to $\operatorname{PSh}(\mathcal{T};\operatorname{Sp})$ identifies with the filtered colimit of the restrictions of the $X_i$ to $\operatorname{PSh}(\mathcal{T};\operatorname{Sp})$.  
Since the adjunction was strongly generating, we can conclude that the
hypercomplete objects are closed under colimits. 
The same spectral sequence argument also shows that the hypercompletion of
$h_U$ is compact in $\mathcal{C}^h$ for each $U \in
\mathcal{T}$, using again the convergence of Postnikov towers in
$\mathcal{C}^h$. 
\end{proof}

\subsection{Finite cohomological dimension; nilpotence criteria}

Here we specialize to the following setting, instances of which will be the
focus of the remainder of the paper. 
Recall the following definition as in \cite[A.3.1]{SAG}; in this paper, by
convention, we will always assume in addition the existence of a terminal object in our
sites. 

\begin{definition}[Finitary sites] \label{finitarysitedef} 
A \emph{finitary site} is a small $\infty$-category $\mathcal{T}$ with all finite limits equipped with a Grothendieck topology such that every covering sieve admits a refinement which is generated by a finite number of elements. 
\end{definition}

Given a finitary site $\mathcal{T}$, we have a well-behaved theory of sheaves
on it, forming an $\infty$-topos $\mathrm{Sh}(\mathcal{T})$; 
similarly we can form an $\infty$-category of sheaves of spectra
$\mathrm{Sh}(\mathcal{T}, \sp)$. 
We are interested in questions of hyperdescent or Postnikov completeness in
such $\infty$-categories; these two notions are the same under finite
cohomological dimension assumptions.

\begin{definition}
Given a finitary site $\mathcal{T}$, we say that $\mathcal{T}$ has
\emph{cohomological dimension $\leq d$} if, for
each $x \in \mathcal{T}$, the object $\Sigma^\infty_+ h_x \in
\mathrm{Sh}(\mathcal{T}, \sp_{\geq 0})$ (i.e., sheaves of connective spectra on
$\mathcal{T}$) has cohomological dimension $\leq d$ (cf.\ \Cref{cdsite}).  
For a set of primes $\mathcal{P}$, we similarly have a notion of
\emph{$\mathcal{P}$-local cohomological dimension} of a site $\mathcal{T}$.
\end{definition} 

\newcommand{\pre}{\mathrm{pre}}
Our discussion of hypercompleteness will take place primarily in the setting of
finitary sites of bounded cohomological dimension.
In this case, 
hypercompleteness is equivalent to Postnikov completeness by
\Cref{hypforfinitecd}, cf.\ also \cite[Prop. 3.3]{Ja87} or \cite[Prop.
3.20]{Mi97} for equivalent results.   Note also that by the local-global principle \Cref{locglobhyp}, this and several of the results below also hold in the more general setting of \emph{local } finite cohomological dimension.

Even under this finite-dimensionality hypothesis, the condition
that a sheaf of spectra should be hypercomplete seems slightly subtle, and we do
not know whether hypercompletion is smashing in this generality (although it
will be in the cases of interest below). 
We first observe that
hypercompleteness is automatic for $H\mathbb{Z}$-modules under a mild assumption
on $\mathcal{T}$. 

\begin{proposition} 
\label{Zmodulesarehypercomplete}
Let $\mathcal{T}$ be a 
finitary site of $\mathcal{P}$-local cohomological dimension $\leq d$ which is
an ordinary category. 
Then $\mathrm{Sh}(\mathcal{T}, \md_{H\mathbb{Z}_{\mathcal{P}}})$ is hypercomplete. 
\end{proposition}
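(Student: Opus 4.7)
The plan is to apply \Cref{hypforfinitecd}, which under a finite-cohomological-dimension hypothesis reduces hypercompleteness to Postnikov completeness, and then to observe that every sheaf of $H\mathbb{Z}_{\mathcal{P}}$-modules is automatically Postnikov complete because the value category $\md_{H\mathbb{Z}_{\mathcal{P}}}$ already is.

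For the cohomological-dimension hypothesis, set $\mathcal{C}_{\geq 0} := \mathrm{Sh}(\mathcal{T}, (\md_{H\mathbb{Z}_{\mathcal{P}}})_{\geq 0})$ and, for each $t \in \mathcal{T}$, put $F_t := H\mathbb{Z}_{\mathcal{P}} \otimes \Sigma^\infty_+ h_t$. Because $\mathcal{T}$ is an ordinary category, the sheaf $h_t$ is discrete, so $F_t$ lies in the heart $\mathcal{C}^\heartsuit$ as the free sheaf of $\mathbb{Z}_{\mathcal{P}}$-modules on $h_t$. By the free-forgetful adjunction between $H\mathbb{Z}_{\mathcal{P}}$-modules and spectra, together with the identification of topos cohomology with derived functor cohomology of abelian sheaves on an ordinary site (cf.\ \Cref{cdsite}),
$$H^n(F_t; A) \;=\; \pi_0 \hom_{\sh(\mathcal{T}, \sp)}(\Sigma^\infty_+ h_t, \Sigma^n A) \;=\; H^n(t; A)$$
for any sheaf $A$ of $\mathbb{Z}_{\mathcal{P}}$-modules, which vanishes for $n > d$ by the $\mathcal{P}$-local cohomological-dimension hypothesis on $\mathcal{T}$. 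Given any $X \in \mathcal{C}_{\geq 0}$, lifting enough sections of $\pi_0 X$ produces a map $\bigoplus_i F_{t_i} \to X$ surjective on $\pi_0$, and the source is a coproduct of objects of cohomological dimension $\leq d$; so the hypothesis of \Cref{hypforfinitecd} is satisfied with $\{\pi_n X\}$-coefficients for every $X$.

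Next I would verify that every object of $\mathcal{C}_{\geq 0}$ is Postnikov complete. The value category $(\md_{H\mathbb{Z}_{\mathcal{P}}})_{\geq 0} = \mathcal{D}(\mathbb{Z}_{\mathcal{P}}\text{-Mod})_{\geq 0}$ is itself complete, since $\mathbb{Z}_{\mathcal{P}}$ is a localization of a PID and hence its module category has cohomological dimension $\leq 1$, so the associated derived $\infty$-category is left-complete. Now $\mathcal{C}_{\geq 0}$ can be described as limit-preserving functors $\sh(\mathcal{T})^{op} \to (\md_{H\mathbb{Z}_{\mathcal{P}}})_{\geq 0}$, and $n$-truncation in $\mathcal{C}_{\geq 0}$ is induced by postcomposition with $n$-truncation in the value category, so by interchange of limits $\mathcal{C}_{\geq 0} \simeq \varprojlim_n \mathcal{C}_{[0,n]}$, i.e., $\mathcal{C}_{\geq 0}$ is complete.

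Combining these, \Cref{hypforfinitecd}(2) tells us that under the cohomological-dimension hypothesis each object of $\mathcal{C}$ is hypercomplete if and only if it is Postnikov complete; since every object of $\mathcal{C}$ is Postnikov complete, every object is hypercomplete, as required. The main technical obstacle is the propagation of completeness from the value category $\md_{H\mathbb{Z}_{\mathcal{P}}}$ to the sheaf category, which is formal from interchange of limits but requires some care about the sheaf condition; as a stylistic alternative, one could instead invoke \Cref{enoughcdPostnikovcomplete} to identify the hypercompletion of $\mathcal{C}_{\geq 0}$ with its Postnikov completion, which by the second step is the identity.
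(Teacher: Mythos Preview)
Your argument has a genuine gap in the Postnikov-completeness step. The claim that ``$n$-truncation in $\mathcal{C}_{\geq 0}$ is induced by postcomposition with $n$-truncation in the value category'' is false: the sheaf-$t$-structure truncation $(\mathcal{F})_{\leq n}$ is the \emph{sheafification} of the presheaf-level truncation, and in general differs from it on sections. Concretely, already for a sheaf of abelian groups $A$ (an object of the heart), the sections over $U$ of the corresponding sheaf of $H\mathbb{Z}$-modules have $\pi_{-i}=H^i(U;A)$ for $i>0$, so the sheaf is $0$-truncated in the sheaf $t$-structure while its sections are not $0$-truncated spectra. For the same reason your description of $\mathcal{C}_{\geq 0}$ as limit-preserving functors into $(\md_{H\mathbb{Z}_{\mathcal{P}}})_{\geq 0}$ does not match the connective part of the sheaf $t$-structure, and the interchange-of-limits argument does not go through. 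Thus left-completeness of $\md_{H\mathbb{Z}_{\mathcal{P}}}$ does not propagate to the sheaf category in the way you suggest. Your stated alternative via \Cref{enoughcdPostnikovcomplete} only identifies the hypercompletion and Postnikov completion \emph{as $\infty$-categories}; it does not show either localization is the identity on $\mathcal{C}_{\geq 0}$, so it still rests on the flawed step. Note also that you never invoke the finitary hypothesis, which is a warning sign.

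The paper proceeds differently, and in fact you already isolated the key ingredient: because $\mathcal{T}$ is an ordinary category, each $F_t=H\mathbb{Z}_{\mathcal{P}}\otimes\Sigma^\infty_+ h_t$ is $0$-truncated, hence hypercomplete. Rather than trying to show every sheaf is Postnikov complete, the paper applies \Cref{critforhypcolimits}: the functor $t\mapsto F_t$ strongly generates $\sh(\mathcal{T},\md_{H\mathbb{Z}_{\mathcal{P}}})$, each $F_t$ has cohomological dimension $\leq d$, and---this is where finitary enters---cohomology on $\mathcal{T}$ commutes with filtered colimits. Hence the hypercomplete objects are closed under all colimits; since the generators $F_t$ are hypercomplete, every object is.
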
 
\begin{proof} 
We apply the criterion of \Cref{critforhypcolimits}. Since sheafification 
is $t$-exact, the functor 
$H\mathbb{Z}_{\mathcal{P}} \otimes \Sigma^\infty_+ h_{(-)}: \mathcal{T} \to
\mathrm{Sh}(\mathcal{T}, \md_{H\mathbb{Z}_{\mathcal{P}}})$ takes values in truncated,
connective 
objects and strongly generates the target. Our assumption implies that the
functor also takes values in connective objects of cohomological dimension
$\leq d$.  
The hypothesis of finitary implies that cohomology commutes with filtered
colimits on $\mathcal{T}$, cf.\ \cite[Exp. VI.5]{SGA4} for a detailed treatment. 
Then, \Cref{critforhypcolimits} shows that the subcategory of hypercomplete objects 
in $\mathrm{Sh}(\mathcal{T}, \md_{H\mathbb{Z}_{\mathcal{P}}})$ is
closed
under all colimits. 
Since the objects $H\mathbb{Z}_{\mathcal{P}} \otimes \Sigma^\infty_+ h_x$ are hypercomplete
(indeed, truncated) and generate everything, the claim follows. 
\end{proof}

\begin{proposition} 
Let $\mathcal{T}$ be a 
finitary site of $\mathcal{P}$-local cohomological dimension $\leq d$ which is an
 ordinary category. 
Let $\mathcal{F} \in \mathrm{Sh}(\mathcal{T}, \sp_{\mathcal{P},\geq 0})$. Then
the map $\mathcal{F} \to \varprojlim (\mathcal{F} \otimes \tau_{\leq
n} \mathbb{S})$ exhibits the target as the hypercompletion (equivalently,
Postnikov completion) of the source. 
\end{proposition}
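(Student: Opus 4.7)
The plan is to exploit the finite cohomological dimension hypothesis to identify the target $\varprojlim_n(\mathcal{F} \otimes \tau_{\leq n}\mathbb{S})$ with the Postnikov completion $\widehat{\mathcal{F}} := \varprojlim_n \tau_{\leq n}\mathcal{F}$. Indeed, since $\mathcal{T}$ has $\mathcal{P}$-local cohomological dimension $\leq d$, the Grothendieck prestable $\infty$-category $\mathrm{Sh}(\mathcal{T}, \mathrm{Sp}_{\mathcal{P},\geq 0})$ has enough objects of cohomological dimension $\leq d$ via the representables, and by \Cref{enoughcdPostnikovcomplete} the Postnikov completion of $\mathcal{F}$ coincides with its hypercompletion $\mathcal{F}^h$. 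It therefore suffices to produce an equivalence $\varprojlim_n(\mathcal{F} \otimes \tau_{\leq n}\mathbb{S}) \xrightarrow{\sim} \widehat{\mathcal{F}}$ intertwining the canonical maps from $\mathcal{F}$ on each side.

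To set up the comparison, observe that the unit map $\mathbb{S} \to \tau_{\leq n}\mathbb{S}$ has $(n+1)$-connective fiber, and the standard t-compatibility of $\otimes$ on $\mathrm{Sh}(\mathcal{T}, \mathrm{Sp}_{\mathcal{P}})$ ensures this survives tensoring with the connective sheaf $\mathcal{F}$; thus $\mathcal{F} \to \mathcal{F} \otimes \tau_{\leq n}\mathbb{S}$ is an isomorphism on $\pi_i$ for $i \leq n$. Applying $\tau_{\leq n}$ yields a natural identification $\tau_{\leq n}(\mathcal{F} \otimes \tau_{\leq n}\mathbb{S}) \simeq \tau_{\leq n}\mathcal{F}$, and hence a map of towers $\phi_n: \mathcal{F} \otimes \tau_{\leq n}\mathbb{S} \to \tau_{\leq n}\mathcal{F}$ whose fiber $G_n := \tau_{>n}(\mathcal{F} \otimes \tau_{\leq n}\mathbb{S})$ is $(n+1)$-connective; by naturality of truncation the composite $\mathcal{F} \to \mathcal{F} \otimes \tau_{\leq n}\mathbb{S} \xrightarrow{\phi_n} \tau_{\leq n}\mathcal{F}$ is the canonical truncation. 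Crucially, each $\mathcal{F} \otimes \tau_{\leq n}\mathbb{S}$ is hypercomplete: the finite Postnikov filtration of $\tau_{\leq n}\mathbb{S}$ induces a finite filtration whose successive fibers $\mathcal{F} \otimes H(\pi_{k+1}\mathbb{S}_\mathcal{P})[k+1]$ each carry a natural $H\mathbb{Z}_{\mathcal{P}}$-module structure (via the one on $H(\pi_{k+1}\mathbb{S}_\mathcal{P})$) and so are hypercomplete by \Cref{Zmodulesarehypercomplete}, and hypercompleteness is closed under limits; consequently $G_n$ is hypercomplete as well, being the fiber of a map between hypercomplete objects.

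The main step is to prove that $\varprojlim_n G_n \simeq 0$, whereupon the fiber sequence $\varprojlim_n G_n \to \varprojlim_n(\mathcal{F} \otimes \tau_{\leq n}\mathbb{S}) \to \varprojlim_n \tau_{\leq n}\mathcal{F}$ yields the desired equivalence $\varprojlim_n(\mathcal{F} \otimes \tau_{\leq n}\mathbb{S}) \xrightarrow{\sim} \widehat{\mathcal{F}} \simeq \mathcal{F}^h$, compatibly with the maps from $\mathcal{F}$. Since the representables $\Sigma^\infty_+ h_x$ and their shifts generate $\mathrm{Sh}(\mathcal{T}, \mathrm{Sp}_{\mathcal{P}})$, it is enough to show $R\Gamma(x, \varprojlim_n G_n) \simeq \varprojlim_n R\Gamma(x, G_n)$ is null for each $x \in \mathcal{T}$. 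Because $G_n$ is hypercomplete and $(n+1)$-connective, the descent spectral sequence of \Cref{descentss} yields the connectivity estimate $R\Gamma(x, G_n) \in \mathrm{Sp}_{\geq n+1-d}$; consequently, for each fixed $i$, both $\pi_i R\Gamma(x, G_n)$ and $\pi_{i+1} R\Gamma(x, G_n)$ vanish for $n$ sufficiently large, and the Milnor $\lim^1$ sequence in spectra forces $\pi_i \varprojlim_n R\Gamma(x, G_n) = 0$ for every $i$. The principal obstacle I anticipate is precisely securing the hypercompleteness of $G_n$, without which the descent spectral sequence need not converge nor yield the connectivity bound above; the reduction to $H\mathbb{Z}_{\mathcal{P}}$-modules via the Postnikov filtration of $\tau_{\leq n}\mathbb{S}$ is what makes this step go through under the standing assumption that $\mathcal{T}$ is an ordinary category.
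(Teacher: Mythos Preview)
Your proof is correct and rests on the same two pillars as the paper's: (i) each $\mathcal{F}\otimes\tau_{\le n}\mathbb{S}$ is hypercomplete via the finite Postnikov filtration and \Cref{Zmodulesarehypercomplete}, and (ii) the finite cohomological dimension bound controls the inverse limit. The paper's argument is considerably terser: it simply observes that the map $\mathcal{F}\to\varprojlim_n(\mathcal{F}\otimes\tau_{\le n}\mathbb{S})$ is a $\pi_*$-isomorphism and that the target, being a limit of hypercomplete objects, is hypercomplete---then invokes the characterization of hypercompletion as the unique $\pi_*$-isomorphism to a hypercomplete object (\Cref{hypisPostnikov}). You instead explicitly compare the tower $\{\mathcal{F}\otimes\tau_{\le n}\mathbb{S}\}$ to the Postnikov tower $\{\tau_{\le n}\mathcal{F}\}$ and show the difference vanishes in the limit via the connectivity estimate on sections of $G_n$. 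This is really the same argument unpacked: your vanishing of $\varprojlim_n G_n$ is exactly what justifies the paper's $\pi_*$-isomorphism claim, and your route has the virtue of making that step transparent rather than leaving it to the reader.
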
 
\begin{proof} 
Indeed, the map $\mathcal{F} \to \varprojlim (\mathcal{F} \otimes \tau_{\leq
n} \mathbb{S})$ is a $\pi_*$-isomorphism. Moreover, each 
of the terms in the limit on the right-hand-side 
is hypercomplete
(\Cref{Zmodulesarehypercomplete}). The result follows. \end{proof}

We now give a criterion for hypercompleteness via the notion of nilpotence, cf.,
e.g., \cite{Ma15}. 

\begin{definition}[Nilpotence and weak nilpotence]
Fix an integer $m \geq 0$. 
\begin{enumerate}
\item  
A filtered object $\dots \to X_{-1} \to X_0 \to X_1 \to \dots $  in a stable $\infty$-category
$\mathcal{C}$ 
(resp.~$\mathrm{Sp}$)
is called \emph{$m$-nilpotent} (resp.~\emph{weakly $m$-nilpotent})
if for each $i$, the map $X_i \to X_{i+m+1}$ is nullhomotopic (resp.~induces the
zero map on $\pi_*$). 
Either condition implies that $\varprojlim X_i \simeq \varinjlim X_i = 0$. 
\item
We say that an augmented cosimplicial diagram 
$X^\bullet \in \mathrm{Fun}(\Delta^+, \mathcal{C})$
(resp.~in $\mathrm{Fun}(\Delta^+, \mathrm{Sp})$)
is \emph{$m$-rapidly converging} (resp. \emph{weakly $m$-rapidly converging}) if 
the tower
$\left\{\mathrm{cofib}(X^{-1} \to \mathrm{Tot}_n(X^\bullet)\right\}_{n \geq 0}$ is 
$m$-nilpotent (resp. weakly $m$-nilpotent); this in particular implies that $X^\bullet$ is a limit diagram. 

\end{enumerate}

\end{definition}

The main use of the above notion is that they will enable us to commute
totalizations and filtered colimits, cf.\ also \cite[Sec. 3.1.3]{Mi97}. 
We will only lightly use the following lemma here but it will play a crucial
role in the sequel. 

\begin{lemma}[Commuting totalizations and filtered colimits]
\label{commutetotandcolimit}
Let $X_{\alpha}^\bullet, \alpha \in A$ be a system of augmented cosimplicial spectra
indexed over a filtered partially ordered set $A$. Suppose that there exists
an $m \in \mathbb{Z}_{\geq 0}$ such that each
$X_\alpha^\bullet$ is weakly $m$-rapidly converging for $\alpha \in A$.
Then the colimit $\varinjlim_{\alpha \in A} X_\alpha^\bullet$ is weakly
$m$-rapidly converging and in particular a limit diagram. 
In particular, the map 
\[ \varinjlim_{\alpha \in A} \mathrm{Tot}(X_\alpha^\bullet) 
\to \mathrm{Tot}( \varinjlim_{\alpha \in A} X_\alpha^\bullet)
\]
is an equivalence. 
\end{lemma}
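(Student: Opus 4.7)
The plan is to reduce everything to two facts: finite limits in $\sp$ commute with filtered colimits, and $\pi_*$ commutes with filtered colimits. Once both are in hand, the weak $m$-nilpotence hypothesis propagates through the colimit essentially by inspection.

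Set $X^\bullet := \varinjlim_{\alpha} X_\alpha^\bullet$. First I would observe that for each fixed $n$, the partial totalization $\mathrm{Tot}_n$ is a finite homotopy limit of spectra: by the Bousfield--Kan tower, $\mathrm{Tot}_n(Y^\bullet)$ is built in $n$ steps from $Y^0, \dots, Y^n$ via fiber sequences whose fibers are matching objects, themselves finite limits. Hence $\mathrm{Tot}_n$ commutes with filtered colimits of spectra, and the same applies to cofibers. Consequently, setting $C_n^{(\alpha)} := \mathrm{cofib}(X_\alpha^{-1} \to \mathrm{Tot}_n X_\alpha^\bullet)$ and $C_n := \mathrm{cofib}(X^{-1} \to \mathrm{Tot}_n X^\bullet)$, one has $C_n \simeq \varinjlim_\alpha C_n^{(\alpha)}$, and this identification is compatible with the structure maps as $n$ varies.

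Now the weak $m$-nilpotence of the tower $\{C_n\}_{n \geq 0}$: for each $n$, the map $C_n \to C_{n+m+1}$ is the filtered colimit (over $\alpha$) of the maps $C_n^{(\alpha)} \to C_{n+m+1}^{(\alpha)}$, each of which is zero on $\pi_*$ by hypothesis. Since $\pi_*$ commutes with filtered colimits, the colimit map is likewise zero on $\pi_*$. Thus $X^\bullet$ is weakly $m$-rapidly converging, and in particular a limit diagram, so $X^{-1} \simeq \mathrm{Tot}(X^\bullet)$. For the ``in particular'' statement, each $X_\alpha^\bullet$ is already a limit diagram by hypothesis, so $X_\alpha^{-1} \simeq \mathrm{Tot}(X_\alpha^\bullet)$; taking the filtered colimit and comparing with $X^{-1} \simeq \mathrm{Tot}(X^\bullet)$ gives the equivalence $\varinjlim_\alpha \mathrm{Tot}(X_\alpha^\bullet) \xrightarrow{\sim} \mathrm{Tot}(\varinjlim_\alpha X_\alpha^\bullet)$.

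The only real point to justify is that $\mathrm{Tot}_n$ commutes with filtered colimits of spectra, which follows from its presentation as a finite iterated fiber construction. Beyond this, everything is a direct consequence of the fact that in $\sp$ filtered colimits are exact and preserve $\pi_*$, so no genuine obstacle arises — the weak nilpotence uniformity across $\alpha$ is exactly what is needed to avoid any $\varprojlim^1$-type issue when taking the colimit.
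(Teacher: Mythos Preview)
Your proof is correct and follows essentially the same approach as the paper: identify $\mathrm{Tot}_n$ as a finite limit so it commutes with filtered colimits, pass to the cofiber tower, and use that maps which are zero on $\pi_*$ are stable under filtered colimits. One small slip: the maps in the partial totalization tower go $\mathrm{Tot}_{n+1} \to \mathrm{Tot}_n$, so the relevant composite is $C_{n+m+1} \to C_n$, not $C_n \to C_{n+m+1}$; this is just a direction typo and does not affect the argument.
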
 
\begin{proof} 
For each $n$, the map 
\[ \varinjlim_{\alpha \in A} \mathrm{Tot}_n(X_\alpha^\bullet) 
\to \mathrm{Tot}_n( \varinjlim_{\alpha \in A} X_\alpha^\bullet)
\]
is an equivalence, since finite homotopy limits commute with filtered (indeed
all) colimits in spectra. 
Now the tower
\begin{equation} 
\label{colimittower}
\left\{Z_n\right\}_{n \geq 0}
\stackrel{\mathrm{def}}{=}\left\{\mathrm{cofib}(\varinjlim_{\alpha \in A} 
X_\alpha^{-1} \to \mathrm{Tot}_n(\varinjlim_{\alpha \in A} X_\alpha^\bullet))
\right\}_{n \geq 0}
\end{equation}
is a filtered colimit of 
the towers 
$\left\{\mathrm{cofib}(X_\alpha^{-1} \to 
\mathrm{Tot}_n (X_\alpha^{\bullet}))
\right\}$ which are all weakly $m$-nilpotent. Thus the tower
$\left\{Z_n\right\}_{n \geq 0}$ of
\eqref{colimittower}
has the property that the maps $Z_{n+m+1} \to Z_n$ are filtered colimits of
maps which are zero on homotopy, and hence are zero on homotopy. Therefore 
$\left\{Z_n\right\}$ is weakly $m$-nilpotent, 
$\varprojlim_n Z_n =
0$, and 
$\varinjlim_{\alpha \in A} X_\alpha^\bullet$ is a limit diagram as desired. 
\end{proof} 

\begin{proposition}[Nilpotence and hyperdescent] 
\label{nilpandhyp}
Let $\mathcal{T}$ be a 
finitary site of $\mathcal{P}$-local cohomological dimension $\leq d$. 
Let $\sF $ be a sheaf of $\mathcal{P}$-local spectra on $\mathcal{T}$. Then the
following are equivalent: 
\begin{enumerate}
\item $\sF$ is hypercomplete (or Postnikov complete). 
\item 
For every truncated hypercover $y_\bullet$ in $\mathcal{T}$ of an object $x \in
\mathcal{T}$, the augmented cosimplicial spectrum
\[ \sF(x) \to \sF(y)^\bullet  \]
is $d$-rapidly converging (equivalently, weakly $d'$-rapidly converging for some uniform constant $d'$
independent of the hypercover). 
It suffices to consider truncated hypercovers such that each $y_i$ is a finite coproduct
of objects in $\mathcal{T}$. 
\end{enumerate}
\end{proposition}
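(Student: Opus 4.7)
The plan is to combine the Bousfield--Kan descent spectral sequence for $\sF(y_\bullet)$ with the cohomological-dimension hypothesis, exploiting the equivalence (from \Cref{hypforfinitecd}) between hypercompleteness and Postnikov completeness in this setting. The crucial input is the identification of the $E_2$-page: for any hypercover $y_\bullet \to x$ whose faces $y_n$ are finite coproducts of objects of $\mathcal{T}$, the cosimplicial abelian group $(\pi_q \sF)(y_\bullet)$ is a resolution computing the sheaf cohomology $H^p(x;\pi_q\sF)$, so the BK $E_2$-page is $E_2^{p,q} = H^p(x;\pi_q\sF)$, vanishing for $p > d$ by hypothesis.

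For $(1) \Rightarrow (2)$, assume $\sF$ is hypercomplete, so $\sF(x) \simeq \mathrm{Tot}(\sF(y_\bullet))$ and the BK spectral sequence converges strongly to $\pi_*\sF(x)$. The horizontal vanishing line past $p = d$ bounds the length of the induced filtration on each $\pi_k \sF(x)$ by $d+1$, which unwinds through the skeletal filtration on $\mathrm{Tot}$ (via the octahedral fiber sequences $R_n \to R_{n-1} \to F_n$, with $R_n := \mathrm{fib}(\sF(x)\to\mathrm{Tot}_n\sF(y_\bullet))$ and $F_n$ the $n$-th Tot-tower layer) to show that $T_n := \mathrm{cofib}(\sF(x)\to\mathrm{Tot}_n\sF(y_\bullet))$ has trivial homotopy for $n \geq d$, up to a bounded shift depending only on $d$. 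This yields $d$-rapid convergence (or at worst weak $d'$-rapid convergence for $d'$ depending only on $d$, matching the parenthetical equivalence in (2)).

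For $(2) \Rightarrow (1)$, weak $d'$-rapid convergence of $\{T_n\}$ makes both $\varprojlim \pi_* T_n$ and $\varprojlim^1 \pi_* T_n$ vanish (by Mittag-Leffler together with weak nilpotence), hence $\sF(x) \xrightarrow{\sim} \mathrm{Tot}(\sF(y_\bullet))$ for every truncated hypercover. To upgrade to a general hypercover $y_\bullet \to x$, write $y_\bullet = \varprojlim_N y_\bullet^{(N)}$, where $y_\bullet^{(N)} := \mathrm{cosk}_N(\mathrm{sk}_N\, y_\bullet)$ is a truncated hypercover; then $\sF(y_\bullet) = \varinjlim_N \sF(y_\bullet^{(N)})$ as cosimplicial spectra, and the uniform $d'$-rapid convergence across $N$ allows one to invoke \Cref{commutetotandcolimit} to commute Tot with the colimit, giving $\mathrm{Tot}(\sF(y_\bullet)) = \varinjlim_N \sF(x) = \sF(x)$. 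Hence $\sF$ is a hypersheaf, and so hypercomplete. The principal subtlety is the identification of the BK $E_2$-page with sheaf cohomology on $x$: it requires the hypercover faces to be finite coproducts of objects of $\mathcal{T}$, and reducing general hypercovers to this class uses the finitary structure of $\mathcal{T}$ (\Cref{finitarysitedef}).
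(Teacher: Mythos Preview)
Your argument for $(2)\Rightarrow(1)$ is essentially the paper's: both pass from truncated hypercovers to arbitrary ones by approximating via coskeleta and invoking \Cref{commutetotandcolimit}.  (The paper first replaces $\sF$ by the fiber of $\sF\to\sF^h$ and then uses \Cref{DHIprop} to kill all sections, whereas you verify hyperdescent directly; this is only a cosmetic difference.)

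The gap is in $(1)\Rightarrow(2)$, and it is twofold.  First, your identification of the Bousfield--Kan $E_2$-page with sheaf cohomology is not justified.  The $E_2^{p,q}$ term is the $p$-th cohomology of the cosimplicial abelian group $n\mapsto \pi_q(\sF(y_n))$, i.e.\ the \v{C}ech cohomology with respect to $y_\bullet$ of the \emph{presheaf} $U\mapsto\pi_q(\sF(U))$.  For a sheaf of spectra $\sF$ this presheaf is typically not a sheaf, and Verdier's hypercovering theorem only identifies \v{C}ech cohomology of a hypercover with derived-functor cohomology for \emph{sheaves}; for presheaves there is no such statement.  So the horizontal vanishing line at $p=d$ is not established.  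Second, even granting a vanishing line, a spectral-sequence argument detects only vanishing on $\pi_\ast$, i.e.\ weak $d'$-rapid convergence.  You never produce the nullhomotopies needed for the stronger clause ``$d$-rapidly converging''; the parenthetical ``equivalently'' in the statement is part of what must be proved, not something you may invoke.

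The paper circumvents both issues by arguing once and for all inside the hypercomplete sheaf $\infty$-category rather than with sections of $\sF$.  One studies the augmented simplicial object $(\Sigma^\infty_+ h_{y_\bullet})^h\to(\Sigma^\infty_+ h_x)^h$ and the partial realizations $|\sk_n(\Sigma^\infty_+ h_{y_\bullet})|^h$.  The fiber of $|\sk_n|\to h_x$ (hypercompleted) has cohomological dimension $\leq d+n$, being an $n$-stage realization of objects of cohomological dimension $\leq d$, while the fiber of $|\sk_{n+d+1}|\to h_x$ is $(n+d+1)$-connective by \cite[Lemma~6.5.3.11]{HTT}.  In a Postnikov-complete category any map from the former to the latter is null, so one obtains an honest nullhomotopy in the tower.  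Applying $\hom(-,\sF)$ for hypercomplete $\sF$ then yields actual $d$-rapid convergence of $\sF(x)\to\sF(y_\bullet)$.  This universal argument both supplies the nullhomotopies your spectral-sequence approach cannot see and avoids any appeal to an $E_2$-identification.
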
 
\begin{proof} 
For ease of notation we will omit the $\mathcal{P}$'s in the following proof,
so everything is implicitly $\mathcal{P}$-localized. 
Suppose first that $\sF$ is hypercomplete. 
Consider the augmented simplicial object $(\Sigma^\infty_+ h_{y_\bullet})^h$. 
By applying the functor $\hom_{\mathrm{Sh}(\mathcal{T}, \sp)}(\cdot, \sF)$, 
it suffices to show that 
$(\Sigma^\infty_+ h_{y_\bullet})^h$ is $d$-rapidly converging in 
the opposite category of 
the hypercompletion of 
$\mathrm{Sh}(\mathcal{T}, \sp)$, in order to prove 2. 
For this, consider the map 
\[ |\sk_n  (\Sigma^\infty_+ h_{y_\bullet}) |^h \to 
|\sk_{n+d+1}  (\Sigma^\infty_+ h_{y_\bullet}) |^h \to (\Sigma^\infty_+ h_x)^h
.\]
The
object
$|\sk_n  (\Sigma^\infty_+ h_{y_\bullet})^h |$
has cohomological dimension $\leq d + n$ as an $n$-truncated geometric
realization of objects of cohomological dimension $\leq d$,\footnote{Note that
hypercompletion preserves cohomological dimension, which is tested in terms of
maps into truncated objects.} while the second map 
has homotopy fiber in $\mathrm{Sh}(\mathcal{T}, \sp)_{\geq n+d+1} $
(cf.\ \cite[Lemma 6.5.3.11]{HTT}). This shows that 
$$\mathrm{fib}(|\sk_n  (\Sigma^\infty_+ h_{y_\bullet}) | \to \Sigma^\infty_+
h_x)^h   \to 
\mathrm{fib}(|\sk_{n+d+1}  (\Sigma^\infty_+ h_{y_\bullet}) | \to
\Sigma^\infty_+ h_x)^h$$ 
is a map from an object of cohomological dimension $\leq d+n$ to an object 
concentrated in degrees $\geq d+ n+1$ and therefore vanishes since all objects
are Postnikov complete. 

Now suppose 2 holds (with possibly any $d'$ replacing $d$). Note that the class
of sheaves that satisfy 2 (for some $d'$, which may depend on the sheaf but
which is independent of the truncated hypercover) forms a thick subcategory. 
Replacing $\sF$ with the fiber of the map from $\sF$ to its hypercompletion, we
may assume that the homotopy group sheaves of $\sF$ vanish; then we want $\sF =
0$. 
Then for every hypercover $y_\bullet \to x$, we have $\sF(x) \simeq
\sF(y^\bullet)$: in fact, this is true for truncated hypercovers with a uniform
weak nilpotence, by assumption, so we can pass to the limit 
(approximating a hypercover by its skeleta and using
\Cref{commutetotandcolimit})
and obtain a
statement for all hypercovers. 
By \Cref{DHIprop} below, this implies that all sections of $\sF$ vanish, as
desired. 
\end{proof} 

In practice, the advantage of the above result is that in explicit examples, we
will be able to work not with all hypercovers but certain specific ones,
especially those arising from Galois covers. 

For the above result, we needed the following crucial construction of sufficiently many hypercovers. 
For ease of notation, we drop the $h$ from the Yoneda embedding, so identify
$\mathcal{T}$ as a full subcategory of $\psh(\mathcal{T})$. 
\begin{proposition}[Cf.\ Dugger-Hollander-Isaksen \cite{DHI04}] 
\label{DHIprop}
Let $\mathcal{T}$ be a finitary site. 
Suppose $\mathcal{F} \to \mathcal{G}$ is a map in $\psh(\mathcal{T})$ which induces an 
isomorphism on homotopy group sheaves. 
Suppose given an object $x \in \mathcal{T}$ and a map $x \to \mathcal{G}$. 
Then there exists a hypercover $y_\bullet$ of $x$, by finite coproducts of
representables, and a commutative diagram in $\psh(\mathcal{T})$,
\[ \xymatrix{
|{y_\bullet}| \ar[d]  \ar[r] & \mathcal{F} \ar[d]  \\
x \ar[r] &  \mathcal{G}
}.\]
\end{proposition}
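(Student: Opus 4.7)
\textbf{Proof plan for Proposition \ref{DHIprop}.}
The plan is to construct $y_\bullet$ inductively on the simplicial degree, maintaining at stage $n$ an $n$-truncated simplicial presheaf $y_{\leq n}$ with an augmentation to $x$, satisfying the (truncated) hypercover condition that each matching map $y_k \to M_k(y_\bullet)$ is a local epimorphism (with the convention $M_0(y_\bullet) = x$), together with a compatible map $y_{\leq n} \to \mathcal{F}$ whose composition with $\mathcal{F} \to \mathcal{G}$ agrees with the given $y_{\leq n} \to x \to \mathcal{G}$. The condition that each $y_n$ be a finite coproduct of representables will be maintained by appealing to the finitary hypothesis on $\mathcal{T}$ at each stage.

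For the base case $n=0$, the assumption that $\mathcal{F} \to \mathcal{G}$ is an isomorphism on homotopy group sheaves implies in particular that $\pi_0 \mathcal{F} \to \pi_0 \mathcal{G}$ is an isomorphism of sheaves, hence that $\mathcal{F} \to \mathcal{G}$ is a local epimorphism. Thus the given $x \to \mathcal{G}$ lifts to $\mathcal{F}$ after passage to some covering sieve of $x$; by the finitary hypothesis, this sieve may be refined to one generated by a finite family $\{U_i \to x\}$, and we set $y_0 = \bigsqcup_i U_i$ with the resulting lift $y_0 \to \mathcal{F}$.

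For the inductive step, suppose $y_{\leq n}$ has been built. Form the matching object $M_{n+1}(y_\bullet) = (\mathrm{cosk}_n\, y_{\leq n})_{n+1}$, which comes with a canonical augmentation to $x$ and thence to $\mathcal{G}$, as well as with a map from its ``boundary'' into $\mathcal{F}$ coming from the existing simplicial data $\partial \Delta^{n+1} \to y_{\leq n} \to \mathcal{F}$. Finding an $(n+1)$-cell means extending this boundary data to a map $M_{n+1}(y_\bullet) \to \mathcal{F}$ lying over $M_{n+1}(y_\bullet) \to \mathcal{G}$. The obstruction is a class valued in $\pi_n$ of the homotopy fiber of $\mathcal{F} \to \mathcal{G}$ over $M_{n+1}(y_\bullet)$; since by hypothesis all homotopy group sheaves of this fiber vanish, the obstruction is locally trivial. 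Choosing a cover of $M_{n+1}(y_\bullet)$ over which it trivializes, and refining to one generated by finitely many representables by the finitary hypothesis, yields $y_{n+1}$ together with the required extension to $\mathcal{F}$.

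The main obstacle is rendering the obstruction-theoretic argument rigorous at the level of presheaves, where $\mathcal{F}$ and $\mathcal{G}$ are not fibrant in any natural sense; this is precisely the content of the work of Dugger-Hollander-Isaksen \cite{DHI04}, whose Verdier-style hypercover construction we are invoking almost verbatim, and to which we refer for the model-categorical details. The only additional ingredient needed beyond loc.\ cit.\ is the finitary hypothesis, which ensures that at each stage of the induction the chosen cover can be arranged to be a \emph{finite} coproduct of representables, as demanded by the statement.
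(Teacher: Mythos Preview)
Your approach is essentially the same as the paper's: both carry out the inductive Verdier construction level by level, lifting along the $\pi_*$-isomorphism and invoking the finitary hypothesis at each step to keep $y_n$ a finite coproduct of representables. The paper sharpens this in two ways worth noting: it first pulls back along $x \to \mathcal{G}$ to reduce to the case $\mathcal{G} = x$, which replaces your somewhat vague obstruction-theoretic framing with the concrete observation that the matching object computed in $\psh(\mathcal{T})_{/\mathcal{F}}$ has the same homotopy group sheaves as a finite coproduct of representables (namely the matching object over $x$); and it explicitly handles the latching/degeneracy side of the Reedy extension problem, which your sketch omits, by taking $y_n = L_n y^{\leq n-1} \sqcup z$.
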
 
\begin{proof} 
We can assume that $x \to \mathcal{G}$ is an equivalence by forming the
pullback, so that we have a map 
$\mathcal{F} \to x$ which induces an 
isomorphism on homotopy group sheaves. 
We build the hypercover inductively. 

For $n =0$, we let $y_0$ be a finite coproduct of objects in $\mathcal{T} $ such
that 
$y_0 \to x$ is a cover and 
such that $y_0 \to x \simeq \mathcal{G}$ lifts to $\sF$. 
Suppose that we have constructed the $(n-1)$-skeleton $y^{\leq n-1}_\bullet$ as an object
of $\psh(\mathcal{T})_{/\sF}$, and then we need to build the $n$-skeleton as an
object of $\psh(\mathcal{T})_{/\sF}$. 

We let $L_n y^{\leq n-1}, M_n y^{\leq n-1}$ denote the $n$th latching and matching objects of the
$(n-1)$-truncated simplicial object $y^{\leq n-1}_\bullet$, considered as 
objects in $\psh(\mathcal{T})_{/\sF}$. 
To this end, by \cite[Prop. A.2.9.14]{HTT}, we need to find an object $y_n \in
\psh(\mathcal{T})_{/\mathcal{F}}$, which is a
finite disjoint union of representables, 
and a factorization $L_n y^{\leq n-1} \to y_n \to M_n y^{\leq n-1}$ over $\mathcal{F}, $ such that
$y_n \to M_n y^{\leq n-1}$ is a surjection on $\pi_0$ (the hypercover condition).  
To this end, we observe that $M_n y^{\leq n-1}$ has the same homotopy groups as an object
which is a finite coproduct of objects in $\mathcal{T}$, since $\mathcal{F} \to
x$ is an isomorphism on homotopy groups. 
In particular, there exists an object $z \in \psh(\mathcal{T})_{/\sF}$ which is a
finite coproduct of objects in $\mathcal{T} $ and a map $z \to M_n y^{\leq n-1}$
which is a surjection on homotopy groups. 
Thus, we can take $y_n$ to be the  coproduct of $L_n y^{\leq n-1} \sqcup z$. This defines
the $n$-truncated simplicial object $y^{\leq n}_\bullet$. Continuing as $n \to
\infty$, we build the desired hypercover. 
\end{proof} 

\section{The Zariski and Nisnevich topoi}

The purpose of this section is to prove some results about the \emph{homotopy
dimension} of certain $\infty$-topoi. 
The notion of homotopy dimension, introduced in \cite[Sec. 7.2.1]{HTT}, gives an
effective
criterion which guarantees hypercompleteness and the convergence of Postnikov
towers, so that one can extract descent spectral sequences.  

\begin{definition}[Homotopy dimension $\leq n$] 
Let $\mathfrak{X}$ be an $\infty$-topos.
\begin{enumerate}
\item  
 We have internal notions of
\emph{$n$-truncated} and \emph{$n$-connective} objects in $\mathfrak{X}$ (cf.\ 
\cite[Sec. 5.5.6]{HTT} and 
 \cite[Def. 6.5.1.10]{HTT}).  
For example, if $\mathfrak{X}$ is the $\infty$-category $\mathrm{Sh}(\mathcal{C})$ of
sheaves of spaces on a Grothendieck site $\mathcal{C}$, then $\sF \in
\mathrm{Sh}(\mathcal{C})$ is $n$-truncated if 
the underlying presheaf of $\sF$ takes values in $n$-truncated spaces. 
Suppose $n \geq 0$; we say that all objects are $(-1)$-connective. 
Then $\sF \in \mathrm{Sh}(\mathcal{C})$ is $n$-connective if $\sF$ is
locally non-empty and for any $U \in
\mathcal{C}$, basepoint $\ast \in \sF(U)$, and class $y \in \pi_i(\sF(U),
\ast)$ for $i < n$,  
there exists a covering sieve $\left\{U_\alpha \to U\right\}$ of $U$ such that $y$
pulls back to the unit (or basepoint for $i = 0$) in $\pi_i(\sF(U_\alpha),
\ast)$ for each
$\alpha$. 
\item  We say that 
$\mathfrak{X}$ has \emph{homotopy dimension $\leq n$} if every $n$-connective
object $\sF \in \mathfrak{X}$ admits a section, i.e., a map $\ast \to \sF$ from
the terminal object. We say that $\mathfrak{X}$ is \emph{locally of homotopy
dimension $\leq n$} if there exists a set of objects $U_\alpha$ which
generate $\mathfrak{X}$ under colimits such that $\mathfrak{X}_{/U_\alpha}$ is
of homotopy dimension $\leq n$.
\end{enumerate}
\end{definition} 

For us, the main relevance of the notion of homotopy dimension arises from the
following theorem. 
\begin{theorem}[{Cf.\ \cite[Cor. 7.2.1.12]{HTT} and \cite[Cor. 1.3.3.11]{SAG}}]
\label{htpyimplieshypcomplete}
Let $\mathfrak{X}$ be an $\infty$-topos which is locally of homotopy dimension
$\leq n$. Then the $\infty$-category $\sh(\mathfrak{X}, \sp)$ of sheaves of spectra on
$\mathfrak{X}$  is Postnikov complete. 
\end{theorem}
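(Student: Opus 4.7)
The plan is to reduce to the case where $\mathfrak{X}$ itself has global homotopy dimension $\leq n$, and then combine the two main consequences of that bound: full hypercompleteness of $\mathfrak{X}$ (hence of all sheaves of spectra on it), and a bound on the cohomological dimension of the associated Grothendieck prestable $\infty$-category of sheaves of connective spectra. The theorem will then be essentially a direct application of the machinery of the previous subsection.

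First, by the local-global criterion \Cref{locglobhyp}(2), Postnikov completeness of any $\sF \in \sh(\mathfrak{X}, \sp)$ can be tested after pulling back to a family of covering generators. Choose generators $\{U_\alpha\}$ of $\mathfrak{X}$ such that each slice $\mathfrak{X}_{/U_\alpha}$ has homotopy dimension $\leq n$; then it suffices to treat each $\mathfrak{X}_{/U_\alpha}$, so I may assume $\mathfrak{X}$ itself has homotopy dimension $\leq n$.

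Under this assumption, the cited HTT Corollary 7.2.1.12 shows $\mathfrak{X}$ is hypercomplete, so as noted in the excerpt after the definition of hypersheaves (\cite[Prop. 1.3.3.3]{SAG}), the hypercomplete sheaves of spectra on $\mathfrak{X}$ coincide with sheaves of spectra on $\mathfrak{X}^h = \mathfrak{X}$; hence every object of $\sh(\mathfrak{X}, \sp)$ is automatically hypercomplete. It remains to upgrade this to Postnikov completeness, for which I would invoke \Cref{hypforfinitecd}: it suffices to verify that $\sh(\mathfrak{X}, \sp_{\geq 0})$ has enough objects of cohomological dimension $\leq n$. The standard argument --- inductively sectioning the Postnikov stages of an Eilenberg-MacLane object $K(A,i)$ for $i > n$ using the homotopy dimension hypothesis on each slice --- shows that homotopy dimension $\leq n$ implies cohomological dimension $\leq n$ for abelian sheaf coefficients. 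Hence each corepresenting object $\Sigma^\infty_+ h_{U_\alpha}$ from the reduction step has cohomological dimension $\leq n$, and these generate $\sh(\mathfrak{X}, \sp_{\geq 0})$ under colimits, supplying the required generating set.

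The main obstacle I anticipate is precisely this last step: passing from the space-level condition ``homotopy dimension $\leq n$'' to the spectrum-level condition ``cohomological dimension $\leq n$ with abelian sheaf coefficients.'' This is not formal, since homotopy dimension only directly controls sections of $n$-connective objects, whereas $H^{n+1}(\mathfrak{X}; A)$ concerns a priori $(n+1)$-connective data; the passage requires the torsor/Postnikov tower argument for Eilenberg-MacLane sheaves, which is the essential content of the cited SAG Corollary 1.3.3.11 (or HTT 7.2.2.29). Once this bridge is in hand, the proof is a mechanical combination of \Cref{locglobhyp} and \Cref{hypforfinitecd}.
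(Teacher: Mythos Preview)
The paper does not give its own proof of this theorem; it is simply cited from \cite[Cor.~7.2.1.12]{HTT} and \cite[Cor.~1.3.3.11]{SAG}. Your argument has the right ingredients but the reduction in your first paragraph introduces a gap that your third paragraph then trips over.

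After replacing $\mathfrak{X}$ by a slice $\mathfrak{X}_{/U_\alpha}$ of \emph{global} homotopy dimension $\leq n$, you invoke \Cref{hypforfinitecd}, which requires enough \emph{generators} of cohomological dimension $\leq n$ in $\sh(\mathfrak{X}_{/U_\alpha},\sp_{\geq 0})$. But in that slice the only object you control is the terminal one: arbitrary further slices $(\mathfrak{X}_{/U_\alpha})_{/V}=\mathfrak{X}_{/V}$ need not have homotopy (or cohomological) dimension $\leq n$, so the objects $h_V$ that actually generate the slice topos are not known to satisfy the bound. Your sentence ``each corepresenting object $\Sigma^\infty_+ h_{U_\alpha}$ from the reduction step has cohomological dimension $\leq n$, and these generate $\sh(\mathfrak{X},\sp_{\geq 0})$'' silently switches back to the original $\mathfrak{X}$, where it is true but where you are no longer working.

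The fix is to drop the reduction entirely. In the original $\mathfrak{X}$, the hypothesis already hands you generators $U_\alpha$ with $\mathfrak{X}_{/U_\alpha}$ of homotopy dimension $\leq n$, hence of cohomological dimension $\leq n$ (the torsor argument you correctly flag, HTT~7.2.2.30). Thus the $\Sigma^\infty_+ h_{U_\alpha}$ generate $\sh(\mathfrak{X},\sp_{\geq 0})$ under colimits and have cohomological dimension $\leq n$, so the prestable category has enough objects of cohomological dimension $\leq n$. Meanwhile HTT~7.2.1.12 applies directly under the \emph{local} finite homotopy dimension hypothesis --- no reduction needed --- to give that $\mathfrak{X}$, and hence every sheaf of spectra on it, is hypercomplete. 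Now \Cref{hypforfinitecd} (or \Cref{enoughcdPostnikovcomplete} for the categorical statement) finishes.
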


In this section, we will show that the Zariski and Nisnevich topoi of qcqs
algebraic spaces of finite Krull dimension have finite homotopy dimension. 
The Zariski topos only depends on the underlying topological space, which is a
spectral space (cf.~\cite[Sec.~3.6]{SAG}), so the statement about Zariski topoi
is equivalently one about spectral spaces of finite Krull dimension. 
Since
these results (in various forms) are well-known in the noetherian case, 
the reader interested primarily in noetherian rings may skip this section
without loss of generality. 

In fact, we will give two arguments for the results. The first argument will be based on a
general criterion for passing finite homotopy dimension through a limiting
process, which could be useful in other settings. 
The second argument (which in the case of spectral spaces is based on \cite[Tag
0A3G]{stacks-project}) is based on a passage to pro-objects, and gives a
slightly stronger connectivity assertion. 

\subsection{Finitary excisive sites and finite homotopy dimension}

We continue the discussion of finitary sites from \Cref{finitarysitedef}. 
 A \emph{morphism} of finitary sites is a functor $F$ which preserves all finite
 limits such that the image under $F$ of a covering sieve generates a covering
 sieve.  This defines the $\infty$-category of finitary sites.  Given a morphism
 of finitary sites $F:\mathcal{C}\rightarrow\mathcal{D}$ we get a geometric
 morphism $\operatorname{Sh}(\mathcal{D})\rightarrow
 \operatorname{Sh}(\mathcal{C})$, see \cite[Prop. 6.2.3.20]{HTT}: the corresponding ``pullback" functor $\operatorname{Sh}(\mathcal{C})\rightarrow\operatorname{Sh}(\mathcal{D})$ can be characterized as the unique colimit-preserving functor which sends the representable sheaf $h_U$ to $h_{F(U)}$ for $U\in\mathcal{C}$, and the right adjoint ``pushforward" functor $\operatorname{Sh}(\mathcal{D})\rightarrow \operatorname{Sh}(\mathcal{C})$ is given by composition with $F$ on the level of presheaves (which sends sheaves to sheaves).

\begin{lemma}
\label{filtdiagfinitarysite}
\begin{enumerate}
\item The $\infty$-category of finitary sites admits all filtered colimits, and these commute with the forgetful functor to the $\infty$-category of small $\infty$-categories.  Explicitly, if $\{\mathcal{C}_i\}_{i\in I}$ is a filtered diagram of finitary sites, then one equips the filtered colimit category $\mathcal{C}:=\varinjlim_{i\in I}\mathcal{C}_i$ with the Grothendieck topology described by: a sieve is covering if and only if it admits a refinement which is generated by the image of a covering sieve in one of the $\mathcal{C}_i$.
\item If $\{\mathcal{C}_i\}_{i\in I}$ is a filtered diagram of finitary sites with colimit $\mathcal{C}$, then the induced pushforward functor
$$\operatorname{Sh}(\mathcal{C})\rightarrow\varprojlim_{i\in I}\operatorname{Sh}(\mathcal{C}_i)$$
is an equivalence.
\end{enumerate}
\end{lemma}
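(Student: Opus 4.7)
The plan is to tackle both parts by first passing to the underlying $\infty$-category in $\mathrm{Cat}_\infty$ and then verifying the topology-related conditions via the finitary hypothesis.

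For part (1), I would first observe that the plain filtered colimit $\mathcal{C} := \varinjlim_i \mathcal{C}_i$ in $\mathrm{Cat}_\infty$ inherits finite limits from the $\mathcal{C}_i$: the forgetful functor from finitely complete $\infty$-categories (and finite-limit-preserving functors) to $\mathrm{Cat}_\infty$ preserves filtered colimits, because any finite diagram in $\mathcal{C}$ lifts to some $\mathcal{C}_j$, and the transition functors preserve finite limits. The next step is to check that the prescribed collection of sieves defines a Grothendieck topology: stability under base change and transitivity both reduce to the corresponding axioms in some sufficiently large $\mathcal{C}_j$, again using that the relevant data (finitely many generating morphisms and a finite diagram in $\mathcal{C}$) always lifts to some level of the filtered system. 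The finitary axiom is immediate. Finally, for the universal property, a cocone $\{F_i: \mathcal{C}_i \to \mathcal{D}\}$ of morphisms of finitary sites induces a functor $F: \mathcal{C} \to \mathcal{D}$ which preserves finite limits by the same lifting argument; cover-preservation for $F$ is tested on generating covering sieves, which by construction come from the $\mathcal{C}_i$, reducing to cover-preservation for each $F_i$.

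For part (2), I would work first at the level of presheaves. Since $\mathrm{Fun}((-)^{op}, \mathrm{Spc})$ sends colimits in $\mathrm{Cat}_\infty$ to limits, we get $\psh(\mathcal{C}) \simeq \varprojlim_i \psh(\mathcal{C}_i)$ along the restriction functors. It then suffices to show that a presheaf $\sF$ on $\mathcal{C}$ is a sheaf if and only if each restriction $\sF|_{\mathcal{C}_i}$ is a sheaf. The ``only if'' direction holds because each canonical functor $\mathcal{C}_i \to \mathcal{C}$ is a morphism of sites, and pushforward (i.e., restriction at the presheaf level) always preserves sheaves. For the ``if'' direction, the description of the topology in part (1) shows that a basis for the topology on $\mathcal{C}$ is given by sieves generated by the images of covers in some $\mathcal{C}_i$, so one only needs to test the sheaf condition for $\sF$ on such sieves. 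In the finitary setting the sieve sheaf condition is equivalent to the \v{C}ech condition on the generating family, and since $\mathcal{C}_i \to \mathcal{C}$ preserves finite limits, the \v{C}ech nerve of a cover in $\mathcal{C}_i$ maps to the \v{C}ech nerve of its image in $\mathcal{C}$. Consequently, the \v{C}ech condition for $\sF$ at the image cover is identified with the \v{C}ech condition for $\sF|_{\mathcal{C}_i}$ at the original cover, giving the equivalence.

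The main obstacle is indeed this matching of sheaf conditions across the filtered colimit in part (2). The essential ingredient is that the finitary hypothesis makes everything come down to finite diagrams (finite sets of generating morphisms together with their finite fiber products), and such finite data in $\mathcal{C}$ always lifts to some $\mathcal{C}_i$. Without the finitary hypothesis, one would face the significantly harder problem of comparing arbitrary (not finitely generated) covering sieves under the colimit, where the lifting issues would be considerably more delicate.
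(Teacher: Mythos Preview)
Your proposal is correct and follows essentially the same approach as the paper. The only cosmetic difference is that for the Grothendieck-topology axioms and universal property in part~(1) the paper observes that these depend only on the homotopy category and cites the classical treatment in \cite[Exp.~VI, Sec.~7--8]{SGA4}, whereas you sketch the verification directly; for part~(2) your argument is the same as the paper's (presheaf identification plus reducing the sheaf condition to a \v{C}ech/finite-limit condition that lifts to some $\mathcal{C}_i$).
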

\begin{proof}
First we prove claim 1.  Since the colimit is filtered, every finite diagram
$d:\mathcal{D}\rightarrow \mathcal{C}$ lifts to some
$d':\mathcal{D}\rightarrow\mathcal{C}_i$.  Furthermore, the limit of $d'$ will
map to a limit of $d$, because again this is a claim about a finite diagram and
we know it at every stage of the filtered system.  From this we see that
$\mathcal{C}$ has all finite limits, each functor
$\mathcal{C}_i\rightarrow\mathcal{C}$ preserves finite limits, and $\mathcal{C}$
is the filtered colimit of the $\mathcal{C}_i$ in the $\infty$-category of
finitely complete small $\infty$-categories with finite-limit preserving
functors.  The remaining claims concern the Grothendieck topology; these depend
only on the homotopy category (\cite[Rem. 6.2.2.3]{HTT}) and therefore reduce to
the classical case, treated in \cite[Exp. VI, Sec. 7-8]{SGA4}.

For claim 2, since presheaves are functors out, we have
$\operatorname{PSh}(\mathcal{C})\simeq \varprojlim_{i\in
I}\operatorname{PSh}(\mathcal{C}_i)$.  Thus we only need to check that an
$\mathcal{F}\in\operatorname{PSh}(\mathcal{C})$ is a sheaf if its restriction to
each $\mathcal{C}_i$ is a sheaf.  (The converse follows from functoriality of
the $\infty$-topos associated to a finitary site, recalled above.)  But by
refinement, $\mathcal{F}$ is a sheaf if and only if it has descent with respect
to covering sieves generated by finitely many objects $X_1,\ldots X_n$ covering
some $X$.  By a standard cofinality argument, this descent property is
equivalent to requiring that $\mathcal{F}(X)$ be the limit of a diagram built out of iterated fiber products of the $X_\alpha$ over $X$.  As each of the $X_\alpha \rightarrow X$ are realized at some common stage $\mathcal{C}_i$ and the functor $\mathcal{C}_i\rightarrow\mathcal{C}$ preserves finite limits, the descent condition only depends on the restriction to $\mathcal{C}_i$, verifying the claim.
\end{proof}

Further nice properties of these $\infty$-categories of sheaves are available
under a hypothesis.  To motivate what follows, recall that when $\mathcal{C}$ is
a finitary site, the representable sheaves
$h_U\in\operatorname{Sh}(\mathcal{C})$ for $U\in\mathcal{C}$, while
\emph{coherent} in the sense of $\infty$-topos theory, are not necessarily
\emph{compact } in the categorical sense of mapping spaces out of them commuting
with filtered colimits.  (However, they have compact image in the $d+1$-topos
$\operatorname{Sh}(\mathcal{C})_{\leq d}$ of $d$-truncated sheaves for any
$d\geq 0$: see \cite[A.2.3]{SAG}.)  An example to have in mind is the $\infty$-topos of spaces $\mathcal{S}$: the coherent objects are the spaces all of whose homotopy sets $\pi_n$ are finite (at any basepoint, when $n>0$), and the compact objects are the retracts of the spaces homotopy equivalent to finite CW-complexes.

\begin{definition}
Let $\mathcal{C}$ be a finitary site.  We say $\mathcal{C}$ is \emph{excisive} if the full subcategory $\operatorname{Sh}(\mathcal{C})\subset\operatorname{PSh}(\mathcal{C})$ is closed under filtered colimits, or equivalently if the generating coherent objects $h_U\in\operatorname{Sh}(\mathcal{C})$ for $U\in\mathcal{C}$ are also compact in the $\infty$-categorical sense that $\mathcal{F}\mapsto \operatorname{Map}(h_U,\mathcal{F})=\mathcal{F}(U)$ commutes with filtered colimits.
\end{definition}

\begin{proposition}
Let $\mathcal{C}$ be a finitary site.  Denote by $\operatorname{Sh}^f(\mathcal{C})$ the smallest full subcategory of $\operatorname{Sh}(\mathcal{C})$ closed under finite colimits and containing the representable sheaf $h_U$ for every $U\in\mathcal{C}$.

If $\mathcal{C}$ is excisive, then
$$\operatorname{Sh}(\mathcal{C})=\operatorname{Ind}(\operatorname{Sh}^f(\mathcal{C})).$$
\label{exccompactgen}
\end{proposition}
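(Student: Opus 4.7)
The plan is to apply a standard identification of $\mathrm{Ind}$-categories: if $\mathcal{D}$ is a presentable $\infty$-category and $\mathcal{E}\subset\mathcal{D}$ is a small full subcategory closed under finite colimits whose objects are compact in $\mathcal{D}$ and which generates $\mathcal{D}$ under filtered colimits, then the inclusion extends to an equivalence $\mathrm{Ind}(\mathcal{E})\overset{\sim}{\to}\mathcal{D}$ (cf.\ \cite[Prop.~5.3.5.11]{HTT}). I would apply this criterion with $\mathcal{D}=\operatorname{Sh}(\mathcal{C})$ and $\mathcal{E}=\operatorname{Sh}^f(\mathcal{C})$, reducing the proposition to verifying its three hypotheses.

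Closure of $\operatorname{Sh}^f(\mathcal{C})$ under finite colimits is immediate from the definition. For compactness, the excisiveness hypothesis on $\mathcal{C}$ is, by the characterization given in the definition, exactly the statement that each representable sheaf $h_U$ is compact in $\operatorname{Sh}(\mathcal{C})$. Since the full subcategory of compact objects in any $\infty$-category is closed under finite colimits, it contains the finite-colimit closure of the representables, which is $\operatorname{Sh}^f(\mathcal{C})$. This gives compactness of every object of $\operatorname{Sh}^f(\mathcal{C})$.

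It remains to check that every $\mathcal{F}\in\operatorname{Sh}(\mathcal{C})$ is a filtered colimit of objects in $\operatorname{Sh}^f(\mathcal{C})$. By the sheafified Yoneda lemma, $\mathcal{F}$ is the colimit of the tautological diagram $\mathcal{C}_{/\mathcal{F}}\to\operatorname{Sh}(\mathcal{C})$, $(U\to\mathcal{F})\mapsto h_U$, so $\operatorname{Sh}(\mathcal{C})$ is generated by representables under all colimits. The standard trick to upgrade this to filtered-colimit generation is to pass to the full subcategory $\operatorname{Sh}^f(\mathcal{C})_{/\mathcal{F}}\subset\operatorname{Sh}(\mathcal{C})_{/\mathcal{F}}$: since $\operatorname{Sh}^f(\mathcal{C})$ is closed under finite colimits, this slice is filtered (any finite diagram admits a cone given by its colimit in $\operatorname{Sh}^f(\mathcal{C})$), it contains $\mathcal{C}_{/\mathcal{F}}$, and the forgetful functor to $\operatorname{Sh}(\mathcal{C})$ still has colimit $\mathcal{F}$.

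None of the individual steps are hard; the only point requiring a little care is the filtered-colimit refinement in the last paragraph, which is entirely formal once excisiveness has been used to secure compactness of the representables. The actual content of the proposition lies in the hypothesis of excisiveness, which is assumed.
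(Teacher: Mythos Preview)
Your proof is correct and follows essentially the same approach as the paper: both invoke \cite[Prop.~5.3.5.11]{HTT}, using excisiveness to get compactness of the $h_U$ (hence of all of $\operatorname{Sh}^f(\mathcal{C})$), and the fact that the $h_U$ generate $\operatorname{Sh}(\mathcal{C})$ under colimits. The paper's version is more terse, simply noting full faithfulness from compactness and essential surjectivity from generation, while you spell out the filtered-colimit refinement explicitly.
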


\begin{proof}
If $\mathcal{C}$ is excisive then each $h_U$ is compact, whence so is every
object in $\operatorname{Sh}^f(\mathcal{C})$.  Thus the functor
$\operatorname{Ind}(\operatorname{Sh}^f(\mathcal{C}))\rightarrow\operatorname{Sh}(\mathcal{C})$
is fully faithful.  It is then essentially surjective because the $h_U$
clearly generate $\operatorname{Sh}(\mathcal{C})$ under colimits. Compare
\cite[Proposition 5.3.5.11]{HTT}. 
\end{proof}

\begin{corollary}
\label{filteredcolimitexc}
Let $\mathcal{C}_i$ be a filtered system of finitary excisive sites.  Then:
\begin{enumerate}
\item  The colimit $\mathcal{C}:=\varinjlim_{i\in I}\mathcal{C}_i$ is excisive.
\item $\operatorname{Sh}^f(\mathcal{C})=\varinjlim_{i\in I}\operatorname{Sh}^f(\mathcal{C}_i)$ via the pullback functors.
\end{enumerate}
\end{corollary}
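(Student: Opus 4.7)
The plan is to establish part 1 via a categorical compactness argument, and then deduce part 2 as a consequence.

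For part 1, we must show that for every $U\in\mathcal{C}$ the representable $h_U\in\operatorname{Sh}(\mathcal{C})$ is categorically compact (which is the content of excisiveness). Combining \Cref{filtdiagfinitarysite}(2) with the equivalence between $\mathrm{Pr}^R$-limits and $\mathrm{Pr}^L$-colimits of presentable $\infty$-categories \cite[Sec.~5.5.3]{HTT} yields
\[
\operatorname{Sh}(\mathcal{C}) \simeq \varinjlim_{i\in I}^{\mathrm{Pr}^L}\operatorname{Sh}(\mathcal{C}_i),
\]
with transition maps the pullbacks $F_{ij}^\ast$ associated to the morphisms of sites $F_{ij}\colon\mathcal{C}_i\to\mathcal{C}_j$. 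The excisiveness hypothesis together with \Cref{exccompactgen} identifies each $\operatorname{Sh}(\mathcal{C}_i)$ with the compactly generated $\operatorname{Ind}(\operatorname{Sh}^f(\mathcal{C}_i))$. Because $F_{ij}^\ast$ preserves representables and commutes with all colimits, it restricts to a functor $\operatorname{Sh}^f(\mathcal{C}_i)\to\operatorname{Sh}^f(\mathcal{C}_j)$, and so in particular preserves compact objects. The standard formula for $\mathrm{Pr}^L$-filtered colimits of compactly generated $\infty$-categories along compact-object-preserving transitions (\cite[Sec.~5.5.7]{HTT}) then gives
\[
\operatorname{Sh}(\mathcal{C}) \simeq \operatorname{Ind}\bigl(\varinjlim_{i\in I}\operatorname{Sh}^f(\mathcal{C}_i)\bigr),
\]
with the interior colimit computed in small $\infty$-categories with finite colimits. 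For any $U\in\mathcal{C}$, choosing a lift to $U_i\in\mathcal{C}_i$ exhibits $h_U=F_i^\ast h_{U_i}$ as the image of a compact object of this Ind-category, hence compact in $\operatorname{Sh}(\mathcal{C})$; this proves excisiveness of $\mathcal{C}$.

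For part 2, \Cref{exccompactgen} applied to the now-excisive $\mathcal{C}$, combined with the previous display, produces a canonical equivalence
\[
\operatorname{Ind}(\operatorname{Sh}^f(\mathcal{C})) \simeq \operatorname{Ind}\bigl(\varinjlim_{i\in I}\operatorname{Sh}^f(\mathcal{C}_i)\bigr).
\]
The natural functor $\Phi\colon\varinjlim_i\operatorname{Sh}^f(\mathcal{C}_i)\to\operatorname{Sh}^f(\mathcal{C})$ induced by the pullbacks $F_i^\ast$ is essentially surjective: any object of $\operatorname{Sh}^f(\mathcal{C})$ is a finite colimit of representables, and by the explicit description of $\mathcal{C}=\varinjlim_i\mathcal{C}_i$ in \Cref{filtdiagfinitarysite}(1), every such finite diagram lifts to one in some $\mathcal{C}_i$. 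Full faithfulness of $\Phi$ then follows from the standard filtered-colimit formula for mapping spaces in a filtered colimit of small $\infty$-categories, matched with the mapping spaces in $\operatorname{Sh}(\mathcal{C})=\operatorname{Ind}(\operatorname{Sh}^f(\mathcal{C}))$ via the compactness secured by part 1. The central technical step, from which the rest flows, is the preservation of compact objects by the transition pullbacks $F_{ij}^\ast$; once this is in hand, the remaining manipulations are routine properties of compactly generated $\infty$-categories.
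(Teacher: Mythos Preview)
Your argument is correct in outline, but it takes a different route from the paper and has one imprecise step.

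For part 1, the paper gives a much more elementary argument: since $\psh(\mathcal{C})=\varprojlim_i\psh(\mathcal{C}_i)$ and $\sh(\mathcal{C})=\varprojlim_i\sh(\mathcal{C}_i)$ (\Cref{filtdiagfinitarysite}(2)), and the restriction functors $\psh(\mathcal{C})\to\psh(\mathcal{C}_i)$ preserve all colimits, a filtered colimit of sheaves on $\mathcal{C}$ restricts to a filtered colimit of sheaves on each $\mathcal{C}_i$, which is again a sheaf by excisiveness of $\mathcal{C}_i$; hence it is a sheaf on $\mathcal{C}$. Your route via the $\mathrm{Pr}^L$/$\mathrm{Pr}^R$ duality and the colimit formula for compactly generated categories is valid and has the advantage of simultaneously producing the identification $\operatorname{Sh}(\mathcal{C})\simeq\operatorname{Ind}(\varinjlim_i\operatorname{Sh}^f(\mathcal{C}_i))$ used in part 2, but it imports more machinery than is needed for the bare claim of excisiveness.

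For part 2, your essential surjectivity step is imprecise. You invoke \Cref{filtdiagfinitarysite}(1) to lift ``finite diagrams,'' but that lemma concerns diagrams in $\mathcal{C}$, whereas an object of $\operatorname{Sh}^f(\mathcal{C})$ is an iterated finite colimit whose later-stage morphisms live in $\operatorname{Sh}(\mathcal{C})$, not in $\mathcal{C}$. Lifting those morphisms requires the full faithfulness you only establish afterwards, so the order of your argument is off. The paper's version avoids this entirely: once full faithfulness is known (from the Ind identification), essential surjectivity follows because the image of $\Phi$ contains every $h_U$ and is closed under finite colimits (each $F_i^\ast$ preserves finite colimits), hence contains all of $\operatorname{Sh}^f(\mathcal{C})$; conversely the image is contained in $\operatorname{Sh}^f(\mathcal{C})$ for the same reason. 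Your argument can be repaired simply by reordering, or by adopting this cleaner formulation.
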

\begin{proof}
Claim 1 follows from \Cref{filtdiagfinitarysite} part 2, which implies that a
sheaf on $\mathcal{C}$ is a compatible system of sheaves on
$\{\mathcal{C}_i\}$; the same holds for presheaves, and thus implies that
$\sh(\mathcal{C}) \subset \psh(\mathcal{C})$ commutes with filtered colimits,
since this is true at each finite level. 

For claim 2, the description 
\cite[Corollary 5.3.5.4]{HTT} 
of $\operatorname{Ind}$ as finite limit-preserving presheaves gives an identification
$$\operatorname{Ind}(\varinjlim_{i\in
I}\operatorname{Sh}^f(\mathcal{C}_i))=\varprojlim_{i
\in I}\operatorname{Ind}(\operatorname{Sh}^f(\mathcal{C}_i)).$$
Combining \Cref{exccompactgen} and \Cref{filtdiagfinitarysite} part 2 shows that the $\infty$-category on the right identifies with $\operatorname{Sh}(\mathcal{C})$.  In particular we can see that $\varinjlim_{i\in I}\operatorname{Sh}^f(\mathcal{C}_i)\rightarrow\operatorname{Sh}^f(\mathcal{C})$ is fully faithful.  On the other hand it is essentially surjective because pullback functors preserve finite colimits (indeed, all colimits) and the $h_U$ for $U\in\mathcal{C}$ are clearly hit.\end{proof}

\begin{remark}
Let $d\in\mathbb{Z}_{\geq 0}$.  By \cite[Sec. A.2.3]{SAG}, the analog of
\Cref{exccompactgen} and claim 2 of \Cref{filteredcolimitexc} hold for the $d+1$-category of $d$-truncated sheaves $\operatorname{Sh}(-)_{\leq d}$ without any excisive hypotheses.
\label{colimitworksfornexc}
\end{remark}

Next we will give an analog of the previous proposition and corollary also in the setting of $d$-connective sheaves.  First, some notation: if $\mathcal{C}$ is a site, we write $\operatorname{Sh}(\mathcal{C})_{\geq d}\subset \operatorname{Sh}(\mathcal{C})$ for the full subcategory of $d$-connective sheaves, and if $\mathcal{C}$ is finitary we further set
$$\operatorname{Sh}^f(\mathcal{C})_{\geq d} := \operatorname{Sh}(\mathcal{C})_{\geq d}\cap \operatorname{Sh}^f(\mathcal{C}),$$
the full subcategory of those sheaves which both are generated by $h_U$'s under finite colimits and are $d$-connnective.

A key technical lemma for us will be that any $\mathcal{F}\in\operatorname{Sh}^f(\mathcal{C})_{\geq d}$ has \emph{locally finitely generated} $\pi_d$ in the appropriate sense.  For motivation one can think of the case of the $\infty$-topos $\mathcal{S}$, where it is a standard fact from homotopy theory that a $d$-connective finite CW-complex $X$ has finitely generated $\pi_d$.  In fact, $X$ need not itself be finite: it's enough for it to have finite $d$-skeleton.  In general, we articulate this as follows:

\begin{lemma}
\label{finitegrouplemma}
Let $\mathcal{C}$ be a finitary site, let $d\geq 0$, and let $\mathcal{F}\in\operatorname{Sh}(\mathcal{C})$ be such that the Postnikov truncation $\mathcal{F}_{\leq d}\in \operatorname{Sh}(\mathcal{C})_{\leq d}$ lies in the smallest full subcategory $(\operatorname{Sh}(\mathcal{C})_{\leq d})^f$ of $\operatorname{Sh}(\mathcal{C})_{\leq d}$ containing the $(h_U)_{\leq d}$ for $U\in\mathcal{C}$ and closed under finite colimits.

Then there are finitely many $\{U_i\}_{i\in I}$ in $\mathcal{C}$ and points $x_i \in \mathcal{F}(U_i)$ such that every section of $\pi_0\mathcal{F}$ is locally equal to the class of some $x_i$.  Furthermore, if $d\geq 1$ and $\mathcal{F}$ is $d$-connective then this data can be chosen so that for each $i\in I$ the homotopy group sheaf $\pi_d(\mathcal{F};x_i)$ over $U_i$ is generated by finitely many global sections.

\end{lemma}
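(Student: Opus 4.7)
Since both $\pi_0\mathcal{F}$ and $\pi_d\mathcal{F}$ agree with the corresponding homotopy sheaves of $\mathcal{G} := \mathcal{F}_{\leq d}$, and $d$-connectivity of $\mathcal{F}$ is equivalent to $d$-connectivity of $\mathcal{G}$, the first step is to reduce to proving the statement for $\mathcal{G} \in (\operatorname{Sh}(\mathcal{C})_{\leq d})^f$. From there, the plan is to induct on the construction of $\mathcal{G}$: let $\mathcal{S} \subseteq (\operatorname{Sh}(\mathcal{C})_{\leq d})^f$ denote the full subcategory of objects for which both conclusions hold (with the second one being vacuous unless the object is $d$-connective and $d \geq 1$). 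Since $(\operatorname{Sh}(\mathcal{C})_{\leq d})^f$ is by definition the smallest full subcategory of $\operatorname{Sh}(\mathcal{C})_{\leq d}$ closed under finite colimits and containing the $(h_U)_{\leq d}$, it suffices to verify that $\mathcal{S}$ contains every $(h_U)_{\leq d}$, contains the initial object, and is closed under pushouts.

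For the base case $\mathcal{G} = (h_U)_{\leq d}$, I would take the singleton indexing $I = \{0\}$ with $U_0 = U$ and $x_0 = \operatorname{id}_U$. The $\pi_0$ assertion is essentially a local version of the Yoneda lemma: any section of $\pi_0 \mathcal{G}$ over some $V$ is locally represented by a morphism $V \to U$, and any such morphism pulls back $x_0$ to the given section. For the $\pi_d$ assertion when $\mathcal{G}$ is $d$-connective and $d\geq 1$, I would compute $\pi_d((h_U)_{\leq d}; x_0)$ as a sheaf on the slice $\mathcal{C}_{/U}$ via the mapping-space description and exhibit a canonical global generating section arising from the identity of $U$ in $\mathcal{C}_{/U}$.

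For the inductive step, the initial object case is trivial, so the substance is pushouts $\mathcal{G} = X \cup_Z Y$ with $X, Y, Z \in \mathcal{S}$. For the $\pi_0$ part, combining the finite generating data is immediate: $\pi_0 \mathcal{G}$ is a sheafified coequalizer of $\pi_0 Z \rightrightarrows \pi_0 X \sqcup \pi_0 Y$, so the union of the local $\pi_0$-generators of $X$ and of $Y$ gives local $\pi_0$-generators of $\mathcal{G}$. The main obstacle, and the technical heart of the proof, is the $\pi_d$ assertion: even when $\mathcal{G}$ is $d$-connective, the constituents $X, Y, Z$ may well fail to be, so one cannot directly apply the inductive hypothesis. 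I expect to handle this via a Blakers--Massey- or Mayer--Vietoris-type long exact sequence in the $\infty$-topos $\operatorname{Sh}(\mathcal{C})$: working on the slice over each chosen $U_i$ after basepointing at $x_i$, I would express $\pi_d \mathcal{G}$ as an extension/quotient built from $\pi_d$ and $\pi_{d-1}$ of the pieces, and then invoke the inductive finite generation of those lower homotopy sheaves on the pieces. The delicate point will be ensuring the connectivity estimates are tight enough that only finitely many finitely generated contributions enter.
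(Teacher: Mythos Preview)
Your $\pi_0$ argument is fine, and is a reasonable alternative to the paper's approach. However, your plan for the $\pi_d$ assertion has a genuine gap that you yourself flag but do not resolve: the induction does not close.

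Concretely, your class $\mathcal{S}$ is defined so that the $\pi_d$ condition is vacuous whenever the object fails to be $d$-connective. In the pushout step $\mathcal{G} = X \cup_Z Y$, when $\mathcal{G}$ is $d$-connective but $X,Y,Z$ are not, the inductive hypothesis tells you \emph{nothing} about $\pi_k(X),\pi_k(Y),\pi_k(Z)$ for any $k\geq 1$. A Mayer--Vietoris or Blakers--Massey argument cannot manufacture finite generation of $\pi_d\mathcal{G}$ out of thin air here; indeed, there is no long exact sequence in homotopy for pushouts without connectivity hypotheses on the maps, and even if there were, the inputs would not be known to be finitely generated. (Recall the classical warning: finite CW-complexes can have infinitely generated homotopy groups, e.g.\ $\pi_2(S^1\vee S^2)\cong\mathbb{Z}[t,t^{-1}]$. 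So one cannot hope to strengthen the inductive hypothesis to ``all $\pi_k$ are finitely generated'' and still have it hold for representables and be preserved by pushouts.)

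The paper avoids this by not inducting at all. The key observation is that membership in $(\operatorname{Sh}(\mathcal{C})_{\leq d})^f$ is equivalent to compactness in the $(d{+}1)$-category $\operatorname{Sh}(\mathcal{C})_{\leq d}$ (this uses that the site is finitary; see \cite[Sec.~A.2.3]{SAG}). For the $\pi_0$ statement one truncates further to see $\pi_0\mathcal{F}$ is compact as a sheaf of sets, then notes that any sheaf of sets is the filtered colimit of its locally finitely generated subsheaves. For the $\pi_d$ statement with $\mathcal{F}$ $d$-connective, one works locally to acquire a basepoint, observes that $\mathcal{F}_{\leq d}$ is then compact as a \emph{pointed} object of $\operatorname{Sh}(\mathcal{C})_{\leq d}$ (a point is compact since the site is finitary), and invokes the equivalence between pointed $d$-connective $d$-truncated sheaves and sheaves of groups (for $d=1$) or abelian groups (for $d\geq 2$) given by $\pi_d$ (\cite[Prop.~7.2.2.12]{HTT}). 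Since $d$-connectivity is preserved by filtered colimits, $\pi_d\mathcal{F}$ is compact as a sheaf of (abelian) groups, and the same filtered-colimit-of-finitely-generated-subsheaves trick finishes. This is essentially a Hurewicz-style reduction: one replaces the first nonvanishing homotopy group by a purely algebraic compactness statement, sidestepping the difficulty your induction runs into.
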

\begin{proof}
For any such $\mathcal{F}$ we have that $\pi_0(\mathcal{F})=\mathcal{F}_{\leq 0}$ lies in $(\operatorname{Sh}(\mathcal{C})_{\leq 0})^f$, so that $\pi_0(\mathcal{F})$ is a compact object in the category of sheaves of sets on $\mathcal{C}$.  On the other hand every sheaf of sets is tautologically the filtered colimit of its locally finitely generated subsheaves.  It follows that $\pi_0(\mathcal{F})$ is a retract of a locally finitely generated sheaf of sets, hence is locally finitely generated.  This gives $\{U_i\}$ and $\{x_i\}$ as required.

Now take $d\geq 1$, and assume
$\mathcal{F}\in\operatorname{Sh}(\mathcal{C})_{\geq d}$.  Since we allowed
ourselves to modify the $\{U_i\}$ and $\{x_i\}$, we are free to work locally,
hence without loss of generality we can equip $\mathcal{F}$ with a basepoint.
Since $\mathcal{F}_{\leq d}\in\operatorname{Sh}(\mathcal{C})_{\leq d}$ is a
compact object, and a point is also compact because the site is finitary,
$\mathcal{F}_{\leq d}$ is also compact in the $\infty$-category of pointed
objects of $\operatorname{Sh}(\mathcal{C})_{\leq d}$.  However, \cite[Prop.
7.2.2.12]{HTT} shows that $\pi_d$ establishes an equivalence of categories between pointed, $d$-truncated, $d$-connective sheaves and group sheaves (if $d=1$) or abelian group sheaves (if $d\geq 2$).  Since $d$-connectivity is preserved by filtered colimits (indeed, all colimits), we deduce that $\pi_d\mathcal{F}$ is compact as a sheaf of groups ($d=1$) or abelian groups ($d\geq 2$).  But every such sheaf is a filtered colimit of its locally finitely generated subsheaves, so from compactness we deduce that $\pi_d\mathcal{F}$ itself is locally finitely generated, whence the claim.
\end{proof}

\begin{proposition}
\label{approxbydconncompact}
Let $\mathcal{C}$ be a finitary site.  Then for every $d\geq 0$ we have:
\begin{enumerate}
\item If $\mathcal{F}\rightarrow\mathcal{G}$ is a map in $\operatorname{Sh}(\mathcal{C})$ with $\mathcal{F}\in \operatorname{Sh}^f(\mathcal{C})$ and $\mathcal{G}\in\operatorname{Sh}(\mathcal{C})_{\geq d}$, then there is a factorization $\mathcal{F}\rightarrow\mathcal{F}'\rightarrow\mathcal{G}$ with $\mathcal{F}'\in\operatorname{Sh}^f(\mathcal{C})_{\geq d}$.
\item If $\mathcal{C}$ is excisive, then $\operatorname{Sh}(\mathcal{C})_{\geq d} = \operatorname{Ind}(\operatorname{Sh}^f(\mathcal{C})_{\geq d}).$
\end{enumerate}
\end{proposition}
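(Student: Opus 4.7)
The plan is to prove Part 1 by induction on $d$, building the factorization $\mathcal{F}\to\mathcal{F}'\to\mathcal{G}$ by attaching finitely many ``cells'' in each inductive step, and then to deduce Part 2 by a cofinality argument based on Part 1.

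Base case $d=0$: since $\mathcal{G}$ is $0$-connective and the site is finitary, I choose finitely many $V_1,\ldots,V_n\in\mathcal{C}$ covering the terminal object of $\mathcal{C}$ together with lifts $h_{V_i}\to\mathcal{G}$.  Setting $\mathcal{F}':=\mathcal{F}\sqcup\bigsqcup_i h_{V_i}$ gives the desired factorization, and $\mathcal{F}'\in\operatorname{Sh}^f(\mathcal{C})_{\geq 0}$ because $\bigsqcup_i h_{V_i}\to\ast$ is already an effective epimorphism.

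Inductive step from $d-1$ to $d$ (for $d\geq 1$): the inductive hypothesis furnishes $\mathcal{F}\to\mathcal{F}_0\to\mathcal{G}$ with $\mathcal{F}_0\in\operatorname{Sh}^f(\mathcal{C})_{\geq d-1}$, and I must kill $\pi_{d-1}\mathcal{F}_0$, which maps to $\pi_{d-1}\mathcal{G}=0$.  \Cref{finitegrouplemma}, applied to $\mathcal{F}_0$ with the lemma's parameter equal to $d-1$, supplies finitely many basepoints $x_i\in\mathcal{F}_0(U_i)$ and, when $d\geq 2$, finitely many global generators of each sheaf of groups $\pi_{d-1}(\mathcal{F}_0;x_i)|_{U_i}$.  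I then choose representative attaching maps $S^{d-1}_{U_i}\to\mathcal{F}_0$ for these generators (for $d=1$, the attachments are instead pairs $h_{U_i}\sqcup h_{U_j}\to\mathcal{F}_0$ comparing basepoints $x_i,x_j$), observing that $S^{d-1}_V\in\operatorname{Sh}^f(\mathcal{C})$ as an iterated pushout of representables in the slice topos.  Each composite $S^{d-1}_V\to\mathcal{G}$ represents a locally trivial section of $\pi_{d-1}\mathcal{G}=0$, so the finitary hypothesis lets me pass to a finite cover $\{V_{i,j,\alpha}\to U_i\}$ carrying explicit nullhomotopies, realized as maps $h_{V_{i,j,\alpha}}\to\mathcal{G}$ extending the sphere maps.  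Forming the pushout
$$\mathcal{F}_1 := \mathcal{F}_0 \sqcup_{\bigsqcup S^{d-1}_{V_{i,j,\alpha}}} \bigsqcup h_{V_{i,j,\alpha}}$$
produces an object of $\operatorname{Sh}^f(\mathcal{C})$ equipped with a map to $\mathcal{G}$ assembled from $\mathcal{F}_0\to\mathcal{G}$ and the chosen nullhomotopies, and with $\pi_{d-1}\mathcal{F}_1=0$ because the attached cells kill the chosen local generators of $\pi_{d-1}\mathcal{F}_0$ while leaving lower connectivity intact.

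For Part 2, objects of $\operatorname{Sh}^f(\mathcal{C})_{\geq d}$ are compact in $\operatorname{Sh}(\mathcal{C})_{\geq d}$ by \Cref{exccompactgen} together with the preservation of $d$-connectivity under non-empty filtered colimits, so $\operatorname{Ind}(\operatorname{Sh}^f(\mathcal{C})_{\geq d})\hookrightarrow\operatorname{Sh}(\mathcal{C})_{\geq d}$ is fully faithful.  For essential surjectivity, given $\mathcal{G}\in\operatorname{Sh}(\mathcal{C})_{\geq d}$, Part 1 implies that the inclusion $(\operatorname{Sh}^f(\mathcal{C})_{\geq d})_{/\mathcal{G}}\hookrightarrow(\operatorname{Sh}^f(\mathcal{C}))_{/\mathcal{G}}$ is cofinal: for each $Y\to\mathcal{G}$ in the larger overcategory, the $\infty$-category of factorizations of $Y\to\mathcal{G}$ through $\operatorname{Sh}^f(\mathcal{C})_{\geq d}$ is nonempty by Part 1 and filtered (combine two factorizations via pushout under $Y$, then apply Part 1 again), hence weakly contractible.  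Since the larger overcategory has colimit $\mathcal{G}$ by the excisive hypothesis and \Cref{exccompactgen}, so does the smaller one.  The main obstacle is the homotopical analysis of the cell-attaching pushout in the inductive step: in a general $\infty$-topos, pushouts can behave badly due to Blakers--Massey phenomena, but here the attachments are of terminal objects of the slice topoi $\mathrm{Sh}(\mathcal{C})_{/V}$ along spheres, where the classical CW-cellular picture transposes.  The finite generation output of \Cref{finitegrouplemma} is essential; without it the cell attachment would require infinitely many cells and would destroy membership in $\operatorname{Sh}^f(\mathcal{C})$.
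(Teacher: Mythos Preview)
Your proof is correct and follows essentially the same approach as the paper's: induction on $d$ with the same base case, the same use of \Cref{finitegrouplemma} to obtain finitely many basepoints and finitely many generators of the relevant homotopy group sheaf, and the same cell-attaching pushout to kill those generators after passing to a finite cover where the composite into $\mathcal{G}$ becomes null; the derivation of Part 2 from Part 1 via cofinality is likewise the same. The only cosmetic differences are your index shift (going from $d-1$ to $d$ rather than $d$ to $d+1$) and that the paper refines the $U_i$ themselves before attaching, whereas you attach over a cover $\{V_{i,j,\alpha}\}$ of each $U_i$; these are equivalent.
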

\begin{proof}
Claim 2 follows formally from Claim 1 and \Cref{exccompactgen}, by a cofinality argument.  Thus we need only prove Claim 1.

When $d=0$, we can find finitely many $\{U_i\}$ covering $\ast$ and points $y_i\in\mathcal{G}(U_i)$.  Then set $\mathcal{F}'=\mathcal{F}\sqcup \sqcup_{i\in I} h_{U_i}$ with the map to $\mathcal{G}$ being the given map on $\mathcal{F}$ and the map classifying the $y_i$ on the $i^{th}$ summand of the coproduct.  This $\mathcal{F}'$ is clearly $0$-connective and finite, giving the claim for $d=0$.  Then we proceed by induction on $d$.  Thus we can assume that $\mathcal{F}$ itself lies in $\operatorname{Sh}^f(\mathcal{C})_{\geq d}$, and try to find $\mathcal{F}'$ in $\operatorname{Sh}^f(\mathcal{C})_{\geq d+1}$ with a factoring $\mathcal{F}\rightarrow\mathcal{F}'\rightarrow\mathcal{G}$ assuming $\mathcal{G}\in\operatorname{Sh}^f(\mathcal{C})_{\geq d+1}$.

Consider the $x_i$ associated to $\mathcal{F}$ as in \Cref{finitegrouplemma}, and classify them by a map of sheaves
$$\sqcup_{i\in I}h_{U_i}\rightarrow\mathcal{F}.$$
If $d=0$, then since $\mathcal{G}$ is $1$-connective, for every $(i,j)\in
I\times I$ there is a finite cover $\{U_{ijk}\}_k$ of $U_i\times U_j$ such that
the images of $x_i$ and $x_j$ in $\mathcal{G}$ are homotopic over $U_{ijk}$.  In
other words, defining $\mathcal{F}'$ to be the coequalizer of the two different
natural maps $\sqcup_{i,j,k}h_{U_{ijk}} \rightarrow\mathcal{F}$, our map
$\mathcal{F}\rightarrow\mathcal{G}$ factors through $\mathcal{F}'$.  By
construction $\mathcal{F}'\in\operatorname{Sh}^f(\mathcal{C})$, so we need to
see that $\mathcal{F}'$ is $1$-connective, meaning $\pi_0\mathcal{F}'=\ast$, the
terminal sheaf of sets.  But $\mathcal{F}\rightarrow\mathcal{F}'$ is epimorphic
on $\pi_0$ because a coequalizer is a quotient, and furthermore every section of
$\pi_0\mathcal{F}$ is locally equal to some $x_i$ by \Cref{finitegrouplemma}, so it suffices to show that the $x_i$ become equal in $\pi_0\mathcal{F}'$, or equivalently that they become locally homotopic in $\mathcal{F}'$.  But this has exactly been arranged by the definition of $\mathcal{F}'$.

If $d\geq 1$, then for each $i\in I$ choose generators for
$\pi_d(\mathcal{F};x_i)$ as in \Cref{finitegrouplemma}, and lift them to maps
$$\sqcup_{j\in J_i} S^d\times h_{U_i}\rightarrow \mathcal{F}.$$
Collecting together all the $i\in I$, this gives a map
$$\sqcup_{i,j} S^d\times h_{U_i}\rightarrow\mathcal{F}.$$
Since $\mathcal{G}$ is $d+1$-connective, by refining the $U_i$ if necessary we can assume that each composition $S^d\times h_{U_i}\rightarrow\mathcal{F}\rightarrow \mathcal{G}$ factors through $\ast\times h_{U_i}$.  In other words, if we define $\mathcal{F}'$ to be the pushout of the above map along the projection $\sqcup_{i,j} S^d\times h_{U_i}\rightarrow \sqcup_{i,j}\ast\times h_{U_i}$, then $\mathcal{F}\rightarrow\mathcal{G}$ factors through $\mathcal{F}'$.  Clearly $\mathcal{F}'\in\operatorname{Sh}^f(\mathcal{C})$.  Further, since the projection $S^d\rightarrow \ast$ is $d$-connective and $d$-connective maps are closed under pushouts, we deduce that $\mathcal{F}\rightarrow \mathcal{F}'$ is $d$-connective.  Since $\mathcal{F}$ is $d$-connective, it follows that $\mathcal{F}'$ is as well, and that the map of group sheaves $\pi_d(\mathcal{F};x_i)\rightarrow\pi_d(\mathcal{F}';x_i)$ over $U_i$ is epimorphic for all $i$.  But by construction the generators of $\pi_d(\mathcal{F};x_i)$ become nullhomotopic in $\pi_d(\mathcal{F}';x_i)$, so the latter group sheaf vanishes, hence $\mathcal{F}'$ is $d+1$-connective as desired.
\end{proof}

\begin{proposition}
\label{filtcolimitdconnectivecompact}
Let $\mathcal{C}_i$ be a filtered system of finitary sites.  Let $\mathcal{C}$ denote the filtered colimit site, and let $d\geq 0$. Then:
\begin{enumerate}
\item If $\mathcal{F}\in\operatorname{Sh}^f(\mathcal{C}_i)$ has $d$-connective pullback to $\mathcal{C}$, then it has $d$-connective pullback to $\mathcal{C}_j$ for some $j$. 
\item If each $\mathcal{C}_i$ is excisive, then $\operatorname{Sh}^f(\mathcal{C})_{\geq d}=\varinjlim_{i\in I}\operatorname{Sh}^f(\mathcal{C}_i)_{\geq d}$ via the pullback functors.
\end{enumerate}
\end{proposition}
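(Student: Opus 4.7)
The plan is to reduce both parts to the statements of \Cref{filteredcolimitexc} and \Cref{colimitworksfornexc}, using the observation that $d$-connectivity can be characterized in terms of Postnikov truncation. The key point is that pullback $F^*$ along a morphism of finitary sites $F$ is a geometric morphism, hence left-exact and colimit-preserving; in particular it commutes with Postnikov truncation and detects $d$-connectivity of a sheaf via its $(d-1)$-truncation.

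For part 1, I would use that a sheaf $\mathcal{F}$ is $d$-connective if and only if its $(d-1)$-truncation $\mathcal{F}_{\leq d-1}$ is the terminal sheaf (when $d = 0$, interpreting $\mathcal{F}_{\leq -1}$ as the image of $\mathcal{F} \to \ast$, which is terminal iff $\mathcal{F}\to\ast$ is an effective epimorphism). Since Postnikov truncation preserves finite colimits and sends $h_U$ to $(h_U)_{\leq d-1}$, the truncation of an object in $\operatorname{Sh}^f(\mathcal{C}_i)$ lies in $(\operatorname{Sh}(\mathcal{C}_i)_{\leq d-1})^f$. We are then reduced to the following: if a compact object of $\operatorname{Sh}(\mathcal{C}_i)_{\leq d-1}$ has terminal pullback to $\operatorname{Sh}(\mathcal{C})_{\leq d-1}$, then its pullback to $\operatorname{Sh}(\mathcal{C}_j)_{\leq d-1}$ is terminal for some $j \geq i$. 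This follows from the truncated analogue of \Cref{filteredcolimitexc}(2) supplied by \Cref{colimitworksfornexc} (which holds without the excisive assumption), combined with the general fact that an equivalence between compact objects in a filtered colimit of $\infty$-categories is realized at a finite stage. For $d = 0$ one can also argue more directly: by finitariness, an effective epi $\mathcal{F} \to \ast$ on $\mathcal{C}$ is witnessed by finitely many $U_\alpha \in \mathcal{C}$ covering $\ast$ together with sections $x_\alpha \in \mathcal{F}(U_\alpha)$, and all of this data appears at some finite stage $\mathcal{C}_j$.

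For part 2, I would combine part 1 with \Cref{filteredcolimitexc}(2). The pullback functors assemble into a functor
\[
\varinjlim_{i \in I} \operatorname{Sh}^f(\mathcal{C}_i)_{\geq d} \longrightarrow \operatorname{Sh}^f(\mathcal{C})_{\geq d},
\]
which factors as a filtered colimit of full subcategory inclusions followed by the equivalence $\varinjlim_{i \in I} \operatorname{Sh}^f(\mathcal{C}_i) \simeq \operatorname{Sh}^f(\mathcal{C})$ of \Cref{filteredcolimitexc}(2). This makes full faithfulness automatic; essential surjectivity falls out of part 1, since any $\mathcal{F} \in \operatorname{Sh}^f(\mathcal{C})_{\geq d}$ admits a preimage $\mathcal{F}_i \in \operatorname{Sh}^f(\mathcal{C}_i)$ by \Cref{filteredcolimitexc}(2), and part 1 then upgrades $\mathcal{F}_i$ to an element of $\operatorname{Sh}^f(\mathcal{C}_j)_{\geq d}$ for some $j \geq i$ which continues to pull back to $\mathcal{F}$.

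The main obstacle is the careful bookkeeping around truncations: one must justify that pullback commutes with $(d{-}1)$-truncation, and treat the $d=0$ edge case (where naive truncation is always trivial) either uniformly via the $(-1)$-truncation (i.e., support) or by the separate finitary argument indicated above. Once this is in hand, both parts are essentially formal consequences of the compact-generation statement in \Cref{exccompactgen} applied both in the unbounded setting (\Cref{filteredcolimitexc}, requiring excisiveness) and in the truncated setting (\Cref{colimitworksfornexc}, where excisiveness is not needed).
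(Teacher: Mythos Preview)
Your proposal is correct and follows essentially the same route as the paper: for part 1 you reduce $d$-connectivity to the vanishing of the $(d-1)$-truncation and invoke \Cref{colimitworksfornexc}, and for part 2 you combine \Cref{filteredcolimitexc}(2) for full faithfulness and lifting with part 1 for the connectivity upgrade. Your separate treatment of $d=0$ is unnecessary---the $(-1)$-truncation in an $\infty$-topos is the support, so ``$(d-1)$-truncation is terminal'' characterizes $d$-connectivity uniformly for all $d\geq 0$, which is how the paper handles it in one line.
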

\begin{proof}
For claim 1, since Postnikov truncations commute with pullback maps of
$\infty$-topoi and a sheaf is $d$-connective if and only if its truncation to
degrees $<d$ is $\ast$, this follows directly from \Cref{colimitworksfornexc}.

For claim 2, the functor $\varinjlim_{i\in
I}\operatorname{Sh}^f(\mathcal{C}_i)_{\geq
d}\rightarrow\operatorname{Sh}^f(\mathcal{C})_{\geq d}$ is fully faithful by
\Cref{filteredcolimitexc}, and essentially surjective by
\Cref{colimitworksfornexc} and claim 1.\end{proof}

The important principle for us is the following corollary:

\begin{corollary}
\label{filtcolimitdimbound}
Suppose $\mathcal{C}_i$ is a filtered system of finitary excisive sites, with colimit $\mathcal{C}$.  If each $\operatorname{Sh}(\mathcal{C}_i)$ has homotopy dimension $\leq d$, then so does $\operatorname{Sh}(\mathcal{C})$.
\end{corollary}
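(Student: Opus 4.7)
The plan is to combine \Cref{approxbydconncompact}(2) and \Cref{filtcolimitdconnectivecompact}(2) with the assumed bound on the homotopy dimension of each $\operatorname{Sh}(\mathcal{C}_i)$. Let $\mathcal{F} \in \operatorname{Sh}(\mathcal{C})_{\geq d}$ be a $d$-connective sheaf; the task is to produce a global section $\ast \to \mathcal{F}$.

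First I would invoke \Cref{filteredcolimitexc}(1), which shows that the colimit site $\mathcal{C}$ is itself finitary excisive. This lets me apply \Cref{approxbydconncompact}(2) to obtain
$$\operatorname{Sh}(\mathcal{C})_{\geq d} = \operatorname{Ind}(\operatorname{Sh}^f(\mathcal{C})_{\geq d}),$$
and hence write $\mathcal{F} \simeq \varinjlim_\alpha \mathcal{F}_\alpha$ as a filtered colimit of compact $d$-connective sheaves $\mathcal{F}_\alpha \in \operatorname{Sh}^f(\mathcal{C})_{\geq d}$. A small preliminary point is that the indexing diagram is non-empty: since $d \geq 0$, the sheaf $\mathcal{F}$ is locally non-empty and therefore not the initial object, so at least one cocone map $\mathcal{F}_\alpha \to \mathcal{F}$ exists. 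Fix such an index $\alpha$.

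Next, I would use \Cref{filtcolimitdconnectivecompact}(2), which supplies the identification
$$\operatorname{Sh}^f(\mathcal{C})_{\geq d} = \varinjlim_i \operatorname{Sh}^f(\mathcal{C}_i)_{\geq d}$$
along the pullback functors, to conclude that $\mathcal{F}_\alpha$ is pulled back from some $\widetilde{\mathcal{F}}_\alpha \in \operatorname{Sh}^f(\mathcal{C}_i)_{\geq d}$ for a suitable index $i \in I$. By hypothesis, $\operatorname{Sh}(\mathcal{C}_i)$ has homotopy dimension $\leq d$, so $\widetilde{\mathcal{F}}_\alpha$ admits a section $\ast \to \widetilde{\mathcal{F}}_\alpha$. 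Applying the pullback functor $\operatorname{Sh}(\mathcal{C}_i) \to \operatorname{Sh}(\mathcal{C})$, which as the inverse image part of a geometric morphism preserves finite limits and in particular the terminal object, produces a section $\ast \to \mathcal{F}_\alpha$; composing with the cocone map $\mathcal{F}_\alpha \to \mathcal{F}$ yields the section of $\mathcal{F}$ we sought.

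There is no serious obstacle here: the two approximation propositions do all the substantive work, and the passage from them to the corollary is essentially formal. The only points that need brief verification are that pullback along a morphism of sites preserves the terminal object (immediate from the geometric morphism perspective) and that the Ind-diagram presenting $\mathcal{F}$ is non-empty, both of which amount to bookkeeping.
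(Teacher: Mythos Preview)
Your proof is correct and follows essentially the same approach as the paper's. The only minor difference is that the paper invokes \Cref{approxbydconncompact}(1) directly to obtain a single map $\mathcal{G}\to\mathcal{F}$ with $\mathcal{G}\in\operatorname{Sh}^f(\mathcal{C})_{\geq d}$ (applied to the map $\emptyset\to\mathcal{F}$), which sidesteps both the excisiveness of $\mathcal{C}$ and the non-emptiness bookkeeping you needed for the Ind-presentation via part (2); but the substance is identical.
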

\begin{proof}
Let $\mathcal{F}\in\operatorname{Sh}(\mathcal{C})_{\geq d}$.  From
\Cref{approxbydconncompact} we see that $\mathcal{F}$ admits a map from an
object $\mathcal{G}\in\operatorname{Sh}^f(\mathcal{C})_{\geq d}$.  By claim 2 of
\Cref{filtcolimitdconnectivecompact}, $\mathcal{G}$ is pulled back from some object $\mathcal{G}'\in\operatorname{Sh}^f(\mathcal{C}_i)_{\geq d}$.  This $\mathcal{G}'$ has a global section by hypothesis, hence its pullback $\mathcal{G}$ has a global section, hence $\mathcal{F}$ has a global section, as desired.
\end{proof}

\subsection{Spectral spaces}
Let $X$ be a spectral space\footnote{Recall this means that the quasi-compact
open subsets of $X$ form a basis closed under finite intersection, and $X$ is
$T_0$ and sober; equivalently, by \cite{Ho69}, $X=\operatorname{Spec}(R)$ for
some commutative ring $R$, or $X$ is a filtered inverse limit of finite
$T_0$-spaces. 
A continuous map between spectral spaces $ X \to Y$ is called \emph{spectral} if the preimage of a
quasi-compact open subset is a quasi-compact open subset.}, and let $\operatorname{pt}_X$ denote the category of points of
the topos of sheaves on $X$.  Concretely, the objects of $\operatorname{pt}_X$
are the points of $X$, there is a map $x\rightarrow y$ iff every open subset
containing $y$ also contains $x$, and in this case the map is
unique.\footnote{For instance, this follows using \cite[Theorem 5.1.5.6 and
Proposition 6.1.5.2]{HTT}.}  Recall that the \emph{Krull dimension} of $X$ is the supremum of the lengths $n$ of the chains $x_0\rightarrow x_1\rightarrow\ldots \rightarrow x_n$ of non-identity morphisms in $\operatorname{pt}_X$.

We will denote by $\operatorname{Sh}(X)$ the $\infty$-category of sheaves of
spaces ($\infty$-groupoids) on $X$.  As always with an $\infty$-topos, for
$d\in\mathbb{Z}_{\geq 0}$ there is an internal notion of a sheaf
$\mathcal{F}\in\operatorname{Sh}(X)$ being \emph{$d$-connective}: it means the
(sheafified) Postnikov truncation $\mathcal{F}_{<d}$ is $\ast$ \cite[Sec.
6.5.1.12]{HTT}.  Thus, for example, $0$-connective means locally non-empty, and
$1$-connective means every two sections can locally be connected by a path.
Since the usual topos of sheaves of sets on $X$ has enough points given by the
points of $X$, a sheaf of spaces $\mathcal{F}$ is $d$-connective if and only if
the stalk $\mathcal{F}_x$ is a $d$-connective (equivalently $(d-1)$-connected) space for all $x\in X$.

The main result is:

\begin{theorem}
\label{spectralspacehtpy}
Let $X$ be a spectral space of Krull dimension $d\in\mathbb{Z}_{\geq 0}$, and let $\mathcal{F}\in\operatorname{Sh}(X)$ be $d$-connective.  Then $\mathcal{F}(X)$ is nonempty.
That is, $\mathrm{Sh}(X)$ has homotopy dimension $\leq d$. 
\end{theorem}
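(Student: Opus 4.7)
The strategy is two-step: first reduce to the case of finite $T_0$ spaces via the theory of finitary excisive sites developed above, and then handle the finite case by induction on Krull dimension using a recollement.

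For the reduction, by Hochster's theorem any spectral space $X$ of Krull dimension $\leq d$ is a cofiltered limit of finite $T_0$ spaces, and one can arrange a cofinal subsystem of approximations $X_i$ with $\dim X_i \leq d$. Each site $(X_i)_{\mathrm{Zar}}$ is finitary and excisive, since its covers are finite and hence the sheaf condition is a finite limit (which commutes with filtered colimits). The filtered colimit of these sites is $X_{\mathrm{Zar}}$ by \Cref{filtdiagfinitarysite}, so \Cref{filtcolimitdimbound} reduces the problem to proving the result for each finite $X_i$.

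For a finite $T_0$ space $X$ with associated poset $P$ of length $d$, sheaves of spaces under the Alexandrov topology identify with covariant functors $\mathcal{F}\colon P \to \mathrm{Spaces}$, where $\mathcal{F}(x)$ is the stalk at $x$ and $\Gamma(X, \mathcal{F}) = \lim_P \mathcal{F}$. To close up the induction, I would prove the stronger statement: if each $\mathcal{F}(x)$ is $(d+n-1)$-connected, then $\lim_P \mathcal{F}$ is $(n-1)$-connected, for every $n \geq 0$. The base case $d = 0$ is immediate, as $P$ is then an antichain and the limit reduces to a product of $(n-1)$-connected spaces.

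For the inductive step $d \geq 1$, decompose $P = Z \sqcup U$ where $U$ is the open subset of maximal (generic) points of $P$ (of Krull dimension $0$) and $Z$ is its closed complement (of Krull dimension $\leq d-1$). A direct analysis presents $\lim_P \mathcal{F}$ as a homotopy pullback of $\lim_Z \mathcal{F}|_Z$ and $\prod_{\eta \in U} \mathcal{F}(\eta)$ over the ``nearby cycles'' term $\prod_{z \in Z,\, \eta \in U,\, z \to \eta} \mathcal{F}(\eta)$. By the inductive hypothesis applied to $Z$, the first factor is $n$-connected; the other two are products of $(d+n-1)$-connected spaces, hence $(d+n-1)$-connected. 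A standard long-exact-sequence calculation then shows the pullback is $(n-1)$-connected. The main technical work lies in this connectivity bookkeeping using the strengthened induction; the reduction step's subtlety --- namely, arranging Krull-dimension-bounded approximations --- is a more routine fact about spectral spaces.
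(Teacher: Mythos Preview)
Your reduction step matches the paper's first proof: both pass to finite $T_0$ spaces via the finitary excisive machinery and \Cref{filtcolimitdimbound}. One remark: the dimension bound on the approximating finite spaces is not part of Hochster's theorem proper; the paper invokes \cite{Sal91} for this, so you should cite something rather than call it routine.

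For the finite case, your decomposition is different from the paper's, and your pullback formula is actually incorrect. The paper removes a single \emph{closed} point $x$, obtaining an honest open cover $X = X_x \cup (X\setminus\{x\})$, so Mayer--Vietoris gives a genuine pullback square; the induction is on the number of points. You instead remove the open set $U$ of all generic points, which is an open--closed decomposition, not an open cover. For such a decomposition the correct recollement formula is
\[
\lim_P \mathcal{F} \;\simeq\; \lim_Z \mathcal{F}|_Z \;\times_{\,\Gamma(Z,\, i^*j_*j^*\mathcal{F})}\; \textstyle\prod_{\eta\in U}\mathcal{F}(\eta),
\]
where the middle term is $\lim_{z\in Z}\prod_{\eta: z\to\eta}\mathcal{F}(\eta)$, \emph{not} the product $\prod_{z\to\eta}\mathcal{F}(\eta)$. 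Your formula fails already for $P=\{a\to b\to c\}$ with $\mathcal{F}(a)=\mathcal{F}(b)=\ast$ and $\mathcal{F}(c)=S^1$: your pullback is $\ast\times_{(S^1)^2}S^1\simeq\Omega S^1\simeq\mathbb{Z}$, but $\lim_P\mathcal{F}=\mathcal{F}(a)=\ast$.

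The fix is easy and your strengthened inductive hypothesis survives it. With the correct middle term, one applies the inductive hypothesis to \emph{both} $\lim_Z\mathcal{F}|_Z$ and the middle term (each is a limit over $Z$, of dimension $\leq d-1$, of $(d+n-1)$-connected values), getting both $n$-connected; the pullback of an $n$-connected and a $(d+n-1)$-connected space over an $n$-connected space is then $(n-1)$-connected as desired. So the strategy is sound, but the ``direct analysis'' step needs to be the recollement square rather than the naive product over incidences. The paper's closed-point approach avoids this subtlety entirely by producing an actual open cover.
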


Note that by applying this statement to iterated loop (or path) spaces on $\mathcal{F}$, we deduce the more general statement that for any $n\geq 0$, if $\mathcal{F}$ is $(d+n)$-connective then $\mathcal{F}(X)$ is $n$-connective.

In other words, $X$ has \emph{homotopy dimension} $\leq d$.  Since the
hypothesis clearly passes to any quasi-compact open subspace of $X$, the space $X$ is also
locally of homotopy dimension $\leq d$.  Thus $\operatorname{Sh}(X)$ is
Postnikov complete, hence hypercomplete (\Cref{htpyimplieshypcomplete}).  The
useful consequence is that a map of sheaves of spaces (or spectra) on $X$ is an
equivalence if and only if it is an equivalence on every stalk.  Note that this
consequence is \emph{not} ensured by the weaker property of locally finite
cohomological dimension, even in the setting of sheaves of spectra; see
\Cref{nonhypcompleteZp}.

The claim that $X$ as in \Cref{spectralspacehtpy} has cohomological dimension $\leq d$ was
proved in \cite{Sch92}, and in the noetherian case \Cref{spectralspacehtpy} was
proved in \cite[Cor. 7.2.4.17]{HTT}.  
The convergence of Postnikov towers and the descent spectral sequence in the
noetherian case appears in
\cite{BG73}. 
Together these results suggested that
\Cref{spectralspacehtpy} should be true.

The idea of the proof is to reduce to the case of \emph{finite} spectral spaces
(which are the same as finite $T_0$-spaces), via the result that every spectral
space $X$ is a filtered inverse limit of finite spectral spaces
(\cite[Prop. 10]{Ho69}) --- and
if $X$ has Krull dimension $\leq d$, these finite approximations can also be
taken of Krull dimension $\leq d$ \cite{Sal91}.  On the other hand, the case of finite spectral spaces is fairly elementary:

\begin{lemma}
\label{htpydimforfinitespectral}
Let $X$ be a finite spectral space and $\mathcal{F}\in\operatorname{Sh}(X)$.  If $\mathcal{F}$ is $\operatorname{dim}(X)$-connective, then $\mathcal{F}(X)$ is non-empty.
\end{lemma}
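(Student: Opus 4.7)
My plan is to establish the following strengthening by induction on $|X|$: for a finite spectral space $X$ of Krull dimension $\leq d$ and a sheaf $\mathcal{F} \in \mathrm{Sh}(X)$ which is $k$-connective for some $k \geq d$, the space of sections $\mathcal{F}(X)$ is $(k-d)$-connective. The lemma is exactly the case $k = d$. The base cases ($|X| \leq 1$, or $\dim X = 0$ when $X$ is finite discrete) are immediate, since $\mathcal{F}(X)$ is then a possibly empty product of stalks.

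For the inductive step, assume $|X| \geq 2$ and $\dim X \geq 1$. Pick a closed point $z \in X$, i.e., a maximal element of the specialization poset $\operatorname{pt}_X$. Since $X$ is finite $T_0$, there is a smallest open neighborhood $V_z = \{x \in X : x \to z\}$ of $z$, and $\mathcal{F}(V_z)$ agrees with the stalk $\mathcal{F}_z$, which is $k$-connective by hypothesis. The complement $U = X \setminus \{z\}$ is open, $\{U, V_z\}$ is an open cover of $X$, and $U \cap V_z = V_z \setminus \{z\}$. The sheaf condition for this two-element open cover gives the pullback square
\[\mathcal{F}(X) \simeq \mathcal{F}(U) \times_{\mathcal{F}(V_z \setminus \{z\})} \mathcal{F}(V_z).\]
The crucial combinatorial observation is that $\dim(V_z \setminus \{z\}) \leq d - 1$: any chain $x_0 \to \cdots \to x_n$ in $V_z \setminus \{z\}$ extends to the chain $x_0 \to \cdots \to x_n \to z$ in $X$ of length one greater, since $x_n \in V_z$ forces the arrow $x_n \to z$. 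Meanwhile $\dim U \leq d$, and both $U$ and $V_z \setminus \{z\}$ have strictly fewer points than $X$, so the inductive hypothesis applies to each with the same $k$-connective sheaf.

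By the inductive hypothesis, $\mathcal{F}(U)$ is $(k-d)$-connective and $\mathcal{F}(V_z \setminus \{z\})$ is $(k-d+1)$-connective, while $\mathcal{F}(V_z)$ is $k$-connective (hence $(k-d)$-connective). Invoking the standard fact that a homotopy pullback $A \times_C B$ in which both $A$ and $B$ are $n$-connective and $C$ is $(n+1)$-connective is itself $n$-connective (immediate from the long exact sequence on homotopy groups associated to a pullback square), the case $n = k - d$ yields that $\mathcal{F}(X)$ is $(k-d)$-connective, as required. The main subtlety is choosing the inductive hypothesis correctly: the naive statement ``$d$-connective implies nonempty'' does not close the induction, because one needs $\mathcal{F}(V_z \setminus \{z\})$ to be \emph{connected}, not merely nonempty, in order to glue sections over $U$ and $V_z$ along their restrictions; the strengthened form supplies precisely this extra connectivity for free, thanks to the strict drop in Krull dimension when $z$ is excised.
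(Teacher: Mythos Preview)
Your proof is correct and follows essentially the same approach as the paper: induction on $|X|$, choice of a closed point $z$, the open cover $\{X\setminus\{z\},\,V_z\}$ with $V_z$ the minimal open neighborhood of $z$, the identification $\mathcal{F}(V_z)=\mathcal{F}_z$, and the key observation that $\dim(V_z\setminus\{z\})\leq d-1$. The only cosmetic difference is that you build the extra connectivity of $\mathcal{F}(V_z\setminus\{z\})$ into a strengthened induction hypothesis, whereas the paper proves the bare nonemptiness statement and obtains the needed connectedness by implicitly applying the inductive hypothesis to path spaces of $\mathcal{F}$ (this trick is made explicit just after the proof).
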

\begin{proof}
We proceed by induction on the number of points of $X$.   If $X$ is empty, the result is trivial, as $\mathcal{F}(X)=\ast$.  If $X$ is nonempty, it has a closed point $x$.  Let $X_x$ denote the intersection of all open neighborhoods of $x$; it is the set of all points specializing to $x$, and is itself open.  Then $X$ admits an open cover by $X_x$ and $X-x$ with intersection $X_x-x$, whence $\mathcal{F}(X) = \mathcal{F}(X_x)\times_{\mathcal{F}(X_x-x)}\mathcal{F}(X-x)$.  Thus it suffices to show that $\mathcal{F}(X_x)$ and $\mathcal{F}(X-x)$ are non-empty, and that $\mathcal{F}(X_x-x)$ is connected.  However, $\mathcal{F}(X_x)=\mathcal{F}_x$ is nonempty by assumption, $\mathcal{F}(X-x)$ is non-empty by the inductive hypothesis, and $\mathcal{F}(X_x-x)$ is even \emph{connected } by the inductive hypothesis, since $\operatorname{dim}(X_x-x)\leq \operatorname{dim}(X)-1$.
\end{proof}

\begin{proof}[Proof of \Cref{spectralspacehtpy}]
First, a general remark.  A spectral space $Y$ can equivalently be encoded by its category (poset) $\mathcal{C}_Y$ of quasicompact open subsets under inclusion.  Since the quasicompact open subsets of $Y$ form a basis of the topology of $Y$ closed under finite intersections,
the $\infty$-topos of sheaves of spaces on $Y$ 
 can equivalently be described as sheaves of spaces on the site $\mathcal{C}_Y$
 of quasicompact open subsets equipped with the induced Grothendieck topology,
 which is the obvious one of open coverings, cf.\ 
\cite[Prop. 1.1.4.4]{SAG}. 
Since $Y$ is spectral, $\mathcal{C}_Y$ is a finitary site.  It is also excisive
(cf.\ also \cite[Prop. 6.5.4.4]{HTT}):
indeed, induction on the number of quasicompact opens generating a covering sieve shows
that a presheaf $\mathcal{F}$ on $\mathcal{C}_Y$ is a sheaf if and only if
$\mathcal{F}(\emptyset)=\ast$ and $\mathcal{F}(U\cup
V)\overset{\sim}{\rightarrow}\mathcal{F}(U)\times_{\mathcal{F}(U\cap
V)}\mathcal{F}(V)$ for all $U,V\in\mathcal{C}_Y$, and this condition is
preserved under filtered colimit of presheaves. 

Furthermore, it is elementary to see that if $Y$ is a filtered inverse limit $\varprojlim_{i\in I}Y_i$ in the category of spectral spaces and spectral maps, then $\varinjlim_{i\in I}\mathcal{C}_{Y_i} = \mathcal{C}_Y$ via the pullback maps.

Returning to our $X$ of Krull dimension $\leq d$, the results of \cite{Sal91}
show that $X $ can be expressed as  
a filtered inverse limit 
$\varprojlim_{i \in I} X_i$
of finite spectral spaces $X_i,  i \in I$ of Krull dimension $\leq d$.
More precisely, using the Stone duality between 
spectral spaces and distributive lattices and 
\cite[Theorem 3.2]{Sal91} (which as explained there is essentially due to
\cite{Isbell}), we obtain an expression for $X$ as
$X \simeq \varprojlim_I X_i$ as desired. 
Each $X_i$ has homotopy dimension $\leq d$ by
\Cref{htpydimforfinitespectral}.  Thus 
\Cref{filtcolimitdimbound}
and the above remarks let us conclude that $X$ has homotopy dimension $\leq d$, as desired.
\end{proof}

We also 
include an alternative argument for 
\Cref{spectralspacehtpy}, which is inspired by the argument in 
\cite[Tag 0A3G]{stacks-project} for the cohomological dimension; it avoids the
use of \Cref{filtcolimitdimbound} and the reduction to finite spectral spaces. 
In fact, it yields a slightly stronger 
statement, in that we can relax the connectivity assumptions on the stalks
depending on the point. 

\begin{theorem} 
\label{htpydimspectralbyprosite}
Let $X$ be a spectral space of finite Krull dimension, and let $\sF \in \sh(X)$. 
Suppose that for each $x \in X$, we have that the stalk $\sF_x $ is $\dim(
\overline{\{x\}})$-connective. Then $\sF(X) \neq \emptyset$. 
\end{theorem}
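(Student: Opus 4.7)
The plan is to induct on $d = \dim X$, strengthening the statement as follows: for any spectral space $Y$ with $\dim Y \leq d$, any $k \geq 0$, and any $\mathcal{F} \in \sh(Y)$ such that $\mathcal{F}_y$ is $(\dim_Y(\overline{\{y\}}) + k)$-connective for every $y \in Y$, the space $\mathcal{F}(Y)$ is $k$-connective. The desired theorem is the case $k = 0$.

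The first step is to establish this strengthened statement for \emph{finite} spectral $Y$ by inner induction on the number of points, via the Mayer--Vietoris decomposition from the proof of \Cref{htpydimforfinitespectral}. Picking a closed point $y_0 \in Y$ gives
\[ \mathcal{F}(Y) \simeq \mathcal{F}(Y_{y_0}) \times_{\mathcal{F}(Y_{y_0} \setminus y_0)} \mathcal{F}(Y \setminus y_0). \]
The first factor equals $\mathcal{F}_{y_0}$, which is $k$-connective since $\dim(\overline{\{y_0\}}) = 0$. The third factor is $k$-connective by inner induction, since removing $y_0$ can only decrease each $\dim(\overline{\{y\}})$. For the middle factor, every $y \in Y_{y_0} \setminus y_0$ specializes to $y_0$ in $Y$, so removing $y_0$ strictly shortens the maximal specialization chain of $y$ by exactly $1$, giving $\dim_{Y_{y_0} \setminus y_0}(\overline{\{y\}}) = \dim_Y(\overline{\{y\}}) - 1$; thus the hypothesis on $\mathcal{F}|_{Y_{y_0} \setminus y_0}$ holds with parameter $k + 1$ and inner induction yields $(k+1)$-connectivity. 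A pullback of two $k$-connective spaces over a $(k+1)$-connective one is $k$-connective.

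The second step reduces the general spectral case of dimension $d$ to the finite one via the presentation $X = \varprojlim_i X_i$ with each $X_i$ finite spectral of dimension $\leq d$ (Salinger \cite{Sal91}). Using $\sh(X) = \varprojlim_i \sh(X_i)$ (\Cref{filtdiagfinitarysite}), one has $\mathcal{F}(X) = (\pi_{i*}\mathcal{F})(X_i)$ for every $i$, so it suffices to apply the finite case of Step 1 to some $\pi_{i*}\mathcal{F}$. The stalk is $(\pi_{i*}\mathcal{F})_{x_i} = \mathcal{F}(V_{x_i})$, with $V_{x_i} := \pi_i^{-1}((X_i)_{x_i})$ an open subspace of $X$. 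The standard specialization-lifting property of cofiltered limits of spectral spaces, combined with concatenating a specialization chain of $y$ inside $V_{x_i}$ with the lift of a maximal chain of $x_i$ in $X_i$, yields
\[ \dim_X(\overline{\{y\}}) \geq \dim_{V_{x_i}}(\overline{\{y\}}) + m, \qquad m := \dim_{X_i}(\overline{\{x_i\}}), \]
for each $y \in V_{x_i}$, and hence $\dim V_{x_i} \leq d - m$. When $m \geq 1$, the outer inductive hypothesis applies to $V_{x_i}$ (which has dimension $< d$) at parameter $k + m$, giving that $\mathcal{F}(V_{x_i})$ is $(k + m)$-connective as required.

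The main obstacle is the residual case $m = 0$, i.e., $x_i$ closed in $X_i$: then $\dim V_{x_i}$ may still equal $d$ and the outer induction does not directly reduce the problem. My plan is to resolve this by iterating the finite-approximation argument inside each such $V_{x_i}$, exploiting that the cofiltered system $\{V_{x_i}\}_i$ (for $x_i$ ranging over closed points of the $X_i$'s) cofinally shrinks to the closed points of $X$, at which the hypothesis already supplies $k$-connective stalks. A filtered-colimit argument in the spirit of \Cref{filtcolimitdimbound}, adapted to the strengthened statement at fixed $(d, k)$, should then yield the required $k$-connectivity of $\mathcal{F}(V_{x_i})$ and close the induction.
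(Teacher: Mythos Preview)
Your approach via pushforward to finite approximations is genuinely different from the paper's, but it has a real gap at exactly the point you flag: the $m=0$ case. The proposed fix does not work. When $x_i$ is closed in $X_i$, the open $V_{x_i}$ can have dimension $d$ and need not be local, so neither the outer induction nor the finite case applies. Your suggestion to ``iterate'' and pass to a filtered colimit runs into the following obstruction: for a fixed compatible system of closed points $(x_i)_i$ with limit $x\in X$, the opens $V_{x_i}$ shrink only to the set of generalizations of $x$, which is a pro-open (not open) subset, and there is no mechanism to identify $\varinjlim_i \mathcal F(V_{x_i})$ with anything you control. Worse, if $X$ is local with closed point $x$, then every $X_i$ is also local and $V_{x_i}=X$ for all $i$, so iteration makes no progress at all. (In that particular case one is saved because $\mathcal F(X)=\mathcal F_x$, but this does not generalize.) There is also a second unjustified step: the inequality $\dim_X(\overline{\{y\}})\geq \dim_{V_{x_i}}(\overline{\{y\}})+m$ for $m\geq 1$ relies on lifting specializations along $\pi_i:X\to X_i$, which fails for general spectral maps and is not obviously true for the specific approximations from \cite{Sal91}; you would need to argue this separately.

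The paper's proof takes a completely different route that sidesteps both issues. It writes $X=\spec(A)$, extends $\mathcal F$ to all localizations $A[S^{-1}]$ by continuity, and applies Zorn's lemma to the poset of localizations on which $\mathcal F$ has empty sections, obtaining a maximal ``bad'' localization $A'$. Since stalks are nonempty, $A'$ is not local, so there is $f\in A'$ which is neither a unit nor in the Jacobson radical; one then uses the Zariski pullback square for the cover of $\spec(A')$ by $\spec(A'[1/f])$ and $\spec(A'')$ (with $A''$ the localization at $1+fA'$). The crucial observation is that $\spec(A''[1/f])$ misses all maximal ideals of $A'$, so its Krull dimension is strictly smaller than $d$ and the pointwise closure-dimensions drop by at least one, allowing the inductive hypothesis (with parameter $k+1$) to show $\mathcal F(A''[1/f])$ is connected. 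This Zorn-plus-localization trick is exactly what replaces your missing reduction at closed points.
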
 
\begin{proof} 
We will prove the result by induction on the Krull dimension of $X$. When $X =
\emptyset$, the result is evident, so the induction starts. 
Note that (by taking loop or path spaces) the statement of the theorem (for a given $X$) is equivalent to the statement
that for any $\sF$ such that each stalk $\sF_x$ is $c +
\dim(\overline{\left\{x\right\}})$-connective, then $\sF(X)$ is $c$-connective.

For convenience and ease of notation (although this is not necessary), we will
assume that $X$ is the spectrum of a ring $A$ of finite Krull dimension. 
Then $\sF$ defines a sheaf of spaces on the Zariski site of $A$; in particular,
we can evaluate $\sF$ on finite products of $A$-algebras of the form $A[1/f], f
\in A$. 
By extending $\sF$ by forcing $\sF$ to preserve filtered colimits, 
we obtain a Zariski sheaf $\sF$ on all $A$-algebras which are filtered colimits
of algebras of the above form; here we use implicitly that the Zariski site is
finitary excisive. 
(Moreover, the Zariski site of a filtered colimit of rings
$\left\{A_i\right\}$ is the filtered
colimit of the Zariski sites of the individual $\left\{A_i\right\}$.)
Our assumption is that if $\mathfrak{p} \in \spec(A)$, then
$\sF(A_{\mathfrak{p}})$ is $\dim( A/\mathfrak{p})$-connective. 

Now we consider the collection $\mathcal{P}$ of all localizations $A[S^{-1}]$ of $A$ such that
$\sF(A[S^{-1}]) = \emptyset$. We want to show that $\mathcal{P}$ is empty, so
suppose $\mathcal{P}$ is nonempty.  
By construction, 
 $\mathcal{P}$ (considered as a subcategory of $A$-algebras) is a partially
 ordered set, and it admits filtered colimits. By Zorn's lemma, $\mathcal{P}$
 admits a maximal element $A'$: in particular, $\sF(A') = \emptyset$ but $\sF$
 takes nonempty values on any proper localization of $A'$. 
 Now $A'$ is not local, or $\sF(A')$ would be nonempty by our assumption, and
 $A' \neq 0$.
 Therefore, there exists an element $f \in A'$ which is neither a unit nor 
 in the Jacobson radical of $A'$. 
 We can form the localization $A''$ of $A'$ at all elements $1+gf, g \in A'$ and
 then we have a pullback square
 of spaces
 \[ \xymatrix{
 \sF(A') \ar[d]  \ar[r] &  \sF(A'[1/f]) \ar[d]  \\
 \sF(A'') \ar[r] &  \sF(A''[1/f])
 }.\]
 Our assumption is that $A'[1/f]$ and $A''$ are both nontrivial localizations of
 $A'$, so the terms 
 $\sF(A'[1/f]),  \sF(A'')$ are nonempty. 
 Finally, the Krull dimension of $A''[1/f]$ is less than that of $A$: in fact,
 no maximal ideals belong to the image of 
 the injective map $\spec(A''[1/f]) \to \spec(A)$. 
In fact, this shows that for any $x \in \spec(A''[1/f])$, the dimension
of the closure of its
image in $\spec(A''[1/f])$ is strictly less
than 
the closure of its image in $\spec(A)$. 
By induction on the Krull dimension, we can conclude (using the statement of the
theorem applied to $\spec(A''[1/f])$) that $\sF( A''[1/f])$ is
1-connective (i.e., connected). 
It follows that $\sF(A') \neq \emptyset$, a contradiction which proves the
theorem. 
\end{proof} 

\subsection{The Nisnevich topos}
Next, we would like to prove the finite homotopy dimension assertion
for Nisnevich sites of algebraic spaces.
Let us first fix the definitions.

\begin{definition}[The Nisnevich and \'etale sites]
\label{def:Nisandetale}
Let $X$ be a qcqs algebraic space (over $\mathbb{Z}$).  Define the \emph{\'etale site} $X_{et}$ and the Nisnevich site $X_{Nis}$ as follows:
\begin{enumerate}
\item The underlying category in both cases is the category $et_X$ of \'etale maps of algebraic spaces $U\rightarrow X$ such that $U$ is qcqs.
\item A sieve over $U\in et_X$ is covering for $X_{et}$ iff it contains finitely many $U_1,\ldots U_n$ mapping to $U$ such that $\sqcup_{i} U_i\rightarrow U$ has nonempty pullback to every point of $U$.
\item A sieve over $U\in et_X$ is covering for $X_{Nis}$ iff it contains finitely many $U_1,\ldots U_n$ mapping to $U$ such that $\sqcup_{i} U_i\rightarrow U$ admits a section after pullback to every point of $U$.
\end{enumerate}
\end{definition}

The Nisnevich topos was introduced in \cite{Nis89} in the 
case of a noetherian scheme. See \cite[Sec. 3.7]{SAG} for a detailed treatment in 
the present general setting. Note that the definition given there is a priori slightly
stronger (an \'etale map $p: Y \to X$ is a cover if there is a stratification of
$X$ such that $p$ admits a section along each stratum). It was shown in
\cite[Appendix A]{BH}
that it is actually enough to demand lifting of field-valued points; 
the treatment in \emph{loc.~cit.} is for qcqs schemes, but qcqs algebraic spaces
admit Nisnevich covers by affine schemes, 
cf.\ \cite{Rydh15},
\cite[Sec. 3.4.2]{SAG}.

\begin{remark}One could more generally allow $X$ to be an algebraic space over the sphere spectrum, but the extra generality is spurious for present purposes, because every algebraic space over the sphere spectrum has an underlying ordinary algebraic space with the same \'etale and Nisnevich sites.\end{remark}
A basic example is the following. 

\begin{example}[The Nisnevich site of a field] 
\label{Nisfield}
Let $x = \spec (k)$ for $k$ a field. Then the Nisnevich site is equivalent to the
site of \'etale $x$-schemes (i.e., the opposite category to products of finite
separable extensions of $k$) and the topology is that of finite disjoint unions, i.e.\ the sheaves are exactly those presheaves
which preserve finite products. 
Equivalently, we can define this to be the site  of  
finite continuous $\mathrm{Gal}(k)$-sets with the topology of finite disjoint unions.  We denote this category by $\mathcal{T}_x$ so that Nisnevich
sheaves are finitely product-preserving presheaves on $\mathcal{T}_x$. 
\end{example} 

Our main result is the following. 

\begin{theorem}
\label{Nisfinitehtpy}
Let $X$ be a qcqs algebraic space, and suppose that the underlying topological
space $\vert X\vert$ (which is a spectral space, cf.\ \cite[Prop.~3.6.3.3]{SAG}) has Krull dimension $\leq d$.  Then $\operatorname{Sh}(X_{Nis})$ has homotopy dimension $\leq d$.
\end{theorem}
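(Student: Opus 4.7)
My plan is to reduce the general qcqs statement to the noetherian case, where the theorem is classical: for noetherian schemes it goes back to \cite{Nis89}, and for noetherian qcqs algebraic spaces it is \cite[Theorem~3.7.7.1]{SAG}. The overall strategy parallels the proof of \Cref{spectralspacehtpy}, namely a continuity argument across a filtered colimit of finitary excisive sites.

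First I would verify that $X_{Nis}$ is a finitary excisive site in the sense of the preceding subsection. It is finitary by \Cref{def:Nisandetale}, since every Nisnevich covering sieve is generated by a finite collection of \'etale maps. It is excisive because Nisnevich descent is encoded by Mayer--Vietoris for Nisnevich distinguished squares --- a finite limit condition --- so sheaves are closed under filtered colimits inside presheaves.

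Next I would invoke noetherian approximation of qcqs algebraic spaces (cf.~\cite{Rydh15}) to write $X \cong \varprojlim_\alpha X_\alpha$ as a cofiltered limit of noetherian qcqs algebraic spaces $X_\alpha$ along affine transitions, arranged so that $\dim|X_\alpha| \leq d$ for all $\alpha$ (passing to a cofinal subsystem if necessary). Since \'etale morphisms are finitely presented, $et_X \cong \varinjlim_\alpha et_{X_\alpha}$, and the finite character of Nisnevich covers promotes this to an equivalence $X_{Nis} \cong \varinjlim_\alpha (X_\alpha)_{Nis}$ of finitary excisive sites. Then the noetherian case applied to each $X_\alpha$, combined with \Cref{filtcolimitdimbound}, delivers the desired bound on the homotopy dimension of $\operatorname{Sh}(X_{Nis})$.

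The main obstacle in this plan is controlling the Krull dimension under approximation --- verifying that $\dim|X| \leq d$ forces (eventually) the same bound on $\dim|X_\alpha|$ via $|X| \cong \varprojlim_\alpha |X_\alpha|$, which requires that chains of specializations in the approximating $|X_\alpha|$ can be lifted to chains in $|X|$ at least after passing far enough in the system. As an alternative that sidesteps this technicality, one could mimic the direct induction on $d$ used in the noetherian case: the base case $d = 0$ reduces via \Cref{spectralspacehtpy} and Zariski-local considerations to henselian Artinian local rings, whose Nisnevich site coincides with the Nisnevich site of the residue field and has homotopy dimension $0$ by \Cref{Nisfield}; the inductive step uses a Nisnevich distinguished square to trade a section problem on $X$ for one on a Nisnevich neighborhood of a proper closed subspace of dimension $< d$ (handled inductively, with a looping shift to compensate for the drop in connectivity) together with one on its open complement.
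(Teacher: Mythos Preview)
Your primary approach has a genuine gap that cannot be repaired: noetherian approximation simply does not preserve bounds on Krull dimension. If $X=\operatorname{Spec}(k)$ for an algebraically closed field $k$ of characteristic zero, then every finitely generated $\mathbb{Z}$-subalgebra of $k$ has Krull dimension $\geq 1$, so no cofinal subsystem of noetherian approximations can have dimension $0$. The paper in fact remarks on exactly this obstruction (see the Remark at the end of the Nisnevich subsection), so the step ``arranged so that $\dim|X_\alpha|\leq d$'' is not available. Your alternative inductive sketch runs into the same wall: the base case $d=0$ does not reduce to Artinian local rings in the non-noetherian world, and the inductive step requires producing Nisnevich neighborhoods whose complements have strictly smaller dimension in a uniform way, which is again a noetherian-flavored maneuver.

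The paper's argument avoids approximating the \emph{space} altogether. Instead it keeps $X$ fixed and approximates the \emph{topology}: for each spectral stratification $p\colon |X|\to S$ with $S$ finite, one defines a coarser site $X_{Nis,p}$ on the same underlying category $et_X$, where covers are only required to split over each stratum. These sites are finitary excisive, and a direct induction on $|S|$ (mimicking the finite-spectral-space argument) shows $\operatorname{Sh}(X_{Nis,p})$ has homotopy dimension $\leq\dim(S)$. A technical lemma (\Cref{colimofstratification}) shows that every Nisnevich cover becomes a cover in some $X_{Nis,p}$, so $X_{Nis}=\varinjlim_p X_{Nis,p}$, and then \Cref{filtcolimitdimbound} finishes. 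The point is that one can always choose stratifications $p$ with $\dim(S)\leq\dim|X|$ by \cite{Sal91}, whereas one cannot choose noetherian models with bounded dimension.
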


 The descent spectral sequence for sheaves of spectra is constructed in 
 \cite{Nis89} 
 under the
assumption that $X$ is a noetherian scheme of Krull dimension $\leq d$. 
Under the same assumptions, the result that the Nisnevich topos has cohomological dimension $\leq d$ 
appears in \cite{KS83}, and   
the homotopy dimension assertion (which implies all the others)
appears in \cite[Theorem 3.7.7.1]{SAG}. 
The purpose of this subsection is thus to remove the noetherian assumptions on all of these results. 
%We follow
%the definitions given by Lurie in \cite[Sec. 3.7]{SAG} and \cite[B.4]{SAG} in
%this generality and setting. 
%For example, this implies that the Nisnevich site of a filtered colimit of rings
%is the filtered colimit of the Nisnevich sites of the terms. 

The idea of the proof of \Cref{Nisfinitehtpy} will be the same as in the proof of \Cref{spectralspacehtpy}: to find finite approximations to the Nisnevich site.  For this we need a technical lemma.  In its statement we will refer to the concept of a \emph{spectral stratification} of a qcqs algebraic space $X$.  This just means a spectral map $p:\vert X\vert\rightarrow S$ with $S$ a finite spectral space.  The \emph{strata} are the locally closed subspaces $\{s\}\times_S X\subset X$.  For specificity these can be equipped with their reduced structure sheaves, but in fact every statement we make about them will be independent of the choice of structure sheaf.

\begin{lemma}
\label{colimofstratification}
Let $X$ be a qcqs algebraic space and let $f:Y'\rightarrow Y$ be a Nisnevich covering map of qcqs \'etale $X$-spaces.  Then there is a spectral stratification $p:\vert X\vert\rightarrow S$ such that for all $s\in S$, the map $f$ admits a section over the stratum $\{s\}\times_S Y\subset Y$.
\end{lemma}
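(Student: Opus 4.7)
The plan is to reduce to the noetherian case via noetherian approximation and then proceed by noetherian induction, the crucial geometric input being that fibers of $f$ admit sections over generic points of $X$ which spread out to dense Zariski-opens.

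First I would reduce to the case where $X$ is noetherian. Using noetherian approximation for qcqs algebraic spaces (\cite{Rydh15}), write $X = \varprojlim_\alpha X_\alpha$ as a cofiltered limit of noetherian qcqs algebraic spaces with affine transition maps. The finitely presented étale morphisms $Y\to X$, $Y'\to X$ and the map $f$ descend to some index~$\alpha$. After possibly enlarging $\alpha$, the descended map $f_\alpha\colon Y'_\alpha\to Y_\alpha$ remains a Nisnevich cover: the condition amounts to the existence of residue-field lifts at the (finitely many) generic points of the noetherian $Y_\alpha$, which is finite-type data that descends. Since pulling back a spectral stratification of $|X_\alpha|$ along $|X|\to|X_\alpha|$ yields a spectral stratification of $|X|$ which preserves sections, it suffices to treat the noetherian case.

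Assuming $X$ noetherian, I would argue by noetherian induction on the closed subspaces of $|X|$, reducing to the case where $X$ is irreducible with generic point $\eta$. The fiber $Y_\eta\simeq\coprod_i\operatorname{Spec}K_i$ is a finite disjoint union, and the Nisnevich property yields a section $\sigma_\eta\colon Y_\eta\to Y'_\eta$ by selecting, for each $i$, a component $\operatorname{Spec}L_{ij_i}$ of $Y'_\eta$ with $L_{ij_i}=K_i$. The corresponding points $y'_i\in Y'_\eta$ have trivial residue field extension over their images $y_i\in Y_\eta$, so by the structural fact that an étale morphism of degree one at a point is a Zariski-local open immersion, we obtain open neighborhoods $U'_i\subseteq Y'$ of $y'_i$ mapping isomorphically onto open neighborhoods $V_i\subseteq Y$ of $y_i$. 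On each $V_i$ we have the section $\sigma_i\colon V_i\to Y'$ inverse to $U'_i\xrightarrow{\sim}V_i$. Since $X$ is irreducible and $Y$ is étale over $X$, the irreducible components $C_i\subseteq Y$ are pairwise disjoint, and each $V_i$ is a Zariski-open neighborhood of the generic point of $C_i$ inside $C_i$.

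Now let $\pi\colon Y\to X$ and define the open
\[
U := |X|\setminus\overline{\pi\!\left(\bigcup_i C_i\setminus V_i\right)}.
\]
Because $\pi$ is qcqs the images $\pi(C_i\setminus V_i)$ are constructible, so the complement $U$ is a nonempty open subset of $|X|$ containing $\eta$ (as $y_i\in V_i$ so $\eta\notin\pi(C_i\setminus V_i)$). By construction $\pi^{-1}(U)\cap C_i\subseteq V_i$ for every $i$, and since the $C_i\cap\pi^{-1}(U)$ are pairwise disjoint and cover $\pi^{-1}(U)$, the sections $\sigma_i$ glue to a section of $f$ over $\pi^{-1}(U)=U\times_X Y$. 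Applying the induction hypothesis to the proper closed subspace $|X|\setminus U$ (with the base-changed, hence still Nisnevich, cover $f|_{Y\times_X(X\setminus U)}$) produces a spectral stratification of the complement, which together with $U$ as an open stratum gives the required stratification of~$|X|$.

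The main obstacle I expect is the descent step in the noetherian approximation, namely verifying that the Nisnevich property of $f_\alpha$ survives for sufficiently large $\alpha$; this requires unpacking the definition into a finite-type condition (residue-field lifts at generic points of the noetherian $Y_\alpha$) and checking it descends. The geometric part of the noetherian induction is then comparatively clean, once one knows the key structural lemma that étale morphisms of degree one at a point are Zariski-locally open immersions.
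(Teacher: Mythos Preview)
Your overall strategy---noetherian approximation followed by noetherian induction to produce a dense open with a section---matches the paper's, but there are two genuine gaps.

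First, your justification for the noetherian approximation step is wrong. The Nisnevich covering condition requires residue-field lifts at \emph{every} point of $Y_\alpha$, not just the generic ones. This does not descend by any obvious finite-type argument: a nontrivial finite separable extension of $k(y_\alpha)$ can split after base change to $k(y)$ for $y\in Y$ over $y_\alpha$, so a point of $Y_\alpha$ not in the image of $|Y|\to|Y_\alpha|$ may fail the lifting condition even though $f$ is Nisnevich. The correct justification uses the characterization of Nisnevich covers via finite splitting sequences (as in \cite[Appendix~A]{BH}, which the paper cites): such a sequence is finitely presented data and therefore descends. Second, your claim that the irreducible components $C_i\subset Y$ are pairwise disjoint is false when $X$ is not unibranch; for instance, the connected degree-$2$ \'etale cover of a nodal cubic has two components meeting at the nodes. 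This breaks your gluing of the $\sigma_i$. The fix is easy---take the $V_i$ to be \emph{pairwise disjoint} open neighborhoods of the $y_i$ in $Y$, set $U = |X|\setminus \overline{\pi(Y\setminus\bigcup_i V_i)}$, and check via constructibility that $\eta\in U$, so that $\pi^{-1}(U)\subset\coprod_i V_i$ and the sections glue---but it needs to be made.

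The paper avoids the second issue entirely by first stratifying $X$ (via \cite[Prop.~4.4]{Rydh15}) so that $Y\to X$ becomes \emph{finite} \'etale over each stratum, then reducing to $X$ affine via a scallop decomposition, and only then applying noetherian approximation. With $Y\to X$ finite, the image of the closed set $Y\setminus V$ under $Y\to X$ is already closed, so one gets $U$ open directly without the constructibility-and-closure maneuver, and no claim about components is needed. Your route via Chevalley is valid once patched, but the paper's extra reductions buy a cleaner endgame.
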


\begin{proof}
First we claim that if the lemma is known for all the strata of a spectral
stratification of $X$, then it can be deduced for $X$.  Indeed, since every
spectral space is a filtered inverse limit of finite spectral spaces \cite[Prop.
10]{Ho69}, every partition of a spectral space into finitely many constructible subspaces can be refined by a partition given by the strata of a spectral stratification.  Thus, if we have a stratification of each stratum of $X$, then possibly after refinement we can collect all of the strata of strata together to a single stratification of $X$, verifying the reduction claim.

By 
\cite[Prop. 4.4]{Rydh15}, 
there exists a spectral stratification of $X$ such that $Y$ is finite \'etale
over each stratum; thus we can reduce to the case where $Y$ is finite \'etale
over $X$.  By the ``\'etale d\'evissage" result of \cite{Rydh15} (or the
``scallop decomposition" of \cite[Theorem 3.4.2.1]{SAG}), we can further reduce to the case where $X$ is affine.  
In fact, the theory of scallop decompositions \cite[Definition
2.5.3.1]{SAG} shows that 
$X$ admits a finite filtration by quasi-compact open
subsets $U_i \subset X$
such that each successive difference $U_{i+1} \setminus U_i$ (e.g., endowed with the
reduced algebraic subspace structure) is affine. 
By noetherian approximation, we can then assume $X$ is noetherian (and forget that it's affine if we like).

In that case, proceeding by noetherian induction, it suffices to show that there is a non-empty open subset $U\subset X$ such that $f$ has a section over $U\times_XY$.  Let $x\in X$ be a minimal point (equivalently, a generic point of an irreducible component of $X$).  Then $\{x\}\times_X Y$ consists of a finite set $y_1,\ldots, y_n$ of minimal points of $Y$, which can therefore be separated by disjoint open neighborhoods $V_1,\ldots, V_n$ of $Y$.  By the Nisnevich property and spreading out, we can assume that $f$ has a section over each $V_i$, hence over $V=\cup_i V_i$.  Let $U$ denote the complement of the image of $Y - V$ in $X$.  Then $U$ is open as $f$ is finite, $U$ contains $x$ and hence is non-empty, and $f$ has a section over $U\times_XY$ since $U\times_XY \subset V$.  Thus $U$ satisfies the desired conditions, finishing the proof.
\end{proof}

Now we define the desired finite approximations to the Nisnevich site.

\begin{definition}
Let $X$ be a qcqs algebraic space, and $p:\vert X\vert\rightarrow S$ a spectral
stratification of $X$.  Define the \emph{$p$-Nisnevich site of $X$}, denoted
$X_{Nis,p}$, to be the category  of qcqs \'etale
$X$-spaces, equipped with the Grothendieck topology where a sieve over $Y$ is
covering if and only if it contains finitely many $\{Y_i\rightarrow Y\}_{i\in
I}$ such that $\sqcup_{i\in I} Y_i\rightarrow Y$ admits a section over $\{s\}\times_S Y$ for all $s\in S$.
\end{definition}

The axioms of a finitary Grothendieck topology follow from the definition. The
following is then immediate from \Cref{colimofstratification}:

\begin{lemma}
\label{filteredcolimitNispequivalence}
As $p$ runs over the filtered system of all spectral stratifications of $X$, we
have an equivalence of finitary sites
$$\varinjlim_p X_{Nis,p} = X_{Nis}$$
where all the transition maps, and the identification, are given by the identity
functor on the category of qcqs \'etale $X$-schemes.
\end{lemma}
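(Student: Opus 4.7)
The plan is to observe that all the sites in the colimit share the same underlying category $et_X$, so the content of the statement reduces entirely to identifying the Grothendieck topologies. Once that reduction is in place, the identification of topologies is a direct matching of two descriptions via \Cref{colimofstratification}.

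First I would verify that the system of spectral stratifications $p \colon |X| \to S$ of $X$ is filtered under refinement: given two stratifications $p_1 \colon |X| \to S_1$ and $p_2 \colon |X| \to S_2$, the product map $(p_1, p_2) \colon |X| \to S_1 \times S_2$ factors through its (finite, hence spectral) image and provides a common refinement. I would also note that if $p'$ refines $p$, then every section of an \'etale map over a $p$-stratum restricts to sections over the $p'$-strata contained in it; consequently the identity functor $et_X \to et_X$ is a morphism of finitary sites $X_{Nis,p} \to X_{Nis,p'}$. Thus $\{X_{Nis,p}\}_p$ is a genuine filtered diagram of finitary sites, and by \Cref{filtdiagfinitarysite} its colimit has underlying category $et_X$ and the topology in which a sieve is covering iff it contains (i.e.\ admits a refinement generated by) a sieve which is covering in some $X_{Nis,p}$.

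Next I would match the two topologies on $et_X$. In one direction, suppose a sieve over $Y \in et_X$ contains a $p$-Nisnevich cover $\{Y_i \to Y\}_{i \in I}$ for some spectral stratification $p$, so that $\sqcup_i Y_i \to Y$ admits a section over each stratum $\{s\} \times_S Y$. Every point $y \in Y$ lies in some stratum, and restricting the section to $y$ produces a section of $\sqcup_i Y_i \to Y$ over $y$; hence the sieve is a Nisnevich covering sieve. In the opposite direction, let $\{Y_i \to Y\}_{i \in I}$ be a finite Nisnevich cover generating a sieve in $X_{Nis}$. Apply \Cref{colimofstratification} to the surjection $\sqcup_i Y_i \to Y$ to produce a spectral stratification $p \colon |X| \to S$ such that $\sqcup_i Y_i \to Y$ admits a section over each stratum of $p$; by definition this exhibits $\{Y_i \to Y\}$ as a cover in $X_{Nis,p}$, and the sieve it generates sits inside the original Nisnevich covering sieve.

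These two implications show that a sieve in $et_X$ is covering for the colimit topology (in the sense of \Cref{filtdiagfinitarysite}) if and only if it is Nisnevich covering, completing the identification $\varinjlim_p X_{Nis,p} = X_{Nis}$. The only nontrivial ingredient is \Cref{colimofstratification}, which encapsulates the existence of the required stratification; everything else is a bookkeeping unpacking of definitions, so I do not anticipate a genuine obstacle here.
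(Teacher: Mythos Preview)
Your proposal is correct and takes essentially the same approach as the paper, which simply states that the lemma is ``immediate from \Cref{colimofstratification}'' without further elaboration. You have carefully spelled out the details the paper leaves implicit: that the stratifications form a filtered system, that the identity functors are site morphisms, and how \Cref{filtdiagfinitarysite} combined with \Cref{colimofstratification} yields the matching of topologies.
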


We also have the following, recovering the well-known result 
of Morel-Voevodsky
(\cite[Prop. 1.4]{MV99} and \cite[Theorem 3.7.5.1]{SAG})
in the limit over all spectral stratifications:

\begin{lemma}
\label{Nisnevichexcision}
Let $p:\vert X\vert\rightarrow S$ be a spectral stratification of a qcqs
algebraic space $X$.  A presheaf $\mathcal{F}\in\operatorname{PSh}(et_X)$ is a sheaf for $X_{Nis,p}$ if and only if the following conditions are satsified:
\begin{enumerate}
\item $\mathcal{F}$ sends finite coproducts in $et_X$ to finite products of spaces;
\item For every closed subset $Z\subset S$ and every map $Y'\rightarrow Y$ in $\operatorname{et}_X$ which is an isomorphism over $Z\times_S Y$, the map $\mathcal{F}(Y)\rightarrow \mathcal{F}(Y')\times_{\mathcal{F}(U')}\mathcal{F}(U)$ is an equivalence, where $U=(S-Z)\times_S Y$ and $U'=(S-Z)\times_S Y'$.
\end{enumerate}
In particular, $X_{Nis,p}$ is excisive.
\end{lemma}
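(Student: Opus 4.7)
The plan is to verify the forward direction by testing the sheaf axiom on two explicit families of covers, and the backward direction by induction on the number of strata of $S$ meeting the image of $Y \to X \to S$. For (1), the sieve on $\bigsqcup_i Y_i$ generated by the inclusions $Y_i \hookrightarrow \bigsqcup_j Y_j$ is a $p$-Nisnevich cover, because each stratum lies in a single summand. Since $Y_i \times_{\sqcup_j Y_j} Y_k$ is $Y_i$ for $i=k$ and empty otherwise, the \v{C}ech descent condition for this cover collapses to the finite-product statement (including that $\mathcal{F}(\emptyset) = \ast$ as the empty product). For (2), given $Z \subset S$ closed and $Y' \to Y$ an isomorphism over $Z \times_S Y$, the sieve generated by $Y' \to Y$ and $U \hookrightarrow Y$ is a $p$-Nisnevich cover (invert the iso on strata in $Z$; use the open inclusion $\{s\} \times_S Y \hookrightarrow U$ on strata in $S \setminus Z$). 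Combined with (1), the identifications $U \times_Y U = U$, $U \times_Y Y' = U'$, and the diagonal splitting $Y' \times_Y Y' \simeq Y' \sqcup R$ with $R$ lying over $U$, the \v{C}ech condition for this cover collapses to the asserted pullback square.

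For the backward direction, assume (1) and (2). By (1) it suffices to verify the sheaf axiom for a single-map cover $Y' \to Y$, and we induct on the number $k$ of strata of $S$ meeting the image of $Y \to X \to S$ (finite since $S$ is). When $k \leq 1$ the map $Y' \to Y$ admits a global section and its \v{C}ech nerve is split augmented, so descent is automatic for any presheaf. When $k \geq 2$, pick $s_0$ minimal in this image for the specialization order, so $Z := \{s_0\}$ is closed in $S$; set $Y_Z := Z \times_S Y$ (closed in $Y$) and $Y_W := (S \setminus Z) \times_S Y$ (open). The cover provides a section $\sigma_0 : Y_Z \to Y'$, exhibiting $Y_Z$ as a clopen summand of the étale map $Y' \times_Y Y_Z \to Y_Z$. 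By the standard extension of clopen decompositions from a closed subspace to an open neighborhood, there is an open $V' \subset Y$ containing $Y_Z$ and a decomposition $Y' \times_Y V' = V \sqcup R$ whose restriction to $Y_Z$ recovers $\sigma_0(Y_Z) \sqcup R_0$. The composite $V \to V' \hookrightarrow Y$ is then étale and an isomorphism over $Y_Z = Z \times_S Y$, so $(V \to Y,\ Y_W \hookrightarrow Y)$ is a distinguished square as in (2), and the same holds after base change along any term of the \v{C}ech nerve of $Y' \to Y$. Applying (2) cosimplicial-level-wise reduces the sheaf axiom for $Y' \to Y$ to: the split cases over $V$ and $V \times_Y Y_W$ (both split because $V \to Y$ factors through $Y'$), and the sheaf axiom for $Y' \times_Y Y_W \to Y_W$, which holds by the inductive hypothesis since $Y_W$ meets only $k-1$ strata. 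A short diagram chase then concludes.

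The excisive claim is formal: (1) and (2) both express $\mathcal{F}(Y)$ as a finite limit of values of $\mathcal{F}$ on other objects of $et_X$, and finite limits of spaces commute with filtered colimits, so sheaves form a subcategory of $\operatorname{PSh}(et_X)$ closed under filtered colimits. I expect the main obstacle to be the extension step in the inductive argument: producing the open $V' \subset Y$ and the clopen splitting $Y' \times_Y V' = V \sqcup R$ from the given section $\sigma_0$ over the closed subspace $Y_Z$. This uses the general principle that a clopen subscheme of an étale base change over a closed subspace lifts to a clopen subscheme of the étale base change over some open neighborhood, which relies on the local structure of étale morphisms in an essential way.
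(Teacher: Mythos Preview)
Your forward direction is fine in spirit (the paper argues it slightly differently, by showing the pushout of representable sheaves $h_U \sqcup_{h_{U'}} h_{Y'} \to h_Y$ is an equivalence after reducing to sheaves of sets), but the backward direction has a genuine gap: the extension step you flag as the main obstacle is actually \emph{false} as stated. Take $X = Y = \operatorname{Spec}(\mathbb{R}[x])$ with $S$ the two-point stratification into $\{0\}$ and its complement, and $Y' = \operatorname{Spec}(\mathbb{R}[x,y]/(y^2 - x^2 - 1)) \sqcup \operatorname{Spec}(\mathbb{R}[x,x^{-1}])$. This is a $p$-Nisnevich cover (the first summand splits over $x=0$ as $\mathbb{R}\times\mathbb{R}$, the second is an isomorphism over the complement), and $\sigma_0$ picks one of the two points over $0$ in the first summand. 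But $\mathbb{R}[x,y]/(y^2-x^2-1)$ is a domain and remains one after inverting any $f$ with $f(0)\neq 0$, so for every open $V' \ni 0$ the only clopen pieces of $Y'\times_Y V'$ restrict over $\{0\}$ to either both points or neither; no clopen $V$ with $V\to Y$ an isomorphism over $\{0\}$ exists.

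The paper's inductive step avoids this entirely by working inside $Y'$ rather than seeking an open of $Y$. Pick a closed point $s \in S$ (inducting on $|S|$), take the guaranteed section $\sigma$ over $\{s\}\times_S Y$, and set
\[
Y'' := Y' \setminus \bigl(\{s\}\times_S Y' \;-\; \sigma(\{s\}\times_S Y)\bigr).
\]
Since $\{s\}$ is closed in $S$, the fiber $\{s\}\times_S Y'$ is closed in $Y'$, the section's image is open in that fiber, and hence the displayed set is closed in $Y'$. So $Y''$ is open in $Y'$, the map $Y''\to Y$ is \'etale, an isomorphism over $\{s\}\times_S Y$, and factors through $Y'$. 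Condition (2) for $Y''\to Y$ then reduces \v{C}ech descent for $Y'\to Y$ to the pullbacks along $Y''\to Y$ (split, since it factors through $Y'$), along $(S-\{s\})\times_S Y$, and along $(S-\{s\})\times_S Y''$ (both handled by induction). This is exactly your inductive scheme with the broken extension replaced by a one-line construction. (A minor separate point: your $s_0$, chosen minimal only in the \emph{image}, need not have $\{s_0\}$ closed in $S$; taking $s$ closed in $S$ and inducting on $|S|$ sidesteps this.)
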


\begin{proof}
If $S$ is empty, then so is $X$ and thus the claim is trivial.  We can therefore
assume $S$ is nonempty and proceed by induction on the number of elements of
$S$.  To make use of the inductive hypothesis, it is useful to note that for
every open subset $U\subset S$, if a presheaf $\mathcal{F}$ on $et_X$ satisfies
conditions 1 and 2 above, then its restriction to the full subcategory $\operatorname{et}_{U\times_S X}\subset \operatorname{et}_X$ also satisfies conditions 1 and 2 with respect to the restricted stratification $U\times_SX\rightarrow U$.  (This is elementary: for a closed subset $Z\subset U$, consider the closed subset $Z\cup (S-U)$ of $S$.)

First assume conditions 1 and 2 hold.  By \cite[Prop. A.3.3.1]{SAG} and condition 1, to show that
$\mathcal{F}$ is a sheaf on $X_{Nis,p}$ it suffices to show \v{C}ech descent with respect to an arbitrary covering map $Y'\rightarrow Y$ in $X_{Nis,p}$.  Let $s\in S$ be a closed point.  By hypothesis, $Y'\rightarrow Y$ admits a section $\sigma$ over $\{s\}\times_S Y$.  Let
$$Y'':= Y' - (\{s\}\times_S Y' - \sigma(\{s\}\times_S Y)).$$
Note that $Y''$ is an open subspace of $Y'$, as a section of an \'etale map is an open immersion.  Furthermore, we have arranged it so that $Y''\rightarrow Y$ is an isomorphism over $\{s\}\times_SY$, so from condition 2 it follows that to verify \v{C}ech descent for any map $Y_1\rightarrow Y$ it suffices to verify \v{C}ech descent for its pullback to $Y''$, to $(S-\{s\})\times_S Y$, and to $(S-\{s\})\times_S Y''$.  In our situation, where $Y_1\rightarrow Y$ is $f:Y'\rightarrow Y$, the latter two follow from the inductive hypothesis, and the former follows because $Y''\rightarrow Y$ factors through $f$, so that $f$ acquires a section on pullback along $Y''\rightarrow Y$.

To show the converse, assume that $\mathcal{F}$ is a sheaf for $X_{Nis,p}$.  To check that condition 2 holds, it suffices to show that in the situation of condition 2, the induced map on pushouts of representable sheaves $h_U\sqcup_{h_{U'}}h_{Y'}\rightarrow h_{Y}$ is an equivalence.  Since all representables are sheaves of sets (the site is a $1$-category) and $h_{U'}\rightarrow h_{Y'}$ is a monomorphism, $h_U\sqcup_{h_{U'}}h_{Y'}$ is also a sheaf of sets, and therefore it suffices to show that if $\mathcal{F}$ is a Nisnevich sheaf of sets, then $\mathcal{F}(Y)\overset{\sim}{\rightarrow}\mathcal{F}(U)\times_{\mathcal{F}(U')}\mathcal{F}(Y')$.  For this, by Nisnevich descent for the cover $\{U\rightarrow Y,Y'\rightarrow Y\}$, it suffices to show that if a section $s\in \mathcal{F}(Y')$ is such that the image of $s$ in $\mathcal{F}(U')$ comes from $\mathcal{F}(U)$, then $s$ has the same image on pullback along the two different projections $Y'\times_ Y Y'\rightarrow Y'$.  But $Y'\times_Y Y'$ is covered by the diagonal $Y'\rightarrow Y'\times_YY'$ (which is an open immersion since $Y'\rightarrow Y$ is \'etale) together with pullback of $Y'\times_YY'\rightarrow Y$ to $U\subset Y$, which leads to the conclusion.
\end{proof}

\begin{lemma}
\label{pointsofNisp}
Let $p:\vert X\vert\rightarrow S$ be a spectral stratification of a qcqs algebraic space $X$, let $Y\in\operatorname{et}_X$, and let $s\in S$.  For $\mathcal{F}\in\operatorname{Sh}(X_{Nis,p})$, define
$$\mathcal{F}_{Y,s}:=\varinjlim\mathcal{F}(U),$$
the colimit being over the co-filtered category of all $U\rightarrow Y$ in $\operatorname{et}_Y$ which are an isomorphism over $\{s\}\times_SY$.

Then this functor $\mathcal{F}\mapsto\mathcal{F}_{Y,s}:\operatorname{Sh}(X_{Nis,p})\rightarrow\mathcal{S}$ is the pullback functor associated to a point of the $\infty$-topos $\operatorname{Sh}(X_{Nis,p})$.\end{lemma}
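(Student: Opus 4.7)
My strategy is to show the functor $\mathcal{F}\mapsto \mathcal{F}_{Y,s}$ is left exact and cocontinuous; by general $\infty$-topos theory, such a functor $\operatorname{Sh}(X_{Nis,p})\to\mathcal{S}$ is precisely the pullback along a point. The first step is to verify that the indexing category $\mathcal{J}$---consisting of $U\to Y$ in $\operatorname{et}_Y$ restricting to an isomorphism over $Z:=\{s\}\times_S Y$---is cofiltered. It has $Y$ as terminal object; it is closed under fiber products over $Y$ (base change preserves isomorphisms); and it admits equalizers up to refinement. Two parallel morphisms $U_1 \rightrightarrows U_2$ in $\mathcal{J}$ must agree upon restriction to $Z$ (both compose with $U_2|_Z \xrightarrow{\sim} Z$ to give the isomorphism $U_1|_Z \xrightarrow{\sim} Z$, uniquely determining them), so their equalizer is open in $U_1$ (via the \'etale diagonal) and contains $U_1|_Z$; passing to a qcqs open neighborhood of $U_1|_Z$ inside the equalizer produces the required object of $\mathcal{J}$.

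Since $\mathcal{F}\mapsto\mathcal{F}_{Y,s}$ is a filtered colimit of evaluation functors on $\operatorname{PSh}(\operatorname{et}_X)$, it preserves all colimits there, and its restriction to $\operatorname{Sh}(X_{Nis,p})$ preserves finite limits (sheafification is left exact and filtered colimits commute with finite limits in $\mathcal{S}$). Cocontinuity on sheaves will follow from the standard characterization of points of an $\infty$-topos of sheaves on a site (cf.\ \cite[Sec.~A.3]{SAG}) once I verify that the associated flat functor $F:V\mapsto \varinjlim_{U\in\mathcal{J}^{op}}\operatorname{Hom}_{\operatorname{et}_X}(U,V)$ on $\operatorname{et}_X$ sends $p$-Nisnevich covers to jointly $\pi_0$-surjective families.

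Concretely, given a cover $\{V_i\to V\}$ in $X_{Nis,p}$ and a map $\phi:U\to V$ with $U\in\mathcal{J}$, I need to produce a refinement $U'\to U$ in $\mathcal{J}$ whose composition $U'\to V$ factors through some $V_i$. Pulling the cover back along $\phi$ yields a $p$-Nisnevich cover of $U$, which admits a section $\sigma$ over $Z$ by the defining property of the $p$-Nisnevich topology; this section factors through some component $V_{i_0}\times_V U$. I would adapt the construction in the proof of \Cref{Nisnevichexcision}, removing from $V_{i_0}\times_V U$ the complement $(V_{i_0}\times_V U)|_Z \setminus \sigma(Z)$ of $\sigma$'s image within its fiber, and use the constructibility of $Z$ in $U$ (which holds because $S$ is finite, so $\{s\}$ is constructible in $S$) to extract a qcqs \'etale $U'\in\mathcal{J}$ as desired. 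The main obstacle is this extraction: unlike in \Cref{Nisnevichexcision}, which effectively treated a closed point of $S$ and hence a closed stratum, here $s\in S$ is arbitrary so $Z$ is only locally closed in $U$, and I will have to use $Z$'s constructibility carefully to ensure both openness and quasi-compactness of the extracted subspace.
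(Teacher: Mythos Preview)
Your approach is essentially the paper's, reorganized. Rather than starting from $\mathcal{J}$, the paper introduces the functor $\operatorname{et}_X\to\mathcal{S}$ sending $X'\to X$ to the set of its sections over $\{s\}\times_S Y$, checks directly that it preserves finite limits and sends covering sieves to epimorphic families (immediate from the definition of the $p$-Nisnevich topology: covers admit sections over each stratum, and any map from $\{s\}\times_S Y$ lands in the $s$-stratum), and invokes \cite[Prop.~6.2.3.20]{HTT} to obtain a point whose pullback is the colimit over \emph{all} \'etale neighborhoods $\{s\}\times_S Y\to U\to Y$. Only then does it show your $\mathcal{J}$ is cofinal in that larger indexing category. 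The cofinality is exactly your stated obstacle, and the paper's resolution is short: restrict first to $S_s$, the smallest open neighborhood of $s$ in $S$ (where $\{s\}$ becomes closed), and take
\[
U' \;=\; S_s\times_S U \;-\; \bigl(\{s\}\times_S U - \sigma(\{s\}\times_S Y)\bigr).
\]
Passing to the open $S_s$ before excising makes the removed set closed, so $U'$ is open and qcqs without any constructibility bookkeeping. Your organization folds this cofinality step into the covering verification, which is why the extraction problem appears in the middle of your continuity check rather than as a separate final step---but the content is the same.

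One caution: your claim that the section ``factors through some component $V_{i_0}\times_V U$'' implicitly assumes $\{s\}\times_S Y$ is connected. When it is not, the section can land in different $V_i$'s on different components, and then no refinement $U'\in\mathcal{J}$ factoring through a \emph{single} $V_i$ need exist, so the sieve-surjectivity does not follow as written. The paper's argument is equally brief at the corresponding step; you should be aware of this when filling in details.
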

\begin{proof}
Consider the functor $\operatorname{et}_X\rightarrow\mathcal{S}$ which sends an
$f:X'\rightarrow X$ in $\operatorname{et}_X$ to the set of sections of $f$ over
$\{s\}\times_SY$.  This preserves finite limits, and the definition of the
topology on $X_{Nis,p}$ implies that the image of a covering sieve for
$X_{Nis,p}$ is epimorphic.  Thus, the criterion of \cite[Prop.
6.2.3.20]{HTT} shows that there is a unique functor $\operatorname{Sh}(X_{Nis,p})\rightarrow\mathcal{S}$ which restricts to our original functor $\operatorname{et}_X\rightarrow\mathcal{S}$ on representables and is the pullback functor associated to a geometric morphism of $\infty$-topoi.  Writing a sheaf in the tautological manner as a colimit of representable sheaves, we deduce that this pullback functor sends
$$\mathcal{F}\mapsto \varinjlim_{\{s\}\times_SY\rightarrow U\rightarrow Y}\mathcal{F}(U),$$
where the indexing category is all \'etale neighborhoods of $\{s\}\times_SY$ in $Y$.  This indexing category is co-filtered by fiber products, so by a cofinality argument, to prove the lemma it suffices to see that every \'etale neighborhood $\{s\}\times_SY\overset{\sigma}{\rightarrow} U\rightarrow Y$ admits a refinement $U'\rightarrow U$ for which the map $U'\rightarrow Y$ is an isomorphism over $\{s\}\times_SY$ (in which case the section $\{s\}\times_SY\rightarrow U'$ is unique). For this, let $S_s$ denote the smallest open neighborhood of $s$ in $S$.  Noting that $\{s\}$ is closed in $S_s$, we can take
$$U' = S_s\times_S U - (\{s\}\times_S U -\sigma(\{s\}\times_SY)).$$\end{proof}

\begin{proposition}
Let $p:\vert X\vert\rightarrow S$ be a spectral stratification of a qcqs algebraic space $X$ and let $\mathcal{F}\in\operatorname{Sh}(X_{Nis,p})$.  Adopting the notation of the previous lemma, if $\mathcal{F}_{Y,s}$ is $\operatorname{dim}(S)$-connective for all $Y\in\operatorname{et}_X$ and $s\in S$, then $\mathcal{F}(X)\neq\emptyset$.
\label{Nispexistsections}
\end{proposition}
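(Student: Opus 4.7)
The plan is to induct on $|S|$, mimicking the proof of \Cref{htpydimforfinitespectral} (in which the role of the underlying finite spectral space is now played by $S$). Because the Mayer--Vietoris squares produced by \Cref{Nisnevichexcision}(2) will require the ``overlap'' factors to be $1$-connective, I will prove a strengthening: for every $c \geq 0$ and every sheaf $\mathcal{F}$ on $X_{Nis,p}$ whose stalks $\mathcal{F}_{Y,s}$ are $(c+\dim S)$-connective for all $Y\in et_X, s\in S$, the space $\mathcal{F}(X)$ is $c$-connective; the proposition is the case $c=0$.

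For the inductive step, given $|S|\geq 1$, pick a closed point $s\in S$ and set $W:=S_s$ (the smallest open containing $s$) and $V:=S\setminus\{s\}$, both open. Since $s$ is the unique closed point of $W$, we have $\dim(W\setminus\{s\})\leq\dim S-1$. Write $X_W,X_V,X_{WV}$ for the preimages of $W,V,W\cap V$ in $X$. The pair $\{X_W,X_V\}$ is a Nisnevich--$p$ cover of $X$ (every stratum lies in $W$ or $V$), and applying \Cref{Nisnevichexcision}(2) with $Z=\{s\}$ and $Y'=X_W\sqcup X_V\to X$ yields
\[\mathcal{F}(X)=\mathcal{F}(X_W)\times_{\mathcal{F}(X_{WV})}\mathcal{F}(X_V).\]
The factors $\mathcal{F}(X_V)$ and $\mathcal{F}(X_{WV})$ are handled directly by the inductive hypothesis: since $|V|<|S|$ and $\dim V\leq\dim S$, $\mathcal{F}(X_V)$ is $c$-connective; and since $|W\setminus\{s\}|<|S|$ together with $\dim(W\setminus\{s\})\leq\dim S-1$, applying the strengthened statement with index $c+1$ gives that $\mathcal{F}(X_{WV})$ is $(c+1)$-connective.

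The remaining factor $\mathcal{F}(X_W)$ is the main content, and here the stalk hypothesis at $s$ intervenes. For any \'etale $U\to X_W$ which is an isomorphism over $\{s\}\times_S X$, the pair $\{U, X_{WV}\}$ is a Nisnevich--$p|_W$ cover of $X_W$, and a second application of \Cref{Nisnevichexcision}(2) (with $Z=\{s\}$ in $W$) gives
\[\mathcal{F}(X_W)=\mathcal{F}(U)\times_{\mathcal{F}(V\times_S U)}\mathcal{F}(X_{WV}).\]
Passing to the filtered colimit over such $U$ (permissible since finite limits commute with filtered colimits in spaces), the left-hand side is unchanged and the first factor becomes $\mathcal{F}_{X_W,s}$, which is $(c+\dim S)$-connective by hypothesis; each $\mathcal{F}(V\times_S U)$, stratified by $W\setminus\{s\}$, is $(c+1)$-connective by the inductive hypothesis (as is their filtered colimit); and $\mathcal{F}(X_{WV})$ is $(c+1)$-connective as above. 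The standard pullback connectivity estimate --- given $A$ that is $c$-connective and $B,C$ that are $(c+1)$-connective, $A\times_B C$ is $c$-connective --- shows $\mathcal{F}(X_W)$ is $c$-connective, and re-applied to the outer Mayer--Vietoris square completes the induction.

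The main obstacle is precisely the factor $\mathcal{F}(X_W)$: since $|W|$ can equal $|S|$ (for example when all non-$s$ points of $S$ generize $s$), direct induction on $|S|$ does not apply to $X_W$. The workaround trades $X_W$ for an \'etale neighborhood of the stratum $\{s\}\times_S X$ glued to its complement $X_{WV}$ via Nisnevich excision; the stalk hypothesis then produces sections on the neighborhood side, and the strictly smaller stratification $W\setminus\{s\}$ takes care of the glue. The need to control the connectivity of the intersection terms $X_{WV}$ and $V\times_S U$ is exactly what forces the strengthened $c$-connective formulation rather than just nonemptiness.
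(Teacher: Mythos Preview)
Your proof is correct and follows essentially the same route as the paper: induction on $|S|$, choice of a closed point $s$, and Nisnevich excision to produce Mayer--Vietoris pullback squares whose ``overlap'' terms are controlled because $\dim(S_s\setminus\{s\})\leq\dim S-1$.  The organizational differences are minor: you make the $c$-parameter strengthening explicit at the outset (the paper leaves it implicit, invoking connectedness of the overlap term ``by the inductive hypothesis'' via the usual loop/path-space trick), and you treat all closed points uniformly with two nested squares, whereas the paper splits into the cases $S_s\neq S$ and $S_s=S$; in the second case your outer square degenerates and your inner square is exactly the paper's argument.
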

\begin{proof}
The case $S=\emptyset$ is trivial, so assume $S$ nonempty and proceed by induction on the number of points of $S$.

Let $s$ be a closed point of $S$, and let $S_s$ be the smallest open
neighborhood of $s$ in $S$.  There are two cases: $S_s\neq S$, and $S_s=S$.
First consider the first case.  By Zariski descent (or \Cref{Nisnevichexcision}) we have
$$\mathcal{F}(X)\overset{\sim}{\longrightarrow}\mathcal{F}(S_s\times_S X)\times_{\mathcal{F}((S_s-s)\times_S X)}\mathcal{F}((S-s)\times_S X).$$
We have $\mathcal{F}((S-s)\times_S X)\neq\emptyset$ and $\mathcal{F}(S_s\times_S X)\neq\emptyset$ by the inductive hypothesis, and $\mathcal{F}((S_s-s)\times_S X)$ is even \emph{connected} by the inductive hypothesis, as $\operatorname{dim}(S_s-s)\leq\operatorname{dim}(S)-1$.  Hence we deduce $\mathcal{F}(X)\neq\emptyset$, as claimed.

To finish, assume we are in the second case, so $S_s=S$.  Define
$$\mathcal{F}'_{X,s}:= \varinjlim \mathcal{F}((S-s)\times_SU),$$
where the indexing category is the same as the one which defined
$\mathcal{F}_{X,s}=\varinjlim\mathcal{F}(U)$, namely all the $U\rightarrow X$ in
$\operatorname{et}_X$ which are an isomorphism over $\{s\}\times_SX$.  Then
passing to filtered colimits in \Cref{Nisnevichexcision} shows that
$$\mathcal{F}(X)\overset{\sim}{\longrightarrow}\mathcal{F}((S-s)\times_S X)\times_{\mathcal{F}'_{X,s}}\mathcal{F}_{X,s}.$$
By the inductive hypothesis, $\mathcal{F}((S-s)\times_S X)$ is non-empty (and even connected, as $\operatorname{dim}(S-s)\leq\operatorname{dim}(S)-1$), and likewise $\mathcal{F}'_{X,s}$ is a filtered colimit of connected spaces, hence is connected.  Since $\mathcal{F}_{X,s}$ is non-empty by hypothesis, we deduce $\mathcal{F}(X)\neq\emptyset$, as desired.
\end{proof}
\begin{corollary}
\label{htpydimNisp}
Let $p:\vert X\vert\rightarrow S$ be a spectral stratification of a qcqs algebraic space $X$.  Then:
\begin{enumerate}
\item As $Y$ varies over $\operatorname{et}_X$ and $s$ varies over $S$, the
points of $\operatorname{Sh}(X_{Nis,p})$ described in \Cref{pointsofNisp} form a conservative family of points of the $\infty$-topos $\operatorname{Sh}(X_{Nis,p})$.
\item The homotopy dimension of $\operatorname{Sh}(X_{Nis,p})$ is $\leq \operatorname{dim}(S)$, the Krull dimension of $S$.
\end{enumerate}
\end{corollary}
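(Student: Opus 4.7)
The plan is to establish part 2 first as a direct consequence of \Cref{Nispexistsections}, and then to bootstrap from part 2 to prove part 1.

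For part 2, let $\sF \in \sh(X_{Nis,p})$ be internally $d$-connective, where $d = \dim(S)$. By \Cref{pointsofNisp}, each stalk functor $\sF \mapsto \sF_{Y,s}$ is the pullback along a geometric morphism $\mathcal{S} \to \sh(X_{Nis,p})$, so it is left-exact and colimit-preserving. Such functors commute with sheafified Postnikov truncations and homotopy group sheaves, so internal $d$-connectivity of $\sF$ forces each stalk space $\sF_{Y,s}$ to be $d$-connective. \Cref{Nispexistsections} then yields $\sF(X) \neq \emptyset$, giving homotopy dimension $\leq d$.

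For part 1, I first establish the following terminality-detection claim: any sheaf $\sF$ whose stalks $\sF_{Y,s}$ are all contractible is itself terminal. The hypothesis passes to any $Y \in \et_X$ equipped with the pulled-back stratification $|Y| \to |X| \to S$ (whose Krull dimension is still $\leq \dim(S)$), so part 2 applied there yields $\sF(Y) \neq \emptyset$. Picking a section $y \in \sF(Y)$, the based loop sheaves $\Omega^k_y(\sF|_{\et_Y})$ still have contractible stalks (loops on contractible spaces are contractible), so part 2 applied repeatedly gives global sections of each, proving $\pi_k(\sF(Y), y) = 0$ for all $k \geq 1$; hence $\sF(Y) \simeq \ast$ for every $Y$, so $\sF$ is terminal.

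To upgrade terminality detection to full conservativity, I would use that part 2 makes $\sh(X_{Nis,p})$ hypercomplete in the sense of \Cref{htpyimplieshypcomplete}, so equivalences are detected by the sheafified homotopy group sheaves $\pi_n$. Any sheaf of (pointed) sets, groups, or abelian groups on $X_{Nis,p}$ with all stalks trivial is itself trivial: one applies the terminality claim to the underlying sheaf of sets (giving existence of a global section), and then to equalizer subsheaves of $\ast$ cut out by pairs of prescribed sections (giving uniqueness). Hence any $\phi:\sF \to \mathcal{G}$ inducing stalkwise equivalences induces isomorphisms on every $\pi_n$ sheaf and is therefore an equivalence. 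The main technical delicacy I anticipate is the loop-sheaf bootstrap in part 1 --- namely, confirming that restriction of $\sF$ along $\et_Y \subset \et_X$ lands in a site of the same form with a stratification of Krull dimension $\leq \dim(S)$ --- but this is a formal unwinding using that the composite $|Y| \to |X| \to S$ remains spectral.
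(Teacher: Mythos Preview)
Your proof of part 2 matches the paper's exactly. For part 1, your overall strategy is sound and the core idea (terminality detection via \Cref{Nispexistsections}) is the same as the paper's, but there is a gap in the loop-sheaf step: knowing that each $\Omega^k_y(\sF|_{\et_Y})$ admits a global section over $Y$ does \emph{not} establish $\pi_k(\sF(Y),y)=0$, since the constant loop is always such a section; and you also omit $\pi_0$. The fix is to use \emph{path} sheaves rather than loop sheaves. Given any two sections $y_0,y_1$ of $\sF$ over $Y$, the sheaf $P=\ast\times_{\sF\times\sF}\sF$ (pullback of the diagonal along $(y_0,y_1)$) has contractible stalks, so by \Cref{Nispexistsections} over $Y$ it admits a section, whence $y_0$ and $y_1$ are homotopic; iterating on loop sheaves then handles all $\pi_k$. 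This is the standard bootstrap from ``$d$-connective stalks imply nonempty sections'' to ``$n$-connective stalks imply $(n-d)$-connective sections,'' which the paper invokes tersely in its own proof.

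For the upgrade from terminality detection to full conservativity, your route through hypercompleteness and $\pi_n$-sheaves is correct but circuitous. The paper argues more directly: given $\phi:\sF\to\mathcal{G}$ a stalkwise equivalence and any $g\in\mathcal{G}(X)$, the sheaf-level fiber $\sF\times_{\mathcal{G}}\{g\}$ has contractible stalks (stalk functors being left exact), so by terminality detection it is terminal; hence every homotopy fiber of $\sF(X)\to\mathcal{G}(X)$ is contractible, and repeating over all $U\in\et_X$ gives the equivalence. This avoids invoking \Cref{htpyimplieshypcomplete} altogether.
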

\begin{proof}
For 1, suppose $\mathcal{F}\rightarrow\mathcal{G}$ is a morphism in $\operatorname{Sh}(X_{Nis,p})$ such that $\mathcal{F}_{Y,s}\rightarrow\mathcal{G}_{Y,s}$ is an equivalence for all $Y\in\operatorname{et}_X$ and $s\in S$.  We want to see that $\mathcal{F}\rightarrow\mathcal{G}$ is an equivalence.  By repeating the argument for every $U\in\operatorname{et}_X$, it suffices to see that $\mathcal{F}(X)\rightarrow\mathcal{G}(X)$ is an equivalence.  Considering the homotopy fiber at an arbitrary point of $\mathcal{G}(X)$,  we can further reduce to the case where $\mathcal{G}=\ast$.  Then the hypothesis is that $\mathcal{F}_{Y,s}$ is $\ast$, hence is $d$-connective for all $d\geq 0$; therefore $\mathcal{F}(X)$ is $d-\operatorname{dim}(S)$-connective for all $d$, hence is also $\ast$, as required.

For 2, if $\mathcal{F}$ is $\operatorname{dim}(S)$-connective, then
$\mathcal{F}_{Y,s}$ is $\operatorname{dim}(S)$-connective for all $Y$ and $s$
since pullbacks preserve connectivity.  Thus \Cref{Nispexistsections} implies $\mathcal{F}(X)\neq\emptyset$, as desired.
\end{proof}

Combining everything, we deduce the desired theorem (\Cref{Nisfinitehtpy}) on
finiteness of the homotopy dimension of $X_{Nis}$: from
\Cref{filteredcolimitNispequivalence} and
\Cref{filtcolimitdimbound}  we reduce to the analogous claim for the $X_{Nis,p}$,
which is \Cref{htpydimNisp} part 2 (note also that there is a cofinal collection
of spectral stratifications $p:\vert X\vert\rightarrow S$ with
$\operatorname{dim}(S)\leq \operatorname{dim}(\vert X\vert)$, by \cite{Sal91}).
Inputting the classical fact that the ordinary topos of Nisnevich sheaves on $X$
has enough points given by the finite separable extensions of the residue fields
of $X$ (which can also be recovered in the limit over $p$ from
\Cref{htpydimNisp} part
1), we can rephrase \Cref{Nisfinitehtpy} as follows:

\begin{theorem}
Let $X$ be a qcqs algebraic space of Krull dimension $d<\infty$.  If
$\mathcal{F}\in\operatorname{Sh}(X_{Nis})$ is such that the Nisnevich pullback
$x^\ast\mathcal{F}\in\operatorname{Spec}(k(x))_{Nis}=\operatorname{PSh}_{\Pi}(\mathcal{T}_{\spec
k(x)})$
(cf.\ \Cref{Nisfield}) is $d$-connective for all $x\in X$, then $\mathcal{F}(X)\neq\emptyset$.
\label{connectiveNisdsection}
\end{theorem}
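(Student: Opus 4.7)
The plan is to reduce the claim to \Cref{Nisfinitehtpy} by showing that the hypothesis that each pullback $x^*\mathcal{F}$ is $d$-connective forces $\mathcal{F}$ itself to be $d$-connective as an object of the $\infty$-topos $\operatorname{Sh}(X_{Nis})$. Once this is achieved, the existence of a global section is immediate from \Cref{Nisfinitehtpy}, which says that $\operatorname{Sh}(X_{Nis})$ has homotopy dimension $\leq d$.

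To test $d$-connectivity of $\mathcal{F}$ I will use a conservative family of points of $\operatorname{Sh}(X_{Nis})$. Combining \Cref{filteredcolimitNispequivalence} with \Cref{htpydimNisp}(1) in the filtered colimit over spectral stratifications $p:|X|\to S$, such a family is indexed by pairs $(Y, y)$ with $Y\in\operatorname{et}_X$ and $y\in|Y|$; the corresponding stalk functor sends $\mathcal{F}$ to
\[
\mathcal{F}_{Y,y} \;=\; \varinjlim \mathcal{F}(U),
\]
the colimit ranging over \'etale maps $U\to Y$ that are isomorphisms over an arbitrarily small neighborhood of $y$. Since $d$-connectivity in an $\infty$-topos of sheaves is detected by vanishing of the homotopy sheaves $\pi_i$ for $i<d$ in the underlying ordinary topos, and the latter has enough points given by this same family, $\mathcal{F}$ is $d$-connective iff every $\mathcal{F}_{Y,y}$ is $d$-connective as a space.

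The next step is to identify $\mathcal{F}_{Y,y}$ with a value of the pullback $x^*\mathcal{F}$, where $x\in X$ is the image of $y$. Since $Y\to X$ is \'etale, the extension $k(y)/k(x)$ is finite separable, so $\operatorname{Spec}(k(y))$ defines an object of $\mathcal{T}_{\spec k(x)}$ (cf.\ \Cref{Nisfield}). Unwinding the geometric morphism attached to $\operatorname{Spec}(k(x))\to X$ yields a cofinality equivalence
\[
\mathcal{F}_{Y,y}\;\simeq\;(x^*\mathcal{F})(\operatorname{Spec}(k(y))).
\]
As $(Y,y)$ varies, every finite separable extension $L/k(x)$ arises as $k(y)$ for some such lift; since the values on these extensions form a conservative family of points of $\operatorname{PSh}_{\Pi}(\mathcal{T}_{\spec k(x)})$, the hypothesis that each $x^*\mathcal{F}$ is $d$-connective implies that every $\mathcal{F}_{Y,y}$ is $d$-connective. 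Combined with the previous paragraph, this gives the required $d$-connectivity of $\mathcal{F}$ and concludes the proof.

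The main point requiring care is the identification $\mathcal{F}_{Y,y}\simeq(x^*\mathcal{F})(\operatorname{Spec}(k(y)))$: both sides are filtered colimits over \'etale neighborhoods of the respective point, and the equivalence reduces to a cofinality assertion together with the fact that the Nisnevich henselization of $Y$ at $y$ has residue field $k(y)$. All other steps are routine unwinding of definitions combined with standard facts on points of ordinary topoi.
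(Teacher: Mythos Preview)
Your proposal is correct and follows essentially the same approach as the paper: reduce to \Cref{Nisfinitehtpy} by checking $d$-connectivity of $\mathcal{F}$ on a conservative family of points of the underlying $1$-topos, identifying those points with values of the pullbacks $x^\ast\mathcal{F}$. The paper is even terser---it simply presents the theorem as a ``rephrasing'' of \Cref{Nisfinitehtpy} using the classical fact that the Nisnevich $1$-topos has enough points given by henselizations---while you derive that family of points from \Cref{htpydimNisp}(1) in the limit over stratifications, which is exactly what the paper's parenthetical remark suggests can be done.
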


Here are the standard corollaries.
These follow from 
\Cref{connectiveNisdsection} 
and 
\cite[Proposition 7.2.1.10]{HTT}, 
in light of \cite[Remark 6.5.4.7]{HTT} and \cite[Corollary 7.2.2.30]{HTT}. 

\begin{corollary}
\label{Nishypercomplete}
Let $X$ be a qcqs algebraic space of Krull dimension $d<\infty$.  If
$f:\mathcal{F}\rightarrow\mathcal{G}$ is a map of sheaves of spaces (or spectra)
on $X_{Nis}$ such that $x^\ast f:x^\ast\mathcal{F}\rightarrow x^\ast\mathcal{G}$
is an equivalence for all $x\in X$, then $f$ is an equivalence. In particular,
all sheaves are hypercomplete. 
\end{corollary}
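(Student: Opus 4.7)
The plan is to leverage the finite homotopy dimension established in \Cref{Nisfinitehtpy}. First, I would observe that every $U \in et_X$ itself has Krull dimension $\leq d$, since an \'etale morphism of qcqs algebraic spaces has zero-dimensional fibers and therefore cannot strictly increase chain lengths of specializations; equivalently, each slice $\sh(X_{Nis})_{/h_U}$ is equivalent to $\sh(U_{Nis})$, to which \Cref{Nisfinitehtpy} again applies. Consequently $\sh(X_{Nis})$ is \emph{locally} of homotopy dimension $\leq d$. By \cite[Prop.~7.2.1.10]{HTT} together with \cite[Cor.~7.2.2.30]{HTT}, this implies that $\sh(X_{Nis})$ is hypercomplete and that Postnikov towers converge; in particular every sheaf of spaces, and via passage to connective covers every sheaf of spectra, on $X_{Nis}$ is a hypersheaf. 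This gives the last clause of the corollary.

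For the stalkwise criterion, the plan is to reduce to a statement about ordinary sheaves on the underlying $1$-topos. Hypercompleteness means that a map $f\colon \sF \to \mathcal{G}$ is an equivalence if and only if each map of discrete homotopy sheaves $\pi_n f$ is an isomorphism on the classical Nisnevich $1$-topos of $X$. That $1$-topos has enough points, indexed by the scheme-theoretic points $x \in X$; the stalk functor at $x$ is recovered from the $\infty$-topos pullback $x^\ast$ by evaluating at the trivial separable extension $\spec(k(x)) \to \spec(k(x))$. This enough-points property is essentially the defining feature of the Nisnevich topology, cf.~\Cref{def:Nisandetale} and \cite[Remark~6.5.4.7]{HTT}.

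Accordingly I would conclude as follows: since each pullback $x^\ast\colon \sh(X_{Nis}) \to \sh(\spec(k(x))_{Nis})$ is exact, hence $t$-exact, the hypothesis that $x^\ast f$ is an equivalence forces $x^\ast (\pi_n f) \cong \pi_n(x^\ast f)$ to be an isomorphism in the heart, and in particular its value at $\spec(k(x))$—the Nisnevich stalk of $\pi_n f$ at $x$—is an isomorphism. As $x$ ranges over all points of $X$, the enough-points property of the Nisnevich $1$-topos forces $\pi_n f$ to be an isomorphism for every $n$, and hence $f$ is an equivalence by hypercompleteness. The spectral analogue is immediate by running the same argument on each Postnikov section (or just on the underlying map of sheaves of infinite loop spaces). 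The only mild subtlety is the bookkeeping identifying the $\infty$-topos pullback $x^\ast$ with the Nisnevich stalk at $x$ on the level of homotopy sheaves, but this is formal given the structure of the Nisnevich topology and the $t$-exactness of geometric morphisms of $\infty$-topoi.
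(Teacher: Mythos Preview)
Your approach is correct and essentially identical to the paper's: you invoke \Cref{Nisfinitehtpy} to get locally finite homotopy dimension, then cite the same results from \cite{HTT} (Prop.~7.2.1.10, Cor.~7.2.2.30, Remark~6.5.4.7) to deduce hypercompleteness and reduce to checking $\pi_*$-isomorphisms on points.

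There is one imprecision worth flagging. You write that the Nisnevich $1$-topos has enough points ``indexed by the scheme-theoretic points $x\in X$,'' and then verify only the stalk at the trivial extension $\spec(k(x))$. That is the Zariski picture, not the Nisnevich one: the points of $X_{Nis}$ are given by the finite separable extensions of the residue fields, i.e.\ by pairs $(x,L)$ with $L/k(x)$ finite separable (equivalently, by henselizations of \'etale neighborhoods). Checking only the stalk at $L=k(x)$ would not suffice. Fortunately the hypothesis is stronger than you actually use: $x^*f$ is assumed to be an equivalence in all of $\sh(\spec(k(x))_{Nis}) = \pshp(\mathcal{T}_x)$, so evaluating at \emph{every} object of $\mathcal{T}_x$ (not just the terminal one) gives an isomorphism, and this does cover all Nisnevich points lying over $x$. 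With that correction the argument is complete.
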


\begin{corollary}
Let $X$ be a qcqs algebraic space of Krull dimension $d$.  Then $X_{Nis}$ has cohomological dimension $\leq d$.
\end{corollary}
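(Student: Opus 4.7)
The plan is to deduce this from \Cref{Nisfinitehtpy}, exploiting the standard fact in $\infty$-topos theory that a bound on homotopy dimension implies the same bound on cohomological dimension.

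First, I would unpack the goal using \Cref{cdsite}: the claim that $X_{Nis}$ has cohomological dimension $\leq d$ is the assertion that for every $Y \in \operatorname{et}_X$ and every abelian Nisnevich sheaf $A$ on $Y$, the group $H^i(Y_{Nis}; A)$ vanishes for $i > d$. Since étale maps of qcqs algebraic spaces do not increase Krull dimension (the local rings at an étale point have the same dimension as on the base), each such $Y$ itself satisfies the hypothesis of \Cref{Nisfinitehtpy}, so it suffices to establish the vanishing on $X_{Nis}$ itself; the same argument then applies to every $Y \in \operatorname{et}_X$.

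Next, I would identify $H^i(X_{Nis}; A) \cong \pi_0\, \Gamma(X_{Nis}, K(A,i))$, where $K(A,i) \in \sh(X_{Nis})$ denotes the Eilenberg–MacLane sheaf of pointed spaces associated to $A$; as a sheaf of spaces it is $i$-connective and $i$-truncated. By \Cref{Nisfinitehtpy}, $\sh(X_{Nis})$ has homotopy dimension $\leq d$, and the standard loop-space iteration of this property (cf.\ \cite[Prop.~7.2.1.10]{HTT}, already invoked in the proof of \Cref{Nishypercomplete}) shows that for any $i$-connective object $\sF \in \sh(X_{Nis})$, the space $\Gamma(X_{Nis}, \sF)$ is $(i-d)$-connective.

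Applying this with $\sF = K(A,i)$ for $i > d$ gives that $\Gamma(X_{Nis}, K(A,i))$ is at least $1$-connective, hence connected, and therefore $\pi_0$ vanishes, yielding $H^i(X_{Nis}; A) = 0$ as required. I do not anticipate any substantive obstacle here: all of the real geometric content sits in \Cref{Nisfinitehtpy}, and the passage from homotopy dimension to cohomological dimension is a purely formal consequence of the delooping structure on Eilenberg–MacLane sheaves.
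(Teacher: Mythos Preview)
Your proposal is correct and takes essentially the same approach as the paper: the corollary is listed among the ``standard corollaries'' of \Cref{connectiveNisdsection}/\Cref{Nisfinitehtpy}, deduced via \cite[Prop.~7.2.1.10]{HTT}, which is exactly the passage from homotopy dimension to cohomological dimension that you spell out. Your extra care in noting that the bound must hold for every $Y \in \operatorname{et}_X$ (handled because \'etale maps do not raise Krull dimension) is a welcome clarification the paper leaves implicit.
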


This bound on cohomological dimension has the following further consequence; at
least in the noetherian case, this is well-known, cf.\ for instance \cite[Theorem
2.8]{Mi97} for the argument. 

\begin{corollary}
\label{etalecohdim}
Let $X$ be a qcqs algebraic space of Krull dimension $d$, and $\mathcal{P}$ a set of primes.  For $x\in X$, denote by $cd_x$ the $\mathcal{P}$-local cohomological dimension of the absolute Galois group of the residue field at $x$.  Then
$$\operatorname{sup}_{x\in X} cd_x \leq \operatorname{sup}_{U\rightarrow X\in\operatorname{et}_X}\operatorname{CohDim}_{\mathcal{P}}(U_{et})\leq d + \operatorname{sup}_{x\in X}cd_x.$$
\end{corollary}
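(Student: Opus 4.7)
The plan is to deduce both inequalities from standard comparisons between the \'etale and Nisnevich topoi, combined with \Cref{Nisfinitehtpy}.

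For the upper bound, I would use the Leray spectral sequence for the morphism of sites $\pi: U_{et} \to U_{Nis}$ attached to each $U \in \mathrm{et}_X$. Since $U \to X$ is \'etale, $U$ is a qcqs algebraic space of Krull dimension $\leq d$, so by \Cref{Nisfinitehtpy} the Nisnevich cohomological dimension of $U_{Nis}$ is at most $d$. The Nisnevich stalks of $R^q\pi_*\mathcal{F}$ are the \'etale cohomologies of the Nisnevich local rings $\spec(\mathcal{O}_{U,u}^h)$, which by henselian invariance equal the Galois cohomologies $H^q(G_{k(u)},\mathcal{F}_{\bar u})$. Since every residue field $k(u)$ of $U$ is a finite separable extension of some residue field $k(x)$ of $X$, its $\mathcal{P}$-local cohomological dimension is bounded by $cd_x \leq \operatorname{sup}_{x\in X} cd_x$. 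Thus $R^q\pi_*\mathcal{F} = 0$ for $\mathcal{P}$-local torsion $\mathcal{F}$ and $q > \operatorname{sup}_x cd_x$, and the spectral sequence $H^p_{Nis}(U, R^q\pi_*\mathcal{F}) \Rightarrow H^{p+q}_{et}(U,\mathcal{F})$ then yields the upper bound.

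For the lower bound, I would express the Galois cohomology of each residue field $k(x)$ as a filtered colimit of \'etale cohomologies over $\mathrm{et}_X$. Given a finite $\mathcal{P}$-local torsion $G_{k(x)}$-module $M$, one spreads $M$ out to a constructible $\mathcal{P}$-local torsion sheaf $\tilde M$ on some \'etale $X$-neighborhood $V$ of $x$. Since $\spec(\mathcal{O}_{V,x}^h)$ is a cofiltered limit of Nisnevich neighborhoods $U \to V$ of $x$, each of which lies in $\mathrm{et}_X$, the standard continuity theorem for \'etale cohomology of constructible sheaves along cofiltered limits of qcqs algebraic spaces with affine transition maps yields
$$H^*(G_{k(x)}, M) = H^*_{et}(\spec(\mathcal{O}_{V,x}^h), \tilde M) = \varinjlim_{U} H^*_{et}(U, \tilde M|_U).$$
Reducing to the case of finite $M$ by passing to filtered colimits of submodules (\'etale cohomology of qcqs algebraic spaces commutes with filtered colimits of abelian sheaves), we conclude $cd_x \leq \operatorname{sup}_{U \in \mathrm{et}_X} \operatorname{CohDim}_{\mathcal{P}}(U_{et})$ for each $x \in X$.

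The main technical obstacle lies in the non-noetherian regime: one must invoke the continuity of \'etale cohomology along cofiltered limits of qcqs algebraic spaces, as well as the spreading-out of finite \'etale sheaves, in sufficient generality (i.e., without noetherian hypotheses on $X$). These are available by SGA 4 style limit theorems (cf.\ \cite{SGA4}, Exp.~VII), and once they are in hand the proof reduces to combining \Cref{Nisfinitehtpy} with the standard henselian and Leray computations.
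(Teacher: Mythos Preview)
Your upper bound argument is essentially identical to the paper's: both use the Leray spectral sequence for the morphism $X_{et}\to X_{Nis}$, bound the Nisnevich cohomological dimension by $d$ via \Cref{Nisfinitehtpy}, and identify the Nisnevich stalks of $R^j\lambda_*\mathcal{F}$ with Galois cohomology of the residue fields via henselization.

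For the lower bound your route differs from the paper's, though both are correct. You spread out a finite Galois module to a constructible sheaf on an \'etale neighborhood, then use continuity along the pro-system of \'etale neighborhoods computing the henselization $\mathcal{O}_{X,x}^h$. The paper instead observes that the point $x$, being pro-constructible in the spectral space $|X|$, is a filtered inverse limit of finitely presented closed subspaces $Z\subset U$ of quasicompact Zariski open neighborhoods $U$ of $x$; continuity then bounds $cd_x$ by the \'etale cohomological dimensions of the $Z$'s, and exactness of pushforward along closed immersions bounds these in turn by the cohomological dimensions of the $U$'s. The paper's argument avoids the spreading-out step entirely and in fact proves the slightly stronger statement that $cd_x \leq \sup_{x\in U\subset X}\operatorname{CohDim}_{\mathcal{P}}(U_{et})$ where the supremum ranges only over \emph{Zariski} open neighborhoods of $x$. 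Your approach is a bit more hands-on with the sheaf but requires the extra limit-theoretic input of spreading out finite \'etale covers in the non-noetherian setting; this is available (e.g.\ via \cite[Tag 0BQA]{stacks-project}), so the argument goes through, but the paper's route is more economical here.
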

\begin{proof}
First we show the first inequality.  In fact we show something slightly stronger: for all $x\in X$,
$$cd_x\leq \operatorname{sup}_{x\in U\subset X}\operatorname{CohDim}_{\mathcal{P}}(U_{et}),$$
where the supremum is now over all Zariski open neighborhoods of $x$ in $X$.
For this, note that since a point in a spectral space is pro-constructible, $x$
is a filtered inverse limit of finitely presented closed subspaces $Z\subset U$
of quasicompact Zariski open neighborhoods $U$ of $x$.  Standard continuity results in
\'etale cohomology \cite[Tag 03Q4]{stacks-project} then imply that the \'etale cohomological dimension of $x$ is bounded by the supremum of the \'etale cohomological dimensions of such $Z$.  But \'etale pushforwards along closed immersions are exact, so the cohomological dimension of each $Z$ is bounded by that of its corresponding $U$, as desired.

For the second inequality, since the Krull dimension of each
$U\in\operatorname{et}_X$ is bounded by that of $X$, and likewise the Galois
cohomological dimension of each residue field of $U$ is  bounded in terms of those of $X$ (residue fields of $U$ are finite separable extensions of residue fields of $X$), it suffices to show that
$$\operatorname{CohDim}(X_{et})\leq d + \operatorname{sup}_{x\in X}cd_x.$$
For this, let $\mathcal{A}$ be a $\mathcal{P}$-local sheaf of abelian groups on $X_{et}$.
We wish to show that $H^i(X_{et}; \sF) = 0$ for $i> 
d + \operatorname{sup}_{x\in X}cd_x$. 
To this end, we consider the Leray spectral sequence for the pushforward
$\lambda$ from
the \'etale to the Nisnevich site, 
$$ H^i(X_{Nis}, R^j \lambda_*( \mathcal{F})) \implies H^{i+j}(X_{et}, \sF).$$
Since 
the Nisnevich site has cohomological dimension $\leq d$, 
it suffices to show that $R^j \lambda_*( \mathcal{F}) = 0$ for 
$j
> \operatorname{sup}_{x\in X}cd_x$. 
But the Nisnevich stalk of $R^j \lambda_*(\mathcal{F})$ at $y \in X$ is given by 
$H^j(
\spec(\mathcal{O}_{X,y}^h), \sF)$, where $\mathcal{O}_{X,y}^h$ denotes the
henselian local ring at $y$, and by definition this vanishes for
$j>\mathrm{cd}_y$. 
\end{proof}

Finally, as in 
\Cref{htpydimspectralbyprosite}, 
we include an alternative argument
for 
\Cref{Nisfinitehtpy}, which gives a slightly stronger connectivity bound. 
For simplicity, we only treat the case of qcqs schemes. 

\begin{theorem} 
Let $X$ be a qcqs scheme of finite Krull dimension and let $\sF \in \sh(X)$. 
Suppose that for each $x \in X$,
the Nisnevich pullback $x^* \sF \in \sh( (\spec k(x))_{Nis})$
is $\dim( \overline{\left\{x\right\}})$-connective. 
Then $\sF(X) \neq \emptyset$. 
\end{theorem}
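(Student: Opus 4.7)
The proof proceeds by induction on $\dim X$, following the strategy of \Cref{htpydimspectralbyprosite}. The case $\dim X = 0$ (a disjoint union of field spectra) is immediate. For the inductive step, by passing to loop and path spaces the statement is equivalent to: if each stalk $x^\ast \sF$ is $(c + \dim \overline{\{x\}})$-connective, then $\sF(X)$ is $c$-connective. We reduce to the case $X = \spec A$ affine via Zariski descent and \Cref{spectralspacehtpy}, and extend $\sF$ to a functor on spectra of ind-\'etale $A$-algebras by $\sF(\spec \varinjlim B_i) := \varinjlim \sF(\spec B_i)$, which is legitimate since the Nisnevich site is finitary.

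Consider the poset $\mathcal{P}$ of ind-\'etale $A$-algebras $B$ with $\sF(\spec B) = \emptyset$, ordered by the existence of an $A$-algebra map $B \to B'$. Because $\sF$ commutes with filtered colimits, $\mathcal{P}$ is closed under such colimits; Zorn produces a maximal $B^* \in \mathcal{P}$ (assuming for contradiction $A \in \mathcal{P}$). The key observation is that $B^*$ cannot be Henselian local. Indeed, if $B^*$ is Henselian local with closed point mapping to $x \in X$ and residue field $k(B^*)$, then $\sF(\spec B^*) \simeq (x^\ast \sF)(\spec k(B^*))$, expressed as a filtered colimit over finite separable subextensions when $k(B^*)/k(x)$ is infinite, which is $\dim(\overline{\{x\}})$-connective by hypothesis, hence nonempty. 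So $B^*$ is either (i) not local, or (ii) local but not Henselian.

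Case (i) is handled verbatim as in \Cref{htpydimspectralbyprosite}: pick $f \in B^*$ neither a unit nor in the Jacobson radical, form $B^{**} = B^*[(1+fB^*)^{-1}]$, and use the Zariski distinguished square giving $\sF(\spec B^*) \simeq \sF(\spec B^*[1/f]) \times_{\sF(\spec B^{**}[1/f])} \sF(\spec B^{**})$. The outer terms are nonempty by maximality of $B^*$, while $\spec B^{**}[1/f]$ has strictly smaller Krull dimension (it contains no maximal ideals of $B^*$), so the induction hypothesis yields that $\sF(\spec B^{**}[1/f])$ is $1$-connective. The pullback is therefore nonempty, contradicting $B^* \in \mathcal{P}$.

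Case (ii) is the crux. Using a monic $p(T) \in B^*[T]$ whose simple root in the residue field does not lift, we obtain a standard \'etale neighborhood $B^* \to C$ with $C$ local and $k(C) = k(B^*)$, so $\sF(\spec C) \neq \emptyset$ by maximality. Choosing $f \in \mathfrak{m}_{B^*}$ so that $B^*/fB^* \to C/fC$ is an isomorphism produces a Nisnevich distinguished square
\[
\begin{array}{ccc}
\spec C[1/f] & \to & \spec C \\
\downarrow & & \downarrow \\
\spec B^*[1/f] & \to & \spec B^*,
\end{array}
\]
yielding $\sF(\spec B^*) \simeq \sF(\spec B^*[1/f]) \times_{\sF(\spec C[1/f])} \sF(\spec C)$; the induction hypothesis, applied to $\spec C[1/f]$ (of strictly smaller Krull dimension, as inverting $f$ removes the unique point above the closed point), shows that $\sF(\spec C[1/f])$ is $1$-connective, hence the pullback is nonempty — the desired contradiction. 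The main obstacle is arranging the isomorphism $B^*/fB^* \cong C/fC$ in arbitrary Krull dimension: in dimension $1$, any $f \in \mathfrak{m}\setminus\{0\}$ suffices because $B^*/fB^*$ is then Artinian and hence Henselian, so the \'etale-plus-residue-iso map $C/fC \to B^*/fB^*$ splits uniquely; in higher dimensions one must use a regular sequence $f_1, \dots, f_n$ and iterate the Nisnevich distinguished-square argument along a finer cd-structure, using the inductive statement in lower dimensions to propagate connectivity through the successive squares.
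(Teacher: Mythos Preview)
Your induction set-up and Case~(i) are fine and mirror the paper's argument for spectral spaces.  The gap is in Case~(ii): you cannot in general arrange a \emph{principal} $f \in \mathfrak m_{B^*}$ with $B^*/fB^* \cong C/fC$.  Your own last paragraph concedes this, and the proposed fix via ``regular sequences and iteration'' is not a proof --- $B^*$ is an arbitrary ind-\'etale $A$-algebra, so it need not be Noetherian, need not admit regular sequences, and the vague iteration does not visibly terminate.  What \emph{is} always true (because $C$ is finitely presented and flat over $B^*$, and the fiber over the closed point is a trivial extension) is that there exists a finitely generated ideal $I \subset \mathfrak m_{B^*}$ with $B^*/I \cong C/IC$; this gives a Nisnevich distinguished square with open complement $\spec B^* \setminus V(I)$, which however need not be affine.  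So you cannot use maximality of $B^*$ to get $\sF(\spec B^*\setminus V(I))\neq\emptyset$; you must use the inductive hypothesis on that open (and on $\spec C\setminus V(IC)$) directly, which works since both miss the closed point of a local ring and hence have strictly smaller Krull dimension.

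This is precisely what the paper does, but organized more cleanly: rather than running Zorn over all ind-\'etale $A$-algebras, the paper first invokes the already-proved Zariski statement (\Cref{htpydimspectralbyprosite}) to reduce to showing each Zariski stalk $\sF(A_x)$ has the correct connectivity.  Now $A_x$ is local, and the $0$-connectivity of $x^*\sF$ produces an \'etale neighborhood $A'$ with $\sF(A')\neq\emptyset$; a single Nisnevich excision square with a finitely generated ideal $I$ then reduces to the open complements, which are of strictly smaller dimension and are handled by induction.  No Zorn argument, no case split, and no principal-ideal obstruction.  If you insist on your Zorn approach, replacing $(f)$ by a finitely generated $I$ in Case~(ii) and appealing to induction (not maximality) on the complement repairs the argument --- but at that point you have reproduced the paper's proof with extra overhead.
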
 
\begin{proof} 
We will prove this by induction on the Krull dimension of $X$, starting with the
(evident) case of dimension $-1$. 
In view of \Cref{htpydimspectralbyprosite}, it suffices to show that for each $x \in X$, the Zariski stalk of $\sF$ at $x$ is 
$\dim( \overline{\left\{x\right\}})$-connective for each $x \in X$. 
In particular, using that the Nisnevich site is finitary excisive to pass to the
limit, we may assume that $X = \spec(A)$ for $A$ a local
ring with closed point $x$. Since the Nisnevich pullback of $X$ to $x$ is
0-connective, it follows that there exists an \'etale neighborhood $A'$ of $x$
(in particular, $\spec(A') \to \spec(A)$ pulls back along $\spec(k(x))$ to an
isomorphism)
such that $\sF(A') \neq \emptyset$. 
There exists a finitely generated ideal $I \subset A$ such that $A \to A'$ is an
isomorphism along $I$. Therefore, we have a pullback square
\[ \xymatrix{
\sF( A) \ar[d]  \ar[r] &  \sF( A') \ar[d]  \\
\sF( \spec A \setminus V(I)) \ar[r] &  \sF( \spec A' \setminus V(I))
}.\]
The observation now is that the maps $\spec (A) \setminus V(I) \to \spec(A)$ and
$\spec A' \setminus V(I) \to \spec A'$ miss the closed point, and hence the
inductive hypothesis applies to show that $\sF$ takes 1-connective values on each
of these schemes. 
Since $\sF(A') \neq \emptyset$, it follows that the above pullback $\sF(A)$ is
nonempty, as desired. 
\end{proof} 

\begin{remark} 
In \cite{RO06}, it is asserted (cf.\ Theorem 4.1 of \emph{loc. cit.}) that
algebraic $K$-theory is a hypersheaf in the Zariski or Nisnevich topology on a
qcqs scheme $X$ of finite Krull dimension. 
The argument relies on the fact that this is well-known (due to \cite{TT90}) in the case where 
$X$ is \emph{noetherian} and of finite Krull dimension, and then one can write 
\emph{any} $X$ as a filtered limit of schemes $X_i$ of finite type over
$\mathbb{Z}$, and appeal to continuity results such as \cite[Theorem 3.8]{Mi97}.
To the best of our knowledge, however, such an argument requires the $X_i$ to have
\emph{uniformly} bounded Krull dimension, which generally cannot be arranged
even if $X$ has finite Krull dimension.
Nonetheless, our arguments here show that there is in fact no distinction
between sheaves and hypersheaves on a qcqs scheme $X$ (or algebraic space) of finite
Krull dimension. Since the results of \cite{TT90} are sufficient to show that
$K$-theory is at least a sheaf on any qcqs algebraic space, it follows that the
assertions of \cite{RO06} do indeed hold without noetherian hypotheses. 
\end{remark}

\section{The \'etale topos}

\newcommand{\pshp}{\mathrm{PSh}_{\Pi}}
In this section, we study the $\infty$-category $\mathrm{Sh}(X_{et}, \sp)$ of 
sheaves of spectra on the \'etale site of a qcqs algebraic space $X$ of finite
Krull dimension and with a global bound on the \'etale cohomological dimension
of the residue fields: by \Cref{etalecohdim}, this implies that the \'etale
topos of $X$ has finite cohomological dimension. 

Contrary to the examples of the previous subsection, the $\infty$-topos
$\mathrm{Sh}(X_{et})$
need not have finite homotopy dimension and 
$\mathrm{Sh}(X_{et}, \sp)$ is generally not hypercomplete. 
Our main result (\Cref{main:hypcriterion}) is that hypercompletion
is 
smashing 
and that we can make explicit the condition of hypercompleteness on a sheaf of
spectra. 

Throughout this section, we will fix a set of primes $\mathcal{P}$ and will be
working with $\mathcal{P}$-local objects. 

\subsection{The classifying topos of a profinite group}
Let $G$ be a profinite group. 
First, we review the  topos of continuous $G$-sets, cf.\ \cite[Sec. III.9]{MLM94}
for a treatment.

\begin{definition} 
Let $\mathcal{T}_G$ denote the Grothendieck site defined as follows:
\begin{enumerate}
\item The underlying category of $\mathcal{T}_G$ is the category of finite
continuous $G$-sets. 
\item A system of maps $\left\{S_i \to S\right\}_{i \in I}$ forms a covering
sieve if it
is jointly surjective. 
\end{enumerate}

Given an $\infty$-category $\mathcal{D}$ with all limits, 
we let $\mathrm{Sh}(\mathcal{T}_G, \mathcal{D})$ denote the $\infty$-category of
$\mathcal{D}$-valued sheaves on $\mathcal{T}_G$, as usual. 
We also write $\pshp(\mathcal{T}_G, \mathcal{D})$
for the $\infty$-category of presheaves on $\mathcal{T}_G$ with values in
$\mathcal{D}$ which carry finite coproducts on $\mathcal{T}_G$ to finite
products; equivalently, these are $\mathcal{D}$-valued presheaves on the orbit
category of $G$. 
\end{definition} 

Clearly, $\mathcal{T}_G$ is a finitary site. 
The category of sheaves of sets on $\mathcal{T}_G$ is the category of continuous
(discrete) $G$-sets. More generally, we can describe sheaves on $\mathcal{T}_G$ in an
arbitrary $\infty$-category with limits. 
The category of finite continuous $G$-sets has pullbacks, and the pullback of a
surjection is a surjection. Moreover, coproducts are disjoint. Consequently, 
we note that this Grothendieck topology is a special case of a
general construction described in \cite[Sec. A.3.2, A.3.3]{SAG}.  
In particular, \cite[Prop. A.3.3.1]{SAG} yields the following criterion for a
presheaf to be a sheaf: 
\begin{proposition} 
\label{Gsheafcrit}
Let $G$ be a profinite group, and $\mathcal{T}_G$ the site as above. 
A presheaf $\sF$ on $\mathcal{T}_G$ with values in an $\infty$-category
$\mathcal{D}$ with limits is a sheaf if and only if: 
\begin{enumerate}
\item  For $X, Y \in \mathcal{T}_G$, the natural map gives an equivalence $\sF(X
\sqcup Y) \simeq \sF(X) \times \sF(Y)$. 
That is, $\sF \in \pshp(\mathcal{T}_G, \mathcal{D})$. 
\item For every 
surjective map of $G$-sets $T \twoheadrightarrow S$, the natural map 
\begin{equation} \label{finiteGsetCech} \sF(S) \to \mathrm{Tot}( \sF(T)
\rightrightarrows \sF(T \times_S T) 
\triplearrows
\dots )  \end{equation}
is an equivalence. 
\end{enumerate}
\end{proposition}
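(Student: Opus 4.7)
The plan is to invoke \cite[Prop.~A.3.3.1]{SAG} directly, as already suggested in the paragraph immediately preceding the statement. The site $\mathcal{T}_G$ fits into the general framework of \cite[Sec.~A.3.2, A.3.3]{SAG}: finite coproducts exist and are disjoint, all pullbacks exist, and pullbacks of surjections remain surjections; moreover $\mathcal{T}_G$ is finitary. Under these hypotheses the cited proposition characterizes sheaves by precisely the two conditions (1) and (2), once one translates between the ``single effective epimorphism'' language of \emph{loc.~cit.}\ and the description of covering sieves in terms of jointly surjective finite families.

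For the nontrivial implication the intended argument runs as follows. Given a covering sieve $R$ on an object $S$, I would use finitariness to refine $R$ to a sieve generated by a finite jointly surjective family $\{S_i \to S\}_{i=1}^n$. Setting $T = \bigsqcup_i S_i$, the single induced map $T \to S$ is surjective, so condition (2) yields $\sF(S) \simeq \mathrm{Tot}(\sF(T^{\times_S \bullet}))$. Then condition (1) lets me expand
\[
\sF(T^{\times_S k}) \;\simeq\; \prod_{(i_1,\dots,i_k)} \sF(S_{i_1} \times_S \cdots \times_S S_{i_k}),
\]
and a standard cofinality argument identifies the resulting totalization with $\lim_{U \to S \in R} \sF(U)$, yielding descent for $R$. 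The converse direction is immediate: (1) is the sheaf condition for the covering sieve generated by the canonical family $\{X \to X \sqcup Y,\, Y \to X \sqcup Y\}$ (which is jointly surjective), and (2) is the sheaf condition for the sieve generated by a single surjective map $T \to S$.

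The only mild piece of work is the bookkeeping in the cofinality argument showing that the totalization of the \v{C}ech nerve really computes $\lim_{U \to S \in R} \sF(U)$; but this is entirely standard and handled in full generality by \cite[Prop.~A.3.3.1]{SAG}. Thus the proof is essentially a citation, with only a short verification that the axiomatic hypotheses are satisfied by $\mathcal{T}_G$, and there is no substantive obstacle.
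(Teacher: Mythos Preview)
Your proposal is correct and matches the paper's approach exactly: the paper simply cites \cite[Prop.~A.3.3.1]{SAG} after noting that $\mathcal{T}_G$ has pullbacks, that pullbacks of surjections are surjections, and that coproducts are disjoint. Your additional unpacking of the cofinality argument is more detail than the paper provides, but the underlying argument is the same.
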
 

We now need a basic lemma about Kan extensions. 
\begin{lemma} 
\label{Kanlemma}
Let $\mathcal{C}$ be an $\infty$-category with finite nonempty products and let $X \in
\mathcal{C}. $ Let $\mathcal{D}$ be an $\infty$-category with all limits. 
Let $\sF: \mathcal{C} \to \mathcal{D}$ be a functor and let $\mathcal{C}' \subset
\mathcal{C}$ be the full subcategory of objects $Y$ which admit a map $Y \to X$. 
Let $\widetilde{\sF}$ denote the right Kan extension of $\sF|_{\mathcal{C}'}$ to
$\mathcal{C}$. 
Then $\widetilde{\sF}$ is given by the formula
\begin{equation} \label{Kextformula} \widetilde{\sF}(Z) = \mathrm{Tot}(\sF(Z
\times X) \rightrightarrows \sF(Z \times X
\times X )  \triplearrows \dots ).\end{equation} 
\end{lemma}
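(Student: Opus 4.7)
The plan is to establish the identification by a cofinality argument. By the pointwise formula for the right Kan extension along the inclusion $\mathcal{C}' \hookrightarrow \mathcal{C}$,
\[ \widetilde{\sF}(Z) = \lim_{\mathcal{I}_Z} \sF, \]
where $\mathcal{I}_Z \subset \mathcal{C}_{/Z}$ denotes the full subcategory on arrows $(Y \to Z)$ with $Y \in \mathcal{C}'$ (I interpret the limit with the variance appropriate to the formula, treating $\sF$ as contravariant so that $\sF(Z \times X^{\bullet+1})$ inherits a cosimplicial structure from the simplicial structure on $Z \times X^{\bullet+1}$; obvious modifications apply under the opposite convention). Each $Z \times X^{n+1}$ lies in $\mathcal{C}'$ via any projection to an $X$-factor, and the standard simplicial object $[n] \mapsto Z \times X^{n+1}$ in $\mathcal{C}$ (face maps from projections on $X$-factors, degeneracies from diagonals) assembles, together with the canonical projections to $Z$, into a functor $\phi \colon \Delta^{op} \to \mathcal{I}_Z$. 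The lemma will follow at once from the claim that $\phi$ is cofinal, since then $\lim_{\mathcal{I}_Z} \sF = \lim_\Delta \sF(Z \times X^{\bullet+1}) = \mathrm{Tot}(\sF(Z \times X^{\bullet+1}))$.

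To verify cofinality, I would apply Joyal's criterion \cite[Prop.~4.1.1.8]{HTT}: for each $j = (Y, Y \to Z) \in \mathcal{I}_Z$, the pullback $\Delta^{op} \times_{\mathcal{I}_Z} (\mathcal{I}_Z)_{j/}$ must be weakly contractible. This pullback is a left fibration over $\Delta^{op}$ classified by the simplicial space
\[ [n] \mapsto \mathrm{Map}_{\mathcal{I}_Z}(j, \phi([n])) \simeq \mathrm{Map}_\mathcal{C}(Y, X)^{n+1}, \]
the identification holding because a map $Y \to Z \times X^{n+1}$ over $Z$ amounts to $(n+1)$ maps $Y \to X$, since the map to $Z$ is already specified by $j$. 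The central observation is that this simplicial space is exactly the \v{C}ech nerve of $\mathrm{Map}_\mathcal{C}(Y, X) \to \ast$ in the $\infty$-category of spaces. Since $Y \in \mathcal{C}'$ forces $\mathrm{Map}_\mathcal{C}(Y, X) \neq \emptyset$, this map is an effective epimorphism of spaces, so the colimit of its \v{C}ech nerve is $\ast$; hence the total space of the classifying left fibration is weakly contractible, and $\phi$ is cofinal.

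The main obstacle is precisely this identification of the cofinality fiber with the \v{C}ech nerve of a nonempty mapping space -- in particular, unwinding how the simplicial structure maps on $Z \times X^{\bullet+1}$ induce, on the level of mapping spaces out of $Y$, the expected projection and diagonal maps that assemble into the \v{C}ech nerve of $\mathrm{Map}_\mathcal{C}(Y, X) \to \ast$. Once this identification is in hand, the rest is a formal application of $\infty$-categorical cofinality, combined with the standard fact that the \v{C}ech nerve of an effective epimorphism of spaces realizes to its target.
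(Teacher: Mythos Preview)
Your argument is correct and follows essentially the same route as the paper: reduce to showing that $\Delta^{op}\to\mathcal{C}'_{/Z}$, $[n]\mapsto Z\times X^{n+1}$, is cofinal, and verify this via Quillen's Theorem~A by identifying the relevant comma fibers with the \v{C}ech nerve of the nonempty space $\mathrm{Map}_{\mathcal{C}}(Y,X)$. The paper simply cites this as ``an easy argument with Quillen's Theorem~A'' and refers to \cite[Prop.~6.28]{MNN17}; you have spelled out the details. (One small remark: the cofinality criterion in \cite{HTT} is Theorem~4.1.3.1, not Prop.~4.1.1.8.)
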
 
\begin{proof} 
It suffices to see that the functor 
$ \Delta^{op} \to \mathcal{C}'_{/Z}$ 
given by $Z \times X^{\bullet +1}$
is cofinal. 
This is an easy argument with Quillen's Theorem A, cf.\ \cite[Prop. 6.28]{MNN17}. 
\end{proof}

\begin{example}[Finite groups] 
\label{ex:sheavesonBG}
Let $G$ be a finite group. 
In this case, $\mathcal{T}_G$ is the category of finite $G$-sets. 
We observe that a functor $\sF: \mathcal{T}_G^{op} \to \mathcal{D}$ (where
$\mathcal{D}$ is an $\infty$-category with all limits) is 
a sheaf if and only if it is right Kan extended from the full subcategory of
$\mathcal{T}_G$ spanned by the $G$-set $G$. 
Indeed, by \Cref{Kanlemma} 
any sheaf $\sF$ is Kan extended from the finite free $G$-sets. 
Since $\sF$ takes finite coproducts of $G$-sets to finite products, $\sF$ is
actually right Kan extended from the subcategory $\left\{G\right\}$ itself. 
Conversely, 
given 
a functor $\sF_0: BG \to \mathcal{D}$, let $\sF$ be the right Kan extension to
$\mathcal{T}_G$. It is now easy to see that $\sF$ carries finite coproducts to
finite products and (via the expression \eqref{Kextformula}) satisfies the sheaf
property: any surjection $T \twoheadrightarrow S$ of $G$-sets admits a section after 
taking the product with $G$. 

Consequently, one sees that $\mathrm{Sh}(\mathcal{T}_G, \mathcal{D})$ is
equivalent to the $\infty$-category $\fun(BG, \mathcal{D})$ of objects of
$\mathcal{D}$ equipped with a $G$-action. 
One can also see this by reducing to the case $\mathcal{D}  = \mathcal{S}$ and
then observing that both sides are 1-localic $\infty$-topoi and agree on
discrete objects.  
Given a surjection of finite groups $G \twoheadrightarrow G'$ with kernel $N
\leq G$,
we obtain an inclusion $\mathcal{T}_{G'} \subset \mathcal{T}_G$ and thus by
restriction a
functor $\mathrm{Sh}(\mathcal{T}_G, \mathcal{D}) \to
\mathrm{Sh}(\mathcal{T}_{G'}, \mathcal{D})$; 
unwinding the definitions, one sees that under the above identifications 
this becomes the functor 
$\fun(BG, \mathcal{D}) \to \fun(BG', \mathcal{D})$ given by $(-)^{hN}$. 

Let $\mathcal{O}(G) \subset \mathcal{T}_G$ be the full subcategory spanned by
nonempty \emph{transitive} $G$-sets, i.e., those of the form $G/H, H \leq G$;
$\mathcal{O}(G)$ is called the \emph{orbit category} of $G$. 
Similarly, a functor $\sF: \mathcal{T}_G^{op} \to \mathcal{D}$ preserves finite
products if and only if it is right Kan extended from 
the inclusion $\mathcal{O}(G)^{op} \subset \mathcal{T}_G^{op}$. The category
$\pshp(\mathcal{T}_G, \mathcal{D})$ of
product-preserving presheaves $\mathcal{T}_G^{op} \to \mathcal{D}$ is thus
identified with $\fun(\mathcal{O}(G)^{op}, \mathcal{D})$. 

\end{example} 

We can recover any profinite group as a limit of finite groups, and on
Grothendieck sites we obtain a filtered colimit. This leads to the next
construction: 

\begin{construction}[$\mathcal{T}_G$ as a filtered colimit]
\label{explicit:sheavesonprofinite}
Let $G$ be a profinite group. For each open normal subgroup $N \leq G$, we have 
the finitary site $\mathcal{T}_{G/N}$. When $N' \leq N$, we have a natural
functor of finitary sites
$\mathcal{T}_{G/N} \to \mathcal{T}_{G/N'}$ by pulling back along the quotient
$G/N' \to G/N$. Moreover, it is easy to see that
$\mathcal{T}_G = \varinjlim_{N} \mathcal{T}_{G/N}$, the colimit taken over all
open normal subgroups $N \leq G$. In particular, if $\mathcal{D}$ is any
$\infty$-category with limits, we have an equivalence of $\infty$-categories
(cf.\
\Cref{ex:sheavesonBG} and \Cref{filtdiagfinitarysite})
\[ \mathrm{Sh}(\mathcal{T}_G, \mathcal{D}) \simeq \varprojlim_{N \leq G} \fun(
 B(G/N), \mathcal{D}),\]
where for $N' \leq N$, the functor 
$\fun(
 B(G/N'), \mathcal{D}) \to 
\fun( B(G/N), \mathcal{D})
 $ is given by $(\cdot)^{h(N/N')}$. 

 In a similar 
 fashion, we can describe product-preserving presheaves out of $\mathcal{T}_G$. 
 Let 
 $\mathcal{O}(G)$ denote the category of finite continuous nonempty transitive $G$-sets;
 then clearly $\mathcal{O}(G) =\varinjlim_N \mathcal{O}(G/N)$. 
 In view of \Cref{ex:sheavesonBG} again, we find a natural equivalence 
\[ \pshp(\mathcal{T}_G, \mathcal{D}) = 
\fun(\mathcal{O}(G)^{op}, \mathcal{D}) \simeq \varprojlim_N \fun(
\mathcal{O}(G/N)^{op}, \mathcal{D}).
\] 
 \end{construction}

We next describe the Postnikov sheafification following a method motivated by
the pro-\'etale site \cite{BS15}. This can be done without
any extra finiteness assumptions either on $G$ or on the target
$\infty$-category. 
\begin{construction}[Postnikov sheafification]
Let $G$ be a profinite group and let $\sF \in \pshp(\mathcal{T}_G, \sp)$. We can
give an explicit construction of the Postnikov sheafification
$\widetilde{\sF}$ of $\sF$. 
The
construction is analogous for presheaves of spaces.

Since $\sF$ is defined on finite $G$-sets, we can canonically extend it by
continuity to all profinite $G$-sets so that it carries filtered limits of
$G$-sets to
filtered colimits of spectra. 
Given a finite $G$-set $S$, 
we consider the simplicial profinite $G$-set
\begin{equation} \label{simpprofiniteGset}  \dots  \triplearrows G \times G \times S \rightrightarrows  G
\times S  \end{equation}
augmented over $S$ (equivalently, the product of the \v{C}ech nerve of $G \to
\ast$ with $S$), and we claim that
there is a natural identification (functorially in $S$)
\begin{equation} \label{tildeF} \widetilde{\sF}(S) \simeq \mathrm{Tot}( \sF(G
\times S) \rightrightarrows \sF(G \times
G \times S ) \triplearrows \dots ). \end{equation}
Indeed, we observe first that 
the construction \eqref{tildeF} defines a sheaf on $\mathcal{T}_G$. 
This follows in view of \Cref{Gsheafcrit} and the fact that for any surjection
of finite $G$-sets $S \twoheadrightarrow S'$, the map $S \times G \to S' \times
G$ admits a section. 
Next, the construction \eqref{tildeF} clearly commutes with the inverse
limit along Postnikov towers in
$\sF$. Thus, it remains to show that if $\sF$ is coconnective, then
$\widetilde{\sF}(S)$ is the sheafification of $\sF$; indeed, the Postnikov
sheafification of $\sF$ is the inverse limit of the sheafifications of the
presheaf-level truncations of $\sF$. In this case, we can
simply check on
homotopy group sheaves: the map $\sF \to \widetilde{\sF}$ of presheaves on
$\mathcal{T}_G$ induces an isomorphism of stalks (i.e., extending
$\widetilde{\sF}$ to profinite $G$-sets and after evaluating on
the profinite $G$-set $G$), 
because the simplicial diagram 
of profinite $G$-sets
\eqref{simpprofiniteGset} 
for $S = G$ admits a splitting. 
Hence $\sF \to \widetilde{\sF}$ induces an 
isomorphism on sheafification.  
\end{construction}

\begin{construction}[Restriction to open subgroups]
\label{restricttoopen}
Let $G$ be a profinite
group, 
and let $H \leq G$ be an open subgroup. 
We consider the functor $u: \mathcal{T}_H \to \mathcal{T}_G$ 
sending an $H$-set $S$ to the induced $G$-set $G \times_H S$. 
This is a morphism of finitary sites, 
so it induces a morphism $u_*$
(via precomposition) from sheaves (of spaces) on $\mathcal{T}_G$ to sheaves on $\mathcal{T}_H$. 
Using \Cref{resGcommutesheaf} below, it follows that $u_*$ commutes with
all limits and colimits and with taking
truncations, and hence with Postnikov completions. 
\end{construction}

\begin{definition}[Weakly nilpotent group actions]
Let $K$ be a finite group acting on a spectrum $X$. We say that the
$K$-action is \emph{weakly $m$-nilpotent} if the standard cosimplicial spectrum
$X \rightrightarrows \prod_K X \triplearrows \dots $ computing $X^{hK}$ (and
coaugmented over $X^{hK}$)
is weakly $m$-rapidly converging. 
\end{definition}

\begin{proposition}
\label{Postcompletecrit}
Let $G$ be a profinite group, and $\mathcal{F}$ a sheaf of spectra on
$\mathcal{T}_G$.  Suppose there exists a $d\geq 0$ such that for each normal
containment $N\subset H$ of open subgroups of $G$, the spectrum with
$H/N$-action $\mathcal{F}(G/N)$ is weakly $d$-nilpotent.  Then $\mathcal{F}$ is Postnikov complete.
\end{proposition}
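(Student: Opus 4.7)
The plan is to reduce the problem to the terminal object of $\mathcal{T}_G$ via restriction to slices, and then apply the explicit Postnikov sheafification formula from the Construction above in conjunction with \Cref{commutetotandcolimit}.

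First I would reduce to showing $\mathcal{F}(*) \to \big(\varprojlim_n \mathcal{F}_{\leq n}\big)(*)$ is an equivalence, where $* = G/G$ is the terminal object. Indeed, Postnikov completeness of $\mathcal{F}$ as a sheaf can be tested orbit by orbit, and by \Cref{restricttoopen} the restriction $u_* : \sh(\mathcal{T}_G, \sp) \to \sh(\mathcal{T}_H, \sp)$ commutes with limits and truncations, hence with Postnikov completions, and satisfies $(u_*\mathcal{F})(*_H) = \mathcal{F}(G/H)$. The hypothesis on $\mathcal{F}$ transfers verbatim to $u_*\mathcal{F}$ on $\mathcal{T}_H$: open subgroups of $H$ are open in $G$, and $(u_*\mathcal{F})(H/N') = \mathcal{F}(G/N')$ carries the same $K'/N'$-action for any normal containment $N' \leq K'$ of open subgroups of $H$. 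This reduces us to showing $\mathcal{F}(*) \to \big(\varprojlim_n \mathcal{F}_{\leq n}\big)(*)$ is an equivalence for every $\mathcal{F}$ on $\mathcal{T}_G$ satisfying the hypothesis.

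Next I would apply the explicit Postnikov sheafification formula: extending $\mathcal{F}$ by continuity to profinite $G$-sets,
\[
\Big(\varprojlim_n \mathcal{F}_{\leq n}\Big)(*) \;=\; \mathrm{Tot}\!\left( \mathcal{F}(G^{\bullet+1}) \right) \;=\; \mathrm{Tot}\!\left( \varinjlim_N \mathcal{F}((G/N)^{\bullet+1}) \right),
\]
where $N$ ranges over open normal subgroups of $G$. For each fixed $N$ the sheaf property applied to the cover $G/N \to *$ gives $\mathrm{Tot}(\mathcal{F}((G/N)^{\bullet+1})) = \mathcal{F}(*)$, so the desired equivalence will follow once we commute $\mathrm{Tot}$ past the filtered colimit in $N$. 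The $G$-equivariant change of variables $(x_0, x_1, \ldots, x_k) \mapsto (x_0, x_0^{-1}x_1, \ldots, x_0^{-1}x_k)$, well-defined because $N$ is normal in $G$, produces an isomorphism of $G$-sets $(G/N)^{k+1} \cong \bigsqcup_{(G/N)^k} G/N$; applying $\mathcal{F}$ then identifies the cosimplicial $\mathcal{F}((G/N)^{\bullet+1})$ with $\prod_{(G/N)^\bullet} \mathcal{F}(G/N)$, and tracking the cosimplicial faces through the change of variables shows that the structure is precisely the standard cobar for the right $G/N$-action on $\mathcal{F}(G/N)$ coming from the torsor structure of $G/N \to *$. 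By the hypothesis applied with $H = G$ and this $N$, the $G/N$-action on $\mathcal{F}(G/N)$ is weakly $d$-nilpotent, so the cobar is weakly $d$-rapidly converging with $d$ uniform in $N$. Then \Cref{commutetotandcolimit} commutes $\mathrm{Tot}$ with $\varinjlim_N$, yielding $\big(\varprojlim_n \mathcal{F}_{\leq n}\big)(*) = \varinjlim_N \mathcal{F}(*) = \mathcal{F}(*)$, as desired.

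The main obstacle I anticipate is the bookkeeping in the identification step: one must carefully track the cosimplicial face maps through the change of variables to verify that the \v{C}ech cosimplicial for the cover $G/N \to *$ coincides with the standard cobar construction for the $G/N$-action on $\mathcal{F}(G/N)$. Everything else is a direct application of the reduction via \Cref{restricttoopen}, the Postnikov sheafification formula, and the commutation lemma \Cref{commutetotandcolimit}.
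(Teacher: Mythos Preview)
Your proposal is correct and follows essentially the same approach as the paper's proof: reduce to the terminal object via \Cref{restricttoopen}, invoke the explicit Postnikov sheafification formula \eqref{tildeF}, write the cosimplicial object as a filtered colimit over open normal $N$, and commute $\mathrm{Tot}$ with the colimit using \Cref{commutetotandcolimit}. The only difference is that you spell out the identification of the \v{C}ech cosimplicial for $G/N \to \ast$ with the standard cobar computing $\mathcal{F}(G/N)^{h(G/N)}$, which the paper leaves implicit when it appeals directly to the weak nilpotence hypothesis.
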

\begin{proof}
Let $\widetilde{\sF}$ denote the Postnikov completion of $\sF$. 
It suffices to show that the map $\sF \to \widetilde{ \sF}$ induces an
equivalence after applying to the $G$-set $\ast$; we can then repeat this
argument for every open subgroup by \Cref{restricttoopen}. 
Indeed, by \eqref{tildeF}
\begin{equation} \label{tsFast} \widetilde{\sF}(\ast) \simeq 
\mathrm{Tot}(\sF(G) \rightrightarrows \sF( G \times G)  \triplearrows \dots ). 
\end{equation}
The associated cosimplicial object is the filtered colimit of the cosimplicial
objects
\[ \sF(G/N) \rightrightarrows \sF(G/N \times G/N) \triplearrows \dots, \]
each of which has totalization given by $\sF(\ast)$ since $\sF$ is a sheaf. 
By assumption, there is a uniform bound on the weak nilpotence of each of these
cosimplicial objects (augmented over $\sF(\ast)$), so we can interchange the totalization and the filtered
colimit (\Cref{commutetotandcolimit}). 
Doing so together with \eqref{tsFast} yields $\sF(\ast) \simeq
\widetilde{\sF}(\ast)$, as desired. 
\end{proof}

We now want to describe the sheafification construction for $\mathcal{T}_G$. 
In general, recall that sheafification is a difficult construction 
because it requires a transfinite \v{C}ech construction (cf.\ \cite[Prop.
6.2.2.7]{HTT}), and we cannot expect something as straightforward as the
formula \eqref{tildeF}. 
The primary issue is that 
totalizations need not commute with filtered colimits. 
In this case, this turns out to be essentially the only obstruction. 
To this end, we describe a slightly different method of writing $\mathcal{T}_G$
as a filtered colimit of ``smaller'' finitary sites. Here the
categories stay the same, but only the topologies change. 
\begin{construction}[The Grothendieck site $\mathcal{T}_G^N$]
Let $N \leq G$ be an open normal subgroup. We define a finitary
Grothendieck topology on 
the category of finite continuous $G$-sets
such that a family $\left\{T_i \to
T\right\}$ is a covering if $\bigsqcup_i T_i \times G/N \to T \times G/N$ admits
a section. 
We write $\mathcal{T}_G^N$ for the associated 
site. 

Just as in \Cref{Gsheafcrit}, we see (via \cite[Sec. A.3.2-A.3.3]{SAG}) that a
functor $\sF \in \pshp(\mathcal{T}_G^N,
\mathcal{D})$ is a sheaf if and only if for every surjection $T
\twoheadrightarrow S$ of finite continuous $G$-sets such that $T \times G/N \to
S \times G/N$ admits a section, the natural map \eqref{finiteGsetCech} is an
equivalence (i.e., $\sF$ has \v{C}ech descent for the map). 
Note also that as $N$ ranges over all open normal subgroups, the
$\mathcal{T}_G^N$ form a filtered system of finitary sites whose filtered
colimit is $\mathcal{T}_G$. 
\end{construction}

One advantage of $\mathcal{T}_G^N$ over $\mathcal{T}_G$ is that we can very
explicitly 
describe sheafification. 
\begin{proposition} 
\label{FGNsheafification}
Let $\sF \in \pshp(\mathcal{T}_G^N, \mathcal{D})$. Then the 
sheafification  of $\sF$ in $ \mathrm{Sh}(\mathcal{T}_G^N, \mathcal{D})$ is
given via the formula (for any $S \in \mathcal{T}_G^N$, i.e., any finite
continuous $G$-set)
\begin{equation} \sF_{G/N}(S) 
\simeq \mathrm{Tot}( \sF(G/N \times S) \rightrightarrows \sF( G /N \times G/N \times
S) \triplearrows \dots ) \simeq \sF(G/N \times S)^{h(G/N)}. \label{FGN}
\end{equation}
\end{proposition}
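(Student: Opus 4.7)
The plan is to recognize the displayed formula as a right Kan extension. Let $i\colon \mathcal{T}_{G/N} \hookrightarrow \mathcal{T}_G^N$ denote the inclusion of the full subcategory spanned by the finite continuous $G$-sets on which $N$ acts trivially, i.e., the finite continuous $G/N$-sets. Applying \Cref{Kanlemma} with $X = G/N$ identifies the right Kan extension $i_*(i^* \sF)$ with precisely the claimed formula for $\sF_{G/N}$. The task therefore splits into three pieces: $\sF_{G/N}$ is a sheaf on $\mathcal{T}_G^N$; the unit map $\sF \to \sF_{G/N}$ is the sheafification; and the totalization is identified with $\sF(G/N \times S)^{h(G/N)}$.

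To verify that $\sF_{G/N}$ is a sheaf, I would check \v{C}ech descent for any cover $T \twoheadrightarrow S$ in $\mathcal{T}_G^N$, i.e., any surjection such that $T \times G/N \to S \times G/N$ admits a section. Unfolding the formula on both sides and interchanging the two totalizations, this reduces to showing, for each $m \geq 0$, that the augmented cosimplicial diagram $\sF((G/N)^{m+1} \times S) \to \sF((G/N)^{m+1} \times T^{\bullet+1}/S)$ is a limit diagram. This is automatic because the map $(G/N)^{m+1} \times T \to (G/N)^{m+1} \times S$ admits a section, obtained by multiplying the given section of $G/N \times T \to G/N \times S$ by the identity on $(G/N)^{m}$; hence the associated \v{C}ech nerve is split, and applying $\sF$ yields a split (hence limiting) cosimplicial diagram.

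Next I would establish the universal property. Restriction $i^*$ and right Kan extension $i_*$ both preserve product-preserving presheaves, and so restrict to an adjunction $i^* \dashv i_*$ between $\pshp(\mathcal{T}_G^N, \mathcal{D})$ and $\pshp(\mathcal{T}_{G/N}, \mathcal{D})$. Moreover, any sheaf $\sG$ on $\mathcal{T}_G^N$ satisfies $\sG \simeq i_* i^* \sG$, by applying \v{C}ech descent to the cover $G/N \times S \to S$. A short adjunction chase identifies the comparison $\mathrm{Map}(\sF_{G/N}, \sG) \to \mathrm{Map}(\sF, \sG)$ with the map $\mathrm{Map}(i^* i_* i^* \sF, i^* \sG) \to \mathrm{Map}(i^* \sF, i^* \sG)$ obtained by precomposition with $i^*\eta\colon i^* \sF \to i^* i_* i^* \sF$. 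It therefore suffices to show that $i^*\eta$ is the sheafification of $i^* \sF$ on $\mathcal{T}_{G/N}$. Since $G/N$ is a finite group, this follows from the equivalence $\mathrm{Sh}(\mathcal{T}_{G/N}, \mathcal{D}) \simeq \fun(B(G/N), \mathcal{D})$ of \Cref{ex:sheavesonBG}: sheafification becomes right Kan extension from $B(G/N)$ to $\mathcal{O}(G/N)^{op}$, and writing $(G/N)^{m+1} \times (G/N)/H$ as a free $G/N$-set and invoking product-preservation yields $\mathrm{Tot}(A((G/N)^{\bullet+1} \times (G/N)/H)) \simeq A(G/N)^{hH}$, which matches the Kan extension value.

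Finally, for the second equivalence in the statement, the cosimplicial object $\sF((G/N)^{\bullet+1} \times S)$ is (after reindexing) the standard cobar complex computing homotopy fixed points for the free left-translation action of $G/N$ on the first factor of $G/N \times S$, yielding $\sF(G/N \times S)^{h(G/N)}$. The main obstacle I anticipate is the last point of the previous paragraph: while conceptually routine for a finite group $G/N$, carefully matching the totalization of the bar resolution with the right Kan extension formula for $A(G/N)^{hH}$ requires keeping track of the various $G/N$-equivariances on the free bar resolution.
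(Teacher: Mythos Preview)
There is a genuine error in the identification of the relevant subcategory. \Cref{Kanlemma} with $X = G/N$ produces the right Kan extension from the full subcategory of $G$-sets $Y$ admitting a $G$-map $Y \to G/N$, i.e., those whose isotropy groups are all \emph{contained in} $N$. This is not your $i\colon \mathcal{T}_{G/N} \hookrightarrow \mathcal{T}_G^N$, which picks out the $G$-sets on which $N$ acts trivially (isotropy groups \emph{containing} $N$). For your $i$, the comma category computing $(i_*i^*\sF)(S)$ has initial object $S \to S/N$, so $(i_*i^*\sF)(S) \simeq \sF(S/N)$; in particular $(i_*i^*\sF)(\ast) \simeq \sF(\ast)$, whereas $\sF_{G/N}(\ast) = \sF(G/N)^{h(G/N)}$. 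These disagree unless $\sF$ is already a sheaf, so the opening identification $\sF_{G/N} \simeq i_*i^*\sF$ is false.

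This breaks the universal-property step: the \v{C}ech terms $(G/N)^{m+1}\times S$ need not lie in $\mathcal{T}_{G/N}$, so the asserted identity $\sG \simeq i_*i^*\sG$ for sheaves $\sG$ does not follow from \v{C}ech descent along $G/N \times S \to S$, and the reduction to sheafification on $\mathcal{T}_{G/N}$ collapses. Your verification that $\sF_{G/N}$ is a sheaf is correct and essentially matches the paper. The fix for the universal property is to use the correct subcategory $(\mathcal{T}_G^N)'$ of $G$-sets admitting a map to $G/N$: on that subcategory $\sF$ and $\sF_{G/N}$ agree (the augmented simplicial object $(G/N)^{\bullet+1}\times Z \to Z$ splits once a map $Z \to G/N$ exists), and any sheaf $\sF'$ is right Kan extended from $(\mathcal{T}_G^N)'$ because $\sF' \simeq \sF'_{G/N}$. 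That is precisely the paper's argument.
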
 
\begin{proof} 
We claim first that $\sF_{G/N}$ is a sheaf on $\mathcal{T}_G^N$.
If $T' \twoheadrightarrow T$ is a surjection of finite $G$-sets such that
$G/N\times T' \to G/N \times T$ admits a section (e.g., this is automatic if the
$G$-action on $T'$ factors through $G/N$),
then
we claim
\[ \sF_{G/N}(T) \to \mathrm{Tot}(\sF_{G/N}( T' ) \rightrightarrows \sF_{G/N}(T'
\times_T T') \triplearrows \dots ) \]
is an equivalence. This follows because the \v{C}ech nerve of $T' \to T$ admits
a contracting homotopy after taking the product with $G/N$, so each of the
terms in the totalization defining $\sF_{G/N}$ in \eqref{FGN} takes the \v{C}ech
nerve of $T' \to T$ to a limit diagram.  
Note also that if $\sF$ was a sheaf to start with, then $\sF \to \sF_{G/N}$ is an
equivalence. 
However, it
remains to verify that $\sF \to \sF_{G/N}$ actually exhibits $\sF_{G/N}$ as the
sheafification of $\sF$. 
That is, if $\sF, \sF' \in \pshp(\mathcal{T}_G^N, \mathcal{D})$ and 
$\sF'$ is a sheaf, then we need to verify an equivalence
\begin{equation} \label{sheafverify} \hom_{\pshp}(\sF, \sF') \simeq 
\hom_{\pshp}(\sF_{G/N}, \sF'). 
\end{equation}

We observe by \Cref{Kanlemma} that the construction $ \sF \mapsto \sF_{G/N}$ is the right Kan extension 
of $\sF$
from the full subcategory $(\mathcal{T}_{G}^N)' \subset \mathcal{T}_{G}^N$ spanned by those $G$-sets $S$ which
admit a map $S \to G/N$. 
In particular, since $\sF'$ is a sheaf and $\sF' \simeq \sF'_{G/N}$, $\sF'$ is right Kan
extended from this subcategory. 
It follows that
the mapping spaces in \eqref{sheafverify} can be calculated in the
$\infty$-category of presheaves on 
$(\mathcal{T}_{G/N})'$. However, $\sF, \sF_{G/N}$ agree on this subcategory (again
because $\sF_{G/N}$ is a right Kan extension), so the equivalence
\eqref{sheafverify} follows as desired. 
\end{proof}

\begin{proposition} 
Let $\mathcal{D}$ be a presentable $\infty$-category and let $G$ be a profinite
group. 
Suppose that for every open normal subgroup $N \leq G$ and subgroup $K \leq
G/N$, the functor
$(\cdot)^{hK}: \fun(BK, \mathcal{D}) \to \mathcal{D}$ commutes with
filtered colimits. 
Let $\sF \in \pshp(\mathcal{T}_G, \mathcal{D})$ be a product-preserving presheaf
on $\mathcal{T}_G$. 
Then the sheafification $\sF^{sh}$ of $\sF$ is given by the formula
\[ \sF^{sh}(G/H) = \varinjlim \sF( G/H')^{h (H/H')}, \]
as $H' \leq H$ ranges over all open normal subgroups. 
\label{sheafificationbyCech}
\end{proposition}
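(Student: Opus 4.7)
The plan is to leverage the explicit sheafification formula of \Cref{FGNsheafification} on the intermediate sites $\mathcal{T}_G^N$. Recall $\mathcal{T}_G = \varinjlim_N \mathcal{T}_G^N$ as a filtered colimit of finitary sites: the underlying category is constant, but $\mathcal{T}_G^{N'}$ has a finer topology than $\mathcal{T}_G^N$ for $N' \leq N$. By \Cref{filtdiagfinitarysite}, a presheaf is a $\mathcal{T}_G$-sheaf iff it is a $\mathcal{T}_G^N$-sheaf for every open normal $N \leq G$. The sheafification functors $L_N\colon \pshp(\mathcal{T}_G, \mathcal{D}) \to \sh(\mathcal{T}_G^N, \mathcal{D})$ assemble into a filtered system of natural transformations $L_N \to L_{N'}$ for $N' \leq N$. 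Using the formula $L_N\sF(S) = \sF(G/N \times S)^{h(G/N)}$ and the assumption that each $(-)^{hK}$ preserves filtered colimits, $L_N$ preserves filtered colimits; moreover, a direct computation with this same formula shows that a filtered colimit of $\mathcal{T}_G^N$-sheaves taken in $\pshp(\mathcal{T}_G, \mathcal{D})$ is again a $\mathcal{T}_G^N$-sheaf.

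I then claim that $\sF^{sh} \simeq \varinjlim_N L_N\sF$. For the sheaf property, fix an open normal $N_0$; the cofinal subsystem $\{N \leq N_0\}$ consists of $\mathcal{T}_G^{N_0}$-sheaves (since the $\mathcal{T}_G^{N_0}$-topology is coarser than each $\mathcal{T}_G^N$ for $N \leq N_0$), and the closure property just noted implies that the colimit is a $\mathcal{T}_G^{N_0}$-sheaf; as this holds for every $N_0$, the colimit is a $\mathcal{T}_G$-sheaf. For the universal property, any map $\sF \to \sG$ to a $\mathcal{T}_G$-sheaf $\sG$ factors uniquely through each $L_N\sF$ (since $\sG$ is in particular a $\mathcal{T}_G^N$-sheaf), and these factorings are compatible, producing a unique factorization through the colimit.

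Finally, it remains to evaluate $L_N\sF(G/H)$ when $N \subseteq H$, which is cofinal among open normal subgroups of $G$. One checks that the map $G/N \to G/H$ is a cover in $\mathcal{T}_G^N$ with $H/N$-torsor structure: a $G$-equivariant section of $G/N \times G/N \to G/H \times G/N$ exists because the stabilizer $N$ of a $G$-orbit representative acts trivially on the fiber $H/N$ (as $N$ is normal in $G$), so we may choose any element of $H/N$ on each orbit and extend equivariantly. \v{C}ech descent for the $\mathcal{T}_G^N$-sheaf $L_N\sF$ along this cover then yields $L_N\sF(G/H) = L_N\sF(G/N)^{h(H/N)}$. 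Since $L_N$ is right Kan extended from the subcategory of $G$-sets admitting a map to $G/N$ (cf.\ the proof of \Cref{FGNsheafification}), and $G/N$ lies in this subcategory, we have $L_N\sF(G/N) = \sF(G/N)$ with its canonical $H/N$-action by right translation. Combining these identifications gives the desired formula. The most delicate point is establishing closure of $\mathcal{T}_G^N$-sheaves under filtered colimits, but this follows cleanly from the explicit formula of \Cref{FGNsheafification} combined with the hypothesis on $\mathcal{D}$.
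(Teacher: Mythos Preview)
Your proof is correct and follows essentially the same approach as the paper. Both arguments identify the sheafification as the filtered colimit $\varinjlim_N L_N\sF$ (the paper writes $\sF_{G/N}$ for your $L_N\sF$), using \Cref{FGNsheafification} for the explicit formula, the hypothesis on $\mathcal{D}$ to get closure of $\mathcal{T}_G^N$-sheaves under filtered colimits, and the universal property of each $L_N$ to verify the universal property of the colimit. Your final paragraph, evaluating $L_N\sF(G/H)$ via \v{C}ech descent along the $H/N$-torsor $G/N \to G/H$, is a bit more explicit than the paper, which leaves the passage from $\varinjlim_N \sF_{G/N}$ to the stated formula implicit; your unpacking is correct.
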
 
\begin{proof} 
For each open normal subgroup $N \leq G$, we consider 
the construction $\sF_{G/N}$ as in \eqref{FGN} and the 
augmentation map $\sF \to \sF_{G/N}$ in $\pshp(\mathcal{T}_G, \mathcal{D})$. 
We claim that the filtered colimit
$\widetilde{\sF} = \varinjlim_N \sF_{G/N}$ (taken over all open normal subgroups of $G$) is the
sheafification of $\sF$. 

To see this, we observe that $\sF_{G/N}$ restricts to a sheaf on 
$\mathcal{T}_G^N$, by \Cref{FGNsheafification}. 
Moreover, our assumptions and the explicit formula for sheafification
\eqref{FGN} implies that the inclusion 
$\mathrm{Sh}(\mathcal{T}_G^N, \mathcal{D}) \subset \pshp(\mathcal{T}_G^N,
\mathcal{D})$ is closed under filtered colimits. 
It follows that the colimit $\varinjlim_N \sF_{G/N}$ is a sheaf on each site
$\mathcal{T}_G^N$, and hence on $\mathcal{T}_G$. It remains to verify that this
colimit is actually the sheafification.
If $\sF' \in \mathrm{Sh}(\mathcal{T}_G, \mathcal{D})$, then we have that (for each
$N$)
$\hom_{\pshp(\mathcal{T}_G^N, \mathcal{D})}(\sF, \sF') \simeq 
\hom_{\pshp(\mathcal{T}_G^N, \mathcal{D})}(\sF_{G/N}, \sF')$ in view of
\Cref{FGNsheafification}. Taking the colimit, we get
$\hom_{\pshp(\mathcal{T}_G, \mathcal{D})}(\sF, \sF') \simeq 
\hom_{\pshp(\mathcal{T}_G, \mathcal{D)}}(\widetilde{\sF}, \sF')$ which is the
desired universal property. 
\end{proof}

The condition that homotopy fixed points for finite group actions in $\mathcal{D}$
should commute with filtered colimits is a strong one. 
Here we note three important cases where this is satisfied: 
\begin{enumerate}
\item If $\mathcal{D}$ is a presentable $\infty$-category where finite limits
and filtered colimits commute, and which is an $n$-category for some $n <
\infty$. Then totalizations (hence homotopy fixed points for finite group
actions) and filtered colimits commute, since totalizations can be computed as
finite limits, cf.~\cite[Lemma 1.3.3.10]{HA} and its proof. 
For instance, one can take the $\infty$-category $\mathcal{S}_{\leq n}$ of
$n$-truncated spaces. 
\item  
If $\mathcal{D}$ is 
the $\infty$-category $\sp_{\leq 0}$ of \emph{coconnective} spectra,
then homotopy fixed points and filtered colimits commute. 
In fact, this can be tested after applying $\tau_{\geq -d}$ for some $d$, i.e.,
in 
$\sp_{[-d, 0]}$ for each $d \geq 0$. 
\item Fix a prime $p$ and $n \geq 0$. If $\mathcal{D}$ is 
the $\infty$-category of $T(n)$-local spectra for $T(n)$ a telescope of height
$n$ (for $n = 0$, we set $T(0) = H\mathbb{Q}$), then Tate spectra for finite
groups vanish \cite{Kuhn}, so homotopy fixed points are identified with homotopy orbits and
thus commute with colimits. 
\end{enumerate}

We will discuss examples of the second two cases now. 
\begin{example}[Continuous group cohomology]
For presheaves with values in coconnective spectra, we can compute
sheafification via the \v{C}ech construction, by \Cref{sheafificationbyCech}. 
Recall also that in this case sheafification and hypersheafification gives the
same answer
since truncated sheaves are hypercomplete.

Let $\mathcal{A}$ be a sheaf of abelian groups on $\mathcal{T}_G$, corresponding
to a continuous $G$-module $M$. 
Let
$K(\mathcal{A},0)$ denote the corresponding sheaf of Eilenberg-Maclane spectra
in degree $0$.  
We obtain
$$K(\mathcal{A},0)(G/H) = \varinjlim_{N\subset H} (M^N)^{hH/N},$$
as the colimit ranges over all normal subgroups $N  \subset H$ and $M^N
\subset M$ denotes the usual $N$-fixed points. 
In particular
$$\pi_\ast \left(K(\mathcal{A},0)(G/G)\right) = \varinjlim_{N\subset G}
H^{-*}(G/N; M^N).$$
This is exactly the continuous $G$-cohomology groups of the corresponding $G$-module in
the usual definition \cite{SerreCG}.  This of course conforms with the general
theory \cite[Cor. 2.1.2.3]{SAG}: both sides compute the derived functors of the functor of sections on $G$ (or $G$-fixed points).
\end{example}

\begin{remark} 
For the next example, we will need the following observation. 
Let $R$ be an $E_1$-ring spectrum, and let $\sF$ be a presheaf in $\md_R$ on a
Grothendieck site $\mathcal{T}$. Then the sheafification of $\sF$ can be
calculated either in $R$-modules or in $\sp$. 
This is an abstract Bousfield localization argument. 
To see this, we note that the map $\sF \to \sF^{sh}$ (from $\sF$ to its
sheafification in $\md_R$) belongs to the strongly
saturated class 
\cite[Sec. 5.5.4]{HTT}
of morphisms generated by $\widetilde{h_t} \otimes M \to h_t
\otimes M$, for each $\widetilde{h_t} \to h_t$ a covering sieve and $M \in
\md_R$. 
Such maps induce equivalences upon sheafification, either for sheaves of $\md_R$
or for sheaves of spectra. Since the forgetful functor from $\md_R$ to $\sp$
preserves limits and colimits, the claim follows. For a similar argument, see
also 
\Cref{sheafificationintwocases} below. 
\end{remark}

\begin{example}[A non-hypercomplete example]
\label{nonhypcompleteZp}
Next we consider an example where sheafification and hypersheafification differ. 
Fix a prime number $p$ and an integer $n \geq 0$, and consider the Morava
$K$-theory spectrum $K(n)$. 
We can use \Cref{sheafificationbyCech} to compute the constant
sheaf $\underline{K(n)}$ on $\mathcal{T}_{\mathbb{Z}_p}$, i.e., the
sheafification of the constant presheaf $\mathcal{O}(\mathbb{Z}_p)^{op} \to \sp$ given by
$K(n)$ (on the orbit category, and then extended to
$\mathcal{T}_{\mathbb{Z}_p}^{op}$ by forcing it to preserve finite products).
Note that this can either be computed as a sheaf of spectra or in sheaves of
$K(n)$-modules; since Tate spectra of finite groups vanish in the latter, we can
apply \Cref{sheafificationbyCech}. 

We obtain $$\underline{K(n)}(\ast) = \varinjlim_d K(n)^{B\mathbb{Z}/p^d\mathbb{Z}},$$
where the transition maps are induced by the quotient maps $\mathbb{Z}/p^{d+1}\mathbb{Z}\rightarrow\mathbb{Z}/p^d\mathbb{Z}$.

When $n=0$ we have $K(n)=H\mathbb{Q}$, and the colimit is constant and just produces $H\mathbb{Q}$ again.  In fact the constant sheaf $\underline{H\mathbb{Q}}$ is just the constant presheaf $H\mathbb{Q}$.  

When $n\geq 1$, let $\mathscr{G}$ be the height $n$ formal group over the
perfect field $k$ of characteristic $p$ used to define $K(n)$.  Then the above
colimit instead gives an even periodic ring spectrum whose $\pi_0$ identifies
with the ring of functions on the affine scheme $T_p(\mathscr{G})=\varprojlim_d
\mathscr{G}[p^d]$ over $k$.  Thus when we take
$\mathscr{G}=\widehat{\mathbb{G}_m}$ over $k=\mathbb{F}_p$, we have
$$\pi_0 \left(\underline{K(1)}(\ast)\right) = \mathbb{F}_p[\mathbb{Q}_p/\mathbb{Z}_p].$$
The value of each $\underline{K(n)}(\mathbb{Z}_p/p^d\mathbb{Z}_p)$ is abstractly
the same, but the restriction map
$\underline{K(n)}(\mathbb{Z}_p/p^{d-1}\mathbb{Z}_p)\rightarrow
\underline{K(n)}(\mathbb{Z}_p/p^d\mathbb{Z}_p)$ corresponds on $\pi_0$ to
pullback along the multiplication by $p$ map $T_p(\mathscr{G})\rightarrow
T_p(\mathscr{G})$.  The
$\mathbb{Z}/p^d\mathbb{Z}$-action on
$\underline{K(n)}(\mathbb{Z}_p/p^d\mathbb{Z}_p)$ is of course trivial.

This provides an example of a sheaf of spectra on $\mathcal{T}_{\mathbb{Z}_p}$
which is not Postnikov complete (or hypercomplete, which is the same by
\Cref{hypforfinitecd}).  Indeed, the homotopy groups of the global sections of
its Postnikov completion can be calculated using the spectral sequence from
\Cref{descentss}:
$$E^2_{i,j} = H^{-i}_{cont}(\mathbb{Z}_p;\pi_j K(n))\Rightarrow \pi_{i+j}
\left(K(n)^h(\ast)\right).$$
The homotopy of $K(n)$ is $k$ in every even degree and $0$ in every odd degree,
so this gives $k$ in every degree for $\pi_\ast\left(K(n)^h(\ast)\right)$ ---
clearly different from the above description of $\pi_\ast\left(
\underline{K(n)}(\ast)\right)$.  Note that 
\cite[Warning 7.2.2.31]{HTT}
gave an example (due to Ben Wieland) of a sheaf of \emph{spaces} on $\mathcal{T}_{\mathbb{Z}_p}$ which is not hypercomplete; here we we even have a sheaf of \emph{spectra} which is not hypercomplete.  It yields another example of sheaf of spaces which is not hypercomplete, simply by taking $\Omega^\infty$.
\end{example}

Next we give a criterion for a sheaf of spectra on $\mathcal{T}_G$
to be Postnikov complete in the case where $G$ has finite cohomological
dimension, strengthening \Cref{Postcompletecrit} to an if and only if
assertion.

\begin{proposition} 
\label{Postcompletecrit2}
Let $G$ be a profinite group of finite $\mathcal{P}$-local cohomological dimension $d$. 
Then the following are equivalent for a $\mathcal{P}$-local sheaf $\sF$ of
spectra on $\mathcal{T}_G$: 
\begin{enumerate}
\item $\sF$ is hypercomplete (equivalently, Postnikov complete).  
\item There exists an integer $M$ such that 
for all normal inclusions $N \subset H$ of open subgroups, the 
$H/N$-action on $\sF(G/N)$ is weakly $M$-nilpotent. 
\item 
For all normal inclusions $N \subset H$ of open subgroups, the 
$H/N$-action on $\sF(G/N)$ is weakly $d$-nilpotent. 
\end{enumerate}
\end{proposition}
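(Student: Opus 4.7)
The plan is to establish the cycle $(3) \Rightarrow (2) \Rightarrow (1) \Rightarrow (3)$. The first implication is trivial by taking $M = d$. For $(2) \Rightarrow (1)$, I would invoke \Cref{Postcompletecrit} directly, using that hypercompleteness and Postnikov completeness coincide in this setting: the finite $\mathcal{P}$-local cohomological dimension of $G$ translates into finite $\mathcal{P}$-local cohomological dimension of the finitary site $\mathcal{T}_G$ (cf.\ \Cref{cdsite}, since cohomology on $\mathcal{T}_G$ computes continuous group cohomology), so \Cref{hypforfinitecd} applies. The substantive work is therefore $(1) \Rightarrow (3)$.

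For $(1) \Rightarrow (3)$, I would fix a normal inclusion $N \subset H$ of open subgroups and work with the cover $\pi\colon G/N \twoheadrightarrow G/H$ in $\mathcal{T}_G$. Its \v{C}ech nerve $C_\bullet(\pi)$ is a hypercover of $G/H$ valued in $\mathcal{T}_G$, and a direct combinatorial computation identifies each $C_n(\pi) = G/N \times_{G/H} \cdots \times_{G/H} G/N$ (with $n+1$ factors) with the $G$-set $(H/N)^n \times G/N$. Since $\sF$ preserves finite products, evaluating $\sF$ on $C_\bullet(\pi)$ produces exactly the standard cosimplicial spectrum
\[ \sF(G/N) \rightrightarrows \prod_{H/N} \sF(G/N) \triplearrows \cdots \]
whose totalization is $\sF(G/N)^{h(H/N)}$, augmented (via the sheaf property $\sF(G/H) \simeq \sF(G/N)^{h(H/N)}$) by $\sF(G/H)$.

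Now I would invoke \Cref{nilpandhyp}, which is the key input: since $\sF$ is hypercomplete on the finitary site $\mathcal{T}_G$ of $\mathcal{P}$-local cohomological dimension $\leq d$, the augmented cosimplicial spectrum built from any hypercover by finite coproducts of representables is $d$-rapidly converging, and in particular weakly $d$-rapidly converging. Applied to our hypercover, this says precisely that the $H/N$-action on $\sF(G/N)$ is weakly $d$-nilpotent, giving $(3)$. The main conceptual obstacle is already resolved by \Cref{nilpandhyp} and \Cref{Postcompletecrit}; all that remains is the elementary identification of the \v{C}ech nerve of $G/N \to G/H$ in $\mathcal{T}_G$ with the bar construction underlying the homotopy fixed-point spectrum for $H/N$ acting on $\sF(G/N)$, which presents no difficulty.
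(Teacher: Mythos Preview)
Your proof is correct and follows exactly the paper's approach, which simply says ``Combine \Cref{Postcompletecrit} and \Cref{nilpandhyp}.'' You have correctly spelled out the combination: the \v{C}ech nerve of $G/N \to G/H$ is a (0-coskeletal, hence truncated) hypercover to which \Cref{nilpandhyp} applies, and its identification with the bar construction for $(H/N)$-homotopy fixed points is exactly what is needed.
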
 
\begin{proof} 
Combine \Cref{Postcompletecrit} and  \Cref{nilpandhyp}. 
\end{proof}

\begin{proposition} 
\label{hypissmashingprofinitegroup2}
Let $G$ be a profinite group of finite virtual $\mathcal{P}$-local cohomological dimension $d$. 
Then hypercompletion is smashing on  $\mathcal{P}$-local sheaves $\sF$ of
spectra on $\mathcal{T}_G$. 
\end{proposition}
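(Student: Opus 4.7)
The plan is to combine the local--global principle for slice $\infty$-topoi (\Cref{locglobhyp}) with a dualizability argument for the representables $\Sigma^\infty_+ h_{G/N}$. First, using that $G$ has virtual $\mathcal{P}$-local cohomological dimension $\leq d$, I would choose an open subgroup $H \leq G$ of honest $\mathcal{P}$-local cohomological dimension $\leq d$. The representable $h_{G/H}$ covers the terminal object of $\sh(\mathcal{T}_G)$ (since $G/H \to \ast$ is an effective epimorphism of $G$-sets), and the slice $\infty$-topos $\sh(\mathcal{T}_G)_{/h_{G/H}}$ identifies canonically with $\sh(\mathcal{T}_H)$. Hence \Cref{locglobhyp}(3) reduces the problem to the case of $H$, and we may assume $G$ itself has honest $\mathcal{P}$-local cohomological dimension $\leq d$.

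Next, I would apply \Cref{critforhypcolimits} with the sheafified Yoneda embedding $h_{(-)}\colon \mathcal{T}_G \to \sh(\mathcal{T}_G, \sp_{\mathcal{P}, \geq 0})$, $U \mapsto \Sigma^\infty_+ h_U$. The strong generation hypothesis is automatic for sheaves on a site (as remarked after \Cref{critforhypcolimits}). The cohomological hypothesis holds because $H^n(\Sigma^\infty_+ h_{G/N}; -) = H^n_{\mathrm{cont}}(N; -)$ vanishes for $n > d$ (open subgroups inherit the $\mathcal{P}$-local cohomological dimension bound) and commutes with filtered colimits (since $\mathcal{T}_G$ is finitary, continuous cohomology of $N$ is computed as a filtered colimit of finite-group cohomologies). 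This yields that $\mathcal{C}^h \subset \sh(\mathcal{T}_G, \sp_{\mathcal{P}})$ is closed under all colimits.

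To upgrade closure under colimits to closure under $\otimes$ with arbitrary objects, I would exploit that every open $N \leq G$ has finite index, so that the slice geometric morphism $j\colon \sh(\mathcal{T}_N) \to \sh(\mathcal{T}_G)$ is ambidextrous, $j_! \simeq j_*$. This implies that $\Sigma^\infty_+ h_{G/N} \simeq j_!(1_N)$ is dualizable in $\sh(\mathcal{T}_G, \sp_{\mathcal{P}})$, with coevaluation and evaluation provided by the diagonal and the transfer of the finite cover. For any dualizable $\mathcal{G}$ with dual $\mathcal{G}^\vee$, any hypercomplete $\mathcal{F}$, and any acyclic $A$, one computes
\[ \hom(A, \mathcal{F} \otimes \mathcal{G}) \simeq \hom(A \otimes \mathcal{G}^\vee, \mathcal{F}) = 0, \]
since $A \otimes \mathcal{G}^\vee$ is acyclic by \Cref{tensorproductCh}(1) and $\mathcal{F}$ is hypercomplete. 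Thus tensoring with a dualizable object preserves hypercompleteness.

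For the conclusion, for each hypercomplete $\mathcal{F}$ the class of $\mathcal{G} \in \sh(\mathcal{T}_G, \sp_{\mathcal{P}})$ with $\mathcal{F} \otimes \mathcal{G}$ hypercomplete is closed under colimits (by the colimit closure of $\mathcal{C}^h$) and contains all shifts of the representables $\Sigma^\infty_+ h_{G/N}$ (by dualizability), so it is the whole category. Condition (1) of \Cref{whenishypsmashing} is verified, and hypercompletion is smashing. The main obstacle I anticipate is giving a clean argument for the ambidexterity/dualizability of $\Sigma^\infty_+ h_{G/N}$ for finite-index open subgroups; everything else is formal once the framework of \Cref{critforhypcolimits} and \Cref{whenishypsmashing} is in place.
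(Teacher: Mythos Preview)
Your proof is correct and follows the same overall architecture as the paper's: reduce to honest finite $\mathcal{P}$-local cohomological dimension via \Cref{locglobhyp}, invoke \Cref{critforhypcolimits} for closure of $\mathcal{C}^h$ under colimits, and then reduce the tensor-closure question to the representables $\Sigma^\infty_+ h_{G/H}$.

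The only divergence is in how you handle the representables. You argue that $\Sigma^\infty_+ h_{G/H}$ is dualizable via ambidexterity $j_! \simeq j_*$ for the finite-index inclusion $H \leq G$, and then use the Hom-dualization trick to see that tensoring with a dualizable preserves hypercompleteness. The paper instead writes $\mathcal{G} \otimes h_{G/H} = x_! x^*\mathcal{G}$ and checks that $x_!$ preserves hypercompleteness by pulling back along a cover over which $G/H$ splits, so that $x_!$ becomes a finite direct sum. These are two sides of the same coin: the paper's local splitting is precisely what underlies the ambidexterity you invoke, and indeed $\Sigma^\infty_+ h_{G/H}$ is self-dual with coevaluation the diagonal $G/H \to G/H \times G/H$ and evaluation the transfer/fold map, so the ``obstacle'' you flagged is easily dispatched. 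Your route is slightly more conceptual; the paper's is slightly more hands-on and avoids assembling the duality data.
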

\begin{proof} 
Without loss of generality, $G$ has finite 
$\mathcal{P}$-local cohomological dimension. 
Since the cohomology of profinite groups commutes with filtered
colimits, \Cref{critforhypcolimits} implies that hypercompleteness is preserved
under colimits, so all that remains is to check that it is also preserved under
tensoring with an arbitary sheaf $\mathcal{F}$.  Resolving $\mathcal{F}$ by
representables and again using preservation of hypercompleteness under colimits,
we can reduce to the case where $\mathcal{F}=h_{G/H}$ is the representable
associated to some  $G/H$ in $\mathcal{T}_G$.  But
$$\mathcal{G}\otimes h_{G/H} = x_!x^\ast\mathcal{G},$$
where $x^\ast$ the pullback functor from sheaves of spectra on $\mathcal{T}_G$
to sheaves of spectra on $\mathcal{T}_H$ and $x_!$ is the left adjoint to $x^*$.
Thus it suffices to show that $x_!$ preserves hypercompleteness.  This can be
checked after pullback along any covering map by \Cref{locglobhyp}; but after such a pullback $x$ can be split, and so $x_!$ is just a finite direct sum and hence certainly preserves hypercompleteness.
\end{proof}

In the remainder of this subsection, we will give a precise description of the
hypercomplete sheaves, which will give a more explicit proof of 
\Cref{hypissmashingprofinitegroup2}. 
First we recall some material from \cite{MNN17, MNN15}, though for convenience we abbreviate and reindex the terminology, so that instead of saying ``nilpotent of exponent $\leq d+1$ with respect to the trivial family'' we say ``$d$-nilpotent".

\begin{definition}
Let $\mathcal{C}$ be a stable $\infty$-category. 
Let $G$ be a finite group and $X\in\operatorname{Fun}(BG,\mathcal{C})$ an
object with $G$-action.  Then we say $X$ is:
\begin{enumerate}
\item \emph{$0$-nilpotent} if it is a retract of an induced object $\oplus_G Y$,
some $Y\in\mathcal{C}$.
\item \emph{$d$-nilpotent} if it is a retract of an extension a $0$-nilpotent
object by an $(d-1)$-nilpotent object.
\item \emph{nilpotent} if it is $d$-nilpotent for some $d$. 
\end{enumerate}

More generally, given a family $\mathscr{F}$ of subgroups of $G$ (i.e., $\mathscr{F}$ is closed
under subconjugation), we can similarly define a notion of $\mathscr{F}$-nilpotence: $X$ is 
$\mathscr{F}$-nilpotent if it belongs to the thick subcategory of $\fun(BG,
\mathcal{C})$ generated by objects induced from subgroups in $\mathscr{F}$.\footnote{One
can also define exponents in this context, but we will not need them here.}
\end{definition}

We now prove some basic properties of the notion of nilpotence. 
To begin with, we will take $\mathcal{C} = \mathrm{Sp}$; we will observe below
(\Cref{nilpendocrit}) that the general case can be reduced to this. 

\begin{lemma}\label{basicnilplemma}
\begin{enumerate}
\item If $X$ is a $d$-nilpotent object of $\fun(BG, \sp)$, it is also
$d$-nilpotent viewed as an object of $\fun(BH, \sp)$ for all subgroups $H\subset G$.
\item For $d\geq 0$, the collection of $d$-nilpotent objects of $\fun(BG, \sp)$ is closed under shifts, retracts, and tensoring with any $Z\in\operatorname{Fun}(BG,\operatorname{Sp})$.
\item If $A$ is a $d$-nilpotent spectrum with $G$-action with an algebra structure and $M$ is a module over $A$ in $G$-spectra, then $M$ is $d$-nilpotent.
\item If $X \in \fun(BG, \sp)$ is $d$-nilpotent and $N \leq G$ is a normal
subgroup, then $X^{hN} \in \fun(B (G/N), \sp)$ is $d$-nilpotent. 
\end{enumerate}
\label{general:dnilpotentobj}
\end{lemma}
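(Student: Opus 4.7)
The plan is to reduce claims (1) through (4) to the defining case of $0$-nilpotence and then induct on $d$, using that each property in question is preserved by the thick-subcategory operations (retract and extension) defining $d$-nilpotence.

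For (1), the key observation is that if $Y \in \sp$ has trivial action, then as an $H$-spectrum the induced object $\bigoplus_G Y$ decomposes along $H$-orbits on $G$ as $\bigoplus_{Hg \in H\backslash G} (\bigoplus_H Y)$, each summand being free and hence $0$-nilpotent as an $H$-spectrum. Since restriction $\fun(BG,\sp) \to \fun(BH,\sp)$ is exact, induction on $d$ finishes the proof. For (2), closure under retracts is built into the definition, and $\Sigma$ commutes with the formation of $\bigoplus_G(-)$, so shifts of $0$-nilpotents are $0$-nilpotent; induct on $d$. For tensoring, I would use the projection formula $(\bigoplus_G Y) \otimes Z \simeq \bigoplus_G (Y \otimes \mathrm{Res}\, Z)$, valid in any symmetric monoidal stable $\infty$-category with finite coproducts, which shows that a $0$-nilpotent tensored with any $Z \in \fun(BG,\sp)$ is again $0$-nilpotent, and then proceed by induction using that $- \otimes Z$ is exact.

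For (3), recall the standard splitting: if $A$ is an algebra and $M$ is an $A$-module, then the unit $\eta \otimes M : M \to A \otimes M$ is split by the action map $A \otimes M \to M$. Hence $M$ is a retract of $A \otimes M$ in $\fun(BG,\sp)$, which by (2) is $d$-nilpotent whenever $A$ is. Since $d$-nilpotence is closed under retracts, $M$ is $d$-nilpotent.

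For (4), the essential computation is that for the $0$-nilpotent generator $\bigoplus_G Y$ (with trivial action on $Y$) and $N \trianglelefteq G$, the $N$-fixed points compute as $(\bigoplus_G Y)^{hN} \simeq \bigoplus_{G/N} Y$ with the residual $G/N$-permutation action; this follows from the decomposition $G = \bigsqcup_{G/N} N$ as $N$-sets together with the fact that a free $N$-spectrum $\bigoplus_N Y$ has $(\bigoplus_N Y)^{hN} \simeq Y$ (either by Frobenius reciprocity or by the vanishing of the relevant Tate construction for free actions). Hence the result holds for $0$-nilpotents, and then one inducts: $(-)^{hN}$ is exact (it is a limit, and we are in stable land), so it carries cofiber sequences of $d$-nilpotents to cofiber sequences, and preserves retracts. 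The main subtlety I would expect is the projection formula in (2) and the explicit identification in (4), both of which are standard but deserve care, especially since the point of these lemmas is to set up the quantitative bookkeeping of nilpotence exponents used in the ensuing sections. Nothing here requires finiteness of cohomological dimension or anything beyond the stable $\infty$-categorical structure.
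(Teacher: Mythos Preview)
Your proof is correct and follows essentially the same approach as the paper: both argue by induction on $d$, reducing each claim to the base case $d=0$, where (1) uses that $G$ is free as an $H$-set, (2) uses the shearing/projection formula identifying $(\bigoplus_G Y)\otimes Z$ with an induced object, (3) uses that $M$ is a retract of $A\otimes M$, and (4) uses exactness of $(-)^{hN}$ together with $(\bigoplus_G Y)^{hN}\simeq \bigoplus_{G/N} Y$. Your write-up is more detailed than the paper's, but the content is the same.
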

\begin{proof}
Claim 1 is clear by induction, the base case being the observation that $G$ is
free as an $H$-set.  For claim 2, the case of shifts and retracts is clear.  For
tensoring, an induction reduces us to the case $d=0$, where the claim follows
from the usual remark that the spectrum $\oplus_G Z$ with $G$ acting both on $G$
and $Z$ is equivalent to $\oplus_G Z$ with $G$ acting only on $Z$.  Claim 3
follows from claim 2, since $M$ is then a retract of $A\otimes M$. Claim 4 is
easy to check by induction, starting with $d = 0$. 
\end{proof}

For future reference we note also the following lemma. 
\begin{lemma} 
Let $G$ be a finite group, and let $X \in \fun(BG, \sp)$. 
For each $n$, 
let $\mathrm{sk}_n(EG)$ denote the $n$-skeleton of the standard simplicial model for
$EG$. Then $X$ is $d$-nilpotent if and only if the map 
\[ \Sigma^\infty_+ \mathrm{sk}_d(EG) \otimes X \to X   \]
admits a section in $\fun(BG, \sp)$. 
\label{nilpskEG}
\end{lemma}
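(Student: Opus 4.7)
The approach uses the skeletal filtration of the standard simplicial model of $EG$, which has $EG_n = G^{n+1}$ with diagonal (free) $G$-action. Each successive quotient $\mathrm{sk}_n(EG)/\mathrm{sk}_{n-1}(EG)$ is a wedge of $n$-fold suspensions of the free $G$-set $G$, so writing $T_n := \Sigma^\infty_+ \mathrm{sk}_n(EG)$ gives cofiber sequences $T_{n-1} \to T_n \to \bigvee \Sigma^n G_+$ in $\mathrm{Fun}(BG, \mathrm{Sp})$.

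For the direction ``section implies $d$-nilpotent'', I tensor with $X$ to obtain a filtration on $T_d \otimes X$ whose successive quotients are wedges of $\Sigma^n G_+ \otimes X$. By the shearing isomorphism, $G_+ \otimes X$ with diagonal $G$-action is equivalent to $\mathrm{ind}_e^G \mathrm{res}_e^G X$ and is therefore $0$-nilpotent; the same holds for wedges of its suspensions by Lemma \ref{basicnilplemma}(2). Unfolding the definition, an iterated extension of $d+1$ many $0$-nilpotent pieces is $d$-nilpotent, so $T_d \otimes X$ is $d$-nilpotent, and the retract $X$ is itself $d$-nilpotent by Lemma \ref{basicnilplemma}(2).

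For the converse I argue by induction on $d$. The base case $d = 0$ follows from the shearing isomorphism: $T_0 \otimes X \to X$ is identified with the counit of the $\mathrm{ind}_e^G \dashv \mathrm{res}_e^G$ adjunction, which admits a section exactly when $X$ is a retract of an induced spectrum. For the inductive step with $d \geq 1$, since the section property is preserved under retracts, I reduce to the case where $X$ fits in a cofiber sequence $C \to X \to A$ with $C$ being $(d-1)$-nilpotent and $A$ being $0$-nilpotent. The inductive hypothesis and the base case provide sections $\sigma_C \colon C \to T_{d-1} \otimes C$ and $\sigma_A \colon A \to T_0 \otimes A$.

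The remaining step is to assemble these into a section $\sigma \colon X \to T_d \otimes X$, and this is the main obstacle. My plan is to use the equivalent reformulation that such a section exists if and only if the natural map $X \to C_d \otimes X$ is null, where $C_d := \mathrm{cofib}(T_d \to \mathbb{S})$. By naturality, both of the flanking maps $C \to C_d \otimes C$ and $A \to C_d \otimes A$ are null, inherited from $\sigma_C$ and $\sigma_A$ via the inclusions $T_{d-1} \hookrightarrow T_d$ and $T_0 \hookrightarrow T_d$. A diagram chase using the cofiber sequence $C \to X \to A$ together with the octahedral cofiber sequence $T_d/T_{d-1} \to C_{d-1} \to C_d$ (where $T_d/T_{d-1}$ is a wedge of shifted free cells) should reduce the vanishing of $X \to C_d \otimes X$ to a compatibility between the given null-homotopies; verifying this compatibility amounts to working explicitly with the cellular structure, which is the technical heart of the proof.
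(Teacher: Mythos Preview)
Your forward direction (a section implies $d$-nilpotence) is correct and essentially matches the paper's argument: $\Sigma^\infty_+\mathrm{sk}_d(EG)\otimes X$ is $d$-nilpotent by the free cell structure, and a retract of a $d$-nilpotent object is $d$-nilpotent.

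The converse direction, however, has a genuine gap. Your inductive step leaves you with a cofiber sequence $C\to X\to A$ and null-homotopies of $C\to C_d\otimes C$ and $A\to C_d\otimes A$, and you want to conclude that $X\to C_d\otimes X$ is null. But this does not follow from a diagram chase: knowing that a map is null on the sub and on the quotient of a cofiber sequence leaves a secondary obstruction in $[\Sigma C,\,C_d\otimes X]$ (or equivalently in $[A,\,C_d\otimes C]$), and nothing in your setup forces it to vanish. You essentially acknowledge this by calling it ``the technical heart of the proof'' without carrying it out. Naturality of the transformation $\mathrm{id}\to C_d\otimes(-)$ does not help here, because the null-homotopies for $C$ and for $A$ are chosen independently and there is no reason they should be compatible.

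The paper avoids this obstruction entirely by a different argument. It sets $A=F(G_+,\mathbb{S})$ and forms the cobar construction $\mathrm{CB}^\bullet(A)$. Citing \cite[Prop.~4.9]{MNN17}, $d$-nilpotence of $X$ is equivalent to the existence of a splitting of $X\to\mathrm{Tot}_d(X\otimes\mathrm{CB}^\bullet(A))$. One then identifies $\mathrm{Tot}_d(X\otimes\mathrm{CB}^\bullet(A))\simeq F(\Sigma^\infty_+\mathrm{sk}_d(EG),X)$, since the $d$-th partial totalization of the cobar on $F(G_+,\mathbb{S})$ is exactly the function spectrum out of the $d$-skeleton of the bar model for $EG$. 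Adjunction turns a splitting of $X\to F(\Sigma^\infty_+\mathrm{sk}_d(EG),X)$ into the desired section of $\Sigma^\infty_+\mathrm{sk}_d(EG)\otimes X\to X$. This bypasses any induction on $d$ and any assembly of null-homotopies.
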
 
\begin{proof} 
Since
$\mathrm{sk}_d(EG)$ is a $d$-dimensional $G$-CW complex all of whose cells are
indexed by free $G$-sets, it follows that
$\Sigma^\infty_+\mathrm{sk}_d(EG) \otimes X \in \fun(BG, \sp)$ is always $d$-nilpotent. 

Conversely, suppose that $X$ is $d$-nilpotent. 
Let $A = F(G_+, \mathbb{S}) \in \fun(BG, \sp)$; this is a commutative algebra
object, and we can form the cobar construction
$\mathrm{CB}^\bullet(A)$, a cosimplicial object in spectra. 
As in \cite[Prop. 4.9]{MNN17} the $d$-nilpotence means that 
the map $X \to \mathrm{Tot}_{d}(X \otimes \mathrm{CB}^\bullet(A))$ admits 
a splitting. However, this is equivalent to the map $X \to
F(\Sigma^\infty_+\mathrm{sk}_dEG, X)$;
adjointing over gives the section desired. 
\end{proof}

Next, we give the promised reduction of nilpotence in any stable
$\infty$-category to 
the case of $\mathrm{Sp}$; this will be used in later sections. 
\begin{proposition} 
\label{nilpendocrit}
Let $\mathcal{C}$ be a stable $\infty$-category and let $X \in \fun(BG,
\mathcal{C})$. Then for any $d\geq 0$
the following are equivalent: 
\begin{enumerate}
\item  $X$ is $d$-nilpotent. 
\item $\mathrm{End}_{\mathcal{C}}(X) \in \fun(B(G \times G), \sp)$ is
$d$-nilpotent. 
\item 
$\mathrm{End}_{\mathcal{C}}(X) \in \fun(BG, \sp)$ is $d$-nilpotent (here the
$G$-action is the diagonal one). 
\end{enumerate}
\end{proposition}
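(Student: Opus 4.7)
The plan is to prove the equivalence by establishing $(1) \Leftrightarrow (3)$ first, then $(2) \Rightarrow (3)$, and finally $(1) \Rightarrow (2)$.

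For $(1) \Leftrightarrow (3)$, the key tool is \Cref{nilpskEG}, which characterizes $d$-nilpotence via the existence of a section of the evaluation map $\Sigma^\infty_+ \sk_d(EG) \otimes (-) \to (-)$. Since $\Sigma^\infty_+ \sk_d(EG)$ is a finite, hence dualizable, spectrum, there is a natural identification
\[ \map_{\mathcal{C}}(X, \Sigma^\infty_+ \sk_d(EG) \otimes X) \simeq \Sigma^\infty_+ \sk_d(EG) \otimes \mathrm{End}_{\mathcal{C}}(X) \]
as spectra with $(G \times G)$-action (first factor on the source of $\mathrm{End}$, second factor on $\Sigma^\infty_+\sk_d(EG)$ and on the target of $\mathrm{End}$). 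Taking homotopy fixed points along the diagonal $G \subset G \times G$, a $G$-equivariant section of $\Sigma^\infty_+ \sk_d(EG) \otimes X \to X$ in $\fun(BG, \mathcal{C})$ corresponds precisely to a $G$-fixed lift of $\mathrm{id}_X \in \pi_0 \mathrm{End}_{\mathcal{C}}(X)^{hG}$ to $\pi_0(\Sigma^\infty_+ \sk_d(EG) \otimes \mathrm{End}_{\mathcal{C}}(X))^{hG}$. Given such a lift, right-multiplication by it in the right $\mathrm{End}_{\mathcal{C}}(X)$-module $\Sigma^\infty_+ \sk_d(EG) \otimes \mathrm{End}_{\mathcal{C}}(X)$ yields a $G$-equivariant, $\mathrm{End}_{\mathcal{C}}(X)$-linear section of $\Sigma^\infty_+ \sk_d(EG) \otimes \mathrm{End}_{\mathcal{C}}(X) \to \mathrm{End}_{\mathcal{C}}(X)$ for the diagonal action; conversely, any such section produces the required lift. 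By \Cref{nilpskEG}, these sections encode exactly the nilpotence conditions in $(1)$ and $(3)$ respectively.

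The implication $(2) \Rightarrow (3)$ is immediate: restricting the $(G \times G)$-action to the diagonal $\Delta G \subset G \times G$ preserves $d$-nilpotence by \Cref{basicnilplemma}(1).

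For the remaining direction $(1) \Rightarrow (2)$, I will exploit the retract structure. Since $X$ is a retract of $X \otimes \Sigma^\infty_+ \sk_d(EG)$ in $\fun(BG, \mathcal{C})$, functoriality of $\mathrm{End}_{\mathcal{C}}$ (contravariant in the source, covariant in the target) realizes $\mathrm{End}_{\mathcal{C}}(X)$ as a retract in $\fun(B(G \times G), \sp)$ of $\mathrm{End}_{\mathcal{C}}(X \otimes \Sigma^\infty_+ \sk_d(EG))$, and dualizability of the finite spectrum $\Sigma^\infty_+ \sk_d(EG)$ yields
\[ \mathrm{End}_{\mathcal{C}}(X \otimes \Sigma^\infty_+ \sk_d(EG)) \simeq D\Sigma^\infty_+ \sk_d(EG) \otimes \Sigma^\infty_+ \sk_d(EG) \otimes \mathrm{End}_{\mathcal{C}}(X) \]
with a $(G \times G)$-action where the first factor acts on $D\Sigma^\infty_+\sk_d(EG)$ and on the source of $\mathrm{End}_{\mathcal{C}}(X)$, and the second factor acts on $\Sigma^\infty_+\sk_d(EG)$ and on the target. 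It then remains to check that this auxiliary $(G \times G)$-spectrum is $d$-nilpotent. This is the hard part: a naive external filtration of $D\Sigma^\infty_+\sk_d(EG) \otimes \Sigma^\infty_+\sk_d(EG)$ by total cellular dimension only yields a $(2d)$-nilpotent bound, so the argument must exploit the already-established equivalence $(1) \Leftrightarrow (3)$ together with the bimodule compatibility on $\mathrm{End}_{\mathcal{C}}(X)$ between the source and target $G$-actions to refine the filtration length back down to $d$.
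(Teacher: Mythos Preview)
Your argument for $(3) \Rightarrow (1)$ is exactly the paper's argument: identify $\hom_{\fun(BG,\mathcal{C})}(X,\sk_d(EG)_+\otimes X)$ with $(\sk_d(EG)_+\otimes\mathrm{End}_{\mathcal{C}}(X))^{hG}$, and use the $d$-nilpotence of $\mathrm{End}_{\mathcal{C}}(X)$ via \Cref{nilpskEG} to lift the identity. Your $(1)\Rightarrow(3)$ (via right-multiplication by the lifted section) is a correct and slightly more explicit version of what the paper calls the ``thick subcategory argument'' for that direction --- equivalently, one applies $\mathrm{Hom}_{\mathcal{C}}(X,-)$ to the retraction $X\to\sk_d(EG)_+\otimes X\to X$ and observes that $\sk_d(EG)_+\otimes W$ is always $d$-nilpotent. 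Your $(2)\Rightarrow(3)$ via restriction is also what the paper does.

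The gap is $(1)\Rightarrow(2)$, which you explicitly leave unfinished. The paper handles both $(1)\Rightarrow(2)$ and $(1)\Rightarrow(3)$ in a single sentence (``follows from a thick subcategory argument'') and does not spell out the $(G\times G)$-case any further. You are right that the naive approach --- retracting $\mathrm{End}_{\mathcal{C}}(X)$ off $D\sk_d(EG)_+\otimes\sk_d(EG)_+\otimes\mathrm{End}_{\mathcal{C}}(X)$ and filtering the external tensor --- only yields a $2d$-bound, and your proposed refinement using the bimodule structure is not actually carried out. Note, however, that the only direction the paper ever \emph{uses} (in \Cref{nilpotenceGactionmot}) is $(2)\Rightarrow(1)$, so your completed implications already suffice for the applications; the missing direction $(1)\Rightarrow(2)$ with the sharp exponent $d$ is a loose end in both your write-up and the paper's.
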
 
\begin{proof} 
The fact that 1 implies 2 and 3 follows from a thick subcategory argument, so
we prove that 3 (which is implied by 2) implies 1. 
Suppose $\mathrm{End}_{\mathcal{C}}(X) \in \fun(BG, \sp)$ is $d$-nilpotent. 
For each $n$, let $\mathrm{sk}_n (EG)$ be the $n$-skeleton of $EG$; we have a
map 
\( \mathrm{sk}_n (EG)_+ \otimes X \to X .  \)
Note that 
$\hom_{\fun(BG, \mathcal{C})}(X, X) \simeq \mathrm{End}_{\mathcal{C}}(X)^{hG}$
while 
$$\hom_{\fun(BG, \mathcal{C})}(X, \mathrm{sk}_n(EG)_+ \otimes X)
\simeq (\sk_n(EG)_+ \otimes \mathrm{End}_{\mathcal{C}}(X, X))^{hG}
.$$
By assumption, $\mathrm{End}_{\mathcal{C}}(X, X)$ is $d$-nilpotent; therefore, 
by \Cref{nilpskEG}, the map 
\[ ( \mathrm{sk}_d(EG)_+ \otimes \mathrm{End}_{\mathcal{C}}(X, X))^{hG} \to 
(  \mathrm{End}_{\mathcal{C}}(X, X))^{hG} 
\]
has image including the identity.
Unwinding the above, it follows that there exists a map $f: X \to
\sk_d(EG)_+\otimes
X$ in $\fun(BG, \mathcal{C})$ such that the composite
$$X \stackrel{f}{\to} \sk_d(EG)_+ \otimes X \to X $$
is the identity. 
This in particular implies that $X$ is $d$-nilpotent as desired. 
\end{proof}

We now observe that for algebra objects in $\mathrm{Sp}$ with $G$-action,
nilpotence can be tested on the Tate construction. 
Note that the Tate construction $(-)^{tG}$ vanishes on nilpotent objects of
$\mathrm{Fun}(BG, \mathrm{Sp})$, but the converse is false in general.

\begin{lemma}
\label{nilpexpbounded}
Let $G$ be a finite group and $R$ an algebra object of $\operatorname{Fun}(BG,\operatorname{Sp})$.  
Suppose $R^{tG} = 0$.  
Then: 
\begin{enumerate}
\item $R$ is nilpotent.  
\item
If the underlying spectrum of $R$ belongs to $\sp_{\geq -d}$, then $R$ is
$d$-nilpotent. 
\end{enumerate}
\end{lemma}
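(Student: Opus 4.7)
The plan is to use the characterization from \Cref{nilpskEG}: $R$ is $d$-nilpotent if and only if the map $\Sigma^\infty_+\sk_d(EG)\otimes R\to R$ admits a section in $\fun(BG,\sp)$, and to extract this section from the vanishing $R^{tG}=0$ via the Greenlees--May fiber sequence $R_{hG}\to R^{hG}\to R^{tG}$.

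For part (2), I would fix $d\geq 0$ with $R\in\sp_{\geq -d}$ and consider the cofiber sequence
\[
\Sigma^\infty_+\sk_d(EG)\to \mathbb{S}\to Q_d
\]
in $\fun(BG,\sp)$, where the middle map uses the contractibility of $EG$ (as a non-equivariant space) and $Q_d$ is the cofiber, which is built from free $G$-cells in dimensions $\geq d+1$. Smashing with $R$, the obstruction to the desired section is a canonical class in $\pi_0 \mathrm{Map}^{BG}(R, Q_d\otimes R)$. As $d\to\infty$ the tower $\{Q_d\}$ approximates the unreduced suspension $\widetilde{EG}$, and (using the $R$-module structure on the target and the projection formula) the obstruction groups map into $\pi_0 \mathrm{Map}^{BG}(R,\widetilde{EG}\otimes R)$, which is computed in terms of $R^{tG}$ via Greenlees--May. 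The connectivity bound $R\in\sp_{\geq -d}$ together with the $(d+1)$-connectivity of $\mathrm{fib}(Q_d\to\widetilde{EG})\otimes R$ ensures that this comparison map is injective in the relevant degree. The hypothesis $R^{tG}=0$ then kills the obstruction, producing the desired section and proving $d$-nilpotence.

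For part (1), I would reduce to the bounded-below case of part (2). The algebra structure on $R$ is essential here: working with the Postnikov truncations $\tau_{\geq -n}R$ (which are algebras), $R^{tG}=0$ controls the Tate construction of each truncation, and part (2) yields $n$-nilpotence of $\tau_{\geq -n}R$ for each $n$. Combining this with the thick-subcategory closure of nilpotence (\Cref{general:dnilpotentobj}) and the observation from \Cref{basicnilplemma}(3) that any $R$-module with compatible $G$-action becomes nilpotent once $R$ is, one can argue that nilpotence of some bounded-below truncation --- equipped with the algebra structure --- is enough to deduce nilpotence of $R$ itself in $\fun(BG,\sp)$.

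The principal obstacle is the precise identification in part (2) of the obstruction in $\pi_0\mathrm{Map}^{BG}(R,Q_d\otimes R)$ with (a piece of) $R^{tG}$, which requires careful bookkeeping of the skeletal filtration of $EG$, of connectivities, and of the $R$-module structures involved; this is the kind of explicit equivariant homotopy computation carried out in \cite{MNN17}. A secondary difficulty is arranging the reduction in part (1) without any a priori boundedness hypothesis, which is where the $E_1$-structure on $R$ becomes indispensable.
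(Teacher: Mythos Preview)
Your approach to part (2) contains the right ingredients but is obscured by a confusion between Borel and genuine equivariant homotopy theory. In $\fun(BG,\sp)$ the object $\widetilde{EG}$ is zero, so ``the tower $\{Q_d\}$ approximates $\widetilde{EG}$'' and the comparison map you invoke are vacuous as stated. What actually makes the argument go is that the obstruction class lives in $\pi_0\bigl((Q_d\otimes R)^{hG}\bigr)$ (using the $R$-module structure to reduce $\mathrm{Map}^{BG}(R,-)$ to $(-)^{hG}$), and the norm map on $Q_d\otimes R$ is an equivalence: it is an equivalence on $\Sigma^\infty_+\sk_d(EG)\otimes R$ because that object is finitely built from induced objects, and on $R$ by the hypothesis $R^{tG}=0$, hence on the cofiber by the five lemma. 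Then $(Q_d\otimes R)^{hG}\simeq (Q_d\otimes R)_{hG}$ lies in $\sp_{\geq 1}$ since $Q_d\in\sp_{\geq d+1}$ and $R\in\sp_{\geq -d}$, killing the obstruction. This is exactly the paper's argument phrased dually: the paper instead lifts the unit in $R^{hG}\simeq R_{hG}$ directly to $(\Sigma^\infty_+\sk_d(EG)\otimes R)^{hG}\simeq(\Sigma^\infty_+\sk_d(EG)\otimes R)_{hG}$ using the same connectivity estimate on the cofiber of homotopy orbits.

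Your approach to part (1), however, has a genuine gap. The truncations $\tau_{\geq -n}R$ are not algebra objects when $R$ is not bounded below, so you cannot apply part (2) to them. Even the connective cover $\tau_{\geq 0}R$, which is an algebra, need not satisfy $(\tau_{\geq 0}R)^{tG}=0$: from the fiber sequence $\tau_{\geq 0}R\to R\to\tau_{<0}R$ one gets $(\tau_{\geq 0}R)^{tG}\simeq\Sigma^{-1}(\tau_{<0}R)^{tG}$, which has no reason to vanish. The paper avoids this entirely by treating (1) and (2) uniformly: since $R^{tG}=0$ gives $R^{hG}\simeq R_{hG}=\varinjlim_d(\Sigma^\infty_+\sk_d(EG)\otimes R)_{hG}$, the unit in $\pi_0 R^{hG}$ must lift to $\pi_0\bigl((\Sigma^\infty_+\sk_d(EG)\otimes R)^{hG}\bigr)$ for some finite $d$, with no connectivity hypothesis needed. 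The connectivity bound in (2) only serves to pin down which $d$ works.
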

\begin{proof}
As before, choose the standard simplicial model $EG_\bullet$, which is a simplicial free
$G$-set such that $|EG_\bullet|$ is contractible. 
This gives  an ascending filtration sequence of spaces $\{\mathrm{sk}_i
EG\}_{i \geq 0}$.   
For each $i$, we consider the 
map $\Sigma^\infty_+ \mathrm{sk}_i (EG) \otimes R \to R$ in $\fun(BG, \sp)$. 
Note that the norm map from homotopy orbits to homotopy fixed points is an equivalence on 
$\Sigma^\infty_+ \mathrm{sk}_i (EG) \otimes R$ since it is finitely built from
induced objects of $\fun(BG, \sp)$. 
Our assumption that $R^{tG} = 0$ thus implies that the natural maps induce an
equivalence 
\begin{equation} \label{normEG}
R_{hG} = 
\varinjlim_i 
(\Sigma^\infty_+ \mathrm{sk}_i (EG) \otimes R)_{hG} 
\simeq
\varinjlim_i 
(\Sigma^\infty_+ \mathrm{sk}_i (EG) \otimes R)^{hG} 
\simeq R^{hG}.
\end{equation}

Suppose we are the situation of 2. 
Since $R$ belongs to $\sp_{\geq -d}$, it follows that the unit in $R^{hG}$ belongs to
the image of the map 
$(\Sigma^\infty_+ \mathrm{sk}_d (EG) \otimes R)^{hG} \to R^{hG}$, via
\eqref{normEG}, since both homotopy fixed points are identified with homotopy
orbits. Unwinding the definitions, it follows that we can find maps in
$\fun(BG, \sp)$,
\[ R \to \Sigma^\infty_+ \mathrm{sk}_d(EG) \otimes R \to R  \]
such that the composite is the identity. 
Thus $R$ is $d$-nilpotent by \Cref{nilpskEG}. 
The situation of 1 is analogous but we do not have a specific $d$; one just
chooses $d$ large enough such that the map $(\Sigma^\infty_+ \mathrm{sk}_d(EG)
\otimes R)^{hG} \to R^{hG}$ has image including the unit. 
\end{proof}
\begin{proposition} 
Let $G$ be a finite group. Let $R \in \mathrm{Alg}( \fun(BG,
\md_{H\mathbb{Z}}))$ be an $H\mathbb{Z}$-algebra equipped with a $G$-action. 
Suppose that, for each $H \leq G$, the underlying spectrum of $R^{hH}$ belongs
to $\sp_{\geq -d}$ for some $d \geq 0$. 
Then $R$ is $d$-nilpotent. 
\label{Gnilpcriterion}
\end{proposition}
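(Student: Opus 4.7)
The plan is to apply \Cref{nilpexpbounded}(2): the case $H=1$ of the hypothesis gives that the underlying spectrum of $R$ lies in $\sp_{\geq -d}$, so once I establish $R^{tG}=0$ the desired $d$-nilpotence follows immediately. The bulk of the work is thus showing that the Tate construction vanishes, and I will do this by first reducing to the case of $p$-groups and then inducting on $|G|$.

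For the reduction to $p$-groups, I would use that the unit $H\mathbb{Z}\to R$ makes $R^{tG}$ into an algebra over $(H\mathbb{Z})^{tG}$, and that $\pi_0 (H\mathbb{Z})^{tG}=\widehat H^0(G;\mathbb{Z})=\mathbb{Z}/|G|$, so $R^{tG}$ is $|G|$-torsion and splits as $\bigoplus_{p\mid|G|}R^{tG}_{(p)}$. For each prime $p\mid|G|$ and $p$-Sylow $P\leq G$, the standard restriction--transfer composite $R^{tG}\to R^{tP}\to R^{tG}$ is multiplication by $[G:P]$, a unit in $\mathbb{Z}_{(p)}$, exhibiting $R^{tG}_{(p)}$ as a retract of $R^{tP}_{(p)}$. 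Since $R|_P$ still satisfies the hypothesis of the proposition, it suffices to treat the case where $G$ itself is a $p$-group.

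For a $p$-group $G$ I induct on $|G|$. In the base case $G=C_p$, the ring $\pi_*(H\mathbb{Z})^{tC_p}\cong\mathbb{F}_p[u^{\pm 1}]$ with $|u|=-2$ has an invertible element of negative degree, so any bounded-below $(H\mathbb{Z})^{tC_p}$-module is zero; the cofiber $R^{tC_p}=\mathrm{cofib}(R_{hC_p}\to R^{hC_p})$ is bounded below (both terms are), hence vanishes. For the inductive step $|G|>p$, I would choose a proper nontrivial normal subgroup $N\trianglelefteq G$ with quotient $Q=G/N$ (such $N$ exists because nontrivial $p$-groups are nilpotent). Using the iterated identifications $R^{hG}=(R^{hN})^{hQ}$ and $R_{hG}=(R_{hN})_{hQ}$, I factor the norm $R_{hG}\to R^{hG}$ through $(R^{hN})_{hQ}$; the octahedral axiom applied to this two-step factorization produces a cofiber sequence
\[ (R^{tN})_{hQ}\to R^{tG}\to (R^{hN})^{tQ}. \]
Induction applied to $R|_N$ gives $R^{tN}=0$, so $R^{tG}\simeq(R^{hN})^{tQ}$. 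Now $R^{hN}$ is an $H\mathbb{Z}$-algebra with $Q$-action (via the lax symmetric monoidality of $(-)^{hN}$), and inherits the hypothesis since $(R^{hN})^{hK}\simeq R^{h\pi^{-1}(K)}$ is bounded below for every $K\leq Q$. As $|Q|<|G|$, induction applied once more yields $(R^{hN})^{tQ}=0$, completing the proof.

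The main obstacle will be making the two-step decomposition and the resulting cofiber sequence $(R^{tN})_{hQ}\to R^{tG}\to (R^{hN})^{tQ}$ precise in the $\infty$-categorical context, together with the Sylow retract statement for the Tate construction. Both are essentially formal consequences of standard transfer--restriction calculus and of the definition of Tate as the cofiber of the norm map, but verifying the compatibility of the norm maps across the factorization, and checking that the restriction--transfer identity persists after passage to the Tate cofiber, requires some care.
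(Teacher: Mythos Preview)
Your proof is correct, but it follows a genuinely different route from the paper's. Both arguments begin the same way: reduce via \Cref{nilpexpbounded}(2) to showing $R^{tG}=0$. After that the paper invokes the family-nilpotence framework of \cite{MNN17,MNN15}: by induction on $|G|$ one may assume $R|_H$ is nilpotent for every proper $H<G$, so it remains to show $R$ is nilpotent for the family of proper subgroups. The obstruction is the Euler class $e:\mathbb{S}\to S^{\widetilde{\rho_G}}$ of the reduced regular representation, and the key use of $H\mathbb{Z}$-linearity is that $H\mathbb{Z}\otimes S^{n\widetilde{\rho_G}}\simeq H\mathbb{Z}[2n(|G|-1)]$ trivialises representation spheres; the resulting obstruction class then lives in $\pi_{-2n(|G|-1)}(R^{hG})$, which vanishes for $n\gg 0$ by hypothesis. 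Your argument instead reduces to $p$-groups via the Sylow retract and then climbs through a normal series using the iterated Tate cofiber sequence $(R^{tN})_{hQ}\to R^{tG}\to (R^{hN})^{tQ}$, with the $H\mathbb{Z}$-linearity entering only at the base case $G=C_p$ through the periodicity of $(H\mathbb{Z})^{tC_p}$. The paper's approach has the virtue of integrating seamlessly with the $\mathscr{F}$-nilpotence machinery already set up and makes the role of $H\mathbb{Z}$ transparent in a single step; your approach is more elementary in that it avoids representation spheres and family-nilpotence altogether, at the cost of needing the iterated Tate sequence (which, as you note, requires some care to set up precisely in the $\infty$-categorical context but is standard).
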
 
\begin{proof} 
By \Cref{nilpexpbounded}, it suffices to show that $R \in \fun(BG, \sp)$ is nilpotent. 
We use the more general theory of nilpotence with respect to a family of
subgroups here, cf.\ \cite{MNN17, MNN15}. By induction, for every proper subgroup
$H < G$, $R|_H \in \fun(BH, \sp)$ is 
$H$-nilpotent. It thus suffices to show that $R$ is nilpotent for the family of
proper subgroups. 
Let $\widetilde{\rho_G}$ be the reduced complex regular representation of $G$,
and let $S^{\widetilde{\rho_G}}$ be the one-point compactification of $G$,
considered as a spectrum equipped with a $G$-action. 
We have the Euler class $e: \mathbb{S} \to S^{\widetilde{\rho_G}}$. 
We recall from \cite{MNN15} that 
an object $M \in \fun(BG, \sp)$
is nilpotent for the family of proper subgroups if there exists 
$n \gg 0$ such that the map 
\[ \mathrm{id}_M \otimes e^n: M \to M \otimes S^{n\widetilde{ \rho_G}}  \]
is nullhomotopic. 
Since we are working over $H\mathbb{Z}$, we observe that 
there is an equivalence $H \mathbb{Z} \otimes S^{n \widetilde{\rho_G}} \simeq H
\mathbb{Z} [ 2n (|G|  - 1)]$ in $\fun(BG, \md_{H\mathbb{Z}})$. 
In our setting, we thus need to show that for $n \gg 0$, the map 
\[ \mathrm{id}_R \otimes e^n: R \to R \otimes S^{n \widetilde{\rho_G}}  \simeq
R \otimes_{H\mathbb{Z}} (H \mathbb{Z} \otimes S^{n \widetilde{\rho_G}} )
\simeq
\Sigma^{2n ( |G| - 1)} R\]
is nullhomotopic in $\md(BG, \sp)$. Since this is an $R$-module map, it is
classified by an element in $\pi_{-2n(|G| - 1)} (R^{hG})$. 
However, we have assumed that this group vanishes for $n \gg 0$. This verifies
that $R$ is $G$-nilpotent as desired. 
\end{proof}

\begin{remark} 
\Cref{Gnilpcriterion} fails if $R$ is only assumed to belong to $\mathrm{Alg}(
\fun(BG, \sp))$. For instance, it fails for the trivial $G$-action on the sphere
spectrum; note that the homotopy fixed points are all connective as a
consequence of the Segal
conjecture, cf.\ \cite{Car84}. 
\end{remark}

We now prove a key nilpotence assertion which refines \Cref{nilpandhyp} in the case
of finite group actions (the previous result gives the nilpotence of certain
cosimplicial digrams, which is implied by nilpotence in the group action sense). Note that this result is essentially due to
Tate-Thomason, originally in \cite[Annex 1, Ch. 1]{SerreCG} in the case of
profinite group cohomology and \cite[Remark 2.28]{Th85} for sheaves of spectra
for profinite groups. See also \cite[Sec. 5.3]{Mi97} for a treatment.   

\begin{theorem}[Tate-Thomason]
\label{nilpotenceofetalePostnikov}
Let $\mathcal{X}$ be an $\infty$-topos, and let $\mathcal{A}$ be 
an algebra object of $\mathrm{Sh}(\mathcal{X}, \sp)$ which is Postnikov complete. Suppose given a finite group $G$ and a
$G$-Galois cover $Y\rightarrow X$ in $\mathcal{X}$, such that the cohomological
dimension of $Y/H$ with $\pi_n\mathcal{A}$-coefficients is bounded by some fixed $d \geq 0$ for all $H\subset G$ and $n\geq 0$.

Then any module over $\mathcal{A}(Y) \in \mathrm{Alg}(\fun(BG, \sp)) $ is $d$-nilpotent. 
\end{theorem}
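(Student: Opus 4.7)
The plan is to reduce to showing that $\mathcal{A}(Y) \in \mathrm{Alg}(\fun(BG, \sp))$ is itself $d$-nilpotent as an algebra, and then to apply Lemma 3.24 by establishing both Tate vanishing and a connectivity bound for $\mathcal{A}(Y)$. By Lemma 2.22(3), $d$-nilpotence of the algebra implies $d$-nilpotence of every module over it, which justifies the reduction. For each subgroup $H \leq G$, the sheaf axiom on $\mathcal{A}$ provides Galois descent $\mathcal{A}(Y)^{hH} \simeq \mathcal{A}(Y/H)$, and the descent spectral sequence
\[ E_2^{p,q} = H^p(Y/H; \pi_q\mathcal{A}) \Longrightarrow \pi_{q-p}\mathcal{A}(Y/H) \]
is supported in $0 \leq p \leq d$ by the cohomological dimension hypothesis.

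The base case is Proposition 3.25 applied to the Eilenberg--MacLane sheaf $H\pi_0\mathcal{A}$, which is an $H\mathbb{Z}$-algebra sheaf on $\mathcal{X}$. Its descent spectral sequence (supported in the single row $q = 0$) yields $(H\pi_0\mathcal{A})(Y/H) \in \sp_{\geq -d}$ for each $H \leq G$, so Proposition 3.25 gives that $(H\pi_0\mathcal{A})(Y)$ is $d$-nilpotent in $\fun(BG, \sp)$; in particular $(H\pi_0\mathcal{A})(Y)^{tG} = 0$. Each Postnikov layer $F_n = \Sigma^n H\pi_n\mathcal{A}$ is a sheaf of $H\pi_0\mathcal{A}$-modules (via the $\pi_0\mathcal{A}$-module structure on $\pi_n\mathcal{A}$), so $F_n(Y)$ is a module over the $d$-nilpotent algebra $(H\pi_0\mathcal{A})(Y)$; Lemma 2.22(3) then shows $F_n(Y)$ is $d$-nilpotent, and hence $F_n(Y)^{tG} = 0$. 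Inductively using the cofiber sequences $F_n(Y) \to \mathcal{A}_{\leq n}(Y) \to \mathcal{A}_{\leq n-1}(Y)$ and the exactness of the Tate functor yields $\mathcal{A}_{\leq n}(Y)^{tG} = 0$ for every $n$.

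By Postnikov completeness, $\mathcal{A}(Y) = \varprojlim_n \mathcal{A}_{\leq n}(Y)$ in $\fun(BG, \sp)$, with Postnikov fibers $F_n(Y) \in \sp_{\geq n-d}$ of increasing connectivity. This high connectivity ensures that each fixed homotopy group of the limit is computed from finitely many stages of the tower, so the Tate construction commutes with the inverse limit via a degreewise Milnor-type argument; thus $\mathcal{A}(Y)^{tG} \simeq \varprojlim_n \mathcal{A}_{\leq n}(Y)^{tG} = 0$. In the connective case, the descent spectral sequence also gives $\mathcal{A}(Y) \in \sp_{\geq -d}$, so Lemma 3.24 concludes that $\mathcal{A}(Y)$ is $d$-nilpotent; the non-connective case reduces to the connective one by a further truncation-from-below argument.

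The main obstacle is the rigorous justification that the Tate construction commutes with the Postnikov inverse limit. The Tate functor is not a right adjoint, so interchange with limits is nontrivial; the connectivity bounds on the Postnikov fibers $F_n(Y) \in \sp_{\geq n-d}$ are what force the interchange to hold via a degreewise Milnor argument, but the formal bookkeeping is delicate. A secondary difficulty is controlling the non-connective case, where $\mathcal{A}(Y)$ need not satisfy the connectivity hypothesis of Lemma 3.24 on the nose and must be handled by an additional truncation-from-below approximation.
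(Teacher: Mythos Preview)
Your approach is essentially identical to the paper's: reduce to Tate vanishing via Lemmas~4.22(3) and~4.24, establish nilpotence of $(H\pi_0\mathcal{A})(Y)$ via Proposition~4.25, propagate to all finite Postnikov truncations using the module structure, and pass to the limit.

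The two concerns you flag are handled more cleanly in the paper. For the non-connective case, the paper simply replaces $\mathcal{A}$ by its connective cover $\mathcal{A}_{\geq 0}$ at the outset; this is still a Postnikov-complete algebra sheaf with the same $\pi_n$ for $n \geq 0$, and every $\mathcal{A}(Y)$-module is automatically an $\mathcal{A}_{\geq 0}(Y)$-module via the unit map, so no separate truncation-from-below argument is needed. For the Tate--limit interchange, the paper notes that the fibers of $\mathcal{A}(Y) \to \mathcal{A}_{[0,n]}(Y)$ have connectivity tending to $+\infty$; since $(-)_{hG}$ and $(-)^{hG}$ each lower connectivity by at most a fixed finite amount, both commute with this inverse limit, and therefore so does the cofiber $(-)^{tG}$.
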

\begin{proof}
By replacing $\mathcal{A}$ by its connective cover if necessary,
we may assume that $\mathcal{A}$ is connective. By \Cref{nilpexpbounded} and
part 3 of \Cref{basicnilplemma}, it suffices to verify that $\mathcal{A}(Y)^{tG} = 0$. 
For each $a\leq b$, we let $\mathcal{A}_{[a,b]}$ be the 
$[a,b]$-Postnikov section of $\mathcal{A}$. 
Note that $(\mathcal{A}_{[0,0]})(X)$ 
is, by assumption on the cohomological dimension, an object of $\sp_{\geq -d}$. 
In addition, $\mathcal{A}_{[0,0]}$ is an $H\mathbb{Z}$-module. 
Applying \Cref{Gnilpcriterion} to the $G$-action on $(\mathcal{A}_{[0,0]})(Y)$, we conclude
that this $G$-action is nilpotent. 
Since each $\mathcal{A}_{[n, n]}(Y)$ is a module in $\fun(BG, \sp)$ over
$\mathcal{A}_{[0, 0]}(Y)$, it follows by induction that $\mathcal{A}_{[0,
n]}(Y)$ is nilpotent and $\mathcal{A}_{[0, n]}(Y)^{tG} = 0$.
We claim now that we can pass to the limit as $n \to \infty$ to conclude that 
$\mathcal{A}(Y)^{tG} =0$. 
To this end, we observe simply that
\begin{equation} \label{Apostlimit} \mathcal{A}(Y) \simeq \varprojlim_n
\mathcal{A}_{[0, n]}(Y)  \end{equation}
because $\mathcal{A}$ is Postnikov complete, and that the assumption on finite
cohomological dimension implies that the homotopy fibers of 
$\mathcal{A}(Y) \to \mathcal{A}_{[0, n]}(Y)$ become arbitrarily highly connected
as $n \to \infty$. Therefore, we can interchange the limit in \eqref{Apostlimit}
with both homotopy fixed points and homotopy orbits, and hence with Tate
constructions; we conclude that $\mathcal{A}(Y)^{tG} = 0$ as desired. 
\end{proof}

Putting things together, we get the main result of this subsection.

\begin{theorem}
\label{Gcompletecrit}
Let $G$ be a profinite group of finite $\mathcal{P}$-local cohomological dimension $d$, and let
$\mathcal{F}$ be a sheaf of $\mathcal{P}$-local spectra on $\mathcal{T}_G$.  The following are equivalent:
\begin{enumerate}
\item $\mathcal{F}$ is hypercomplete.
\item $\mathcal{F}$ is Postnikov complete.
\item For every normal containment $N\subset H$ of open subgroups of $G$, the
spectrum with 
$H/N$-action
$\mathcal{F}(G/N)$ is $d$-nilpotent.
\item There is a $d'\geq 0$ such that for every normal  open subgroup $N\subset
G$, the spectrum 
with $G/N$-ction
$\mathcal{F}(G/N)$ is $d'$-nilpotent.
\end{enumerate}
\end{theorem}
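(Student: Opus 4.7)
The equivalence (1) $\Leftrightarrow$ (2) follows from the finite-cohomological-dimension hypothesis via \Cref{hypforfinitecd}, and (3) $\Rightarrow$ (4) is trivial (take $d' = d$). For (4) $\Rightarrow$ (2), the plan is to observe that $d'$-nilpotence of the $G/N$-action on $\mathcal{F}(G/N)$ restricts to $d'$-nilpotence under any open subgroup $H/N$ (\Cref{basicnilplemma}(1)), and that $d'$-nilpotence as a group action is strictly stronger than weak $d'$-rapid convergence of the cobar diagram computing homotopy fixed points---indeed \Cref{nilpskEG} shows that the cobar is even $d'$-rapidly converging. This supplies the hypothesis of \Cref{Postcompletecrit} and yields (2).

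The substantive implication is (2) $\Rightarrow$ (3), and the strategy is to fit $\mathcal{F}$ into the framework of the Tate--Thomason statement \Cref{nilpotenceofetalePostnikov}, which concerns modules over a Postnikov-complete algebra sheaf. To that end, let $\mathcal{A}$ be the hypercompletion of the $\mathcal{P}$-local constant sheaf on the sphere spectrum over $\mathcal{T}_G$. By \Cref{hypissmashingprofinitegroup2}, hypercompletion is smashing on $\mathcal{P}$-local sheaves on $\mathcal{T}_G$, so \Cref{lem:smashinghyp}(4) endows every hypercomplete $\mathcal{P}$-local sheaf---including $\mathcal{F}$---with a canonical $\mathcal{A}$-module structure. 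Evaluating at $G/N$ presents $\mathcal{F}(G/N) \in \fun(B(G/N),\sp)$ as a module over the algebra object $\mathcal{A}(G/N)$.

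I will then apply \Cref{nilpotenceofetalePostnikov} to the $(G/N)$-Galois cover $G/N \to \ast$ in the $\infty$-topos $\sh(\mathcal{T}_G)$, with the algebra $\mathcal{A}$. The hypothesis there requires that for each subgroup $H/N \leq G/N$ the quotient $(G/N)/(H/N) = G/H$ has cohomological dimension $\leq d$ with coefficients in each sheaf $\pi_n \mathcal{A}$; this holds because the slice of $\sh(\mathcal{T}_G)$ at $G/H$ is identified with $\sh(\mathcal{T}_H)$, the open subgroup $H \leq G$ satisfies $\mathrm{cd}_{\mathcal{P}}(H) \leq \mathrm{cd}_{\mathcal{P}}(G) \leq d$, and the sheaves $\pi_n \mathcal{A}$ are $\mathcal{P}$-local. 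The conclusion is that every module over $\mathcal{A}(G/N)$ is $d$-nilpotent as a $G/N$-spectrum; in particular so is $\mathcal{F}(G/N)$, and restricting the action to the subgroup $H/N \leq G/N$ via \Cref{basicnilplemma}(1) gives (3).

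The main obstacle, and the key payoff of the preceding setup, is the interplay between the smashing-hypercompletion result \Cref{hypissmashingprofinitegroup2} and the Tate--Thomason bound \Cref{nilpotenceofetalePostnikov}: the smashing property is what lets one canonically realize the arbitrary hypercomplete $\mathcal{F}$ as a module over a single convenient algebra sheaf, while the Tate--Thomason theorem is what sharpens mere Postnikov convergence into genuine $d$-nilpotence with nilpotence exponent controlled by the cohomological dimension $d$ itself, rather than some larger constant.
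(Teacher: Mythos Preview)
Your argument follows the same route as the paper's: (1)$\Leftrightarrow$(2) via \Cref{hypforfinitecd}, (2)$\Rightarrow$(3) via \Cref{nilpotenceofetalePostnikov} applied to the hypercompleted unit, (3)$\Rightarrow$(4) trivially, and (4)$\Rightarrow$(2) via \Cref{Postcompletecrit}. Two small remarks.

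First, in (4)$\Rightarrow$(2) there is a minor gap: condition (4) only gives you $d'$-nilpotence of $\mathcal{F}(G/N)$ for $N$ \emph{normal in $G$}, but the hypothesis of \Cref{Postcompletecrit} asks for weak nilpotence over $H/N$ for every normal containment $N\subset H$ of open subgroups, and such $N$ need not be normal in $G$. Your restriction step via \Cref{basicnilplemma}(1) only reaches those pairs $(N,H)$ with $N$ normal in $G$. The fix, which the paper indicates by citing \Cref{general:dnilpotentobj}(4), is to pick $N'\subset N$ open and normal in $G$, restrict the $d'$-nilpotent $G/N'$-action on $\mathcal{F}(G/N')$ to $H/N'$, and then pass to $(N/N')$-fixed points to obtain $d'$-nilpotence of $\mathcal{F}(G/N)=\mathcal{F}(G/N')^{h(N/N')}$ over $H/N$.

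Second, for (2)$\Rightarrow$(3) you invoke the smashing result \Cref{hypissmashingprofinitegroup2} to realize $\mathcal{F}$ as a $1^h$-module; this is correct but heavier than needed, since \Cref{tensorproductCh}(3) already gives every hypercomplete object a canonical $1^h$-module structure without appealing to smashing. The paper simply says ``applied to the hypercompletion of the unit object,'' relying on this lighter fact.
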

\begin{proof}
1 $\Leftrightarrow$ 2 by \Cref{hypforfinitecd}.  2 $\Rightarrow$ 3 by
\Cref{nilpotenceofetalePostnikov} applied to the hypercompletion of the unit
object.  3 $\Rightarrow$ 4 trivially. Finally, 4 $\Rightarrow$ 2 by
\Cref{Postcompletecrit2} (see also part 4 of \Cref{general:dnilpotentobj}).\end{proof}

\begin{remark}
These conditions imply that the $H/N$-Tate construction vanishes on
$\mathcal{F}(G/N)$ for all normal containments $N\subset H$.  The converse is
false, as \Cref{nonhypcompleteZp} verifies.
\end{remark}

Now we use the nilpotence machinery above to reprove that hypercompletion is
smashing 
under finite cohomological dimension assumptions (which appeared earlier as
\Cref{hypissmashingprofinitegroup2} via an abstract argument). 
\begin{corollary}
Let $G$ be a profinite group of virtual finite $\mathcal{P}$-local cohomological
dimension.  Then hypercompletion is smashing for sheaves of
$\mathcal{P}$-local spectra on $\mathcal{T}_G$, and agrees with Postnikov completion.
\end{corollary}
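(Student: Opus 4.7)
My plan is to combine the nilpotence criterion of \Cref{Gcompletecrit} with a reduction step based on the local-global principle. First, I would exploit the virtual finiteness hypothesis to select an open subgroup $H \leq G$ of finite $\mathcal{P}$-local cohomological dimension $d$. The representable $h_{G/H}$ covers the terminal object of $\sh(\mathcal{T}_G)$, and the slice topos $\sh(\mathcal{T}_G)_{/h_{G/H}}$ identifies canonically with $\sh(\mathcal{T}_H)$ (via the induction morphism of sites from \Cref{restricttoopen}). By \Cref{locglobhyp}(3), smashing on $\sh(\mathcal{T}_H, \sp)$ implies smashing on $\sh(\mathcal{T}_G, \sp)$, so I may assume from now on that $G$ itself has finite $\mathcal{P}$-local cohomological dimension $d$. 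Under this assumption, the agreement of hypercompletion with Postnikov completion is immediate from \Cref{enoughcdPostnikovcomplete}.

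To prove smashing, I would verify the criterion of \Cref{whenishypsmashing}: the hypercomplete sheaves are closed under colimits and under tensoring with an arbitrary sheaf. Closure under colimits follows from \Cref{critforhypcolimits}: the representables $h_{G/N}$ strongly generate $\sh(\mathcal{T}_G, \sp)$, each has cohomological dimension $\leq d$ (restriction to an open subgroup does not increase cohomological dimension), and continuous profinite-group cohomology commutes with filtered colimits since $\mathcal{T}_G$ is finitary.

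The heart of the argument is closure under tensoring, where the nilpotence machinery enters. By \Cref{Gcompletecrit}, a sheaf $\mathcal{F}$ is hypercomplete if and only if the $G/N$-action on $\mathcal{F}(G/N)$ is $d$-nilpotent for every open normal $N \leq G$. Given a hypercomplete $\mathcal{F}$, I would resolve an arbitrary sheaf as a colimit of representables and, using closure under colimits, reduce to showing that $\mathcal{F} \otimes h_{G/H'}$ is hypercomplete for every open $H' \leq G$. The sheaf $\mathcal{F} \otimes h_{G/H'}$ corresponds, under the slice equivalence $\sh(\mathcal{T}_G)_{/h_{G/H'}} \simeq \sh(\mathcal{T}_{H'})$, to the left Kan extension along the induction of the restriction of $\mathcal{F}$; evaluating at $G/N$ and decomposing according to the finite double coset set $N \backslash G / H'$ produces a finite direct sum whose summands are restrictions of $\mathcal{F}$ to subgroups of $G/N$ corresponding to the stabilizers of each double coset. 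By parts 1, 2, and 4 of \Cref{basicnilplemma}, each such summand is $d$-nilpotent, and a finite direct sum of $d$-nilpotent objects remains $d$-nilpotent. The principal technical obstacle is precisely this computation of $(\mathcal{F} \otimes h_{G/H'})(G/N)$ as a $G/N$-spectrum, which requires carefully unwinding the interaction between the Day-convolution tensor on presheaves, subsequent sheafification, and the double coset decomposition of $G/N \times G/H'$.
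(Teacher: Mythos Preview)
Your reduction to the finite-cohomological-dimension case via \Cref{locglobhyp}, the identification of Postnikov and hypercompletion, and the closure under colimits via \Cref{critforhypcolimits} all match the paper. The divergence is in the tensoring step, where the paper takes a much shorter route.

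Rather than resolving the second tensor factor by representables and computing $(\mathcal{F}\otimes h_{G/H'})(G/N)$ by hand, the paper verifies criterion~(5) of \Cref{whenishypsmashing} directly. The point is that evaluation at $G/N$ is a \emph{lax} symmetric monoidal functor $\sh(\mathcal{T}_G,\sp)\to\fun(B(G/N),\sp)$, so if $\mathcal{A}$ is a hypercomplete sheaf of algebras and $\mathcal{M}$ is any module sheaf, then $\mathcal{M}(G/N)$ is a module over the $d$-nilpotent algebra $\mathcal{A}(G/N)$ in $\fun(B(G/N),\sp)$. By \Cref{basicnilplemma}(3) this forces $\mathcal{M}(G/N)$ to be $d$-nilpotent for every open normal $N$, whence $\mathcal{M}$ is hypercomplete by \Cref{Gcompletecrit}. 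This one-line argument replaces your double-coset analysis entirely. (The paper also records an earlier, more abstract proof as \Cref{hypissmashingprofinitegroup2}; that one, like yours, reduces to tensoring with a representable $h_{G/H'}$ and writes $\mathcal{F}\otimes h_{G/H'}=x_!x^*\mathcal{F}$, but then finishes by pulling back along $h_{G/H'}$ to split $x$ rather than computing $x_!$ explicitly.)

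Your approach is not wrong, but the step you flag as ``the principal technical obstacle'' really is one: evaluation at $G/N$ is only lax monoidal (it is $(-)^{hN}$ on the underlying $G$-spectra), so $(\mathcal{F}\otimes h_{G/H'})(G/N)$ is not simply $\mathcal{F}(G/N)\otimes\Sigma^\infty_+(G/H')$, and your double-coset description of it as a $G/N$-spectrum would need a genuine Mackey-formula computation to justify. Both of the paper's arguments sidestep this.
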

\begin{proof}
Without loss of generality (by passing to a finite index subgroup, using the
local-global principle \Cref{locglobhyp}), we may
assume 
that $G$ has finite $\mathcal{P}$-local cohomological dimension, because all
these assertions are local. 
Postnikov completion agrees with hypercompletion by \Cref{hypforfinitecd}.   The smashing claim follows from 
the criterion of 
\Cref{Gcompletecrit} 
via nilpotence. Indeed, here we use the fact that 
for finite group $H$, $d$-nilpotent objects of $\fun(BH, \sp)$ form an ideal, as
in 
\Cref{general:dnilpotentobj}. 
In particular, if $\mathcal{A}$ is a $\mathcal{P}$-local sheaf of algebras which is hypercomplete,
then any sheaf of modules $\mathcal{M}$
over it 
is necessarily hypercomplete, which implies that hypercompletion is smashing 
if $G$ has finite $\mathcal{P}$-local cohomological dimension. 
\end{proof}

\begin{corollary}
Let $G$ be a profinite group of virtual finite $\mathcal{P}$-local cohomological dimension.  Then
any sheaf of $H\mathbb{Z}_{\mathcal{P}}$-modules on $\mathcal{T}_G$ is hypercomplete.
\end{corollary}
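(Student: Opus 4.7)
The plan is to reduce from the virtual finite $\mathcal{P}$-local cohomological dimension hypothesis to the finite $\mathcal{P}$-local cohomological dimension case via the local-global principle, and then apply \Cref{Zmodulesarehypercomplete} directly. Concretely, by hypothesis we may choose an open subgroup $H \leq G$ of finite $\mathcal{P}$-local cohomological dimension $\leq d$. The finite continuous $G$-set $G/H$ is a cover of the terminal object of $\mathcal{T}_G$, so it suffices by part 1 of \Cref{locglobhyp} to check hypercompleteness after pullback along $G/H$.

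Under the identification of the slice $\infty$-topos $\sh(\mathcal{T}_G)_{/(G/H)}$ with $\sh(\mathcal{T}_H)$ (cf.\ \Cref{restricttoopen}), this pullback functor is symmetric monoidal and sends the constant sheaf $H\mathbb{Z}_{\mathcal{P}}$ on $\mathcal{T}_G$ to the constant sheaf $H\mathbb{Z}_{\mathcal{P}}$ on $\mathcal{T}_H$. Consequently it carries any sheaf of $H\mathbb{Z}_{\mathcal{P}}$-modules on $\mathcal{T}_G$ to a sheaf of $H\mathbb{Z}_{\mathcal{P}}$-modules on $\mathcal{T}_H$. Now $\mathcal{T}_H$ is a finitary site which is an ordinary category and has $\mathcal{P}$-local cohomological dimension $\leq d$, so \Cref{Zmodulesarehypercomplete} applies and tells us that every sheaf of $H\mathbb{Z}_{\mathcal{P}}$-modules on $\mathcal{T}_H$ is hypercomplete. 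Putting these two observations together yields the corollary.

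There is essentially no serious obstacle: the entire argument reduces to assembling \Cref{locglobhyp}, the identification of the slice by an open subgroup from \Cref{restricttoopen}, and \Cref{Zmodulesarehypercomplete}. (One could alternatively phrase the proof in terms of the previous corollary's smashing statement: since hypercompletion is smashing on $\mathcal{P}$-local sheaves of spectra on $\mathcal{T}_G$, it is enough, via condition~5 of \Cref{whenishypsmashing}, to show that the constant sheaf $H\mathbb{Z}_{\mathcal{P}}$ itself is hypercomplete, which one checks locally by the same reduction to \Cref{Zmodulesarehypercomplete}.)
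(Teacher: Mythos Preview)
Your proof is correct. The paper's primary argument is the one you sketch as an alternative: since hypercompletion is smashing (the immediately preceding corollary), by condition~5 of \Cref{whenishypsmashing} it suffices to note that the constant sheaf $\underline{H\mathbb{Z}_{\mathcal{P}}}$ is hypercomplete, and the paper observes this directly from the fact that it is truncated---no passage to an open subgroup is needed, since the sheafification of a discrete spectrum is already $0$-truncated. The paper then mentions \Cref{Zmodulesarehypercomplete} as an alternative, which is essentially your main route made explicit via the local-global reduction to an open subgroup of finite cohomological dimension. So the two proofs swap which argument is primary; your main approach has the mild advantage of not invoking the smashing result, while the paper's is a line shorter because truncatedness of $\underline{H\mathbb{Z}_{\mathcal{P}}}$ needs no localization step.
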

\begin{proof}
The sheafification 
$\underline{H\mathbb{Z}_{\mathcal{P}}}$ of
the constant sheaf 
$H\mathbb{Z}_{\mathcal{P}}$
is truncated, hence hypercomplete.
Alternatively this follows from \Cref{Zmodulesarehypercomplete}. 
\end{proof}

\subsection{The \'etale topos of an algebraic space}

The main result of this subsection is essentially due to Thomason.  It is an instance of the intuition that ``\'etale = Nisnevich + Galois", or more picturesquely that ``the \'etale topos fibers over the Nisnevich topos with fibers $BG_{k(x)}$".  Before stating the precise result, we give some recollections on pullback functoriality for \'etale and Nisnevich sheaves.
In particular, we review how a Nisnevich sheaf on a qcqs algebraic space $X$
induces a Nisnevich sheaf on each of its residue fields via pullback. 

In \Cref{def:Nisandetale}, we reviewed the Nisnevich and \'etale sites of a 
qcqs algebraic space. 
Let $X$ be a qcqs algebraic space and let $\sF$ be a presheaf of spectra on $\operatorname{et}_X$. 
For every \'etale map $Y \to X$ of qcqs algebraic spaces, we can evaluate
$\sF(Y)$; we wish to extend this to pro-\'etale $Y \to X$. 
For simplicity and ease of notation, we will assume that $X =\spec(A)$ is
affine. Recall that qcqs algebraic spaces are Nisnevich-locally affine, so this
is no loss of generality, cf.\ \Cref{ifNisreducetoaffine}. 

\begin{construction}[Extending Nisnevich sheaves to ind-\'etale objects]
Let $\sF$ be a Nisnevich sheaf (of spaces or spectra) on $X= \spec(A)$. Then $\sF$ defines a functor on the category of \'etale $A$-algebras. 
By left Kan extension, $\sF$ extends to a functor on the category of ind-\'etale
$A$-algebras which commutes with filtered colimits. Given an ind-\'etale $A$-algebra $B$, it follows that $\sF$
(extended in this manner) also
defines a Nisnevich sheaf on $\spec(B)$, since the Nisnevich excision condition is
finitary. 
\end{construction}

This construction of evaluating sheaves on ind-\'etale objects will be extremely
useful in this subsection. For instance, we recall that evaluation on
henselian (resp. strictly henselian) local rings recovers the points
of the Nisnevich and \'etale topoi. Compare \cite[Tag 04GE]{stacks-project} for
a convenient reference on henselian local rings. 

\begin{example}[Points of the Nisnevich topos]
\label{points:Nisnevich}
Let $B$ be an \'etale $A$-algebra and $\mathfrak{p} \in \spec(B)$. 
Then the henselization $B^h_{\mathfrak{p}}$ of $B$ at $\mathfrak{p}$ is an
ind-\'etale $A$-algebra. The evaluation 
$\sF \mapsto \sF(B^h_{\mathfrak{p}})$ gives the points of the $\infty$-topos of
Nisnevich sheaves on $\spec(A)$.
In particular, for sheaves of spaces, the functor 
$$ \sh( \spec(A)_{Nis}) \to \mathcal{S}, \quad \sF \to
\sF(B^h_{\mathfrak{p}})$$
commutes with all colimits and with finite limits, and similarly for sheaves of
spectra. 
Compare \cite[Sec. B.4.4]{SAG}. More precisely, the $\infty$-category of points
of the $\infty$-topos of sheaves of spaces on 
the Nisnevich site of $\spec(A)$ is equivalent to the category of ind-\'etale
$A$-algebras which are henselian local. 
\end{example} 

\begin{example}[Points of the \'etale topos] 
\label{points:etale}
Let $C$ an ind-\'etale $A$-algebra which is \emph{strictly} henselian local. 
Then the functor
\[ \sh( \spec(A)_{et}, \mathcal{S}) \to \mathcal{S}  \] given by evaluation on
$C$ defines a point of  
$\sh( \spec(A)_{et}, \mathcal{S})$. 
This follows similarly as in the Nisnevich case via \cite[Prop. 6.1.5.2]{HTT} as
well as \cite[Cor. B.3.5.4]{SAG} (and of course is classical for sheaves of
sets).  

Let $\sF$ be an \'etale sheaf on $\spec(A)$, and extend to ind-\'etale
$A$-algebras as above. 
By contrast with the Nisnevich case, $\sF$ need not define an \'etale sheaf on
$\spec(B)$ for $A \to B$ ind-\'etale; the filtered colimits involved need not
commute with totalizations. 
\end{example} 

For future reference, we need a slight upgrade of \Cref{points:Nisnevich} above. 
As above, we fix a commutative ring $A$. 

\begin{construction}[Pulling back to the Nisnevich site of a point]
\label{pullbacktopoint}
Let $\mathfrak{p} \in \spec(A)$ and let $A_{\mathfrak{p}}^h$ be the
henselization of $A$ at $\mathfrak{p}$. 
The category of finite \'etale 
$A_{\mathfrak{p}}^h$-algebras is equivalent to the category of \'etale
$k(\mathfrak{p})$-algebras. 
Given a Nisnevich sheaf on $\spec(A)$, it follows that we obtain a Nisnevich
sheaf on $\spec(k(\mathfrak{p}))$ 
by evaluating on finite \'etale $A_{\mathfrak{p}}^h$-algebras. 
In particular, we obtain a product-preserving presheaf on the category of finite continuous
$\mathrm{Gal}(k(\mathfrak{p}))$-sets. 
In view of \Cref{points:Nisnevich}, for sheaves of spaces or spectra, this is a
left exact, cocontinuous functor. 
\end{construction} 

We can also phrase 
\Cref{pullbacktopoint}
more abstractly and generally in the language of pullbacks of topoi. 
A map of qcqs algebraic spaces $f:Y\rightarrow X$ induces a pullback functor ${et}_X\rightarrow {et}_Y$ which gives a morphism of sites both on the \'etale site and on the Nisnevich site.  Thus there are associated pullback morphisms of $\infty$-topoi $f_{et}^\ast$ and $f_{Nis}^\ast$ both on \'etale sheaves and on Nisnevich sheaves. 
We will only consider 
the Nisnevich pullback, so we will drop the subscript when referring to it:
$f^\ast := f^\ast_{Nis}.$

In particular, suppose $x\rightarrow X$ is a residue field of $X$.  By abuse, we denote the map
$x\rightarrow X$ also by $x$, so we obtain a left-exact, cocontinuous functor
$\sh(X_{Nis}) \to \sh(x_{Nis})$. 
By \Cref{Nisfield}, we can view $x^\ast$ as a functor from sheaves on $X_{Nis}$
to product-preserving presheaves on $\mathcal{T}_x$.  
Unwinding the definitions, one finds
$$(x^\ast\mathcal{F})(y\rightarrow x) = \varinjlim_{y\rightarrow U\rightarrow X}\mathcal{F}(U\rightarrow X)$$
where the indexing category is all \emph{\'etale neighborhoods of $y$ in $X$},
i.e.\ factorizations of the composition $y\rightarrow x\rightarrow X$ through a
map $U\rightarrow X$ in $et_X$.  Note that this category is filtered, because
$et_X$ has pullbacks.
By construction of the henselization \cite[Tag 0BSK]{stacks-project}, 
it follows that this recovers precisely \Cref{pullbacktopoint}. 

Next we include some general preliminaries which enable us to reduce from
qcqs algebraic spaces to affine schemes (via Nisnevich descent). 
\begin{proposition}
\label{ifNisreducetoaffine}
Let $X$ be a qcqs algebraic space and $\mathcal{A}$ a
Nisnevich sheaf of spectra on $X$.  If any of the following is known for the restriction
$p^\ast \mathcal{A}$ of $\mathcal{A}$ to every $p:U\rightarrow X$ in $et_X$ with $U$ affine:
\begin{enumerate}
\item $p^\ast\mathcal{A}$ is a sheaf on the affine \'etale site of $U$;
\item $p^\ast\mathcal{A}$ is a hypercomplete sheaf on the affine \'etale site of $U$;
\item $p^\ast\mathcal{A}$ is a Postnikov complete sheaf on the affine \'etale site of $U$;
\item $p^\ast\mathcal{A}$ is zero as a presheaf on the affine \'etale site of $U$.
\end{enumerate}
then the analogous satement holds for $\mathcal{A}$ itself on the \'etale site of $X$.
\end{proposition}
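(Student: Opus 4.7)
The plan is to perform two successive reductions. The first reduces statements on $X$ to statements on affine schemes in $et_X$, using that $\mathcal{A}$ satisfies Nisnevich descent together with the existence of affine Nisnevich covers of qcqs algebraic spaces (cf.\ \cite{Rydh15}, \cite[Sec.~3.4.2]{SAG}). The second reduces, for each affine $U \in et_X$, the full \'etale site of $U$ to its affine \'etale subsite, which forms a basis. The key abstract input is the local-global principle \Cref{locglobhyp}.

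For the first reduction, any $V \in et_X$ admits a Nisnevich cover by a qcqs affine scheme, and iterating (the fiber products are again qcqs algebraic spaces) yields a Nisnevich hypercover $V_\bullet \to V$ in $et_X$ whose terms $V_n$ are finite disjoint unions of qcqs affine schemes $V_{n,\alpha}$. Nisnevich descent for $\mathcal{A}$ gives $\mathcal{A}(V) \simeq \varprojlim_{n \in \Delta} \prod_\alpha \mathcal{A}(V_{n,\alpha})$. Two consequences follow: first, the collection $\{h_U : U \in et_X\ \text{affine}\}$ of representable sheaves covers the terminal object of $\sh(X_{et})$ (since Nisnevich covers are \'etale covers), so by \Cref{locglobhyp} the hypercompleteness and Postnikov completeness of $\mathcal{A}$ may be tested after pullback to each slice $\sh(X_{et})_{/h_U}$ for affine $U$; second, the vanishing of $\mathcal{A}$ as a presheaf reduces directly via the limit formula to vanishing on each $V_{n,\alpha}$.

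For the second reduction, for affine $U \in et_X$ there is a canonical equivalence $\sh(X_{et})_{/h_U} \simeq \sh(U_{et})$, and any qcqs \'etale $U$-scheme is covered in the Zariski topology by its affine opens, each of which is affine \'etale over $U$. Hence the inclusion $U_{et}^{\mathrm{aff}} \hookrightarrow U_{et}$ is a morphism of sites inducing an equivalence $\sh(U_{et}) \simeq \sh(U_{et}^{\mathrm{aff}})$, under which the restriction of $\mathcal{A}$ corresponds to $p^*\mathcal{A}$. Combining with the first reduction yields claims (2), (3), and (4). For claim (1), an analogous argument gives the \'etale sheaf property on $et_X$: on each affine $V_{n,\alpha}$, the hypothesis and Zariski descent (which $\mathcal{A}$ already satisfies as a Nisnevich sheaf) together imply that $\mathcal{A}$ restricts to an \'etale sheaf on $(V_{n,\alpha})_{et}$; then Nisnevich descent on $V$ upgrades this to the \'etale sheaf property on all of $V_{et}$. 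The main, if mild, obstacle is simply the bookkeeping between the Nisnevich and \'etale topologies throughout; no new idea is required beyond \Cref{locglobhyp} and the basis property of affine \'etale schemes.
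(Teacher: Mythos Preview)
Your two-step reduction matches the paper's strategy, and invoking \Cref{locglobhyp} for (2) and (3) is legitimate once (1) is in hand. However, there is a real gap in your first reduction: you build a Nisnevich \emph{hypercover} $V_\bullet \to V$ with affine terms and then assert $\mathcal{A}(V) \simeq \varprojlim_{n\in\Delta}\prod_\alpha \mathcal{A}(V_{n,\alpha})$. That identification is hyperdescent, not \v{C}ech descent, and $\mathcal{A}$ is only assumed to be a Nisnevich \emph{sheaf}. You cannot sidestep this by using the \v{C}ech nerve of a single affine cover $V'\to V$, since for a non-separated algebraic space $V$ the iterated fiber products $V'\times_V\cdots\times_V V'$ need not be affine. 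So the limit formula you use for (4), and the reduction underlying your argument for (1), are unjustified as written.

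The paper avoids this by never trying to compute $\mathcal{A}(V)$ via a resolution. Instead it observes that the affine objects of $et_X$ form a full subcategory closed under fiber products which generates the same $1$-topos---hence the same $1$-localic $\infty$-topos---as $et_X$, for either the Nisnevich or the \'etale topology. A Nisnevich sheaf on $et_X$ is therefore the same thing as a Nisnevich sheaf on the affines, and similarly for \'etale sheaves; this yields (1) and (4) directly. Since hypercompleteness and Postnikov completeness are intrinsic to the ambient $\infty$-topos, (2) and (3) follow as well, without a separate appeal to \Cref{locglobhyp}. Your second reduction (affine \'etale site versus full \'etale site of an affine $U$) is then this same argument applied once more, and is correct.
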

\begin{proof}
Every qcqs algebraic space is Nisnevich-locally affine (cf.\ \cite{Rydh15},
\cite[Theorem 3.4.2.1]{SAG}), which gives the statements if we replace every
occurrence of ``affine \'etale site" with ``\'etale site."  On the other hand,
the affine \'etale site over an affine scheme is a defining 1-categorical site
closed under fiber products just as the \'etale site is, and they clearly
generate the same topos, hence the same $\infty$-topos since 
everything involved is 1-localic (and determined by the 0-truncated objects), cf.~\cite[Sec.
6.4.5]{HTT}.  As the notions of sheaf, hypersheaf, and Postnikov complete sheaf are intrinsic to the $\infty$-topos, this shows that the replacement doesn't affect the content of the statement.\end{proof}
\begin{remark}[Reduction to finite Galois descent]
\label{redtofintieGalois}
By \cite[Theorem B.6.4.1]{SAG}, in the affine case \'etale descent is equivalent to
the combination of Nisnevich descent and finite \'etale (or Galois) descent.  Thus in the situation of this lemma, 1 holds (and hence $\mathcal{A}$ is an \'etale sheaf) if and only if $\mathcal{A}$ satisfies finite Galois descent on affines.
\end{remark}

Now we state the main results of this section.  We emphasize that they
should essentially be attributed to Thomason (compare \cite[Sec. 11]{TT90}).  Our contribution is to have rewritten Thomason's argument in more modern language.

\begin{theorem}
\label{hypetalehypNis}
Let $X$ be a qcqs algebraic space and $\mathcal{F}$ a presheaf of
$\mathcal{P}$-local spectra on
$et_X$. 
Suppose that there is a uniform bound on the $\mathcal{P}$-local \'etale cohomological dimension of
each \'etale $U \to X$ (or for a cofinal collection, e.g., the affines). 
Then $\mathcal{F}$ is a hypercomplete \'etale sheaf if and only if it is a
hypercomplete Nisnevich sheaf, and the presheaf of spectra $x^\ast\mathcal{F}$
on $\mathcal{T}_{x}=\mathcal{T}_{\operatorname{Gal}_{k(x)^{sep}/k(x)}}$ is a
hypercomplete  \'etale sheaf for all points $x\in X$.
\end{theorem}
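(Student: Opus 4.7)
The forward direction is essentially formal. Since the Nisnevich topology is coarser than the étale topology, any étale sheaf (or hypersheaf) is automatically a Nisnevich sheaf (or hypersheaf). For a residue field $x \in X$, the pullback functor $x_{et}^*$ from étale sheaves on $X$ to étale sheaves on the point $x$ (i.e., sheaves on $\mathcal{T}_x$) is left exact and colimit-preserving, hence sends acyclic objects to acyclics and therefore preserves hypercompleteness. One needs to verify that this étale pullback agrees with the Nisnevich-style construction $x^*\mathcal{F}$ from \Cref{pullbacktopoint}, but since both are obtained from evaluation along the henselization $A \to A^h_{\mathfrak{p}}$ followed by restriction to finite étale algebras, the identification is immediate.

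For the backward direction, I proceed in two stages. \emph{Stage 1: $\mathcal{F}$ is an étale sheaf.} By \Cref{ifNisreducetoaffine}, I reduce to verifying étale descent on each affine $U \to X$ in $et_X$. By \Cref{redtofintieGalois}, since $\mathcal{F}$ is already a Nisnevich sheaf, it suffices to check finite Galois descent. Given a finite Galois cover $V \to U$ with group $G$, I test the cosimplicial descent statement after pullback to each Nisnevich stalk $U_x^h = \operatorname{Spec}(\mathcal{O}_{U,x}^h)$, using that $\mathcal{F}$ is a Nisnevich hypersheaf and that the Nisnevich points are exactly evaluations on henselian local rings (\Cref{points:Nisnevich}). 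Over $U_x^h$, the category of finite étale algebras is equivalent to that of finite étale extensions of $k(x)$ via reduction mod the maximal ideal, compatibly with the Galois structure. Under this equivalence, the values of $\mathcal{F}$ on $V \times_U U_x^h$ and its iterated self-products agree with the values of $x^*\mathcal{F}$ on the corresponding finite continuous $\operatorname{Gal}(k(x))$-sets, so the required Galois descent is precisely the sheaf condition for $x^*\mathcal{F}$ on $\mathcal{T}_x$.

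\emph{Stage 2: $\mathcal{F}$ is hypercomplete.} The uniform bound on étale cohomological dimension implies that the étale site $X_{et}$ has enough objects of bounded cohomological dimension, so by \Cref{hypforfinitecd} hypercompleteness is equivalent to Postnikov completeness for sheaves of $\mathcal{P}$-local spectra on $X_{et}$. I apply the local--global criterion \Cref{locglobhyp} to the Nisnevich pullback: since the representables $h_U$ for $U\to X$ affine cover $X_{et}$, it suffices to check that after pullback to each such $U$, the sheaf is Postnikov complete. Combining the Nisnevich-hypercomplete sheaf structure of $\mathcal{F}$ with the nilpotence criterion \Cref{Gcompletecrit} applied to the stalks --- which, by hypothesis, gives a uniform bound $d$ on the $(H/N)$-nilpotence of the $x^*\mathcal{F}(G_x/N)$ for the residue fields $x$ --- I conclude via \Cref{nilpandhyp} that the cosimplicial diagrams computing étale hyperdescent over finite Galois covers are uniformly rapidly converging. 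Interchanging these totalizations with the filtered colimits defining the Nisnevich stalks (using \Cref{commutetotandcolimit}) then produces the desired Postnikov completeness.

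The main obstacle is Stage 2: one must carefully combine the Nisnevich hyperdescent property of $\mathcal{F}$ with the finite-group nilpotence of its stalks on Galois sites to obtain a uniform descent estimate over the full étale site. The key technical ingredient is the uniform nilpotence exponent supplied by \Cref{Gcompletecrit}, which allows one to commute the Galois totalizations with the filtered colimits defining the Nisnevich stalks via \Cref{commutetotandcolimit}; without such a uniform bound, one could in principle get stalk-wise hyperdescent without global hyperdescent, as the non-hypercomplete example of \Cref{nonhypcompleteZp} already illustrates in dimension one.
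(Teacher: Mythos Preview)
Your proposal has genuine gaps in both directions.

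\textbf{Forward direction.} The implication ``$x^*_{et}$ sends acyclics to acyclics, therefore preserves hypercompleteness'' is backwards: that hypothesis shows the \emph{right adjoint} $x_{et,*}$ preserves hypercompleteness, not the pullback. In fact $t$-exact pullbacks need not preserve hypercompleteness at all: the constant sheaf functor $\mathrm{Sp}\to\mathrm{Sh}(\mathcal{T}_{\mathbb{Z}_p},\mathrm{Sp})$ is $t$-exact, but \Cref{nonhypcompleteZp} shows that the constant sheaf on $K(n)$ is not hypercomplete. Relatedly, the ``immediate'' identification of $x^*_{et}\mathcal{F}$ with the Nisnevich pullback $x^*\mathcal{F}$ is exactly the content of the forward direction: one must show that the \'etale descent of $\mathcal{F}$ survives the filtered colimit defining evaluation on finite \'etale $A^h_{\mathfrak{p}}$-algebras. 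The paper does this by applying \Cref{nilpandhyp} on the full \'etale site to get uniform $N$-rapid convergence of \v{C}ech diagrams at the \emph{finite} level, and then passing to filtered colimits via \Cref{commutetotandcolimit}.

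\textbf{Backward direction.} Both of your stages silently require interchanging $(-)^{hG}$ (or $\mathrm{Tot}$) with the filtered colimit defining the Nisnevich stalk: the stalk of $U'\mapsto \mathcal{F}(V\times_U U')^{hG}$ is $\varinjlim_{U'}\mathcal{F}(V')^{hG}$, not $\bigl(\varinjlim_{U'}\mathcal{F}(V')\bigr)^{hG}$. You try to justify this via \Cref{commutetotandcolimit}, but that lemma needs uniform weak nilpotence of the \emph{terms} of the filtered system, whereas \Cref{Gcompletecrit} applied to the hypothesis on $x^*\mathcal{F}$ only gives nilpotence of the \emph{colimit}. There is no mechanism in your argument for propagating nilpotence from the stalk back to the finite level, and without it one cannot exclude the kind of failure seen in \Cref{nonhypcompleteZp}.

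The paper avoids this entirely by a comparison trick: let $\mathcal{F}^h$ be the \'etale hypercompletion of $\mathcal{F}$. Both $\mathcal{F}$ and $\mathcal{F}^h$ are Nisnevich hypersheaves, so it suffices to check $\mathcal{F}\to\mathcal{F}^h$ is an equivalence on each henselian local $B$. On the finite \'etale site of $B$ (equivalently $\mathcal{T}_x$), the restriction of $\mathcal{F}$ is hypercomplete by assumption, and the restriction of $\mathcal{F}^h$ is hypercomplete by the forward direction applied to $\mathcal{F}^h$. Since hypersheafification preserves \'etale stalks, the map between them is an equivalence on the unique stalk of $\mathcal{T}_x$, hence an equivalence of hypercomplete sheaves. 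This uses the forward direction (where the finite-level nilpotence is available) as the engine for the backward direction, rather than trying to extract finite-level information from the stalk hypothesis directly.
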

\begin{remark}
Suppose $X$ is a qcqs algebraic space.  Then by \Cref{etalecohdim},
$$\operatorname{sup}_{x\in
X}\operatorname{CohDim}(\mathrm{Gal}_{k(x)})\leq\operatorname{sup}_{U\rightarrow X \in
et_X}\operatorname{CohDim}(U_{et})\leq  \operatorname{KrullDim}(X)
+\operatorname{sup}_{x\in X}\operatorname{CohDim}(\mathrm{Gal}_{k(x)}).$$
In particular, if $X$ has finite Krull dimension, then $X$ satisfies the
hypotheses of the theorem if and only if there is a uniform bound on the
$\mathcal{P}$-local Galois cohomological dimension of the residue fields of $X$.
\end{remark}

\begin{proof} Without loss of generality (cf.\ \Cref{locglobhyp} and
\Cref{ifNisreducetoaffine}), $X = \spec(A)$ is affine.  
Let $N$ be the bound on the $\mathcal{P}$-local \'etale cohomological dimension. 
First suppose that $\sF$ is a hypercomplete \'etale sheaf. 
We claim that $x^* \sF$ is a hypercomplete  \'etale sheaf on $\spec ( k(x))$. 
Indeed, we extend $\sF$ to all ind-\'etale $A$-algebras. 
For each faithfully flat \'etale map of \'etale $A$-algebras $B_1 \to B_2$, we
have by assumption
\begin{equation} \label{totbig} \sF(B_1) \simeq \mathrm{Tot}( \sF(B_2) \rightrightarrows \sF(B_2
\otimes_{B_1} B_2)  \triplearrows \dots ),  \end{equation}
and furthermore this diagram is $N$-nilpotent (\Cref{nilpandhyp}). 
It follows that if 
$B_1 \to B_2$ is a faithfully flat \'etale map of ind-\'etale $A$-algebras, then
\eqref{totbig} still holds and the diagram is weakly $N$-nilpotent. Indeed, we
can write $B_1 \to B_2$ as a filtered colimit of 
faithfully flat maps of \'etale $A$-algebras, consider the \v{C}ech nerves of
each of those, 
and 
we can commute the totalization and colimit (\Cref{commutetotandcolimit}). 
Applying this fact to a faithfully flat map between finite \'etale
$A_{\mathfrak{p}}^h$-algebras, we conclude that 
$x^* \sF$ is hypercomplete by \Cref{Postcompletecrit2}, as desired. 

Now suppose the Nisnevich pullbacks $x^* \sF$ are hypercomplete for $x \in X$. 
Let $\sF^h$ denote the \'etale hypercompletion of $\sF$.  We have a map $\sF \to
\sF^h$, and we need to see that it is an equivalence. 
We extend both $\sF, \sF^h$ to 
ind-\'etale $A$-algebras. By assumption, 
$\sF(B') \to \sF^h(B')$ is an equivalence
if $B'$ is \emph{strictly} henselian local. 
Since both $\sF, \sF^h$ are hypercomplete Nisnevich sheaves, it suffices to see that
$\sF \to \sF^h$ induces an equivalence on Nisnevich stalks, i.e, on each ind-\'etale $A$-algebra $B$ which is henselian local. 
However,  if we fix a henselian local $B$ which is ind-\'etale over $A$, then by
assumption and the previous paragraph
$\sF, \sF^h$
define hypercomplete \'etale sheaves on the finite \'etale site of $\spec(B)$
with the same stalk; therefore $\sF(B) \simeq \sF^h(B)$ as desired. 
\end{proof} 

In the case where $X$ has finite Krull dimension, the above result simplifies to
the following. 

\begin{theorem}[Hypercompleteness criterion]
\label{main:hypcriterion}
Let $X$ be a qcqs algebraic space of finite Krull dimension $d$ 
and such that the $\mathcal{P}$-local cohomological dimension of each residue field is $\leq
l$. 
Let $\sF$ be a $\mathcal{P}$-local Nisnevich sheaf of spectra on $X$. Then the following are
equivalent: 
\begin{enumerate}
\item $\sF$ is a hypercomplete \'etale sheaf.  
\item $\sF$ is a Postnikov complete \'etale sheaf. 
\item For every finite group $G$ and every $G$-Galois cover $Y \to Y'$ of
algebraic spaces \'etale over $X$, the map 
$\sF(Y') \to \sF(Y)^{hG}$ is an equivalence and 
the $G$-action on $\sF(Y)$ is $(d + l)$-nilpotent. 
\item 
For each point $x \in X$, the presheaf $x^* \sF$ on $\mathcal{T}_x$ is a
hypercomplete sheaf. 
\item 
There exists an integer $N \geq 0$ such that the following holds. 
For each \'etale map $\spec A \to X$ and every finite $G$-Galois cover
$\spec B \to \spec A$, the map 
$\sF(\spec A) \to \sF(\spec B)^{hG}$ is an equivalence and the $G$-action on 
$\sF(\spec B)$ is weakly $N$-nilpotent. 
\end{enumerate}
\end{theorem}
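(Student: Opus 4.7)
The plan is to establish the cyclic chain of implications $(1)\Rightarrow(3)\Rightarrow(5)\Rightarrow(4)\Rightarrow(1)$, supplemented with $(1)\Leftrightarrow(2)$. The key preliminary observation is that by \Cref{etalecohdim}, every $U \in \mathrm{et}_X$ has $\mathcal{P}$-local étale cohomological dimension at most $d+l$. This immediately gives $(1)\Leftrightarrow(2)$ via \Cref{hypforfinitecd}, and it puts us in the situation of \Cref{hypetalehypNis}. Since $X$ has finite Krull dimension, \Cref{Nishypercomplete} tells us that $\sF$ is automatically a hypercomplete Nisnevich sheaf, so \Cref{hypetalehypNis} directly yields $(1)\Leftrightarrow(4)$.

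For $(1)\Rightarrow(3)$, if $\sF$ is a hypercomplete étale sheaf then finite Galois descent $\sF(Y') \simeq \sF(Y)^{hG}$ is automatic, so it remains to establish the nilpotence bound. For this, observe that any hypercomplete sheaf is a module over the hypercompleted unit sheaf $\mathbb{S}^h$, so $\sF(Y)$ becomes a module in $\fun(BG, \sp)$ over the algebra $\mathbb{S}^h(Y)$. The Tate--Thomason theorem \Cref{nilpotenceofetalePostnikov}, applied with the cohomological dimension bound $d+l$, then shows that $\sF(Y)$ is $(d+l)$-nilpotent. The step $(3)\Rightarrow(5)$ is immediate with $N = d+l$, since $N$-nilpotence implies weak $N$-nilpotence.

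The main substantive step is $(5)\Rightarrow(4)$. First, (5) combined with the Nisnevich sheaf property, \Cref{redtofintieGalois}, and \Cref{ifNisreducetoaffine} upgrades $\sF$ to a bona fide étale sheaf. Now fix a point $x \in X$; we must show that $x^*\sF$ is a hypercomplete sheaf on $\mathcal{T}_{G_x}$, where $G_x$ is the absolute Galois group of $k(x)$ (of $\mathcal{P}$-local cohomological dimension $\leq l$). By \Cref{Postcompletecrit2}, it suffices to exhibit a uniform integer $N'$ such that for every normal inclusion of open subgroups $K \subset H \leq G_x$, the $H/K$-action on $(x^*\sF)(G_x/K)$ is weakly $N'$-nilpotent; we claim $N' = N$ works. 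Indeed, by the filtered colimit description of Nisnevich pullbacks (\Cref{pullbacktopoint}), $(x^*\sF)(G_x/K)$ and $(x^*\sF)(G_x/H)$ are computed as filtered colimits of $\sF(\spec B_\alpha)$ and $\sF(\spec A_\alpha)$ along a cofinal system of étale neighborhoods, where each $\spec B_\alpha \to \spec A_\alpha$ is a finite $H/K$-Galois cover in $\mathrm{et}_X$. Condition (5) provides the required weak $N$-nilpotence at each stage of this filtered system, and \Cref{commutetotandcolimit} ensures that this weak nilpotence is preserved under the colimit, yielding the desired bound on $x^*\sF$.

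The main obstacle in this argument is the step $(5)\Rightarrow(4)$, where one must translate the global weak-nilpotence bound in (5) into the local weak-nilpotence bound on the residue-field Galois action that feeds into \Cref{Postcompletecrit2}. Concretely, this requires realizing the points of $\mathcal{T}_{G_x}$ as cofinal filtered colimits of compatible finite Galois covers of étale neighborhoods of $x$ in $X$, and using \Cref{commutetotandcolimit} to pass weak nilpotence of augmented cosimplicial diagrams through the filtered colimit. Everything else is formal bookkeeping atop the previously established machinery.
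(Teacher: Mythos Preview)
Your proof is correct and follows essentially the same approach as the paper, which simply states ``Combine \Cref{hypetalehypNis}, the fact that Nisnevich sheaves are hypercomplete (\Cref{Nishypercomplete}), and the criterion for hypercompleteness for profinite groups in \Cref{Gcompletecrit}.'' You have correctly unpacked this terse proof into the explicit cycle of implications, using the ingredients of \Cref{Gcompletecrit} (namely \Cref{nilpotenceofetalePostnikov} and \Cref{Postcompletecrit2}) together with the filtered-colimit passage via \Cref{commutetotandcolimit} to translate between the global conditions (3), (5) and the pointwise condition (4). One small presentational point: in the step $(5)\Rightarrow(4)$ you invoke \Cref{Postcompletecrit2}, which is stated for \emph{sheaves} on $\mathcal{T}_{G_x}$, so strictly speaking you should note that the very same weak-nilpotence-plus-\Cref{commutetotandcolimit} argument also shows the augmented cosimplicial diagrams are limit diagrams, hence that $x^*\sF$ is an \'etale sheaf on $\mathcal{T}_{G_x}$ before \Cref{Postcompletecrit2} applies; this is implicit in what you wrote and not a genuine gap.
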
 
\begin{proof} 
Combine \Cref{hypetalehypNis}, the fact that Nisnevich sheaves are hypercomplete 
(\Cref{Nishypercomplete}), and the criterion for hypercompleteness for profinite groups in
\Cref{Gcompletecrit}. 
\end{proof} 

We deduce that the property of being a hypersheaf propagates to modules in a fairly strong sense.

\begin{corollary}\label{transfers}
Let $X$ be a qcqs algebraic space of finite Krull dimension and with a global
bound on the $\mathcal{P}$-local Galois cohomological dimension of the residue
fields, and let $\mathcal{F}$ be a sheaf of $\mathcal{P}$-local spectra on $X_{Nis}$.  Suppose that:
\begin{enumerate}
\item $\mathcal{F}$ admits \emph{finite \'etale transfers} in the sense that for
every finite group $G$ and every $G$-Galois cover $V\rightarrow U$ in $et_X$, there is a genuine $G$-spectrum $\mathcal{F}_{V\rightarrow U}$ whose induced presheaf of spectra on the orbit category of $G$ agrees with the restriction of $\mathcal{F}$ to the $V/H=V\times_G G/H$ for $H\subset G$.

(For example, if $\mathcal{F}$ has finite Galois descent, there is a unique such $\mathcal{F}_{V\rightarrow U}$: this follows from the ``Borel-completion" yoga, \cite{MNN17} 6.3.)

\item There is a hypercomplete sheaf of algebras $\mathcal{A}$ on $X_{et}$ such that $\mathcal{F}$ is a \emph{module over $\mathcal{A}$ compatibly with the transfers} in the following sense: $\mathcal{F}$ admits a module structure over $\mathcal{A}$ as a presheaf, such that for every $V\rightarrow U$ as above, this module structure restricted to the orbit category of $G$ extends to a module structure of $\mathcal{F}_{V\rightarrow U}$ over $\mathcal{A}_{V\rightarrow U}$.
\end{enumerate}
Then $\mathcal{F}$ is a hypercomplete sheaf on $X_{et}$.
\end{corollary}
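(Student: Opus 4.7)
The plan is to apply the hypercompleteness criterion \Cref{main:hypcriterion}, specifically condition~(3). Let $d = \dim X$ and $l$ denote the $\mathcal{P}$-local Galois cohomological dimension bound on the residue fields of $X$. For each finite $G$-Galois cover $V\to U$ of algebraic spaces \'etale over $X$, I verify (i) the $G$-action on $\mathcal{F}(V)$ is $(d+l)$-nilpotent as an object of $\fun(BG, \sp)$, and (ii) the natural map $\mathcal{F}(U) \to \mathcal{F}(V)^{hG}$ is an equivalence.

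For (i), since $\mathcal{A}$ is a hypercomplete \'etale sheaf and each $U'\in \et_X$ has \'etale cohomological dimension bounded by $d+l$ (\Cref{etalecohdim}), \Cref{nilpotenceofetalePostnikov} applied to $\mathcal{A}$ shows that every module over the algebra $\mathcal{A}(V)\in\mathrm{Alg}(\fun(BG,\sp))$ is $(d+l)$-nilpotent. The compatibility clause of hypothesis~(2) provides precisely such a module structure on $\mathcal{F}(V)$: the restriction of the genuine $G$-spectrum $\mathcal{A}_{V\to U}$-module structure on $\mathcal{F}_{V\to U}$ to the free orbit $G/e$ recovers the Borel $G$-action on $\mathcal{F}(V)$ together with an $\mathcal{A}(V)$-module structure in $\fun(BG,\sp)$. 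Hence the $G$-action on $\mathcal{F}(V)$ is $(d+l)$-nilpotent.

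For (ii), observe first that $\mathcal{F}(U) = \mathcal{F}_{V\to U}(G/G) = \mathcal{F}_{V\to U}^G$ by hypothesis~(1), since $V/G = U$. The hypercompleteness of $\mathcal{A}$ together with \Cref{main:hypcriterion} gives $\mathcal{A}(V/H)\simeq \mathcal{A}(V)^{hH}$ for every subgroup $H\leq G$; through the identification $\mathcal{A}_{V\to U}(G/H) = \mathcal{A}(V/H)$, this says precisely that the genuine $G$-spectrum $\mathcal{A}_{V\to U}$ is Borel-complete. The key input I invoke is that any module over a Borel-complete algebra in genuine $G$-spectra is again Borel-complete; applied to $\mathcal{F}_{V\to U}$, this yields $\mathcal{F}_{V\to U}^G \simeq \mathcal{F}(V)^{hG}$, which combined with the transfer identification completes the verification of (ii).

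The main obstacle is this module-theoretic claim about Borel completeness propagating to modules. It can be justified by combining (i) with the following argument: the $(d+l)$-nilpotence of $\mathcal{A}(V)$ as a Borel $G$-spectrum forces the Tate construction to vanish on $\mathcal{A}(V)$ (\Cref{nilpexpbounded}), and, by a thick-subcategory argument exploiting that Tate is exact and that $\mathcal{A}(V)^{tG} = 0$, also on every $\mathcal{A}(V)$-module in $\fun(BG,\sp)$. This Tate vanishing is exactly what is needed to identify modules in genuine $G$-spectra over $\mathcal{A}_{V\to U}$ with modules in Borel $G$-spectra over $\mathcal{A}(V)$ via the underlying functor, and hence to conclude that every $\mathcal{A}_{V\to U}$-module is Borel-complete. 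An alternative, more elementary route avoids genuine $G$-spectra altogether and proceeds by induction on $|G|$, using \Cref{nilpandhyp} in combination with Nisnevich descent to reduce (ii) to the weak $(d+l)$-nilpotence of the cosimplicial diagram computing $\mathcal{F}(V)^{hG}$, which is provided by step~(i).
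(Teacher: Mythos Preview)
Your proof is correct and follows essentially the same approach as the paper. The paper is slightly more streamlined: it observes directly that $\mathcal{A}_{V\to U}$ is a $d$-nilpotent \emph{genuine} $G$-spectrum (it is Borel-complete since $\mathcal{A}$ is an \'etale sheaf, and its underlying Borel spectrum is $d$-nilpotent by \Cref{main:hypcriterion}), then invokes \cite{MNN17} to deduce that the module $\mathcal{F}_{V\to U}$ is also $d$-nilpotent as a genuine $G$-spectrum---this yields Borel-completeness (your step (ii)) and Borel nilpotence of $\mathcal{F}(V)$ (your step (i)) simultaneously, after which \Cref{main:hypcriterion} concludes.
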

\begin{proof}
Let $V\rightarrow U$ be a $G$-Galois cover  in $X_{et}$.  From Theorem
\ref{main:hypcriterion} we deduce that $\mathcal{A}_{V\rightarrow U}$ is a
$d$-nilpotent $G$-spectrum.  Thus $\mathcal{F}_{V\rightarrow U}$, being  a
module over $\mathcal{A}_{V\rightarrow U}$, is a $d$-nilpotent genuine
$G$-spectrum (cf.~\cite{MNN17}).  In partiucular $\mathcal{F}_{V\rightarrow U}$ is Borel-complete, meaning $\mathcal{F}$ satisfies finite Galois descent, and $\mathcal{F}(Y)$ is $d$-nilpotent as a $G$-spectrum.  We conclude by Theorem \ref{main:hypcriterion}.
\end{proof}

\begin{corollary}
\label{hypsmashingfinitecd}
Let $X$ be a qcqs algebraic space of finite Krull dimension with a global bound
on the $\mathcal{P}$-local virtual Galois cohomological dimension of its residue
fields.  Then hypercompletion is smashing for sheaves of
$\mathcal{P}$-local spectra on $X_{et}$ and agrees with Postnikov completion.
\end{corollary}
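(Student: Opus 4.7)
The plan is to verify condition (5) of \Cref{lem:smashinghyp}---that every sheaf which is a module over a hypercomplete algebra sheaf is itself hypercomplete---after a preliminary reduction to the case of bounded ordinary (not just virtual) Galois cohomological dimension of residue fields. The assertion that hypercompletion agrees with Postnikov completion will then follow directly from \Cref{hypforfinitecd}.

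First, the reduction: by \Cref{locglobhyp}(3), the smashing property for hypercompletion is local, so we may pass to a suitable \'etale cover of $X$; paralleling the reduction in the proof of \Cref{hypissmashingprofinitegroup2}, we can thereby trade ``virtual'' for ``ordinary'' $\mathcal{P}$-local Galois cohomological dimension of residue fields. Henceforth assume that the ordinary $\mathcal{P}$-local Galois cohomological dimension of each residue field of $X$ is uniformly bounded. Then by \Cref{etalecohdim}, the $\mathcal{P}$-local \'etale cohomological dimension of every $U \to X$ in $et_X$ is bounded by some uniform integer $N$. By \Cref{hypforfinitecd}, hypercompletion and Postnikov completion on $\sh(X_{et}, \sp_{\mathcal{P}})$ agree.

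For smashing, let $\sA$ be a hypercomplete algebra sheaf on $X_{et}$ and let $\sF$ be a module over $\sA$ in $\sh(X_{et}, \sp_{\mathcal{P}})$; I verify condition (5) of \Cref{main:hypcriterion} for $\sF$. The descent equivalence $\sF(\spec A) \to \sF(\spec B)^{hG}$ for each finite $G$-Galois cover $\spec B \to \spec A$ in $et_X$ is automatic because $\sF$ is itself an \'etale sheaf. For the uniform nilpotence, I apply \Cref{nilpotenceofetalePostnikov} to $\sA$: since $\sA$ is Postnikov complete and its $\pi_n\sA$-cohomological dimension is uniformly bounded by $N$, every module over $\sA(\spec B) \in \mathrm{Alg}(\fun(BG, \sp))$ is $N$-nilpotent. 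In particular $\sF(\spec B)$, as a module over $\sA(\spec B)$ in $\fun(BG, \sp)$, is $N$-nilpotent, with $N$ independent of the chosen cover. This verifies condition (5) of \Cref{main:hypcriterion}, so $\sF$ is hypercomplete.

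The main technical content is the combined use of \Cref{nilpotenceofetalePostnikov} and the module structure, which yields the required uniform nilpotence without having to grapple with subtleties such as whether the \'etale and Nisnevich pullbacks to a residue field coincide on general \'etale sheaves, or the behavior of the \'etale tensor product on the nose. The only step requiring a bit of care is the preliminary reduction from virtual to ordinary cohomological dimension via \Cref{locglobhyp}(3); with \Cref{main:hypcriterion} and \Cref{nilpotenceofetalePostnikov} already in place, I do not expect any further substantive obstacle.
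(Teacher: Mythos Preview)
Your proposal is correct and essentially matches the paper's alternative argument (the one that bypasses \Cref{transfers}): reduce from virtual to ordinary cohomological dimension via \Cref{locglobhyp}, then verify condition~(5) of \Cref{whenishypsmashing} by showing that for any hypercomplete algebra sheaf $\mathcal{A}$ and any module $\mathcal{F}$, the values $\mathcal{F}(\spec B)$ at Galois covers are uniformly nilpotent. The paper phrases this last step as ``$m$-nilpotent objects form a tensor ideal'' and invokes part~3 of \Cref{main:hypcriterion}, whereas you cite \Cref{nilpotenceofetalePostnikov} (which already contains the module statement) and invoke part~5 of \Cref{main:hypcriterion}; these are the same argument unpacked slightly differently.

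One small point worth making explicit: after the reduction you establish that hypercompletion equals Postnikov completion on the \emph{cover}, but the corollary asserts this on $X$ itself. This follows immediately from parts~(1) and~(2) of \Cref{locglobhyp} (both hypercompleteness and Postnikov completeness are local, so the comparison map $\mathcal{F}^h \to \varprojlim_n \mathcal{F}_{\leq n}$ can be checked on the cover), but you should say so rather than leaving it implicit in the smashing reduction, which a priori only handles the smashing claim.
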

\begin{proof}
This holds locally on $X$ by the special case of \Cref{transfers} where each
$\mathcal{F}_{V\rightarrow U}$ is Borel-complete.  The claim then follows by the
local-global principle (\Cref{locglobhyp}).
Alternatively, we can argue without the use of 
\Cref{transfers} (and the implicit use of genuine $G$-spectra and transfers)
simply by invoking 
part 3 of \Cref{main:hypcriterion}, noting that $m$-nilpotent objects in $\mathrm{Fun}(BG,
\mathrm{Sp})$ form a tensor ideal for any $m$. 
\end{proof}

\section{Localizing invariants on spectral algebraic spaces}

This section collects together 
several general descent theorems for localizing invariants on spectral algebraic
spaces. 
First, we introduce the language of (qcqs) 
$E_2$-spectral algebraic spaces for maximal generality; here one has a sheaf of
$E_2$-rings rather than $E_\infty$-rings, as in \cite{SAG}.
We review the formalism of localizing invariants, and recall (\Cref{Nisdescloc}) that localizing invariants automatically satisfy Nisnevich
descent, after Thomason-Trobaugh. 

Our main results are that topological cyclic homology is an \'etale hypersheaf
(\Cref{TCissheaf}),
and that any $L_n^f$-local localizing invariant is an \'etale sheaf
(\Cref{generaletaledesc}). The latter
result is an extension of the main results of \cite{CMNN} to the $E_2$-setting
and is based on a careful transfer argument, together with the use of the
Dundas-Goodwillie-McCarthy theorem to
reduce from the $E_2$-case to the discrete (hence $E_\infty$) case.

The use of $E_2$-structures and the generality of localizing invariants
introduces some additional technicalities. 
The reader
primarily interested in  
the $E_\infty$-case  and in algebraic $K$-theory may skip most of this section. The most
important result is \Cref{TCissheaf}, which 
(even in the $E_\infty$-case) 
will be crucial in the discussion of Selmer $K$-theory below.

\subsection{Preliminaries on $E_2$-spectral algebraic spaces}
\label{secEtwo}
Here we briefly review some preliminaries on spectral algebraic spaces, always
assumed quasi-compact and quasi-separated (qcqs). For maximal
generality, we will state all of our results for spectral algebraic spaces
modeled on the spectra of 
$E_2$-rings rather than the more standard
$E_\infty$-rings (for which see \cite[Ch. 3]{SAG} for a detailed treatment). 
The reader interested primarily in the case of $E_\infty$-rings can skip this
subsection without loss of continuity. 

\begin{definition}[$E_2$-spectral algebraic spaces]
An \emph{$E_2$-spectral algebraic space} $X=(X_{et},\mathcal{O}_X)$ is a pair consisting of a site $X_{et}$ and a sheaf of (possibly non-connective) $E_2$-algebras on $X_{et}$, such that $(X_{et},\pi_0\mathcal{O}_X)$ is the \'etale site and \'etale structure sheaf of a qcqs algebraic space (over $\mathbb{Z}$) in the usual sense, $\pi_n\mathcal{O}_X$ is a quasi-coherent $\pi_0\mathcal{O}_X$-module for all $n\in\mathbb{Z}$, and $\mathcal{O}_X$ is Postnikov-complete.
A map of $E_2$-spectral algebraic spaces $(X_{et}, \mathcal{O}_X)$ is a map of
$E_2$-ringed sites, which on $\pi_0$ arises from a map of algebraic spaces. We let $\algspc$ denote the $\infty$-category of
$E_2$-spectral algebraic spaces. 

Note in particular that $X$ determines an underlying qcqs algebraic space
denoted $\pi_0 X$; any
properties such as noetherian, finite Krull dimension, etc.\ will refer to this
underlying algebraic space. 
\end{definition}

\begin{construction}[The spectrum of an $E_2$-ring]
We can construct $E_2$-spectral algebraic spaces from $E_2$-rings as follows. 
Let $A$ be an $E_2$-ring. 
By the results of \cite[Sec. 7.5.4]{HA}, for every \'etale $\pi_0(A)$-algebra
$A'_0$, we can construct an $E_2$-algebra $A'$ equipped with a map $A \to A'$
such that 
$\pi_0(A') \simeq A'_0$ and such that $\pi_*(A) \otimes_{\pi_0(A)} \pi_0(A')
\simeq \pi_*(A')$. Moreover, $A'$ is characterized by a universal property in
the $\infty$-category of $E_2$-algebras under $A$: maps $A' \to B$ (under $A$) are in
bijection with maps of commutative rings $\pi_0(A') \to\pi_0(B)$ under
$\pi_0(A)$. 
This easily yields a sheaf of $E_2$-rings on the \'etale site of $\spec( \pi_0
A)$ and an $E_2$-spectral algebraic space, which we write simply as
$\spec^{et}(A)$. Any spectral algebraic space is \'etale locally of this
form. 
\end{construction} 

\begin{definition}[\'Etale site] 
The \emph{\'etale site} of an
$E_2$-spectral algebraic space $X = (X_{et}, \mathcal{O}_X)$ is simply the site
$X_{et}$, which is the \'etale site of the underlying algebraic space.
This is a full subcategory of the $\infty$-category of $E_2$-spectral algebraic
spaces over $X$; such objects $Y \to X$ are called \emph{\'etale} over $X$. 

The theory of \'etale morphisms of $E_2$-rings 
\cite[Sec. 7.5]{HA}
gives the following property of \'etale morphisms of $E_2$-spectral algebraic
spaces. 
Given $Y \to X$ \'etale, for any $Z \to X$, we have an equivalence
\begin{equation}  \hom_{\algspc_{/X}}(Z, Y) \simeq
\hom_{\algspc_{/\pi_0X}}(\pi_0Z, \pi_0Y). \label{E2etalespectral}
\end{equation}
\end{definition}

\begin{definition}[Perfect modules] 
Let $X$ be an $E_2$-spectral algebraic space. 
  As usual, we denote by $\operatorname{Perf}(X)$ the full subcategory of
  $\mathcal{O}_X$-modules consisting of those $\mathcal{O}_X$-modules which
  locally lie in the thick subcategory generated by the structure sheaf.
  Equivalently, $\operatorname{Perf}(-)$ is right Kan extended from its value on
  affines, and on an affine $X$, so $X=\operatorname{Spec}^{et}(A)$ for an
  $E_2$-ring $A$, we have $\operatorname{Perf}(X)=\operatorname{Perf}(A)$.  (The
  equivalence of these two descriptions of $\operatorname{Perf}(X)$ follows from
  \'etale descent for perfect complexes on $E_2$-rings, for which see
  \cite[Theorem 5.4, Prop. 6.21]{DAGXI}.)  
  
 Given $X$,  the object $\operatorname{Perf}(X)$ is then a monoidal
 $\infty$-category.  More specifically, it is an associative algebra object in
 the symmetric monoidal $\infty$-category $\operatorname{Cat}_{\mathrm{perf}}$ of small idempotent-complete stable $\infty$-categories with Lurie's tensor product.  It further has the property of being \emph{rigid}: every object is both left and right dualizable.  This is obvious by reduction to the affine case.
\end{definition} 

\begin{definition}[Quasi-coherent sheaves]
Let $A$ be an $E_2$-ring and let $X = \spec^{et}(A)$ be the associated
$E_2$-spectral algebraic space. 
Given an $A$-module $M$, one obtains a sheaf of $\mathcal{O}_X$-modules 
on $X_{et}$ which sends an \'etale $A$-algebra $A'$ to $A' \otimes_A M$.  
Now let $X$ be an arbitrary $E_2$-spectral algebraic space. 
A \emph{quasi-coherent sheaf} on $X$ consists of the datum of an
$\mathcal{O}_X$-module on $X_{et}$ which restricts on each affine to a sheaf of
the above form. 
We let $\mathrm{QCoh}(X)$ denote the presentably monoidal, stable $\infty$-category
of quasi-coherent sheaves on $X$. Using general results, cf.\ 
\cite[Theorem 10.3.2.1]{SAG},\footnote{The results are stated assuming an
$E_\infty$-structure, but the arguments require only an $E_2$-structure.} it follows that (since $X$ is always assumed qcqs),
then $\mathrm{QCoh}(X)$ is compactly
generated and the compact objects are precisely $\mathrm{Perf}(X)$. 
\end{definition}

\begin{definition}[Algebraic $K$-theory] 
\label{def:Ktheory}
We define the \emph{$K$-theory} $K(X)$ of an $E_2$-spectral algebraic space $X$ to be
the (non-connective) $K$-theory of the stable $\infty$-category $\perf(X)$. Similarly, we define 
the topological Hochschild homology $\mathrm{THH}$, topological cyclic homology
$\mathrm{TC}$, etc. via $\perf(X)$. 
\end{definition} 

The main purpose of this section, and the remainder of this paper,  is to investigate the above presheaves of
spectra, and their variants, on $\algspc$. 
We recall first that these are all Nisnevich sheaves by an argument of Thomason
\cite{TT90}. 
It will next be convenient to review the generality of this statement and the
framework of localizing invariants. 

\subsection{Generalities on additive and localizing invariants}
Next we review the basic notions of additive and localizing invariants
\cite{BGT, HSS}; the slight difference is that we work over an $E_2$-base. 
Fix an $E_2$-spectral algebraic space $X$. 

\begin{definition}[Linear $\infty$-categories over $X$] 
Recall that $\perf(X)$ is a stable, monoidal $\infty$-category. 
Let $\mathrm{Cat}_{\mathrm{perf}}$ denote the symmetric monoidal
$\infty$-category of small, idempotent-complete stable $\infty$-categories
with the Lurie tensor product. 
By definition, a \emph{$\perf(X)$-linear $\infty$-category} is a left module over the
associative algebra $\perf(X)$ in $\mathrm{Cat}_{\mathrm{perf}}$. 
Equivalently, it is a left module over $\qcoh(X)$ in the $\infty$-category of
compactly generated, presentable $\infty$-categories. 
\end{definition}

\begin{definition}[Fiber-cofiber and split exact sequences] 
A sequence of $\perf(X)$-linear $\infty$-categories 
\( \mathcal{C} \to \mathcal{D}\to \mathcal{E}  \)
is a \emph{fiber-cofiber sequence} if and only if $\mathrm{Ind}(\mathcal{C}) \to \mathrm{Ind}(\mathcal{D}) \to
\mathrm{Ind}(\mathcal{E})$ is a split exact sequence 
of $\qcoh(X)$-linear presentable $\infty$-categories
(cf.\ \cite[Def. 3.8]{CMNN}). Similarly, we have also the notion of a \emph{split
exact sequence} of 
$\perf(X)$-linear $\infty$-categories. \end{definition} 

\begin{remark} 
{The fiber-cofiber sequences of small idempotent-complete
stable $\infty$-categories are equivalently just  the idempotent completions of
Verdier quotient sequences (i.e., $\mathcal{E}= \mathcal{D}/\mathcal{C}$ above), and this persists in the
$\operatorname{Perf}(X)$-linear context, cf.\ \cite[Remark D.7.4.4]{SAG}, \cite[Prop.
5.4]{HSS}.}
That is, given a sequence $\mathcal{C} \to \mathcal{D} \to \mathcal{E}$ of
$\perf(X)$-linear $\infty$-categories, it is a fiber-cofiber sequence (resp.
split exact sequence) if and only if it is so as a sequence of underlying stable
$\infty$-categories; the $\perf(X)$-linearity of the adjoints (at the level of
$\mathrm{Ind}$-completions) is automatic
\cite[Prop.~4.9(3)]{HSS}.  Moreover, as the terminology suggests a fiber-cofiber sequence is exactly a null-composite sequence which is both a fiber sequence and a cofiber sequence in the $\infty$-category of $\perf(X)$-modules in $\mathrm{Cat}_{\mathrm{perf}}$.
\end{remark}

\begin{definition}[Weakly localizing and additive invariants] 
Recall \cite{BGT} that a \emph{weakly localizing invariant} over $X$ is a
functor from $\operatorname{Perf}(X)$-linear small idempotent complete stable
$\infty$-categories to spectra\footnote{We could allow our localizing invariants
to take values in more general target stable $\infty$-categories than just
$\operatorname{Sp}$, but one can often reduce to the case of $\operatorname{Sp}$
using a Yoneda embedding.} which is \emph{exact}, meaning it sends fiber-cofiber
sequences to fiber-cofiber sequences.
A \emph{weakly additive} invariant over $X$  is a functor 
from $\perf(X)$-linear $\infty$-categories to spectra 
which carries split exact sequences of $\perf(X)$-linear
$\infty$-categories to direct sums. 

For a weakly localizing or additive invariant $\mathcal{A}$ over $X$, we can evaluate $\mathcal{A}$ on any qcqs algebraic space $Y\rightarrow X$ by setting
$$\mathcal{A}(Y\rightarrow X) := \mathcal{A}(\operatorname{Perf}(Y)).$$
This is contravariant in $Y$, via pullback of perfect modules.

\end{definition} 

\begin{remark} 
The ``weakly" qualification refers to that we don't require any commutation with
filtered colimits, whereas in \cite{BGT} this is required for localizing
invariants.  \end{remark}

\begin{example}\label{locwithcoeff}
Recall \cite{BGT} that algebraic $K$-theory $K$ and topological Hochschild and cyclic
homology  $\mathrm{THH}, \mathrm{TC}$ define weakly localizing invariants (on
all of $\mathrm{Cat}_{\mathrm{perf}}$), while connective $K$-theory $K_{\geq 0}$
defines an additive invariant. 
Another way to produce such invariants is as follows. Let $\mathcal{M}$ be a 
\emph{right} $\perf(X)$-module in $\mathrm{Cat}_{\mathrm{perf}}$. Then we obtain
a functor on $\perf(X)$-linear $\infty$-categories
\[ \mathcal{C} \mapsto K( \mathcal{C} \otimes_{\perf(X)} \mathcal{M})  \]
which defines a weakly localizing invariant over $X$. 
To see this, we observe that the construction 
$\mathcal{C} \mapsto 
\mathcal{C} \otimes_{\perf(X)} \mathcal{M}$ preserves fiber-cofiber sequences
(resp. split exact sequences). This follows because it is $(\infty, 2)$-categorical (i.e.,
it is well-defined for functors and natural transformations), and the condition
of a fiber-cofiber sequence is $(\infty, 2)$-categorical. Compare also the
discussion in \cite[Sec. 3.2]{CMNN} and \cite[Sec. 3]{HSS}.  
\end{example}

The primary goal of this section is to analyze \'etale descent and hyperdescent
properties of weakly localizing and additive invariants.  
Here we will prove the weaker Nisnevich descent. We begin with a
reduction to the connective case. 

\begin{lemma}
Let $X$ be an $E_2$-spectral algebraic space, and let $X_{\geq
0}=(X_{et},(\mathcal{O}_X)_{\geq 0})$ be its connective cover.  For every map
$U\rightarrow X$ in $X_{et}$, the natural functor of $\operatorname{Perf}(U_{\geq 0})$-linear stable $\infty$-categories
$$\operatorname{Perf}(X)\otimes_{\operatorname{Perf}(X_{\geq 0})}\operatorname{Perf}(U_{\geq 0})\rightarrow\operatorname{Perf}(U)$$
(adjoint to the $\operatorname{Perf}(X_{\geq 0})$-linear pullback map $\operatorname{Perf}(X)\rightarrow\operatorname{Perf}(U)$, where the target has a $\operatorname{Perf}(U_{\geq 0})$-linear structure) is an equivalence.
\label{}
\end{lemma}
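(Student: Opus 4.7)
The plan is to reduce to the affine case and then invoke a base-change formula for module $\infty$-categories over $E_2$-rings. First I would reduce to the situation where $X=\operatorname{Spec}^{et}(A)$ and $U=\operatorname{Spec}^{et}(B)$ are both affine, with $B$ an \'etale $E_2$-algebra over $A$. The right-hand side $\operatorname{Perf}(-)$ satisfies \'etale (hence Nisnevich) descent on $U$ by the Thomason-Trobaugh argument already invoked in the construction of $\operatorname{Perf}(X)$. The left-hand side satisfies the same descent because $\operatorname{Perf}((-)_{\geq 0})$ is a Nisnevich sheaf of $\operatorname{Perf}(X_{\geq 0})$-linear stable $\infty$-categories and tensoring against the fixed object $\operatorname{Perf}(X)$ over $\operatorname{Perf}(X_{\geq 0})$ in $\mathrm{Cat}_{\mathrm{perf}}$ preserves the finite limits encoding Nisnevich excision, since in the stable setting pullback squares can be reconstructed from their fibers, which the tensor product respects.

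Next I would canonically identify $B$ with the pushout $A\sqcup_{A_{\geq 0}}B_{\geq 0}$ in the $\infty$-category of $E_2$-rings. This follows from the universal property of \'etale extensions of $E_2$-rings, recorded in \eqref{E2etalespectral} (see \cite[Sec.~7.5]{HA}): for any $E_2$-ring $R$, the $\infty$-category of \'etale $E_2$-$R$-algebras is equivalent to the ordinary category of \'etale $\pi_0(R)$-algebras. Since $\pi_0(A)=\pi_0(A_{\geq 0})$, both \'etale $A$-algebras and \'etale $A_{\geq 0}$-algebras are classified by the same ordinary algebra $\pi_0(B)=\pi_0(B_{\geq 0})$, and one checks that the resulting extension of $A$ has the universal property of a pushout under $A_{\geq 0}$. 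The \'etaleness of $B_{\geq 0}$ over $A_{\geq 0}$ is essential here, as it bypasses the usual obstruction to forming pushouts of $E_2$-algebras.

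Passing to Ind-completions, the claim then becomes the equivalence
\[ \md(A)\otimes_{\md(A_{\geq 0})}\md(B_{\geq 0})\;\simeq\;\md(B) \]
of presentable $\md(A)$-linear stable $\infty$-categories, from which the lemma follows by restriction to compact objects. This is a special case of the base-change formula $\md(S)\otimes_{\md(R)}\md(T)\simeq\md(S\sqcup_R T)$ for $R\to S$ an arbitrary map of $E_2$-rings and $R\to T$ an \'etale $E_2$-extension. The main obstacle is verifying this base-change in the $E_2$-setting: the $E_\infty$-analogue is \cite[Theorem 4.8.4.6]{HA}, and the proof adapts, the point being that the \'etale hypothesis on $T$ simultaneously produces the pushout $S\sqcup_R T$ as an $E_2$-ring and supplies the flatness that makes the standard bar-resolution argument converge.
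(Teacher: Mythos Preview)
Your approach differs from the paper's, and the reduction to affines has a gap that is not easily closed without the machinery the paper actually uses. The paper does not reduce to affines. Instead it invokes Lurie's theory of quasi-coherent stacks \cite[Ch.~10]{SAG}: one defines a compactly generated quasi-coherent stack on $X_{\geq 0}$ whose sections over an affine \'etale $\operatorname{Spec}(A_{\geq 0}) \to X_{\geq 0}$ are $\operatorname{Mod}(A)$. Its global sections are $\operatorname{QCoh}(X)$ with compact objects $\operatorname{Perf}(X)$ by \cite[Sec.~10.3.2]{SAG}, and restricting the stack to $U_{\geq 0}$ gives the analogous stack with global compact objects $\operatorname{Perf}(U)$. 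The equivalence \cite[Theorem~10.2.0.2]{SAG} between quasi-coherent stacks over $X_{\geq 0}$ and $\operatorname{QCoh}(X_{\geq 0})$-linear categories then converts the tautological restriction-of-stacks into the claimed base-change formula, directly and globally.

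Two issues arise in your reduction. The minor one: your justification that tensoring with $\operatorname{Perf}(X)$ over $\operatorname{Perf}(X_{\geq 0})$ preserves Nisnevich excision squares appeals to ``the stable setting,'' but $\mathrm{Cat}_{\mathrm{perf}}$ is not a stable $\infty$-category, so this reasoning does not apply as written. (The conclusion can likely be salvaged via the fact that tensoring preserves fiber-cofiber sequences together with a recollement argument, but this needs to be spelled out.) The more serious one: your descent argument runs only in the variable $U$, yet you also assert that $X$ may be taken affine. Since $\operatorname{Perf}(X)$ and $\operatorname{Perf}(X_{\geq 0})$ are global invariants of $X$, there is no mechanism in your argument for passing from affine $X$ to general $X$; doing so would require knowing that the construction $X \mapsto \operatorname{Perf}(X)$, viewed as a $\operatorname{Perf}(X_{\geq 0})$-linear category, is itself determined by descent from affines---which is precisely the content of \cite[Theorem~10.2.0.2]{SAG}. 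Once one is genuinely in the affine case, your pushout identification of $B$ and the module-category base-change are correct and recover the local content of the paper's argument.
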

\begin{proof}
This follows from Lurie's theory of quasi-coherent stacks
\cite[Ch. 10]{SAG},\footnote{Sometimes Lurie assumes $E_\infty$-structures on
the structure sheaves, but this is cosmetic: at most a monoidal structure on the
categories of modules is used, and so $E_2$-rings are sufficient.} cf.\  also
\cite{TT90} and \cite[Theorem 0.2]{Toen12} for precursors.  Namely, we can
define a quasi-coherent stack over $X_{\geq 0}$ by having its sections over an
affine \'etale $\operatorname{Spec}(A_{\geq 0})\rightarrow X_{\geq 0}$ be given
by $\operatorname{Mod}(A)$.  The global sections of this stack are
$\operatorname{QCoh}(X)$.  Since $\operatorname{Mod}(A)$ is compactly generated
with compact objects $\operatorname{Perf}(A)$, it follows from \cite[Sec.
10.3.2]{SAG} that the global sections of this compactly generated stack are
compactly generated with compact objects $\operatorname{Perf}(X)$.  There is an
analogous compactly generated quasi-coherent stack over $U_{\geq 0}$ whose
global compact objects are $\operatorname{Perf}(U)$.  Then the tautological fact
that the restriction of the first stack from $X_{\geq 0}$ to $U_{\geq 0}$ yields
the second stack translates, via the equivalence of \cite[Theorem
10.2.0.2]{SAG}, to the claimed equivalence of $\operatorname{Perf}(X)$-linear categories.
\end{proof}

This admits the following reinterpretation:

\begin{corollary}
\label{reducetoconnective}
Let $X$ be an $E_2$-spectral algebraic space, and $\mathcal{A}$ a
weakly localizing invariant over $X$.  Then we can also view $\mathcal{A}$ as a
weakly localizing invariant over the connective cover $X_{\geq 0}$ of $X$ via the base-change
functor $\operatorname{Perf}(X_{\geq 0})\rightarrow\operatorname{Perf}(X)$, and
the induced presheaves on $et_X=et_{(X_{\geq 0})}$ are the same.

Thus any general claim about descent for \emph{all} localizing invariants on a qcqs algebraic space reduces to the connective case.
\end{corollary}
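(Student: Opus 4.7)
The plan is to read off the corollary from the preceding lemma plus the general mechanism for building localizing invariants by base change, as reviewed in \Cref{locwithcoeff}.

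First, I would define the induced functor. Given a weakly localizing invariant $\mathcal{A}$ on $\perf(X)$-linear $\infty$-categories, set
\[ \mathcal{A}'(\mathcal{C}) := \mathcal{A}\bigl( \mathcal{C} \otimes_{\perf(X_{\geq 0})} \perf(X) \bigr) \]
for any $\perf(X_{\geq 0})$-linear $\infty$-category $\mathcal{C}$. The target carries a natural $\perf(X)$-linear structure, so this is well-defined. To see that $\mathcal{A}'$ is again a weakly localizing invariant, I would invoke exactly the argument sketched in \Cref{locwithcoeff}: the functor $\mathcal{C}\mapsto \mathcal{C}\otimes_{\perf(X_{\geq 0})}\perf(X)$ is $(\infty,2)$-categorical (it comes from tensoring with a fixed bimodule), and the property of being a fiber-cofiber sequence is $(\infty,2)$-categorical in the sense that it is preserved by any such functor. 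Hence base change along $\perf(X_{\geq 0})\to \perf(X)$ sends fiber-cofiber sequences to fiber-cofiber sequences, and the composite with $\mathcal{A}$ remains exact.

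Next, I would compare the two presheaves on $et_X = et_{X_{\geq 0}}$. For $U\to X$ in $et_X$, the induced presheaf over $X_{\geq 0}$ evaluates to
\[ \mathcal{A}'(U_{\geq 0}\to X_{\geq 0}) = \mathcal{A}\bigl( \perf(U_{\geq 0}) \otimes_{\perf(X_{\geq 0})} \perf(X) \bigr), \]
and the preceding lemma identifies the argument with $\perf(U)$. Hence this equals $\mathcal{A}(U\to X)$, as desired. The bijection of \'etale sites $et_X = et_{X_{\geq 0}}$ (cf.\ \eqref{E2etalespectral}) guarantees that this identification of values is natural in $U$, giving an equivalence of presheaves of spectra on $et_X$.

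There is essentially no obstacle here: once the lemma is in place, the only nontrivial input is the preservation of fiber-cofiber sequences under base change of module categories, which is the standard $(\infty,2)$-categorical observation already used to produce localizing invariants ``with coefficients''. The final sentence of the corollary is then immediate, since any descent statement for the restriction of $\mathcal{A}'$ to the site $et_{X_{\geq 0}}$ translates verbatim into the corresponding descent statement for $\mathcal{A}$ on $et_X$.
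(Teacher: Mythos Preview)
Your proposal is correct and is exactly the reinterpretation the paper intends: the paper gives no separate proof of the corollary beyond the phrase ``This admits the following reinterpretation,'' and you have spelled out precisely that reinterpretation---defining the new invariant by base change as in \Cref{locwithcoeff}, and then invoking the preceding lemma to identify the values on \'etale opens. The only cosmetic point is the order of the tensor factors (the lemma writes $\perf(X)\otimes_{\perf(X_{\geq 0})}\perf(U_{\geq 0})$ rather than the reverse), but this matches the paper's own loose conventions in \Cref{locwithcoeff} and does not affect the argument.
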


The classical Thomason-Trobaugh argument (which is also an ingredient in the
theory of compactly generated quasicoherent stacks) then gives the following
basic result:

\begin{proposition}[Nisnevich descent for localizing invariants]
\label{Nisdescloc}
Let $X$ be an $E_2$-spectral algebraic space, and $\mathcal{A}$ a weakly localizing invariant over $X$.  Then $\mathcal{A}$ is a sheaf for $X_{Nis}$.
\end{proposition}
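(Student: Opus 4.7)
The plan is to reduce, via \Cref{reducetoconnective}, to the case of a connective $E_2$-spectral algebraic space, and then run the classical Thomason--Trobaugh argument using the characterization of Nisnevich sheaves by elementary distinguished squares. First, I would recall that, by \cite[Theorem 3.7.5.1]{SAG} (cf.\ also \Cref{Nisnevichexcision} in the stratified form), a presheaf of spectra $\mathcal{F}$ on $et_X$ is a Nisnevich sheaf if and only if (i) $\mathcal{F}$ carries finite coproducts to finite products and $\mathcal{F}(\emptyset) = 0$, and (ii) for every elementary Nisnevich square
\[ \xymatrix{
W \ar[r] \ar[d] & V \ar[d]^{p} \\
U \ar[r]^{j} & Y
} \]
in $et_X$ (with $j$ a quasi-compact open immersion, $p$ \'etale, and $p^{-1}(Y\smallsetminus U) \to Y \smallsetminus U$ an isomorphism), the induced square $\mathcal{F}(Y) \to \mathcal{F}(V) \times_{\mathcal{F}(W)} \mathcal{F}(U)$ is cartesian. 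Condition (i) is automatic for any weakly localizing invariant, since the zero category maps to $0$.

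Next, I would invoke the Thomason--Trobaugh localization theorem in the $E_2$-setting: for $U \subset Y$ a quasi-compact open immersion with closed complement $Z$, there is a fiber-cofiber sequence of $\perf(Y)$-linear $\infty$-categories
\[ \perf_Z(Y) \longrightarrow \perf(Y) \longrightarrow \perf(U), \]
where $\perf_Z(Y)$ denotes the full subcategory of objects whose restriction to $U$ vanishes. This holds in the $E_\infty$, connective setting by \cite[Ch.~7]{SAG}, and, after the reduction of \Cref{reducetoconnective}, suffices for our purposes; alternatively one uses the compactly generated quasi-coherent stack formalism of \cite[Ch. 10]{SAG}, which only requires an $E_2$-structure on $\mathcal{O}_Y$. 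The analogous sequence exists for $V$ with $W \subset V$ and closed complement $p^{-1}(Z)$, and $p^*$ gives a map of fiber-cofiber sequences.

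The crucial step is to show that the left-hand vertical map $\perf_Z(Y) \to \perf_{p^{-1}(Z)}(V)$ is an equivalence. This is Thomason's étale excision for perfect complexes with support: because $p^{-1}(Z) \to Z$ is an isomorphism, after Nisnevich-localizing $Y$ along $Z$ the morphism $p$ admits a section identifying the two support categories; more formally one checks this after passing to the formal completion along $Z$, where the étale morphism is itself an isomorphism and hence induces equivalences on modules with support on the closed subset. This is the only place where the Nisnevich hypothesis enters nontrivially.

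Given this equivalence, applying the weakly localizing invariant $\mathcal{A}$ to the diagram
\[ \xymatrix{
\perf_Z(Y) \ar[r] \ar[d]^{\simeq} & \perf(Y) \ar[r] \ar[d] & \perf(U) \ar[d] \\
\perf_{p^{-1}(Z)}(V) \ar[r] & \perf(V) \ar[r] & \perf(W)
} \]
produces a map of fiber sequences of spectra whose left-hand map is an equivalence, whence the right-hand square is cartesian, verifying Nisnevich excision for $\mathcal{A}$. The main obstacle is purely technical: checking that the Thomason--Trobaugh support-excision theorem goes through in the $E_2$-setting; the reduction to the connective case via \Cref{reducetoconnective} together with the fact that $E_2$-structures suffice to make $\perf(-)$ into a monoidal stable $\infty$-category (and hence to set up the theory of compactly generated quasi-coherent stacks) handles this.
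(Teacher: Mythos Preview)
Your proposal is correct and follows essentially the same route as the paper: reduce to the connective case via \Cref{reducetoconnective}, then run the Thomason--Trobaugh excision argument using elementary Nisnevich squares and the localization sequence for perfect complexes with support. The paper simply cites \cite[Prop.~A.15]{CMNN} (noting the $E_\infty$-hypothesis there is not used) rather than spelling out the argument, but what you have written is exactly what that citation unpacks to.
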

\begin{proof}
By \Cref{reducetoconnective}, we can reduce to the connective case; then this is proved in
\cite[Prop.~A.15]{CMNN} following an argument of \cite{TT90}.
Note that while the result in \cite{CMNN} is stated for $E_\infty$-spectral
algebraic spaces, the $E_\infty$-structure is nowhere used in the arguments. 
\end{proof}

\subsection{$\mathrm{THH}$ and $\mathrm{TC}$ of spectral algebraic spaces}

We next study the basic examples of topological Hochschild and cyclic homology. It is known from \cite{GH99} that $\operatorname{TC}$, which stands for
\emph{topological cyclic homology}, satisfies a very strong form of \'etale
descent, cf.\ also \cite{WG} in the more classical context of ordinary cyclic
homology.  Going through that argument in the current context gives the
following crucial result:

\begin{theorem}
\label{TCissheaf}
Let $X \in \algspc$ and let $\mathcal{M}$ be any right $\perf(X)$-module in $\mathrm{Cat}_{\mathrm{perf}}$.
The presheaf of spectra $\operatorname{TC}_\mathcal{M}$ on 
$et_X$ defined by
$$\operatorname{TC}_\mathcal{M}(U) =  \operatorname{TC}(\perf(U)\otimes_{\perf(X)}\mathcal{M})$$
is an \'etale hypersheaf. 
Moreover, for any prime number $p$, $\mathrm{TC}_\mathcal{M}/p$ is an \'etale Postnikov
sheaf. 
\end{theorem}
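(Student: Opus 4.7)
The plan is to follow Geisser--Hesselholt \cite{GH99} and its refinement in \cite[Sec.~3]{BMS2}, adapting to the $E_2$-setting with module coefficients. Since $\TC_\mathcal{M}$ is a weakly localizing invariant, \Cref{Nisdescloc} provides Nisnevich descent on $et_X$. Combined with \Cref{ifNisreducetoaffine} and \Cref{redtofintieGalois}, proving the \'etale hypersheaf property reduces to establishing finite Galois hyperdescent on affines: for every $G$-Galois cover $\spec B \to \spec A$ with $\spec A$ affine and \'etale over $X$, we must show $\TC_\mathcal{M}(\spec A) \simeq \TC_\mathcal{M}(\spec B)^{hG}$, together with a uniform bound on the weak nilpotence of the associated cosimplicial object depending only on $G$. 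The hypersheaf conclusion then follows from \Cref{nilpandhyp}.

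The key technical input is \'etale rigidity for $\mathrm{THH}$: for an \'etale morphism $A \to B$ of connective $E_2$-rings and any right $\perf(A)$-module $\mathcal{N} \in \mathrm{Cat}_{\mathrm{perf}}$, the natural map
\[
\mathrm{THH}(\mathcal{N}) \otimes_A B \longrightarrow \mathrm{THH}(\perf(B) \otimes_{\perf(A)} \mathcal{N})
\]
is an equivalence of cyclotomic spectra. This is a module-coefficient upgrade of the Weibel--Geller theorem \cite{WG}; in the $E_2$-setting it follows from formal \'etaleness of $A \to B$ (vanishing of the relative cotangent complex, cf.\ \cite[Sec.~7.5]{HA}), noting that the module-coefficient version reduces to the ring-level one by the monoidality of $\mathrm{THH}$. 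Combined with Galois descent for modules ($A \simeq B^{hG}$ and $M \simeq (M \otimes_A B)^{hG}$), this yields $\mathrm{THH}(\mathcal{M}) \simeq \mathrm{THH}(\perf(B) \otimes_{\perf(A)} \mathcal{M})^{hG}$ as cyclotomic spectra. Since $\TC$ is a right adjoint (mapping out of the unit cyclotomic spectrum) and preserves finite limits, the corresponding descent $\TC_\mathcal{M}(\spec A) \simeq \TC_\mathcal{M}(\spec B)^{hG}$ follows.

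For the uniform nilpotence upgrading \v{C}ech to hyperdescent, I exploit that the defining Galois property $\perf(B) \otimes_{\perf(A)} \perf(B) \simeq \prod_G \perf(B)$ exhibits $\perf(B)$ as a retract of an induced $G$-object in $\perf(A)$-linear categories, hence as $0$-nilpotent with $G$-action. Applying the symmetric monoidal functor $\mathrm{THH}$ transports this to a uniform $0$-nilpotence of $\mathrm{THH}(\perf(B))$ in $\fun(BG, \sp)$, and via $\mathrm{THH}(\perf(B) \otimes_{\perf(A)} \mathcal{M}) \simeq \mathrm{THH}(\perf(B)) \otimes_{\mathrm{THH}(A)} \mathrm{THH}(\mathcal{M})$ and the module-over-algebra clause of \Cref{general:dnilpotentobj}(3), the same bound passes to $\mathrm{THH}(\perf(B) \otimes_{\perf(A)} \mathcal{M})$. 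The nilpotence then propagates through to $\TC_\mathcal{M}(\spec B)$ via the standard presentation of $\TC$ as a finite limit involving $\mathrm{THH}^{hS^1}$ and $\mathrm{THH}^{tS^1}$ (which commute with the $G$-action), with an explicit bound on $d$ depending only on $G$.

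For the mod $p$ Postnikov sheaf statement, once \'etale hyperdescent is known, the \'etale cohomological dimension for $p$-torsion sheaves on affines is bounded (locally at most $1$ in characteristic $p$ by \Cref{modpdimensionex}, and finite in mixed/characteristic $0$ by the Leray argument of \Cref{etalecohdim}), so \Cref{hypforfinitecd} identifies hypercompleteness with Postnikov completeness for sheaves of $p$-complete spectra, yielding the conclusion. The main obstacle is the \'etale rigidity for $\mathrm{THH}$ in the module-coefficient, $E_2$ setting: the classical arguments are stated for discrete rings or $E_\infty$-ring spectra, and the adaptation relies on careful use of the \'etale theory for $E_2$-algebras \cite[Sec.~7.5]{HA} together with monoidal base-change properties of $\mathrm{THH}$ through the $\perf$-construction.
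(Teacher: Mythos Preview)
Your overall strategy---reduce to affines via Nisnevich descent, use \'etale base change for $\mathrm{THH}$, and then pass to $\TC$---is sound and matches the paper's, but the argument contains two genuine gaps.

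\textbf{The $0$-nilpotence claim is false.} You assert that the Galois identity $\perf(B)\otimes_{\perf(A)}\perf(B)\simeq\prod_G\perf(B)$ exhibits $\perf(B)$ as a retract of an induced $G$-object. It does not: that identity only says $\perf(B)$ becomes induced \emph{after base change to $\perf(B)$}, not over $\perf(A)$. Concretely, take $A=\mathbb{Z}$, $B=\mathbb{Z}[i]$, $G=\mathbb{Z}/2$. If $\perf(\mathbb{Z}[i])$ were $0$-nilpotent, then since $\mathrm{THH}$ preserves finite products and retracts, $\mathrm{THH}(\mathbb{Z}[i])$ would be $0$-nilpotent in $\fun(B\mathbb{Z}/2,\sp)$, hence $\pi_0\mathrm{THH}(\mathbb{Z}[i])\supset\mathbb{Z}[i]$ would be projective over $\mathbb{Z}[\mathbb{Z}/2]$. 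But $\mathbb{Z}[i]/2\cong\mathbb{F}_2\oplus\mathbb{F}_2$ with trivial $\mathbb{Z}/2$-action, which is not free over the local ring $\mathbb{F}_2[\mathbb{Z}/2]$, so $\mathbb{Z}[i]$ is not projective. Thus your nilpotence step, and with it the route to hyperdescent via \Cref{nilpandhyp}, collapses. (The paper does eventually prove a uniform nilpotence bound, \Cref{TCGnilpotent}, but only as a \emph{corollary} of the present theorem.)

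\textbf{The finite cohomological dimension hypothesis is missing.} Both your invocation of \Cref{nilpandhyp} for hyperdescent and your invocation of \Cref{hypforfinitecd} (via \Cref{etalecohdim}) for Postnikov completeness presuppose a global bound on \'etale cohomological dimension. The theorem has no such hypothesis: $X$ is an arbitrary qcqs $E_2$-spectral algebraic space, and its residue fields can have unbounded cohomological dimension. The paper circumvents this by the key observation you are missing: $\mathrm{THH}_{\mathcal{M}}/p$ (hence $\TC_{\mathcal{M}}/p$) vanishes on any ring where $p$ is invertible, so it is pushed forward from the closed \'etale subtopos over $\mathbb{F}_p$. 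There the mod~$p$ cohomological dimension is always $\leq 1$ (\Cref{modpdimensionex}), and this is what makes both hypercompleteness (closure under the colimits $(-)_{hC_{p^n}}$ via \Cref{critforhypcolimits}) and Postnikov completeness work unconditionally. The paper then assembles $\TC$ from $\mathrm{THH}$ via the $\mathrm{TR}^n$-tower and the fundamental cofiber sequence, rather than via a direct ``$\TC$ preserves limits'' argument on cyclotomic spectra.
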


Since $\mathrm{TC}_\mathcal{M}$ is a localizing invariant, it is a Nisnevich sheaf, so it suffices to treat the affine case $X=\spec^{et}(R)$,
with the affine \'etale site instead of the usual \'etale site
(\Cref{ifNisreducetoaffine}).  Recall that $\operatorname{TC}$ is built out of $\operatorname{THH}$; therefore we start by studying $\operatorname{THH}$.

Suppose $A$ is an $E_1$-ring.  Then $A$ is an $A-A$-bimodule (in spectra) in the usual way, and the spectrum $\operatorname{THH}(A)$ can be defined as the relative tensor product

$$\operatorname{THH}(A)=A\otimes_{A\otimes A^{op}}A.$$

Now suppose that $R$ is an $E_2$-ring and $A$ is promoted to an $E_1$-algebra in $R$-modules.  Then the $R$-module structure on $A$ in particular commutes with its $A-A$-bimodule structure, and this lets us view $\operatorname{THH}(A)$ as an $R$-module (via the left-hand copy of $A$ above, say).

Now we can state the following \'etale base change property of $\mathrm{THH}$. 
Cf.\ also \cite{WG, McMi03, Ma17} for the result for $E_\infty$-algebras. 
\begin{theorem}
\label{THHetalebasechange}
Let $R\rightarrow R'$ be an \'etale map of $E_2$-algebras, let $A$ an $E_1$-algebra over $R$, and let $B = A\otimes_R R'$ be its base change to $R'$.  Then the comparison map
$$R'\otimes_R\operatorname{THH}(A)\rightarrow\operatorname{THH}(B)$$
of $R'$-modules (adjoint to the map of $R$-modules $\operatorname{THH}(A)\rightarrow\operatorname{THH}(B)$ given by functoriality) is an equivalence.
\end{theorem}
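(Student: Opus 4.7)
The plan is to adapt the argument for $E_\infty$-rings (Weibel-Geller \cite{WG}, McCarthy-Minasian \cite{McMi03}, Mathew \cite{Ma17}) to the $E_2$-setting. The strategy is to introduce a relative version of $\mathrm{THH}$, use it to reduce to the case $A = R$, and then handle that case directly using the structure of \'etale $E_2$-algebras.

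First, introduce the relative topological Hochschild homology, $\mathrm{THH}^R(A) = A \otimes_{A \otimes_R A^{op}} A$, defined using the $E_2$-structure of $R$ to give $A \otimes_R A^{op}$ an $E_1$-$R$-algebra structure. The relative version tautologically commutes with base change of the base ring: $\mathrm{THH}^R(A) \otimes_R R' \simeq \mathrm{THH}^{R'}(B)$, since all tensor products are over $R$. The key computational input connecting the absolute and relative theories is a shearing identity
\[
\mathrm{THH}(A) \simeq \mathrm{THH}^R(A) \otimes_R \mathrm{THH}(R),
\]
which at the level of cyclic bar constructions amounts to the natural decomposition $A^{\otimes(n+1)} \simeq A^{\otimes_R(n+1)} \otimes_R R^{\otimes(n+1)}$, using the $E_2$-structure of $R$ to view $R^{\otimes(n+1)}$ as an $E_1$-$R$-algebra acting on $A^{\otimes(n+1)}$.

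Combining these two ingredients, one obtains $R' \otimes_R \mathrm{THH}(A) \simeq \mathrm{THH}^{R'}(B) \otimes_R \mathrm{THH}(R)$, whereas applying the shearing identity directly to $B$ gives $\mathrm{THH}(B) \simeq \mathrm{THH}^{R'}(B) \otimes_{R'} \mathrm{THH}(R')$. The two agree provided $R' \otimes_R \mathrm{THH}(R) \simeq \mathrm{THH}(R')$, which is the claim of the theorem in the special case $A = R$. This base case is handled using the structure of \'etale morphisms of $E_2$-rings from \cite[Sec.~7.5.4]{HA}: the multiplication $R' \otimes_R R' \to R'$ admits a canonical section, so that the diagonal $R' \to R' \otimes_R R'$ is a retract, exhibiting $R'$ as an ``idempotent'' $R \otimes R^{op}$-algebra. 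This property (the $E_2$-analogue of the classical fact that $R' \otimes_R R' \cong R' \oplus C$ as $R'$-modules for discrete \'etale $R \to R'$), combined with the cyclic bar model $\mathrm{THH}(R') \simeq |R'^{\otimes(\bullet+1)}|$, yields the desired equivalence by a direct manipulation.

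The main obstacle is establishing the shearing identity in the $E_2$-setting: in the $E_\infty$-case, the identity $A^{\otimes(n+1)} \simeq A^{\otimes_R(n+1)} \otimes_R R^{\otimes(n+1)}$ is transparent from the symmetric monoidal structure, but with only an $E_2$-structure on $R$ it requires careful work in the $(\infty, 1)$-categorical bimodule framework of \cite[Ch.~4]{HA}, using the braiding supplied by the $E_2$-structure on $R$ to equip the iterated tensor products with their $E_1$-algebra structures. Once the shearing identity is in place, the remainder of the proof proceeds essentially as in the $E_\infty$-case.
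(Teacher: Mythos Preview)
Your strategy is reasonable and would work in principle, but it takes a more circuitous route than necessary, and the step you yourself identify as the main obstacle is exactly the step the paper avoids.

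The paper's proof does not use a shearing identity. Instead, it performs a direct associativity manipulation: starting from $\mathrm{THH}(A)\otimes_R R' = A\otimes_{A\otimes A^{op}}A\otimes_R R' \simeq A\otimes_{A\otimes A^{op}} B$, it inserts $B\otimes B^{op}$ to rewrite this as $(B\otimes_A B)\otimes_{B\otimes B^{op}} B$, so that the comparison with $\mathrm{THH}(B)=B\otimes_{B\otimes B^{op}}B$ is induced by the multiplication $B\otimes_A B\to B$. This reduces the theorem to showing that the fiber $F$ of that multiplication, viewed as a $B$-$B$-bimodule, satisfies $F\otimes_{B\otimes B^{op}} B = 0$. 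A further base-change reduces to $A=R$, $B=R'$, and then the vanishing is proved by a Tor spectral sequence argument using a lift $\overline{e}\in\pi_0(R'\otimes R'^{op})$ of the idempotent splitting off the diagonal in $\pi_0(R'\otimes_R R')$: this element acts invertibly on $\pi_* R'$ and locally nilpotently on $\pi_* F$, forcing the Tor groups to vanish.

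So both approaches ultimately reduce to the case $A=R$, but the paper's reduction uses only elementary associativity of relative tensor products of bimodules, whereas yours requires the identity $\mathrm{THH}(A)\simeq \mathrm{THH}^R(A)\otimes_R \mathrm{THH}(R)$ in the $E_2$-setting, which you acknowledge needs careful justification. Your base-case sketch is also imprecise: the diagonal $R'\to R'\otimes_R R'$ is a retract for any unital map, not just \'etale ones; the actual content is that this section makes $R'$ a \emph{localization} of $R'\otimes_R R'$ at an idempotent, which is what the paper's Tor argument exploits. The paper's approach buys you a complete, self-contained proof with no deferred obstacles; yours would buy a more conceptual picture if the shearing identity were established, but as written it leaves the hardest part undone.
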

\begin{proof}
Associativity of relative tensor products gives
$$\operatorname{THH}(A)\otimes_RR' = A\otimes_{A\otimes A^{op}}A\otimes_RR'\simeq A\otimes_{A\otimes A^{op}}B,$$
which further evaluates to
$$(A\otimes_{A\otimes A^{op}}(B\otimes B^{op}))\otimes_{B\otimes B^{op}}B \simeq (B\otimes_A B)\otimes_{B\otimes B^{op}}B.$$
Via this our comparison map becomes
$$(B\otimes_A B)\otimes_{B\otimes B^{op}}B\rightarrow (B\otimes_B B)\otimes_{B\otimes B^{op}}B\simeq B\otimes_{B\otimes B^{op}}B.$$
Thus it suffices to show that the fiber $F$ of the multiplication map
$B\otimes_AB\rightarrow B$, viewed as a $B-B$-bimodule, satisfies
$F\otimes_{B\otimes B^{op}}B=0$.  By base-change, we can assume $A=R$ and $B=R'$, so we are in the $E_2$-setting.  At this point we can follow the proof of
\cite[Prop. 7.5.3.6]{HA}, which is the dual claim to what we're trying to establish.  Let
$\overline{e}\in\pi_0B\otimes_\mathbb{Z}\pi_0B$ be any element lifting the
idempotent $e\in\pi_0B\otimes_{\pi_0A}\pi_0B$ for which
$(\pi_0B\otimes_{\pi_0A}\pi_0B)[e^{-1}]=\pi_0B$ via the multiplication map.
Then $\overline{e}$ acts by an isomorphism on $\pi_\ast B$, but it acts locally
nilpotently on $\pi_\ast F$.  Now, there is a $\mathrm{Tor}$-spectral sequence converging conditionally to $\pi_\ast(F\otimes_{B\otimes B^{op}}B)$ with $E_2$ page of the form
$$\operatorname{Tor}^{\pi_\ast(B\otimes B^{op})}_{p,q}(\pi_\ast F,\pi_\ast B).$$
Considering the action of $\overline{e}\in \pi_0(B\otimes B^{op})$ on the Tor groups shows that the $E_2$-page vanishes, whence the conclusion.
\end{proof}

We can formally deduce the following extension:

\begin{corollary}\label{THHetalebasechangecor}
Let $R$ be an $E_2$-ring, and $\mathcal{M}$ a right $\perf(R)$-module in
$\mathrm{Cat}_{\mathrm{perf}}$.  Let $R \to R'$ be an \'etale map of
$E_2$-rings. Then the comparison map
$$R'\otimes_R\operatorname{THH}(\mathcal{M})\rightarrow\operatorname{THH}(R'\otimes_R\mathcal{M})$$
of $R'$-modules is an equivalence.
\end{corollary}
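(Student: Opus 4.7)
The plan is to reduce the general statement to \Cref{THHetalebasechange} via a Schwede-Shipley identification of compactly generated modules, combined with a filtered colimit argument.

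First, I would verify that both sides of the comparison map, viewed as functors of $\mathcal{M}$ from right $\perf(R)$-modules in $\mathrm{Cat}_{\mathrm{perf}}$ to $R'$-modules, commute with filtered colimits in $\mathcal{M}$. For the left-hand side, this follows because $\mathrm{THH}(-)$ commutes with filtered colimits of small idempotent-complete stable $\infty$-categories (cf.\ \cite{BGT}) and base change $R' \otimes_R (-)$ preserves all colimits. For the right-hand side, the functor $(-) \otimes_{\perf(R)} \perf(R')$ is a left adjoint on right $\perf(R)$-modules in $\mathrm{Cat}_{\mathrm{perf}}$ and hence preserves colimits, after which $\mathrm{THH}$ commutes with the filtered colimit as before.

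Next, I would write $\mathcal{M}$ as a filtered colimit, in the $\infty$-category of right $\perf(R)$-modules in $\mathrm{Cat}_{\mathrm{perf}}$, of its $\perf(R)$-linear thick subcategories generated by finitely many objects. Each such subcategory admits a compact $\perf(R)$-linear generator, namely the direct sum of its generators, reducing us to the case where $\mathcal{M}$ has a compact generator $X$. For such an $\mathcal{M}$, the $\infty$-categorical Schwede-Shipley theorem (\cite[Theorem 7.1.2.1]{HA}), applied in the $E_1$-monoidal $\infty$-category $\md(R)$ (which is $E_1$-monoidal since $R$ is $E_2$), identifies $\mathcal{M}$ with $\perf(A)$ as a right $\perf(R)$-module, where $A$ is the endomorphism $E_1$-algebra of $X$ promoted to an $E_1$-algebra in $\md(R)$ (i.e., an $E_1$-algebra over $R$ in the sense of \Cref{THHetalebasechange}). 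Under this identification, $\mathcal{M} \otimes_{\perf(R)} \perf(R') \simeq \perf(A \otimes_R R') = \perf(B)$, and the comparison map of the corollary becomes exactly the one already shown to be an equivalence in \Cref{THHetalebasechange}.

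The main obstacle is the careful bookkeeping of the $\perf(R)$-linearity in the Schwede-Shipley identification, given that $R$ is only an $E_2$-ring and so $\md(R)$ is merely $E_1$-monoidal; one must distinguish left and right module structures and verify that the resulting endomorphism $E_1$-algebra is naturally an algebra under $R$ in the correct handedness so that the previous theorem applies verbatim. Once this setup is established, the filtered colimit reduction and invocation of \Cref{THHetalebasechange} are routine.
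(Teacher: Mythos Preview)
Your proposal is correct and matches the paper's proof essentially step for step: reduce to the finitely generated case via filtered colimits (using that both sides commute with them), pass to a single generator by taking direct sums, apply the Schwede--Shipley theorem in the $R$-linear setting to identify $\mathcal{M}\simeq\perf(A)$, and then invoke \Cref{THHetalebasechange} together with Morita invariance of $\mathrm{THH}$. Your additional remarks about verifying filtered colimit compatibility and tracking left/right module structures in the $E_2$-only setting are exactly the points the paper glosses over.
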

\begin{proof}
When $\mathcal{M}$ is generated by a single object, then the Schwede-Shipley
theorem, in the form proved by Lurie in \cite[Theorem 7.1.2.1]{HA} (with a
natural extension to the $R$-linear case), gives $\mathcal{M}=\operatorname{Perf}(A)$ for some $E_1$-algebra $A$ over $R$, and so this is equivalent to the previous Theorem by the Morita invariance of $\operatorname{THH}$.  The same holds if $\mathcal{M}$ is generated by finitely many objects, because then it is generated by the single object given by their direct sum.  In general, $\mathcal{M}$ is a filtered colimit of its full subcategories generated by finitely many objects.  Since $\operatorname{THH}$ and base change commute with filtered colimits, we deduce the claim in full generality.
\end{proof}

\begin{corollary}
In the situation of the previous corollary, the presheaf $\operatorname{THH}_{\mathcal{M}}$ on the opposite of the category of $E_2$-rings over $R$ defined by 
$$\operatorname{THH}_{\mathcal{M}}(R')=\operatorname{THH}(R'\otimes_R\mathcal{M})$$
is a Postnikov complete \'etale sheaf.  

In fact, a stronger claim holds: the presheaf-level Postnikov truncations of $\operatorname{THH}_{\mathcal{M}}$ are already \'etale sheaves.
\end{corollary}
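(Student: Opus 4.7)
The plan is to recognize $\operatorname{THH}_{\mathcal{M}}$ as a quasi-coherent sheaf on $\operatorname{Spec}^{et}(R)$ and invoke \'etale (hyper)descent for such sheaves. Set $M := \operatorname{THH}(\mathcal{M})$, viewed as an $R$-module. Corollary~\ref{THHetalebasechangecor} supplies a natural equivalence $\operatorname{THH}_{\mathcal{M}}(R') \simeq R' \otimes_R M$ for every \'etale $R$-algebra $R'$, so $\operatorname{THH}_{\mathcal{M}}$ is exactly the quasi-coherent sheaf attached to $M$. \'Etale hyperdescent for quasi-coherent sheaves on an $E_2$-ring (which underlies the preceding lemma, cf.~\cite{DAGXI}) then shows that $\operatorname{THH}_{\mathcal{M}}$ is an \'etale sheaf.

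To establish the stronger claim about presheaf-level truncations, I would exploit flatness of \'etale morphisms. Since $\pi_\ast R' \cong \pi_\ast R \otimes_{\pi_0 R} \pi_0 R'$ with $\pi_0 R'$ flat over $\pi_0 R$, the Tor spectral sequence for $R' \otimes_R N$ degenerates to yield
\[ \pi_k(R' \otimes_R N) \cong \pi_k N \otimes_{\pi_0 R} \pi_0 R' \]
for every $R$-module $N$. In particular $R' \otimes_R \tau_{\leq n} M$ is already $n$-truncated and agrees on homotopy with $\tau_{\leq n}(R' \otimes_R M)$, so the canonical map $R' \otimes_R \tau_{\leq n} M \to \tau_{\leq n}(R' \otimes_R M)$ is an equivalence. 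Consequently $\tau_{\leq n}\operatorname{THH}_{\mathcal{M}}$ is the quasi-coherent sheaf associated to the $R$-module $\tau_{\leq n} M$, hence also an \'etale sheaf by the same descent result.

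Granted the strong claim, the sheaf-level Postnikov tower of $\operatorname{THH}_{\mathcal{M}}$ coincides with the presheaf-level Postnikov tower, so Postnikov completeness reduces to checking sectionwise Postnikov completeness of $R' \otimes_R M$. By the flat base-change formula above (which shows that the cofibers between consecutive truncations are concentrated in a single degree and preserved by $R' \otimes_R (-)$), this reduces to Postnikov completeness of $M$, which follows from the assumed Postnikov completeness of $\mathcal{O}_X$ together with the bar-complex description of $\operatorname{THH}$. The main technical point is the commutation of truncation with flat base change in the middle step; from there the remainder is a formal appeal to descent for quasi-coherent sheaves.
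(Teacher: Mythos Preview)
Your approach is essentially the same as the paper's: both identify $\operatorname{THH}_{\mathcal{M}}$ and its truncations with quasi-coherent sheaves via \'etale base change, then invoke \'etale descent. The paper d\'evissages one step further, reducing to the Eilenberg--MacLane layers $K(\pi_k\operatorname{THH}_{\mathcal{M}},0)$ and then citing only classical \'etale descent for quasi-coherent sheaves of abelian groups (exactness of the Amitsur complex), whereas you appeal directly to \'etale descent for $R$-modules; either works.

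Your final paragraph, however, is confused. Once you know the presheaf-level truncations are sheaves, Postnikov completeness is immediate and purely formal: the sheaf-level Postnikov tower coincides with the presheaf-level one, and sectionwise every spectrum is the limit of its Postnikov tower because $\operatorname{Sp}$ is left-complete. There is nothing to check about $M$ individually, and the appeal to ``Postnikov completeness of $\mathcal{O}_X$ together with the bar-complex description of $\operatorname{THH}$'' is a red herring --- that hypothesis concerns Postnikov completeness of the structure sheaf \emph{as an \'etale sheaf} on a spectral algebraic space, which is irrelevant here since $M=\operatorname{THH}(\mathcal{M})$ is just a spectrum. Drop that sentence and replace it with the one-line observation that $\operatorname{Sp}$ is left-complete.
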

\begin{proof}
It suffices to prove the second, stronger claim.  Since as $\ast$ varies over
$\mathbb{N}$ the functors $\Omega^{\infty-\ast}$ from spectra to spaces detect
equivalences and preserve limits, to show each Postnikov truncation
$(\operatorname{THH}_{\mathcal{M}})_{\leq n}$ is a sheaf is suffices to show that each
$(\operatorname{THH}_{\mathcal{M}})_{[m,n]}$ is a sheaf for $m\leq n$ in $\mathbb{Z}$.  By
d\'evissage up the Postnikov tower, it therefore suffices to see that each
$K(\pi_k\operatorname{THH}_{\mathcal{M}},0)$ is a sheaf for all $k\in\mathbb{Z}$.  But for
$R\rightarrow R'$ \'etale, \Cref{THHetalebasechangecor} on homotopy groups signifies
$$\pi_k\operatorname{THH}_{\mathcal{M}}(R)\otimes_{\pi_0R}\pi_0R'\overset{\sim}{\rightarrow}\pi_k\operatorname{THH}_{\mathcal{M}}(R').$$
In other words, $\pi_k\operatorname{THH}_{\mathcal{M}}$ on $E_2$-rings with an
\'etale map from $R$ identifies with the quasi-coherent sheaf associated to the
$\pi_0 R$-module $\pi_k\operatorname{THH}_{\mathcal{M}}(R)$.  Therefore the claim follows from standard \'etale descent theory in commutative algebra (exactness of the Amitsur complex plus preservation of finite products).
\end{proof}

\begin{proof}[Proof of \Cref{TCissheaf}] 
Since $\mathrm{TC}_{\mathcal{M}}$ is a Nisnevich sheaf, it suffices to treat this claim on the
affine \'etale site over an $E_2$-ring $R$.  
We will first show that it is an \'etale hypersheaf. 
Recall that, by definition (cf.\ \cite[Sec.
6.4.3]{DGM13}), $\operatorname{TC}(-)$ is built from $\operatorname{THH}(-)$
and the $\operatorname{TC}(-;p)/p$ for primes $p$ via homotopy limits and
extensions.  Since $\operatorname{THH}_{\mathcal{M}}(-)$ is an \'etale Postnikov  sheaf by
\Cref{THHetalebasechange}, it suffices to show that $\operatorname{TC}_{\mathcal{M}}(-;p)/p$
is a hypersheaf.  But again, $\operatorname{TC}(-;p)$ is built from the
$\operatorname{TR}^n(-;p)$ via homotopy limits, and the
$\operatorname{TR}^n(-;p)$ can be gotten inductively starting from
$\operatorname{TR}^1(-;p)=\operatorname{THH}(-)$ by the fundamental cofiber
sequence \cite{HM97} 
$$\operatorname{THH}(-)_{hC_{p^n}}\rightarrow \operatorname{TR}^n(-;p)\rightarrow\operatorname{TR}^{n-1}(-;p).$$
Thus it suffices to show that
$(\operatorname{THH}_{\mathcal{M}}(-)/p)_{hC_{p^n}}$ is a hypersheaf.  We know $\operatorname{THH}_{\mathcal{M}}/p$ is a hypersheaf, so the issue is just to check that this property is preserved by the homotopy orbits in this case.

However, letting $k=\pi_0R$ and viewing $\operatorname{THH}(-)$ as a sheaf on
$\operatorname{Spec}(R)_{et}=\operatorname{Spec}(k)_{et}$, we have that the
sheaf $\operatorname{THH}_{\mathcal{M}}(-)/p$ vanishes on any $k[1/p]$-algebra.  Therefore, it
is pushed forward from the \'etale topos of $k\otimes_{\mathbb{Z}}
\mathbb{F}_p$, the latter forming the closed complement of
$\operatorname{Spec}(k[1/p])_{et}$ in $\operatorname{Spec}(k)_{et}$ \cite[Exp.
IV, Sec. 9]{SGA4}. For sheaves supported on a closed subtopos, the question of
hypercompleteness doesn't depend on whether we consider the sheaves on the
original topos or on the subtopos, because the pushforward functor is fully
faithful and t-exact.  Thus it suffices to see that taking homotopy orbits
$(-)_{hC_{p^n}}$ preserves hypercompleteness of $p$-power torsion sheaves on
$\operatorname{Spec}(k\otimes_\mathbb{Z}\mathbb{F}_p)_{et}$.  However, the (mod
$p$) \'etale cohomological dimension of any commutative $\mathbb{F}_p$-algebra
is $\leq 1$ \cite[Theorem 5.1, Exp. X]{SGA4}, so it follows from 
\Cref{critforhypcolimits}
that hypercomplete sheaves are closed under all colimits of $p$-power torsion
presheaves.  This finishes the proof of hypercompleteness. 
Moreover, since any hypercomplete $p$-torsion sheaf of spectra  on $\spec(k
\otimes_{\mathbb{Z}} \mathbb{F}_p)_{et}$ is Postnikov complete
(cf.~\Cref{modpdimensionex}), the Postnikov
completeness claim follows too. 
\end{proof} 

The above argument by reduction to the \'etale site of an $\mathbb{F}_p$-algebra
also gives a cohomological dimension bound for the homotopy group sheaves:

\begin{corollary}
\label{TCPostnikovcomplete}
Let $n>0$.  Consider the \'etale hypersheaf $\operatorname{TC}_{\mathcal{M}}/n$
over an arbitrary $E_2$-algebra $R$.  Then its \'etale homotopy groups sheaves
$\widetilde{\pi}_\ast$ have cohomological dimension $\leq 1$, so
$\operatorname{TC}_{\mathcal{M}}/n$ is actually a Postnikov sheaf, and the resulting descent spectral sequence
$$H^{-p}(\operatorname{Spec}(R)_{et};\widetilde{\pi}_q)\Rightarrow
\pi_{p+q}(\operatorname{TC}_{\mathcal{M}}(R)/n)$$
simply gives short exact sequences
$$0\rightarrow H^1(\operatorname{Spec}(R)_{et};\widetilde{\pi}_{d+1})\rightarrow
\pi_d(\operatorname{TC}_{\mathcal{M}}(R)/n)\rightarrow H^0(\operatorname{Spec}(R)_{et};\widetilde{\pi}_d)\rightarrow 0$$
for all $d\in\mathbb{Z}$.
\end{corollary}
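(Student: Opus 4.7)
The plan is to show the \'etale homotopy group sheaves $\widetilde{\pi}_*(\mathrm{TC}_{\mathcal{M}}/n)$ are supported on the closed subtopos $\spec(\pi_0 R/n)_{et}\hookrightarrow \spec(R)_{et}$, and to analyze them component by component, invoking the classical fact that $p$-primary torsion \'etale sheaves on a commutative $\mathbb{F}_p$-algebra have cohomological dimension $\leq 1$ (cf.\ \Cref{modpdimensionex} and \cite[Exp.~X, Theorem~5.1]{SGA4}). Combined with the hypercompleteness of \Cref{TCissheaf}, a finite cohomological dimension bound will then imply Postnikov completeness via \Cref{hypforfinitecd}.

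First I would observe that for $A \in et_R$ on which $n$ is invertible, the \'etale base change formula \Cref{THHetalebasechangecor} gives $\mathrm{THH}_{\mathcal{M}}(A) \simeq A \otimes_R \mathrm{THH}_{\mathcal{M}}(R)$, so that $\mathrm{THH}_{\mathcal{M}}(A)$ carries a natural $A$-module structure commuting with the cyclotomic structure. Since $\mathrm{TC}$ is constructed from $\mathrm{THH}$ by iterated homotopy (co)limits for cyclic-group actions, $\mathrm{TC}_{\mathcal{M}}(A)$ inherits the $A$-module structure and hence is $\mathbb{Z}[1/n]$-local; in particular $\mathrm{TC}_{\mathcal{M}}(A)/n = 0$. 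Therefore the \'etale hypersheaf $\mathrm{TC}_{\mathcal{M}}/n$ vanishes on $\spec(R[1/n])_{et}$, and each $\widetilde{\pi}_k(\mathrm{TC}_{\mathcal{M}}/n)$ is extended by zero from the closed complement $\spec(\pi_0 R/n)_{et}$.

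By the Chinese remainder theorem, $\pi_0 R/n \simeq \prod_{p \mid n} \pi_0 R/p^{v_p(n)}$, so $\spec(\pi_0 R/n)_{et}$ splits as a finite disjoint union of \'etale sites of $\pi_0 R/p^{v_p(n)}$ for the primes $p \mid n$, and by topological invariance of the \'etale site each of these is equivalent to the \'etale site of the $\mathbb{F}_p$-algebra $\pi_0 R/p$. On the $p$-component, a stalk of $\widetilde{\pi}_k(\mathrm{TC}_{\mathcal{M}}/n)$ is a homotopy group of $\mathrm{TC}_{\mathcal{M}}(-)/n$ evaluated at a strictly henselian local ring in which $p$ lies in the maximal ideal and all $\ell \mid n$ with $\ell \neq p$ are units; the $A$-module argument above shows this stalk is $p^{v_p(n)}$-torsion. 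Hence the restriction of $\widetilde{\pi}_k(\mathrm{TC}_{\mathcal{M}}/n)$ to the $p$-component is a $p$-primary torsion sheaf, of cohomological dimension $\leq 1$ by the classical bound. Summing over components and pushing forward along the closed immersion into $\spec(R)_{et}$ (which is exact) yields cohomological dimension $\leq 1$ on $\spec(R)_{et}$.

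With the bound in hand, \Cref{hypforfinitecd} upgrades the \'etale hypersheaf $\mathrm{TC}_{\mathcal{M}}/n$ to a Postnikov sheaf, and in the descent spectral sequence of \Cref{descentss} only the columns $-p \in \{0, 1\}$ contribute. All higher differentials then vanish for degree reasons, and the two-column $E_\infty$-page produces the advertised short exact sequences
\[ 0 \to H^1(\spec(R)_{et}; \widetilde{\pi}_{d+1}) \to \pi_d(\mathrm{TC}_{\mathcal{M}}(R)/n) \to H^0(\spec(R)_{et}; \widetilde{\pi}_d) \to 0. \]
The main subtle point is the vanishing $\mathrm{TC}_{\mathcal{M}}/n|_{\spec(R[1/n])_{et}} = 0$, which rests on the \'etale base change formula for $\mathrm{THH}$ and on $\mathrm{TC}$ being assembled from $\mathrm{THH}$ by operations preserving the $A$-module structure; once this is granted, the cohomological dimension bound and spectral sequence collapse are formal.
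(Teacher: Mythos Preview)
Your overall strategy matches the paper's: show the homotopy sheaves are supported on the closed locus where $n$ is not invertible, decompose by primes dividing $n$, invoke the cohomological dimension $\leq 1$ bound for $p$-torsion \'etale sheaves over $\mathbb{F}_p$-algebras, and then use \Cref{hypforfinitecd} to upgrade hypercompleteness to Postnikov completeness, from which the two-column spectral sequence collapse is formal.

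There is, however, a genuine gap in your vanishing argument. You claim that $\mathrm{TC}_{\mathcal{M}}(A)$ ``inherits the $A$-module structure'' because ``$\mathrm{TC}$ is constructed from $\mathrm{THH}$ by iterated homotopy (co)limits for cyclic-group actions.'' This is not correct: the construction of $\mathrm{TC}$ from $\mathrm{THH}$ crucially involves the cyclotomic Frobenius maps (or, in the classical approach the paper uses, the restriction maps on $\mathrm{TR}^\bullet$), and these are \emph{not} $A$-linear. In particular $\mathrm{TC}(A)$ does not carry an $A$-module structure in general; for instance $\mathrm{TC}(\mathbb{F}_p)_{\hat p}$ has $\pi_0 = \mathbb{Z}_p$, not an $\mathbb{F}_p$-module.

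The conclusion you need is nonetheless true, and the fix is exactly what the paper does in the proof of \Cref{TCissheaf}: observe only that $\mathrm{THH}_{\mathcal{M}}(A)/p = 0$ when $p$ is invertible in $A$ (this \emph{does} follow from the $A$-module structure on $\mathrm{THH}_{\mathcal{M}}(A)$ coming from base change), and then note that this vanishing propagates through the construction of $\mathrm{TC}$ because the operations involved---homotopy orbits $(-)_{hC_{p^n}}$, inverse limits along the $\mathrm{TR}$-tower, fibers---all preserve the full subcategory of spectra on which $p$ acts invertibly. Once you replace your $A$-module claim with this observation, the rest of your argument is correct and coincides with the paper's.
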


\begin{corollary}
\label{TCGnilpotent}
Let $X$ be a qcqs algebraic space, and $Y\rightarrow X$ a $G$-Galois cover, $G$
a finite group, and $\mathcal{M}$ a $\perf(X)$-module in $\mathrm{Cat}_{\mathrm{perf}}$.  Then $\operatorname{TC}_{\mathcal{M}}(Y) \in \fun(BG, \sp)$ is $d$-nilpotent for some $d\geq 0$.
\end{corollary}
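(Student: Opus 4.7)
The strategy is to reduce nilpotence of the module $\TC_{\mathcal{M}}(Y)$ to nilpotence of an algebra and then apply the Tate-vanishing criterion \Cref{nilpexpbounded}. Since $\TC$ is lax symmetric monoidal and $\perf(Y) \otimes_{\perf(X)} \mathcal{M}$ is naturally a module over the $E_1$-algebra $\perf(Y)$ in $\perf(X)$-linear stable $\infty$-categories, the $G$-spectrum $\TC_{\mathcal{M}}(Y)$ acquires a canonical module structure over $\TC(Y) := \TC(\perf(Y)) \in \mathrm{Alg}(\fun(BG, \sp))$. By part~3 of \Cref{basicnilplemma}, it therefore suffices to prove $\TC(Y)$ itself is $d$-nilpotent; and by \Cref{reducetoconnective} we may assume $\mathcal{O}_Y$ is connective, so that $\TC(Y)$ lies in $\sp_{\geq -d_0}$ for some uniform $d_0$.

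By \Cref{nilpexpbounded}, the remaining task is to verify $\TC(Y)^{tG} = 0$. Prime by prime, this follows from the Tate-Thomason criterion \Cref{nilpotenceofetalePostnikov} applied with $\mathcal{A} = \TC/p$: by \Cref{TCissheaf} and \Cref{TCPostnikovcomplete} this is a Postnikov-complete \'etale sheaf of algebras, and its $p$-torsion homotopy group sheaves have cohomological dimension $\leq 1$ on each $Y/H$ by \Cref{modpdimensionex} (they are supported on the $\mathbb{F}_p$-fiber of $X$, as in the proof of \Cref{TCissheaf}). This yields nilpotence of the $G$-algebra $\TC(Y)/p$, so $(\TC(Y)/p)^{tG} = 0$; iterating with $\TC/p^n$ and passing to the limit gives $(\TC(Y)_{\hat p})^{tG} = 0$. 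Rationally, $(\TC(Y) \otimes \mathbb{Q})^{tG} = 0$ is automatic, since any finite group action on a $\mathbb{Q}$-module is $0$-nilpotent via averaging over $G$.

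The main obstacle is combining these vanishings into an integral statement $\TC(Y)^{tG} = 0$, which is delivered by the arithmetic fracture square for $\TC(Y)$. Since Tate constructions for finite groups preserve limits and $\TC(Y)$ is bounded below, applying $(-)^{tG}$ to the fracture
\[ \TC(Y) \simeq (\TC(Y) \otimes \mathbb{Q}) \times_{(\prod_p \TC(Y)_{\hat p}) \otimes \mathbb{Q}} \prod_p \TC(Y)_{\hat p} \]
reduces the vanishing of $\TC(Y)^{tG}$ to the $p$-complete and rational vanishings established above. Invoking \Cref{nilpexpbounded} part~2 then produces an integral nilpotence exponent for $\TC(Y)$, and hence for $\TC_{\mathcal{M}}(Y)$.
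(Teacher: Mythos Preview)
Your overall strategy --- reduce to nilpotence of the algebra $\TC(Y)$ via the module structure on $\TC_{\mathcal{M}}(Y)$, then verify Tate vanishing using the low \'etale cohomological dimension in characteristic $p$ --- is close to the paper's, but there are three concrete gaps.

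First, you omit the reduction to $X$ affine. The bound $\mathrm{cd}_p \leq 1$ you invoke via \Cref{modpdimensionex} (and implicitly via \Cref{TCPostnikovcomplete}) is stated only for affines; for a general qcqs algebraic space $Y/H$ over $\mathbb{F}_p$ no such uniform bound is available without further input. The paper handles this at the outset by using the finitary Nisnevich gluing of a qcqs algebraic space from affines together with the fact that $\TC_{\mathcal{M}}$ is a localizing invariant, so that nilpotence over each affine piece assembles to nilpotence over $X$.

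Second, $\TC/p$ is not a sheaf of algebras, since $\mathbb{S}/p$ carries no $E_1$-ring structure. The paper instead applies \Cref{nilpotenceofetalePostnikov} with $\mathcal{A}$ the constant sheaf on the honest $E_1$-ring $\operatorname{End}(\mathbb{S}/p^j)$, restricted to the mod~$p$ locus: this is bounded, hence Postnikov complete, and its $p$-torsion homotopy sheaves have cohomological dimension $\leq 1$ on affines. Then $\TC_{\mathcal{M}}(Y)/p^j$ is a module over $\mathcal{A}(Y)$ and is therefore $1$-nilpotent directly, with no need to pass through $\TC(Y)$ first.

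Third, your passage through the arithmetic fracture square is imprecise: $(-)^{tG}$ is exact but does not in general commute with the infinite product $\prod_p$ or the sequential limit defining $p$-completion. These commutations can be salvaged here using uniform bounded-below-ness of the terms, but you have not argued this. The paper sidesteps the issue entirely by working on the \emph{colimit} side: the cofiber of $\TC_{\mathcal{M}}(Y) \to \TC_{\mathcal{M}}(Y)_{\mathbb{Q}}$ is $\varinjlim_n \Sigma\, \TC_{\mathcal{M}}(Y)/n$, a countable filtered colimit is a cofiber of countable direct sums, and direct sums of $1$-nilpotent objects remain $1$-nilpotent. This gives $3$-nilpotence of the torsion part and hence $4$-nilpotence of $\TC_{\mathcal{M}}(Y)$ in the affine case, with no limit manipulations needed.
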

\begin{proof}
Every qcqs algebraic space is glued from affines in a finitary manner in the
Nisnevich topology \cite[Sec.
3.4.2]{SAG}, so
by localization we can reduce to the case of $X$ affine.  In this case we claim
that $d=4$ works.  Any rational spectrum with $G$-action is $0$-nilpotent (being a module
over $H\mathbb{Q}$ with trivial $G$-action, which is a retract of
$H\mathbb{Q}[G]$), so it suffices to see that $\varinjlim_n
\operatorname{TC}_{\mathcal{M}}(Y)/n$ is $3$-nilpotent.  A countable filtered colimit is the
cofiber of a map of countable direct sums, 
and a countable direct sum of $s$-nilpotent objects is $s$-nilpotent (for any
$s$),
so it suffices to see that each
$\operatorname{TC}_{\mathcal{M}}(Y)/n$ is $1$-nilpotent.  For this we can assume $n=p^j$ is a
prime power.  Then again since $\operatorname{TC}_{\mathcal{M}}/p^j$ is zero over any ring
where $p$ is invertible, we can view it as an \'etale sheaf on the mod $p$ locus
of $\operatorname{Spec}(A)$.  It is Postnikov complete by
\Cref{TCPostnikovcomplete}, but on the other
hand it can be considered as a module over the $p$-power torsion $E_1$-ring
$End(\mathbb{S}/p^j)$.  Since the (mod $p$) locus of any commutative ring has (mod $p$)
\'etale cohomological dimension $\leq 1$,
\Cref{nilpotenceofetalePostnikov} implies that $\operatorname{TC}_{\mathcal{M}}(Y)/p^j$ is $1$-nilpotent, finishing the proof.
\end{proof}

\subsection{\'Etale descent for telescopically localized invariants}
Here we prove 
a generalization of the main result of \cite{CMNN} to the $E_2$-case. 
That is, we show that telescopically localized localizing invariants 
satisfy  \'etale descent. 
Since we already know Nisnevich descent (\Cref{Nisdescloc}), it suffices to
handle finite \'etale descent by \Cref{redtofintieGalois}. For this, we can even
work with additive invariants. 

\begin{definition}[The finite \'etale site] 
Let $A$ be an $E_2$-algebra. The \emph{finite \'etale site} of $A$ is the
$\infty$-category of \'etale $E_2$-algebras $B$ under $A$ 
(cf.~\cite[Corollary 7.5.4.3]{HA})
such that $\pi_0(B)$
is finite projective as a $\pi_0(A)$-module; this is equivalent to the finite
\'etale site of $\pi_0(A)$. We say that a finite \'etale $A$-algebra $B$ is
\emph{$G$-Galois} (for a finite group $G$) if $\pi_0(B)$ is a $G$-Galois
extension of $\pi_0(A)$.\footnote{In particular, we do not consider here the
more general Galois extensions of ring spectra of \cite{Rog08}.}
\end{definition}

It is well-known (via an elementary transfer argument) that 
with \emph{rational} coefficients, additive invariants satisfy finite \'etale
descent. Compare the following result, which will be proved more generally
below. 

\begin{proposition}[{Cf.\ \cite[Prop. 2.14]{Th85}}] 
Let $A$ be an $E_2$-algebra, and let $\mathcal{A}$ be a weakly additive invariant for
$A$-linear $\infty$-categories valued in $\mathbb{Q}$-module spectra. 
Then $\mathcal{A}$ defines a sheaf on the finite \'etale site of
$\spec(A)$. 
\end{proposition}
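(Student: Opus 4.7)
The plan is the classical transfer argument going back to Thomason \cite[Prop.~2.14]{Th85}, here adapted to the $E_2$-setting and to general weakly additive invariants valued in $\mathbb{Q}$-module spectra. First, it suffices to verify \v{C}ech descent for a single faithfully flat finite \'etale cover $\spec(B) \to \spec(A)$, since finite disjoint unions are handled by additivity ($\perf(B_1 \times B_2) \simeq \perf(B_1) \times \perf(B_2)$). Since every such cover can be refined by a $G$-Galois cover, obtained by taking the Galois closure at the level of $\pi_0$ and lifting \'etale algebras via \eqref{E2etalespectral}, I would reduce further to the case where $A \to B$ is $G$-Galois for some finite group $G$.

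The second step is to identify the \v{C}ech cosimplicial $\perf(A)$-linear $\infty$-category $\perf(B^{\otimes_A \bullet+1})$ with the cobar construction presenting $\perf(A) \simeq \perf(B)^{hG}$. The Galois property gives a shearing equivalence $B \otimes_A B \simeq \prod_G B$ of $B$-algebras and, more generally, $\perf(B^{\otimes_A n+1}) \simeq \prod_{G^n} \perf(B)$, compatibly with the cosimplicial structure. Applying $\mathcal{A}$ and using additivity, one obtains an equivalence of augmented cosimplicial $H\mathbb{Q}$-modules
\[ \mathcal{A}(\perf(A)) \to \mathcal{A}(\perf(B^{\otimes_A \bullet+1})) \simeq \mathrm{CB}^\bullet_G\bigl(\mathcal{A}(\perf(B))\bigr), \]
whose totalization is $\mathcal{A}(\perf(B))^{hG}$ in $\md_{H\mathbb{Q}}$.

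Finally, I would verify that the augmentation $\mathcal{A}(\perf(A)) \to \mathcal{A}(\perf(B))^{hG}$ is an equivalence via the transfer. Since $B$ is dualizable as an $A$-module, the restriction functor $\perf(B) \to \perf(A)$ is $\perf(A)$-linear and yields a transfer $\mathrm{tr}: \mathcal{A}(\perf(B)) \to \mathcal{A}(\perf(A))$. The projection formula identifies $\mathrm{ind} \circ \mathrm{tr}$ acting on $\mathcal{A}(\perf(B))$ with the norm element $\sum_{g \in G} g \in \mathbb{Q}[G]$, while $\mathrm{tr} \circ \mathrm{ind}$ on $\mathcal{A}(\perf(A))$ is multiplication by $|G|$. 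Since $|G|$ is invertible in $\mathbb{Q}$, the averaging idempotent $e = |G|^{-1}\sum_g g$ is defined on $\mathcal{A}(\perf(B))$, and its image coincides both with $\mathcal{A}(\perf(A))$ (via $|G|^{-1}\mathrm{ind} \circ \mathrm{tr}$) and with the rational homotopy fixed points $\mathcal{A}(\perf(B))^{hG}$; these two summands must therefore agree.

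The hard part will be the bookkeeping in the second step, namely verifying that the cosimplicial structure on $\perf(B^{\otimes_A \bullet+1})$ truly matches the cobar construction under the shearing equivalences, together with the $E_2$-linear form of the projection formula used in the last step. Both are essentially structural and should follow from the general formalism of dualizable objects in monoidal $\infty$-categories together with Galois theory for $E_2$-algebras as in \cite[Sec.~7.5.4]{HA}.
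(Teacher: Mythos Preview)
Your overall strategy—reduce to $G$-Galois covers, identify the \v{C}ech nerve with the cobar construction for the $G$-action, then apply a transfer argument—is exactly the classical approach the paper has in mind, and your first two steps are fine. There is, however, a genuine gap in the third step.

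You assert that $\mathrm{tr} \circ \mathrm{ind}$ acts on $\mathcal{A}(\perf(A))$ as multiplication by $|G|$. But the composite functor $\perf(A) \to \perf(B) \to \perf(A)$ is $M \mapsto B \otimes_A M$ viewed as an $A$-module, so on any weakly additive invariant it acts as multiplication by the class $[B] \in K_0(A)$ (via the action of $K_0(A)=\pi_0\mathrm{End}_{\mot_A}([\perf(A)])$ on $\mathcal{A}(\perf(A))$). A $G$-Galois extension $B$ is finite projective of rank $|G|$ over $A$ but need not be stably free, so in general $[B] \ne |G|$ in $K_0(A)$. With only $\mathrm{ind}\circ\mathrm{tr}=\sum_g g$ in hand, your argument produces a retraction exhibiting $\mathcal{A}(\perf(B))^{hG}$ as a summand of $\mathcal{A}(\perf(A))$, with complementary idempotent $1 - |G|^{-1}[B]$; nothing you have written forces that complement to vanish.

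This is precisely the point the paper singles out as ``the key observation to run the argument'': one must know that $[B]$ is a \emph{unit} in $K_0(A)\otimes\mathbb{Q}$ (this is \cite[Prop.~5.4]{CMNN}). Granting that, $|G|^{-1}[B]$ is an idempotent which is also a unit, hence equals $1$, and your transfer argument goes through unchanged. One route to the unit claim: reduce to connective $A$ as in Corollary~\ref{reducetoconnective}, so that $K_0(A)=K_0(\pi_0A)$; then $[B]-|G|$ lies in the kernel of the rank map $K_0(\pi_0A)\to H^0(\spec\pi_0A,\mathbb{Z})$, and that kernel is a nil ideal by noetherian approximation.
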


Note that the conditions in 
\cite[Prop. 2.14]{Th85} are slightly stronger; however, it is not difficult to
deduce the more general result. The key observation to run the argument is that if $A \to B$ is
finite \'etale, then the class that $B$ defines in $K_0(A) \otimes \mathbb{Q}$
is a unit (cf.\ also \cite[Prop. 5.4]{CMNN}). 
In this subsection, we give an extension 
of this transfer argument to additive invariants that take values in
$L_n^f$-local spectra rather than 
rational spectra. For $E_\infty$-rings, this was done in \cite{CMNN}. We extend
to the $E_2$-case using the Dundas-Goodwillie-McCarthy theorem, starting with
$K$-theory. 

Throughout this section we fix an implicit prime $p$. 
We use the theory of finite localizations, cf.\ \cite{Mi92}. In particular, as is
conventional, we let $L_n^f: \sp \to\sp_{(p)}$ denote $p$-localization
together with the finite localization away
from a type $(n + 1)$ ($p$-local) finite spectrum.

\begin{proposition} 
\label{finitedescLnK}
Let $A$ be a connective $E_2$-algebra. 
Then the functor $B \mapsto L_n^f (K_{\geq 0}(B))$ defines a sheaf on the 
finite \'etale site of $A$. 
\end{proposition}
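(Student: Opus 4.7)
The plan is to reduce the $E_2$-case to the discrete case (where \cite{CMNN} applies) by means of the Dundas-Goodwillie-McCarthy (DGM) theorem, combined with the \'etale sheaf property of $\mathrm{TC}$ proved in \Cref{TCissheaf}. The key preliminary observation is that, by \eqref{E2etalespectral} (or \cite[Sec.~7.5]{HA}), the finite \'etale site of $A$ is canonically equivalent to the finite \'etale site of the discrete commutative ring $\pi_0 A$ via $B \mapsto \pi_0 B$; in particular, a presheaf on finite \'etale $A$-algebras is a sheaf if and only if the corresponding presheaf on finite \'etale $\pi_0 A$-algebras is one.

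Next, I would apply DGM to the augmentation $A \to \pi_0 A$. Since $A$ is connective and the map is an isomorphism on $\pi_0$, the fiber is connective, so DGM (applied stagewise to the Postnikov tower of $A$, whose successive stages are nilpotent square-zero extensions, and then passing to the limit) yields a natural pullback square
\[ \xymatrix{ K_{\geq 0}(A) \ar[r] \ar[d] & K_{\geq 0}(\pi_0 A) \ar[d] \\ \mathrm{TC}(A) \ar[r] & \mathrm{TC}(\pi_0 A) } \]
in which the vertical maps are cyclotomic traces. Naturality supplies the analogous square for every finite \'etale $B$ over $A$, involving $\pi_0 B$. Since $L_n^f$ is a smashing (in particular finite-limit-preserving) localization, applying it gives
\[ L_n^f K_{\geq 0}(B) \simeq L_n^f K_{\geq 0}(\pi_0 B) \times_{L_n^f \mathrm{TC}(\pi_0 B)} L_n^f \mathrm{TC}(B), \]
functorially in finite \'etale maps out of $A$.

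The proof is then completed by verifying that each of the three presheaves on the right-hand side defines a finite \'etale sheaf, and invoking closure of sheaves under finite limits. Indeed, $B \mapsto L_n^f \mathrm{TC}(B)$ is a sheaf by \Cref{TCissheaf}, since $L_n^f$ preserves the totalizations defining the descent condition. The presheaves $B \mapsto L_n^f K_{\geq 0}(\pi_0 B)$ and $B \mapsto L_n^f \mathrm{TC}(\pi_0 B)$ are obtained by restriction along the site equivalence of the first paragraph from presheaves on the finite \'etale site of the \emph{discrete} (hence $E_\infty$) ring $\pi_0 A$; as such, they are sheaves by the main result of \cite{CMNN} (together with \Cref{TCissheaf} for $\pi_0 A$).

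The main obstacle is ensuring that DGM is available with the correct functoriality in the $E_2$-setting; this is essentially standard, since DGM for connective $E_1$-rings suffices, and applying it inductively along the Postnikov tower of $A$ (where each successive map is a square-zero extension) and taking the limit gives the required square. A small additional verification is that the square is compatible with finite \'etale base change $A \to B$, which follows from the naturality of the cyclotomic trace and the fact that $\pi_0 B \simeq \pi_0 A \otimes_{\pi_0 A} \pi_0 B$ identifies the DGM squares compatibly.
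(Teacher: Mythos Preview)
Your overall strategy matches the paper's exactly: reduce to the discrete case via the Dundas--Goodwillie--McCarthy pullback square, handle $L_n^f K_{\geq 0}(\pi_0 B)$ by \cite{CMNN}, and handle the two $\mathrm{TC}$ terms separately. The decomposition and functoriality discussion are fine.

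There is one genuine gap. You assert that ``$B \mapsto L_n^f \mathrm{TC}(B)$ is a sheaf by \Cref{TCissheaf}, since $L_n^f$ preserves the totalizations defining the descent condition.'' But $L_n^f$ is a smashing localization, hence given by tensoring with $L_n^f \mathbb{S}$; it preserves finite limits, not arbitrary totalizations such as $(-)^{hG}$ for a finite group $G$. Knowing that $\mathrm{TC}(A) \simeq \mathrm{TC}(B)^{hG}$ for a $G$-Galois cover does \emph{not} formally give $L_n^f \mathrm{TC}(A) \simeq (L_n^f \mathrm{TC}(B))^{hG}$. The same issue occurs in your treatment of $L_n^f \mathrm{TC}(\pi_0 B)$, where you cite \Cref{TCissheaf} again. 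The paper fills this gap by invoking \Cref{TCGnilpotent}: for any $G$-Galois cover $Y \to X$, the $G$-action on $\mathrm{TC}(Y)$ (and on $\mathrm{TC}(\pi_0 Y)$) is $d$-nilpotent for some $d$, so that $\mathrm{TC}(Y)^{hG}$ lies in the thick subcategory generated by $\mathrm{TC}(Y)$ and the totalization is effectively finite. Nilpotence is precisely what allows $L_n^f$ to pass inside the homotopy fixed points. You need to add this step.
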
 
\begin{proof} 
This follows using the Dundas-Goodwillie-McCarthy theorem 
and \cite{CMNN}. 
Note that for the finite \'etale site, the results of \cite{CMNN} do not require
a localizing invariant, only an additive invariant. 

Indeed, \cite[Theorem 5.1, Proposition 5.4]{CMNN} shows that $B \mapsto L_n^f ( K_{\geq 0}(\pi_0 B))$ 
defines a sheaf on the finite \'etale site (of $A$ or equivalently of
$\pi_0(A)$); note that this is a purely
algebraic statement about the ring $\pi_0(A)$, and the distinction between
$E_2$ and $E_\infty$ disappears. 
By the Dundas-Goodwillie-McCarthy theorem, it now suffices to show that
$B \mapsto L_n^f ( \mathrm{TC}(B))$ and $B \mapsto L_n^f (\mathrm{TC}(\pi_0 B)
)$ are sheaves on the finite \'etale site. 
Now \Cref{TCissheaf} shows that $B \mapsto \mathrm{TC}(B), B \mapsto \mathrm{TC}(\pi_0
B)$ are \'etale hypersheaves, so it remains to see that we still have finite
\'etale descent after applying $L_n^f$-localization, i.e., that we can commute
$L_n^f$-localization and finite Galois homotopy fixed points. However, this follows from 
\Cref{TCGnilpotent}. 
\end{proof}

To extend the result from connective $K$-theory to arbitrary additive
invariants, we recall the construction of the $\infty$-category of
noncommutative motives. The basic theorem about this $\infty$-category is that
algebraic $K$-theory becomes representable here. 

Fix an $E_2$-ring $A$. We will abbreviate ``$\perf(A)$-linear
$\infty$-category'' to ``$A$-linear $\infty$-category.''

\newcommand{\mot}{\mathrm{Mot}}
\begin{construction}[Noncommutative motives, cf.\ \cite{Tab08, BGT, HSS}]
Let $\mot_A$ denote the presentable, stable $\infty$-category of
\emph{noncommutative motives} over $A$. By construction, we have a functor 
$\mathcal{C} \mapsto [\mathcal{C}]$ from $A$-linear $\infty$-categories 
to $\mot_A$ with the following two properties: 
\begin{enumerate}
\item  $\mathcal{C} \mapsto [\mathcal{C}]$ preserves filtered colimits. 
\item Given an $A$-linear $\infty$-category $\mathcal{C}$ admitting a
semiorthogonal decomposition into subcategories $\mathcal{C}_1, \mathcal{C}_2$
(i.e., one has a strict exact sequence $\mathcal{C}_1 \to \mathcal{C} \to
\mathcal{C}_2$),
we have that
the natural map 
$[\mathcal{C}] \to [\mathcal{C}_1] \times [\mathcal{C}_2]$ is an equivalence in
$\mot_A$. 
\end{enumerate}
Furthermore, $\mot_A$ is initial among presentable, stable $\infty$-categories admitting 
a functor $[\cdot]$ with the above two properties. 
\end{construction}

In the following, we write $\mathrm{Fun}_A(-, -)$ for $A$-linear, exact functors
between $A$-linear $\infty$-categories. 

\begin{theorem}[{Cf.\ Tabuada \cite{Tab08}, Blumberg-Gepner-Tabuada \cite{BGT},
Hoyois-Schereztoke-Sibilla \cite{HSS}}]\label{additivekviamotives}
Let $\mathcal{C}_1, \mathcal{C}_2$ be small $A$-linear $\infty$-categories
and suppose that $\mathcal{C}_1$ is compact as an $A$-linear $\infty$-category. 
Then there is a natural equivalence $\hom_{\mot_A}( [\mathcal{C}_1],
[\mathcal{C}_2]) \simeq K_{\geq 0}(
\fun_A(\mathcal{C}_1, \mathcal{C}_2))$. 
\end{theorem}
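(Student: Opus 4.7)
My plan is to follow the standard strategy of Blumberg-Gepner-Tabuada \cite{BGT} and Hoyois-Scherotzke-Sibilla \cite{HSS}, adapted to the $E_2$-base $A$. The idea is to reduce to the representability statement $\hom_{\mot_A}([\perf(A)], [\mathcal{D}]) \simeq K_{\geq 0}(\mathcal{D})$ for every $A$-linear $\infty$-category $\mathcal{D}$, and then obtain the general case by tensor-hom adjunction using that $[\mathcal{C}_1]$ is dualizable when $\mathcal{C}_1$ is compact. Along the way, naturality of the comparison map will be automatic from its construction via the universal property of $\mot_A$.

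For the base case, both sides define additive, filtered-colimit-preserving functors from $A$-linear $\infty$-categories to spectra: the left-hand side by construction of $\mot_A$, and the right-hand side by the basic properties of connective algebraic $K$-theory (cf.\ \Cref{def:Ktheory} and \Cref{locwithcoeff}). By the universal property characterizing $\mot_A$, there is a canonical comparison map, and both agree on $\mathcal{D} = \perf(A)$, where the left-hand side equals the endomorphism spectrum of the unit and the right-hand side equals $K_{\geq 0}(\perf(A))$; this pins down the natural transformation on the ``free'' case. To promote this to a general equivalence I would use that any additive functor on $A$-linear $\infty$-categories is determined by its value on the unit together with its additive structure, which again is the content of the universal property of $\mot_A$. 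To deduce the general case from the base case, I would endow $\mot_A$ with a monoidal structure induced by the relative tensor product $\otimes_{\perf(A)}$ on $A$-linear $\infty$-categories (the $E_2$-structure on $A$ makes $\perf(A)$ an $E_1$-algebra in $\mathrm{Cat}_{\mathrm{perf}}$, which is enough to define this tensor product on left modules). Compactness of $\mathcal{C}_1$ translates into dualizability of $[\mathcal{C}_1]$ in $\mot_A$, and standard duality then gives
\[
\hom_{\mot_A}([\mathcal{C}_1], [\mathcal{C}_2]) \simeq \hom_{\mot_A}([\perf(A)], [\mathcal{C}_1]^\vee \otimes [\mathcal{C}_2]),
\]
after which a Morita-theoretic identification $[\mathcal{C}_1]^\vee \otimes [\mathcal{C}_2] \simeq [\fun_A(\mathcal{C}_1, \mathcal{C}_2)]$ reduces to the base case.

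The main obstacle will be to handle the monoidal (as opposed to symmetric monoidal) structure on $\mot_A$ in the $E_2$-setting. Since $A$ carries only an $E_2$-structure, the tensor product of left $\perf(A)$-modules is merely monoidal, so the duality argument must be set up using left duals alone and one must keep left- and right-module conventions consistent across the Morita identification. Ensuring that the $\perf(A)$-action on $\fun_A(\mathcal{C}_1, \mathcal{C}_2)$ arising from tensor-hom adjunction really matches the canonical one (using the braiding supplied by the $E_2$-structure) is essentially the technical content of extending the $E_\infty$-arguments of \cite{HSS} to our setting; once these conventions are fixed, the proof becomes a direct application of the machinery of \cite{BGT, HSS} with no additional ideas required.
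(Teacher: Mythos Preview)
The paper does not give a proof of this theorem; it cites \cite{Tab08, BGT, HSS} and remarks that the \cite{HSS} arguments, written for a symmetric monoidal base, ``go through with only a monoidal structure'' on $\perf(A)$. Your overall strategy---prove the base case $\mathcal{C}_1 = \perf(A)$ from the universal property, then reduce the general case via duality---is precisely the \cite{BGT, HSS} approach.

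However, your implementation of the reduction step contains a genuine error. You assert that the $E_1$-structure on $\perf(A)$ ``is enough to define this tensor product on left modules'' and hence a monoidal structure on $\mot_A$. This is false: left modules over an $E_1$-algebra $R$ do not admit a relative tensor product among themselves. The balanced tensor product $\otimes_R$ pairs a \emph{right} $R$-module with a \emph{left} $R$-module; to obtain a monoidal structure on $\mathrm{LMod}_R$ one needs $R$ to be at least $E_2$, i.e.\ $A$ to be at least $E_3$. Consequently there is no monoidal structure on $A$-linear $\infty$-categories or on $\mot_A$, and no internal notion of left or right dual in $\mot_A$ of the kind you invoke. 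Your later remark that ``the tensor product of left $\perf(A)$-modules is merely monoidal'' repeats the same mistake.

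The repair is to replace monoidal duality by the bimodule pairing. For $\mathcal{C}_1$ compact one produces a \emph{right} $\perf(A)$-module $\mathcal{C}_1^\vee$ together with a natural equivalence
\[
\fun_A(\mathcal{C}_1, \mathcal{D}) \simeq \mathcal{C}_1^\vee \otimes_{\perf(A)} \mathcal{D}
\]
in $\mathrm{Cat}_{\mathrm{perf}}$ for all left modules $\mathcal{D}$; compare \Cref{kernelfunctor} and \Cref{locwithcoeff}, which use exactly this right--left pairing. The functor $\mathcal{D} \mapsto K_{\geq 0}(\mathcal{C}_1^\vee \otimes_{\perf(A)} \mathcal{D})$ is then an additive invariant by \Cref{locwithcoeff}, and one identifies it with $\hom_{\mot_A}([\mathcal{C}_1], -)$. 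This is presumably what the paper intends by saying that the symmetric-monoidal arguments of \cite{HSS} carry over to the merely monoidal setting.
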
 

\begin{remark} 
The reference \cite{BGT} considers the case $A = \mathbb{S}$. The paper
\cite{HSS} 
proves the result  for a stably symmetric monoidal $\infty$-category with all objects
dualizable (e.g., $\perf(A)$ for $A$ an $E_\infty$-ring). However, the arguments
all go through with only a monoidal structure, and hence $A$ is allowed to only be $E_2$. 
\end{remark}

\begin{remark} 
The use here of the language of noncommutative motives is not essential. 
For our purposes, it would be sufficient to start with 
the additive $\infty$-category of finite \emph{projective} $B$-modules (for each $B$ which is
finite \'etale over $A$) and $A$-linear additive functors; then one can use
additive $K$-theory. It plays a larger role in \cite{CMNN} because of the use of
extensions such as $KO \to KU$, which are not \'etale at the level of $\pi_0$. 
\end{remark}

Our goal is to upgrade 
\Cref{finitedescLnK} to a statement at the level of $\mot_A$, and then to deduce a
corresponding statement for an arbitrary additive invariant. 

\begin{proposition} 
\label{kernelfunctor}
Let $A$ be an $E_2$-ring, $B$ a finite \'etale
$E_2$-$A$-algebra, and $\mathcal{M}$ an $A$-linear $\infty$-category.
Then there is a natural equivalence
of stable $\infty$-categories
\[ \fun_A( \perf_B, \mathcal{M}) \simeq \perf_{B^{op}} \otimes_{\perf(A)} \mathcal{M} . \]
\end{proposition}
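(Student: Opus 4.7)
The strategy is to deduce the formula from a general duality statement: $\perf_B$ is dualizable as a $\perf(A)$-linear $\infty$-category, with dual $\perf_{B^{op}}$. Once this dualizability is established, the desired equivalence $\fun_A(\perf_B, \mathcal{M}) \simeq \perf_{B^{op}} \otimes_{\perf(A)} \mathcal{M}$ is a formal consequence of the universal property of internal homs in a closed symmetric monoidal $(\infty,2)$-category, applied to the $(\infty,2)$-category of $\perf(A)$-modules in $\mathrm{Cat}_{\mathrm{perf}}$ (or equivalently, after passing to Ind-completions, in the $\qcoh(A)$-linear compactly generated presentable $\infty$-categories, restricting back to compact objects at the end).

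To exhibit the duality, I would write down the unit and counit using the diagonal bimodule. One has a Morita-type identification $\perf_B \otimes_{\perf(A)} \perf_{B^{op}} \simeq \perf_{B \otimes_A B^{op}}$, valid since $A$ is $E_2$ and the tensor product $B \otimes_A B^{op}$ makes sense as an $E_1$-algebra over $A$. The unit $\eta \colon \perf(A) \to \perf_{B \otimes_A B^{op}}$ is the $\perf(A)$-linear functor classifying the diagonal bimodule $B_\Delta \in \perf_{B \otimes_A B^{op}}$, and the counit $\varepsilon \colon \perf_{B^{op}} \otimes_{\perf(A)} \perf_B \simeq \perf_{B^{op} \otimes_A B} \to \perf(A)$ is the functor $M \mapsto M \otimes_{B^{op} \otimes_A B} B_\Delta$. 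This lands in $\perf(A)$ because $B_\Delta$ is perfect over $A$ (using that $\pi_0(B)$ is finite projective over $\pi_0(A)$). The triangle identities then reduce, after Morita translation, to the formal computations $B_\Delta \otimes_B B_\Delta \simeq B_\Delta$ in the appropriate bimodule categories.

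The main technical obstacle — and the place where the finite \'etale hypothesis is genuinely used — is verifying that $B_\Delta$ is perfect as a $B \otimes_A B^{op}$-module (the ``smoothness'' input). Classically, a finite \'etale extension $\pi_0(A) \to \pi_0(B)$ is separable, so there is a separability idempotent $e \in \pi_0(B) \otimes_{\pi_0(A)} \pi_0(B^{op})$ which splits the multiplication as a map of $\pi_0(B)$-bimodules. Using the same kind of argument as in the proof of \Cref{THHetalebasechange} — namely, lifting $e$ to an element $\bar e \in \pi_0(B \otimes_A B^{op})$ and analyzing its action on homotopy groups — one shows that the multiplication $\mu \colon B \otimes_A B^{op} \to B$ itself admits a $B$-bimodule section. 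This presents $B_\Delta$ as a retract of the free $B \otimes_A B^{op}$-module of rank one, hence as perfect. Once this smoothness is in hand, the duality is established and the desired formula follows from the internal hom interpretation of the dual.
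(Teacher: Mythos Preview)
Your proposal is correct and identifies the same two key inputs as the paper: properness ($B$ perfect over $A$) and smoothness ($B$ perfect over $B\otimes_A B^{op}$). The organization, however, differs. The paper takes a more streamlined route: it passes to Ind-categories, where the identification
\[
\operatorname{Fun}_A^L(\operatorname{Mod}_B,\operatorname{Ind}(\mathcal{M}))\;\simeq\;\operatorname{Mod}_{B^{op}}(\operatorname{Ind}(\mathcal{M}))\;\simeq\;\operatorname{Mod}_{B^{op}}\otimes_{\operatorname{Mod}_A}\operatorname{Ind}(\mathcal{M})
\]
holds unconditionally by Lurie's Morita theory (\cite[Theorems 4.8.4.1 and 4.8.4.6]{HA}), and then uses properness and smoothness exactly to match up the compact objects on each side (``only if'' from properness, ``if'' from smoothness). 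Your approach instead packages these same inputs as a dualizability statement for $\perf_B$ and then invokes the internal-hom formula. This is formally equivalent, but note that your phrase ``closed symmetric monoidal $(\infty,2)$-category'' is not quite right in the $E_2$ setting: $\perf(A)$ is only monoidal, so its module category is not symmetric monoidal, and one has to be careful distinguishing left from right modules when formulating the duality. The paper's route via presentable module categories sidesteps this subtlety, since the relevant adjunctions are already supplied off the shelf by \cite{HA}. Your separability-idempotent argument for smoothness is fine; the paper simply asserts that $B$ is finitely generated projective over $B\otimes_A B^{op}$ ``as one checks on homotopy.''
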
 
\begin{proof} 
This is a standard consequence of the fact that $B$ is \emph{proper and smooth} as an $A$-algebra, i.e.:
\begin{enumerate}
\item $B$ is perfect as an $A$-module;
\item $B$ is perfect as a $B\otimes_A B^{op}$-module.
\end{enumerate}
Namely, passing to Ind-categories and use Lurie's tensor product on presentable $\infty$-categories, we always have
$$\operatorname{Fun}_A^L(\operatorname{Mod}_B,\operatorname{Ind}(\mathcal{M}))\simeq \operatorname{Mod}_{B^{op}}(\operatorname{Ind}(\mathcal{M})) \simeq \operatorname{Mod}_{B^{op}}\otimes_{\operatorname{Mod}_A}\operatorname{Ind}(\mathcal{M})$$
without any hypothesis on the $A$-algebra $B$, see \cite[Theorems 4.8.4.1 and 4.8.4.6]{HA}.  Thus what one needs to see is that an object of $\operatorname{Mod}_{B^{op}}(\operatorname{Ind}(\mathcal{M}))$ is compact if and only if the underlying object of $\operatorname{Ind}(\mathcal{M})$ is compact.  ``Only if" follows from condition 1 and ``if" follows from condition 2.

In fact, in our case $B$ is not just perfect but finitely generated projective over both $A$ and $B\otimes_A B^{op}$, as one checks on homotopy.
\end{proof}

\begin{remark} 
Let $A$ be an $E_2$-ring, and let $B$ be an $E_2$-algebra which is \'etale over
$A$. 
The above construction relied upon $B$ as an $E_1$-algebra in $A$-modules
(which form a monoidal $\infty$-category), which
enabled us to form the opposite algebra $B^{op}$. 
However, in fact $B$ is canonically identified with $B^{op}$, and both have the
structure of $E_2$-algebras under $A$; this follows from the theory of \'etale
morphisms as treated in \cite[Sec. 7.5]{HA}.  
\end{remark} 

\begin{corollary} 
\label{kernelfiniteetale}
Let $A$ be a connective $E_2$-ring, and let $B, B'$ be finite \'etale
$E_2$-algebras under $A$. 
By a slight abuse of notation, let $B \otimes_A B'$  denote the finite
\'etale $E_2$-algebra  over $A$ whose $\pi_0$ is $\pi_0(B) \otimes_{\pi_0(A)}
\pi_0(B')$. 
Then $\fun_A( \perf_B, \perf_{B'})$ is identified  with the stable
$\infty$-category of perfect modules over $B \otimes_A B'$. 
\end{corollary}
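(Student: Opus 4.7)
The plan is to deduce this corollary by combining \Cref{kernelfunctor} with two identifications, one for the opposite algebra and one for the relative tensor product of module categories. First I would apply \Cref{kernelfunctor} with $\mathcal{M} = \perf_{B'}$ to get
\[ \fun_A(\perf_B, \perf_{B'}) \simeq \perf_{B^{op}} \otimes_{\perf(A)} \perf_{B'}. \]
By the remark preceding the corollary, since $B$ is \'etale over $A$ it is canonically identified with $B^{op}$ as an $E_2$-algebra under $A$. Thus the task reduces to producing a natural equivalence
\[ \perf_B \otimes_{\perf(A)} \perf_{B'} \simeq \perf_{B \otimes_A B'}. \]

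To produce this equivalence, I would work with Ind-completions: since both sides are compactly generated, the equivalence of small stable $\infty$-categories is equivalent to an equivalence $\md_B \otimes_{\md_A} \md_{B'} \simeq \md_{B \otimes_A B'}$ of presentably monoidal (on the right) stable $\infty$-categories. The left-hand side is computed by the two-sided bar construction: since $\md_A$ is a monoidal $\infty$-category (using the $E_2$-structure on $A$), and $\md_B, \md_{B'}$ are module categories over it, the relative tensor product is compactly generated by the external product $B \boxtimes B'$ of the two units, whose endomorphism ring is the $E_1$-algebra $B \otimes_A B'$ (with multiplication induced by termwise multiplication, which is well-defined thanks to the $E_2$-structure on $A$). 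On the right-hand side, $\md_{B \otimes_A B'}$ is compactly generated by its unit, whose endomorphism ring is $B \otimes_A B'$ itself. Mapping the unit to the unit gives the desired equivalence by the Schwede-Shipley-Lurie theorem (\cite[Theorem 7.1.2.1]{HA}, applied in the $A$-linear context as in the proof of \Cref{kernelfunctor}).

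The main subtlety is ensuring that the $E_1$-algebra structure on the underlying tensor product $B \otimes_A B'$ coming from the $E_2$-structure on $A$ agrees with the \'etale $E_2$-algebra structure coming from the \'etale lift of $\pi_0(B) \otimes_{\pi_0(A)} \pi_0(B')$. This agreement is automatic, because the naive tensor product has
\[ \pi_n(B \otimes_A B') \simeq \pi_n(A) \otimes_{\pi_0(A)} \bigl( \pi_0(B) \otimes_{\pi_0(A)} \pi_0(B') \bigr) \]
by flat base change (both $B$ and $B'$ being $A$-flat in the \'etale sense); this manifestly satisfies the characterizing property of the \'etale lift from \cite[Sec.~7.5]{HA}, so the two $E_1$-structures agree by rigidity of \'etale extensions. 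Once this is in place, the compact generation and endomorphism calculations run as in the $E_\infty$-case, and the corollary follows.
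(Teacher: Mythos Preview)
Your proof is correct and follows essentially the same route as the paper: both apply \Cref{kernelfunctor} to reduce to identifying $\perf_{B^{op}} \otimes_{\perf(A)} \perf_{B'}$ with $\perf_{B \otimes_A B'}$, invoking $B^{op}\simeq B$ for \'etale $E_2$-algebras. The only difference is in how that last identification is made: the paper writes down the explicit Fourier--Mukai style comparison functor $P \mapsto \bigl(M \mapsto f_{2*}(f_1^*M \otimes_{B\otimes_A B'} P)\bigr)$ using the \'etale structure maps $f_1\colon B \to B\otimes_A B'$ and $f_2\colon B' \to B\otimes_A B'$, whereas you argue abstractly via compact generation and the Schwede--Shipley theorem. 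Your version is arguably more complete, since you explicitly address why the $E_1$-algebra $B\otimes_A B'$ produced by the bar construction agrees with the \'etale $E_2$-lift; the paper's proof leaves the verification that its kernel functor is an equivalence to the reader.
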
 
\begin{proof} 
We have natural maps of $E_2$-algebras under $A$, 
$f_1: B \to B \otimes_A B'$ and $f_2: B' \to B \otimes_A B'$, thanks to the
general theory of \'etale morphisms \cite[Sec. 7.5]{HA}.  These maps induce
extension of scalars functors
$f_1^*: \perf_B \to \perf_{B \otimes_A B'}, 
f_2^*: \perf_{B'} \to \perf_{B \otimes_A B'}
$ 
and right adjoint restriction of scalars functors $f_{1*}, f_{2*}$; all of
these are naturally $A$-linear.  
Given a perfect $B \otimes_A B'$-module $P$, 
we define a functor $\perf_B \to \perf_{B'}$ which sends 
$M \mapsto f_{2*}(f_1^*(M) \otimes_{B \otimes_A B'} P)$. 
Via \Cref{kernelfunctor}, this implies that functors are precisely modules
over this tensor product, whence the result. 
\end{proof} 
\begin{corollary} 
\label{sheafforfiniteetalehom}
Let $A$ be a connective $E_2$-ring. 
Then for any finite \'etale $E_2$-algebra $B$, the functor
\[ B' \mapsto L_n^f \hom_{\mot_A}( [\perf(B)], [\perf(B')])  \]
is a sheaf for the finite \'etale topology. 
\end{corollary}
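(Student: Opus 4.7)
The plan is to reduce the statement to \Cref{finitedescLnK} by chaining together the identifications from \Cref{additivekviamotives} and \Cref{kernelfiniteetale}, and then interpreting the base change $B' \mapsto B \otimes_A B'$ as a morphism of sites from the finite \'etale site of $A$ to the finite \'etale site of $B$.

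First, I would check that $\perf(B)$ is compact as an $A$-linear $\infty$-category, so that \Cref{additivekviamotives} applies with $\mathcal{C}_1 = \perf(B)$. This follows from \Cref{kernelfunctor}: the formula $\fun_A(\perf(B), \mathcal{M}) \simeq \perf(B^{op}) \otimes_{\perf(A)} \mathcal{M}$ makes the functor $\mathcal{M} \mapsto \fun_A(\perf(B), \mathcal{M})$ manifestly commute with filtered colimits. Then \Cref{additivekviamotives} together with \Cref{kernelfiniteetale} provide a natural equivalence
\[
\hom_{\mot_A}([\perf(B)], [\perf(B')]) \;\simeq\; K_{\geq 0}\bigl(\fun_A(\perf(B), \perf(B'))\bigr) \;\simeq\; K_{\geq 0}(B \otimes_A B'),
\]
natural in $B'$ ranging over the finite \'etale site of $A$. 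Applying $L_n^f$ throughout, the functor in question becomes $B' \mapsto L_n^f K_{\geq 0}(B \otimes_A B')$.

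The next step is to observe that the functor $B' \mapsto B \otimes_A B'$ defines a morphism of sites from the finite \'etale site of $A$ to the finite \'etale site of $B$: it sends finite \'etale $A$-algebras to finite \'etale $B$-algebras and preserves covers, and for any cover $B' \to B''$ in $\operatorname{fet}(A)$ the tower of iterated tensor products $B'' \otimes_{B'} \cdots \otimes_{B'} B''$ goes to $(B \otimes_A B'') \otimes_{B \otimes_A B'} \cdots \otimes_{B \otimes_A B'} (B \otimes_A B'')$, so that \v{C}ech nerves are preserved. Thus the descent condition for $B' \mapsto L_n^f K_{\geq 0}(B \otimes_A B')$ on $\operatorname{fet}(A)$ follows from the descent condition for $C \mapsto L_n^f K_{\geq 0}(C)$ on $\operatorname{fet}(B)$, which is precisely \Cref{finitedescLnK} applied to the connective $E_2$-ring $B$ (compatibility with finite products being equally straightforward).

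The only real content has already been absorbed into \Cref{finitedescLnK}, which itself rests on the Dundas--Goodwillie--McCarthy theorem and on the $G$-nilpotence of $\mathrm{TC}(-)_p^\wedge$-values on Galois covers supplied by \Cref{TCGnilpotent}; beyond this, there is no serious obstacle and the argument is a matter of tracking naturality of the chain of equivalences above.
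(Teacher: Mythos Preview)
Your proposal is correct and follows essentially the same approach as the paper: compactness of $\perf(B)$ via \Cref{kernelfunctor}, the identification with $K_{\geq 0}(B\otimes_A B')$ via \Cref{additivekviamotives} and \Cref{kernelfiniteetale}, and then an appeal to \Cref{finitedescLnK}. The paper's proof is more terse at the final step (``Then the claim follows from \Cref{finitedescLnK}''), whereas you explicitly spell out why, via the base-change morphism of sites; this is just an unpacking of the same step.
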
 
\begin{proof} 
From the proposition we deduce that $\perf(B)$ is compact as an $A$-linear $\infty$-category, so \Cref{additivekviamotives} shows that $\hom_{\mot_A}( [\perf(B)], [\perf(B')]) = K_{\geq 0}\fun_A(\perf(B),\perf(B'))$.  Applying \Cref{kernelfiniteetale} again identifies this with $K_{\geq 0}(\perf(B\otimes_AB'))$.  Then the claim follows from \Cref{finitedescLnK}. 
\end{proof} 

To extend this to more general localizing invariants, we will need a bit more
nilpotence technology. 
That is, for a $G$-Galois extension $A \to B$, we claim that the noncommutative motive 
$L_n^f [\perf(B)]$ is actually nilpotent, as an object of $\fun(BG, \mot_A)$.  

\begin{proposition} 
\label{nilpotenceGactionK}
Let $B$ be a $G$-Galois extension of the connective $E_2$-algebra
$A$.\footnote{That is, $B$ is \'etale over $A$, and $\pi_0(A) \to \pi_0(B)$ is $G$-Galois
as a map of commutative rings.}
Then the $G$-action on $L_n^f K_{\geq 0}(B)$ is nilpotent. 
\end{proposition}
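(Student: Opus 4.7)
The strategy is to reduce the $E_2$-case to the discrete (ordinary commutative ring) case using the Dundas-Goodwillie-McCarthy (DGM) theorem, thus splitting the problem into two pieces already handled earlier: the discrete case addressed by \cite{CMNN} and the $\mathrm{TC}$-nilpotence proved in \Cref{TCGnilpotent}.

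Concretely, consider the $G$-equivariant truncation map $B \to \pi_0 B$. Since both are connective $E_1$-rings (indeed $E_2$) and the fiber is $1$-connective, the DGM theorem gives a pullback square of $G$-spectra
\[
\begin{array}{ccc}
K_{\geq 0}(B) & \to & K_{\geq 0}(\pi_0 B) \\
\downarrow & & \downarrow \\
\mathrm{TC}(B) & \to & \mathrm{TC}(\pi_0 B),
\end{array}
\]
where $G$-equivariance is automatic from naturality of the cyclotomic trace. Comparing horizontal fibers yields a $G$-equivariant fiber sequence
\[ F \to K_{\geq 0}(B) \to K_{\geq 0}(\pi_0 B), \]
with $F \simeq \mathrm{fib}(\mathrm{TC}(B) \to \mathrm{TC}(\pi_0 B))$. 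Applying $L_n^f$ (which is exact) gives an analogous fiber sequence of $G$-spectra.

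Now I check that the outer two terms of the $L_n^f$-localized fiber sequence are nilpotent $G$-spectra. For $L_n^f K_{\geq 0}(\pi_0 B)$: the map $\pi_0 A \to \pi_0 B$ is a $G$-Galois extension of ordinary commutative rings, so this is the (already $E_\infty$) case handled in \cite{CMNN}, which proceeds precisely by showing nilpotence of the $G$-action (cf.\ the proof of \Cref{finitedescLnK}). For $L_n^f F$: by \Cref{TCGnilpotent} applied to the $G$-Galois covers $\spec^{et}(B) \to \spec^{et}(A)$ and $\spec(\pi_0 B) \to \spec(\pi_0 A)$, both $\mathrm{TC}(B)$ and $\mathrm{TC}(\pi_0 B)$ are $d$-nilpotent as $G$-spectra for some uniform $d$. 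Since $L_n^f$-localization is smashing, \Cref{basicnilplemma}(2) shows nilpotence is preserved, and hence $L_n^f F$ is nilpotent as an extension of two nilpotent $G$-spectra.

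Finally, since nilpotent $G$-spectra form a thick subcategory of $\fun(BG, \sp)$, the $L_n^f$-localized fiber sequence implies $L_n^f K_{\geq 0}(B)$ is nilpotent. The main potential obstacle is verifying that DGM can be invoked in this $G$-equivariant, $E_2$ setting; but DGM is proved for $1$-connective maps of connective $E_1$-rings, and the $G$-equivariance of the square is formal from the naturality of the cyclotomic trace (and $\mathrm{TC}$ is well-defined for $E_1$-rings). All other inputs — the discrete case from \cite{CMNN} and the nilpotence of $\mathrm{TC}$ of Galois covers from \Cref{TCGnilpotent} — are already in place.
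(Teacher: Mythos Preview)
Your overall strategy matches the paper's: both reduce via the Dundas--Goodwillie--McCarthy theorem and \Cref{TCGnilpotent} to the case where $A$ and $B$ are discrete, hence $E_\infty$. The one imprecision is your handling of the discrete case. You assert that \cite{CMNN} ``proceeds precisely by showing nilpotence of the $G$-action,'' and point to the proof of \Cref{finitedescLnK}; but both \cite{CMNN} (as cited here) and \Cref{finitedescLnK} establish \emph{descent}, i.e.\ $L_n^f K_{\geq 0}(A)\simeq (L_n^f K_{\geq 0}(B))^{hG}$, not nilpotence of the $G$-action. The paper itself flags this gap, writing that nilpotence ``effectively follows from the main results of \cite{CMNN}, though since it is not spelled out there explicitly we indicate the argument.''

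The missing step the paper supplies is: in the discrete case $K_{\geq 0}(B)$ is an $E_\infty$-algebra in $\fun(BG,\sp)$, so by \Cref{nilpexpbounded} it suffices to show $(L_n^f K_{\geq 0}(B))^{tG}=0$. A transfer argument from \cite{CMNN} gives $(K_{\geq 0}(B)^{tG})_{\mathbb{Q}}=0$, and then the May nilpotence conjecture \cite{MNN15} (which requires the $E_\infty$-structure) upgrades this to $L_n^f(K_{\geq 0}(B)^{tG})=0$; finally the ring map $L_n^f(K_{\geq 0}(B)^{tG})\to (L_n^f K_{\geq 0}(B))^{tG}$ forces the target to vanish. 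So your proof is correct modulo this one undersupported citation.
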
 
\begin{proof} 
Using the Dundas-Goodwillie-McCarthy theorem and \Cref{TCGnilpotent}, 
we see that it suffices to  assume that $A, B$ are discrete and in particular
$E_\infty$. 
In this case, it follows that $K_{\geq 0}(B)$ is an $E_\infty$-algebra in
$\fun(BG, \sp)$.
It suffices to show that for each $n$ and prime $p$, we have
$(L_n^f  K_{\geq 0}(B))^{tG} = 0$, thanks to \Cref{nilpexpbounded}. 
This effectively follows from the main results 
of \cite{CMNN}, though since it is not spelled out there explicitly we indicate
the argument.
Indeed,  a transfer argument (cf.\ \cite[Theorem 5.1]{CMNN}) shows that we have
that
$$(K_{\geq 0}(B)^{hG})_{\mathbb{Q}} \simeq (K_{\geq 0}(B)_{\mathbb{Q}})^{hG}
\simeq K_{\geq 0}(A)_{\mathbb{Q}}.$$  
The slightly subtle point is the first equivalence. 
In particular, taking $G$-homotopy fixed points commutes with rationalization on
$K_{\geq 0}(B)$. 
Taking $G$-homotopy orbits \emph{always} commutes with rationalization.
Combining these observations, it follows that 
\[ (K_{\geq 0}(B)^{tG})_{\mathbb{Q}} \simeq 
(K_{\geq 0}(B)_{\mathbb{Q}})^{tG} =0 .
\]
But by the May nilpotence conjecture \cite{MNN15}, since everything is now
$E_\infty$, this implies that for each 
$n$, we have $L_n^f ( K_{\geq 0}(B)^{tG}) = 0$. 
As we have a natural ring map 
$L_n^f ( K_{\geq 0}(B)^{tG}) \to (L_n^f K_{\geq 0}(B))^{tG}$, it follows that
the latter vanishes,
as desired. 
\end{proof} 

\begin{construction}[$L_n^f$-localized noncommutative motives]
The presentable stable $\infty$-category $\mot_A$ is compactly generated via the classes
$[\mathcal{C}]$, for $\mathcal{C}$ a compact $A$-linear $\infty$-category
(thanks to \cite[Prop. 5.5]{HSS}). 
It follows that when we form the $L_n^f$-localized $\infty$-category $L_n^f
\mot_A$ (i.e., Bousfield localization at the maps $F \otimes X \to X, X \in
\mot_A$, for $F$ a type $(n+1)$ complex), then the mapping spaces between
compact objects are simply $L_n^f$-localized. That is, we have
$$\hom_{L_n^f \mot_A} ( L_n^f [\mathcal{C}], L_n^f [\mathcal{D}]) \simeq L_n^f
\hom_{\mot_A}( [\mathcal{C}], [\mathcal{D}]) \simeq L_n^f K_{\geq 0}( \fun_A(
\mathcal{C}, \mathcal{D}))$$
when $\mathcal{C}, \mathcal{D}$ are compact $A$-linear $\infty$-categories. 
\end{construction}
\begin{corollary} 
\label{nilpotenceGactionmot}
Let $B$ be a $G$-Galois extension of the connective $E_2$-algebra $A$. 
Then the $G$-action on $L_n^f [\perf(B)]$ is nilpotent. 
\end{corollary}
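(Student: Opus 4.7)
The plan is to reduce the motivic nilpotence to the $K$-theoretic nilpotence already established in \Cref{nilpotenceGactionK}, using the endomorphism criterion of \Cref{nilpendocrit}. First, I would apply that criterion in the stable $\infty$-category $L_n^f \mot_A$: to show that $L_n^f[\perf(B)] \in \fun(BG, L_n^f \mot_A)$ is nilpotent, it suffices to verify that its endomorphism spectrum, equipped with the diagonal $G$-action coming from the Galois action on each of the two copies of $B$, is nilpotent in $\fun(BG, \sp)$.

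By the construction of $L_n^f \mot_A$ together with \Cref{additivekviamotives}, this endomorphism spectrum is $L_n^f K_{\geq 0}(\fun_A(\perf(B), \perf(B)))$, which via \Cref{kernelfiniteetale} (noting that $B \simeq B^{op}$ canonically for an \'etale $E_2$-algebra) identifies with $L_n^f K_{\geq 0}(\perf(B \otimes_A B))$ carrying the diagonal Galois action. The key geometric input is the Galois splitting $B \otimes_A B \overset{\sim}{\to} \prod_{g \in G} B$ of $E_2$-$A$-algebras, given by $b_1 \otimes b_2 \mapsto (g \mapsto b_1 \cdot g(b_2))$: it is classical at the level of $\pi_0$ and lifts to the $E_2$-level because finite \'etale $E_2$-algebras over $A$ are determined by their underlying $\pi_0$-algebras (\cite[Sec.~7.5]{HA}). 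A direct computation shows that under this splitting the diagonal $G$-action on the source corresponds to the action on $\prod_G B$ by simultaneous conjugation on the indexing set $G$ and Galois action on each component.

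Partitioning the indexing set into orbits under conjugation (i.e., conjugacy classes), one then obtains a decomposition in $\fun(BG, \sp)$,
\[ L_n^f K_{\geq 0}(B \otimes_A B) \;\simeq\; \bigoplus_{[g]} \mathrm{Ind}_{C_G(g)}^G \bigl( L_n^f K_{\geq 0}(B) \bigr), \]
where each summand is induced from the centralizer $C_G(g)$ and carries the Galois action restricted to that subgroup. By \Cref{nilpotenceGactionK}, $L_n^f K_{\geq 0}(B)$ is nilpotent as a $G$-spectrum; part~1 of \Cref{basicnilplemma} transfers this nilpotence to each $C_G(g)$; and $\mathrm{Ind}_H^G$ sends free $H$-spectra to free $G$-spectra, so an induction on the nilpotence exponent shows that it preserves nilpotence. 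Combining these with closure of nilpotent objects under finite direct sums completes the argument.

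The main obstacle, and the step requiring the most care, is verifying the precise form of the diagonal $G$-action on the Galois splitting $B \otimes_A B \simeq \prod_G B$ and ensuring this equivariance is valid at the $E_2$-level, not merely on $\pi_0$; once this bookkeeping is in place, everything else is a formal consequence of the nilpotence machinery already established.
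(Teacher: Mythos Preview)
Your argument is correct, but the paper takes a shorter route. Both proofs start the same way: apply \Cref{nilpendocrit} in $L_n^f\mot_A$ and identify the endomorphism spectrum of $L_n^f[\perf(B)]$ with $L_n^f K_{\geq 0}(B\otimes_A B)$ via \Cref{additivekviamotives} and \Cref{kernelfiniteetale}. The difference is which condition of \Cref{nilpendocrit} is verified. You use condition~3 (the diagonal $G$-action), which forces you to understand the diagonal action through the Galois splitting $B\otimes_A B \simeq \prod_G B$, decompose by conjugacy classes into induced pieces, and then argue that restriction and induction preserve nilpotence. The paper instead uses condition~2 (the $G\times G$-action): it simply observes that $B\otimes_A B$ is a $(G\times G)$-Galois extension of $A$, so \Cref{nilpotenceGactionK} applied with the group $G\times G$ directly shows the $(G\times G)$-action on $L_n^f K_{\geq 0}(B\otimes_A B)$ is nilpotent.

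The paper's route is cleaner precisely because it sidesteps the equivariance bookkeeping you flagged as the main obstacle: there is no need to analyze the diagonal action on the splitting or to lift the equivariance to the $E_2$-level, since one never splits $B\otimes_A B$ at all. Your approach has the minor virtue of only invoking \Cref{nilpotenceGactionK} for the original extension $B/A$ rather than for the larger extension $B\otimes_A B/A$, but since that proposition is stated for arbitrary finite Galois extensions this gains nothing.
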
 
\begin{proof} 
Combine \Cref{nilpotenceGactionK}, \Cref{kernelfiniteetale}, and
\Cref{nilpendocrit} (and the above construction). 
In particular, note that $B \otimes_A B$ is a $(G \times G)$-Galois extension of
$A$. 
\end{proof}

\begin{proposition} 
Let $\mathcal{C}$ be a stable $\infty$-category. 
Let $Y \in \fun(BG, \mathcal{C})$ be nilpotent and let $X \to Y^{hG}$ be a
morphism. 
Then the following are equivalent: 
\begin{enumerate}
\item $X \to Y^{hG} $ is an equivalence. 
\item For $Z $ either $X$ or $Y$, the map $\hom_{\mathcal{C}}(Z, X) \to
\hom_{\mathcal{C}}(Z,
Y)^{hG}$ is an equivalence. 
\end{enumerate}

\label{equivalencecritnilp}
\end{proposition}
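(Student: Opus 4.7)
The implication $(1)\Rightarrow(2)$ is immediate: since $\hom_{\mathcal{C}}(Z,-)$ preserves limits, the identification $\hom_{\mathcal{C}}(Z,Y^{hG})\simeq\hom_{\mathcal{C}}(Z,Y)^{hG}$ is natural, and postcomposition with an equivalence $f$ produces an equivalence for every choice of $Z$.

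For $(2)\Rightarrow(1)$, the plan is to work with $C=\mathrm{cofib}(f)\in\mathcal{C}$ and show $C=0$. Applying $\hom_{\mathcal{C}}(Z,-)$ to the cofiber sequence $X\xrightarrow{f} Y^{hG}\to C$ yields the fiber sequence
$$\hom_{\mathcal{C}}(Z,X)\to\hom_{\mathcal{C}}(Z,Y^{hG})\simeq\hom_{\mathcal{C}}(Z,Y)^{hG}\to\hom_{\mathcal{C}}(Z,C),$$
so hypothesis $(2)$ at $Z$ is equivalent to $\hom_{\mathcal{C}}(Z,C)=0$. Thus I may assume $\hom_{\mathcal{C}}(X,C)=0$ and $\hom_{\mathcal{C}}(Y,C)=0$, and must deduce $C=0$.

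The central step is to leverage the nilpotence of $Y$ to promote $\hom_{\mathcal{C}}(Y,C)=0$ into $\hom_{\mathcal{C}}(Y^{hG},C)=0$. For an induced object $p_{!}W$ with $W\in\mathcal{C}$, both $(p_{!}W)_{hG}$ and $(p_{!}W)^{hG}$ are canonically identified with $W$ (via $(qp)_{!}=(qp)_{*}=\mathrm{id}$), so the norm $(p_{!}W)_{hG}\to(p_{!}W)^{hG}$ is an equivalence. The class of $V\in\fun(BG,\mathcal{C})$ whose norm is an equivalence is closed under shifts, retracts, and cofiber sequences, because in a stable $\infty$-category both $(-)_{hG}$ and $(-)^{hG}$ preserve finite (co)fiber sequences. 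Hence this class contains every nilpotent object, and in particular $Y_{hG}\simeq Y^{hG}$. Consequently
$$\hom_{\mathcal{C}}(Y^{hG},C)\simeq\hom_{\mathcal{C}}(Y_{hG},C)\simeq\hom_{\mathcal{C}}(Y,C)^{hG}=0.$$

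To finish, I apply $\hom_{\mathcal{C}}(-,C)$ to $X\to Y^{hG}\to C$, obtaining the fiber sequence of mapping spectra
$$\hom_{\mathcal{C}}(C,C)\to\hom_{\mathcal{C}}(Y^{hG},C)\to\hom_{\mathcal{C}}(X,C),$$
whose outer terms both vanish; therefore $\hom_{\mathcal{C}}(C,C)=0$, so $\mathrm{id}_{C}=0$ and $C=0$, i.e., $f$ is an equivalence. The main obstacle is the nilpotence-to-norm-equivalence step via the thick subcategory argument; once $Y_{hG}\simeq Y^{hG}$ is established, the argument reduces to a formal mapping-spectrum manipulation using both vanishings in $(2)$.
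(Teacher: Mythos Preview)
Your proof is correct, but it reaches the key point by a different mechanism than the paper. Both arguments reduce to showing $\hom_{\mathcal{C}}(Y^{hG},C)=0$ given $\hom_{\mathcal{C}}(Y,C)=0$, and then conclude $\hom_{\mathcal{C}}(C,C)=0$ from the cofiber sequence. The paper gets there by observing that nilpotence of $Y$ forces the cosimplicial object computing $Y^{hG}$ to be rapidly converging, so $Y^{hG}=\mathrm{Tot}_n$ for some finite $n$ and hence $Y^{hG}$ lies in the thick subcategory generated by $Y$; then $C$ lies in the thick subcategory generated by $X$ and $Y$, and one concludes directly. You instead prove that the norm $Y_{hG}\to Y^{hG}$ is an equivalence (a thick subcategory argument starting from induced objects), then use the colimit universal property $\hom_{\mathcal{C}}(Y_{hG},C)\simeq\hom_{\mathcal{C}}(Y,C)^{hG}$.

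Both arguments are short and essentially equivalent in strength; the paper's version is marginally more self-contained because it never needs to invoke $Y_{hG}$ or the norm map. In a bare stable $\infty$-category, the existence of $Y_{hG}$ and of the norm transformation are not entirely for free: one should either note that for nilpotent $Y$ the orbits exist (since they exist for induced objects and are preserved under extensions and retracts), or pass to a presentable enlargement. Your sentence ``both $(p_!W)_{hG}$ and $(p_!W)^{hG}$ are canonically identified with $W$, so the norm is an equivalence'' is the one place that deserves an extra word: the two identifications alone don't show the particular map is an equivalence; one really uses that $(p_!W)^{tG}=0$ for induced objects, which is standard. With that caveat, your argument is complete.
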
 
\begin{proof} 
The hypothesis of nilpotence implies that 
$Y^{hG} \in \mathcal{C}$ belongs to the thick subcategory generated by $Y$; in
fact,  this holds because the cosimplicial diagram computing $Y^{hG}$ is rapidly converging. 
Therefore, the cofiber $C$ of $X \to Y^{hG}$ belongs to 
the thick subcategory generated by $X, Y$, which easily implies the claim,
since our hypotheses show that $\mathrm{Hom}_{\mathcal{C}}(X, C)
=\mathrm{Hom}_{\mathcal{C}}(Y, C) =0$. 
\end{proof}

\begin{proposition} 
\label{finiteGaldescadditive}
Let $A$ be an $E_2$-ring, and let $\mathcal{A}$ be a weakly additive invariant of  
$A$-linear $\infty$-categories with values in $L_n^f$-local spectra. 
Let  $A \to B$ be a $G$-Galois extension of $E_2$-rings. 
Then 
$\mathcal{A}(\perf(A)) \simeq \mathcal{A}(\perf(B))^{hG}$.  Moreover $\mathcal{A}(\perf(B))$ is $d$-nilpotent as a $G$-object for some $d\geq 0$ depending only on $A\to B$, $n$, and the implicit prime $p$.
\end{proposition}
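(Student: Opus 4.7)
The plan is to combine the nilpotence of the $G$-action on $L_n^f[\perf(B)]$ (\Cref{nilpotenceGactionmot}) with the finite \'etale descent of $L_n^f K_{\geq 0}$ (\Cref{finitedescLnK}), via the representability of $K$-theory in noncommutative motives (\Cref{additivekviamotives}). First, arguing exactly as in \Cref{reducetoconnective}---base change to $\perf(A)$ preserves split exact sequences, so the proof adapts verbatim from weakly localizing to weakly additive invariants---we reduce to the case where $A$ is connective.

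For the nilpotence claim, \Cref{nilpotenceGactionmot} provides that $L_n^f[\perf(B)] \in \fun(BG, L_n^f \mot_A)$ is $d$-nilpotent for some $d$ depending only on $A \to B$, $n$, and $p$ (tracing through \Cref{nilpotenceGactionK} and \Cref{TCGnilpotent} together with the nilpotence bounds from \cite{CMNN}). The weakly additive invariant $\mathcal{A}$, restricted to compact $A$-linear $\infty$-categories, factors through an exact functor $\bar{\mathcal{A}}: (L_n^f \mot_A)^\omega \to L_n^f \sp$ satisfying $\bar{\mathcal{A}}(L_n^f[\mathcal{C}]) = \mathcal{A}(\mathcal{C})$, by the additive universal property of compact noncommutative motives (cf.\ \cite{BGT, HSS}, where the filtered colimit condition becomes vacuous on compact objects) composed with the universality of $L_n^f$-localization. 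Since exact functors between stable $\infty$-categories preserve $d$-nilpotence of $G$-objects (by induction on $d$, using preservation of finite direct sums and fiber sequences, cf.\ \Cref{basicnilplemma}), it follows that $\mathcal{A}(\perf(B)) = \bar{\mathcal{A}}(L_n^f[\perf(B)])$ is $d$-nilpotent as a $G$-object.

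For the descent claim, we first show that the canonical map $L_n^f[\perf(A)] \to L_n^f[\perf(B)]^{hG}$ is an equivalence in $L_n^f \mot_A$. Granting this, $d$-nilpotence of $L_n^f[\perf(B)]$ guarantees that $L_n^f[\perf(B)]^{hG}$ lies in the thick subcategory generated by $L_n^f[\perf(B)]$ (hence is compact) and that its formation is preserved by any exact functor; applying $\bar{\mathcal{A}}$ then yields $\mathcal{A}(\perf(A)) \simeq \mathcal{A}(\perf(B))^{hG}$. To verify the equivalence in $L_n^f \mot_A$, we invoke \Cref{equivalencecritnilp} with $X = L_n^f[\perf(A)]$ and $Y = L_n^f[\perf(B)]$: it suffices that
\[ \hom_{L_n^f \mot_A}(X, X) \to \hom_{L_n^f \mot_A}(X, Y)^{hG} \]
be an equivalence. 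By \Cref{additivekviamotives} this is precisely the map $L_n^f K_{\geq 0}(A) \to L_n^f K_{\geq 0}(B)^{hG}$, which is an equivalence by \Cref{finitedescLnK} applied to the $G$-Galois cover $A \to B$.

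The main subtlety is justifying the exact extension $\bar{\mathcal{A}}$: whereas \cite{BGT, HSS} formally state their universal property for localizing invariants preserving filtered colimits, the analogous additive universal property on compact objects (which is all we need, since all sources are compact $A$-linear) avoids any filtered colimit issue. Once $\bar{\mathcal{A}}$ is in hand, the argument reduces cleanly to the single genuine descent input, namely \Cref{finitedescLnK}.
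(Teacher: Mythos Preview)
Your approach is essentially the paper's, and most of the steps are correct. There is, however, one genuine gap in your application of \Cref{equivalencecritnilp}. That proposition requires checking condition (2) for $Z$ equal to \emph{both} $X$ and $Y$, i.e.\ that
\[
\hom_{L_n^f \mot_A}(Z, X) \to \hom_{L_n^f \mot_A}(Z, Y)^{hG}
\]
is an equivalence for $Z = L_n^f[\perf(A)]$ \emph{and} $Z = L_n^f[\perf(B)]$. You only verify the case $Z = X$, which via \Cref{additivekviamotives} is indeed \Cref{finitedescLnK}. But the case $Z = Y$ is also needed: the proof of \Cref{equivalencecritnilp} shows the cofiber $C$ of $X \to Y^{hG}$ lies in the thick subcategory generated by $X$ \emph{and} $Y$, so knowing only $\hom(X,C)=0$ does not force $C=0$.

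The paper handles both cases at once by invoking \Cref{sheafforfiniteetalehom}, which says that for \emph{any} finite \'etale $A$-algebra $B_0$ the functor $B' \mapsto L_n^f \hom_{\mot_A}([\perf(B_0)],[\perf(B')])$ is a sheaf on the finite \'etale site. This in turn rests on \Cref{kernelfiniteetale}, which identifies $\fun_A(\perf(B_0),\perf(B'))$ with $\perf(B_0 \otimes_A B')$, reducing to \Cref{finitedescLnK} over the base $B_0$. Concretely, for $Z = Y$ you need that $L_n^f K_{\geq 0}(B) \to L_n^f K_{\geq 0}(B \otimes_A B)^{hG}$ is an equivalence; since $B \to B \otimes_A B$ is itself $G$-Galois, this is again an instance of \Cref{finitedescLnK}, but you must say so. Once this is added, your argument is complete and matches the paper's.
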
 

\begin{proof}  
Since we are only evaluating on compact objects, we may as well assume that
$\mathcal{A}$ is additive. 
Moreover, we can assume $A$ connective as in \Cref{reducetoconnective}. 
By \Cref{equivalencecritnilp}, \Cref{nilpotenceGactionmot}, and
\Cref{sheafforfiniteetalehom}, it follows that  in the $\infty$-category $L_n^f \mot_A$, 
we have $L_n^f [\perf(A)] \simeq (L_n^f [\perf(B)])^{hG}$. 
Since the $G$-action on 
$L_n^f [\perf(B)]$ is nilpotent (\Cref{nilpotenceGactionmot}), 
it follows that after applying any exact functor $\mathcal{A}: L_n^f \mot_A \to
L_n^f\sp$, we have that
\[ \mathcal{A}( L_n^f [\perf(A)]) 
\simeq \mathcal{A}( L_n^f [\perf(B)]^{hG}) \simeq \mathcal{A}( L_n^f
[\perf(B)])^{hG}
,\]
where the last equivalence uses the nilpotence. This is the desired claim,
since any additive invariant naturally factors through $\mot_A$.  
\end{proof} 

\begin{remark} 
Using the language of $G$-equivariant stable homotopy theory and equivariant
algebraic $K$-theory (cf.\ \cite{Me17, B17, BGS20}), one can streamline
the above arguments. 
Since the constructions given there are not exactly in the generality we need,
we have followed the approach above.
\end{remark} 

\begin{theorem}[\'Etale descent]
\label{generaletaledesc}
Let $X$ be an $E_2$-spectral algebraic space. 
Let $\mathcal{A}$ be a weakly localizing invariant for $\perf(X)$-linear
$\infty$-categories which takes values in $L_n^f$-local spectra. 
Then $Y \mapsto \mathcal{A}(\perf(Y))$ defines an \'etale sheaf on $X$. 
\end{theorem}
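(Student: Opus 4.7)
The proof plan is essentially to combine the two previously-established descent principles: Nisnevich descent for arbitrary weakly localizing invariants (\Cref{Nisdescloc}), and finite Galois descent for $L_n^f$-local weakly additive invariants (\Cref{finiteGaldescadditive}). Every weakly localizing invariant is in particular weakly additive, so both ingredients apply to $\mathcal{A}$.

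First I would reduce to the affine case. By \Cref{ifNisreducetoaffine}, checking that a Nisnevich sheaf on $et_X$ is an \'etale sheaf can be done after restricting to each affine $U = \spec^{et}(R) \to X$ in $et_X$; the restriction $p^*\mathcal{A}$ is again a weakly localizing invariant (now on $\perf(U)$-linear categories, via $\perf(X)$-linear structure induced by pullback). By \Cref{Nisdescloc}, $p^*\mathcal{A}$ is a Nisnevich sheaf on $U_{et}$. So it suffices to verify that on the affine \'etale site of $R$, the presheaf $\mathcal{A}$ is in addition a sheaf for finite \'etale covers, since by \Cref{redtofintieGalois} this, combined with Nisnevich descent, gives full \'etale descent on the affine \'etale site.

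Next, by the standard reduction for sheaves on the finite \'etale site of an affine via Galois theory, it suffices to check that for every finite $G$-Galois cover $A \to B$ of $E_2$-rings (with $A \to R$ \'etale), the natural map
\[
\mathcal{A}(\perf(A)) \longrightarrow \mathcal{A}(\perf(B))^{hG}
\]
is an equivalence. But this is exactly the conclusion of \Cref{finiteGaldescadditive} applied to the restriction of $\mathcal{A}$ to $\perf(A)$-linear $\infty$-categories (which is again weakly localizing, hence weakly additive, and still takes values in $L_n^f$-local spectra).

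The hard work has already been done in the previous sections: the technical heart is \Cref{nilpotenceGactionK} and \Cref{nilpotenceGactionmot}, establishing nilpotence of the $G$-action on $L_n^f[\perf(B)]$ in the $L_n^f$-localized noncommutative motives category, which relies on the Dundas-Goodwillie-McCarthy theorem to reduce to the discrete case, \Cref{TCGnilpotent} for the $\mathrm{TC}$ side, and the main results of \cite{CMNN} together with the May nilpotence conjecture for $K$-theory of ordinary rings. The only subtlety in the present proof is bookkeeping: checking that every step passes between weakly localizing and weakly additive without loss (which is immediate, as split exact sequences are fiber-cofiber sequences), and that the $\perf(X)$-linearity is compatible with the intermediate $\perf(A)$-linearity used in \Cref{finiteGaldescadditive}, which is clear by restriction of scalars along $\perf(X) \to \perf(A)$.
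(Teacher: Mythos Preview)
Your proposal is correct and follows essentially the same approach as the paper's proof: establish Nisnevich descent via \Cref{Nisdescloc}, then reduce \'etale descent to finite Galois descent on affines via \Cref{redtofintieGalois}, and finally invoke \Cref{finiteGaldescadditive}. The paper's proof is simply a terser version of exactly this argument.
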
 
\begin{proof} 
By the Thomason-Trobaugh argument, $Y \mapsto \mathcal{A}(\perf(Y))$ is a
Nisnevich sheaf (\Cref{Nisdescloc}). Thus it suffices to see that it satisfies finite Galois
descent on affines, by \Cref{redtofintieGalois}. But this follows from 
\Cref{finiteGaldescadditive}. 
\end{proof}

\section{Selmer $K$-theory and hyperdescent results}

In this section we 
first study the basic properties of Selmer $K$-theory
(\Cref{SelKdef}). Our main result is that it always satisfies \'etale descent,
and satisfies hyperdescent under mild finite-dimensionality assumptions
(\Cref{QLSelgeneral}). 
We show also a version of the Lichtenbaum-Quillen conjecture, that the map from
$K$ to $K^{Sel}$ is an equivalence in high enough degrees 
under mild assumptions. 
A crucial ingredient in proving these statements is the Beilinson-Lichtenbaum
conjecture, proved by Voevodsky-Rost \cite{Voe03, Voe11}. 
Finally, we use the smashing property of \'etale hypercompletion to extend these
hyperdescent results to $L_n^f$-local localizing invariants (\Cref{mainhypLn}). 

\subsection{Basic properties of Selmer $K$-theory}

Here we recall the definition of  \emph{Selmer
$K$-theory}, introduced in \cite{Artinmaps}. 
Our main result (\Cref{selmer:basicproperties}) is that  Selmer $K$-theory satisfies \'etale descent
and that it commutes with filtered colimits for connective ring spectra. 

\begin{definition}[Selmer $K$-theory, cf.\ \cite{Artinmaps}]
\label{SelKdef}
Let $\mathcal{C}$ be a small stable $\infty$-category.
The \emph{Selmer $K$-theory} $K^{Sel}(\mathcal{C})$ is defined to be the
homotopy pullback
\begin{equation} \label{Seldef}  K^{Sel}(\mathcal{C}) = L_1 K(\mathcal{C}) \times_{L_1
\mathrm{TC}(\mathcal{C})} \mathrm{TC}(\mathcal{C}). \end{equation}
For an $E_2$-spectral algebraic space $X$, we write $K^{Sel}(X) = K^{Sel}(
\perf(X))$. 
We define $K^{Sel}$ of an $E_1$-ring $R$ by $K^{Sel}(R)=K^{Sel}(\operatorname{Perf}(R))$.

\end{definition} 

Note that applying the localization natural transformation $id\rightarrow L_1$ to the cyclotomic trace $K\rightarrow \operatorname{TC}$ gives rise to a natural map $K\rightarrow K^{Sel}$ which factors both the trace $K\rightarrow \operatorname{TC}$ and the localization $K\rightarrow L_1K$.

\begin{example}[Rational Selmer $K$-theory] 
Recall that $T_\mathbb{Q}\overset{\sim}{\rightarrow} (L_1T)_{\mathbb{Q}}$ for any spectrum $T$.  Hence the above formula \eqref{Seldef} shows that 
the rationalization of $K^{Sel}$ is simply the rationalization of $K$.
\end{example} 
Consequently, the difference between $K$ and $K^{Sel}$ is seen after $p$-adic
completion (for any prime $p$). 
In this case, the idea of Selmer $K$-theory roughly is to glue prime-to-$p$ phenomena (which are seen
via $L_1 K$) and $p$-adic phenomena (which are handled by $\mathrm{TC}$). 
We illustrate this with two basic examples. 

\begin{example}[Selmer $K$-theory for rings with $p$ inverted] 
Suppose $\mathcal{C}$ is a stable $\infty$-category on which $p$ is
invertible. 
In this case, $p$ is also invertible on $\mathrm{THH}(\mathcal{C})$ and hence on
$\mathrm{TC}(\mathcal{C})$. It follows that the
$p$-adic completion ${K}^{Sel}(\mathcal{C})_{\hat{p}} $ is identified with $L_{K(1)}
K(\mathcal{C})$, where $K(1)$ is at the same prime $p$. 
\label{SelKawayp}
\end{example} 

\begin{example}[Selmer $K$-theory in the $p$-complete case] 
\label{SelKatp}
Suppose that $R$ is a connective associative ring spectrum such that
$\pi_0(R) $ is commutative and $p$-henselian (e.g., $p$-adically complete). Then applying
$K(1)$-localization at the prime $p$ (which we note erases the difference between connective
and nonconnective $K$-theory) to the main result of
\cite{CMM} combined with the Dundas-Goodwillie-McCarthy theorem \cite{DGM13} gives a 
homotopy pullback square
\begin{equation} \label{k1pa} \xymatrix{
L_{K(1)} K(R) \ar[d]  \ar[r] &  L_{K(1)} \TC(R) \ar[d]  \\
L_{K(1)} K(\pi_0(R)/p) \ar[r] &  L_{K(1)} \TC( \pi_0(R)/p).
}.\end{equation}
Note that $L_{K(1)} K(\mathbb{F}_p) =0 $ since the
$p$-completion ${K}(\mathbb{F}_p)_{\hat{p}}$  vanishes in positive degrees thanks to 
\cite{Qui72}.  
The bottom row in 
\eqref{k1pa} consists of ${K}(\mathbb{F}_p)_{\hat{p}}$-modules, and therefore
vanishes. It follows that in this case, we have 
that the top horizontal arrow in \eqref{k1pa} is an equivalence, and
consequently
${K}^{Sel}(R)_{\hat{p}} = {\TC}(R)_{\hat{p}}$. 
\end{example} 

\begin{remark}
For any $E_1$-ring $R$ we have a pullback square
\begin{equation} \xymatrix{
K^{Sel}(R) \ar[d]  \ar[r] &  K^{Sel}(R_{\hat{p}}) \ar[d]  \\
K^{Sel}(R[1/p]) \ar[r] &  K^{Sel}(R_{\hat{p}}[1/p]).
}\end{equation}
Actually, this holds for any localizing invariant: it can be obtained by
identifying the fiber terms in the localization sequences on perfect complexes
associated to the vertical maps of rings.  Supposing $R$ is connective with
$\pi_0R$ commutative of bounded $p$-torsion, then after $p$-completion the terms
in this pullback can be described in terms of either $L_{K(1)}K$ or
$\operatorname{TC}$ by the previous examples.  In \cite{BCM} it is shown in  the same setting that this pullback square identifies with the $p$-completion of the pullback square defining $K^{Sel}(R)$, and in particular each of the terms is a localizing invariant of $\operatorname{Perf}(R)$.
\end{remark}

We now prove some basic general properties of 
Selmer $K$-theory. 
\begin{theorem} 
\label{selmer:basicproperties}
\begin{enumerate}
\item  
The functor $R\mapsto K^{Sel}(R)$, from connective $E_1$-rings to spectra, commutes with filtered colimits.
\item
On $E_2$-spectral algebraic spaces, $K^{Sel}$ is an \'etale sheaf.
\end{enumerate}
\end{theorem}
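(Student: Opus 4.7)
The plan is to handle the two claims separately, using the pullback description
\[ K^{Sel}(-) = L_1 K(-) \times_{L_1 \mathrm{TC}(-)} \mathrm{TC}(-)  \]
and exploiting the smashing nature of $L_1$. Recall that $L_1$ denotes Bousfield localization at $KU$ and that the telescope conjecture at height $1$ (Mahowald, Miller, Mahowald--Sadofsky) identifies $L_1 = L_1^f$; in particular $L_1$ is smashing and the $L_1$-acyclic sphere $A_1\mathbb{S} := \mathrm{fib}(\mathbb{S}\to L_1\mathbb{S})$ is a filtered colimit of finite $L_1$-acyclic spectra, each of which splits as a finite direct sum of $p$-local spectra of type $\geq 2$ for varying primes $p$.

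For part (1), I would rearrange the defining pullback into a fiber sequence
\[ A_1 \mathrm{TC}(R) \to K^{Sel}(R) \to L_1 K(R)  \]
and show that each of the outer terms preserves filtered colimits in connective $E_1$-rings $R$. The right-hand term is immediate: $K$ preserves filtered colimits by construction, and $L_1$ is smashing. For the left-hand term, $A_1 \mathrm{TC}(R) = \mathrm{TC}(R) \otimes A_1\mathbb{S}$ is the filtered colimit over finite $L_1$-acyclic spectra $F$ of the pieces $\mathrm{TC}(R) \otimes F$. Any such $F$ decomposes as a finite direct sum of $p$-local pieces $F_p$ of type $\geq 2$, and each $F_p$ has finite $p$-primary exponent $p^{N_p}$; consequently $F_p \otimes \mathrm{TC}(R) \simeq F_p \otimes_{\mathbb{S}/p^{N_p}} \mathrm{TC}(R)/p^{N_p}$ is a finite colimit of copies of $\mathrm{TC}(R)/p^{N_p}$. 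The d\'evissage from \cite{CMM}, which asserts that $\mathrm{TC}(-)/p$ preserves filtered colimits of connective $E_1$-rings, upgrades by induction on $k$ to $\mathrm{TC}(-)/p^{k}$; filtered colimits therefore commute past $F_p \otimes \mathrm{TC}(-)$, then past the filtered colimit defining $A_1 \mathrm{TC}(-)$, giving the claim.

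For part (2), $K^{Sel}$ is a finite limit of three functors, and finite limits of \'etale sheaves of spectra are again \'etale sheaves; so it suffices to check that each of $L_1 K$, $\mathrm{TC}$, and $L_1 \mathrm{TC}$ defines an \'etale sheaf on $E_2$-spectral algebraic spaces. For $\mathrm{TC}$ this is precisely \Cref{TCissheaf}. For $L_1 K$, smashness makes $L_1 K$ a localizing invariant, and since $L_1\mathrm{Sp} = L_1^f\mathrm{Sp}$, \Cref{generaletaledesc} applies with $n = 1$ to yield \'etale descent in the $E_2$-generality. For $L_1 \mathrm{TC}$ the same argument works; alternatively, one can observe that since $\mathrm{TC}$ already has \'etale descent and for any finite \'etale $G$-Galois cover $Y \to Y'$ the $G$-action on $\mathrm{TC}(Y)$ is nilpotent (\Cref{TCGnilpotent}), the functor $L_1$ commutes with the relevant homotopy fixed points, so $L_1 \mathrm{TC}$ inherits the sheaf condition.

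The principal obstacle is part (1): controlling $A_1 \mathrm{TC}$ in a way compatible with filtered colimits. The reduction to $\mathrm{TC}/p^k$ relies on both the height-$1$ telescope conjecture (so that $A_1 \mathbb{S}$ is built from finite type-$\geq 2$ complexes) and on the deep theorem of \cite{CMM} that $\mathrm{TC}/p$ commutes with filtered colimits of connective $E_1$-rings. By contrast, part (2) is essentially formal once \Cref{TCissheaf} and \Cref{generaletaledesc} are in hand.
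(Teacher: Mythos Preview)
Your proof is correct, and for part (2) it is essentially identical to the paper's: both cite \Cref{TCissheaf} for $\mathrm{TC}$, \Cref{generaletaledesc} for $L_1 K$ and $L_1\mathrm{TC}$, and take a finite limit of sheaves.

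For part (1) you arrive at the same key input as the paper---the result from \cite{CMM} that $\mathrm{TC}(-)/p$ commutes with filtered colimits of connective $E_1$-rings---but via a longer route. The paper simply observes that since $(L_1 T)\otimes\mathbb{Q}\simeq T\otimes\mathbb{Q}$ for any $T$ and $K$ already commutes with filtered colimits, one is immediately reduced to checking mod $p$ for each prime; there $K^{Sel}/p$ is a finite limit of $L_1 K/p$, $L_1 \mathrm{TC}/p$, and $\mathrm{TC}/p$, all of which commute with filtered colimits (using smashing of $L_1$ and \cite{CMM}). Your argument instead invokes the height-$1$ telescope conjecture to decompose $A_1\mathbb{S}$ as a filtered colimit of finite $L_1$-acyclic complexes, then reduces each piece to $\mathrm{TC}/p^k$ via thick-subcategory arguments. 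This works, but the telescope conjecture is not needed: smashing of $L_1$ alone suffices, and the rational/mod-$p$ check is both shorter and uses strictly less input. (A minor point: your phrase ``$F_p\otimes_{\mathbb{S}/p^{N_p}}\mathrm{TC}(R)/p^{N_p}$'' is awkward since $\mathbb{S}/p^N$ is not generally a ring; the cleaner statement is that $F_p$ lies in the thick subcategory generated by $\mathbb{S}/p$, so $F_p\otimes\mathrm{TC}(R)$ is finitely built from $\mathrm{TC}(R)/p$.)
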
 
\begin{remark} 
Part 2 of this result is not the best possible. 
In \Cref{QLSelgeneral} below, we will prove hyperdescent under the hypotheses
of finite Krull dimension and bounded virtual cohomological dimension of the
residue fields. We can also generalize beyond the \'etale topology, see \Cref{fppfKSel}
below. 
\end{remark} 
\begin{proof} 
For 1, since $K$-theory itself commutes with filtered colimits and
$(L_1T)\otimes\mathbb{Q}=T\otimes\mathbb{Q}$ for any spectrum $T$, we reduce to
the claim that $\operatorname{TC}/p$ commutes with filtered colimits for any
prime $p$, which is \cite[Theorem G]{CMM}.

For 2, 
we observe that by \Cref{generaletaledesc}, $L_1 K, L_1 \mathrm{TC}$ are \'etale
sheaves. 
Since $\mathrm{TC}$ is an \'etale hypersheaf by \Cref{TCissheaf}, it follows now
that the pullback $K^{Sel}$ is an \'etale sheaf. 
\end{proof}

\begin{remark}\label{moregeneralkseldesc}
More generally, for any set of primes $\mathcal{P}$, $K^{Sel}_{\mathcal{P}}$ is
an \'etale sheaf. 
This follows because we can check things rationally and mod $p$ (for each $p \in
\mathcal{P}$), and we have just done this above. 
There is also the same result ``with coefficients,'' again with the same proof: if $X$ is an $E_2$-spectral algebraic space and $\mathcal{M}$ is $\perf(X)$-module in $\mathrm{Cat}_{\mathrm{perf}}$, then $K^{Sel}_{\mathcal{P}}(\mathcal{M}\otimes_{\perf(X)}\perf(-))$ is an \'{e}tale sheaf on $X$.
\end{remark}

We can actually strengthen this descent statement slightly when working with  
(discrete) rings or algebraic spaces.

\begin{theorem} 
\label{fppfKSel}
For ordinary qcqs algebraic spaces, $K^{Sel}$ is a sheaf for the fppf topology.
\end{theorem}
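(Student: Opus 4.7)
My plan starts from the \'etale descent of $K^{Sel}$ established in part~(2) of \Cref{selmer:basicproperties}. Since $K^{Sel}$ commutes with filtered colimits on connective $E_1$-rings (part~(1) of \emph{loc.~cit.}), and since Nisnevich descent reduces the sheaf condition on qcqs algebraic spaces to affines, I may assume the base is a finitely presented $\mathbb{Z}$-algebra, hence Noetherian of finite Krull dimension. A presheaf which is an \'etale sheaf is an fppf sheaf if and only if it also satisfies \v{C}ech descent along finite locally free surjections of affines, since the fppf topology is generated by \'etale covers together with such maps. So the task reduces to verifying \v{C}ech descent for $K^{Sel}$ along a finite locally free surjection $A \to B$ of ordinary commutative rings.

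Using the pullback $K^{Sel} = L_1 K \times_{L_1 \TC} \TC$, it suffices to verify fppf descent for each of the three factors separately. For $\TC$ this is the key nontrivial input: $\mathrm{THH}$ of a commutative ring is constructed from tensor products over the sphere spectrum, and hence inherits fpqc descent (in particular fppf descent) from flat descent for the derived $\infty$-category of quasi-coherent modules on ordinary rings. Since $\TC$ is built from $\mathrm{THH}$ via limits of $\mathbb{T}$-equivariant constructions, it descends as well.

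For $L_1 K$ and $L_1 \TC$, I would decompose via the arithmetic fracture square into rational and $p$-adic pieces. Rationally, $L_1 K \otimes \mathbb{Q} = K \otimes \mathbb{Q}$, and this satisfies fppf descent via the Chern character identification with rational negative cyclic homology (which has flat descent on ordinary commutative rings). At each prime $p$, the $p$-completion $(L_1 K)_{\hat{p}}$ agrees with $L_{K(1)} K$. On $\mathbb{Z}[1/p]$-algebras, $L_{K(1)} K$ is a $p$-adic \'etale sheaf and thus an fppf sheaf by the identification of \'etale and fppf cohomology with $p$-torsion coefficients when $p$ is invertible (\cite[Exp.~IX]{SGA4}); on $p$-henselian rings the identification $K^{Sel}_{\hat{p}} \simeq \TC_{\hat{p}}$ from \Cref{SelKatp} reduces the descent to the case of $\TC$ handled above. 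The same reasoning applies verbatim to $L_1 \TC$.

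The main obstacle will be gluing these rational and $p$-adic pieces in mixed characteristic, where $A$ is neither $p$-inverted nor $p$-henselian. I would argue locally: for the affine fppf cover $A \to B$, verify the \v{C}ech descent statement rationally and after each $p$-completion using the above, then reassemble via the compatibility of the arithmetic fracture square with the pullback definition of $K^{Sel}$ and with the fppf topology on ordinary rings. Verifying this compatibility uniformly in $A$ and $B$ is the most delicate point and the technical heart of the proof.
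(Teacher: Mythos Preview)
Your overall architecture matches the paper's: reduce to affines, use that fppf descent is generated by \'etale descent plus finite locally free surjections (\cite[Tag 05WM]{stacks-project}), and verify \v{C}ech descent for each of $L_1 K$, $L_1\TC$, and $\TC$ separately. Your treatment of $\TC$ via flat descent for $\mathrm{THH}$ is also correct (this is the \cite[Sec.~3]{BMS2} input the paper cites).

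However, your argument for $L_1 K$ and $L_1\TC$ has genuine gaps. First, the claimed identification of $K\otimes\mathbb{Q}$ with rational negative cyclic homology via the Chern character is simply false for general commutative rings: Goodwillie's theorem only identifies \emph{relative} rational $K$-theory and cyclic homology across nilpotent extensions, not the absolute theories. So your rational step does not go through. Second, and you explicitly flag this yourself, your $p$-adic argument only treats $\mathbb{Z}[1/p]$-algebras and $p$-henselian rings separately, and you never actually produce the ``gluing'' in mixed characteristic; invoking the arithmetic fracture square does not convert descent for those two special classes of rings into descent for an arbitrary finite locally free surjection $A\to B$. (The preliminary reduction to finite type over $\mathbb{Z}$ via filtered colimits is also unjustified, since \v{C}ech totalizations need not commute with filtered colimits, though this is not the essential problem.)

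The paper avoids all of this by invoking a single result: \cite[Theorem~5.1]{CMNN} proves directly, via transfer and nilpotence arguments, that any additive invariant with values in $L_n^f$-local spectra satisfies descent along finite locally free surjections of ordinary commutative rings. Applied with $n=1$, this handles $L_1 K$ and $L_1\TC$ in one stroke, with no rational/$p$-adic decomposition and no mixed-characteristic case analysis. You should replace your fracture-square maneuver with a citation of that result.
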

\begin{proof} 
We reduce to the affine case. 
We first observe that $K^{Sel}$ is a sheaf for the finite flat topology. 
This assertion follows from the finite flat descent for $L_1 K, L_1
\mathrm{TC}$ proved in \cite[Theorem 5.1]{CMNN}, and 
faithfully flat descent for $\mathrm{TC}$, cf.\ \cite[Sec. 3]{BMS2}. 
Now we use the fact that \'etale descent together with finite flat descent
implies fppf descent, cf.\ \cite[Tag 05WM]{stacks-project}. 
\end{proof} 

\begin{remark} 
Let $R$ be a connective $\mathbb{Z}$-algebra, and consider the $p$-adic
completion ${K^{Sel}}(R)_{\hat{p}}$. 
The work \cite{BCM} shows that the first two ingredients
of ${K^{Sel}}(R)_{\hat{p}}$ depend only on $\pi_0(R)[1/p], \pi_0(\hat{R})[1/p]$ respectively. 
Therefore, the only part of ($p$-complete) Selmer $K$-theory that sees the
higher homotopy groups of $R$ arises from the topological cyclic homology of
$R$. 
%As a consequence, we will be able to prove stronger descent properties of
%$K^{Sel}$ after $p$-adic completion in \cite{BCM}. 
\end{remark}

The proof that Selmer $K$-theory is an \'etale {hyper}sheaf (and not only an
\'etale sheaf) will rely on the norm
residue isomorphism theorem, and will be given in the next subsection. 
Here, we identify the \'etale stalks of Selmer $K$-theory, via rigidity
results, and show that they are very close to those of $K$-theory itself.

\begin{theorem}
\label{stalksofKSel}
Let $A$ be a connective $E_1$-ring spectrum such that $\pi_0(A)$ is  a strictly
henselian local (commutative) ring with residue field $k$.  
Then the map
$$K(A)\rightarrow K^{Sel}(A)$$
is an isomorphism on homotopy in degrees $\geq -1$.
Furthermore, for 
a prime $p$, we have:
\begin{enumerate}
\item If $k$ has characteristic $p$, then
$$K^{Sel}(A)_{\widehat{p}}\overset{\sim}{\rightarrow}
\operatorname{TC}(A)_{\widehat{p}} \simeq (K_{\geq 0}(A))_{\hat{p}}.$$
\item If $k$ has characteristic $\neq p$, then
$$K^{Sel}(A)_{\widehat{p}}\overset{\sim}{\rightarrow} (L_1K(A))_{\widehat{p}} \simeq KU_{\widehat{p}}.$$
Here $\simeq$ means a non-canonical equivalence of $E_\infty$-ring spectra, but
$\pi_2 (L_1K(A)_{\widehat{p}})$ is canonically identified with
$\mathbb{Z}_p(1)$, and this precisely pins down the non-canonicity of $\simeq$
since $\operatorname{Aut}(KU_{\widehat{p}})=\mathbb{Z}_p^\times$ given by the
Adams operations \cite{GH04}, which act by scalar multiplication on $\pi_2 KU_{\widehat{p}}=\mathbb{Z}_p$.
\end{enumerate}
\end{theorem}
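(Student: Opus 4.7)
The plan is to reduce to $p$-adic completions via the arithmetic fracture square and then split on the residue characteristic. The isomorphism $K(A) \to K^{Sel}(A)$ on $\pi_i$ for $i \geq -1$ can be checked after rationalization and after $p$-adic completion for every prime $p$. Rationally, since $(L_1 T)_{\mathbb{Q}} \simeq T_{\mathbb{Q}}$ for any spectrum $T$, the defining pullback square \eqref{Seldef} collapses to give $K^{Sel}(A)_{\mathbb{Q}} \simeq K(A)_{\mathbb{Q}}$, settling the rational case. It therefore remains to verify both the isomorphism on $\pi_{\geq -1}$ and the explicit $p$-complete identifications case by case.

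In the case $\mathrm{char}(k) \neq p$, the prime $p$ is invertible in $\pi_0(A)$ (as $p \notin \mathfrak{m}$ forces it to be a unit in the local ring), hence in $A$. Then $p$ acts invertibly on $\mathrm{THH}(A)$ and thus on $\mathrm{TC}(A)$, so by \Cref{SelKawayp} the fiber product collapses to yield $K^{Sel}(A)_{\hat p} \simeq L_{K(1)} K(A)$. I then invoke the generalization of Gabber--Suslin rigidity to connective ring spectra with strictly henselian $\pi_0$ proved in \cite{CMM}: this gives $K(A)_{\hat p} \simeq K(k)_{\hat p}$. Since $k$ is separably closed of characteristic prime to $p$, Suslin's theorem identifies $K(k)_{\hat p}$ with the connective cover of $KU_{\hat p}$ in degrees $\geq 0$, with $\pi_2$ canonically the Tate module $T_p \mu(k) = \mathbb{Z}_p(1)$ of roots of unity in $k$ (the Bott element arising from a choice of compatible $p$-power roots of unity). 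Bott-inverting produces the non-canonical equivalence $L_{K(1)} K(k) \simeq KU_{\hat p}$, while the canonical identification $\pi_2 = \mathbb{Z}_p(1)$ accounts for precisely the $\mathbb{Z}_p^{\times}$ of automorphisms.

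In the case $\mathrm{char}(k) = p$, the prime $p$ lies in the maximal ideal of $\pi_0(A)$, so $\pi_0(A)$ is (strictly, hence in particular) $p$-henselian. \Cref{SelKatp} then gives $K^{Sel}(A)_{\hat p} \simeq \mathrm{TC}(A)_{\hat p}$. The main theorem of \cite{CMM}, combined with Dundas--Goodwillie--McCarthy, now yields that the cyclotomic trace $K_{\geq 0}(A)_{\hat p} \to \mathrm{TC}(A)_{\hat p}$ is an equivalence; the comparison passes to non-connective $K$-theory in degrees $\geq -1$ because the negative $K$-groups of a strictly henselian local ring and those of $\mathrm{TC}$ (which is connective on $p$-henselian input after $p$-completion, cf.\ the Nikolaus--Scholze calculus) both vanish in the relevant range. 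This gives both the isomorphism on $\pi_{\geq -1}$ and the chain of equivalences $K^{Sel}(A)_{\hat p} \simeq \mathrm{TC}(A)_{\hat p} \simeq K_{\geq 0}(A)_{\hat p}$.

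The main obstacle will be the legitimate invocation of \cite{CMM} in both cases. In case $\mathrm{char}(k) \neq p$ one needs the ring-spectrum generalization of Gabber--Suslin rigidity, and in case $\mathrm{char}(k) = p$ one needs the $p$-adic comparison between $K$-theory and $\mathrm{TC}$ for arbitrary $p$-henselian connective $E_1$-rings — both of these are deep theorems whose proofs pivot on \cite{CMM}. A further subtlety is tracking degree $-1$ (as opposed to the easier $\geq 0$ range): this will require invoking vanishing of $K_{-1}$ for strictly henselian local rings (or the Bass fundamental sequence) on the $K$-theory side and using that the cyclotomic trace is suitably normalized in the CMM comparison.
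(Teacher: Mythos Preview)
Your approach is essentially the same as the paper's: split by residue characteristic, invoke \Cref{SelKawayp} and \Cref{SelKatp}, use Gabber--Suslin rigidity (resp.\ \cite{CMM}) plus Suslin's comparison with $KU$, and handle degree $-1$ via $K_{-1}(A)=0$ from \cite{Dri06}. The paper organizes this slightly differently, first treating discrete $A$ and then reducing the general connective $E_1$ case via a Dundas--Goodwillie--McCarthy pullback square comparing $K,K^{Sel}$ of $A$ and of $\pi_0 A$, rather than citing a ring-spectrum-level rigidity statement directly.

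One genuine slip: your parenthetical that $\mathrm{TC}$ is ``connective on $p$-henselian input after $p$-completion'' is false --- $\mathbb{F}_p$ is $p$-henselian but $\pi_{-1}\mathrm{TC}(\mathbb{F}_p)_{\hat p}=\mathbb{Z}_p$. What you actually need is connectivity for \emph{strictly} henselian input of residue characteristic $p$, and that is a consequence of the identification $\mathrm{TC}(A)_{\hat p}\simeq K_{\geq 0}(A)_{\hat p}$ from \cite{CMM}, not an independent fact from Nikolaus--Scholze. Your degree $-1$ argument also deserves to be spelled out; the paper does this by a Bockstein: $\pi_{-1}K^{Sel}(A)$ is torsion (rationally $K^{Sel}=K$), and for each $p$ the explicit descriptions show $\pi_0(K^{Sel}(A)/p)$ is generated by the unit, which lifts from $\pi_0 K^{Sel}(A)$, forcing $\pi_{-1}K^{Sel}(A)[p]=0$.
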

\begin{proof} 
Assume first $A$ is discrete. 
We first prove the two identifications of $K^{Sel}_{\hat{p}}$. 

If $k$ has characteristic $p$, then by 
\Cref{SelKatp}, we have that 
$K^{Sel}(A)_{\hat{p}} \simeq \TC(A)_{\hat{p}}$, and we can identify this with
the $p$-completion of connective $K$-theory via \cite[Theorem C]{CMM}. 

If $k$ has characteristic $\neq p$, then 
$p$ is invertible in $A$, so we have by \Cref{SelKawayp} that 
$K^{Sel}(A)_{\hat{p}} \simeq L_{K(1)} K(A)$. 
By Gabber-Suslin rigidity 
\cite{Gabber92, Su83}
$K_{\geq 0}(-)_{\widehat{p}}$ is invariant under all ring homomorphisms between strictly
henselian local $\mathbb{Z}[1/p]$-algebras; again this is only true for
connective $K$-theory, but after applying $L_{K(1)}$ we find that this also
holds for $K(-)_{\widehat{p}}$.  Thus to
produce the equivalence $(L_1K(A))_{\widehat{p}} \simeq KU_{\widehat{p}}$ we can
use a zig-zag of maps to connect $A$ to $\mathbb{C}$, where the conclusion
follows from Suslin's comparison with topological $K$-theory
$K(\mathbb{C})/p\overset{\sim}{\rightarrow} ku/p$ \cite{Su84}.  The identification $\pi_2 L_1K(A)_{\widehat{p}}=\mathbb{Z}_p(1)$ is then given in the standard manner using $\mu_{p^\infty}\subset A^\times = K_1(A)$.

Finally, we claim that $K(A) \to K^{Sel}(A)$ is an isomorphism in degrees $\geq
-1$.
We first note that $K\otimes\mathbb{Q}\overset{\sim}{\rightarrow} K^{Sel}\otimes\mathbb{Q}$ in general, since $T\otimes\mathbb{Q}\overset{\sim}{\rightarrow} (L_1T)\otimes\mathbb{Q}$ for any spectrum $T$.  Moreover $K_{-1}(A)=0$ by 
\cite[Theorem 3.7]{Dri06}.  It follows that $\pi_{-1} K^{Sel}(A)$ is torsion.  But the above (mod $p$) descriptions show that $\pi_0 (K^{Sel}(A)/p)$ is generated by the unit, which comes from $\pi_0 K^{Sel}(A)$, so we deduce that $\pi_{-1}K^{Sel}(A)=0$ as well.
Thus it suffices to show that $K_{\geq 0}(A)/p\rightarrow K^{Sel}(A)/p$ is an
isomorphism in degrees $\geq 0$ for all primes $p$, with $K_{\geq 0}$ here
meaning \emph{connective} $K$-theory.  When $k$ has characteristic $p$ this
follows from 1.  When $k$ has characteristic $\neq p$ this follows from Gabber-Suslin rigidity again, which reduces us to the obvious fact $ku/p\overset{\sim}{\rightarrow} (KU/p)_{\geq 0}$.

Now suppose $A$ is a connective $E_1$-ring with $\pi_0(A)$ strictly henselian.
Using the Dundas-Goodwillie-McCarthy theorem \cite{DGM13} twice, we 
have a homotopy pullback square
\[ \xymatrix{
K(A) \ar[d]  \ar[r] &  K^{Sel}(A) \ar[d] \\
K(\pi_0(A)) \ar[r] &  K^{Sel}(\pi_0(A))
}.\]
In addition, if $p$ is invertible in $A$, then the map $K(A)_{\hat{p}} \to
K(\pi_0 A)_{\hat{p}}$ is an equivalence (e.g., via Dundas-Goodwillie-McCarthy or
more simply by a group homology calculation). 
Using these facts, we 
easily reduce to
the case where $A$ is discrete treated above.
\end{proof}

\begin{corollary}\label{KSelHomotopy}
Let $d\in\mathbb{Z}$ and $p^n$ a prime power.  Then on \'etale homotopy group
sheaves with (mod $p^n$) coefficients over an arbitrary qcqs algebraic space $X$ with connective structure sheaf, the natural map $K^{Sel}\rightarrow \operatorname{TC}$ induces a surjection
$$\pi^{et}_d (K^{Sel}/p^n) \rightarrow \pi^{et}_d(\operatorname{TC}/p^n),$$
with kernel $\mathcal{F}_d$ described as follows:
\begin{enumerate}
\item For $d$ odd, $\mathcal{F}_d=0$;
\item For $d=2q$ even, $\mathcal{F}_d = j_!\mathbb{Z}/p^n\mathbb{Z}(q)$ with $j$ the open inclusion of the characteristic $\neq p$ locus of $X$ into $X$.
\end{enumerate}
\end{corollary}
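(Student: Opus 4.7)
The plan is to compute everything stalk-by-stalk on the \'etale site. Since \'etale homotopy group sheaves are defined as sheafifications of the corresponding presheaves of homotopy groups, and since sheafification preserves stalks, the stalk of $\pi^{et}_d(K^{Sel}/p^n)$ at a point $x \in X$ is the filtered colimit $\varinjlim_{U \ni x} \pi_d(K^{Sel}(U)/p^n)$ taken over \'etale neighborhoods $U$ of $x$; the analogous statement holds for $\TC/p^n$. Now $K^{Sel}$ commutes with filtered colimits on connective $E_1$-rings by \Cref{selmer:basicproperties}(1), and the same is true for $\TC$ by \cite{CMM}. So, writing $A_x^{sh}$ for the strict henselization of the connective structure sheaf at $x$, the stalks of $\pi^{et}_d(K^{Sel}/p^n)$ and $\pi^{et}_d(\TC/p^n)$ at $x$ are respectively $\pi_d(K^{Sel}(A_x^{sh})/p^n)$ and $\pi_d(\TC(A_x^{sh})/p^n)$.

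Next, I would invoke \Cref{stalksofKSel} at each stalk. If the residue field at $x$ has characteristic $p$, part (1) of that theorem shows that $K^{Sel}(A_x^{sh})_{\hat{p}} \simeq \TC(A_x^{sh})_{\hat{p}}$, so the mod $p^n$ map is already an isomorphism on these stalks and contributes nothing to the kernel. If the residue field at $x$ has characteristic different from $p$, then $p$ is invertible in $A_x^{sh}$, whence $\TC(A_x^{sh})/p^n = 0$ (as recorded in the proof of \Cref{TCissheaf}); and part (2) of \Cref{stalksofKSel} identifies $K^{Sel}(A_x^{sh})_{\hat{p}}$ with $KU_{\hat{p}}$, canonically at the level of $\pi_2 \cong \mathbb{Z}_p(1)$. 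Via Bott periodicity, $\pi_d(K^{Sel}(A_x^{sh})/p^n) \cong \mathbb{Z}/p^n\mathbb{Z}(q)$ when $d = 2q$ is even, and vanishes when $d$ is odd.

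Combining the two cases, the sheaf map $\pi^{et}_d(K^{Sel}/p^n) \to \pi^{et}_d(\TC/p^n)$ is surjective on every stalk (trivially so at characteristic $\neq p$ points, and an isomorphism at characteristic $p$ points), hence surjective as a map of sheaves. Its kernel $\mathcal{F}_d$ has vanishing stalks on the closed characteristic $p$ locus of $X$ and stalks $\mathbb{Z}/p^n\mathbb{Z}(q)$ on the open locus $X[1/p]$ when $d = 2q$ is even, and vanishes entirely for $d$ odd. Since an \'etale sheaf of abelian groups whose stalks vanish outside an open subset is canonically the extension by zero of its restriction to that open, we obtain $\mathcal{F}_d = j_! \mathbb{Z}/p^n\mathbb{Z}(q)$ for $d$ even and $\mathcal{F}_d = 0$ for $d$ odd. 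The entire argument is a direct unwinding of \Cref{stalksofKSel} combined with the filtered-colimit property of $K^{Sel}$ and $\TC$; the only subtle point is that the canonical identification $\pi_2 L_1 K(A)_{\hat{p}} \cong \mathbb{Z}_p(1)$ is natural enough in $A$ to produce the Tate twist upon sheafification, which is immediate from its construction via $\mu_{p^\infty} \subset A^\times = K_1(A)$.
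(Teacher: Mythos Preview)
Your proposal is correct and follows essentially the same strategy as the paper: reduce to stalks via \Cref{stalksofKSel}, use that $\TC/p^n$ vanishes where $p$ is invertible, and identify the Tate twist via the units map $\mu_{p^\infty}\subset A^\times \to K_1(A)$. The paper organizes the argument slightly differently---it first constructs the comparison map $j_!\mathbb{Z}/p^n\mathbb{Z}(q)\to \pi_{2q}^{et}(K^{Sel}/p^n)$ globally (via the Bockstein $\pi_2^{et}(K/p^n)\to(\pi_1^{et}K)[p^n]\cong\mu_{p^n}$ and multiplicativity) and then checks exactness on stalks---whereas you compute stalks first and address naturality at the end, but the content is the same.
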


\begin{proof}
Because the map from the units of a ring to $K_1$ of the ring is an isomorphism
on local rings, the $p^n$-torsion in $\pi_1^{et}K$ identifies with
$\mu_{p^n}=\mathbb{Z}/p^n\mathbb{Z}(1)$. Comparing with Proposition
\ref{stalksofKSel}, we deduce that the Bockstein map
$\pi_2^{et}(K/p^n)\rightarrow(\pi_1^{et}K)[p^n]=\mathbb{Z}/p^n\mathbb{Z}(1)$ is
an isomorphism over rings in which $p$ is invertible.  The homotopy group
sheaves of $j^\ast K/p^n$ are even periodic by Proposition \ref{stalksofKSel},
so by multiplicativity we deduce a comparison isomorphism $\mathbb{Z}/p^n\mathbb{Z}(q)\overset{\sim}{\rightarrow} j^\ast \pi_{2q}^{et}(K/p^n)$ for all $q\in\mathbb{Z}$.  This is adjoint to a map $\mathcal{F}_d \rightarrow \pi_{2q}^{et}(K/p^n)$.  We need to see that
$$0\rightarrow \mathcal{F}_d\rightarrow \pi_d^{et}(K^{Sel}/p^n)\rightarrow \pi_d^{et}(\operatorname{TC}/p^n)\rightarrow 0$$
is exact, but it suffices to check this on stalks, where it follows from
\Cref{stalksofKSel}.
\end{proof}

\subsection{Hyperdescent and Lichtenbaum-Quillen for Selmer $K$-theory}

Here   we indicate the Lichtenbaum-Quillen style statements one obtains for 
the map $K \to K^{Sel}$, showing that the map is often an equivalence in high
enough degrees. We begin with the case of fields, when the result is
known.

\begin{theorem} 
\label{QLSelmerfields}
Let $k$ be a field and let $p$ be a prime number. Let $d$ be the virtual
cohomological dimension (mod $p$) of $k$ if $p \neq \mathrm{char}(k)$, and
$1+\log_p [k:k^p]$ if $p = \mathrm{char}(k)$. 
Then: 
\begin{enumerate}
\item  
The map $K(k)_{(p)} \to K^{Sel}(k)_{(p)}$ induces an equivalence on $(d-2)$-connective
covers: more precisely, its homotopy fiber is concentrated in degrees $\leq d-4$. 
\item
The construction $A \mapsto K^{Sel}(A)_{(p)}$ defines a hypercomplete \'etale sheaf
on $\spec(k)_{et}$. 
\end{enumerate}

\end{theorem}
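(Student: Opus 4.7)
The plan is to treat both parts simultaneously by splitting coefficients into rational and mod~$p$ pieces, and then splitting the mod~$p$ piece into the equal and mixed characteristic cases. Since $K_{\mathbb{Q}} \simeq K^{Sel}_{\mathbb{Q}}$ (as noted after \Cref{SelKdef}), part~1 is automatic rationally and it suffices to bound the fiber of $K/p \to K^{Sel}/p$ for each prime~$p$. For part~2, rationalization commutes with finite Galois homotopy fixed points on additive invariants (by the usual transfer), so $K^{Sel}_{\mathbb{Q}}$ is automatically an \'etale hypersheaf on $\spec(k)_{et}$, and it also suffices to check that $K^{Sel}/p$ is a hypercomplete \'etale sheaf for each~$p$.

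For $p = \mathrm{char}(k)$ the argument is short. Any object of the \'etale site of $\spec(k)$ is a finite product of separable algebraic extensions $L/k$, each of which is itself a field of characteristic~$p$ (hence $p$-henselian with residue field equal to itself); \Cref{SelKatp} then yields a natural equivalence $K^{Sel}(L)_{\hat p} \simeq \mathrm{TC}(L)_{\hat p}$ at the level of presheaves on $\spec(k)_{et}$. Consequently part~2 reduces immediately to the fact, established in \Cref{TCissheaf}, that $\mathrm{TC}/p$ is a Postnikov complete \'etale sheaf. For part~1 we are reduced to bounding the fiber of $K(k)/p \to \mathrm{TC}(k)/p$; one inputs the Geisser--Levine computation of $K_*(k;\mathbb{Z}/p) \simeq K^M_*(k)/p$ and the parallel computation of $\mathrm{TC}_*(k;\mathbb{Z}/p)$ via the de Rham--Witt complex (Hesselholt--Madsen), both of which vanish above degree $\log_p[k:k^p]$, to read off the connectivity estimate.

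For $p \neq \mathrm{char}(k)$ we are in the regime treated classically by Thomason. Here \Cref{SelKawayp} gives $K^{Sel}(L)_{\hat p} \simeq L_{K(1)} K(L)$ for every $L$ in the \'etale site, so we must show that $L_{K(1)} K(-)$ is a hypercomplete \'etale sheaf on $\spec(k)_{et}$ and that $K(k) \to L_{K(1)} K(k)$ has fiber in degrees $\le d-4$. The key input is the norm residue isomorphism theorem of Voevodsky--Rost: combined with the motivic spectral sequence converging to $K(L)/p$ and the descent spectral sequence of \Cref{descentss} converging to $(L_{K(1)} K)/p$ in terms of Galois cohomology, Beilinson--Lichtenbaum identifies the two spectral sequences above a diagonal determined by $\mathrm{vcd}_p(k)$. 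This simultaneously forces the descent spectral sequence to degenerate in a uniformly bounded range and gives the Lichtenbaum--Quillen bound.

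The main obstacle is to convert this degeneration into genuine hypercompleteness, not merely sheafiness (which is already provided by \Cref{selmer:basicproperties}). For this we use the criterion of \Cref{main:hypcriterion}/\Cref{Gcompletecrit}: it suffices to check that for every finite Galois extension $L/k$ with group $G$, the $G$-spectrum $K^{Sel}(L)/p$ is $d$-nilpotent, with $d$ bounded uniformly by the virtual cohomological dimension. The nilpotence assertion follows by mimicking the Tate--Thomason argument (\Cref{nilpotenceofetalePostnikov}, applied to a Postnikov-complete algebra whose homotopy sheaves have cohomological dimension bounded by~$d$): the norm residue theorem controls the homotopy sheaves of $K^{Sel}/p$ in terms of Tate twists, which have cohomological dimension $\le d$ on the \'etale site of $k$, and so give a uniform nilpotence exponent. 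Passing to the limit over all finite Galois covers yields hypercompleteness, and the same analysis pins down the connectivity constant $d-4$ appearing in part~1.
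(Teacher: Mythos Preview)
Your reduction to mod $p$ coefficients and your treatment of the case $p = \mathrm{char}(k)$ are correct and essentially match the paper: the identification $K^{Sel}/p \simeq \mathrm{TC}/p$ via \Cref{SelKatp}, together with \Cref{TCissheaf} and the Geisser--Levine/Geisser--Hesselholt computations, gives both parts.

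The gap is in your hypercompleteness argument for $p \neq \mathrm{char}(k)$. You propose to use the nilpotence criterion \Cref{Gcompletecrit} and to supply the nilpotence via the Tate--Thomason theorem \Cref{nilpotenceofetalePostnikov}. But the hypothesis of \Cref{nilpotenceofetalePostnikov} is that the algebra $\mathcal{A}$ be \emph{Postnikov complete}; on $\spec(k)_{et}$ under your finite-dimensionality assumption, Postnikov complete is the same as hypercomplete (\Cref{hypforfinitecd}), which is exactly what you are trying to prove. There is no evident Postnikov-complete algebra $\mathcal{A}$ with $K^{Sel}(L)/p$ a module over $\mathcal{A}(L)$ in $\fun(BG,\sp)$: the comparison map runs $K^{Sel}/p \to (K^{Sel}/p)^h$, not the other way, and constant sheaves on spectra such as $ku/p$ are themselves not known to be hypercomplete (cf.\ \Cref{nonhypcompleteZp}). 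The same circularity affects your appeal to the descent spectral sequence of \Cref{descentss} ``converging to $(L_{K(1)}K)/p$'': that convergence already presupposes Postnikov completeness.

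The paper breaks this circle by using the motivic filtration $\{F^{\geq n}(K/p)\}$ directly. Beilinson--Lichtenbaum shows that for $n \geq d$ the graded piece $F^{=n}(K/p) \simeq \Sigma^{2n}\tau_{\geq -n}R\Gamma_{et}(-,\mu_p^{\otimes n})$ is already an \'etale sheaf (the truncation is vacuous), so $F^{\geq d}(K/p)$ is a hypersheaf as a limit of truncated ones. Since $K/p \big/ F^{\geq d}(K/p)$ is a finite extension of $H\mathbb{F}_p$-modules, $L_1(K/p) \simeq L_1 F^{\geq d}(K/p)$; and as hypercomplete sheaves on $\spec(k)_{et}$ are closed under colimits (\Cref{critforhypcolimits}), $K^{Sel}/p = L_1(K/p)$ is hypercomplete. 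This is the missing idea. Note finally that when only the \emph{virtual} mod $2$ cohomological dimension is finite the graded pieces are never \'etale sheaves, and the paper gives a separate argument by passing to $K/(2,u)$ for $u$ the class of $-1$ and using the nilpotence of $\eta$; your sketch does not address this case.
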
 
\begin{proof} 
Suppose first that $p \neq \mathrm{char}(k)$. Then the result is
essentially contained in
\cite{RO05, RO06}.  Indeed, $K^{Sel}(k)/p \simeq L_{K(1)}
K(k)/p$. 
The results of \emph{loc. cit.} show that 
$K(k)/p \to K^{Sel}(k)/p$ is an equivalence in degrees $\geq d-2$. 
Therefore, 
the homotopy fiber $F$ of $K(k) \to K^{Sel}(k)$ 
has mod $p$ homotopy in degrees $\leq d - 3$. 
Since this fiber $F$ is torsion, it follows 
that $F_{(p)}$ has homotopy in degrees $\leq d-4$. 
The hypercompleteness is also proved there, but we verify it below as well.

Suppose $\operatorname{char}(k)=p$, so that in this case $K^{Sel}(k)/p =
\mathrm{TC}(k)/p$
by \Cref{SelKatp}. 
Then the result follows from
\cite{GL00, GH99}. 
In fact, $K(k)/p$ and $\mathrm{TC}(k)/p$ are both $(d-1)$-truncated and the map
$K(k)/p \to \mathrm{TC}(k)/p$ is an isomorphism on top homotopy groups (in
degree $(d-1)$) and an injection in all degrees (in particular, in degree $d-2$), 
so the homotopy fiber $F$ of $K(k) \to K^{Sel}(k)$ has mod $p$ homotopy in
degrees $\leq d-3$, and thus $F_{(p)}$ has homotopy in degrees $\leq d-4$ since
it is torsion. 
Explicitly, the homotopy groups are given by 
$\pi_i(K(k)/p) \simeq \Omega^i_{k, \mathrm{log}}$ and $\pi_i( \mathrm{TC}(k)/p)
\simeq
\Omega^i_{k, \mathrm{log}} \oplus H^1_{et}(\spec(k),
\Omega^{i+1}_{\mathrm{log}})$. 
Thus, if $F$ denotes the fiber of the map $K(k) \to K^{Sel}(k)$ as before, then 
$F/p$ is concentrated in degrees $\leq d-3$, so $F$ is concentrated in degrees
$\leq d-4$ (since $F$ is torsion). 
It also follows that $K^{Sel}/p$ defines a hypercomplete \'etale sheaf since it
is a truncated \'etale sheaf; therefore, $K^{Sel}_{(p)}$ is a hypercomplete \'etale sheaf too. 
\end{proof}

For the convenience of the reader, we briefly include a version of the argument
(slightly reformulated) in the
case $\mathrm{char}(k) \neq p$. 
In this case, the goal is to prove that if $k$ has virtual cohomological
dimension $d$, then the homotopy fiber of $K(k)/p \to L_1 K(k)/p$ belongs to
$\sp_{\leq d-3}$. 

\begin{construction}[Review of Beilinson-Lichtenbaum]
Let $k$ be a field of characteristic $\neq p$. 
The motivic or slice filtration
\cite{FS02, Lev08} of $K$ restricts to a decreasing, multiplicative $\mathbb{Z}_{\geq 0}$-indexed filtration
$$\ldots \rightarrow F^{\geq n+1}(K)\rightarrow F^{\geq n}(K) \rightarrow \ldots \rightarrow F^{\geq 0}(K) = K$$
of Nisnevich sheaves on $\operatorname{Spec}(k)$ with the following properties:
\begin{enumerate}
\item $\varprojlim_n F^{\geq n}(K) = 0$;
\item $F^{=n}(K):=\mathrm{cofib}(F^{\geq n+1}(K)\rightarrow F^{\geq n}(K))$ identifies with $\Sigma^{2n}H\mathbb{Z}(n)$, the Nisnevich sheaf representing motivic cohomology in degree $2n$ and weight $n$.
\end{enumerate}
We will primarily work with mod $p$ coefficients, so that we obtain 
a filtration $\left\{F^{\geq n}(K/p)\right\}_{n \geq 0}$
with associated graded given by $F^{=n}(K/p) = \Sigma^{2n}H \mathbb{F}_p(n)$,
i.e., one obtains motivic cohomology with mod $p$ coefficients. 
By the norm residue isomorphism theorem (cf.\ \cite{HW19} for a textbook
reference), 
we have an equivalence for any \'etale $k$-algebra $A$ and for each $n \geq 0$,
\begin{equation} \label{BLthm} H \mathbb{F}_p(n) (A) \simeq \tau_{\geq -n} R \Gamma_{et}( \spec(A),
\mu_p^{\otimes n}) .\end{equation}
\end{construction}

Next we need an elementary connectivity lemma. 
\begin{lemma} 
\label{sheafconnectivitylemma}
Let $\sF$ be a sheaf of spectra on a site. Let $\mathcal{G}$ denote the presheaf
$\tau_{\geq j} \sF$ and let $\mathcal{G}^{sh}$ denote its sheafification. 
Then the fiber of the map $\mathcal{G} \to \mathcal{G}^{sh}$ is a presheaf of
spectra which is concentrated in
degrees $\leq j-2$. 
\end{lemma}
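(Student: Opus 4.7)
The plan is to reduce the claim to a dual statement on the coconnective side and then invoke the $t$-exactness of sheafification together with the $t$-structure orthogonality in $\sh(\mathcal{T}, \sp)$.

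Set $\mathcal{H} = \tau_{\leq j-1}\sF$, viewed as a presheaf of spectra; there is a canonical fiber sequence of presheaves $\mathcal{G} \to \sF \to \mathcal{H}$. Since sheafification of presheaves of spectra is exact (as a Bousfield localization in a stable $\infty$-category, it preserves finite limits), sheafifying this sequence yields a fiber sequence of sheaves
\[ \mathcal{G}^{sh} \to \sF \to \mathcal{H}^{sh}, \]
the middle term being unchanged because $\sF$ is already a sheaf. The evident morphism from the first fiber sequence to the second is the identity on the middle term, so taking fibers column-wise in this ladder yields an equivalence of presheaves
\[ \mathrm{fib}(\mathcal{G} \to \mathcal{G}^{sh}) \simeq \Omega\,\mathrm{fib}(\mathcal{H} \to \mathcal{H}^{sh}). \]

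It therefore suffices to show that $\mathrm{fib}(\mathcal{H} \to \mathcal{H}^{sh})$ takes values in $\sp_{\leq j-1}$. For each $U \in \mathcal{T}$, the source $\mathcal{H}(U) = \tau_{\leq j-1}\sF(U)$ is manifestly $(j-1)$-truncated. For the target, the key point is that $\mathcal{H}^{sh}$ is a $(j-1)$-truncated sheaf of spectra: its homotopy sheaves are the sheafifications of the vanishing homotopy presheaves $\pi_i \mathcal{H}$ for $i \geq j$. By Yoneda,
\[ \mathcal{H}^{sh}(U) \simeq \hom_{\sh(\mathcal{T}, \sp)}(\Sigma^\infty_+ h_U, \mathcal{H}^{sh}), \]
and since $\Sigma^\infty_+ h_U$ is a connective sheaf of spectra while $\mathcal{H}^{sh}$ lies in $\sh(\mathcal{T}, \sp)_{\leq j-1}$, the $t$-structure orthogonality axiom (applied to the shifts $\Sigma^{-k}\mathcal{H}^{sh} \in \sh(\mathcal{T}, \sp)_{\leq -1}$ for $k \geq j$) forces this mapping spectrum to lie in $\sp_{\leq j-1}$.

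The fiber of a morphism between two $(j-1)$-truncated spectra is again $(j-1)$-truncated, since $\sp_{\leq j-1}$ is closed under finite limits. Thus $\mathrm{fib}(\mathcal{H} \to \mathcal{H}^{sh})$ is section-wise in $\sp_{\leq j-1}$, and looping once yields the desired bound $\leq j-2$ on $\mathrm{fib}(\mathcal{G} \to \mathcal{G}^{sh})$. No step appears subtle; the essential structural input is the $t$-exactness of sheafification (already noted in the paper) and the resulting section-wise coconnectivity of $\mathcal{H}^{sh}$ via the $t$-structure axioms.
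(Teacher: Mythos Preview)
Your proof is correct and is essentially the same argument as the paper's, just reorganized: where the paper works with the composite $\mathcal{G} \to \mathcal{G}^{sh} \to \sF$ and its fibers, you pass to the cofiber $\mathcal{H} = \tau_{\leq j-1}\sF$ and compare $\mathcal{H}$ with $\mathcal{H}^{sh}$, which amounts to the same fiber sequence after a shift. The decisive input in both versions is identical: $t$-exactness of sheafification forces $\mathcal{H}^{sh}$ (equivalently, $\Sigma\,\mathrm{fib}(\mathcal{G}^{sh}\to\sF)$) to be a $(j-1)$-truncated sheaf, and a truncated sheaf is section-wise truncated; the paper compresses this last point to ``and hence as a presheaf of spectra,'' while you spell it out via Yoneda and $t$-orthogonality.
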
 
\begin{proof} 
We have a sequence of presheaves of spectra $\mathcal{G} \to \mathcal{G}^{sh}
\to \mathcal{F}$. 
Since sheafification is $t$-exact, the map $\mathcal{G}^{sh}\to
\mathcal{F}$ is an isomorphism on homotopy group sheaves in degrees $\geq j$
and an injection in degree $j-1$. Thus, 
$\mathrm{fib}( \mathcal{G}^{sh} \to \mathcal{F})$ is $(j-2)$-truncated as a
sheaf of spectra, and hence as a presheaf of spectra. 
Using the sequence
$\mathrm{fib}(\mathcal{G} \to \mathcal{G}^{sh}) \to \mathrm{fib}(\mathcal{G} \to
\mathcal{F}) \to \mathrm{fib}( \mathcal{G}^{sh} \to \mathcal{F})$, 
we see that the second and third terms are $(j-2)$-truncated, and thus so is the
first. 
\end{proof}

\begin{proof}[Proof of \Cref{QLSelmerfields} in case of finite cohomological dimension] 

Suppose first that $k$ has cohomological dimension $d$ and $p \neq
\mathrm{char}(k)$. 
In this case, it follows from the above filtration 
and \eqref{BLthm}
that for $n \geq d$,
the functor that $F^{=n}(K/p)$ defines  on $\spec(k)_{et}$ is a truncated
\'etale sheaf (hence a hypersheaf). 
Taking the inverse limit up the tower, we conclude that $F^{\geq d}(K/p)$
defines a hypersheaf 
on $\spec(k)_{et}$. 
Since $L_1 (K/p) \simeq L_1 F^{\geq d} (K/p)$ (because $L_1$ annihilates the
thick subcategory of $\mathrm{Sp}$ generated by $ \mathbb{F}_p$-modules) and hypercomplete sheaves are closed
under all colimits in presheaves (\Cref{critforhypcolimits}),
it follows that $L_1 (K/p) = K^{Sel}/p$ is a hypercomplete \'etale sheaf; here
we also use that $L_1$ is smashing, and hence given by tensoring with $L_1
\mathbb{S}$. This
proves 2 of \Cref{QLSelmerfields} for $K^{Sel}/p$; the assertion for
$K^{Sel}_{(p)}$ follows using the arithmetic fracture square and using that
rational $K$-theory is a hypercomplete \'etale sheaf in this case. 

It remains to prove 1. 
Note that the map $F^{=i}(K/p) \to F^{=i} (K/p)^{et}$ (where the target denotes
the sheafification of the source) has homotopy fiber in $\sp_{\leq i-2}$, 
again by \eqref{BLthm} and \Cref{sheafconnectivitylemma}. 
We conclude that if $(K/p)^{et}$ denotes the \'etale sheafification of $(K/p)$
on $\spec(k)_{et}$, then 
$(K/p)^{et}$ fits into a fiber sequence
\[ F^{\geq d}(K/p) \to  (K/p)^{et} \to F_{<d} (K/p)^{et},  \]
where the first term is already a hypercomplete \'etale sheaf. Since the last term
is truncated, we conclude that $(K/p)^{et}$ is hypercomplete
and that the homotopy fiber of $(K/p) \to (K/p)^{et}$ is a 
$(d-3)$-truncated presheaf on $\spec(k)_{\et}$. 
Finally, $(K/p)^{et} \to L_1 (K/p)^{et}$ is a map of 
hypercomplete sheaves of spectra, since hypercompletion and $L_1$-localization
are smashing. 
For a separably closed field $\ell$, we have that $(K/p)(\ell) \to  L_1
(K/p)(\ell)$ has homotopy fiber in degrees $\leq -3$, 
and therefore $(K^{et}/p)(k) \to L_1 (K^{et}/p)(k)$ has homotopy fiber in
degrees $\leq -3$.
Combining these observations, we conclude that 
$(K/p)(k) \to (K^{Sel}/p)(k)$ has homotopy fiber in degrees $\leq d-3$ as
desired. 
\end{proof}

\begin{proof}[Proof of \Cref{QLSelmerfields} at 2, cf.\ \cite{RO05}] 
Let $k$ be a field of characteristic $\neq 2$ and of (mod $2$) virtual cohomological
dimension $d$. 
Note that this implies that $k( \sqrt{-1})$ has cohomological dimension $d$,
cf.\ 
\cite[Sec. 4]{SerreCG}. 
Here we indicate an argument for 
\Cref{QLSelmerfields} which should be equivalent to that of \cite{RO05}, but
which does not involve any explicit spectral sequence calculations. 

With mod $2$ coefficients, the norm residue isomorphism theorem \eqref{BLthm}
simply becomes
\[ H \mathbb{F}_2(n)(A) \simeq \tau_{\geq -n} R \Gamma_{et}( \spec(A),
\mathbb{F}_2),  \]
as $A$ ranges over \'etale $k$-algebras. 
Since $k$ is only assumed of \emph{virtual} cohomological dimension $d$,
$H\mathbb{F}_2(n)$ need not be an \'etale sheaf for any $n$, so the
previous argument does not directly apply. We need to use additionally the
nilpotence of the Hopf map. In other words, unlike previously, the motivic
spectral sequence does not degenerate. 

Given an \'etale $k$-algebra $A$, the class $-1 \in A^{\times}$ defines an
element $u \in K_1(A)$, which lives in $F^{\geq 1} K$. 
With mod $2$ coefficients, it defines a class in $F^{\geq 1} (K/2)(A)$. 
In associated gradeds, 
this gives the class $t$ in $\pi_1 
(F^{=1} (K/2)(A)) =
H^1( \spec(A)_{et}, \mathbb{F}_2)$ arising from $-1 \in A^{\times}$ via the
Kummer sequence. 
We can also identify this class as follows. 
For any $A$, we have that 
\[ R \Gamma( \spec(A)_{et}, \mathbb{F}_2) 
\simeq 
R \Gamma( \spec(A[i])_{et}, \mathbb{F}_2)^{hC_2}
\]
where $A[i] = A \otimes_{\mathbb{Z}[1/2]} \mathbb{Z}[1/2, i]$ is a $C_2$-Galois
extension of $A$. Note in addition that $A[i]$ has \'etale cohomological
dimension $\leq d$. 
Unwinding the definitions, we conclude that 
the class $u$ arises from the 
map 
\[ \mathbb{F}_2^{hC_2} \to  
R \Gamma( \spec(A[i])_{et}, \mathbb{F}_2)^{hC_2}
\]
as the image of the generator $t \in H^1( C_2; \mathbb{F}_2)$. 

Now consider the filtered spectrum $\left\{F^{\geq i} (K/2)(A)\right\}$, for $A
$ an \'etale $k$-algebra. 
This is a module over the filtered spectrum $\left\{F^{\geq i} K(A)\right\}$. 
Since $u$ defines a class in filtration $1$, it follows that we get a complete
exhaustive filtration 
$\{F^{\geq i} (K/(2, u))\}$
on $(K/2)(A)/(u)$ whose associated graded terms are given by 
$F^{=i}( K/(2, u))
= \mathrm{cofib}( \Sigma F^{= i-1} (K/2)  \to
F^{=i}(K/2))$ (i.e., we take the cofiber of multiplication by $u$, but
in filtered spectra, recording that it raises filtration by $1$). 
Unwinding the definition, we have
that the associated graded terms are 
\[ F^{=i}( K/(2, u))(A)  \simeq
\Sigma^{2i}\mathrm{cofib}( 
\Sigma^{-1} \tau_{\geq -(i-1)} 
R \Gamma( \spec(A[i])_{et}, \mathbb{F}_2)^{hC_2} \to \tau_{\geq -i}
R \Gamma( \spec(A[i])_{et}, \mathbb{F}_2)^{hC_2})
\]
where the map is multiplication by $t$. 
By \Cref{htpygplemma}, for $i \geq d+1$, we can remove the truncations and therefore obtain an
object which is actually an \'etale hypersheaf on $\spec(k)_{et}$. 
Passing up the limit, it follows that $F^{\geq d+1} (K/ (2, u))$ is an \'etale
hypersheaf.

 Note that the map $F^{=i}(K/(2, u)) \to (F^{=i}(K/(2, u)))^{et}$
 has homotopy fiber which is $(i-1)$-truncated, by checking on each of the terms
 in the cofiber. 
It thus follows that the map 
$$ K/(2, u)(A) \to (K/(2, u)^{et}(A)$$
has homotopy fiber in degrees $\leq d-1$, and that the target is hypercomplete. 
Since, however, $u$ is nilpotent --- it arises from the Hopf map $\eta$\footnote{Two proofs: one, according to the Barratt-Priddy-Quillen theorem, the Hopf map is detected in homology by the sign of a permutation, so it suffices to note that a permutation matrix has determinant given by the sign of the permutation; two, it suffices to verify the claim in $K_1(\mathbb{Z})\overset{\sim}{\rightarrow} \pi_1ko$ where it is classical.} --- we
can conclude the result for $K/2$ itself, i.e., that 
$K/2 (A) \to (K/2)^{et}(A)$ has homotopy fiber which is $\leq d-3$-truncated,
via \Cref{etatruncated} below. 
This shows that $K/2(k) \to (K/2)^{et}(k)$ has homotopy fiber in degrees $\leq
d-3$ as desired. As in the previous proof, we can make the same conclusion with
$(K/2)^{et}$ replaced by $K^{Sel}/2$. 
\end{proof} 

\begin{lemma} 
\label{etatruncated}
Let $X$ be a spectrum. Suppose that the cofiber $C$ of the Hopf map $\eta: \Sigma X
\to X$ belongs to $\sp_{\leq n}$. Then $X \in \sp_{\leq n-2}$. 
\end{lemma}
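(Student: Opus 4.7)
The key input will be the classical fact that $\eta^4 = 0$ in the stable homotopy ring $\pi_*(\mathbb{S})$, which follows from the computation $\pi_4(\mathbb{S}) = 0$. Since $\eta^4$ acts on any spectrum through multiplication from $\pi_*(\mathbb{S})$, the self-map $\eta^4 \colon \Sigma^4 X \to X$ is nullhomotopic.

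First, I will build inductively the cofibers $D_k \stackrel{\mathrm{def}}{=} \mathrm{cofib}(\eta^k \colon \Sigma^k X \to X)$ for $k \geq 1$, starting from $D_1 = C$. Applying the octahedral axiom to the factorization $\eta^k = \eta^{k-1} \circ \eta$ (where the first factor is $\eta \colon \Sigma^k X \to \Sigma^{k-1} X$ and the second is $\eta^{k-1} \colon \Sigma^{k-1} X \to X$) produces a cofiber sequence
\[ \Sigma^{k-1} C \longrightarrow D_k \longrightarrow D_{k-1}. \]
A straightforward induction on $k$, using the hypothesis $C \in \sp_{\leq n}$, then shows $D_k \in \sp_{\leq n+k-1}$.

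Next, I will specialize to $k = 4$. Since $\eta^4 = 0$, the cofiber sequence $\Sigma^4 X \xrightarrow{0} X \to D_4$ splits, giving an equivalence $D_4 \simeq X \oplus \Sigma^5 X$. In particular, $\Sigma^5 X$ is a retract of $D_4$, so $\Sigma^5 X \in \sp_{\leq n+3}$; translating back through the fivefold desuspension yields $X \in \sp_{\leq n-2}$, as desired.

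I don't anticipate any real obstacle here: the vanishing $\pi_4(\mathbb{S}) = 0$ is classical, and the remainder is formal bookkeeping with cofiber sequences and truncation estimates. The only mild subtlety is choosing the correct power of $\eta$ (one cannot get away with $\eta^3$, since $\eta^3 = 12\nu \neq 0$ in $\pi_3(\mathbb{S}) = \mathbb{Z}/24$), which forces the loss of two degrees rather than one.
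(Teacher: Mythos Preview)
Your proof is correct and rests on the same key input as the paper's---the nilpotence of $\eta$ in $\pi_*(\mathbb{S})$---but packages it differently. The paper argues by contradiction at the level of elements: given a nonzero $x \in \pi_i(X)$ with $i \geq n-1$, multiply by powers of $\eta$ until the result is nonzero but annihilated by $\eta$; then the long exact sequence of $\Sigma X \to X \to C$ lifts this element to $\pi_{j}(C)$ for some $j > n$, contradicting $C \in \sp_{\leq n}$. Your approach is more global, building the tower of cofibers $D_k$ via the octahedral axiom and reading off the bound from the splitting at $k=4$. Both are short; the paper's version avoids the octahedral bookkeeping, while yours avoids chasing individual elements. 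One minor remark: your closing comment that the specific power of $\eta$ ``forces the loss of two degrees rather than one'' is not quite right---any $k$ with $\eta^k = 0$ gives the same conclusion, since $D_k \in \sp_{\leq n+k-1}$ and the retract $\Sigma^{k+1} X$ then lies in $\sp_{\leq n+k-1}$, i.e., $X \in \sp_{\leq n-2}$ regardless of $k$. The two-degree drop is intrinsic to the shape of the cofiber sequence $\Sigma X \to X \to C$, not to which power of $\eta$ happens to vanish.
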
 
\begin{proof} 
Suppose that there exists $x \in \pi_i(X)$ for $i \geq n-1$. 
Recall that $\eta$ is nilpotent in the stable stems; therefore, 
up to replacing $x$ by an $\eta$-multiple (and thus raising $i$), we may assume
$\eta x = 0$. 
The cofiber sequence
 $\Sigma X \to X \to C$ shows that $x$, considered as an
 element $\pi_{i+1}( \Sigma X)$
must be the image of a class from $\pi_{i+2}(C)$. This contradicts the
assumption that $C \in \sp_{\leq n}$. 
\end{proof} 

\begin{lemma} 
\label{htpygplemma}
Let $M$ be an $\mathbb{F}_2$-module spectrum with a $C_2$-action.
Suppose that the homotopy groups of $M$ are concentrated in degrees $[-d, 0]$. 
Then for each $i \geq d+1$, the map 
\[ \Sigma^{-1}\tau_{\geq -(i-1)}(M^{hC_2} ) \to \tau_{\geq -i}( M^{hC_2}) \]
given by multiplication by $t \in H^1(C_2; \mathbb{F}_2)$
is identified with the cofiber of the map $t: \Sigma^{-1} M^{hC_2} \to
M^{hC_2}$, which is $M$. 
\end{lemma}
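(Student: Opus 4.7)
The plan is to identify the cofiber of the displayed truncated-$t$ map with $M$ by first establishing a canonical fiber sequence
\[ \Sigma^{-1} M^{hC_2} \xrightarrow{t} M^{hC_2} \to M \]
of spectra, and then checking that the truncations do not affect the cofiber when $i \geq d+1$.

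For the fiber sequence, I would use that $M^{hC_2}$ is naturally a module over the $E_\infty$-ring $\mathbb{F}_2^{hC_2}$, whose homotopy ring is $\mathbb{F}_2[t]$ with $|t| = -1$. A direct computation shows $\mathbb{F}_2^{hC_2}/t \simeq \mathbb{F}_2$, giving a fiber sequence $\Sigma^{-1} \mathbb{F}_2^{hC_2} \xrightarrow{t} \mathbb{F}_2^{hC_2} \to \mathbb{F}_2$ of $\mathbb{F}_2^{hC_2}$-modules. Tensoring over $\mathbb{F}_2^{hC_2}$ with $M^{hC_2}$ then yields a fiber sequence whose last term $M^{hC_2} \otimes_{\mathbb{F}_2^{hC_2}} \mathbb{F}_2$ is identified with $M$ via the natural map coming from the augmentation $M^{hC_2} \to M$; this identification can be verified by checking on generators of $\fun(BC_2, \md_{\mathbb{F}_2})$, for instance on $\mathbb{F}_2$ with trivial action.

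Given the fiber sequence, the associated long exact sequence on homotopy shows that $t \colon \pi_{n+1}(M^{hC_2}) \to \pi_n(M^{hC_2})$ is an isomorphism whenever $\pi_n(M) = \pi_{n+1}(M) = 0$, i.e., whenever $n \leq -d-2$. Alternatively, one can extract this $t$-periodicity directly from the norm fiber sequence $M_{hC_2} \to M^{hC_2} \to M^{tC_2}$, using that $M_{hC_2}$ lies in $\sp_{\geq -d}$ and that $t$ acts invertibly on $M^{tC_2}$.

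Finally, I would form the commutative square
\[
\xymatrix{
\Sigma^{-1}\tau_{\geq -(i-1)} M^{hC_2} \ar[r]^-t \ar[d] & \tau_{\geq -i} M^{hC_2} \ar[d] \\
\Sigma^{-1} M^{hC_2} \ar[r]^-t & M^{hC_2}
}
\]
and observe that its vertical cofibers are $\Sigma^{-1} \tau_{\leq -i} M^{hC_2}$ and $\tau_{\leq -i-1} M^{hC_2}$. The total cofiber of the square equals the cofiber of the induced map between these vertical cofibers, which on $\pi_n$ for $n \leq -i-1$ is precisely multiplication by $t\colon \pi_{n+1}(M^{hC_2}) \to \pi_n(M^{hC_2})$; the hypothesis $i \geq d+1$ ensures $n \leq -i-1 \leq -d-2$, so $t$ is an isomorphism in the entire relevant range, the map of vertical cofibers is an equivalence, and the total cofiber vanishes. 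Consequently the cofiber of the top row agrees with that of the bottom row, which is $M$. The main obstacle will be pinning down the identification $M^{hC_2} \otimes_{\mathbb{F}_2^{hC_2}} \mathbb{F}_2 \simeq M$ as a natural equivalence; once that is in place, the truncation comparison is a routine connectivity argument.
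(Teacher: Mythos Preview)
Your proposal is correct and follows the same overall strategy as the paper: establish the cofiber sequence $\Sigma^{-1}M^{hC_2}\xrightarrow{t}M^{hC_2}\to M$, deduce that $t$ is an isomorphism on $\pi_n$ for $n\le -d-2$, and then argue that the truncations are irrelevant for $i\ge d+1$.

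The one genuine difference is in how the cofiber sequence is obtained. You work module-theoretically over $\mathbb{F}_2^{hC_2}$, computing $\mathbb{F}_2^{hC_2}/t\simeq\mathbb{F}_2$ and then tensoring; this leaves you with the obstacle you flag, namely identifying $M^{hC_2}\otimes_{\mathbb{F}_2^{hC_2}}\mathbb{F}_2$ with $M$ naturally. The paper sidesteps this by working equivariantly: it starts from the fiber sequence $\mathbb{F}_2\to\mathbb{F}_2[C_2]\to\mathbb{F}_2$ in $\fun(BC_2,\md_{\mathbb{F}_2})$, tensors with $M$ there, and then applies $(-)^{hC_2}$. The middle term becomes $(M\otimes_{\mathbb{F}_2}\mathbb{F}_2[C_2])^{hC_2}\simeq M$ by the standard shearing/induction identification, so the cofiber is $M$ on the nose and your ``main obstacle'' never arises. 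Your truncation-square argument in the final step is more explicit than the paper's one-line appeal to the $t$-isomorphism in low degrees, but they amount to the same thing.
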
 
\begin{proof} 
This follows from the fact that multiplication by $t$ induces an isomorphism 
$\pi_{-j}(M^{hC_2}) \to \pi_{-j-1}(M^{hC_2})$ for $j> d$.
In fact, we have a cofiber sequence
\[ \Sigma^{-1}M^{hC_2} \to M^{hC_2} \to M  \]
where the first map is multiplication by $t$, using the 
fiber sequence $\mathbb{F}_2 \to \mathbb{F}_2[C_2]\to \mathbb{F}_2$ in
$\fun(BC_2, \md_{\mathbb{F}_2})$, tensoring with $M$, and taking $C_2$-homotopy
fixed points. 
\end{proof}

Now, finally, we can state the main results about Selmer $K$-theory for 
connective
$E_2$-spectral algebraic spaces. 
Note that when $p$ is invertible, the result 
appears in \cite{RO06}. 
\begin{theorem} 
\label{QLSelgeneral}
Let $X$ be a connective $E_2$-spectral algebraic space and fix a prime number
$p$. 
Suppose $X$ has finite Krull dimension.
For a point $x \in X$, let 
$d_x = \mathrm{vcd}_p(x)$ if $\mathrm{char}(k(x))$ is prime to $p$, and 
$d_x = \log_p [k(x):k(x)^p] + 1$ if 
$\mathrm{char}(k(x)) = p$. 
Let $d = \sup_{x \in X} d_x$, and suppose $d < \infty$. 
Then: 
\begin{enumerate}
\item  
The map  $K(X)_{(p)} \to K^{Sel}(X)_{(p)}$ 
has homotopy fiber 
which is concentrated in degrees $\leq \max(d-4, -2)$. 
In particular, it is an isomorphism in degrees $\geq \max(d-2, 0)$. 
\item
$K^{Sel}_{(p)}$ defines a hypersheaf on $X_{et}$. 
\end{enumerate}
\end{theorem}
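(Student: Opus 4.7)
The plan is to deduce both parts from the field-level \Cref{QLSelmerfields} via rigidity and Nisnevich descent. Since $K \to K^{Sel}$ is a rational equivalence, it suffices to check both statements after $p$-localization, where mod-$p$ rigidity is available.

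For part (1), the homotopy fiber $F = \mathrm{fib}(K_{(p)} \to K^{Sel}_{(p)})$ is a Nisnevich sheaf since both $K$ and $K^{Sel}$ are, and is a Nisnevich hypersheaf by \Cref{Nishypercomplete}. The crucial numerical observation is that if $\pi_q F = 0$ as a Nisnevich sheaf for $q > \max(d-4, -2)$, the same bound passes to $F(X)$ \emph{with no Krull-dimensional correction}: the Nisnevich descent spectral sequence $H^p_{Nis}(X, \pi_q F) \Rightarrow \pi_{q-p} F(X)$ has $E_2^{p,q} = 0$ for $q > \max(d-4, -2)$, and only $p \geq 0$ contributes. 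Vanishing on a Nisnevich stalk, i.e., a henselian local ring $A$ with residue field $k = k(x)$ for some $x \in X$, then follows from rigidity and \Cref{QLSelmerfields}(1). When $\mathrm{char}(k) \neq p$, Gabber-Suslin rigidity gives $K(A)/p \simeq K(k)/p$, and applying $L_{K(1)}$ together with \Cref{SelKawayp} promotes this to $K^{Sel}(A)/p \simeq K^{Sel}(k)/p$; when $\mathrm{char}(k) = p$, Dundas-Goodwillie-McCarthy and the main rigidity theorem of \cite{CMM} combined with \Cref{SelKatp} yield $K(A)/p \simeq \mathrm{TC}(A)/p \simeq K^{Sel}(A)/p$, so the mod-$p$ fiber vanishes outright. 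Either way $F(A)_{(p)} \simeq F(k)_{(p)}$, which lies in degrees $\leq \max(d_x - 4, -2) \leq \max(d-4, -2)$ by \Cref{QLSelmerfields}(1).

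For part (2), by \Cref{hypetalehypNis} it suffices to check: (a) $K^{Sel}_{(p)}$ is a Nisnevich hypersheaf, which is immediate from finite Krull dimension (\Cref{Nishypercomplete}); and (b) for each $x \in X$, the pullback $x^{\ast} K^{Sel}_{(p)}$ is an \'etale hypersheaf on $\mathcal{T}_x$. The filtered-colimit commutation of \Cref{selmer:basicproperties}(1) identifies $(x^{\ast} K^{Sel})(L)$, for $L/k(x)$ finite separable, with $K^{Sel}(A_L)$ for $A_L$ the henselian local ring of $X$ at a point with residue field $L$. The rigidity identifications from part (1) then show that $x^{\ast} K^{Sel}_{(p)}$ and the restriction of $K^{Sel}_{(p)}$ to $\spec(k(x))_{et}$ agree as presheaves on $\mathcal{T}_x$ modulo $p$ (and rationally every $\mathcal{T}_x$-presheaf is automatically a hypersheaf, since profinite groups have vanishing rational cohomology in positive degrees), and the latter is a hypersheaf by \Cref{QLSelmerfields}(2).

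The main obstacle is the uniform rigidity identification for $K^{Sel}/p$ at henselian local rings, which requires combining Gabber-Suslin rigidity (at primes invertible on the base) with the much deeper rigidity of topological cyclic homology proved in \cite{CMM} (at the residue characteristic), then gluing via the pullback definition of Selmer $K$-theory. Granted this rigidity input, the remainder is a clean exercise in the descent spectral sequence and the formalism of \Cref{hypetalehypNis}.
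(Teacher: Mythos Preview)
Your overall strategy---Nisnevich-localize, then use rigidity to reduce to fields, then invoke \Cref{QLSelmerfields}---is exactly the paper's approach. But there is a genuine error in the characteristic-$p$ step of part (1), and it propagates to part (2).

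You assert that for a henselian local ring $A$ with residue field $k$ of characteristic $p$, Dundas--Goodwillie--McCarthy together with \cite{CMM} yield $K(A)/p \simeq \mathrm{TC}(A)/p$, so that the mod-$p$ fiber vanishes. This is false: the cyclotomic trace is not a mod-$p$ equivalence even on $\mathbb{F}_p$ itself (where $K(\mathbb{F}_p)/p$ is concentrated in degree $0$ but $\mathrm{TC}(\mathbb{F}_p)/p$ is not). What DGM and \cite{CMM} actually furnish is a \emph{pullback square}
\[
\xymatrix{
K_{\geq -1}(A)/p \ar[r]\ar[d] & K^{Sel}(A)/p \ar[d] \\
K_{\geq -1}(k)/p \ar[r] & K^{Sel}(k)/p,
}
\]
which gives $F(A)/p \simeq F(k)/p$ (for the connective comparison) rather than $F(A)/p = 0$. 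Your conclusion ``$F(A)_{(p)} \simeq F(k)_{(p)}$'' is in fact correct, but it does not follow from your stated reasoning---indeed your claim that the fiber vanishes would force $F(k)_{(p)} = 0$, contradicting \Cref{QLSelmerfields}. (There is also a minor gap: Gabber--Suslin rigidity is for connective $K$-theory, so one needs Drinfeld's $K_{-1}(A)=0$ to pass to $K_{\geq -1}$; the remaining discrepancy with nonconnective $K$ lives in degrees $\leq -3$ and is harmless for the bound.)

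The same error recurs in part (2): you claim that $x^\ast K^{Sel}/p$ agrees with $K^{Sel}|_{\spec(k(x))_{et}}/p$, but when $\mathrm{char}(k(x))=p$ this is false for the same reason. The paper's fix is to observe, from the pullback square above together with the field-level bound, that the map $K^{Sel}(A_L)/p \to K^{Sel}(L)/p$ has \emph{truncated} fiber (concentrated in degrees $\leq 0$). Truncated sheaves are automatically hypercomplete, and the target is hypercomplete by \Cref{QLSelmerfields}(2); hence the source is hypercomplete as well, and one concludes via \Cref{main:hypcriterion}.
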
 
\begin{proof} 
The strategy is to 
use rigidity to reduce both 1 and 2 to the case of fields. 
By working Nisnevich locally, and using the fact that Nisnevich sheaves are
Postnikov complete (\Cref{Nishypercomplete}), we find that 
for 1, it suffices to treat the case where $X $ is the spectrum of a 
connective $E_2$-ring $R$ such that $\pi_0(R)$ is henselian with residue field $k$. 
We can make the same reduction for 2 thanks to \Cref{main:hypcriterion} and the
fact that Selmer $K$-theory commutes with filtered colimits and is
a Nisnevich (even \'etale) sheaf (\Cref{selmer:basicproperties}): it
suffices to prove hypercompleteness when $X$ is replaced by one of its
henselizations.

Thus, let $R$ be a henselian local ring with residue field $k$. 
We need to show that 
the homotopy fiber $F$ of $K(R)_{(p)} \to K^{Sel}(R)_{(p)}$ lives in degrees
$\leq \max(d - 4, -2)$. 
Consider the composite maps
$K_{\geq -1}(R)_{(p)} \to K(R)_{(p)} \to K^{Sel}(R)_{(p)}$. It suffices to show
that the fiber $\widetilde{F}$ of the composite lives in degrees 
$\max(d - 4, -2)$ because the fiber of the first map lives in degrees $\leq -3$.

Now with mod $p$ coefficients, we have a homotopy pullback square
\[  \xymatrix{
K_{\geq -1}(R)/p \ar[d]   \ar[r] &   K^{Sel}(R)/p \ar[d] \\ 
K_{\geq -1}(k)/p \ar[r] &  K^{Sel}(k)/p
}
\]
by \cite{DGM13} to reduce to the discrete case and then the main result of
\cite{CMM} as well as 
\cite[Theorem 3.7]{Dri06}, 
which states that $K_{-1}(R) = 0$. 
By \Cref{QLSelmerfields}, we conclude that $\widetilde{F}/p$ is concentrated in degrees $\leq (d-3)$. But
$\widetilde{F}_{\mathbb{Q}}$ is concentrated in degrees $\leq -2$. 
It follows that $F_{(p)}$ is concentrated in degrees 
$\leq \max( d-4, -2)$ by \Cref{lem:concentratedmodp} below. 
This completes the proof of 1. 

For 2, we observe that 
if $R$ is a henselian local ring with residue field $k$, then the above shows
that 
$K^{Sel}(R)/p \to K^{Sel}(k)/p$
has homotopy fiber concentrated in degrees $\leq 0$. 
More generally, 
if $R'$
is a finite \'etale $R$-algebra, then $R$ is a finite product of henselian local
rings, so similarly the fiber of 
\begin{equation}
K^{Sel}(R')/p  \to K^{Sel}(R' \otimes_R k)/p 
\label{selrigidmap}
\end{equation}
lives in degrees $\leq 0$. 
Now if $R$ is the henselization of $X$ at a point, then the residue field $k$
has finite mod $p$ virtual cohomological dimension. 
Thus, the right-hand-side of \eqref{selrigidmap} defines a hypersheaf on the
finite \'etale site of $R$ (or the \'etale site of $k$) by
\Cref{QLSelmerfields}. 
Since the left-hand-side is an \'etale sheaf, it follows that it must be a
hypersheaf too, since truncated sheaves are always hypercomplete. Now using the
main hypercompleteness criterion (\Cref{main:hypcriterion}), and since
$K^{Sel}_{\mathbb{Q}}$ is a hypersheaf, we conclude that
$K^{Sel}_{(p)}$ is a hypersheaf on $X_{et}$. 
\end{proof}

\begin{lemma} 
\label{lem:concentratedmodp}
Let $Y$ be a $p$-local spectrum. 
Suppose $Y_{\mathbb{Q}}$ is concentrated in degrees $\leq d_1$ and $Y/p$ is
concentrated in degrees $\leq d_2$. Then $Y$ is concentrated in degrees $\leq
\max(d_1, d_2 - 1)$. 
\end{lemma}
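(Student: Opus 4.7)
The plan is to argue directly on homotopy groups using the cofiber sequence $Y \xrightarrow{p} Y \to Y/p$ together with the rational hypothesis.

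First, from the assumption on $Y_{\mathbb{Q}}$, for any $n > d_1$ we have $\pi_n(Y) \otimes \mathbb{Q} = \pi_n(Y_{\mathbb{Q}}) = 0$. Since $Y$ is $p$-local, $\pi_n(Y)$ is a $p$-local abelian group, so this means every element of $\pi_n(Y)$ is torsion of $p$-power order.

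Second, the cofiber sequence $Y \xrightarrow{p} Y \to Y/p$ produces a long exact sequence
\[ \cdots \to \pi_{n+1}(Y/p) \to \pi_n(Y) \xrightarrow{p} \pi_n(Y) \to \pi_n(Y/p) \to \cdots. \]
The hypothesis that $Y/p$ is concentrated in degrees $\leq d_2$ gives $\pi_{n+1}(Y/p) = 0$ whenever $n + 1 > d_2$, i.e.\ $n \geq d_2$. In that range, multiplication by $p$ is injective on $\pi_n(Y)$.

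Finally, suppose $n > \max(d_1, d_2 - 1)$, so that simultaneously $n > d_1$ and $n \geq d_2$. Then $\pi_n(Y)$ is $p$-power torsion (from the first point) and $p$ acts injectively on it (from the second). Any nonzero $p$-power torsion element $x$ of order $p^k$ with $k \geq 1$ would give a nonzero element $p^{k-1} x$ of exact order $p$, which is killed by $p$, contradicting injectivity. Hence $\pi_n(Y) = 0$ in this range, as claimed. There is no serious obstacle; this is a routine diagram chase on homotopy groups.
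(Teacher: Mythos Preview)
Your proof is correct and is essentially the same argument as the paper's: both use that $\pi_n(Y)$ is $p$-power torsion for $n>d_1$ and then invoke the long exact sequence of $Y\xrightarrow{p}Y\to Y/p$ (the paper phrases this as a Bockstein producing a nonzero class in $\pi_{n+1}(Y/p)$, which is exactly your injectivity-of-$p$ step).
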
 
\begin{proof} 
Suppose that there exists a nonzero $x \in \pi_i(Y)$ for $i > \max(d_1, d_2 - 1)$. 
Then by assumption $x$ is $p$-power torsion. Multiplying $x$ by a power of $p$,
we can assume that $px = 0$. Then there is a  nonzero element in
$\pi_{i+1}(Y/p)$ which Bocksteins to $x$, contradicting the assumptions. 
\end{proof}

\section{\'Etale $K$-theory}

Here we formally define \'etale $K$-theory as a functor, and prove its basic
properties. The basic ingredient is the properties of $K^{Sel}$ proved in the
previous section. 

\subsection{Big versus small topoi}

We define \'etale $K$-theory formally as a functor on $E_2$-spectral
algebraic spaces, via the big site. An observation is that the
sheafification process can be done using either the small or the big site; by
contrast if one defines \'etale $K$-theory directly using the small site one
has to prove functoriality, although the small site is much more convenient
given our previous discussion. 
Note that this is very classical at least when one works with sheaf cohomology,
cf.\ \cite[Exp. VII, Sec. 4]{SGA4} for closely related results for the \'etale
topoi, and cf.\
\cite[Exp. III]{SGA4} for the general results in the case of sheaves of sets.

Let $\mathcal{T}$ be a site, so that for each $t \in \mathcal{T}$, we are given
some family of subobjects of the Yoneda functor $h_t \in \psh(\mathcal{T})$. 
Let $u: \mathcal{T}'  \to  \mathcal{T}$ 
be a morphism of sites, i.e., precomposition with $u$ induces a functor
$u_*: \psh(\mathcal{T}) \to \psh(\mathcal{T}')$ which 
carries sheaves on
$\mathcal{T}$ to sheaves on $\mathcal{T}'$. 
\begin{proposition} 
\label{sheafificationintwocases}
Suppose for each $t \in \mathcal{T}$ and each covering sieve $\widetilde{h_t}\to h_t$,
the induced map of functors
$\pi_0(u_*\widetilde{h_t}) \to \pi_0(u_*h_t) \in \psh(\mathcal{T}') $ induces a surjection
after sheafification.
 Then: 
\begin{enumerate}
\item  
The functor of precomposition with $u$,
$u_*: \psh(\mathcal{T}) \to \psh(\mathcal{T}')$ commutes with sheafification. 
\item
The functor $u_*: \sh(\mathcal{T}) \to \sh(\mathcal{T}')$ commutes with
colimits and with taking
$n$-truncations, for each $n$ (and thus with Postnikov completion). 
\item The functor $u_*: \sh(\mathcal{T}) \to \sh(\mathcal{T}')$ commutes with
taking hypercompletion. 
\end{enumerate}
\end{proposition}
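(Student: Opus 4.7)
My plan is to first establish part (1) via a strongly saturated class argument, and then deduce parts (2) and (3) from it. Consider the class $S$ of morphisms $\alpha$ in $\psh(\mathcal{T})$ such that $L_{\mathcal{T}'} u_* \alpha$ is an equivalence in $\sh(\mathcal{T}')$, where $L_{\mathcal{T}'}$ denotes sheafification on $\mathcal{T}'$. Since precomposition $u_*: \psh(\mathcal{T}) \to \psh(\mathcal{T}')$ preserves all limits and colimits pointwise and $L_{\mathcal{T}'}$ is a left-exact left adjoint, the class $S$ is strongly saturated in the sense of \cite[Sec.~5.5.4]{HTT}. The strongly saturated class of $L_{\mathcal{T}}$-equivalences is generated by the covering sieve inclusions $\widetilde{h_t} \hookrightarrow h_t$, so it suffices to verify that each such inclusion belongs to $S$. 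It is a monomorphism; since $u_*$ and $L_{\mathcal{T}'}$ both preserve monomorphisms (they preserve finite limits), the map $L_{\mathcal{T}'} u_* \widetilde{h_t} \to L_{\mathcal{T}'} u_* h_t$ is a monomorphism in $\sh(\mathcal{T}')$. The hypothesis precisely says that it is also a $\pi_0$-surjection after sheafification, and a $(-1)$-truncated morphism which is an effective epimorphism in an $\infty$-topos is automatically an equivalence.

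\textbf{Paragraph 2: Part (2).}
Once $L_{\mathcal{T}'} \circ u_* \simeq u_* \circ L_{\mathcal{T}}$ is established, part (2) is formal. A colimit of sheaves on $\mathcal{T}$ equals $L_{\mathcal{T}}$ applied to the corresponding presheaf-level colimit, and $n$-truncation in $\sh(\mathcal{T})$ equals $L_{\mathcal{T}}$ applied to the pointwise truncation $\tau_{\leq n}^{\psh}$ (cf.\ \cite[Prop.~5.5.6.28]{HTT}). Since $u_*$ on presheaves commutes with colimits and with pointwise truncation trivially, and with sheafification by part (1), the same holds for $u_*: \sh(\mathcal{T}) \to \sh(\mathcal{T}')$. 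Furthermore, $u_*$ on sheaves preserves all limits: the inclusion $\sh \hookrightarrow \psh$ preserves limits and $u_*$ on presheaves preserves them pointwise. Hence $u_*$ commutes with the inverse limit along Postnikov towers, yielding compatibility with Postnikov completion.

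\textbf{Paragraph 3: Part (3) reduced to preservation of hypercompleteness.}
I will use the characterization of hypercomplete sheaves as those local with respect to the class of $\infty$-connective maps, where $\alpha$ is $\infty$-connective iff $\tau_{\leq n}\alpha$ is an equivalence for every $n\geq 0$. By part (2), $u_*$ commutes with every $\tau_{\leq n}$, so it preserves $\infty$-connective maps. In particular, the hypercompletion unit $\mathcal{F}\to\mathcal{F}^h$ is $\infty$-connective, so its image $u_*\mathcal{F}\to u_*\mathcal{F}^h$ is $\infty$-connective in $\sh(\mathcal{T}')$. By the universal property of hypercompletion, to conclude the natural map $(u_*\mathcal{F})^h\to u_*\mathcal{F}^h$ is an equivalence, it suffices to verify that $u_*\mathcal{F}^h$ is itself hypercomplete; that is, the remaining obligation is that $u_*$ preserves hypercomplete objects.

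\textbf{Paragraph 4: The main obstacle.}
The main obstacle is therefore the preservation of hypercompleteness by $u_*$. By part (2), $u_*$ on sheaves preserves all colimits and all limits, and hence admits a left adjoint $u^{sh}: \sh(\mathcal{T}') \to \sh(\mathcal{T})$. Via the adjunction, $u_*$ sends hypercomplete objects to hypercomplete objects if and only if $u^{sh}$ sends $\infty$-connective maps to $\infty$-connective maps. I would deduce this from left exactness of $u^{sh}$, since a left-exact colimit-preserving functor between $\infty$-topoi automatically commutes with every $\tau_{\leq n}$. The functor $u^{sh}$ is characterized as the colimit-preserving extension of the assignment $h_{t'}\mapsto h_{u(t')}$; using the morphism-of-sites structure and the universal property \cite[Prop.~6.2.3.20]{HTT}, one identifies $u^{sh}$ with the inverse image of the geometric morphism of $\infty$-topoi associated to $u$, which is left exact by construction. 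Once left exactness is in hand, preservation of hypercompleteness follows, and combining with paragraph 3 yields the required equivalence $u_*\mathcal{F}^h\simeq (u_*\mathcal{F})^h$.
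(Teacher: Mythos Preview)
For parts (1) and (2), your argument is essentially the paper's: the strongly saturated class reduction for (1), and the observation that sheaf-level colimits and truncations are sheafified presheaf-level ones for (2). Your handling of the generating maps in (1) is in fact slightly slicker than the paper's: you argue directly that a monomorphism which is also an effective epimorphism is an equivalence, whereas the paper routes through the cartesian square over the $\pi_0$'s before invoking the same principle.

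For part (3), you are more explicit than the paper. The paper's proof only records that $u_*$ preserves $\pi_*$-isomorphisms (equivalently, $\infty$-connective maps); you correctly isolate the further obligation that $u_*$ preserve hypercomplete objects, without which one cannot even form the comparison map $(u_*\mathcal{F})^h \to u_*(\mathcal{F}^h)$. However, your proposed resolution has a gap. You invoke \cite[Prop.~6.2.3.20]{HTT} to identify $u^{sh}$ with the inverse image of a geometric morphism and thereby deduce left exactness. But that result requires $u$ to preserve finite limits, whereas the hypothesis in force here is only the weaker condition that precomposition with $u$ carry sheaves to sheaves (see the sentence immediately preceding the proposition). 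Under that weak hypothesis alone there is no reason for $u^{sh}$ to be left exact, so this step is not justified as written. In the paper's actual applications (restriction from the big to the small \'etale site, and \Cref{resGcommutesheaf}) the functor $u$ does preserve finite limits and your argument goes through; but for the proposition as stated, neither your argument nor the paper's one-line justification fully closes this point.
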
 
\begin{proof} 
First we verify 1. 
Let $\mathcal{F} \in \psh(\mathcal{T})$ and let $\mathcal{F}'$ be its
sheafification. Then $f: \mathcal{F} \to \mathcal{F}'$ is uniquely characterized by
the fact that $f$ induces an equivalence after sheafification and that
$\mathcal{F}'$ is a sheaf: that is, sheafification is a Bousfield localization. 
We know now that $u_* \mathcal{F}'$ is a sheaf. To see that $u_* \mathcal{F}'$
is the sheafification of $u_* \mathcal{F}$, 
it suffices to show that $u_*$ 
carries morphisms 
which become equivalences upon sheafification 
to morphisms which become equivalences upon sheafification. 

To this end, consider first the case of maps of the form $\widetilde{h_t} \to
h_t$ associated to a covering sieve of an object $t \in \mathcal{T}$; by
construction these induce equivalences upon sheafifying in $\mathcal{T}$. Now we have a homotopy cartesian diagram
in $\psh(\mathcal{T})$,
\[ \xymatrix{
\widetilde{h_t} \ar[d] \ar[r] &  h_t \ar[d]  \\
\pi_0( \widetilde{h_t}) \ar[r] &  \pi_0(h_t)
}.\]
Here the bottom arrow (which is a monomorphism of presheaves of sets) induces an
epimorphism, hence an equivalence after applying $u_*$
and sheafifying by assumption. Since sheafification and $u_*$
are left exact, it follows that 
$u_*\widetilde{h_t}\to u_*h_t$ induces an equivalence after sheafification. 
Now we appeal to the theory of strongly saturated classes and Bousfield
localizations, cf.\ \cite[Sec. 5.5.4]{HTT}. 
Recall that $\sh(\mathcal{T})$ is the Bousfield localization of
$\psh(\mathcal{T})$ at the class 
of arrows $\widetilde{h_t} \to h_t$, over all covering sieves. 
Now the class of morphisms in
$\psh(\mathcal{T})$ which induce an equivalence after sheafification is the
strongly saturated class generated by the arrows $\widetilde{h_t} \to h_t$, cf.\
\cite[Prop. 5.5.4.15]{HTT}. Now $u_*$ preserves colimits and the class of morphisms
in $\psh(\mathcal{T}')$ which induce an equivalence after sheafification is
again strongly saturated. It thus follows from the above that $u_*$ carries
the strongly saturated class of morphisms in $\psh(\mathcal{T})$ which induce
equivalences upon sheafification into the 
strongly saturated class of morphisms in $\psh(\mathcal{T}')$ which induce
equivalences upon sheafification, proving 1. 

Now 2 follows from 1 because for any sheaf $\sF \in \sh(\mathcal{T})$, the $n$-truncation
$\tau_{\leq n}\mathcal{F}$ in $\sh(\mathcal{T})$ is the sheafification of the
presheaf $n$-truncation. Moreover, $u_*$ preserves all limits. 

Finally, 3 follows from 2 because if $\sF \to \sF'$ induces an equivalence on
homotopy groups (or on $n$-truncations for each $n$), then $u_*( \sF) \to u_*(\sF')$ does as well by 2. 
\end{proof}

\begin{corollary} 
Let $\mathcal{D}$ be a presentable $\infty$-category. 
Under the hypotheses of 
\Cref{sheafificationintwocases}, 
the functor $u_*: \psh(\mathcal{T}, \mathcal{D}) \to \psh(\mathcal{T}',
\mathcal{D})$ commutes with sheafification. 
\end{corollary}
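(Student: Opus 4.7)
The plan is to bootstrap the $\mathcal{D} = \mathcal{S}$ case already established in \Cref{sheafificationintwocases}(1), using a strongly saturated class argument parallel to the one in that proof. The key input is that for any presentable $\infty$-category $\mathcal{D}$, sheafification $\psh(\mathcal{T}, \mathcal{D}) \to \sh(\mathcal{T}, \mathcal{D})$ is the Bousfield localization (\cite[Sec. 5.5.4]{HTT}) at the strongly saturated class $\mathcal{W}_{\mathcal{T}, \mathcal{D}}$ generated by the morphisms $\widetilde{h_t} \otimes d \to h_t \otimes d$, as $\widetilde{h_t} \hookrightarrow h_t$ ranges over the covering sieves of $\mathcal{T}$, $d$ ranges over $\mathcal{D}$, and $\otimes$ denotes the canonical tensoring of $\psh(\mathcal{T}, \mathcal{D})$ over $\mathcal{S}$. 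An analogous description holds over $\mathcal{T}'$.

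First, I would show that $u_*$ carries each generator of $\mathcal{W}_{\mathcal{T}, \mathcal{D}}$ into $\mathcal{W}_{\mathcal{T}', \mathcal{D}}$. Since $u_*$ is precomposition and the tensoring by $\mathcal{S}$ is computed pointwise, one has a natural identification $u_*(F \otimes d) \simeq u_*(F) \otimes d$ for $F \in \psh(\mathcal{T}, \mathcal{S})$ and $d \in \mathcal{D}$, so it suffices to check that $u_*(\widetilde{h_t}) \otimes d \to u_*(h_t) \otimes d$ lies in $\mathcal{W}_{\mathcal{T}', \mathcal{D}}$. By \Cref{sheafificationintwocases}(1), the map $u_*(\widetilde{h_t}) \to u_*(h_t)$ lies in $\mathcal{W}_{\mathcal{T}', \mathcal{S}}$. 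Since the functor $(-) \otimes d : \psh(\mathcal{T}', \mathcal{S}) \to \psh(\mathcal{T}', \mathcal{D})$ is colimit-preserving and sends the generators of $\mathcal{W}_{\mathcal{T}', \mathcal{S}}$ to generators of $\mathcal{W}_{\mathcal{T}', \mathcal{D}}$, it carries $\mathcal{W}_{\mathcal{T}', \mathcal{S}}$ into $\mathcal{W}_{\mathcal{T}', \mathcal{D}}$, which gives the claim.

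Then I would conclude by observing that the preimage $u_*^{-1}(\mathcal{W}_{\mathcal{T}', \mathcal{D}})$ in $\psh(\mathcal{T}, \mathcal{D})$ is itself strongly saturated, since $u_*$ preserves all colimits (colimits in presheaves being computed pointwise). By the previous step, this preimage contains every generator of $\mathcal{W}_{\mathcal{T}, \mathcal{D}}$, hence all of $\mathcal{W}_{\mathcal{T}, \mathcal{D}}$. Combined with the fact that $u_*$ carries $\mathcal{D}$-valued sheaves to $\mathcal{D}$-valued sheaves --- which is again just a limit condition in the target and so follows from the given hypothesis in the $\mathcal{S}$-valued case --- this is exactly the statement that $u_*$ commutes with sheafification for $\mathcal{D}$-valued presheaves.

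The argument is entirely formal and there is no serious obstacle; the only inputs beyond \Cref{sheafificationintwocases} are the pointwise identity $u_*(F \otimes d) \simeq u_*(F) \otimes d$ and the stability of a strongly saturated class under preimages by colimit-preserving functors, both elementary. This is why the statement is properly a corollary of the preceding proposition.
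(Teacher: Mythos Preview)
Your proposal is correct and matches the paper's own argument essentially line for line: the paper also describes $\sh(\mathcal{T},\mathcal{D}) \subset \psh(\mathcal{T},\mathcal{D})$ as the Bousfield localization at the maps $d \otimes \widetilde{h_t} \to d \otimes h_t$ and then invokes the $\mathcal{S}$-valued case to conclude that $u_*$ sends these to sheaf-equivalences. The paper additionally mentions a one-line variant (tensor the statement of \Cref{sheafificationintwocases} with $\mathcal{D}$ in presentable $\infty$-categories), but your direct route is exactly the argument it spells out.
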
 
\begin{proof} 
Keep the notation of 
\Cref{sheafificationintwocases}. 
Then the result follows from the above by taking the tensor product in
presentable $\infty$-categories with $\mathcal{D}$. 
Alternatively, one can argue directly: here $\sh(\mathcal{T}, \mathcal{D}) \subset \psh(\mathcal{T}, \mathcal{D})$ is a
Bousfield localization at the class of maps $d \otimes \widetilde{h_t} \to d
\otimes h_t$, as $\widetilde{h_t} \to h_t$ ranges over covering sieves and $d$
ranges over the objects in $\mathcal{D}$. As in  
\Cref{sheafificationintwocases}, it suffices to show that each of these maps 
is carried to a map in 
$\psh(\mathcal{T}, \mathcal{D})$ which becomes an equivalence after
sheafification. However, this follows from the assumptions. 
\end{proof}

\begin{example} 
\label{resGcommutesheaf}
Suppose the functor $u: \mathcal{T}' \to \mathcal{T}$ admits a right adjoint $g:
\mathcal{T} \to \mathcal{T}'$. 
Unwinding the definitions (or using the Hochschild-Serre spectral sequence), it follows that if $t \in \mathcal{T}$, then $u_*$
carries the representable presheaf $h_t \in \psh(\mathcal{T})$ to the
representable presheaf $h_{g(t)} \in \psh(\mathcal{T}')$. 
If $g$ carries covering families to covering families, then it follows that the
hypotheses of 
\Cref{sheafificationintwocases} apply. As an instance of this, let $H \leq G$ be
an open subgroup of a profinite group $G$, 
then the functor $\mathrm{Ind}_H^G: \mathcal{T}_H \to \mathcal{T}_G$ admits a
right adjoint, given by the forgetful functor. Since this clearly preserves
covering families, it follows that the induced functor on presheaves of spaces
commutes with sheafification.
\end{example}

\begin{definition}[The big \'etale site] 
The \emph{big \'etale site} of an $E_2$-spectral algebraic space $X$ consists of all 
$E_2$-spectral algebraic spaces $Y$ over $X$ and maps between them. The topology is generated
by finite families $\left\{Y_i \to Y\right\}_{i \in I}$ such that each $Y_i \to
Y$ is \'etale and they generate a covering in the \'etale site $Y_{et}$. 
\end{definition}

\begin{remark} 
Here we should restrict to objects of some bounded cardinality $\kappa$ for
set-theoretic reasons, but the
cardinal number $\kappa$ does not affect any of the statements (in view of
\Cref{sheafinsmallorbig} below), so we omit it. 
\end{remark}

\begin{proposition} 
\label{sheafinsmallorbig}
Fix an $E_2$-spectral algebraic space $X$. 
Let $\sF: \algspc_X^{op} \to \sp$ be a functor. 
Let $\widetilde{\sF}$ be the sheafification of $\sF$ (in the big \'etale site). 
Then for any $E_2$-spectral algebraic space 
$Y$ over $X$, the map 
$\sF|_{Y_{et}} \to \widetilde{\sF}_{Y_{et}}$ exhibits the target as the
sheafification of the source (on the site $Y_{et}$).
\end{proposition}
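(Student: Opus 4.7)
The plan is to deduce this directly from \Cref{sheafificationintwocases} and its spectrum-valued corollary, applied to the natural inclusion of sites $u \colon Y_{et} \to \algspc_X$, which sends an \'etale $W \to Y$ to the $E_2$-spectral algebraic space $W$ regarded as an object over $X$ via the composite $W \to Y \to X$. Under this identification, $u_*\sF$ is precisely the restriction $\sF|_{Y_{et}}$ for any presheaf $\sF$ on $\algspc_X$, and similarly for $\widetilde{\sF}$. So once we know that $u_*$ commutes with sheafification of spectrum-valued presheaves, the conclusion is immediate: $u_*\widetilde{\sF} = \widetilde{\sF}|_{Y_{et}}$ is the sheafification of $u_*\sF = \sF|_{Y_{et}}$ in $\psh(Y_{et},\sp)$.

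The one thing to verify, then, is the hypothesis of \Cref{sheafificationintwocases} for $u$. Concretely, given an object $Z \in \algspc_X$ and a covering sieve $\widetilde{h_Z} \hookrightarrow h_Z$ in the big \'etale site, generated by a finite \'etale cover $\{Z_i \to Z\}_{i \in I}$, we must show that $\pi_0(u_*\widetilde{h_Z}) \to \pi_0(u_*h_Z)$ induces a surjection after sheafification in $Y_{et}$. Unwinding the definitions, this amounts to the following: for every $W \in Y_{et}$ and every $X$-morphism $W \to Z$, there exists an \'etale cover $\{W_\alpha \to W\}$ in $Y_{et}$ such that each composition $W_\alpha \to W \to Z$ factors through some $Z_i$.

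This is entirely formal. Consider the pullback family $\{W \times_Z Z_i \to W\}_{i \in I}$. Since \'etaleness and the property of being a (jointly surjective) \'etale cover are stable under arbitrary base change of $E_2$-spectral algebraic spaces (via \eqref{E2etalespectral} this reduces to the underlying algebraic space statement), this is an \'etale cover of $W$ in $W_{et}$, and each map in the family is \'etale over $Y$ (being a composition $W \times_Z Z_i \to W \to Y$ of \'etale maps), so it is a cover in $Y_{et}$. By construction, the composition $W \times_Z Z_i \to W \to Z$ factors tautologically through $Z_i \to Z$, which is what was required.

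Applying the spectrum-valued form of \Cref{sheafificationintwocases} (i.e., the corollary with $\mathcal{D} = \sp$) thus yields the claim. The step that required any actual content — verifying the covering hypothesis — was in fact purely formal, so there is no real obstacle in the proof; all the work was done in \Cref{sheafificationintwocases}.
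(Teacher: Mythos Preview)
Your proof is correct and follows essentially the same route as the paper's: both deduce the result from \Cref{sheafificationintwocases} (and its corollary with $\mathcal{D}=\sp$) applied to the inclusion $Y_{et}\to\algspc_X$, verifying the surjectivity hypothesis by pulling back an \'etale cover of $Z$ along a given $W\to Z$ to obtain an \'etale cover in $Y_{et}$. Your write-up is simply a more detailed unwinding of the same argument; one minor wording point is that the covers in the big \'etale site are finite families of \'etale maps, not maps that are finite \'etale, but this does not affect your argument.
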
 
\begin{proof} 
This follows from \Cref{sheafificationintwocases}. 	
One has to check that 
given an \'etale surjective map $Z' \to Z$ in $\algspc_X^{op}$, the 
map $h_{Z'} \to h_Z$ induces a surjection on $\pi_0$ after sheafifying in $Y_{et}$. 
However, given any $Y' \in Y_{et}$, we observe that any map $Y \to Z$ \'etale
locally lifts to $Z'$, as desired. Note that this is a purely algebraic fact (on
$\pi_0$), thanks to  \eqref{E2etalespectral}. 
\end{proof}

\begin{proposition} 
Let $\sF: \algspc_X^{op} \to \sp$ be a functor. 
Then the following are equivalent: 
\begin{enumerate}
\item $\sF$ is an \'etale sheaf (resp. \'etale hypersheaf, resp. \'etale
Postnikov sheaf). 
\item For each $Y \in \algspc_X$, $\sF|_{Y_{et}}$ is an \'etale sheaf (resp.
\'etale hypersheaf, resp. \'etale Postnikov sheaf). 
\end{enumerate}
\end{proposition}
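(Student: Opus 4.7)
The plan is to reduce everything to the already-established comparison of sheafification in the two sites, namely \Cref{sheafinsmallorbig} together with \Cref{sheafificationintwocases}. The observation that glues the two sides is elementary: since $Y$ itself is an object of $Y_{et}$, for any presheaf $\sF$ on the big site one has $\sF(Y) = (\sF|_{Y_{et}})(Y)$, so a map of presheaves on the big site is an equivalence if and only if its restriction to $Y_{et}$ is an equivalence for every $Y$.

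For the sheaf case, the equivalence is purely a matter of unwinding definitions. By construction of the big \'etale site, a sieve on $Y$ is covering if and only if it contains a finite family generating a covering sieve in $Y_{et}$. Thus testing the sheaf condition for $\sF$ against covers of $Y$ in the big site is literally the same as testing the sheaf condition for $\sF|_{Y_{et}}$ against covers of $Y$ in $Y_{et}$. This handles both (1)$\Rightarrow$(2) and (2)$\Rightarrow$(1) simultaneously.

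For the hypersheaf and Postnikov sheaf cases, the key input is \Cref{sheafinsmallorbig}, which states that restriction to $Y_{et}$ commutes with sheafification in the big \'etale site. By \Cref{sheafificationintwocases}(2) and (3), this implies in particular that restriction to $Y_{et}$ also commutes with $n$-truncation inside the $\infty$-category of sheaves and with hypercompletion. For Postnikov completeness: $\sF \simeq \varprojlim_n \sF_{\leq n}$ in the big site if and only if this equivalence holds after restriction to each $Y_{et}$; by commutation of truncation with restriction, the latter is precisely the statement that $\sF|_{Y_{et}} \simeq \varprojlim_n (\sF|_{Y_{et}})_{\leq n}$ for each $Y$. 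For hypersheaves one argues identically with the hypercompletion functor $(-)^h$ in place of $\varprojlim_n (-)_{\leq n}$: $\sF \simeq \sF^h$ if and only if $\sF|_{Y_{et}} \simeq \sF^h|_{Y_{et}} \simeq (\sF|_{Y_{et}})^h$ for each $Y$.

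There is no real obstacle; the statement is essentially a bookkeeping consequence of the more substantive \Cref{sheafinsmallorbig}. The only point requiring minor care is to verify that in the (2)$\Rightarrow$(1) direction of the sheaf case one does not need to test against covers that are not already \'etale covers of some object $Y$ in $\algspc_X$, which is immediate from how the topology on the big site was generated.
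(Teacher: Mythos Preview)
Your proof is correct and is exactly the argument the paper has in mind: the proposition is stated in the paper without proof, evidently as an immediate consequence of \Cref{sheafinsmallorbig} and \Cref{sheafificationintwocases}, and your write-up spells out precisely this deduction.
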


\subsection{Properties of \'etale $K$-theory}
Here we give the main applications to \'etale $K$-theory. 
We begin with the definition. 
\newcommand{\algetwo}{\mathrm{Alg}_{E_2}(\mathrm{Sp})}

\begin{definition}[\'Etale $K$-theory] 
We consider the functor of algebraic $K$-theory (\Cref{def:Ktheory}) on $\algspc$, and denote its
\'etale 
sheafification by $K^{et}$. 
We call this functor \emph{\'etale $K$-theory.}
\end{definition}

\begin{remark} 
By \Cref{sheafinsmallorbig}, the \'etale sheafification defining $K^{et}$ can be
carried out either on the big or small \'etale sites, and the results are equivalent.
\end{remark} 

\begin{construction}[The trace from $K^{et}$]
\label{etaletrace}
We have a natural trace map $K \to K^{Sel}$. Since $K^{Sel}$ is an \'etale sheaf
(\Cref{selmer:basicproperties}), it follows that we obtain a factorization
$K^{et}\to K^{Sel}$ of sheaves of spectra. 
\end{construction}

Our main goal is to control \'etale $K$-theory $K^{et}$. In doing so, we use the
map $K^{et} \to K^{Sel}$, and the fact that we have a good handle on $K^{Sel}$:
it is an \'etale sheaf, and commutes with filtered colimits. 
The following argument is very general; it would work with the category of
commutative rings, or connective $E_\infty$-ring spectra. 
Let $\algetwo_{\geq 0}$ denote the $\infty$-category of connective $E_2$-ring
spectra. 
This is a compactly generated $\infty$-category and for a compact object $R \in
\algetwo_{\geq 0}$, $\pi_0(R)$ is a finitely generated commutative ring. 
Compare \cite[Sec.~7.2.4]{HA}. 

\begin{proposition} 
\label{generalaxiomaticarg}
Let $\sF: 
\algetwo_{\geq 0} \to \sp
$  be a functor with the following properties: 
\begin{enumerate}
\item $\sF$ commutes with filtered colimits.  
\item 
When $R \in \algetwo_{\geq 0}$ is such that $\pi_0(R)$ is a strictly henselian ring, $\sF(R) \in \sp_{\leq 0}$. 

\item When $R \in \algetwo_{\geq 0}$ is compact, then $\sF$
restricted to $\spec(R)_{et}$ is $d$-truncated (as a presheaf) for some $d$
(possibly depending on $R$).

\end{enumerate}

Then: 
\begin{enumerate}
\item  
The \'etale sheafification $\sF^{et}: \algetwo_{\geq 0} \to
\sp$ commutes with filtered colimits.
\item
$\sF^{et}$
takes
values in $\sp_{\leq 0}$. 
\item 
$\sF^{et}$ 
is hypercomplete as an \'etale sheaf. 
\end{enumerate}

\end{proposition}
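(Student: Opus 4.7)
My approach is to exploit hypothesis (3) to reduce the \'etale sheafification to a bounded truncated computation on each compact ring's small \'etale site, where hypercompleteness is automatic, and then to propagate the resulting properties to arbitrary rings via hypothesis (1). The first step is to apply \Cref{sheafinsmallorbig} to identify, for any $R \in \algetwo_{\geq 0}$, the restriction $\sF^{et}|_{\spec(R)_{et}}$ with the small-site \'etale sheafification of $\sF|_{\spec(R)_{et}}$; this reduces all three conclusions to statements about small-site sheafification.

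For compact $R$, hypothesis (3) makes $\sF|_{\spec(R)_{et}}$ a $d$-truncated presheaf of spectra. Since \'etale sheafification is $t$-exact, $\sF^{et}|_{\spec(R)_{et}}$ remains $d$-truncated and is therefore hypercomplete by \Cref{hypvsPostcompl}(2). The stalk at a geometric point corresponding to a strictly henselian local ring $A$ agrees with the stalk of the presheaf $\sF|_{\spec(R)_{et}}$, which by hypothesis (1) equals $\sF(A)$ and lies in $\sp_{\leq 0}$ by hypothesis (2). Hence the positive homotopy group sheaves of $\sF^{et}|_{\spec(R)_{et}}$ vanish, so this is a hypercomplete sheaf in the coconnective part of the $t$-structure; the descent spectral sequence then forces every section to lie in $\sp_{\leq 0}$, and in particular $\sF^{et}(U) \in \sp_{\leq 0}$ for every $U \in \spec(R)_{et}$ with $R$ compact.

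The main technical obstacle is step (1'). For $R = \varinjlim_j R_j$ with $R_j$ compact, I aim to show $\sF^{et}(R) \simeq \varinjlim_j \sF^{et}(R_j)$. By \Cref{filtdiagfinitarysite} and \Cref{filteredcolimitexc}, the small \'etale sites form a filtered colimit of finitary excisive sites $\spec(R)_{et} \simeq \varinjlim_j \spec(R_j)_{et}$, and by hypothesis (1) the presheaf $\sF|_{\spec(R)_{et}}$ is the pulled-back filtered colimit of the $\sF|_{\spec(R_j)_{et}}$. The compact case above yields the crucial uniform coconnectivity: for every \'etale $U$ over any $R_j$, $\sF^{et}(U)$ lies in $\sp_{\leq 0}$. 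An \'etale cover of $R$ descends to some finite stage $R_j$, and the \v{C}ech cosimplicial diagrams computing sheafification then take values uniformly in $\sp_{\leq 0}$ across the filtered colimit in $j$; this uniformity is exactly what is needed to commute the totalization with the filtered colimit (in the spirit of \Cref{commutetotandcolimit}), yielding (1').

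With (1') in hand, (2') for arbitrary $R$ follows from the compact case together with the closure of $\sp_{\leq 0}$ under filtered colimits. Finally, for (3'), since $\sF^{et}$ then takes values in $\sp_{\leq 0}$ on every section by (2'), the sheaf $\sF^{et}$ itself lies in the coconnective part of the $t$-structure; consequently $\tau_{\leq n} \sF^{et} = \sF^{et}$ for every $n \geq 0$, so $\sF^{et}$ is tautologically Postnikov complete, and hence hypercomplete by \Cref{hypvsPostcompl}(3).
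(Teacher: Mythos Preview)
Your approach is essentially the same as the paper's: reduce to compact rings where hypothesis (3) makes the sheafification truncated (hence hypercomplete and, by the stalk condition (2), coconnective), then pass to general rings by commuting totalizations with filtered colimits in $\sp_{\leq 0}$. The paper organizes the passage to general $R$ slightly more cleanly by first \emph{defining} $\widetilde{\sF^{et}}$ as the left Kan extension of $\sF^{et}$ from compact objects, checking directly that this Kan extension is an \'etale sheaf (via the same ``Tot commutes with filtered colimits in $\sp_{\leq 0}$'' argument you sketch), and only then identifying $\widetilde{\sF^{et}} \simeq \sF^{et}$ by the universal property of sheafification; your phrase ``\v{C}ech cosimplicial diagrams computing sheafification'' is a bit loose (sheafification is not a single \v{C}ech construction), and what you are really doing is exactly this Kan-extension-then-identify step. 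Modulo that expository point, the arguments coincide.
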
 
\begin{proof} 
Let $\widetilde{\sF^{et}}$ be obtained by restricting the \'etale sheafification
$\sF^{et}$ to compact objects of $\algetwo_{\geq 0}$ and then Kan extending to
all of $\algetwo_{\geq 0}$, so $\widetilde{\sF^{et}}$ commutes with filtered
colimits. 
We will verify 1, 2, and 3 for $\widetilde{\sF^{et}}$ first. 
Of course, 1 is automatic by construction.

By construction and \Cref{sheafinsmallorbig}, $\widetilde{\mathcal{F}^{et}}$ when restricted to the \'etale site
of a compact object $R \in \algetwo_{\geq 0}$ is the sheafification of
$\mathcal{F}|_{\spec(R)_{et}}$; this by assumption is truncated, so
$\widetilde{\mathcal{F}^{et}}$ is Postnikov complete. 
Since the stalks of $\sF$ belong to $\sp_{\leq 0}$, 
we conclude that 
$\widetilde{\mathcal{F}^{et}}(R) \in \sp_{\leq 0}$. Kan extending, we conclude
that $\widetilde{\sF^{et}}$ takes values in $\sp_{\leq 0}$, verifying 2.

Given a faithfully flat \'etale
map $f: R \to R'$ in $\algetwo_{\geq 0}$, we can write $f$ as a filtered colimit
of faithfully flat \'etale maps $f_\alpha:  R_\alpha \to R'_\alpha$ between
compact objects in $\algetwo_{\geq 0}$. 
By assumption,
$\widetilde{\sF^{et}}$ satisfies the sheaf condition for each map $f_\alpha$, 
i.e., $\widetilde{\sF^{et}}(R_\alpha)$ is the totalization of
$\widetilde{\sF^{et}}$ applied to the \v{C}ech nerve of 
$f_\alpha$. 
Now filtered colimits commute with totalizations in $\sp_{\leq 0}$, so 
it follows that $\widetilde{\sF^{et}} $ satisfies the sheaf condition
for $R \to R'$, verifying that $\widetilde{\sF^{et}}$ is a sheaf; it is
automatically hypercomplete since it is truncated. 

Finally, it remains to show that 
$\widetilde{\sF^{et}}$ actually is the sheafification of $\sF$. 
To this end, we have a map $\widetilde{\sF^{et}} \to \sF^{et}$ by left Kan
extension. Since $\sF^{et}$ is the sheafification of $\sF$ and
$\widetilde{\sF^{et}}$ is an \'etale sheaf receiving a map from $\sF$, we obtain a map
$\sF^{et} \to \widetilde{\sF^{et}}$. A diagram 
chase shows that both maps are inverses to each other, as desired. 
\end{proof} 

\begin{theorem}
\label{mainetaleKdesc}
Let $X$ be an $E_2$-spectral algebraic space with connective structure sheaf.  Then: 
\begin{enumerate}
\item  
The map
$$K^{et}(X)\rightarrow K^{Sel}(X)$$
induced by \Cref{etaletrace} is an isomorphism on homotopy in degrees $\geq -1$.
\item If $X$ has finite Krull dimension and admits a global bound on the virtual
$\mathcal{P}$-local cohomological dimensions of the residue fields, then
$K^{et}_{\mathcal{P}}$ is a hypercomplete
sheaf on $X_{et}$. 
\item
The construction $K^{et}(\cdot)$, on $\algetwo_{\geq 0}$, commutes with filtered
colimits. 
\end{enumerate}
\label{KetvsKSelcomp}
\end{theorem}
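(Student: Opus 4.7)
The plan is to deduce all three parts simultaneously by applying the axiomatic \Cref{generalaxiomaticarg} to the fiber $F = \mathrm{fib}(K \to K^{Sel})$ of the trace map constructed in \Cref{etaletrace}. Since $K^{Sel}$ is already an \'etale sheaf by \Cref{selmer:basicproperties}, sheafifying the defining sequence yields a fiber sequence of sheaves
\[
F^{et} \to K^{et} \to K^{Sel},
\]
so every desired property of $K^{et}$ will be transferred from the corresponding properties of $F^{et}$ and $K^{Sel}$.

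I will verify the three hypotheses of \Cref{generalaxiomaticarg} for $F$ (applied to $\Sigma^2 F$ to land in $\sp_{\leq 0}$). First, $F$ commutes with filtered colimits on $\algetwo_{\geq 0}$, since both $K$ (by definition) and $K^{Sel}$ (by \Cref{selmer:basicproperties}) do. Second, when $R \in \algetwo_{\geq 0}$ has $\pi_0(R)$ strictly henselian local, \Cref{stalksofKSel} shows that $K(R) \to K^{Sel}(R)$ is an isomorphism on $\pi_i$ for $i \geq -1$, so $F(R) \in \sp_{\leq -2}$. Third, if $R$ is compact in $\algetwo_{\geq 0}$, then $\pi_0(R)$ is a finitely generated commutative ring, hence Jacobson of finite Krull dimension; the same holds for every $R'$ in $\spec(R)_{et}$, whose residue fields are finitely generated fields with virtual cohomological dimension uniformly bounded in terms of the Krull dimension and transcendence degree. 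Then \Cref{QLSelgeneral}, applied at each prime and assembled via the arithmetic fracture square, provides a uniform truncation bound for $F$ on $\spec(R)_{et}$. The conclusion of \Cref{generalaxiomaticarg} thus delivers: $F^{et}$ commutes with filtered colimits on $\algetwo_{\geq 0}$, takes values in $\sp_{\leq -2}$, and is hypercomplete as an \'etale sheaf.

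From here the three parts of the theorem follow. Part (3) is immediate: $K^{et}$ is the fiber of a map between functors on $\algetwo_{\geq 0}$ that both commute with filtered colimits. For part (1), the fact that $F^{et}(X) \in \sp_{\leq -2}$ when $X$ is affine, combined with the long exact sequence, gives the desired $\pi_{\geq -1}$-isomorphism; for general qcqs $X$ one reduces to the affine case via Nisnevich descent, using that $K^{et}$, $K^{Sel}$, and $F^{et}$ are all Nisnevich sheaves and that $X$ admits a finite Nisnevich cover by affines. For part (2), under the finite Krull dimension and bounded virtual cohomological dimension hypotheses, the $\mathcal{P}$-local version of \Cref{QLSelgeneral} (cf.\ \Cref{moregeneralkseldesc}) shows that $K^{Sel}_\mathcal{P}$ is a hypersheaf on $X_{et}$, while $F^{et}_\mathcal{P}$ restricted to $X_{et}$ is a hypersheaf simply because it is truncated (\Cref{hypvsPostcompl}); since hypercomplete sheaves are closed under limits, $K^{et}_\mathcal{P}$ is hypercomplete on $X_{et}$.

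The main subtlety in this plan is hypothesis (iii) of \Cref{generalaxiomaticarg}: namely, promoting the prime-by-prime Lichtenbaum--Quillen bounds of \Cref{QLSelgeneral} to a single integral truncation bound for $F$ on the entire small \'etale site of a compact $E_2$-ring. This uses the uniform bound on virtual cohomological dimensions across all primes for residue fields of finite type $\mathbb{Z}$-algebras, and this is the step where the norm residue isomorphism theorem enters substantively through \Cref{QLSelgeneral}.
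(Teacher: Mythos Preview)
The proposal is correct and follows essentially the same approach as the paper: both apply \Cref{generalaxiomaticarg} to the fiber $F = \mathrm{fib}(K \to K^{Sel})$, verify the three hypotheses using \Cref{selmer:basicproperties}, \Cref{stalksofKSel}, and \Cref{QLSelgeneral} together with the bound on virtual cohomological dimensions of residue fields of finite type $\mathbb{Z}$-schemes, and then read off parts (1)--(3) from the resulting fiber sequence $F^{et} \to K^{et} \to K^{Sel}$. Your treatment is in fact slightly more explicit than the paper's in two places: you spell out the Nisnevich reduction from general qcqs $X$ to the affine case in part (1) (using that finite limits preserve $\sp_{\leq -2}$), and you note that $F^{et}_{\mathcal{P}}$ is hypercomplete on $X_{et}$ simply because it is truncated, whereas the paper's proof is terse on both points.
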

\begin{proof}
Consider the fiber $\sF$ of the map $K \to K^{Sel}$. 
Then $\sF$ satisfies the conditions of \Cref{generalaxiomaticarg}, 
in view of the results \Cref{selmer:basicproperties}, \Cref{QLSelgeneral}, and
\Cref{stalksofKSel}. 
Recall that the residue fields of a scheme (or algebraic space) of finite type
over $\mathbb{Z}$ have 
virtual cohomological dimension at most $d + 1$, for $d$ the Krull dimension (which is also finite)
\cite[Theorem 6.2, Exp. X]{SGA4}, so these results apply. 
It follows that $\sF^{et} = \mathrm{fib}(K^{et} \to K^{Sel})$  commutes with filtered colimits of connective
$E_2$-rings, takes values in $\sp_{\leq 0} $, and is a hypercomplete \'etale
sheaf. 

Using the above results that we have already proved for $K^{Sel}$ again, 
it follows that $K^{et}$ has all of the above desired properties too, from the
fiber sequence $\sF^{et} \to K^{et} \to K^{Sel}$. 
The fact that $K^{et}(X) \to K^{Sel}(X)$ is an isomorphism in degrees $\geq -1$
can now be tested for $X$ finite type over $\mathbb{Z}$, then for $X$ strictly
henselian local by hypercompleteness, and then follows from \Cref{stalksofKSel}. 
\end{proof}

Finally, we can state and prove the relevant Lichtenbaum-Quillen statement for
$K \to  K^{et}$. 

\begin{theorem}
\label{LQstatementmain}
Let $X$ be an $E_2$-spectral  algebraic space of finite Krull dimension, and $p$ a
prime.  
For a point $x \in X$, let 
$d_x = \mathrm{vcd}_p(x)$ if $\mathrm{char}(k(x))$ is prime to $p$, and 
$d_x = \log_p [k(x):k(x)^p] + 1$ if 
$\mathrm{char}(k(x)) = p$. 
Let $d = \sup_{x \in X} d_x$, and suppose $d < \infty$. 

Then the map
$$K(X)\rightarrow K^{et}(X)$$
is an isomorphism on $p$-local homotopy groups in degrees $\geq
\operatorname{max}(\operatorname{sup}_{x\in X} d_{k(x)} - 2, 0)$.
\end{theorem}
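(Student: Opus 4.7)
The plan is to exploit the factorization $K \to K^{et} \to K^{Sel}$ provided by \Cref{etaletrace} and to bound the fibers of the two constituent maps separately. By \Cref{mainetaleKdesc}(1), the map $K^{et}(X) \to K^{Sel}(X)$ is an isomorphism on $\pi_i$ for $i \geq -1$, equivalently its fiber is concentrated in degrees $\leq -2$. By \Cref{QLSelgeneral}(1), the fiber of $K(X)_{(p)} \to K^{Sel}(X)_{(p)}$ is concentrated in degrees $\leq \max(d-4, -2)$, under precisely the hypotheses of the present theorem: finite Krull dimension, together with a global bound $d = \sup_{x \in X} d_x$ coming from mod $p$ virtual Galois cohomological dimensions (in the prime-to-characteristic case) and from the $p$-rank $1 + \log_p[k(x):k(x)^p]$ (in equal characteristic).

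Next I would chase through the octahedral axiom applied to $K(X) \to K^{et}(X) \to K^{Sel}(X)$, which yields a fiber sequence
\[ \mathrm{fib}(K \to K^{et}) \to \mathrm{fib}(K \to K^{Sel}) \to \mathrm{fib}(K^{et} \to K^{Sel}). \]
The middle term is $p$-locally concentrated in degrees $\leq \max(d-4,-2)$, and the right-hand term sits in degrees $\leq -2$, so the long exact sequence in homotopy immediately places the left-hand term in degrees $\leq \max(d-4,-2)$ after $p$-localization. By the standard translation between fiber connectivity and degrees where the map is an isomorphism, this is exactly the statement that $K(X)_{(p)} \to K^{et}(X)_{(p)}$ is an isomorphism in degrees $\geq \max(d-2, 0)$, which is the desired conclusion.

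The entire substance of the argument has been established in the earlier sections, so there is no real obstacle here. The one point of care, should $X$ not be assumed to have a connective structure sheaf, is that \Cref{mainetaleKdesc}(1) and \Cref{QLSelgeneral}(1) are formulated in the connective setting. This is handled by invoking \Cref{reducetoconnective}: viewed as a weakly localizing invariant over $X$, $K$-theory pulls back (via base change along $\perf(X_{\geq 0}) \to \perf(X)$) to a weakly localizing invariant over $X_{\geq 0}$ which induces the same presheaf of spectra on $et_X = et_{X_{\geq 0}}$ and whose value at the terminal object is again $K(X)$, and similarly for $K^{et}$ and $K^{Sel}$. Since the residue fields and Krull dimension depend only on the underlying algebraic space, the hypothesis on $d$ transfers intact and the theorem reduces to the connective case.
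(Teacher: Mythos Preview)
Your proof is correct and follows exactly the same approach as the paper, which simply says to combine \Cref{KetvsKSelcomp} (i.e., \Cref{mainetaleKdesc}) with \Cref{QLSelgeneral}. Your version is more explicit about the octahedral argument and the degree bookkeeping, and you also spell out the reduction to the connective case via \Cref{reducetoconnective}, a point the paper's one-line proof leaves implicit.
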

\begin{proof} 
Combine the above comparison of \'etale and Selmer $K$-theory
(\Cref{KetvsKSelcomp})
as well as \Cref{QLSelgeneral}. 
\end{proof}

\subsection{Hyperdescent for telescopically localized  invariants}

Putting everything together, we can prove our most general \'etale hyperdescent results.

\begin{theorem}[\'Etale hyperdescent]
\label{mainhypLn}
Let $X$ be an $E_2$-spectral algebraic space. 
Suppose that $X$ has finite Krull dimension and that there is a 
global bound on the mod $p$ virtual cohomological dimensions of the residue
fields of $X$. Let $\mathcal{A}$ be a weakly localizing invariant for $\perf(X)$-linear
$\infty$-categories which takes values in $L_n^f$-local spectra. 
Then $Y \mapsto \mathcal{A}(\perf(Y))$ defines an \'etale hypersheaf on $X$. 
\end{theorem}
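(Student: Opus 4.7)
The plan is to exhibit $\mathcal{A}$ as a sheaf of modules over a hypercomplete sheaf of ring spectra, and to conclude via the smashing property of \'etale hypercompletion. We already know by \Cref{generaletaledesc} that $Y \mapsto \mathcal{A}(\perf(Y))$ defines an \'etale sheaf on $X$, so only hyperdescent remains. By \Cref{reducetoconnective} we may assume $X$ has connective structure sheaf.

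The key input is that $\mathcal{A}$ carries a natural module structure over the $K$-theory sheaf. At the presheaf level, for each \'etale $Y \to X$, the spectrum $\mathcal{A}(\perf(Y))$ is a module over the $E_\infty$-ring $K(\perf(Y))$; this is a formal consequence of the universal property of $K$-theory as an endomorphism object in noncommutative motives (\Cref{additivekviamotives}) together with the symmetric monoidal structure on $\perf(Y)$. Since sheafification is symmetric monoidal and $\mathcal{A}$ is already a sheaf, this promotes to a $K^{et}$-module structure on $\mathcal{A}$ as an \'etale sheaf. Because $\mathcal{A}$ takes $L_n^f$-local (hence $p$-local, where $p$ is the prime implicit in $L_n^f$) values, the action factors through $K^{et}_{(p)}$.

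Observe now that the hypotheses of the theorem are exactly those of \Cref{mainetaleKdesc}(2) applied with the singleton set of primes $\{p\}$, so $K^{et}_{(p)}$ is a hypercomplete \'etale sheaf; and they are also those of \Cref{hypsmashingfinitecd}, so hypercompletion is smashing on $p$-local \'etale sheaves of spectra on $X$. By part 5 of \Cref{whenishypsmashing}, every sheaf of modules over an algebra in the hypercomplete subcategory is itself hypercomplete; applying this to $\mathcal{A}$ as a $K^{et}_{(p)}$-module yields the desired conclusion.

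Thus the proof is a formal corollary of the hypercompleteness of $p$-local \'etale $K$-theory together with the smashing property of hypercompletion. The substance of the argument lives entirely on that side: the hypercompleteness of $K^{et}_{(p)}$ passes through Selmer $K$-theory and ultimately depends on the Voevodsky-Rost norm residue isomorphism theorem, while the smashing property of \'etale hypercompletion rests on Tate-Thomason vanishing and the finite-dimensionality hypotheses of the theorem. The only real obstacle in the plan above is the bookkeeping needed to track the $K^{et}_{(p)}$-module structure on $\mathcal{A}$ carefully through sheafification and $p$-localization, which requires care but no new ideas.
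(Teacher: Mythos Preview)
Your proposal is correct and follows essentially the same approach as the paper: produce a module structure on $\mathcal{A}$ over \'etale $K$-theory via noncommutative motives, invoke \Cref{mainetaleKdesc} to see that (a suitable localization of) $K^{et}$ is hypercomplete, and conclude via the smashing property of hypercompletion (\Cref{hypsmashingfinitecd}). The paper is slightly terser---it works directly with $(K_{\geq 0})^{et}$ rather than passing explicitly through $p$-localization, and it notes that the connective/nonconnective distinction is harmless since truncated sheaves are hypercomplete---but the substance is the same; one small caution is that in the $E_2$ setting $\perf(Y)$ is only monoidal, not symmetric monoidal, so $K(\perf(Y))$ need only be $E_1$, which is all that is needed for the module argument.
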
 
\begin{proof} 
As any additive invariant factors through non-commutative motives,
$\mathcal{A}(\perf(X)) = \mathcal{A}([\perf(X)])$ carries an action of
$\operatorname{End}_{\mot(X)}([\perf(X)],[\perf(X)])=K_{\geq 0}(X)$, see
\Cref{additivekviamotives}.  This is functorial in the $E_2$-algebraic space $X$
via pullbacks, so in particular we get that the presheaf $Y \mapsto
\mathcal{A}(\perf(Y))$ carries an action of the presheaf $K_{\geq 0}$.  However,
by \Cref{generaletaledesc} $\mathcal{A}$ is an \'etale sheaf; hence this action
extends to an action of $(K_{\geq 0})^{et}$.  The latter is a hypersheaf by
\Cref{mainetaleKdesc} (note that the discrepancy between connective and
non-connective $K$-theory is irrelevant because truncated sheaves are automatically hypercomplete), hence so is $\mathcal{A}$ by \Cref{hypsmashingfinitecd}.
\end{proof}

Finally, we can extend the hyperdescent properties of Selmer
$K$-theory (cf.\ \Cref{selmer:basicproperties} and \Cref{QLSelmerfields}) to the nonconnective case. 

\begin{corollary} 
Let $X$ be an $E_2$-spectral algebraic space. Then 
 if $X$ has finite Krull dimension and the $\mathcal{P}$-local 
cohomological dimension of the residue fields of $X$ are bounded, then 
$K^{Sel}(-)_{\mathcal{P}}$ is an \'etale hypersheaf over $X$.

Moreover, this also holds ``with coefficients" $\mathcal{M}$ as in  \Cref{moregeneralkseldesc}.
\label{KSel:innonconndesc}
\end{corollary}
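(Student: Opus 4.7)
The plan is to exhibit $K^{Sel}_{\mathcal{P}}$ (with or without coefficients) as a finite limit of three \'etale hypersheaves on $X$. Starting from the defining pullback
\[ K^{Sel} = L_1 K \times_{L_1 \mathrm{TC}} \mathrm{TC} \]
and applying $\mathcal{P}$-localization (an exact smashing functor preserving finite limits) yields
\[ K^{Sel}_{\mathcal{P}} = (L_1 K)_{\mathcal{P}} \times_{(L_1 \mathrm{TC})_{\mathcal{P}}} \mathrm{TC}_{\mathcal{P}}, \]
and the same holds after replacing $K, \mathrm{TC}$ by $K_{\mathcal{M}}, \mathrm{TC}_{\mathcal{M}}$ for a right $\perf(X)$-module $\mathcal{M}$ as in \Cref{locwithcoeff}.

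First, I would verify that each of the two $L_1$-terms is an \'etale hypersheaf. Both $L_1 K_{\mathcal{M}}$ and $L_1 \mathrm{TC}_{\mathcal{M}}$ are weakly localizing invariants of $\perf(X)$-linear $\infty$-categories (\Cref{locwithcoeff}), and they take values in $L_1$-local spectra. Since $L_1$ is a further Bousfield localization of the smashing localization $L_1^f$ (the Bousfield class of $KU$ is contained in that of a height $1$ telescope), every $L_1$-local spectrum is $L_1^f$-local, a property preserved under $\mathcal{P}$-localization. Hence \Cref{mainhypLn}, applied to $(L_1 K_{\mathcal{M}})_{\mathcal{P}}$ and $(L_1 \mathrm{TC}_{\mathcal{M}})_{\mathcal{P}}$, yields that both are \'etale hypersheaves on $X$.

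Next, I would handle $\mathrm{TC}_{\mathcal{M},\mathcal{P}}$. By \Cref{TCissheaf}, $\mathrm{TC}_{\mathcal{M}}$ is an \'etale hypersheaf on $X$. Under the finite Krull dimension and bounded virtual cohomological dimension hypotheses, \Cref{hypsmashingfinitecd} shows that hypercompletion is smashing on $\mathcal{P}$-local \'etale sheaves of spectra on $X$; equivalently, the hypercomplete objects in $\mathrm{Sh}(X_{et}, \sp_{\mathcal{P}})$ are closed under tensoring with any sheaf. Therefore $\mathrm{TC}_{\mathcal{M},\mathcal{P}} \simeq \mathrm{TC}_{\mathcal{M}} \otimes \mathbb{S}_{\mathcal{P}}$ is again hypercomplete.

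Finally, the collection of hypersheaves in $\mathrm{Sh}(X_{et}, \sp_{\mathcal{P}})$ is closed under all limits (e.g., by \Cref{hypvsPostcompl}(1)). Taking the homotopy pullback of the three hypersheaves above produces $K^{Sel}_{\mathcal{M},\mathcal{P}}$, completing the argument. There is no serious obstacle beyond checking that $L_1$-local spectra are $L_1^f$-local so as to invoke \Cref{mainhypLn}; everything else is a formal consequence of the already-established hyperdescent for $\mathrm{TC}$, the smashing property of hypercompletion, and closure of hypersheaves under limits.
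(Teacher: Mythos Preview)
Your decomposition is different from the paper's. The paper uses the arithmetic fracture square: it first shows $K^{Sel}/p$ is a hypersheaf for each $p\in\mathcal{P}$ (as a pullback of $L_{K(1)}K$, $L_{K(1)}\mathrm{TC}$, and $\mathrm{TC}/p$, the first two handled by \Cref{mainhypLn} and the last by \Cref{TCissheaf}), then passes to the pro-$\mathcal{P}$-completion as a limit of hypersheaves, treats the rational pieces $K^{Sel}_{\mathbb{Q}}$ and $(K^{Sel}_{\hat{\mathcal{P}}})_{\mathbb{Q}}$ by the $n=0$ case of \Cref{mainhypLn}, and reassembles. You instead work directly with the defining pullback $K^{Sel}_{\mathcal{P}} = (L_1K)_{\mathcal{P}}\times_{(L_1\mathrm{TC})_{\mathcal{P}}}\mathrm{TC}_{\mathcal{P}}$, which is a perfectly reasonable alternative and your treatment of $\mathrm{TC}_{\mathcal{P}}$ via \Cref{TCissheaf} and \Cref{hypsmashingfinitecd} is fine.

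There is, however, a gap in your invocation of \Cref{mainhypLn} for the $L_1$-terms. In the paper's conventions (see the beginning of \S5.4), $L_n^f$ is defined at a single implicit prime $p$ and takes values in $p$-local spectra; thus ``$L_1$-local implies $L_1^f$-local'' is false as stated, since an integral $KU$-local spectrum is not $p$-local. When $|\mathcal{P}|>1$, $(L_1K)_{\mathcal{P}}$ is not $L_n^f$-local at any single prime, so \Cref{mainhypLn} does not apply to it verbatim (it does apply when $\mathcal{P}=\{p\}$). The easiest fix is to run the arithmetic square on $(L_1K)_{\mathcal{P}}$ and $(L_1\mathrm{TC})_{\mathcal{P}}$ themselves, at which point you have essentially reproduced the paper's argument. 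A cleaner fix, closer to your intent, is to cite the \emph{proof} of \Cref{mainhypLn} rather than its statement: that proof only uses $L_n^f$-locality to invoke \Cref{generaletaledesc} and conclude the invariant is an \'etale sheaf; once one knows $(L_1K)_{\mathcal{P}}$ is an \'etale sheaf (e.g.\ by the argument of \Cref{moregeneralkseldesc}), the module structure over $(K_{\geq 0})^{et}$ and \Cref{hypsmashingfinitecd} finish the job exactly as you describe.
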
 
\begin{proof} 
In fact, \Cref{mainhypLn} and \Cref{TCissheaf} implies that 
$K^{Sel}/p$ is a hypersheaf for each $p \in \mathcal{P}$. 
It follows that the pro-$\mathcal{P}$-completion $K^{Sel}_{\hat{\mathcal{P}}}$ of $K^{Sel}$ is a hypersheaf,
and so are the rationalizations $K^{Sel}_{\mathbb{Q}}$ and 
$(K^{Sel}_{\hat{\mathcal{P}}})_{\mathbb{Q}}$ by the $n=0$ case of \Cref{mainhypLn}. 
Using the arithmetic square, it follows that $K^{Sel}(-)_{\mathcal{P}}$ is an
\'etale hypersheaf as desired. 
\end{proof}

\bibliography{hyper}

\end{document}